\providecommand{\U}[1]{\protect\rule{.1in}{.1in}}
\providecommand{\U}[1]{\protect\rule{.1in}{.1in}}
\providecommand{\bigsqcap}{\mathop{\mathpalette\@updown\bigsqcup}}
\newcommand*{\@updown}
[2]{\rotatebox[origin=c]{180}{$\m@th#1#2$}}
\newtheorem{theorem}{Theorem}
\theoremstyle{plain}
\newtheorem{algorithm}{Algorithm}
\newtheorem{case}{Case}
\newtheorem{conjecture}{Conjecture}
\newtheorem{corollary}{Corollary}
\newtheorem{definition}{Definition}
\newtheorem{example}{Example}
\newtheorem{lemma}{Lemma}
\newtheorem{proposition}{Proposition}
\newtheorem{remark}{Remark}
\numberwithin{equation}{section}
\begin{document}		
	\title{Framed Matrices and  $A_{\infty}$-Bialgebras}
	\author{Samson Saneblidze$^{1}$}
	\address{ A. Razmadze Mathematical Institute, I.Javakhishvili Tbilisi State University 2, Merab Aleksidze II Lane\\
		0193 Tbilisi, Georgia}
	\email{sane@rmi.ge}
	\author{Ronald Umble $^{2}$}
	\address{Department of Mathematics\\
		Millersville University of Pennsylvania\\
		Millersville, PA. 17551}
	\email{ron.umble@millersville.edu}
	\thanks{$^{1} $ The research reported in this publication was made possible in part
		by grant SRNSF/217614}
	\thanks{$^{2}$ This research funded in part by multiple Millersville University faculty research grants.}
	\date{}
	\subjclass[2020]{Primary 55P35, 55P48, 55P99; Secondary 52B05, 52B11}
	\keywords{Bipartition, matrad, framed matrix,  biassociahedron, bimultiplihedron, $A_{\infty}$-bialgebra}
\begin{abstract}
	We complete the construction of the biassociahedra $KK$, construct the free matrad
	$\mathcal{H}_{\infty}$, realize $\mathcal{H}_{\infty}$ as the cellular chains of $KK,$ and define an $A_{\infty}$-bialgebra as an
	algebra over $\mathcal{H}_{\infty}.$ We construct the bimultiplihedra $JJ,$ construct the
	relative free matrad $r\mathcal{H}_{\infty}$ as a $\mathcal{H}_{\infty}$-bimodule,
	realize $r\mathcal{H}_{\infty}$ as the cellular chains of $JJ$, and define a
	morphism of $A_{\infty}$-bialgebras as a bimodule over $\mathcal{H}_{\infty}$.
	We prove that the homology of every $A_{\infty }$-bialgebra over a commutative ring with unity admits an induced $A_{\infty}
	$-bialgebra structure. We extend the Bott-Samelson isomorphism to an
	isomorphism of $A_{\infty}$-bialgebras and determine the $A_{\infty}
	$-bialgebra structure of $H_{\ast}\left(  \Omega\Sigma X;\mathbb{Q}\right)  $.
	For each $n\geq2$, we construct a space $X_{n}$ and identify an induced
	nontrivial $A_{\infty}$-bialgebra operation $\omega_{2}^{n}: H^{\ast}\left(
	\Omega X_{n};\mathbb{Z}_{2}\right)  ^{\otimes2}\rightarrow H^{\ast}\left(
	\Omega X_{n};\mathbb{Z}_{2}\right)  ^{\otimes n}$.
\end{abstract}
\maketitle
\vspace{-.2in}
\tableofcontents
\vspace{-.3in}

\section{Introduction}

Let $m,n\in \mathbb{N}, mn\ge 2$. The biassociahedron $KK_{n,m}$ is a contractible $(m+n-3)$-dimensional
polytope with a single top dimensional cell $e^{m+n-1}$; in particular, $KK_{1,n}\cong KK_{n,1}$ is Stasheff's associahedron $K_{n}$ \cite{Stasheff}. Our construction of $KK_{n,m}$ in \cite{SU4}, which is valid for all $m$ when $n\leq3$ and for all $n$ when $m\leq3$, extends M. Markl's construction for $m+n\leq 6$ in \cite{Markl2} and \cite{Markl3}.

In this paper we introduce and apply the theory of framed matrices to construct $KK_{m,n}$ for all $m$ and $n$. A \emph{framed matrix} is an equivalence class of paths of \emph{generalized bipartition matrices} whose entries involve \emph{augmented bipartitions}, which are pairs of partitions  $(A_1|\cdots|A_r,B_1|\cdots|B_r)$ of finite sets of positive integers in which $A_i$ and $B_j$ are possibly null (cf. Sections \ref{BM} and \ref{FM}).

Let $\Theta=\{\theta_m^n:\theta_1^1=\mathbf{1}\}_{m,n\in \mathbb{N}}$ be a bigraded set with at most one element of bidegree $(m,n)$ and let $F^{pre}(\Theta)$ denote the \emph{free prematrad generated by} $\Theta$ introduced in \cite{SU4}.  Then $F^{pre}(\Theta)$ is the $\mathcal{A}_{\infty}$-operad when $\Theta =\{\theta_m^1\}_{m\ge 1}$ or $\Theta =\{\theta_1^n\}_{n\ge 1}$, and  $F^{{pre}}\left(  \Theta\right)  /\sim,$ where $A\sim B$ if ${bideg}\left(  A\right)  ={bideg}\left(  B\right)$, is the \emph{bialgebra prematrad} $\mathcal{H}^{{pre}}=\langle c_{n,m}\rangle_{m,n\in \mathbb{N}}$ when  $\Theta=\{\mathbf{1},\theta_2^1,\theta_1^2\}$ (see Example \ref{bialg}). 	
The \emph{free matrad} $\mathcal{H}_\infty$ is a proper submodule of $F^{pre}(\Theta)$ when $\Theta$ contains
exactly one element of each bidegree.

Consider the canonical prematrad projection
$\rho^{{pre}}:F^{pre}(\Theta)\rightarrow{\mathcal{H}}^{pre}$ under
which	\[
\rho^{pre}(\theta_{m}^{n})=\left\{
\begin{array}
	[c]{ll}
	c_{n,m}, & m+n\leq3\\
	0, & \text{otherwise.}
\end{array}
\right.
\]
Suppose we wish to define a differential $\partial^{pre}$ such that
$\rho^{pre}:\left(F^{{pre}}(\Theta),\partial^{pre}\right)\rightarrow\left(
{\mathcal{H}}^{pre},0\right)  $ is a free resolution in the category of
prematrads. Then $H_{\ast}\left(  F^{pre}(\Theta),\partial^{pre}\right)$ $\approx\mathcal{H}^{pre}$ implies
\[
\begin{array}
	[c]{l}%
	\partial^{pre}(\theta_{2}^{1})=\partial^{pre}(\theta_{1}
	^{2})=0\smallskip\\
	\partial^{pre}(\theta_{3}^{1})=\gamma(\theta_{2}^{1};\mathbf{1}
	,\theta_{2}^{1})-\gamma(\theta_{2}^{1};\theta_{2}^{1},\mathbf{1})\smallskip\\
	\partial^{pre}(\theta_{2}^{2})=\gamma(\theta_{1}^{2};\theta_{2}
	^{1})-\gamma(\theta_{2}^{1}\theta_{2}^{1};\theta_{1}^{2}\theta_{1}
	^{2})\smallskip\\
	\partial^{pre}(\theta_{1}^{3})=\gamma(\mathbf{1},\theta_{1}^{2}
	;\theta_{1}^{2})-\gamma(\theta_{1}^{2},\mathbf{1};\theta_{1}^{2}),
\end{array}
\]
where $\gamma(\theta_{2}^{1};\mathbf{1}
,\theta_{2}^{1}):=\theta_{2}^{1}(\mathbf{1}\otimes
\theta_{2}^{1})$ and $\gamma(\theta_{2}^{1}\theta_{2}^{1};\theta_{1}^{2}
\theta_{1}^{2}):=\left(  \theta_{2}^{1}\otimes\theta_{2}^{1}\right)  \left(
2,3\right)  \left(  \theta_{1}^{2}\otimes\theta_{1}^{2}\right)  $. Although it
is difficult to extend $\partial^{pre}$ in such a way that acyclicity is
easy to verify, there is a canonical differential
$\partial$ on $\mathcal{H}_{\infty}$ such that the canonical projection
$\varrho:\left(  \mathcal{H}_{\infty},\partial\right)  \rightarrow\left(
{\mathcal{H}}^{pre},0\right)  $ is a free resolution in the category of
prematrads.

The precise definition of $\mathcal{H}_{\infty}$ requires the
notion of \emph{coherent} framed matrices defined in terms of the S-U diagonal $\Delta_P$ on the permutahedra $P=\{P_n\}_{n\geq 1}$ \cite{SU2}, and once defined, $\mathcal{H}
_{\infty}$ is the \emph{minimal} submodule of $F^{pre}(\Theta).$ But unlike the resolution differential defined in \cite{SU4}, which fails to preserve the underlying coherence in bidegree $(4,4)$  (cf. Example \ref{33obstruction}), the more delicate resolution differential defined here preserves the underlying coherence in every bidegree.

Let $\mathfrak{n}=\left\{  1,2,\ldots,n\right\}$ and let $P_{n}$ denote the $\left(  n-1\right)  $-dimensional
permutahedron. Guided by the combinatorial join of
permutahedra $P_{m}\ast_{c}P_{n}=P_{m+n}$ defined in \cite{SU4},
we introduce
the \emph {balanced framed join} $\mathfrak{m}
\circledast_{pp}\mathfrak{n}$ and the \emph{reduced balanced
	framed join} $\mathfrak{m}\circledast_{kk}\mathfrak{n}=\mathfrak{m}\circledast_{pp}\mathfrak{n}/\sim$.
The bipermutahedron $PP_{n,m}$ is the geometric realization of
$\mathfrak{m}\circledast_{pp}\mathfrak{n}$ and the biassociahedron
$KK_{n+1,m+1}=PP_{n,m}/\sim$ is the geometric realization of $\mathfrak{m}\circledast_{kk}\mathfrak{n}$, where $\sim$ extends the
standard relation $K_{n+1}=P_{n}/\sim$.
Thus $KK_{m,n}=K_{m}\ast_{kk}K_{n}$ is the reduced balanced framed join of the associahedra $K_m$ and $K_n$  for all $m$ and $n.$

Let $KK=\{KK_{n,m}\}_{mn\ge 2}$. We realize $\mathcal{H}_{\infty}$ as the
cellular chains $C_*(KK)$ and define an $A_{\infty}$-\emph{bialgebra} as an
algebra over $\mathcal{H}_{\infty}.$ 	
We define a \emph{relative prematrad}, construct the
\emph{relative free matrad} $r\mathcal{H}_{\infty}$ as a $\mathcal{H}_{\infty}$-bimodule,
realize $r\mathcal{H}_{\infty}$ as the cellular chains of
bimultiplihedra $JJ=\{JJ_{n,m}\}_{mn\geq 1},$ and define a
\emph{morphism $G:A\Rightarrow B$ of $A_{\infty}$-bialgebras} as a bimodule
over $\mathcal{H}_{\infty}$.

Let $R$ be a commutative ring with unity. 	An $A_{\infty}$-bialgebra is a graded $R$-module $H$ together with a 	family of multilinear operations $\{\omega_{m}^{n}\in
Hom_{m+n-3}(H^{\otimes m},$ $H^{\otimes n})\}_{m,n\geq1}$ and a
chain map $\alpha:C_{\ast}\left(  KK\right)  \rightarrow \mathcal{E}nd_{TH}$ such that
$\alpha\left(  e^{m+n-3}\right)  =\omega_{m}^{n}.$ 	
Given an $A_{\infty}$-bialgebra $B$ whose homology $H_{\ast}(B)$ is a free
$R$-module, and a homology isomorphism $g:H_{\ast}\left(  B\right)
\rightarrow B$, we prove that the $A_{\infty}$-bialgebra structure on $B$
pulls back along $g$ to an $A_{\infty}$-bialgebra structure on $H_{\ast
}\left(  B\right)  $, and any two such structures so obtained are isomorphic.
This is a special case of our main result:\smallskip

\noindent\textbf{Theorem} \ref{AAHopf}. \textit{Let }$A$\textit{\ be a free
}$R$\textit{-module, let }$B$\textit{\ an }$A_{\infty}$\textit{-bialgebra, and
	let }$g:A\rightarrow B$\textit{\ be a homology isomorphism. Then}\smallskip

\noindent(i) (Existence) $g$\textit{\ induces an }$A_{\infty}$%
\textit{-bialgebra structure }$\omega_{A}$\textit{\ on }$A$\textit{\ and
	extends to a map\ }	$G:A\Rightarrow B$ \textit{of}

$A_{\infty}$-\textit{bialgebras and}\smallskip

\noindent(ii) (Uniqueness) $\left(  \omega_{A},G\right)  $\textit{\ is unique
	up to isomorphism.}\smallskip

Our proof of Theorem \ref{AAHopf} follows from a new transfer algorithm based on the interpretation of $JJ_{n,m}$ as a subdivision of $KK_{n,m}\times I$. In the special case of $A_{\infty}$-(co)algebras, in which $J_{n}$ is interpreted as a subdivision of $K_{n}\times I$, our approach differs significantly from the standard perturbation method for transferring $A_{\infty}$-structures. Indeed, our definition of a morphism between two $A_{\infty}$-(bi)structures in terms $C_*(JJ_{n,m})$ allows us to extend a given morphism $A\to B$ of DG modules to a morphism $A\Rightarrow B$ of $A_{\infty}$-(bi)structures inductively. For some remarks on the history of
perturbation theory see \cite{Huebschmann} and \cite{Hueb-Kade}.

We conclude with several applications. Given a
topological space $X$ and a field $F,$ the bialgebra structure of simplicial
singular chains $S_{\ast}\left(  \Omega X;F\right)  $ of Moore base pointed
loops induces an $A_{\infty}$-bialgebra structure on $H_{\ast}\left(  \Omega
X;F\right)  $ whose $A_{\infty}$-algebra substructure was observed by Kadeishvili
\cite{Kadeishvili1} and whose $A_{\infty}$-coalgebra substructure was observed by Gugenheim \cite{Gugenheim}. Furthermore, the $A_{\infty}$-coalgebra structure on
${H}_{\ast}(X;F)$ extends to an $A_{\infty}$-bialgebra structure on the tensor
algebra $T^{a}\tilde{H}_{\ast}(X;F)$, which is trivial if and only if the
$A_{\infty}$-coalgebra structure on ${H}_{\ast}(X;F)$ is trivial, and the
Bott-Samelson isomorphism $t_{\ast}:T^{a}\tilde{H}_{\ast}(X;F)\overset{\approx
}{\rightarrow}H_{\ast}(\Omega\Sigma X;F)$ extends to an isomorphism of
$A_{\infty}$-bialgebras (Theorem \ref{B-S}). The $A_{\infty}%
$-bialgebra structure of $H_{\ast}(\Omega\Sigma X;\mathbb{Q})$ provides the
first nontrivial rational homology invariant for $\Omega\Sigma X$ (Corollary
\ref{invariant}). Finally, for each $n\geq2,$ we construct a space $X_{n}$ and
identify a nontrivial $A_{\infty}$-bialgebra operation $\omega_{2}^{n}%
:H^{\ast}(\Omega X_{n};\mathbb{Z}_{2})^{\otimes2}\rightarrow H^{\ast}(\Omega
X_{n};\mathbb{Z}_{2})^{\otimes n}$ defined in terms of the action of the
Steenrod algebra $\mathcal{A}_{2}$ on $H^{\ast}(X_{n};\mathbb{Z}_{2}).$

\section{Combinatorial and Topological Tools}

\label{topological}We begin with a review of the combinatorial and topological
tools we need for our constructions.

\subsection{Partitions and Permutahedra}

By an \emph{ordered set} we mean the empty set or a finite strictly increasing
set of positive integers. We denote the special ordered sets $\mathfrak{0}:=\varnothing$ and $\mathfrak{n}:=\left\{
1,2,\ldots,n\right\}  ,$ $n\geq1,$ and define $\min\varnothing=\max
\varnothing:=0.$

Let $\mathbf{a}$ be an ordered set and let
$\#\mathbf{a}$  denote its cardinality. When $A_{1}|\cdots|A_{n}$ is an ordered partition of $A$ in
which $A_{i}=\varnothing,$ we write $A_{1}|\cdots|A_{i-1}|0|\cdots|A_{n}$. The
set of \emph{(ordered) partitions of }$\mathbf{a}$ is defined and denoted by
\[
P\left(  \mathbf{a}\right)  :=\left\{
\begin{array}
	[c]{cl}%
	\left\{  0\right\}  , & \mathbf{a}=\varnothing\\
	\left\{  \text{standard ordered partitions of }\mathbf{a}\right\}  , &
	\text{otherwise},
\end{array}
\right.
\]
and decomposes as the disjoint union of subsets  $P_{n}\left(  \mathbf{a}\right):=\left\{  A_{1}|\cdots|A_{n}\in P\left(  \mathbf{a}\right)  \right\}$ of \emph{length }$n$ \emph{partitions of} $\mathbf{a}$, i.e.,
$P\left(  \mathbf{a}\right) =\bigcup_{1\leq n\leq \#\mathbf{a}}P_{n}\left(  \mathbf{a}\right) $.

Let $\alpha\in P_{n}\left(  \mathbf{a}\right)$ and let $\left\vert \alpha\right\vert $ denote its \emph{dimension}.  When $\mathbf{a}=\varnothing,$ define
$\left\vert \mathbf{a}\right\vert :=0.$ When $\mathbf{a}\neq\varnothing,$ define
$\left\vert \alpha\right\vert :=\#\mathbf{a}-n$ via the correspondence
$P_{n}(\mathbf{a})\leftrightarrow\{$codimension $n-1$ cells of
$P_{\#\mathbf{a}}\},$ where $P_{\#\mathbf{a}}$ denotes the permutahedron whose
cells (or faces) are indexed by the partitions in $P\left(\mathbf{a}%
\right) $. Thus, when $\mathbf{a}\neq\varnothing$ we identify
$P\left(  \mathbf{a}\right)  $ with the permutahedron $P_{\#\mathbf{a}}$.

The set of \emph{augmented (ordered) partitions of }$\mathbf{a}$ is defined
and denoted by
\[
P^{\prime}\left(  \mathbf{a}\right)  :=\left\{
\begin{array}
	[c]{cl}%
	\{\underset{k}{\underbrace{0|\cdots|0}}:k\in\mathbb{N}\}, & \mathbf{a}
	=\varnothing\\
	\{A_{1}|\cdots|A_{k}:k\in\mathbb{N}\text{, where }A_{i_{1}}|\cdots|A_{i_{r}
	}\in P(\mathbf{a}) & \\
	\text{for some }i_{1}<\cdots<i_{r}\}, & \mathbf{a}\neq\varnothing\text{.}%
\end{array}
\right.
\]
Let $\alpha=A_{1}|\cdots|A_{n}\in P^{\prime}\left(  \mathbf{a}\right)  ;$ its
\emph{length }$l\left(  \alpha\right)  :=n.$ Then $P_{n}^{\prime}%
(\mathbf{a}):=\{\alpha\in P^{\prime}\left(  \mathbf{a}\right)  :l\left(
\alpha\right)  =n\}$ is the subset of \emph{length\ }$n$\emph{\ augmented
	partitions} \emph{in} $P^{\prime}\left(  \mathbf{a}\right)  .$ Note that
$P_{1}^{\prime}\left(  \mathbf{a}\right)  =P_{1}\left(  \mathbf{a}\right)
=\left\{  \mathbf{a}\right\}  .$

Let
$\alpha:=A_{1}|\cdots|A_{n+1}\in P_{n+1}^{\prime}(\mathbf{a}).$ Given a subset $M^k\subseteq A_k,$  the
\emph{(partitioning) action of }$M^{k}$ \emph{on }$\alpha$ is the partition
\begin{equation} \label{partitioning-action}
	\partial_{M^{k}}\alpha:=A_{1}|\cdots|A_{k-1}|M^{k}|A_{k}\smallsetminus
	M^{k}|A_{k+1}|\cdots|A_{n+1}
\end{equation}
(cf. \cite{SU2}).
The set $M^{k}$ is \emph{extreme }if $M^{k}=\varnothing$ or
$M^{k}=A_{k}.$ Note that when $M^{k}\neq\varnothing,$ the index $k$ can be omitted.

Given an ordered subset $\lambda\subseteq\mathfrak{n,}$ let $k=\#\lambda.$
Define $\lambda^{0}:=0$ and $\lambda^{k+1}:=n+1;$ when $k>0,$ write
$\lambda=\left\{  \lambda^{1}<\cdots<\lambda^{k}\right\}  .$ The $\lambda$-\emph{projection}
\begin{equation}\label{mu}
	\mu_{\lambda}:P_{n+1}^{\prime
	}(\mathbf{a})\rightarrow P_{k+1}^{\prime}(\mathbf{a})
\end{equation}
is defined by
$\mu_{\lambda}\left(  \alpha\right)  :=\bar{A}_{1}|\cdots|\bar{A}_{k+1},$
where $\bar{A}_{i}=A_{\lambda^{i-1}+1}\cup\cdots\cup A_{\lambda^{i}}.$ In the
extreme cases $\lambda=\varnothing$ and $\lambda=\mathfrak{n}$ we have
$\mu_{\varnothing}\left(  \alpha\right)  =\mathbf{a}$ and $\mu_{\mathfrak{n}%
}\left(  \alpha\right)  =\alpha.$ When $\lambda_{i}=\mathfrak{n}%
\smallsetminus\left\{  i\right\}  ,$ $1\leq i\leq n,$ we write
\begin{equation}
	\alpha\left[  i\right]  :=\mu_{\lambda_{i}}\left(  \alpha\right)
	=A_{1}|\cdots|A_{i}\cup A_{i+1}|\cdots|A_{n+1}. \label{special-lambda}%
\end{equation}
Then $\partial_{A_{k}}\alpha\left[  k\right]  =\alpha$ for all $k.$
The \emph{canonical projection} \[\pi:P^{\prime}(\mathbf{a})\rightarrow
P(\mathbf{a})\]
is given by discarding empty blocks when $\mathbf{a}\neq\varnothing$ and by $\pi(0|\cdots|0):=\varnothing$ otherwise.

Define the \emph{dimension} and \emph{vacuosity }of $\alpha$ by%
\begin{equation}
	\left\vert \alpha\right\vert :=\left\vert \pi\left(  \alpha\right)
	\right\vert \text{ \ and \ }v\left(  \alpha\right)  :=l\left(  \alpha\right)
	-l\left(  \pi\left(  \alpha\right)  \right)  , \label{dim-vac}%
\end{equation}
respectively; then $v\left(  \alpha\right)  $ counts the number of empty
blocks in $\alpha.$

Let $\mathbf{a}$ and $\mathbf{b}$ be ordered sets. When $\mathbf{a}%
\subseteq\mathbf{b},$ the precise way in which $\mathbf{a}$ embeds in
$\mathbf{b}$ is encoded by a special element of $P^{\prime}\left(
\mathbf{a}\right)  $, called the \emph{embedding partition of }$\mathbf{a}%
$\emph{\ in} $\mathbf{b}$. When $\mathbf{a}\neq\varnothing,$ this particular
partition is constructed block-by-block as the elements of $\mathbf{a}$ are
analyzed sequentially.
\begin{definition}\label{embed}
	Let $\mathbf{a}\subseteq\mathbf{b}.$ The\textbf{\ embedding partition
		of}\emph{\ }$\mathbf{a}$ \textbf{in} $\mathbf{b,}$ denoted by $EP_{\mathbf{b}
	}\mathbf{a,}$ is the following element of $P^{\prime}\left(  \mathbf{a}
	\right)  $:\smallskip
	
	\noindent If $\mathbf{a}=\varnothing,$ define $EP_{\mathbf{b}}\mathbf{a:}
	=0|\cdots|0\in P_{\#\mathbf{b}+1}^{\prime}\left(  \mathbf{a}\right)
	.$\smallskip
	
	\noindent If $\mathbf{a}\neq \varnothing,$ write $\mathbf{a}=\left\{  a_{1}<\cdots<a_{m}\right\} $ and define $a_{0}:=0$ and $a_{m+1}:=\infty$. \smallskip
	
	\noindent For $i=0,1,\ldots,m:$	
	define $j_{i}:=\#\{b\in\mathbf{b}:a_{i}	<b<a_{i+1}\}.$\smallskip
	
	\noindent Then $EP_{\mathbf{b}}\mathbf{a:}=\mathbf{b}_{1}|\cdots|\mathbf{b}_{q},$ where
	
	\begin{enumerate}

		\item $\mathbf{b}_{k}=\varnothing$ for all $k\leq j_{0}$ and $k>q-j_{m},$
		
		\item $a_{1}\in\mathbf{b}_{j_{0}+1},$ and
		
		\item for $i=1,2,\ldots,m-1:$ if $a_{i}\in\mathbf{b}_{k},$
		
		\noindent then $a_{i+1}\in\mathbf{b}_{k+j_{i}}$
		and $\mathbf{b}_{k+1}=\cdots=\mathbf{b}_{k+j_{i}-1}=\varnothing$ when
		$j_{i}>1.$
	\end{enumerate}
\end{definition}

\noindent Heuristically speaking, the empty blocks at the extremes are place
holders for the extreme elements of $\mathbf{b}\smallsetminus\mathbf{a}.$ The
elements of a non-empty block $\mathbf{b}_{k}$ are consecutive in $\mathbf{b}%
$, and $\mathbf{b}_{k}$ is maximal in the sense that $\max\mathbf{b}_{k}$ and
$\min\mathbf{b}_{k+1}$ are \emph{not} consecutive in $\mathbf{b}$ whenever
$k<q$ and $\mathbf{b}_{k+1}\neq\varnothing.$ For example, $EP_{\mathbf{b}%
}\mathbf{b}=\mathbf{b}$ and $EP_{\mathfrak{9}}\left\{  2,3,7,9\right\}
=0|23|0|0|7|9.$ Note that
\begin{equation}
	\#\left(  \mathbf{b}\smallsetminus\mathbf{a}\right)  =\sum\limits_{i=0}
	^{m}j_{i}=q-1. \label{relative-complement}%
\end{equation}
The algorithmic formulation in Definition \ref{embed} is due to D. Freeman and the second author in \cite{Freeman}.

\subsection{Partitions and Planar Leveled Trees}

Denote the sets of down-rooted and up-rooted Planar Leveled Trees (PLTs,
aka \textit{planar rooted trees with levels}) with $n$ leaves by $\vee(n)
$ and $\wedge(n)$, respectively, and consider a PLT $T\in\vee(n)$ with $k$
levels. Set $T=T_{1}$ and let $\left( \curlyvee^{n_{11}},\ldots
,\curlyvee^{n_{1s_{1}}}\right) $ be the sequence of top level corollas in $%
T_{1}.$ Then $\mathbf{n}_{1}=(n_{11},\ldots,n_{1s_{1}})$ is the \emph{first
	leaf sequence of} $T.$ Form the subtree $T_{2}$ by removing the top level
of $T_{1}$ and let $\left(
\curlyvee^{n_{21}},\ldots,\curlyvee^{n_{2s_{2}}}\right) $ be the sequence of
top level corollas in $T_{2}.$ Then $\mathbf{n}_{2}=\left(
n_{21},\ldots,n_{2s_{2}}\right) $ is the \emph{second leaf sequence of} $%
T.$ Continue in this manner until the process terminates and the $k^{th}$
\emph{leaf sequence} $\mathbf{n}_{k}=\left( n_{k1}\right) $ is obtained ($%
s_{k}=1$). Then $\left( \mathbf{n}_{1},\ldots,\mathbf{n}_{k}\right) $ is the
\emph{leaf decomposition} \emph{of }$T.$

To recover the down-rooted tree $T$ from its leaf decomposition $\left(
\mathbf{n}_{1},\ldots ,\mathbf{n}_{k}\right) ,$ set $T^{\mathbf{n}%
	_{k}}=\curlyvee ^{n_{k1}}$ and construct the subtree $T^{\mathbf{n}_{k-1},%
	\mathbf{n}_{k}}$ by attaching corolla $\curlyvee ^{n_{k-1,j}}$ to leaf $j$
of $T^{\left( \mathbf{n}_{k}\right) }$. Construct the subtree $T^{\mathbf{n}%
	_{k-2},\mathbf{n}_{k-1},\mathbf{n}_{k}}$ by attaching corolla $\curlyvee
^{n_{k-2,j}}$ to leaf $j$ of $T^{\mathbf{n}_{k-1},\mathbf{n}_{k}}.$ Continue
in this manner until the process terminates and $T=T^{\mathbf{n}_{1},\ldots ,%
	\mathbf{n}_{k}}$ is obtained. Dually, a tree $T\in \wedge \left( n\right) $
has the leaf decomposition $\left( \mathbf{n}_{k},\ldots ,\mathbf{n}%
_{1}\right) $ and $T=T_{\mathbf{n}_{k},\ldots ,\mathbf{n}_{1}},$ where $%
\mathbf{n}_{1}$ is the bottom level leaf sequence of $T$.

Let $\mathbf{a}$ be a non-empty ordered set. Given $B_{1}|\cdots |B_{k}\in P\left(
\mathbf{a}\right) ,$ identify $B_{1}|\cdots |B_{k}$ with the down-rooted PLT $T^{%
	\mathbf{n}_{1},\ldots ,\mathbf{n}_{k}}$, where $\left( \mathbf{n}_{1},\ldots
,\mathbf{n}_{k}\right) $ is constructed as follows: Set $A_{1}=\mathbf{a}$ and let $%
\beta _{11}|\cdots |\beta _{1s_{1}}:=EP_{A_{1}}B_{1};$ then $\mathbf{n}%
_{1}=(\#\beta _{11}+1,\ldots ,\#\beta _{1s_{1}}+1).$ Set $%
A_{2}=A_{1}\smallsetminus B_{1}$ and consider $B_{2}|\cdots |B_{k}\in
P\left( A_{2}\right) .$ Let $\beta _{21}|\cdots |\beta
_{2s_{2}}:=EP_{A_{2}}B_{2};$ then $\mathbf{n}_{2}=(\#\beta _{21}+1,\ldots ,$
$\#\beta _{2s_{2}}+1).$ Continue in this manner until the process terminates
and $\mathbf{n}_{k}$ is obtained. Denote the componentwise correspondence
between partitions and leaf sequences established above by $\epsilon (B_{i})=%
\mathbf{n}_{i}.$ Then the map
\begin{equation}
	\overset{\vee }{\epsilon }:P\left( \mathbf{a}\right) \rightarrow \vee \left(
	\#\mathbf{a}+1\right)   \label{down}
\end{equation}%
given by $\overset{\vee }{\epsilon }\left( B_{1}|\cdots |B_{k}\right)
=T^{\epsilon (B_{1}),\ldots ,\epsilon (B_{k})}$ is a bijection, and $\overset{\vee}{\epsilon}\hspace*{0.01in}^{-1}$ is given by the
following standard construction when $\mathbf{a}=\mathfrak{n}:$ Given $T\in\vee\left(
n+1\right)  $, number the leaves of $T$ from left-to-right and assign the
label $\ell$ to the vertex of $T$ at which the branch containing leaf $\ell$
meets the branch containing leaf $\ell+1$.
Let $B_{i}$ denote the set of vertex labels in level $i;$ then
$\overset{\vee}{\epsilon}\left(  B_{1}|\cdots|B_{k}\right)  =T.$ For example,
$\overset{\vee}{\epsilon}(5|13|24)=T^{(11112),(221),(3)}\in\vee\left(
6\right)  $ is pictured in Figure 1.

\vspace{0.4in}

\unitlength 1.5mm 
\linethickness{0.8pt}
\ifx\plotpoint\undefined\newsavebox{\plotpoint}\fi
\begin{picture}(77.572,16.187)(0,0)
	\put(37.0,18.443){\makebox(0,0)[cc]{$1\ \ \ \ 2\ \ \ \ 3\ \ \ 4\ \ \ 5\ \ \ 6$}}
	\put(43.8,12.8){\makebox(0,0)[cc]{$5$}}
	\put(31.4,11.0){\makebox(0,0)[cc]{$1$}}
	\put(35.2,11.0){\makebox(0,0)[cc]{$3$}}
	\put(36.2,7.0){\makebox(0,0)[cc]{$2\ \ 4$}}
	\multiput(28.019,15.977)(.070074384,-.067379215){117}{\line(1,0){.070074384}}
	\multiput(36.217,8.094)(.078612859,.067130082){119}{\line(1,0){.078612859}}
	\put(36.3,16.0){\line(0,-1){15.361}}
	\multiput(36.217,11.3)(.078612859,.067130082){43}{\line(1,0){.078612859}}
	\put(32.5,16.0){\line(0,-1){4.361}}
	\put(39.5,16.0){\line(0,-1){1.9}}
	\put(42.5,16.0){\line(0,-1){2.6}}
\end{picture}

\begin{center}
	Figure 1. The down-rooted PLT corresponding to $5|13|24$.
\end{center}
\bigskip

Dually, let $\tau :P\left( \mathbf{a}\right) \rightarrow P\left( \mathbf{a}\right) $ denote
the \emph{reversing map}
\begin{equation}
	\tau \left( B_{1}|\cdots |B_{k}\right) =B_{k}|\cdots |B_{1},  \label{tau}
\end{equation}%
let $\sigma $ be a reflection of the plane in some horizontal axis, and
define
\begin{equation}
	\overset{\wedge }{\epsilon }:=\sigma \overset{\vee }{\epsilon }\tau :P\left(
	\mathbf{a}\right) \rightarrow \wedge \left( \#\mathbf{a}+1\right) .  \label{up}
\end{equation}
Let $\left( \mathbf{n}_{1},\ldots ,\mathbf{n}_{k}\right) $ and $\left(
\mathbf{n}_{k}^{\prime },\ldots ,\mathbf{n}_{1}^{\prime }\right) $ be the
leaf decompositions of $\overset{\vee }{\epsilon }
\left( B_{1}|\cdots |B_{k}\right) $ and $\overset{\wedge }{\epsilon }\left(
B_{1}|\cdots |B_{k}\right)$, respectively. Then the (typically non-isomorphic)
PLTs $T^{\mathbf{n}_{1},\ldots ,\mathbf{n}_{k}}$ and $T_{\mathbf{n}%
	_{k}^{\prime },\ldots ,\mathbf{n}_{1}^{\prime }}\ $index the same cell of $%
P_{n}\ $(cf. \cite{Loday}, \cite{SU2}).

\subsection{The Combinatorial Join of Permutahedra}

In this subsection we present a slight reformulation of the combinatorial join constructed in \cite{SU4}.

The $n$ \emph{right-shift }of an ordered set $\mathbf{a}$
is the ordered set
\[
\mathbf{a}+n:=\left\{
\begin{array}
	[c]{cl}%
	\varnothing, & \mathbf{a}=\varnothing\\
	\left\{  a+n:a\in\mathbf{a}\right\}  , & \mathbf{a}\neq\varnothing.
\end{array}
\right.
\]
The \emph{partitioned union} of $A_{1}|\cdots|A_{n}\in P_{n}^{\prime}\left(
\mathbf{a}\right)  $ \emph{with} $B_{1}|\cdots|B_{n}\in P_{n}^{\prime}\left(
\mathbf{b}\right)  $ is the partition $A_{1}|\cdots|A_{n}\Cup B_{1}%
|\cdots|B_{n}:=$%
\begin{equation}
	\left\{
	\begin{array}
		[c]{cl}%
		A_{1}\cup\left(  B_{1}+\max\mathbf{a}\right)  |\cdots|A_{n}\cup\left(
		B_{n}+\max\mathbf{a}\right)  , & \min\mathbf{b}\leq\max\mathbf{a}\\
		A_{1}\cup B_{1}|\cdots|A_{n}\cup B_{n}, & \text{otherwise.}%
	\end{array}
	\right.  \label{partition-union}%
\end{equation}
When $n=1$, formula (\ref{partition-union}) defines the \emph{ordered set
	union} \emph{of }$\mathbf{a}$\emph{\ with} $\mathbf{b}.$ Note that ordered set
union is associative but non-commutative in general; however, $\mathfrak{m}%
\Cup\mathfrak{n}=\mathfrak{n}\Cup\mathfrak{m}.$

Define $P_{n}^{\prime}(\mathbf{a})\Cup P_{n}^{\prime}(\mathbf{b}):=\left\{
\alpha\Cup\beta:\left(  \alpha,\beta\right)  \in P_{n}^{\prime}(\mathbf{a}%
)\times P_{n}^{\prime}(\mathbf{b})\right\}  ;$ then clearly, $P_{n}^{\prime
}(\mathbf{a})\Cup P_{n}^{\prime}(\mathbf{b})\subseteq P_{n}^{\prime
}(\mathbf{a}\Cup\mathbf{b}).$ Conversely, given $C_{1}|\cdots|C_{n}\in
P_{n}^{\prime}\left(  \mathbf{a}\Cup\mathbf{b}\right)  ,$ for each
$i\in \mathfrak{n}$ let
\[
A_{i}=C_{i}\cap\mathbf{a}\text{ and }B_{i}=\left\{
\begin{array}
	[c]{cc}%
	\left(  C_{i}\smallsetminus\mathbf{a}\right)  -\max\mathbf{a}, & \text{if
	}\min\mathbf{b}\leq\max\mathbf{a}\\
	C_{i}\smallsetminus\mathbf{a}, & \text{otherwise.}%
\end{array}
\right.
\]
Then $C_{1}|\cdots|C_{n}=A_{1}|\cdots|A_{n}\Cup B_{1}|\cdots|B_{n}\in
P_{n}^{\prime}\left(  \mathbf{a}\right)  \Cup P_{n}^{\prime}\left(
\mathbf{b}\right)  \ $so that $P_{n}^{\prime}(\mathbf{a})\Cup P_{n}^{\prime
}(\mathbf{b})=P_{n}^{\prime}(\mathbf{a}\Cup\mathbf{b}).$
\begin{definition}\label{comb-join}
	Let $\mathbf{a}$ and $\mathbf{b}$ be ordered sets. The \textbf{combinatorial
		join of} $P_{m}(\mathbf{a})$ \textbf{with} $P_{n}(\mathbf{b})$ is the set
	\[
	P_{m}(\mathbf{a})\ast_{c}P_{n}(\mathbf{b}):=\left\{  \alpha\Cup\beta\in
	P_{r}(\mathbf{a}\Cup\mathbf{b}):\left(  \alpha,\beta\right)  \in P_{r}
	^{\prime}(\mathbf{a})\times P_{r}^{\prime}(\mathbf{b}), 1\leq r\leq
	m+n\right\}.
	\]
	Define
	\[
	P(\mathbf{a})\ast_{c}P(\mathbf{b}):=\bigcup_{(1,1)\leq (m,n)\leq (\#\mathbf{a},\#\mathbf{b})}P_{m}(\mathbf{a})\ast_{c}P_{n}(\mathbf{b}).
	\]
\end{definition}

\noindent Since $P_{r}(\mathbf{a}\Cup\mathbf{b})\subset P_{r}^{\prime
}(\mathbf{a}\Cup\mathbf{b})=P_{r}^{\prime}(\mathbf{a})\Cup P_{r}^{\prime
}(\mathbf{b}),$ an element $c\in P(\mathbf{a})\ast_{c}P(\mathbf{b})$
decomposes uniquely as $c=\alpha\Cup\beta$ for some $\left(  \alpha
,\beta\right)  \in P_{r}^{\prime}(\mathbf{a})\times P_{r}^{\prime}%
(\mathbf{b}).$ Hence
\begin{equation}
	P(\mathbf{a})\ast_{c}P(\mathbf{b})=P\left(  \mathbf{a}\Cup\mathbf{b}\right)  .
	\label{join}%
\end{equation}
Thus in view of (\ref{join}), the \emph{combinatorial join of permutahedra} $P_{m}\ast_{c}%
P_{n}=P_{m+n}.$

Let $c\in P\left(  \mathbf{a}\Cup\mathbf{b}\right)  $ and let $e_{c}$ be the
cell of $P_{m+n}$ indexed by $c.$ Then $c$ decomposes uniquely as
$c=\alpha\Cup\beta$ for some $\left(  \alpha,\beta\right)  \in P_{r}^{\prime
}(\mathbf{a})\times P_{r}^{\prime}(\mathbf{b}),$ and the projections
$\pi\left(  \alpha\right)  \in P\left(  \mathbf{a}\right)  $ and $\pi\left(
\beta\right)  \in P\left(  \mathbf{b}\right)  $ index cells $e_{\alpha}\subset
P_{m}$ and $e_{\beta}\subset P_{n}.$ Thus, there is a decomposition map
\[
W:\left\{  \text{cells of }P_{m+n}\right\}  \rightarrow\left\{  \text{cells of
}P_{m}\right\}  \times\left\{  \text{cells of }P_{n}\right\}
\]
given by $W\left(  e_{c}\right)  =e_{\alpha}\times e_{\beta}.$ Furthermore, if
$e\times e^{\prime}\subset P_{m}\times P_{n}$ is indexed by $A_{1}%
|\cdots|A_{s}\times B_{1}|\cdots|B_{t}\in P\left(  \mathbf{a}\right)  \times
P\left(  \mathbf{b}\right)  ,$ set
\[
\alpha\times\beta=A_{1}|\cdots|A_{s}|0|\cdots|0\ \times\ 0|\cdots|0|B_{1}%
|\cdots|B_{t}\in P_{s+t}^{\prime}\left(  \mathbf{a}\right)  \times
P_{s+t}^{\prime}\left(  \mathbf{b}\right)  ;
\]
then $W\left(  e_{\alpha\Cup\beta}\right)  =e_{\alpha}\times e_{\beta}=e\times
e^{\prime}$ and $W$ is a surjection. For example, when $m=n=1,$ $W\left(
e_{12}\right)  =e_{1}\times e_{1},$ $W\left(  e_{1|2}\right)  =e_{1|0}\times
e_{0|1},$ and $W\left(  e_{2|1}\right)  =e_{0|1}\times e_{1|0}.$ Of course,
$W$ is not an injection since $e_{1}\times e_{1}=e_{1|0}\times e_{0|1}%
=e_{0|1}\times e_{1|0}.$ Note that $\left\vert e_{\alpha\Cup\beta}\right\vert
\geq\left\vert e_{\alpha}\right\vert +\left\vert e_{\beta}\right\vert ,$ where
equality holds when $\alpha\Cup\beta$ is a shuffle permutation of $\left(
\mathfrak{m};\mathfrak{n}+m\right)  .$

\begin{example}
	\label{comb-join-ex}Let us compute the combinatorial join $P_{1}\ast_{c}
	P_{2}=P_{3}:$\smallskip
	\[
	\begin{tabular}
		[c]{|c|c|c|c|}\hline
		$r$ & $\alpha\in P_{r}^{\prime}\left(  \mathfrak{1}\right)  $ & $\beta\in
		P_{r}^{\prime}\left(  \mathfrak{2}\right)  $ & $\alpha\Cup\beta\in
		P_{r}(\mathfrak{1}\Cup\mathfrak{2})$\\\hline\hline
		$1$ & $1$ & $12$ & $1\Cup12=123$\\\hline
		$2$ & $%
		\begin{array}
			[c]{c}%
			1|0\\
			0|1
		\end{array}
		$ & $%
		\begin{array}
			[c]{r}%
			1|2\\
			2|1\\
			0|12\\
			12|0
		\end{array}
		$ & $%
		\begin{array}
			[c]{r}%
			1|0\Cup1|2=12|3\\
			1|0\Cup2|1=13|2\\
			1|0\Cup0|12=1|23\\
			0|1\Cup1|2=2|13\\
			0|1\Cup2|1=3|12\\
			0|1\Cup12|0=23|1
		\end{array}
		$\\\hline
		$3$ & $%
		\begin{array}
			[c]{c}%
			1|0|0\\
			0|1|0\\
			0|0|1
		\end{array}
		$ & $%
		\begin{array}
			[c]{r}%
			0|1|2\\
			0|2|1\\
			1|0|2\\
			2|0|1\\
			1|2|0\\
			2|1|0\\
			12|0|0\\
			0|12|0\\
			0|0|12
		\end{array}
		$ & $%
		\begin{array}
			[c]{c}%
			1|0|0\Cup0|1|2=1|2|3\\
			1|0|0\Cup0|2|1=1|3|2\\
			0|1|0\Cup1|0|2=2|1|3\\
			0|1|0\Cup2|0|1=3|1|2\\
			0|0|1\Cup1|2|0=2|3|1\\
			0|0|1\Cup2|1|0=3|2|1
		\end{array}
		$\\\hline
	\end{tabular}
	\]
\end{example}

\begin{remark}
	M. Markl's description of the permutahedron $P_{m+n}$ in \cite{Markl3} is a
	translation of the description of the combinatorial join decomposition
	$P_{m+n}=P_{m}\ast_{c}P_{n}$ defined in terms of partitions as above into a description defined in
	terms of PLTs.
\end{remark}

\subsection{Diagonals on Permutahedra and Associahedra}

Let $X$ be an $n$-dimen- sional polytope that admits a cellular projection
$p:P_{n+1}\rightarrow X$ and a realization as a subdivision of the $n$-cube
$I^{n}$. In this subsection we review the S-U diagonal $\Delta_P$ (see \cite{SU2})
and discuss the diagonal $\Delta_X$ induced by $p$. The diagonal
$\Delta_{K_{n+2}}$ is obtained by setting $X=K_{n+2}$.

The permutahedron $P_{n}$ can be realized as a subdivision of $I^{n-1}$ in the following way:
Identify the faces of $P_{n}$ with the partitions of $\mathfrak{n}$; then $P_{1}$ is identified
with the partition $1.$ If $P_{n-1}$ has been constructed and $a=A_{1}|\cdots|A_{p}$ is a face
of $P_{n-1}$, define $a_{0}:=0$, $a_{j}:=\#\left(A_{p-j+1}\cup\cdots\cup A_{p}\right)$ for
$0<j<p$, $a_{p}:=\infty$, and $\frac{1}{2^{\infty}}:=0$. Define
$I\left(a\right):=I_{1}\cup I_{2}\cup\cdots\cup I_{p},$ where $I_{j}:=[1-{2^{-a_{j-1}} },1-{2^{-a_{j}}}];$
then $P_{n}=\bigcup\nolimits_{a\in P_{n-1}}a\times I\left(  a\right)  ,$ where faces of
$a\times I(a)$ are identified with partitions of $\mathfrak{n}$ as follows (see Figures 1 and 2):
\[
\begin{tabular}
	[c]{c|rr}%
	$_{\mathstrut}^{\mathstrut}$\textbf{Face of }$a\times I\left(  a\right)  $ &
	\textbf{Partition of }$\mathfrak{n}$\ \ \ \ \ \ \ \ \ \ \  & \\\hline
	$_{\mathstrut}^{\mathstrut}a\times0$ & \multicolumn{1}{|c}{$A_{1}|\cdots
		|A_{p}|n$} & \multicolumn{1}{l}{}\\
	$_{\mathstrut}^{\mathstrut}a\times(I_{j}\cap I_{j+1})$ &
	\multicolumn{1}{|c}{$A_{1}|\cdots|A_{p-j}|n|A_{p-j+1}|\cdots|A_{p},$} &
	\multicolumn{1}{l}{$1\leq j\leq p-1$}\\
	$_{\mathstrut}^{\mathstrut}a\times1$ & \multicolumn{1}{|c}{$n|A_{1}%
		|\cdots|A_{p},$} & \multicolumn{1}{l}{}\\
	$_{\mathstrut}^{\mathstrut}a\times I_{j}$ & \multicolumn{1}{|c}{$A_{1}%
		|\cdots|A_{p-j+1}\cup n|\cdots|A_{p},$} & \multicolumn{1}{l}{$1\leq j\leq p.$}%
\end{tabular}
\]

\noindent A vertex common to $P_{n}$ and $I^{n-1}$ is a \emph{cubical vertex}.
Thus $a$ is a cubical\ vertex of $P_{n}$ if and only if $a|n$ and $n|a$ are
cubical vertices of $P_{n+1}.$

Let $e$ be a cell of $P_n$ and denote the set of vertices of $e$ by $\mathcal{V}_{e}$.
The subset $\mathcal{V}_{e}\subseteq S_{n}$ determines the components of
$\Delta_{P}(e)$ in the following way: Let $\sigma=x_{1}|\cdots|x_{n}%
\in\mathcal{V}_{e}.$ Reading $\sigma$ from left-to-right and from
right-to-left, construct the partitions $\overleftarrow{\sigma}_{1}%
|\cdots|\overleftarrow{\sigma}_{p}$ and $\overrightarrow{\sigma}_{q}%
|\cdots|\overrightarrow{\sigma}_{1}$ of maximal decreasing subsets and form
the \emph{Strong Complementary Pair }(SCP)
\[
a_{\sigma}\times b_{\sigma}:\overleftarrow{\sigma}_{1}|\cdots
|\overleftarrow{\sigma}_{p}\times\overrightarrow{\sigma}_{q}|\cdots
|\overrightarrow{\sigma}_{1}\in P(\mathfrak{n})\times P(\mathfrak{n}).
\]
Then $\sigma=\max a_{\sigma}=\min b_{\sigma},$ $\min\overleftarrow{\sigma}
_{j}<\max\overleftarrow{\sigma}_{j+1}$ for all $j<p$, and $\min
\overrightarrow{\sigma}_{i}<\max\overrightarrow{\sigma}_{i+1}$ for all $i<q.$
Let $\left\{  c_{ij}\right\}  :=\overrightarrow{\sigma}_{i}\cap
\overleftarrow{\sigma}_{j};$ then $a_{\sigma}\times b_{\sigma}$ is represented
by the $q\times p$ \emph{step matrix} $C_{\sigma}=\left(  c_{ij}\right)  $
\emph{over }$\mathfrak{n\cup}\left\{  0\right\}  ,$ whose positive entries in
each row and column are contiguous and increasing.\bigskip

\begin{center}
	\setlength{\unitlength}{0.0004in}\begin{picture}
		(2975,2685)(3126,-2038) \thicklines \put(3601,239){\line(
			1,0){1800}} \put(5401,239){\line( 0,-1){1800}}
		\put(5401,-1561){\line(-1, 0){1800}} \put(3601,-1561){\line(
			0,1){1800}} \put(3601,239){\makebox(0,0){$\bullet$}}
		\put(3601,-661){\makebox(0,0){$\bullet$}}
		\put(3601,-1561){\makebox(0,0){$\bullet$}}
		\put(5401,239){\makebox(0,0){$\bullet$}}
		\put(5401,-661){\makebox(0,0){$\bullet$}}
		\put(5401,-1561){\makebox(0,0){$\bullet$}}
		\put(4500,-680){\makebox(0,0){$123$}}
		\put(2980,-1861){\makebox(0,0){$1|2|3$}}
		\put(2980,-699){\makebox(0,0){$1|3|2$}}
		\put(2980,464){\makebox(0,0){$3|1|2$}}
		\put(6000,-1861){\makebox(0,0){$2|1|3$}}
		\put(6000,-699){\makebox(0,0){$2|3|1$}}
		\put(6000,464){\makebox(0,0){$3|2|1$}}
		\put(3040,-1260){\makebox(0,0){$1|23$}}
		\put(4550,530){\makebox(0,0){$3|12$}}
		\put(3040,-111){\makebox(0,0){$13|2$}}
		\put(5960,-111){\makebox(0,0){$23|1$}}
		\put(5960,-1260){\makebox(0,0){$2|13$}}
		\put(4550,-1890){\makebox(0,0){$12|3$}}
	\end{picture}\vspace{0.1in}
	
	Figure 1: $P_{3}$ as a subdivision of $P_{2}\times I$.\vspace{0.2in}
	
	\setlength{\unitlength}{0.007in} \begin{picture}
		(500,-500) \thicklines
		\put(0,-120){\line( 0,-1){120}} \put(120,0){\line( 0,-1){360}}
		\put(180,-120){\line( 0,-1){120}} \put(240,0){\line( 0,-1){360}}
		\put(360,-120){\line( 0,-1){120}} \put(420,-120){\line(
			0,-1){120}} \put(480,-120){\line( 0,-1){120}}
		\put(120,0){\line( 1,0){120}} \put(0,-120){\line( 1,0){480}} \put
		(120,-150){\line( 1,0){60}} \put(180,-180){\line( 1,0){60}} \put
		(240,-150){\line( 1,0){120}} \put(420,-150){\line( 1,0){60}} \put
		(0,-180){\line( 1,0){120}} \put(360,-180){\line( 1,0){60}}
		\put(0,-240){\line( 1,0){480}} \put(120,-360){\line( 1,0){120}}
		\put(120,0){\makebox(0,0){$\bullet$}}
		\put(180,0){\makebox(0,0){$\bullet$}}
		\put(240,0){\makebox(0,0){$\bullet$}}
		\put(0,-120){\makebox(0,0){$\bullet$}}
		\put(120,-120){\makebox(0,0){$\bullet$}} \put(180,-120){\makebox
			(0,0){$\bullet$}} \put(240,-120){\makebox(0,0){$\bullet$}} \put
		(360,-120){\makebox(0,0){$\bullet$}}
		\put(420,-120){\makebox(0,0){$\bullet$}}
		\put(480,-120){\makebox(0,0){$\bullet$}} \put(120,-150){\makebox
			(0,0){$\bullet$}} \put(180,-150){\makebox(0,0){$\bullet$}} \put
		(240,-150){\makebox(0,0){$\bullet$}}
		\put(360,-150){\makebox(0,0){$\bullet$}}
		\put(0,-180){\makebox(0,0){$\bullet$}}
		\put(182.25,-180){\makebox(0,0){$\bullet$ }}
		\put(240,-180){\makebox(0,0){$\bullet$}} \put(420,-180){\makebox
			(0,0){$\bullet$}} \put(480,-180){\makebox(0,0){$\bullet$}} \put
		(0,-150){\makebox(0,0){$\bullet$}}
		\put(120,-240){\makebox(0,0){$\bullet$}}
		\put(360,-240){\makebox(0,0){$\bullet$}} \put(420,-240){\makebox
			(0,0){$\bullet$}} \put(480,-150){\makebox(0,0){$\bullet$}} \put
		(0,-240){\makebox(0,0){$\bullet$}}
		\put(120,-180){\makebox(0,0){$\bullet$}}
		\put(180,-240){\makebox(0,0){$\bullet$}} \put(240,-240){\makebox
			(0,0){$\bullet$}} \put(360,-180){\makebox(0,0){$\bullet$}} \put
		(420,-150){\makebox(0,0){$\bullet$}}
		\put(480,-240){\makebox(0,0){$\bullet$}}
		\put(120,-360){\makebox(0,0){$\bullet$}} \put(180,-360){\makebox
			(0,0){$\bullet$}} \put(240,-360){\makebox(0,0){$\bullet$}}
		\put(55,-152){\makebox(0,0){$124|3$}}
		\put(55,-210){\makebox(0,0){$12|34$}}
		\put(150,-134){\makebox(0,0){$24|13$}}
		\put(150,-195){\makebox(0,0){$2|134$}}
		\put(177,-55){\makebox(0,0){$4|123$}}
		\put(177,-295){\makebox(0,0){$123|4$}}
		\put(210,-152){\makebox(0,0){$234|1$}}
		\put(210,-210){\makebox(0,0){$23|14$}}
		\put(300,-134){\makebox(0,0){$34|12$}}
		\put(300,-195){\makebox(0,0){$3|124$}}
		\put(390,-152){\makebox(0,0){$134|2$}}
		\put(390,-200){\makebox(0,0){$13|24$}}
		\put(450,-134){\makebox(0,0){$14|23$}}
		\put(450,-194){\makebox(0,0){$1|234$}}
	\end{picture}\vspace*{2.6in}
	
	Figure 2: The facets of $P_{4}$ as a subdivision of $I^{3}$.\bigskip
\end{center}

\begin{example}
	\label{scp}Consider the vertex $\sigma=2|1|3|5|4$ of $P_{5};$ then
	$\overleftarrow{\sigma}_{1}|\overleftarrow{\sigma}_{2}|\overleftarrow{\sigma
	}_{3}=21|3|54$ and $\overrightarrow{\sigma}_{3}|\overrightarrow{\sigma}%
	_{2}|\overrightarrow{\sigma}_{1}=2|135|4$ so that $a_{\sigma}\times b_{\sigma
	}=21|3|54\times2|135|4\ $and
	\[
	C_{\sigma}=\left(
	\begin{tabular}
		[c]{lll}%
		$0$ & $0$ & $4$\\
		$1$ & $3$ & $5$\\
		$2$ & $0$ & $0$%
	\end{tabular}
	\ \right)  \ .
	\]
	
\end{example}

Let $a=A_{1}|\cdots|A_{p}\in P(\mathfrak{n})$. For $1\leq j<p,$ choose a subset
$M_{j}\subseteq(A_{j}\smallsetminus\{\min A_{j}\})$ such that $\min M_{j}>\max
A_{j+1}$ when $M_{j}\neq\varnothing.$ Define the \emph{right-shift }$M_{j}$
\emph{action}
\[
R_{M_{j}}(a):=\left\{
\begin{array}
	[c]{cl}%
	A_{1}|\cdots|A_{j}\smallsetminus M_{j}|A_{j+1}\cup M_{j}|\cdots|A_{k}, &
	M_{j}\neq\varnothing\\
	a, & M_{j}=\varnothing.
\end{array}
\right.
\]
Choose $M_{1}\subset A_{1}$ so that $R_{M_{1}}(a)$ is defined, choose
$M_{2}\subset A_{2}\cup M_{1}$ so that $R_{M_{2}}R_{M_{1}}(a)$ is defined,
choose $M_{3}\subset A_{3}\cup M_{2}$ so that $R_{M_{3}}R_{M_{2}}R_{M_{1}}(a)$
is defined, and so on. Having chosen \textbf{$M$}$:=(M_{1},M_{2}%
,\ldots,M_{p-1})$, denote the composition ${R}_{M_{p-1}}\cdots R_{M_{2}%
}R_{M_{1}}(a)$ by $R_{\mathbf{M}}\left(  a\right)  $.

Dually, let $b=B_{q}|\cdots|B_{1}\in P(\mathfrak{n}).$ For $1\leq i<q,$ choose a subset
$N_{i}\subseteq(B_{i}\smallsetminus\linebreak\{\min B_{i}\})$ such that $\min N_{i}>\max
B_{i+1}$ when $N_{i}\neq\varnothing.$ Define the \emph{left-shift }$N_{i}%
$\emph{ action}
\[
L_{N_{i}}(b):=\left\{
\begin{array}
	[c]{cl}%
	B_{q}|\cdots|B_{i+1}\cup N_{i}|B_{i}\smallsetminus N_{i}|\cdots|B_{1}, &
	N_{i}\neq\varnothing\\
	b, & N_{i}=\varnothing.
\end{array}
\right.
\]
Choose $N_{1}\subset B_{1}$ so that $L_{N_{1}}(b)$ is defined, choose
$N_{2}\subset B_{2}\cup N_{1}$ so that $L_{N_{2}}L_{N_{1}}(b)$ is defined,
choose $N_{3}\subset B_{3}\cup N_{2}$ so that $L_{N_{3}}L_{N_{2}}L_{N_{1}}(b)$
is defined, and so on. Having chosen \textbf{$N$}$:=(N_{1},N_{2}%
,\ldots,N_{q-1})$, denote the composition $L_{N_{q-1}}\cdots L_{N_{2}}%
L_{N_{1}}(b)$ by $L_{\mathbf{N}}\left(  b\right)  $.

Now choose $\mathbf{M}$ and $\mathbf{N}$ so that ${R}_{\mathbf{M}}(a_{\sigma
})$ and $L_{\mathbf{N}}\left(  b_{\sigma}\right)  $ are defined, and form the
\emph{Complementary Pair} (CP) ${R}_{\mathbf{M}}(a_{\sigma})\times
L_{\mathbf{N}}\left(  b_{\sigma}\right)  $. Define
\[
A_{\sigma}\times B_{\sigma}:=\bigcup\limits_{\mathbf{M,N}}\left\{
{R}_{\mathbf{M}}(a_{\sigma})\times L_{\mathbf{N}}\left(  b_{\sigma}\right)
\right\}  ;
\]
then
\begin{equation}
	\Delta_{P}(e)=\bigcup_{\sigma\in\mathcal{V}_{e}}A_{\sigma}\times B_{\sigma}.
	\label{Delta_P}%
\end{equation}
Note that $R_{M_{j}}(a_{\sigma})$ and $L_{N_{i}}\left(  b_{\sigma}\right)  $
are realized by a right-shift $M_{j}$ action and a down-shift $N_{i}$ action
on $C_{\sigma}.$ Thus \emph{the components of }$\Delta_{P}(e^{n-1})$\emph{ are
	generated by all possible right-shift actions together with all possible
	down-shift actions on all possible step matrices over} $\mathfrak{n}%
\cup\left\{  0\right\}  $.

\begin{example}
	On the top dimensional cell $e^{2}$ of $P_{3}$, $\Delta_{P}(e^{2})$ is the
	union of
	\[
	\hspace{-0.1in}%
	\begin{array}
		[c]{ll}%
		A_{1|2|3}\times B_{1|2|3}=\left\{  1|2|3\times123\right\}  , & A_{1|3|2}\times
		B_{1|3|2}=\left\{  1|32\times13|2\right\}  ,\\
		A_{2|1|3}\times B_{2|1|3}=\left\{  21|3\times2|13,\text{ }21|3\times
		23|1\right\}  , & A_{2|3|1}\times B_{2|3|1}=\{2|31\times23|1\},\\
		A_{3|1|2}\times B_{3|1|2}=\{31|2\times3|12,\text{ }1|32\times3|12\}, &
		A_{3|2|1}\times B_{3|2|1}=\{321\times3|2|1\}.
	\end{array}
	\]
	
\end{example}

Note that $R_{M_{j}}(a_{\sigma})$ and $L_{N_{i}}\left(  b_{\sigma}\right)  $
are realized by a right-shift $M_{j}$ action and a down-shift $N_{i}$ action
on $C_{\sigma}.$ Thus \emph{the components of }$\Delta_{P}(e^{n-1})$\emph{ are
	generated by all possible right-shift actions together with all possible
	down-shift actions on all possible step matrices over} $\mathfrak{n}%
\cup\left\{  0\right\}  $.

\begin{example}
	Continuing Example \ref{scp}, the left-shift action $L_{\left\{  5\right\}
	}\left(  2|135|4\right)  =25|13|4$ on the right-hand factor of the SCP
	$a_{\sigma}\times b_{\sigma}=12|3|45\times2|135|4$ produces the CP
	$12|3|45\times25|13|4$ and induces the corresponding down-shift action on the
	step matrix
	\[
	C_{\sigma}=\left(
	\begin{tabular}
		[c]{lll}%
		$0$ & $0$ & $4$\\
		$1$ & $3$ & $5$\\
		$2$ & $0$ & $0$%
	\end{tabular}
	\ \right)  \overset{L_{\left\{  5\right\}  }}{\longrightarrow}\left(
	\begin{tabular}
		[c]{lll}%
		$0$ & $0$ & $4$\\
		$1$ & $3$ & $0$\\
		$2$ & $0$ & $5$%
	\end{tabular}
	\ \right)  \text{\ }.
	\]
	
\end{example}

Define $\Delta_P$ on cellular chains $C_{\ast}(P_{n})$ by
\begin{equation}
	\Delta_{P}(e^{n-1}):=\sum_{\substack{{e}_{1}\times e_{2}\in A_{\sigma}\times
			B_{\sigma}\\\sigma\in S_{n}}} sgn(e_{1},e_{2})\,\,e_{1}\otimes e_{2} ,
	\label{DeltaP}%
\end{equation}
where $sgn(e_{1}, e_{2})$ denotes the sign specified in \cite{SU2}, and on proper
cells by extending comultiplicatively. Define the boundary operator $\partial$
on $C_{\ast}(P_{n})$ by
\begin{equation}
	\partial e^{n-1}:=\sum_{A|B\sqsubset P_n}(-1)^{\#A}sgn(A|B)\,\,A|B \label{Psign}%
\end{equation}
and on proper cells by extending as a derivation, i.e.,
\[
\partial(A_{1}|\cdots|A_{k})=\sum(-1)^{\#(A_{1}\cup\cdots\cup A_{r-1}
	)+r+1}A_{1}|\cdots|A_{r-1}|\partial A_{r}|A_{r+1}|\cdots|A_{k}.
\]
Then $(C_{\ast}(P_{n});\partial,\Delta_{P})$ is a (non-coassociative) DG coalgebra.

Now identify the vertices of $P_{n+1}$ with the permutations in $S_{n+1}$ and extend the
\emph{weak order} on $S_{n+1}$ given by $\cdots|x_{i}|x_{i+1}|\cdots
<\cdots|x_{i+1}|x_{i}|\cdots$ if $x_{i}<x_{i+1}$ to a partial order (p-o).
Then the associated Hasse diagram orients the $1$-skeleton of $P_{n+1}$
\cite{CP}. Denote the minimal and maximal vertices of a face $e$ of $P_{n+1}$
by $\min e$ and $\max e$, respectively, and define $e\leq e^{\prime}$ if there
exists an oriented edge-path in $P_{n+1}$ from $\max e$ to $\min e^{\prime}$.

When $X$ is an $n$-dimensional polytope as above, the projection
$p:P_{n+1}\to X$ induces a p-o on the cells of $X$. For example, when the faces of
$P_{n+1}$ are indexed by planar leveled trees (PLTs) with $n+2$ leaves and the
faces of $K_{n+2}$ are indexed by planar rooted trees (PRTs) with $n+2$ leaves
(without levels), Tonks' projection $p=\theta$ given by forgetting levels
\cite{Tonks} induces the \emph{Tamari order} on the faces $\{\theta(T_{i})\}$
of $K_{n+2}$ given by $\theta(T_{i})\leq\theta(T_{j})$ if $T_{i}\leq T_{j}$.

Let $e$ be a cell of $X$ and let $\left\vert e\right\vert $ denote its
dimension$.$ A $k$\emph{-subdivision cube of }$e$ is a set of faces of $e$
whose union is a $k$-subcube of $I^{n}$ for some $k\leq n.$ For example, when
$e$ is the top dimensional cell of $P_{4},$ the facets in $\left\{
2|134,24|13\right\}  $ and $\{2|134,24|13,23|14,234|1\}$ form $2$-subdivision
cubes of $e$, but any three in the latter do not (see Figure 2). Denote the
set of vertices of $e$ by $\mathcal{V}_{e}$ (when $e=X$ we suppress the
subscript $e$). Given a vertex $v\in\mathcal{V}_{e},$ let $I_{v,1}^{k_{1}}$
and $I_{v,2}^{k_{2}}$ be $k_{i}$-subdivision cubes of $e$ such that $\max
I_{v,1}^{k_{1}}=\min I_{v,2}^{k_{2}}=v$ and $k_{1}+k_{2}=|e|;$ then $\left(
I_{v,1}^{k_{1}},I_{v,2}^{k_{2}}\right)  $ is a \emph{pair of} $\left(
k_{1},k_{2}\right)  $-\emph{subdivision cubes of }$e.$ Denote the set of all
such pairs by $e_{v}$ and let $(\mathbf{I}_{v,1}^{k_{1}},\mathbf{I}%
_{v,2}^{k_{2}})_{e}$ denote its unique maximal element; then $\left(
I_{v,3}^{k_{3}},I_{v,4}^{k_{4}}\right)  \subseteq(\mathbf{I}_{v,1}^{k_{1}%
},\mathbf{I}_{v,2}^{k_{2}})_{e}$ for all $(I_{v,3}^{k_{3}},I_{v,4}^{k_{4}})\in
e_{v}.$ For example, when $e$ is the top dimensional cell of $P_{4}$ and
$v=4|3|2|1,$ we have $(\mathbf{I}_{v,1}^{2},\mathbf{I}_{v,2}^{1})_{e}=\left(
\left\{  2|134,24|13\right\}  ,\left\{  4|23|1\right\}  \right)  $. For an
explicit description of $\left(  \mathbf{I}_{v,1}^{k_{1}},\mathbf{I}%
_{v,2}^{k_{2}}\right)  _{\!e}$ when $e\subseteq P_{n}$ see (\ref{maxpair1}) below.

If in addition, the cellular projection $p:P_{n+1}\rightarrow X$ preserves
maximal pairs of $\left(  k_{1},k_{2}\right)  $-subdivision cubes, i.e., for
every cell $e$ of $P_{n+1}$ we have
\[
p\left(  \mathbf{I}_{v,1}^{k_{1}},\mathbf{I}_{v,2}^{k_{2}}\right)
_{e}=\left(  \mathbf{I}_{p(v),1}^{k_{1}},\mathbf{I}_{p(v),2}^{k_{2}}\right)
_{p(e)},
\]
the components of the induced diagonal $\Delta_{X}$ on a cell $f$ of $X$ form
the set of product cells
\begin{equation}
	\Delta_{X}(f):=\bigcup_{\substack{\left(  e^{k_{1}},\,e^{k_{2}}\right)
			\in\left(  \mathbf{I}_{v,1}^{k_{1}},\mathbf{I}_{v,2}^{k_{2}}\right)
			_{\!f}\\v\in\mathcal{V}_{f}}}\{e^{k_{1}}\times e^{k_{2}}\}. \label{thepair}%
\end{equation}
In particular, $p=\theta$ preserves maximal pairs of $(k_{1},k_{2}%
)$-subdivision cubes and $\Delta_{K}(e)$ is given by setting $X=K_{n+2}$
(see (\ref{S-U}) below). Note that $\left(  e^{k_{1}},e^{k_{2}%
}\right)  \in\left(  \mathbf{I}_{v,1}^{k_{1}},\mathbf{I}_{v,2}^{k_{2}}\right)
_{\!X}$ implies $e^{k_{1}}\leq e^{k_{2}}.$ Thus $e^{k_{1}}\times e^{k_{2}}$ is
a \textquotedblleft matching pair\textquotedblright\ in the sense of Masuda,
Thomas, Tonks, and Vallette in \cite{MTTV} (see Definition \ref{mp}). Furthermore,
since $f=p\left(  e\right)  $ for some
$e=P_{n_{1}}\times\cdots\times P_{n_{s}}$ and $p\left(  e\right)  =p(P_{n_{1}%
})\times\cdots\times p(P_{n_{s}}),$ the diagonal $\Delta_{X}(f)$ is
automatically the comultiplicative extension of its values on the factors of
$f$, i.e., $\Delta_{X}(f)=\Delta_{X}(p(P_{n_{1}}))\times\cdots\times\Delta
_{X}(p(P_{n_{s}}))$.

When $X=P_{n+1}$, Formulas (\ref{Delta_P}) and (\ref{thepair}) are equivalent.
The maximal $\left(  k_{1},k_{2}\right)  $-subdivision pair with respect to a
vertex $\sigma$ of $P_{n+1}$ is\
\begin{equation}
	\left(  \mathbf{I}_{\sigma,1}^{k_{1}},\mathbf{I}_{\sigma,2}^{k_{2}}\right)
	=\left(  \bigcup_{e_{1}\in A_{\sigma}}e_{1},\bigcup_{e_{2}\in B_{\sigma}}%
	e_{2}\right)  . \label{maxpair1}%
\end{equation}

\begin{definition}
	A positive dimensional face $e$ of $P_{n}$ is \textbf{non-degenerate }if
	$|\theta(e)|=|e|$. A positive dimensional partition $a=A_{1}|\cdots|A_{p}\in
	P(\mathfrak{n})$ is \textbf{degenerate }if for some $j$ and some $k>0$, there exist
	$x,z\in A_{j}$ and $y\in A_{j+k}$ such that $x<y<z;$ otherwise $a$ is
	\textbf{non-degenerate}. A CP $\alpha\times\beta$ is \textbf{non-degenerate}
	if $\alpha$ and $\beta$ are non-degenerate.
\end{definition}

\noindent Define $\Delta_{K}(K_{n+1})=\Delta_{K}(\theta(P_{n})):=(\theta
\times\theta)\Delta_{P}(P_{n});$ then%
\begin{equation}
	\Delta_{K}(e^{n-1})=\bigcup_{\substack{\text{non-degenerate CPs }%
			\\\alpha\times\beta\in A_{\sigma}\times B_{\sigma}\\\sigma\in S_{n}}%
	}\{\theta\left(  \alpha\right)  \times\theta\left(  \beta\right)  \}.
	\label{S-U}%
\end{equation}

\begin{definition}
	\label{mp} A pair of faces $a\times b\subseteq K_{n+1}\times K_{n+1}$ is a
	\textbf{Matching Pair }(MP) if $a\leq b$ and $\left\vert a\right\vert
	+\left\vert b\right\vert =n-1.$
\end{definition}

\noindent J.-L. Loday proposed the following \textquotedblleft magical formula\textquotedblright\ for a diagonal $\Delta_K'$ on associahedra:
\begin{equation}
	\Delta_{K}^{\prime}\left(  e^{n-1}\right)  =\bigcup_{\substack{\text{MPs of
				faces}\\a\times b\subseteq K_{n+1}\times K_{n+1}}}\{a\times b\}.
	\label{magical}
\end{equation}
Formula (\ref{magical}), derived by Markl and Shnider in \cite{MS} and more recently by Masuda, Thomas, Tonks, and Vallette in \cite{MTTV}, agrees with Formula (\ref{S-U}) (see \cite{SU5}).

\subsection{The Subdivision Complex of a Diagonal Approximation}

Recall that every map of $CW$-complexes is homotopic to a cellular map and a
cellular map of $CW$-complexes induces a chain map of cellular chains. Let $X$
be a $CW$ complex, let $\Delta:X\rightarrow X\times X$ be the geometric
diagonal, and let $C_{\ast}\left(  X\right)  $ denote the cellular chains of
$X$. A \emph{diagonal approximation of }$\Delta$ is a cellular map $\Delta
_{X}$ homotopic to $\Delta,$ and the induced chain map $\Delta_{X}:C_{\ast
}\left(  X\right)  \rightarrow C_{\ast}\left(  X\times X\right)  \approx
C_{\ast}\left(  X\right)  \otimes C_{\ast}\left(  X\right)  $ is a
\emph{diagonal on }$C_{\ast}\left(  X\right)  $.

Let $X$ be a polytope. For each $k\geq1,$ there is a diagonal approximation
$\Delta_{X}$ whose \emph{(left)} $k$\emph{-fold iterate}
\[
\Delta_{X}^{\left(  k\right)  }:=\left(  \Delta_{X}\times{\mathbf{Id}}%
^{\times\left(  k-1\right)  }\right)  \cdots\left(  \Delta_{X}\times
{\mathbf{Id}}\right)  \Delta_{X}%
\]
is an embedding $X\hookrightarrow X^{\times\left(  k+1\right)  }$.
Given a diagonal approximation $\Delta_{X},$ define $\Delta_{X}^{\left(0\right)}
:=${$\mathbf{Id}$}. Then for each $k\geq0,$ there exists a unique cellular
complex $X^{\left(  k\right)  },$ called the $k^{th}$ \emph{subdivision}
\emph{complex} \emph{of} $X$ (\emph{with respect to} $\Delta_{X}$), with the
following property: Given a cell $e\sqsubseteq X$ and a subdivision cell
$e^{\prime}\sqsubseteq e$ in $X^{\left(  k\right)  },$ there exist unique
cells $u_{1},\ldots,u_{k+1}\sqsubseteq e$ such that $\Delta_{X}^{\left(
	k\right)  }\left(  e^{\prime}\right)  =u_{1}\times\cdots\times u_{k+1}$. Thus
$\Delta_{X}^{\left(  k\right)  }$ extends to a cellular inclusion $\Delta
_{X}^{(k)}:X^{(k)}\hookrightarrow X^{\times\left(  k+1\right)  },$ and
$X^{(k)}$ is the geometric representation of $\Delta_{X}^{(k)}\left(
X\right)  $ obtained by gluing the cells of $\Delta_{X}^{(k)}(X)$ together
along their common boundaries in the only possible way.

\begin{example}
	Let $I=\left[  0,1\right]  $ and consider the Serre diagonal $\Delta_{I}.$
	For  each $k\geq0$ we have
	\[
	\Delta_{I}^{\left(  k\right)  }\left(  I\right)  =\bigcup\limits_{i=0}
	^{k}0^{\times i}\times I\times1^{\times\left(  k-i\right)  },
	\]
	and the $k^{th}$ subdivision complex of $I$ is\
	\[
	I^{\left(  k\right)  }=\left[  1-2^{-k},1\right]  \cup\left[  1-2^{1-k}
	,1-2^{-k}\right]  \cup\cdots\cup\left[  0,1-2^{-1}\right]  .
	\]
	A subdivision $1$-cell
	\[
	e_{i}=\left\{
	\begin{array}
		[c]{ll}%
		\left[  1-2^{-k},1\right]  , & i=0\\
		\left[  1-2^{i-k},1-2^{i-k-1}\right]  , & 0<i\leq k
	\end{array}
	\right.
	\]
	is uniquely determined by the product cell $0^{\times i}\times I\times
	1^{\times k-i}\ $via $\Delta_{I}^{\left(  k\right)  }\left(  e_{i}\right)
	=0^{\times i}\times I\times1^{\times\left(  k-i\right)  }$ and a subdivision
	$0$-cell $1-2^{i-k},$ $0\leq i<k,$ is uniquely determined by the product cell
	$0^{\times i}\times1^{\times\left(  k-i+1\right)  }$ via $\Delta_{I}^{\left(
		k\right)  }\left(  1-2^{i-k}\right)  =0^{\times i}\times1^{\times\left(
		k-i+1\right)  }.$ Thus $\Delta_{I}^{\left(  k\right)  }\left(  I^{\left(
		k\right)  }\right)  =\bigcup\limits_{i=0}^{k}0^{\times i}\times I\times
	1^{\times\left(  k-i\right)  }$ and $\Delta_{I}^{\left(  k\right)  }$ extends
	to a cellular inclusion $I^{\left(  k\right)  }\hookrightarrow I^{\times
		\left(  k+1\right)  }.$ When $k=1$ we have
	\[
	I^{\left(  1\right)  }=\left[  \tfrac{1}{2},1\right]  \cup\left[  0,\tfrac
	{1}{2}\right]  \overset{\Delta_{I}^{\left(  1\right)  }}{\mapsto}I\times
	1\cup0\times I\hookrightarrow I^{2}.
	\]
	
\end{example}

Consider the (left) $k$-fold iterated diagonal ${\Delta}_{P}^{(k)}$ on the
permutahedron $P_{n}$. If $e$ is a cell of $P_{n}$ and $X\sqsubseteq\Delta
_{P}^{\left(  k\right)  }(e),$ then $\left\vert X\right\vert \leq\left\vert
e\right\vert .$ A \emph{diagonal component} \emph{of} ${\Delta}_{P}^{(k)}(e)$
is product cell $X\sqsubseteq\Delta_{P}^{\left(  k\right)  }(e)$ such that
$\left\vert X\right\vert =\left\vert e\right\vert ,$ in which case we write
$X\sqsubseteq_{diag}{\Delta}_{P}^{(k)}(e)$. For example, $1|3|2|4\times
13|24\sqsubseteq1|23|4\times13|24\sqsubseteq_{diag}\Delta_{P}\left(
P_{4}\right)  $ and $\left\vert 1|3|2|4\times13|24_{\mathstrut}^{\mathstrut
}\right\vert <\left\vert 1|23|4\times13|24_{\mathstrut}^{\mathstrut
}\right\vert =\left\vert P_{4}\right\vert .$

Let $\mathbf{x}=\alpha_{1}\times\cdots
\times\alpha_{q}$ be a product of partitions and write $\alpha_{i}%
:=A_{i1}|\cdots|A_{ir_{i}}.$ Given $i\in \mathfrak{q}$,
$k,\ell<r_{i},$ and $M^{k}\subseteq A_{ik}$, define
\[
\alpha_{j}^{\prime}:=A_{j1}^{\prime}|\cdots|A_{jr_{j}^{\prime}}^{\prime
}=\left\{
\begin{array}
	[c]{cc}%
	\partial_{M^{k}}\alpha_{i}, & j=i\\
	\alpha_{j}, & j\neq i,
\end{array}
\right.  \text{ where }r_{j}^{\prime}=\left\{
\begin{array}
	[c]{cc}%
	{r_{i}+1}, & j=i\\
	r_{j}, & j\neq i,
\end{array}
\right.
\]
\[
\alpha_{j}^{\prime\prime}:=A_{j1}^{\prime\prime}|\cdots|A_{jr_{j}
	^{\prime\prime}}^{\prime\prime}=\left\{
\begin{array}
	[c]{cc}%
	{\alpha_{i}[\ell]}, & j=i\\
	\alpha_{j}, & j\neq i,
\end{array}
\right.  \text{ where }r_{j}^{\prime\prime}=\left\{
\begin{array}
	[c]{cc}%
	{r_{i}-1}, & j=i\\
	r_{j}, & j\neq i,
\end{array}
\right.
\]
\[
\partial_{M^{k}}^{i}\mathbf{x}:=\alpha_{1}^{\prime}\times\cdots\times
\alpha_{q}^{\prime}\text{\ \ and\ \ }\mathbf{x}_{i\ell}:=\alpha_{1}^{\prime\prime
}\times\cdots\times\alpha_{q}^{\prime\prime}\text{.}%
\]
Then $\left(  \partial_{M^{k}}^{j}\mathbf{x}\right)  _{jk}=\partial_{A_{jk}%
}^{j}\mathbf{x}_{jk}=\mathbf{x}$ for all $j,$ $k,$ and $M^{k}.$ Note that if
$e$ is a cell of $P_{n},$ $n\geq2,$ and $\mathbf{x}\sqsubseteq_{diag}{\Delta
}_{P}^{(q)}(\partial e)\mathbf{,}$ these equations (with non-extreme $M^{k}$)
determine the cell structure of the boundary subdivision complex $\partial
P_{n}^{\left(  q\right)  }.$

\begin{proposition}
	\label{ab} Given a cell $e\sqsubseteq P_{n},$ $n\geq2,$ and a product
	cell $\mathbf{e:}=e_{1}\times\cdots\times e_{q}\sqsubseteq e^{\times q}$,
	write $e_{i}:=B_{i1}|\cdots|B_{ir_{i}}.$
	
	\begin{enumerate}

		\item Let $\mathbf{e}\sqsubseteq_{diag}{\Delta}_{P}^{(q-1)}(e)$ and
		$\mathbf{x}:=\partial_{M^{k}}^{i}\mathbf{e}\sqsubseteq\hspace*{-0.15in}
		\diagup_{diag}{\ \Delta}_{P}^{(q-1)}(\partial e)$ for some $i$ and some
		non-extreme $M^{k}\subset B_{ik}$; write $\mathbf{x}=e_{1}^{\prime}
		\times\cdots\times e_{q}^{\prime}$ and $e_{j}^{\prime}:=B_{j1}^{\prime}
		|\cdots|B_{jr_{j}^{\prime}}^{\prime}.$ There exist unique positive integers
		$j\leq q$ and $\ell<r_{j}^{\prime}$ such that $\mathbf{x}_{j\ell}
		\neq\mathbf{e}$ and $\mathbf{x}_{j\ell}\sqsubseteq_{diag}{\Delta}_{P}
		^{(q-1)}(e).$ \medskip
		
		\noindent$
		\begin{array}
			[c]{rcccl}%
			{\exists!\text{ }e_{j}} & \overset{proj\smallskip}{\longleftarrow} &
			\mathbf{e}=e_{1}\times\cdots\times e_{q}\medskip & \overset{\partial_{M^{k}
				}^{i}}{\longrightarrow} & \mathbf{x}=\partial_{B_{j\ell}^{\prime}}
			^{j}\mathbf{x}_{j\ell}\text{ }.\\
			& \multicolumn{1}{l}{} & \left\vert \bigsqcap\right.  _{diag}\medskip &
			\multicolumn{1}{l}{} & \nearrow_{\partial_{B_{j\ell}^{\prime}}^{j}}\\
			\mathbf{x}\sqsubseteq\hspace*{-0.13in}\diagup_{diag\text{ }}{\Delta}
			_{P}^{(q-1)}(\partial e) & \multicolumn{1}{l}{\overset{\partial
				}{\longleftarrow}} & {\Delta}_{P}^{(q-1)}(e)\sqsupseteq_{diag} &
			\multicolumn{1}{l}{\mathbf{x}_{j\ell}} &
		\end{array}
		\bigskip$
		
		\item If $\mathbf{x:}=\mathbf{e}\sqsubseteq_{diag}{\Delta}_{P}^{(q-1)}
		(\partial e)$, there exist unique positive integers $j\leq q$ and $\ell
		<r_{j}$  such that $\mathbf{x}_{j\ell}\sqsubseteq_{diag}\Delta_{P}%
		^{(q-1)}(e).$
		
		\noindent$
		\begin{array}
			[c]{cccc}
			& \mathbf{x}=e_{1}\times\cdots\times e_{q}\medskip & \overset{proj_{\smallskip
			}}{\longrightarrow} & e_{j}=\partial_{_{B_{j\ell}}}e_{j}[\ell]\\
			& \left\vert \bigsqcap\right.  _{diag}\medskip &  & \uparrow_{\partial
				_{B_{j\ell}}}\\
			\mathbf{x}_{j\ell}\sqsubseteq_{diag}\text{ }\Delta_{P}^{(q-1)}(e)\hspace
			{0.1in}\overset{\partial}{\longrightarrow} & {\Delta}_{P}^{(q-1)}(\partial
			e)\text{ } & \multicolumn{1}{l}{} & \exists!\text{ }e_{j}[\ell].
		\end{array}
		$\medskip
	\end{enumerate}
\end{proposition}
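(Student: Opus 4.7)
The plan is to reduce both parts to standard CW-theoretic facts about the subdivision of the polytope $e$ induced by the iterated diagonal $\Delta_P^{(q-1)}$, and then carry out a combinatorial identification of $(j,\ell)$ using the complementary pair (CP) description of $\Delta_P$ from equation~(\ref{Delta_P}). First I would verify the dimension accounting: a diagonal component satisfies $|\mathbf{e}|=|e|$ in Part~1 and $|\mathbf{e}|=|e|-1$ in Part~2 (since there $\mathbf{e}$ is a top-dimensional cell of $\partial e^{(q-1)}$ and hence lies on a codimension-one face of $e$). The split $\partial_{M^k}^i$ with non-extreme $M^k$ drops total dimension by exactly one, while each merge $\mathbf{x}_{j\ell}$ raises it by one, since $|\alpha[\ell]|=|\alpha|+1$. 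Consequently $|\mathbf{x}_{j\ell}|=|\mathbf{x}|+1=|e|$ in both parts, the correct dimension to be a diagonal component of $\Delta_P^{(q-1)}(e)$. Only the combinatorial compatibility with the CP structure and the uniqueness of $(j,\ell)$ remain to be established.

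For Part~1 I would invoke the fact that the diagonal components of $\Delta_P^{(q-1)}(e)$ are the top-dimensional cells of a CW subdivision $e^{(q-1)}$ of $e$ whose boundary subdivision equals $\Delta_P^{(q-1)}(\partial e)$. The assumption that $\mathbf{x}$ is \emph{not} a diagonal component of $\Delta_P^{(q-1)}(\partial e)$ identifies $\mathbf{x}$ as an interior codimension-one face of $e^{(q-1)}$, hence shared by exactly two top-dimensional cells: $\mathbf{e}$ itself and a unique other diagonal component $\mathbf{x}'$. The content of Part~1 is then that $\mathbf{x}'$ has the form $\mathbf{x}_{j\ell}$ for a uniquely determined pair $(j,\ell)$ with $\ell<r_j'$. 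I would prove this by associating $\mathbf{e}$ to an iterated CP rooted at a vertex $\sigma$ of $e$ via the description in (\ref{Delta_P}) and (\ref{maxpair1}), and recognizing $\mathbf{x}'$ as the iterated CP rooted at the adjacent vertex $\sigma'$ of $e$ obtained by crossing the edge that corresponds combinatorially to the split $\partial_{M^k}^i$.

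Part~2 is the dual boundary case. Each top-dimensional cell of $\partial e^{(q-1)}$ is contained in the closure of a unique top-dimensional cell of $e^{(q-1)}$, and the same CP-combinatorial argument identifies that cell as $\mathbf{x}_{j\ell}$ for a unique $(j,\ell)$, where now the adjacent vertex $\sigma'$ lies in the interior of $e$ rather than on the facet supporting $\mathbf{e}$. The commuting diagram in the statement then records exactly that $\partial_{B_{j\ell}} e_j[\ell] = e_j$ in the $j$-th coordinate, while all other coordinates of $\mathbf{x}_{j\ell}$ agree with those of $\mathbf{x}$.

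The main obstacle will be making the CP identification of $(j,\ell)$ explicit. Concretely, one must track the right-shift and left-shift operators $R_{\mathbf{M}}$ and $L_{\mathbf{N}}$ that generate the complementary pairs $A_\sigma\times B_\sigma$, and determine precisely which factor $j$ of $\mathbf{x}$ admits the compensating merge $e_j'[\ell]$ that realizes the iterated CP at $\sigma'$. Uniqueness should follow from the observation that, given the split $\partial_{M^k}^i$ at $\sigma$, the adjacent vertex $\sigma'$ of $e$ is uniquely determined, and exactly one coordinate of the iterated CP at $\sigma'$ differs from that at $\sigma$; hence only one choice of $(j,\ell)$ can produce a diagonal component, and in Part~1 that component is automatically distinct from $\mathbf{e}$ because the split was non-extreme.
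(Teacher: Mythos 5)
Your strategy is sound and is essentially the argument the paper points to: the paper offers no proof beyond citing Theorem 1 of \cite{SU2}, and that theorem is precisely the statement that the iterated S-U diagonal $\Delta_P^{(q-1)}$ realizes a polytopal subdivision $e^{(q-1)}$ of $e$ whose restriction to $\partial e$ is $\Delta_P^{(q-1)}(\partial e)$; your dimension count, the observation that any top cell of the subdivision having $\mathbf{x}$ as a codimension-one face must be of the form $\mathbf{x}_{j\ell}$ (a single block merge in a single factor), and the ``two top cells per interior face, one top cell per boundary face'' dichotomy then give both existence and uniqueness exactly as you describe. The only caution is that the ``standard CW-theoretic facts'' you invoke are not free: that $\Delta_P^{(q-1)}(e)$ tiles $e$ and is boundary-compatible is the substance of the cited theorem, so your proposal correctly locates the load-bearing step (and the explicit CP identification of $(j,\ell)$ via $R_{\mathbf{M}}$, $L_{\mathbf{N}}$ at adjacent vertices) but, like the paper, defers its verification to \cite{SU2}.
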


\begin{proof}
	The proof follows immediately from the proof of Theorem 1 in \cite{SU2}.
\end{proof}

\begin{example}
	Consider the top dimensional cell $e=123\sqsubseteq P_{3}$ and its diagonal
	component $\mathbf{e}=e_{1}\times e_{2}=13|2\times3|12.$ Set $\mathbf{x}
	=\partial_{\left\{  1\right\}  }^{1}\mathbf{e}=1|3|2\times3|12\sqsubseteq
	\hspace*{-0.13in}\diagup_{diag}$ $\Delta_{P}(\partial e)$ in Proposition
	\ref{ab}, part (1). By uniqueness, $\mathbf{x}_{12}=1|23\times3|12$ is the
	unique diagonal component of $e$ distinct from $\mathbf{e}$ that can be
	obtained from $\mathbf{x}$ by a single factor replacement. On the other hand,
	set $\mathbf{x}=3|1|2\times3|12\sqsubseteq_{diag}\Delta_{P}(\partial e)$ in
	Proposition \ref{ab}, part (2). By uniqueness, $\mathbf{x}_{11}=13|2\times
	3|12$ is the unique diagonal component of $e$ that can be obtained from
	$\mathbf{x}$ by a single factor replacement.
\end{example}

\section{Bipartition Matrices}\label{BM}

\subsection{Bipartition Matrices Defined}

Let $\mathbf{a}$ and $\mathbf{b}$ be ordered sets. A \emph{bipartition on
}$\left(  \mathbf{a,b}\right)  $ \emph{of length }$r$ is a pair $\left(
\alpha,\beta\right)  \in P_{r}^{\prime}(\mathbf{a})\times P_{r}^{\prime
}(\mathbf{b})$; it is \emph{elementary} when $r=1$. We will often denote a bipartition $\left(  \alpha,\beta\right)  $ as the
fraction $\beta/\alpha$ and its length $r$ by $l(\beta/\alpha).$ The $k^{th}$
\emph{biblock} of a bipartition $\frac{B_{1}|\cdots|B_{r}}{A_{1}|\cdots|A_{r}%
}$ is the pair $\left(  A_{k},B_{k}\right)  $; we refer to the components $A_{k}$ and
$B_{k}$ as the $k^{th}$ \emph{input} \emph{block} and\emph{\ the }$k^{th}$
\emph{output block}, respectively. Denote the $i^{th}$ row and
$j^{th}$ column of a matrix $C$ by $C_{i\ast}$ and $C_{\ast j}$.

\begin{definition}
	Given ordered sets $\mathbf{a}$ and $\mathbf{b,}$ let $\left(  \mathbf{a}%
	_{\ast},\mathbf{b}_{\ast}\right)  :=\left(  \mathbf{a}_{1}|\cdots
	|\mathbf{a}_{p},\mathbf{b}_{1}|\cdots|\mathbf{b}_{q}\right)  $ $\in P^{\prime
	}\left(  \mathbf{a}\right)  \times P^{\prime}\left(  \mathbf{b}\right)  ,$ let
	$\left(  r_{ij}\right)  $ be a $q\times p$ matrix of positive integers, and
	choose a bipartition $c_{ij}\in P_{r_{ij}}^{\prime}\left(  \mathbf{a}%
	_{j}\right)  \times P_{r_{ij}}^{\prime}\left(  \mathbf{b}_{i}\right)  $ for
	each $\left(  i,j\right)  ;$ then $C=\left(  c_{ij}\right)  $ is a $q\times p$
	\textbf{bipartition matrix over} $\left(  \mathbf{a}_{\ast},\mathbf{b}_{\ast
	}\right)  $. The sets $\mathbf{is}\left(  C\right)  :=\mathbf{a}$ and
	$\mathbf{os}\left(  C\right)  :=\mathbf{b}$ are the \textbf{input and output
		sets of }$C,$ respectively. A bipartition matrix $C=\left(  c_{ij}\right)  $
	is \textbf{elementary }if $c_{ij}=\frac{\mathbf{b}_{i}}{\mathbf{a}_{j}}$ for
	all $\left(  i,j\right)  ,$ \textbf{null }if $\mathbf{is}\left(  C\right)
	=\mathbf{os}\left(  C\right)  =\varnothing$, and \textbf{semi-null} if either
	
	\begin{enumerate}
		\item $\mathbf{is}\left(  C\right)  =\varnothing,$ $\mathbf{os}\left(
		C\right)  \neq\varnothing,$ and $C_{\ast1}=\cdots=C_{\ast p}$ or
		
		\item $\mathbf{is}\left(  C\right)  \neq\varnothing,$ $\mathbf{os}\left(
		C\right)  =\varnothing,$ and $C_{1\ast}=\cdots=C_{q\ast}$\textbf{. }
	\end{enumerate}
\end{definition}

\noindent Note that if $C$ is a $q\times p$ bipartition matrix over $\left(
\mathbf{a}_{\ast},\mathbf{b}_{\ast}\right)  $, all numerators in $C_{i\ast}$
lie in $P^{\prime}\left(  \mathbf{b}_{i}\right)  $ and all denominators in
$C_{\ast j}$ lie in $P^{\prime}\left(  \mathbf{a}_{j}\right)  $.
Thus, an elementary matrix has constant denominators in each column and
constant numerators in each row.

\subsection{Products of Bipartition Matrices}

Before we can impose a binary product on the set of bipartition matrices we
need some definitions that apply the $\lambda$-projection map $\mu_\lambda$ defined in (\ref{mu}).

\begin{definition}
	\label{equalizers}Let $C=\left(  \frac{\beta_{ij}}{\alpha_{ij}}\right)  $ be
	a  $q\times p$ bipartition matrix and let $r_{ij}=l\left(  \frac{\beta_{ij}
	}{\alpha_{ij}}\right)  $. A \textbf{row equalizer of }$C$ is a $q\times p$
	matrix of ordered sets $\lambda^{row}\left(  C\right)  =\left(  \lambda
	_{ij}^{row}\right)  ,$ $\lambda_{ij}^{row}\subseteq\mathfrak{r_{ij}
	}\smallsetminus\left\{  r_{ij}\right\}  ,$ such that for each $i$,
	$\#\lambda_{ij}^{row}=s_{i}$ is constant for all $j,$ and
	\begin{equation}
		\mu_{\lambda_{i1}^{row}}\left(  \beta_{i1}\right)  =\cdots=\mu_{\lambda
			_{ip}^{row}}\left(  \beta_{ip}\right)  \ \ \text{for each}\ \ i=1,2,\ldots,q;
		\label{roweq}
	\end{equation}
	it is \textbf{maximal }if $s_{i}$ is maximal for all $i$. A \textbf{column
		equalizer of }$C$ is a $q\times p$ matrix of ordered sets $\lambda
	^{col}\left(  C\right)  =\left(  \lambda_{ij}^{col}\right)  ,\,\lambda
	_{ij}^{col}\subseteq\mathfrak{r_{ij}}\smallsetminus\left\{  r_{ij}\right\}
	,$  such that for each $j$, $\#\lambda_{ij}^{col}=t_{j}$ is constant for all
	$i,$  and
	\begin{equation}
		\mu_{\lambda_{1j}^{col}}\left(  \alpha_{1j}\right)  =\cdots=\mu_{\lambda
			_{qj}^{col}}\left(  \alpha_{qj}\right)  \ \ \text{for each}\ \ j=1,2,\ldots,p;
		\label{coleq}%
	\end{equation}
	it is \textbf{maximal }if $t_{j}$ is maximal for all $j$. An \textbf{equalizer
		of }$C$ is a $q\times p$ matrix of ordered sets $\lambda\left(  C\right)
	=\left(  \lambda_{ij}\right)  ,\,\lambda_{ij}\subseteq\mathfrak{r_{ij}
	}\smallsetminus\left\{  r_{ij}\right\}  ,$ such that $\#\lambda_{ij}=r$ is
	constant for all $(i,j),$ and
	\begin{equation}
		\mu_{\lambda_{i1}}\left(  \beta_{i1}\right)  =\cdots=\mu_{\lambda_{ip}}\left(
		\beta_{ip}\right)  \text{ and }\mu_{\lambda_{1j}}\left(  \alpha_{1j}\right)
		=\cdots=\mu_{\lambda_{qj}}\left(  \alpha_{qj}\right)  \label{equa1}
	\end{equation}
	for each $\left(  i,j\right)  ;$ it is \textbf{maximal }if $r$ is maximal.
	
	The \textbf{row and column equalizations of}\emph{\ }$C$\textbf{\ with respect
		to} $\lambda^{row}\left(  C\right)  $ \textbf{and} $\lambda^{col}\left(
	C\right)  $ are the bipartition matrices
	\[
	\left(  \frac{\mu_{\lambda_{ij}^{row}}(\beta_{ij})}{\mu_{\lambda_{ij}^{row}
		}(\alpha_{ij})}\right)  \text{ and }\ \left(  \frac{\mu_{\lambda_{ij}^{col}
		}(\beta_{ij})}{\mu_{\lambda_{ij}^{col}}(\alpha_{ij})}\right)  .
	\]
	The \textbf{equalization of }$C$ \textbf{with respect to }$\lambda\left(
	C\right)  $ is the bipartition matrix
	\[
	\left(  \frac{\mu_{\lambda_{ij}}(\beta_{ij})}{\mu_{\lambda_{ij}}(\alpha_{ij}
		)}\right)  .
	\]
	
\end{definition}

Maximal equalizers are particularly important. However, when the entries of a
bipartition matrix contain null bipartition blocks, multiple maximal
equalizers can exist and produce different equalizations. For example, the
bipartition matrix%
\[
C=\left(
\begin{array}
	[c]{cc}%
	\frac{0|0|1}{1|2|0} & \frac{0|1}{0|3}%
\end{array}
\right)
\]
has two maximal row equalizers
\[
\lambda_{1}^{row}\left(  C\right)  =\left(
\begin{array}
	[c]{cc}%
	\left\{  1\right\}  & \left\{  1\right\}
\end{array}
\right)  \text{ and }\lambda_{2}^{row}\left(  C\right)  =\left(
\begin{array}
	[c]{cc}%
	\left\{  2\right\}  & \left\{  1\right\}
\end{array}
\right)
\]
with respective equalizations
\[
\left(
\begin{array}
	[c]{cc}%
	\frac{0|1}{1|2} & \frac{0|1}{0|3}%
\end{array}
\right)  \text{ and }\left(
\begin{array}
	[c]{cc}%
	\frac{0|1}{12|0} & \frac{0|1}{0|3}%
\end{array}
\right)  .
\]
Thus we distinguish one particular maximal (row/column) equalizer in the
following way: Given a $q\times p$ bipartition matrix $C$, denote its class of
maximal equalizers by $\mathcal{E}\left(  C\right)  $ and note that
$\lambda_{1}\neq\lambda_{2}\in\mathcal{E}\left(  C\right)  \ $implies $r>0.$
So assume $\lambda_{1}\neq\lambda_{2},$ list the entries of $\lambda_{i}$ in
row order, and write $\lambda_{1}=\left(  \lambda_{1}^{1},\ldots,\lambda
_{pq}^{1}\right)  $ and $\lambda_{2}=\left(  \lambda_{1}^{2},\ldots
,\lambda_{pq}^{2}\right)  .$ Define $\lambda_{1}<\lambda_{2}$ if and only if
for some $i$ and all $j<i,$ $\lambda_{j}^{1}=\lambda_{j}^{2}$ and $\lambda
_{i}^{1}<\lambda_{i}^{2}$ with respect to the lexicographic ordering. Order
the classes $\mathcal{E}^{row}\left(  C\right)  $ and $\mathcal{E}%
^{col}\left(  C\right)  $ of maximal row and maximal column equalizers in like
manner and denote the minimal elements of $\mathcal{E}\left(  C\right)  ,$
$\mathcal{E}^{row}\left(  C\right)  ,$ and $\mathcal{E}^{col}\left(  C\right)
,$ by $\Lambda\left(  C\right)  =\left(  \Lambda_{ij}\right)  ,$
$\Lambda^{row}\left(  C\right)  =\left(  \Lambda_{ij}^{row}\right)  ,$ and
$\Lambda^{col}\left(  C\right)  =\left(  \Lambda_{ij}^{col}\right)  ,$ respectively.

\begin{definition}
	Let $C=\left(  \frac{\beta_{ij}}{\alpha_{ij}}\right)  $ be a bipartition
	matrix. The \textbf{(canonical) maximal row and column equalizers} of $C$ are
	$\Lambda^{row}\left(  C\right)  $ and $\Lambda^{col}\left(  C\right)  ;$ the
	\textbf{(canonical) maximal row and column equalizations} of $C$ are
	\[
	{C}^{req}:=\left(  \frac{\mu_{\mathbf{\Lambda}_{ij}^{row}}(\beta_{ij})}
	{\mu_{\mathbf{\Lambda}_{ij}^{row}}(\alpha_{ij})}\right)  \text{ \ and }
	\ {C}^{ceq}:=\left(  \frac{\mu_{\mathbf{\Lambda}_{ij}^{col}}(\beta_{ij})}
	{\mu_{\mathbf{\Lambda}_{ij}^{col}}(\alpha_{ij})}\right)  .
	\]
	The \textbf{(canonical) maximal equalizer of }$C$ is $\Lambda\left(  C\right)
	;$ the \textbf{(canonical) maximal equalization of }$C$ is
	\[
	{C}^{eq}:=\left(  \frac{\mu_{\mathbf{\Lambda}_{ij}}(\beta_{ij})}
	{\mu_{\mathbf{\Lambda}_{ij}}(\alpha_{ij})}\right)  .
	\]
	
\end{definition}

\begin{example}
	\label{wedge-c}For
	\[
	C=\left(
	\begin{array}
		[c]{cc}%
		\frac{1|2|3}{1|2|3}\smallskip & \frac{1|2|3}{4|5|6}\\
		\frac{4|5|6}{1|2|3} & \frac{45|6}{45|6}%
	\end{array}
	\right)
	\]
	we have
	\[
	\Lambda^{row}\left(  C\right)  =\left(
	\begin{array}
		[c]{cc}%
		\left\{  1,2\right\}  \smallskip & \left\{  1,2\right\} \\
		\left\{  2\right\}  & \left\{  1\right\}
	\end{array}
	\right)  \text{ and }\,\, \Lambda^{col}\left(  C\right)  =\left(
	\begin{array}
		[c]{cc}%
		\left\{  1,2\right\}  \smallskip & \left\{  2\right\} \\
		\left\{  1,2\right\}  & \left\{  1\right\}
	\end{array}
	\right)
	\]
	so that
	\[
	{C}^{req}=\left(
	\begin{array}
		[c]{cc}%
		\frac{1|2|3}{1|2|3}\smallskip & \frac{1|2|3}{4|5|6}\\
		\frac{45|6}{12|3} & \frac{45|6}{45|6}%
	\end{array}
	\right)  \text{ and }\,\,{C}^{ceq}=\left(
	\begin{array}
		[c]{cc}%
		\frac{1|2|3}{1|2|3}\smallskip & \frac{12|3}{45|6}\\
		\frac{4|5|6}{1|2|3} & \frac{45|6}{45|6}%
	\end{array}
	\right)  .
	\]
	Furthermore,
	\[
	\Lambda\left(  C\right)  =\left(
	\begin{array}
		[c]{cc}%
		\left\{  2\right\}  & \left\{  2\right\} \\
		\left\{  2\right\}  & \left\{  1\right\}
	\end{array}
	\right)  \text{ and }C^{eq}=\left(
	\begin{array}
		[c]{cc}%
		\frac{12|3}{12|3}\smallskip & \frac{12|3}{45|6}\\
		\frac{45|6}{12|3} & \frac{45|6}{45|6}%
	\end{array}
	\right)  .
	\]
	
\end{example}

\begin{remark}
	\label{non-null}Since a maximal equalizer $\Lambda\left(  C\right)  $ is
	simultaneously a row and column equalizer, $\Lambda_{ij}\subseteq\Lambda
	_{ij}^{row}\cap\Lambda_{ij}^{col}$ for all $\left(  i,j\right)  .$
	Furthermore, when an equalizer $\lambda\left(  C\right)  =\left(  \lambda
	_{ij}\right)  $ is non-null, all entries in the corresponding equalization
	have constant length greater than $1,$ and consequently, $\lambda_{ij}
	^{row}\cap\lambda_{ij}^{col}\neq\varnothing$ for all $(i,j).$ However, as our
	next example demonstrates, $\lambda_{ij}^{row}\cap\lambda_{ij}^{col}
	\neq\varnothing$ for all $(i,j)$ does not imply the existence of a non-null equalizer.
\end{remark}

\begin{example}
	Although the maximal equalizer $\Lambda\left(  C\right)  $ of the bipartition
	matrix
	\[
	C=\left(
	\begin{array}
		[c]{cc}%
		\frac{1|2|3}{1|2|3}\smallskip & \frac{1|3|2}{4|5|6}\\
		\frac{4|5|6}{1|2|3} & \frac{5|4|6}{4|5|6}%
	\end{array}
	\right)
	\]
	is null, we have
	\[
	\Lambda^{row}\left(  C\right)  =\left(
	\begin{array}
		[c]{cc}%
		\left\{  1\right\}  & \left\{  1\right\} \\
		\left\{  2\right\}  & \left\{  2\right\}
	\end{array}
	\right)  \text{ and }\Lambda^{col}\left(  C\right)  =\left(
	\begin{array}
		[c]{cc}%
		\left\{  1,2\right\}  & \left\{  1,2\right\} \\
		\left\{  1,2\right\}  & \left\{  1,2\right\}
	\end{array}
	\right)
	\]
	so that $\Lambda_{ij}^{row}\cap\Lambda_{ij}^{col}\neq\varnothing$ for all
	$(i,j).$
\end{example}

Let $C$ be a $q\times p$ bipartition matrix $C$ with a non-null maximal
equalizer $\Lambda\left(  C\right)  =\left(  \Lambda_{ij}\right)  .$ Write
$\Lambda_{ij}=\{\Lambda_{ij}^{1}<\cdots<\Lambda_{ij}^{r}\},$ $\Lambda
_{ij}^{row}=\{\left(  \Lambda_{ij}^{row}\right)  ^{1}<\cdots<\left(
\Lambda_{ij}^{row}\right)  ^{s_{i}}\},$ and $\Lambda_{ij}^{col}=\{\left(
\Lambda_{ij}^{col}\right)  ^{1}<\cdots<\left(  \Lambda_{ij}^{col}\right)
^{t_{j}}\}.$ By Remark \ref{non-null}, there is a subsequence $\{v_{i}%
^{1},\ldots,v_{i}^{k_{i}}\}$ $\subseteq\left\{  1,2,\ldots,s_{i}\right\}  $
such that $\Lambda_{i1}=\{\left(  \Lambda_{i1}^{row}\right)  ^{v_{i}^{1}%
}<\cdots<\left(  \Lambda_{i1}^{row}\right)  ^{v_{i}^{k_{i}}}\}$ for each
$i\leq q,$ and a subsequence $\{w_{j}^{1},\ldots,w_{j}^{l_{j}}\}\subseteq
\left\{  1,2,\ldots,l_{j}\right\}  $ such that $\Lambda_{1j}=\{\left(
\Lambda_{1j}^{col}\right)  ^{w_{j}^{1}}<\cdots<\left(  \Lambda_{1j}%
^{col}\right)  ^{w_{j}^{l_{j}}}\}$ for each $j\leq p.$ The following
proposition gives a method for constructing the maximal equalizer from the
maximal row and maximal column equalizers:

\begin{proposition}
	\label{uniqueness}Given a $q\times p$ bipartition matrix $C$ with a non-null
	maximal equalizer $\Lambda\left(  C\right)  =\left(  \Lambda_{ij}\right)  ,$
	write $\Lambda_{ij}=\{\Lambda_{ij}^{1}<\cdots<\Lambda_{ij}^{r}\},$
	$\Lambda_{i1}=\{\left(  \Lambda_{i1}^{row}\right)  ^{v_{i}^{1}}<\cdots<\left(
	\Lambda_{i1}^{row}\right)  ^{v_{i}^{k_{i}}}\}$ for each $i\leq q,$ and
	$\Lambda_{1j}=\{\left(  \Lambda_{1j}^{col}\right)  ^{w_{j}^{1}}<\cdots<\left(
	\Lambda_{1j}^{col}\right)  ^{w_{j}^{l_{j}}}\}$ for each $j\leq p.$ Then for
	all $\left(  i,j\right)  ,$
	
	\begin{enumerate}

		\item $r:=k_{i}=l_{j}$ is constant and
		
		\item $\Lambda_{ij}=\{\left(  \Lambda_{ij}^{row}\right)  ^{v_{i}^{1}}
		<\cdots<\left(  \Lambda_{ij}^{row}\right)  ^{v_{i}^{r}}\}=\{\left(
		\Lambda_{ij}^{col}\right)  ^{w_{j}^{1}}<\cdots<\left(  \Lambda_{ij}
		^{col}\right)  ^{w_{j}^{r}}\}.$
	\end{enumerate}
\end{proposition}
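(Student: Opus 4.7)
For part (1), $\#\Lambda_{ij} = r$ is constant in $(i,j)$ by the definition of an equalizer; hence $k_i = \#\Lambda_{i1} = r$ and $l_j = \#\Lambda_{1j} = r$ for all $i, j$.

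For part (2), fix a row index $i$. By Remark \ref{non-null}, $\Lambda_{ij} \subseteq \Lambda_{ij}^{row}$, so each $\Lambda_{ij}$ is encoded by a position tuple $v^j := (v_i^{1,j}<\cdots<v_i^{r,j})$ in $\{1, \ldots, s_i\}$ via $\Lambda_{ij} = \{(\Lambda_{ij}^{row})^{v_i^{1,j}} < \cdots < (\Lambda_{ij}^{row})^{v_i^{r,j}}\}$. The goal is to show $v^j$ is independent of $j$. The basic tool is the composition identity
\[
\mu_{\Lambda_{ij}} = \mu_{v^j} \circ \mu_{\Lambda_{ij}^{row}},
\]
immediate from the definition of $\mu_\lambda$: coarsening via a subset factors through any intermediate coarsening via a superset. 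Setting $\gamma_i := \mu_{\Lambda_{ij}^{row}}(\beta_{ij})$ (independent of $j$ by the row equalizer property of $\Lambda^{row}$) and $\delta_i := \mu_{\Lambda_{ij}}(\beta_{ij})$ (independent of $j$ by that of $\Lambda$), each $v^j$ lies in the common fibre $V_i := \{v \subseteq \{1, \ldots, s_i\} : |v| = r,\ \mu_v(\gamma_i) = \delta_i\}$. When $\gamma_i$ has no empty blocks, $V_i$ is a singleton, so $v^j$ is automatically constant; put $v_i^t := v^{t,j}$.

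When $\gamma_i$ has empty blocks, $V_i$ may contain several elements, and one must invoke the canonical (lex-min) property of $\Lambda$. I would build an alternative matrix $\tilde\Lambda$ whose $(i,j)$-entry occupies the lex-min position $v^*_i \in V_i$ uniformly across $j$ (and symmetrically the lex-min $w^*_j$ uniformly across $i$), verify that $\tilde\Lambda \in \mathcal{E}(C)$ with the same size $r$, and then compare $\tilde\Lambda$ with $\Lambda$ lexicographically. Because $\Lambda_{ij}^{row}$ is listed increasingly, lex-min of $\Lambda_{ij}$ within $\Lambda_{ij}^{row}$ coincides with lex-min of the position tuple $v^j$; hence the comparison yields $\Lambda = \tilde\Lambda$ and $v^j = v^*_i$ for all $j$, giving the row part of (2). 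The column part is strictly symmetric.

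The main obstacle is showing that $\tilde\Lambda$ is simultaneously a row and column equalizer when the row-lex-min (governed by the $\beta$'s) and column-lex-min (governed by the $\alpha$'s) choices are made independently. This will require a compatibility argument exploiting the inclusions $\Lambda_{ij} \subseteq \Lambda_{ij}^{row} \cap \Lambda_{ij}^{col}$ of Remark \ref{non-null}: simultaneous membership forces the row-lex-min and column-lex-min choices to agree on the positions they jointly determine, ensuring that $\tilde\Lambda$ satisfies both equalizer conditions and is a legitimate competitor to $\Lambda$ in the lex-min comparison.
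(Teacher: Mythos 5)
Your part (1) and your fibre argument for part (2) are correct as far as they go: the composition identity $\mu_{\Lambda_{ij}}=\mu_{v^{j}}\circ\mu_{\Lambda_{ij}^{row}}$ does hold, and when the common row equalization $\gamma_{i}$ has no empty blocks the fibre $V_{i}$ is indeed a singleton, which settles that case cleanly (arguably more transparently than the paper does). But the proposition only has content in the degenerate case — empty blocks in the equalizations are exactly what allow several maximal equalizers to coexist (cf. the example preceding Definition 5) — and that is precisely the case you leave unresolved. The step you flag as "the main obstacle" is a genuine gap, not a routine verification: your candidate $\tilde\Lambda$ is assembled from row-lex-min positions $v_{i}^{*}$, which are determined entirely by the numerators $\beta_{ij}$, and column-lex-min positions $w_{j}^{*}$, determined entirely by the denominators $\alpha_{ij}$. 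Nothing in your argument forces these two independent families of choices to produce the \emph{same} set $\tilde\Lambda_{ij}$ in each entry, and unless they do, $\tilde\Lambda$ is not simultaneously a row and column equalizer, hence not a member of $\mathcal{E}(C)$, and the lex-min comparison with $\Lambda$ cannot even be run. The inclusion $\Lambda_{ij}\subseteq\Lambda_{ij}^{row}\cap\Lambda_{ij}^{col}$ of Remark \ref{non-null} constrains $\Lambda$ itself but says nothing about your independently built competitor.

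The paper closes exactly this hole by organizing the argument differently: instead of choosing optimal positions row-wise and column-wise separately and then trying to reconcile them, it constructs the elements $\Lambda_{ij}^{1}<\cdots<\Lambda_{ij}^{r}$ one at a time by a greedy induction. At step $k$ it takes the \emph{smallest} index $r_{k}$ for which the candidate element lies in $\Lambda_{ij}^{row}\cap\Lambda_{ij}^{col}$ for every $(i,j)$ and the corresponding row positions and column positions match across the entire matrix simultaneously; compatibility is thus built into each selection rather than verified after the fact, and minimality of $\Lambda$ in $\mathcal{E}(C)$ identifies the greedy output with $\Lambda$. If you want to keep your fibre framework, you would need to replace the independent lex-min constructions by an analogous joint selection (e.g., intersect the row fibre $V_{i}$ with the corresponding column fibre entrywise and take the least common admissible position), and prove that such a joint choice exists at every step; as written, that existence claim is missing.
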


\begin{proof}
	First, $\Lambda_{ij}^{row}\cap\Lambda_{ij}^{col}\neq\varnothing$ for all
	$(i,j)$ by Remark \ref{non-null}. Then $\Lambda_{11}^{row}\cap\Lambda
	_{11}^{col}=\{\left(  \Lambda_{11}^{row}\right)  ^{m_{1}}<\cdots<\left(
	\Lambda_{11}^{row}\right)  ^{m_{s}}\}=\{\left(  \Lambda_{11}^{col}\right)
	^{n_{1}}<\cdots<\left(  \Lambda_{11}^{col}\right)  ^{n_{s}}\}.$ Consider the
	corresponding subsets $\{\left(  \Lambda_{1j}^{row}\right)  ^{m_{1}}
	<\cdots<\left(  \Lambda_{1j}^{row}\right)  ^{m_{s}}\}\subseteq\Lambda
	_{1j}^{row}$ for each $j$ and $\{\left(  \Lambda_{i1}^{col}\right)  ^{n_{1}%
	}<$  $\left.  \cdots<\left(  \Lambda_{i1}^{col}\right)  ^{n_{s}}\right\}
	\subseteq\Lambda_{i1}^{col}$ for each $i,$ and let $r_{1}$ be the smallest
	positive integer such that
	
	\begin{itemize}

		\item $(\Lambda_{1j}^{row})^{m_{r_{1}}}\in\Lambda_{1j}^{col}$ for all $j,$
		
		\item $(\Lambda_{i1}^{col})^{n_{r_{1}}}\in\Lambda_{i1}^{row}$ for all $i,$ and
		
		\item $(\Lambda_{ij}^{col})^{m_{r_{1}}}=(\Lambda_{ij}^{row})^{n_{r_{1}}}$ for
		all $\left(  i,j\right)  .$
	\end{itemize}
	
	\noindent Then $n_{r_{1}}=v_{i}^{1}$ for all $i,$ $m_{r_{1}}=w_{j}^{1}$ for
	all $j$, and $\Lambda_{ij}^{1}=\left(  \Lambda_{ij}^{row}\right)  ^{v_{i}^{1}
	}=\left(  \Lambda_{ij}^{col}\right)  ^{w_{j}^{1}}$ for all $\left(
	i,j\right)  .$
	
	Inductively, assume that for some $k>1,$ the set $\{\Lambda_{ij}^{1}
	<\cdots<\Lambda_{ij}^{k-1}\}$ has been constructed for each $\left(
	i,j\right)  .$ Let $r_{k}>r_{k-1}$ be the smallest integer such that
	
	\begin{itemize}

		\item $(\Lambda_{1j}^{row})^{m_{r_{k-1}}}<(\Lambda_{1j}^{row})^{m_{r_{k}}}
		\in\Lambda_{1j}^{col}$ for all $j,$
		
		\item $(\Lambda_{i1}^{col})^{n_{r_{k-1}}}<(\Lambda_{i1}^{col})^{n_{r_{k}}}
		\in\Lambda_{i1}^{row}$ for all $i,$ and
		
		\item $(\Lambda_{ij}^{col})^{m_{r_{k}}}=(\Lambda_{ij}^{row})^{n_{r_{k}}}$ for
		all $\left(  i,j\right)  .$
	\end{itemize}
	
	\noindent Then $n_{r_{k}}=v_{i}^{k}$ for all $i,$ $m_{r_{k}}=w_{j}^{k}$ for
	all $j$, and $\Lambda_{ij}^{k}=(\Lambda_{ij}^{row})^{v_{i}^{k}}=(\Lambda
	_{ij}^{col})^{w_{j}^{k}}$ for all $\left(  i,j\right)  .$ The induction
	terminates after $r$ steps and produces (2).
\end{proof}

\begin{definition}
	\label{transverse-decomp}Given a bipartition $c=\frac{B_{1}|\cdots|B_{r}}{A_{1}|\cdots|A_{r}
	}$  over $\left(  \mathbf{a,b}\right)  $ with $r>1,$ let
	$\lambda\in\left\{  1,2,\ldots,r-1\right\}  ,$ let
	$\mathbf{a}_{1}|\cdots|\mathbf{a}_{p}:=EP_{\mathbf{a}}\left(  A_{\lambda+1}
	\cup\cdots\cup A_{r}\right)$, and let $\mathbf{b}_{1}|\cdots|\mathbf{b}
	_{q}:= EP_{\mathbf{b}}\left(  B_{1}\cup\cdots\cup B_{\lambda}\right).	$
	For each $(i,j)\in \mathfrak{q}\times \mathfrak{p}$, let $	B_{1}^{j}|\cdots|B_{\lambda}^{j}:=(B_{1}\cap\mathbf{b}_{j})|\cdots|(B_{\lambda
	}\cap\mathbf{b}_{j})$ and let  $A_{\lambda+1}^{i}|\cdots|A_{r}^{i}
	:=(A_{\lambda+1}\cap\mathbf{a}_{i})|\cdots|(A_{r}\cap\mathbf{a}_{i}).$
	The \textbf{transverse decomposition of }$c$ \textbf{with respect to}
	$\lambda$ is the formal product
	\begin{equation}
		A\cdot B=\left(
		\begin{tabular}
			[c]{c}%
			$\frac{B_{1}^{1}|\cdots|B_{\lambda}^{1}}{A_{1}|\cdots|A_{\lambda}}$\\
			$\vdots$\\
			$\frac{B_{1}^{q}|\cdots|B_{\lambda}^{q}}{A_{1}|\cdots|A_{\lambda}}$%
		\end{tabular}
		\right)  \left(
		\begin{array}
			[c]{c}%
			\frac{B_{\lambda+1}|\cdots|B_{r}}{A_{\lambda+1}^{1}|\cdots|A_{r}^{1}}\text{
			}\cdots\text{ }\frac{B_{\lambda+1}|\cdots|B_{r}}{A_{\lambda+1}^{p}
				|\cdots|A_{r}^{p}}%
		\end{array}
		\right)  . \label{deco1}%
	\end{equation}
	
\end{definition}

\noindent It follows immediately that
\begin{equation}%
	\begin{array}
		[c]{rll}%
		B_{1}|\cdots|B_{\lambda} & = & B_{1}^{1}|\cdots|B_{\lambda}^{1}\Cup\cdots\Cup
		B_{1}^{q}|\cdots|B_{\lambda}^{q}\text{ and}\vspace{1mm}\\
		A_{\lambda+1}|\cdots|A_{r} & = & A_{\lambda+1}^{1}|\cdots|A_{r}^{1}\Cup
		\cdots\Cup A_{\lambda+1}^{p}|\cdots|A_{r}^{p}.
	\end{array}
	\label{deco2}%
\end{equation}

\begin{definition}
	\label{TP}A pair of bipartition matrices $\left(  A^{q\times1},B^{1\times
		p}\right)  $ is a \textbf{Transverse Pair}\emph{\ }(TP) if $A\cdot B$ is the
	transverse decomposition of some bipartition. A pair of bipartition matrices
	$(A^{q\times s},B^{t\times p})$ is a\textbf{\ Block Transverse Pair} (BTP) if
	there exist $t\times s$ block decompositions $A=\left(  A_{ij}\right)  $ and
	$B=\left(  B_{ij}\right)  $ such that $\left(  A_{ij},B_{ij}\right)  $ is a
	TP  for all $\left(  i,j\right)  $. When $\left(  A,B\right)  $ is a BTP, the
	value of the \textbf{formal product }$A\cdot B$ is the \textbf{formal matrix}
	$AB:=\left(  A_{ij}B_{ij}\right)  ,$ which is a bipartition matrix if
	$A_{ij}B_{ij}$ is a bipartition for all $\left(  i,j\right)  $. When
	$C_{1}\cdots C_{r}$ is a formal product of bipartition matrices,
	$(C_{k},C_{k+1})$ is a BTP for each $k.$ A bipartition matrix $C$ is
	\textbf{indecomposable} if $\Lambda\left(  C\right)  $ is null; otherwise $C$
	is \textbf{decomposable. }A \textbf{factorization}\emph{\ }of $C$ is a formal
	product $C_{1}\cdots C_{r}$ such that $C=C_{1}\cdots C_{r}$ and $C_{k}$ is a
	bipartition matrix for all $k;$ it is \textbf{indecomposable} if\textbf{
	}$C_{k}$ is indecomposable for all $k.$
\end{definition}

\noindent If $\left(  A^{q\times1},B^{1\times p}\right)  $ is a TP, formula
(\ref{relative-complement}) implies $\left(  \mathbf{\#is}(A)\,,\#\mathbf{os}%
(B)\right)  =(p-1,q-1)$. Furthermore, if $(A^{q\times s},B^{t\times p})$ is a
BTP with $A=\left(  A_{ij}^{q_{ij}\times1}\right)  $ and $B=\left(
B_{ij}^{1\times p_{ij}}\right)  $, then $(\#\mathbf{is}(A_{ij}%
)\,,\#\mathbf{os}(B_{ij}))=(p_{ij}-1,q_{ij}-1)$ so that $\#\mathbf{is}\left(
A\right)  =\sum\nolimits_{j}\#\mathbf{is}\left(  A_{ij}\right)  =\sum
\nolimits_{j}p_{ij}-s=p-s$ and $\#\mathbf{os}\left(  B\right)  =\sum
\nolimits_{i}\#\mathbf{os}\left(  B_{ij}\right)  =\sum\nolimits_{i}%
q_{ij}-t=q-t.$ Thus%
\begin{equation}
	(\#\mathbf{is}(A^{q\times s})\,,\#\mathbf{os}(B^{t\times p}))=(p-s,q-t).
	\label{input-output}%
\end{equation}
When $C=C_{1}\cdots C_{r}$ define
\[
\mathbf{is}(C):=\bigcup_{k\in\mathfrak{r}}\mathbf{is}(C_{k})\ \ \text{and}%
\ \ \mathbf{os}(C):=\bigcup_{k\in\mathfrak{r}}\mathbf{os}(C_{k}).
\]
Our next proposition is immediate.

\begin{proposition}
	\label{decomposition-formula}If a bipartition matrix $C$ has a non-null
	equalizer $\lambda\left(  C\right)  =\left(  \lambda_{ij}\right)  $, there is
	a factorization $C=C_{1}\cdots C_{r+1}$, where $r=\#\lambda_{ij}.$ If
	$\lambda\left(  C\right)  =\Lambda\left(  C\right)  $, the factorization
	$C=C_{1}\cdots C_{r+1}$ is unique and indecomposable.
\end{proposition}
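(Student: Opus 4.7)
The plan is to build the factorization explicitly from the equalizer data, using Definition \ref{transverse-decomp} iteratively, and then to appeal to the canonical structure of $\Lambda(C)$ supplied by Proposition \ref{uniqueness} for uniqueness and indecomposability.

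For existence, write $\lambda_{ij} = \{\lambda_{ij}^{1} < \cdots < \lambda_{ij}^{r}\}$ and apply the transverse decomposition at each of the $r$ cut positions. Concretely, cutting $c_{ij} = \beta_{ij}/\alpha_{ij}$ at $\lambda_{ij}^{k}$ splits it into two pieces; iterating the cuts at $\lambda_{ij}^{1}, \ldots, \lambda_{ij}^{r}$ slices each entry into $r+1$ slabs. The row equalizer condition (\ref{roweq}) says that for each row $i$ and each cut level $k$, the partial unions $B_{ij,1} \cup \cdots \cup B_{ij,\lambda_{ij}^{k}}$ agree across $j$ after projection by $\mu$, which ensures that the $k$-th slabs in row $i$ glue into a single row of a bipartition matrix $C_{k}$. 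The column condition (\ref{coleq}) plays the symmetric role for inputs. By (\ref{deco1})--(\ref{deco2}), the induced block decompositions make each consecutive pair $(C_{k}, C_{k+1})$ a Block Transverse Pair whose formal product returns the corresponding piece of $C$; concatenating yields $C = C_{1} \cdots C_{r+1}$.

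For uniqueness under the hypothesis $\lambda = \Lambda$, any factorization $C = D_{1} \cdots D_{r+1}$ distinguishes, within each bipartition $c_{ij}$, a set of $r$ cut positions that together form an equalizer $\mu(C)$ of size $r$. Maximality of $\Lambda(C)$ forbids any equalizer of size larger than $r$, and Proposition \ref{uniqueness} then pins down $\mu(C) = \Lambda(C)$ entry-by-entry via the canonical row and column data, so the factors are determined. For indecomposability, suppose some $C_{k}$ has a non-null maximal equalizer; splicing those cuts into $\Lambda(C)$ produces an equalizer of $C$ of size at least $r+1$, contradicting the maximality of $\Lambda(C)$.

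The main obstacle is verifying the BTP compatibility between successive $C_{k}$ and $C_{k+1}$: the required $t \times s$ block decomposition must be simultaneously compatible with the row-slicing forced by (\ref{roweq}) and the column-slicing forced by (\ref{coleq}). This is precisely the compatibility guaranteed by Proposition \ref{uniqueness}, which realizes $\Lambda(C)$ as the common refinement of $\Lambda^{row}(C)$ and $\Lambda^{col}(C)$ via the matched subsequences $\{v_{i}^{k}\}$ and $\{w_{j}^{k}\}$, so the row-induced and column-induced block structures coincide and each $C_{k} C_{k+1}$ is a well-defined bipartition matrix.
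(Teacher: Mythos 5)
The paper supplies no proof here: the proposition is introduced with ``Our next proposition is immediate,'' so your write-up has to be judged against the machinery it is meant to be immediate from, namely Definition \ref{transverse-decomp}, Definition \ref{TP}, and Proposition \ref{uniqueness}. Your existence argument is exactly the intended one: cut each entry at the positions recorded in $\lambda_{ij}$ and use conditions (\ref{equa1}) to glue the resulting slabs into bipartition matrices $C_1,\ldots,C_{r+1}$ forming successive BTPs, as in (\ref{deco1})--(\ref{deco2}). One attribution should be corrected, though. For a \emph{general} non-null equalizer the BTP compatibility of consecutive factors does not come from Proposition \ref{uniqueness}, which concerns only the canonical maximal equalizer $\Lambda(C)$; it comes from the definition of an equalizer itself, which demands that a single matrix $(\lambda_{ij})$ satisfy the row condition and the column condition simultaneously, so the row-induced and column-induced slicings agree by hypothesis. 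Relatedly, $\Lambda(C)$ is a common sub-equalizer of $\Lambda^{row}(C)$ and $\Lambda^{col}(C)$ in the sense $\Lambda_{ij}\subseteq\Lambda_{ij}^{row}\cap\Lambda_{ij}^{col}$ (Remark \ref{non-null}), not their ``common refinement.''

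On uniqueness your argument has a soft spot that the paper itself does not address. Maximal equalizers need not be unique --- the paper's own $1\times 2$ example with null biblocks exhibits two maximal row equalizers with genuinely different equalizations, hence different slicings --- so from ``the cuts of an arbitrary length-$(r+1)$ factorization form an equalizer of cardinality $r$'' you cannot immediately conclude that this equalizer equals $\Lambda(C)$. Proposition \ref{uniqueness} reconstructs $\Lambda(C)$ from $\Lambda^{row}(C)$ and $\Lambda^{col}(C)$ under the standing assumption that $\Lambda(C)$ is the canonical (lexicographically minimal) maximal equalizer; it does not rule out other maximal equalizers or identify an arbitrary one with $\Lambda(C)$. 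The uniqueness claimed in the proposition is uniqueness of the \emph{indecomposable} factorization, so closing the gap requires showing that a factorization arising from a non-canonical maximal equalizer either coincides with the canonical one or has a decomposable factor. Your indecomposability argument --- splicing a non-null equalizer of some $C_k$ into the cuts of $\Lambda(C)$ would produce a strictly larger equalizer of $C$, contradicting maximality --- is correct and is the intended one.
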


\begin{example}
	\label{decomposable-ex1}The bipartition matrix
	\[
	C=\left(  c_{ij}\right)  =\left(
	\begin{array}
		[c]{cc}%
		\frac{2|3|0|4}{1|3|2|0} & \frac{23|0|4}{7|5|6}\vspace{1mm}\\
		\frac{7|56|0}{13|0|2} & \frac{7|0|56}{5|7|6}%
	\end{array}
	\right)
	\]
	over $(\mathbf{a}_{1}=\{1,2,3\},$ $\mathbf{a}_{2}=\{5,6,7\},$ $\mathbf{b}
	_{1}=\{2,3,4\},$ $\mathbf{b}_{2}=\{5,6,7\})$ has maximal equalizer
	\[
	\left(  \Lambda_{ij}\right)  =\left(
	\begin{array}
		[c]{cc}%
		\left\{  2\right\}  & \left\{  2\right\} \\
		\left\{  1\right\}  & \left\{  2\right\}
	\end{array}
	\right)  .
	\]
	To compute the unique indecomposable factorization, apply the formulas in
	Definition \ref{transverse-decomp} with $c=c_{ij}$ and $\lambda =\Lambda
	_{ij}$ for each $\left( i,j\right) $:%
	\[
	\begin{array}{lll}
		c_{11}:\left\{
		\begin{array}{l}
			\mathbf{b}_{1}^{1}|\mathbf{b}_{2}^{1}=23|0 \\
			\mathbf{a}_{1}^{1}|\mathbf{a}_{2}^{1}|\mathbf{a}_{3}^{1}=0|2|0%
		\end{array}%
		\right.  & \Rightarrow  &
		\begin{array}{l}
			B_{1}^{1}|B_{2}^{1}=2|3,\text{ }B_{1}^{2}|B_{2}^{2}=0|0 \\
			A_{3}^{1}|A_{4}^{1}=0|0,\text{ }A_{3}^{2}|A_{4}^{2}=2|0,\text{ }%
			A_{3}^{3}|A_{4}^{2}=0|0,%
		\end{array}
		\\
		&  &  \\
		c_{12}:\left\{
		\begin{array}{l}
			\mathbf{b}_{1}^{1}|\mathbf{b}_{2}^{1}=23|0 \\
			\mathbf{a}_{1}^{2}|\mathbf{a}_{2}^{2}|\mathbf{a}_{3}^{2}=0|6|0%
		\end{array}%
		\right.  & \Rightarrow  &
		\begin{array}{l}
			B_{1}^{1}|B_{2}^{1}=23|0,\text{ }B_{1}^{2}|B_{2}^{2}=0|0 \\
			A_{3}^{1}=0,\text{ }A_{3}^{2}=6,\text{ }A_{3}^{3}=0,%
		\end{array}
		\\
		&  &  \\
		c_{21}:\left\{
		\begin{array}{l}
			\mathbf{b}_{1}^{2}|\mathbf{b}_{2}^{2}|\mathbf{b}_{3}^{2}=0|07 \\
			\mathbf{a}_{1}^{1}|\mathbf{a}_{2}^{1}|\mathbf{a}_{3}^{1}=0|2|0%
		\end{array}%
		\right.  & \Rightarrow  &
		\begin{array}{l}
			B_{1}^{1}=0,\text{ }B_{1}^{2}=0,\text{ }B_{1}^{3}=7 \\
			A_{2}^{1}|A_{3}^{1}=0|0,\text{ }A_{2}^{2}|A_{3}^{2}=0|2,\text{ }%
			A_{2}^{3}|A_{3}^{3}=0|0,%
		\end{array}
		\\
		&  &  \\
		c_{22}:\left\{
		\begin{array}{l}
			\mathbf{b}_{1}^{2}|\mathbf{b}_{2}^{2}|\mathbf{b}_{3}^{2}=0|0|7 \\
			\mathbf{a}_{1}^{2}|\mathbf{a}_{2}^{2}|\mathbf{a}_{3}^{2}=0|6|0%
		\end{array}%
		\right.  & \Rightarrow  &
		\begin{array}{l}
			B_{1}^{1}|B_{2}^{1}=0|0,\text{ }B_{1}^{2}|B_{2}^{2}=0|0,\text{ }%
			B_{1}^{3}|B_{2}^{3}=7|0 \\
			A_{3}^{1}=0,\text{ }A_{3}^{2}=6,\text{ }A_{3}^{3}=0.%
		\end{array}%
	\end{array}%
	\]
	Then
	\[
	\left(  A_{ij}B_{ij}\right)  =\left(
	\begin{array}
		[c]{lll}%
		\left(
		\begin{array}
			[c]{c}%
			\frac{2|3}{1|3}\vspace{1mm}\\
			\frac{0|0}{1|3}%
		\end{array}
		\right)  \left(
		\begin{array}
			[c]{ccc}%
			\frac{0|4}{0|0} & \frac{0|4}{2|0} & \frac{0|4}{0|0}%
		\end{array}
		\right)  &  & \left(
		\begin{array}
			[c]{c}%
			\frac{23|0}{7|5}\vspace{1mm}\\
			\frac{0|0}{7|5}%
		\end{array}
		\right)  \left(
		\begin{array}
			[c]{ccc}%
			\frac{4}{0} & \frac{4}{6} & \frac{4}{0}%
		\end{array}
		\right) \\
		&  & \\
		\left(
		\begin{array}
			[c]{c}%
			\frac{0}{13}\smallskip\\
			\frac{0}{13}\smallskip\\
			\frac{7}{13}%
		\end{array}
		\right)  \left(
		\begin{array}
			[c]{ccc}%
			\frac{56|0}{0|0} & \frac{56|0}{0|2} & \frac{56|0}{0|0}%
		\end{array}
		\right)  &  & \left(
		\begin{array}
			[c]{c}%
			\frac{0|0}{5|7}\vspace{1mm}\\
			\frac{0|0}{5|7}\vspace{1mm}\\
			\frac{7|0}{5|7}%
		\end{array}
		\right)  \left(
		\begin{array}
			[c]{ccc}%
			\frac{56}{0} & \frac{56}{6} & \frac{56}{0}%
		\end{array}
		\right)
	\end{array}
	\right)
	\]
	and
	\[
	C=AB=\left(
	\begin{array}
		[c]{cc}%
		\frac{2|3}{1|3} & \frac{23|0}{7|5}\vspace{1mm}\\
		\frac{0|0}{1|3} & \frac{0|0}{7|5}\vspace{1mm}\\
		\frac{0}{13} & \frac{0|0}{5|7}\vspace{1mm}\\
		\frac{0}{13} & \frac{0|0}{5|7}\vspace{1mm}\\
		\frac{7}{13} & \frac{7|0}{5|7}%
	\end{array}
	\right)  \left(
	\begin{array}
		[c]{cccccc}%
		\frac{0|4}{0|0} & \frac{0|4}{2|0} & \frac{0|4}{0|0} & \frac{4}{0} & \frac
		{4}{6} & \frac{4}{0}\vspace{1mm}\\
		\frac{56|0}{0|0} & \frac{56|0}{0|2} & \frac{56|0}{0|0} & \frac{56}{0} &
		\frac{56}{6} & \frac{56}{0}%
	\end{array}
	\right)  .
	\]

	\noindent The matrix dimensions $\left(  q,s\right)  =\left(  5,2\right)  $
	and $\left(  t,p\right)  =\left(  2,6\right)  $ together with $\left(
	\#\mathbf{is}\left(  A\right)  \right.  ,$ $\left.  \#\mathbf{os}\left(
	B\right)  \right)  =\left(  4,3\right)  $ verify Formula (\ref{input-output}%
	).  To recover the matrix $C,\ $apply the formulas in (\ref{deco2}) and
	obtain  $c_{ij}=A_{ij}B_{ij}.$ Then $\mathbf{is}\left(  C\right)  =\left\{
	1,2,3\right\}  \cup\left\{  5,6,7\right\}  =\left\{  1,3,5,7\right\}
	\cup\left\{  2,6\right\}  =\mathbf{is}\left(  A\right)  \cup\mathbf{is}\left(
	B\right)  ,$ and $\mathbf{os}\left(  C\right)  =\left\{  2,3,4\right\}
	\cup\left\{  5,6,7\right\}  =\left\{  2,3,7\right\}  \cup\left\{
	4,5,6\right\}  =\mathbf{os}\left(  A\right)  \cup\mathbf{os}\left(  B\right)
	$ as required. Note that the second row of $A$ factors as
	\[
	\left(
	\begin{array}
		[c]{cc}%
		\frac{0|0}{1|3} & \frac{0|0}{7|5}%
	\end{array}
	\right)  =\left(
	\begin{array}
		[c]{cc}%
		\frac{0}{1} & \frac{0}{7}%
	\end{array}
	\right)  \left(
	\begin{array}
		[c]{cccc}%
		\frac{0}{0} & \frac{0}{3} & \frac{0}{5} & \frac{0}{0}%
	\end{array}
	\right)  .
	\]
	Thus the rows (and columns) of an indecomposable matrix may be decomposable.
\end{example}

\noindent The indecomposable factorization of a bipartition is given by

\begin{algorithm}
	Let $\frac{B_{1}|\cdots|B_{r}}{A_{1}|\cdots|A_{r}}$ be a bipartition with
	$r>1$.
	
	For $k=1$ to $r:$
	
	\qquad Let $\mathbf{a}_{k1}|\cdots|\mathbf{a}_{kp_{k}}\,:=EP_{A_{1}\cup
		\cdots\cup A_{k}}A_{k}.$
	
	\qquad Let $\mathbf{b}_{k1}|\cdots|\mathbf{b}_{kq_{k}}:=EP_{B_{k}\cup
		\cdots\cup B_{r}}B_{k}.$
	
	\qquad Form the $q_{k}\times p_{k}$ elementary matrix $C_{k}=\left(
	\frac{\mathbf{b}_{ki}}{\mathbf{a}_{kj}}\right)  .$
	
	Obtain the unique factorization
	\begin{equation}
		\frac{B_{1}|\cdots|B_{r}}{A_{1}|\cdots|A_{r}}=C_{1}\cdots C_{r}.
		\label{bipart-factor}%
	\end{equation}
	
\end{algorithm}

\begin{example}
	\label{decomposable-ex2}To compute the indecomposable factorization of the
	bipartition $\frac{56|7|8}{1|23|4},$ set
	\[
	\begin{array}
		[c]{cc}%
		\mathbf{a}_{11}=1 & \mathbf{b}_{11}|\mathbf{b}_{12}|\mathbf{b}_{13}=56|0|0\\
		\mathbf{a}_{21}|\mathbf{a}_{22}=0|23 & \mathbf{b}_{21}|\mathbf{b}_{22}=7|0\\
		\mathbf{a}_{31}|\mathbf{a}_{32}|\mathbf{a}_{33}|\mathbf{a}_{34}=0|0|0|4 &
		\mathbf{b}_{31}=8.
	\end{array}
	\]
	Then
	\[
	\frac{56|7|8}{1|23|4}=\left(
	\begin{array}
		[c]{c}%
		\frac{56}{1}\smallskip\\
		\frac{0}{1}\smallskip\\
		\frac{0}{1}%
	\end{array}
	\right)  \left(
	\begin{array}
		[c]{cc}%
		\frac{7}{0} & \frac{7}{23}\smallskip\\
		\frac{0}{0} & \frac{0}{23}%
	\end{array}
	\right)  \left(
	\begin{array}
		[c]{cccc}%
		\frac{8}{0} & \frac{8}{0} & \frac{8}{0} & \frac{8}{4}%
	\end{array}
	\right)  =\mathbf{C}_{1}\mathbf{C}_{2}\mathbf{C}_{3}.
	\]
	
\end{example}

\noindent The fact that $\mathbf{C}_{1},$ $\mathbf{C}_{2},$ and $\mathbf{C}_{3}$  are elementary matrices illustrates the fact that the indecomposable factorization of a
bipartition is an elementary product.

\subsection{Partitioning the Entries of a Bipartition Matrix}

\label{partitioning}
An essential action on a bipartition matrix $C$ is to partition its entries.
Such a partitioning is called a \emph{partitioning action on} $C$. Let
$\frac{\beta}{\alpha}=\frac{B_{1}|\cdots|B_{r}}{A_{1}|\cdots|A_{r}}$ be a
bipartition and let $\left(  M^{k},N^{k}\right)  \subseteq\left(  A_{k}%
,B_{k}\right)  $ for some $k.$ The \emph{(partitioning) action of }$\left(
M^{k},N^{k}\right)  $ \emph{on }$\frac{\beta}{\alpha}$ is the bipartition
\begin{equation}
	\partial_{M^{k},N^{k}}\left(  \frac{\beta}{\alpha}\right)  :=\frac
	{\partial_{N^{k}}\beta}{\partial_{M^{k}}\alpha} \label{biface1}%
\end{equation}
(see \ref{partitioning-action}). The pair $\left(  M^{k},N^{k}\right)  $ is
\emph{extreme}\textbf{\ }if $M^{k}$ and $N^{k}$ are extreme, and is
\emph{strongly extreme} when $\left(  M^{k},N^{k}\right)  =\left(
\varnothing,\varnothing\right)  $ or $\left(  M^{k},N^{k}\right)  =\left(
A_{k},B_{k}\right)  .$ When $\left(  M^{1},N^{1}\right)  =\left(
\varnothing,\varnothing\right)  $ or $\left(  M^{r},N^{r}\right)  =\left(
A_{r},B_{r}\right)  ,$ we denote the special strongly extreme cases
$\partial_{M^{1},N^{1}}$ and $\partial_{M^{r},N^{r}}$ by $\eta_{1}$ and
$\eta_{2}$ respectively; thus
\[
\eta_{1}\left(  \frac{\beta}{\alpha}\right)  =\frac{0|B_{1}|\cdots|B_{r}%
}{0|A_{1}|\cdots|A_{r}}\ \ \text{and}\ \ \eta_{2}\left(  \frac{\beta}{\alpha
}\right)  =\frac{B_{1}|\cdots|B_{r}|0}{A_{1}|\cdots|A_{r}|0}.
\]
Given a positive integer $k<r,$ let
\[
\frac{\beta}{\alpha}[k]:=\frac{\beta\lbrack k]}{\alpha\lbrack k]}=\frac
{B_{1}|\cdots|B_{k}\cup B_{k+1}|\cdots|B_{r}}{A_{1}|\cdots|A_{k}\cup
	A_{k+1}|\cdots|A_{r}};
\]
then $\partial_{A_{k},B_{k}}\left(  \frac{\beta}{\alpha}[k]\right)
=\frac{\beta}{\alpha}.$

\begin{definition}Let $C=\left(  c_{ij}\right)  =\left(  \frac{B_{ij}^{1}|\cdots|B_{ij}^{r_{ij}%
	}}{A_{ij}^{1}|\cdots|A_{ij}^{r_{ij}}}\right)  $ be a $q\times p$ bipartition
	matrix, let $\mathcal{U}\subseteq\mathfrak{q}\times\mathfrak{p,\,}$choose a
	family of pairs
	\[
	\left(  \mathbf{M,N}\right)  =\left\{  \left(  M_{ij}^{k_{ij}},N_{ij}^{k_{ij}%
	}\right)  \subseteq\left(  A_{ij}^{k_{ij}},B_{ij}^{k_{ij}}\right)  \right\}
	_{\left(  i,j\right)  \in\mathcal{U}},
	\]
	and define
	\begin{equation}
		\partial_{ij}\left(  c_{ij}\right)  :=\left\{
		\begin{array}
			[c]{ll}%
			\partial_{M_{ij}^{k_{ij}},N_{ij}^{k_{ij}}}\left(  c_{ij}\right)  , & \text{if
			}\left(  i,j\right)  \in\mathcal{U},\\
			\mathbf{Id}, & \text{otherwise.}%
		\end{array}
		\right.  \label{bipartitioning}%
	\end{equation}
	The \textbf{(partitioning) action of }$\left(  \mathbf{M,N}\right)  $ \textbf{on
	}$C$ is the matrix $\partial_{\mathbf{M,N}}\left(  C\right)  =\left(
	\partial_{ij}\left(  c_{ij}\right)  \right)  $. When $\mathcal{U}=\varnothing$
	there is the \textbf{trivial partitioning action} $\mathbf{Id}\left(  C\right)
	=C.$
	\medskip
	\noindent Given $\left(  i,j\right)  \in\mathfrak{q}\times\mathfrak{p,}$ let
	$\mathcal{U}_{i\ast}:=\left\{  i\right\}  \times\mathfrak{p},$ $\mathcal{U}%
	_{\ast j}:=\mathfrak{q}\times\left\{  j\right\}$, and $\mathcal{V}_{ij}\subseteq \mathfrak{q\times p}\setminus(\mathcal{U}_{i\ast}
	\cup\mathcal{U}_{\ast j})$. Then $\partial
	_{\mathbf{M,N}}\left(  C\right)  $ is a/an
	
	\begin{itemize}	
		\item $\left(  i,j\right)  $\textbf{ entry action} if $\mathcal{U}=\left\{(i,j)\right\}  ,$ $M_{ij}^{k}$ is not extreme when $q>1,$ and
		$N_{ij}^{k}$ is not extreme when $p>1$.
		
		\item \textbf{left (respt. right) row} $i$\textbf{\ action} if
		$\mathcal{U}=\mathcal{U}_{i\ast}$, $p\geq2$, $N_{ij}^{k_{ij}}=\varnothing$ for each $j$
		(respt. $N_{ij}^{k_{ij}}=B_{ij}^{k_{ij}}$ for each $j$), and $M_{ij}^{k}$ is
		not extreme for each $j$ when $q>1$.
		
		\item \textbf{row} $i$\textbf{\ action }if $\partial_{\mathbf{M,N}}\left(
		C\right)  $ is a left or right row $i$ action.
		
		\item \textbf{left (respt. right) column} $j$\textbf{\ action} if
		$\mathcal{U}=\mathcal{U}_{\ast j}$, $q\geq2$, $M_{ij}^{k_{ij}}=\varnothing$ for each $i$
		(respt. $M_{ij}^{k_{ij}}=A_{ij}^{k_{ij}}$ for each $i$), and $N_{ij}^{k}$ is
		not extreme for each $i$ when $p>1$.
		
		\item \textbf{column} $j$\textbf{\ action }if\textbf{\ }$\partial_{\mathbf{M,N}
		}\left(  C\right)  $ is a left or right column $j$ action.
		
		\item \textbf{row} $i/$\textbf{column} $j$\textbf{\ action} if $\partial_{\mathbf{M},\mathbf{N}}\left(  C\right)  $ is a row $i$ action for $\mathcal{U}=\mathcal{U}_{i\ast}$,  $\partial_{\mathbf{M},\mathbf{N}}\left(  C\right)  $ is a column $j$ action for $\mathcal{U}=\mathcal{U}_{\ast j}$, the \textbf{pivoting pair} $(M_{ij}^{k_{ij}},N_{ij}^{k_{ij}})$ common to both actions is not strongly extreme, and for some $\mathcal{V}_{ij}$ there exist strongly extreme pairs $\{(M_{st}^{k_{st}},N_{st}^{k_{st}})\}_{(s,t)\in\mathcal{V}_{ij}}$ such that $\partial_{\mathbf{M,N}}\left(C\right)$ is decomposable for $\mathcal{U}=\mathcal{U}_{i\ast} \cup \mathcal{U}_{\ast j}\cup \mathcal{V}_{ij}$.
	\end{itemize}
\end{definition}

\noindent Hence, a left (respt. right) row $i$ action inserts a null partition
block to the left (respt. right) of $B_{ij}^{k_{ij}}$ for each $j$, and dually
for column $j$ actions. Thus left (and right) row or column actions are always decomposable and the pivoting pair in a row $i$/column $j$ action is extreme.

\begin{example}
	\label{bi-22}Let
	\[
	C=\left(
	\begin{array}
		[c]{cc}%
		\frac{0|1}{0|1}\smallskip & \frac{0|1}{3|0}\\
		\frac{3}{1} & \frac{3|0}{3|0}%
	\end{array}
	\right)  ,
	\]
	$\left(  \mathbf{M}_{1}\mathbf{,N}_{1}\right)  =\left\{  (\left\{
	1\right\}  _{11},\varnothing_{11}),(\left\{  1\right\}  _{21},\varnothing
	_{21})\right\}\text{, and} \left(  \mathbf{M}_{2}\mathbf{,N}_{2}\right)
	=\{  (\left\{  1\right\}  _{21},
	\varnothing_{21}),(\left\{  3\right\}_{22},\varnothing_{22})\}  .$
	Then $\partial_{\mathbf{M}_{1}
		,\mathbf{N}_{1}}\left(  C\right)  $ is the right column action
	\[
	\left(
	\begin{array}
		[c]{c}
		\frac{0|0|1}{0|1|0}\smallskip\\
		\frac{0|3}{1|0}
	\end{array}
	\right)
	=\left(
	\begin{array}
		[c]{c}
		\frac{0|0}{0|1}\smallskip\\
		\frac{0|0}{0|1}\smallskip\\
		\frac{0}{1}\smallskip\\
		\frac{0}{1}
	\end{array}
	\right)  \left(
	\begin{array}
		[c]{cc}%
		\frac{1}{0} & \frac{1}{0} \smallskip\\
		\frac{3}{0} & \frac{3}{0}
	\end{array}
	\right) ,
	\]
	$\partial
	_{\mathbf{M}_{2},\mathbf{N}_{2}}\left(  C\right)  $ is the left row action
	\[
	\left(
	\begin{array}
		[c]{cc}%
		\frac{0|3}{1|0} & \frac{0|3|0}{3|0|0}%
	\end{array}
	\right)  =\left(
	\begin{array}
		[c]{cc}%
		\frac{0}{1} & \frac{0}{3}\smallskip\\
		\frac{0}{1} & \frac{0}{3}
	\end{array}
	\right)  \left(
	\begin{array}
		[c]{cccc}%
		\frac{3}{0} & \frac{3}{0} & \frac{3|0}{0|0} & \frac{3|0}{0|0}%
	\end{array}
	\right) ,
	\]
	and $\partial_{\left(  \mathbf{M}_{1},\mathbf{N}_{1}\right)  \cup\left(
		\mathbf{M}_{2}\mathbf{,N}_{2}\right)  }\left(  C\right)  $  with $\mathcal{V}_{21}=\varnothing$ is the row
	$2$/column  $1$ action
	\[
	\left(
	\begin{array}
		[c]{cc}%
		\frac{0|0|1}{0|1|0}\smallskip & \frac{0|1}{3|0}\\
		\frac{0|3}{1|0} & \frac{0|3|0}{3|0|0}%
	\end{array}
	\right)
	=\left(
	\begin{array}
		[c]{cc}%
		\frac{0|0}{0|1}& \frac{0}{3}\smallskip\\
		\frac{0|0}{0|1}& \frac{0}{3}\smallskip\\
		\frac{0}{1}& \frac{0}{3}\smallskip\\
		\frac{0}{1}& \frac{0}{3}
	\end{array}
	\right)  \left(
	\begin{array}
		[c]{cccc}%
		\frac{1}{0} & \frac{1}{0} & \frac{1}{0} & \frac{1}{0}\smallskip\\
		\frac{3}{0} & \frac{3}{0} & \frac{3|0}{0|0} & \frac{3|0}{0|0}
	\end{array}
	\right) .
	\]
\end{example}

\subsection{Coherent Bipartition Matrices}
In this subsection we define the \emph{coherence} of a bipartition matrix in terms of $\Delta_{P}^{\left(  k\right)  }$. Coherence is fundamentally important in our development
because the \emph{dimension} of a coherent bipartition matrix is under control.

Let $C=(\beta_{ij}/\alpha_{ij})$ be a $q\times p$ bipartition matrix. Recall
that a row equalization of $C$ has equal numerators in each row (\ref{roweq})
and a column equalization of $C$ has equal denominators in each column
(\ref{coleq}). Consider the maximal row and column equalizers $\Lambda
^{row}\left(  C\right)  =\left(  \Lambda_{ij}^{row}\right)  $ and
$\Lambda^{col}\left(  C\right)  =\left(  \Lambda_{ij}^{col}\right)  .$ The
$i^{th}$ \emph{input and }$j^{th}$ \emph{output partitions} \emph{of }$C$ are%
\begin{align*}
	\overset{\wedge}{\alpha}_{i}\left(  C\right)   &  :=\mu_{\Lambda_{i1}^{row}
	}(\alpha_{i1})\Cup\cdots\Cup\mu_{\Lambda_{ip}^{row}}(\alpha_{ip})\in
	P^{\prime}(\mathbf{is}(C))\\
	\overset{\vee}{\beta}_{j}\left(  C\right)   &  :=\mu_{\Lambda_{1j}^{col}
	}(\beta_{1j})\Cup\cdots\Cup\mu_{\Lambda_{qj}^{col}}(\beta_{qj})\in P^{\prime
	}(\mathbf{os}(C)),
\end{align*}
which may vary in length with $i$ and $j.$ Consider the maximal equalization
$C^{eq}$ with respect to the maximal equalizer $\Lambda\left(  C\right)
=\left(  \Lambda_{ij}\right)  $.\ The \emph{input and output partitions of
}$C^{eq}$ are
\begin{align*}
	\overset{\wedge}{eq}\left(  C\right)   &  :=\mu_{\Lambda_{11}}(\alpha
	_{11})\Cup\cdots\Cup\mu_{\Lambda_{1p}}(\alpha_{1p})\in P^{\prime}
	(\mathbf{is}(C))\\
	\overset{\vee}{eq}\left(  C\right)   &  :=\mu_{\Lambda_{11}}(\beta_{11}
	)\Cup\cdots\Cup\mu_{\Lambda_{q1}}(\beta_{q1})\in P^{\prime}(\mathbf{os}(C)).
\end{align*}
Then $\overset{\wedge}{eq}\left(  C\right)  $ is obtained by merging the
denominators in the first (or any) row of $C^{eq}$ and $\overset{\wedge
}{\alpha}_{i}\left(  C\right)  $ is obtained by appropriately partitioning
$\overset{\wedge}{eq}\left(  C\right)  $ (and dually for $\overset{\vee
}{eq}\left(  C\right)  $). When $\mathbf{is}(C)$ and $\mathbf{os}(C)$ are non-empty, $\pi\overset{\wedge
}{eq}\left(  C\right)  $ is identified with a cell of $P_{\#\mathbf{is}(C)}$.
In particular, when $C$ is indecomposable (in which case $\Lambda(C)$ is null), $\pi\overset{\wedge}{eq}\left(
C\right)  $ is identified with the top dimensional cell of $P_{\#\mathbf{is}(C)}$.

The \emph{input and output products of }$C$ are
\begin{align*}
	\overset{\wedge}{\mathbf{e}}(C)  &  :=\pi\overset{\wedge}{\alpha}_{q}\left(
	C\right)  \times\cdots\times\pi\overset{\wedge}{\alpha}_{1}\left(  C\right)
	\sqsubseteq P(\mathbf{is}(C))^{\times q}\\
	\overset{\vee}{\mathbf{e}}(C)  &  :=\pi\overset{\vee}{\beta_{1}}\left(
	C\right)  \times\cdots\times\pi\overset{\vee}{\beta_{p}}\left(  C\right)
	\sqsubseteq P(\mathbf{os}(C))^{\times p}.
\end{align*}

\begin{example}
	\label{twomatrices}Let $C=\left(
	\begin{array}
		[c]{ccc}%
		\frac{1|2|3|4|5}{0|0|0|1|2} & \frac{1|2|34|5}{0|0|3|4} & \frac{12|4|3|5}
		{0|0|5|6}%
	\end{array}
	\right)  .$ Then $\Lambda\left(  C\right)  =(\left\{  2,4\right\}  $ $\left\{
	2,3\right\}  $ $\left\{  1,3\right\}  )$ and $C^{eq}=\left(
	\begin{array}
		[c]{ccc}%
		\frac{12|34|5}{0|1|2} & \frac{12|34|5}{0|3|4} & \frac{12|34|5}{0|5|6}%
	\end{array}
	\right)  \ $so that $\overset{\wedge}{eq}\left(  C\right)  =\overset{\wedge
	}{\alpha}_{1}(C)=0|135|246,$ $\overset{\wedge}{\mathbf{e}}(C)=\pi
	\overset{\wedge}{\alpha}_{1}(C)=135|246,$ $\overset{\vee}{eq}(C)=12|34|5,$
	and  $\overset{\vee}{\mathbf{e}}(C)=1|2|3|4|5\times1|2|34|5\times
	12|4|3|5\sqsubseteq_{diag}\Delta_{P}^{\left(  2\right)  }(\overset{\vee
	}{eq}\left(  C\right)  ).$
	
	Consider the matrix $D=\left(
	\begin{array}
		[c]{ccc}%
		\frac{1|2|3|4|5}{0|0|0|1|2} & \frac{1|2|34|5}{0|0|3|4} & \frac{12|4|35}
		{0|0|56}%
	\end{array}
	\right)  $ obtained from $C$ via the replacement $\frac{12|4|3|5}
	{0|0|5|6}\leftarrow\frac{12|4|35}{0|0|56};$ then $C=\partial_{\left\{
		5\right\}  _{13},\left\{  3\right\}  _{13}}\left(  D\right)  $. Furthermore,
	$\Lambda\left(  D\right)  =(\left\{  2\right\}  $ $\left\{  2\right\}  $
	$\left\{  1\right\}  )$ and $D^{eq}=\left(
	\begin{array}
		[c]{ccc}%
		\frac{12|345}{0|12} & \frac{12|345}{0|34} & \frac{12|345}{0|56}%
	\end{array}
	\right)  \ $so that $\overset{\wedge}{eq}\left(  D\right)  =0|123456,$
	$\overset{\wedge}{\mathbf{e}}(D)=123456,$ $\overset{\vee}{eq}(D)=12|345,$ and
	$\overset{\vee}{\mathbf{e}}(D)=1|2|3|4|5\times1|2|34|5\times12|4|35\sqsubseteq
	_{diag}\Delta_{P}^{\left(  2\right)  }(\overset{\vee}{eq}\left(  D\right)
	).$  Let $e=\overset{\vee}{eq}\left(  D\right)  ;$ then $\overset{\vee
	}{eq}(C)\sqsubseteq\partial e.$ Set $\mathbf{x=}\overset{\vee}{\mathbf{e}
	}(C);$ by uniqueness in Proposition \ref{ab}, $\mathbf{x}_{33}=\overset{\vee
	}{\mathbf{e}}(D)$ is the unique diagonal component of $\overset{\vee
	}{eq}\left(  D\right)  $ that can be obtained from $\overset{\vee}{\mathbf{e}
	}(C)$ by a single factor replacement.
\end{example}

\begin{definition}
	\label{defn-coherence}A $q\times p$ bipartition matrix $C$ is
	
	\begin{itemize}

		\item \textbf{row precoherent }if $\mathbf{is}(C)\neq\varnothing$ and
		\begin{equation}
			\overset{\wedge}{\mathbf{e}}(C)\sqsubseteq\Delta_{P}^{(q-1)}(\pi
			\overset{\wedge}{eq}\left(  C\right)  ); \label{deco-iterwedge}%
		\end{equation}

		\item \textbf{maximally row precoherent }if $\mathbf{is}(C)\neq\varnothing$
		and
		\begin{equation}
			\overset{\wedge}{\mathbf{e}}(C)\sqsubseteq_{diag}\Delta_{P}^{(q-1)}
			(\pi\overset{\wedge}{eq}\left(  C\right)  ); \label{mdeco-iterwedge}%
		\end{equation}

		\item \textbf{column precoherent }if $\mathbf{os}(C)\neq\varnothing$ and
		\begin{equation}
			\overset{\vee}{\mathbf{e}}(C)\sqsubseteq\Delta_{P}^{(p-1)}(\pi\overset{\vee
			}{eq}\left(  C\right)  ); \label{deco-itervee}%
		\end{equation}

		\item \textbf{maximally column precoherent }if $\mathbf{os}(C)\neq
		\varnothing$  and
		\begin{equation}
			\overset{\vee}{\mathbf{e}}(C)\sqsubseteq_{diag}\Delta_{P}^{(p-1)}
			(\pi\overset{\vee}{eq}\left(  C\right)  ); \label{mdeco-itervee}%
		\end{equation}

		\item (\textbf{maximally) precoherent }if $C$ is (maximally) row and
		(maximally) column precoherent;
		
		\item \textbf{(maximally) row coherent} if $\mathbf{is}(C)=\varnothing$ or
		its  indecomposable factors are (maximally) row precoherent;
		
		\item \textbf{(maximally) column coherent} if $\mathbf{os}(C)=\varnothing$ or
		its indecomposable factors are (maximally) column precoherent;
		
		\item \textbf{(maximally) coherent }if $C$ is (maximally) row and (maximally)
		column coherent;
		
		\item \textbf{(maximally) totally coherent} if $C$ is (maximally) coherent
		and  its indecomposable factors have (maximally) coherent rows, columns, and
		entries.
	\end{itemize}
\end{definition}

Null bipartition matrices are maximally coherent. An indecomposable
bipartition matrix is coherent if and only if it is precoherent, but a
precoherent decomposable bipartition matrix is not necessarily coherent.

\begin{example}
	\label{not-totally-coherent}The maximally precoherent matrix
	\[
	\left(
	\begin{array}
		[c]{cc}%
		\frac{0|1}{12|0} & \frac{0|1}{45|0}%
	\end{array}
	\right)  =\left(
	\begin{array}
		[c]{cc}%
		\frac{0}{12}\smallskip & \frac{0}{45}\\
		\frac{0}{12} & \frac{0}{45}%
	\end{array}
	\right)  \left(
	\begin{array}
		[c]{cccccc}%
		\frac{1}{0} & \frac{1}{0} & \frac{1}{0} & \frac{1}{0} & \frac{1}{0} & \frac
		{1}{0}%
	\end{array}
	\right)  =C_{1}C_{2}
	\]
	is incoherent because its factor $C_{1}$ is incoherent. Since $C_{1}$ has
	coherent entries but fails to be coherent, coherence is a global property of
	bipartition matrices.
\end{example}

\begin{example}
	\label{D-coherent}The matrices
	\[
	C=\left(
	\begin{array}
		[c]{ccc}%
		\frac{1|2|3|4|5}{0|0|0|1|2} & \frac{1|2|34|5}{0|0|3|4} & \frac{12|4|3|5}
		{0|0|5|6}%
	\end{array}
	\right)  \text{ and }D=\left(
	\begin{array}
		[c]{ccc}%
		\frac{1|2|3|4|5}{0|0|0|1|2} & \frac{1|2|34|5}{0|0|3|4} & \frac{12|4|35}
		{0|0|56}%
	\end{array}
	\right)
	\]
	discussed in Example \ref{twomatrices} are maximally column precoherent since
	$\overset{\vee}{\mathbf{e}}(C)\sqsubseteq_{diag}$\linebreak $\Delta_{P_{5}
	}^{\left(  2\right)  }(\overset{\vee}{eq}(C))$ and $\overset{\vee}{\mathbf{e}
	}(D)\sqsubseteq_{diag}\Delta_{P}^{(2)}(\overset{\vee}{eq}\left(  D\right)
	).$  Furthermore, by Proposition \ref{ab}, part (2), $D$ is the unique
	maximally  column precoherent matrix such that $\partial_{\mathbf{M,N}}\left(
	D\right)   =C$ for some $\left(  \mathbf{M,N}\right)  $.
\end{example}

\begin{proposition}
	A coherent bipartition matrix  is precoherent.
\end{proposition}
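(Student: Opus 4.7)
My plan is to reduce to the indecomposable case and then build up to decomposable matrices via induction on the length of the unique indecomposable factorization provided by Proposition \ref{decomposition-formula}. First I note that if $C$ is indecomposable, the only indecomposable factor of $C$ is $C$ itself, so the coherence hypothesis from Definition \ref{defn-coherence} is literally the precoherence of $C$ (reading the clauses ``$\mathbf{is}(C) = \varnothing$'' and ``$\mathbf{os}(C) = \varnothing$'' as asserting that the corresponding half of precoherence holds vacuously). Thus the indecomposable case is tautological.

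For a decomposable $C$, I would write its unique indecomposable factorization as $C = A \cdot B$ with $A = C_{1}$ indecomposable and $B = C_{2} \cdots C_{r+1}$ precoherent by the inductive hypothesis. The key preliminary step is to establish the structural identities
\[
\pi \overset{\wedge}{eq}(C) = \pi \overset{\wedge}{eq}(A) \Cup \pi \overset{\wedge}{eq}(B), \qquad \overset{\wedge}{\mathbf{e}}(C) = \overset{\wedge}{\mathbf{e}}(A) \times \overset{\wedge}{\mathbf{e}}(B),
\]
together with their column duals, using the BTP block decomposition of Definition \ref{TP} and the ordered set splittings $\mathbf{is}(C) = \mathbf{is}(A) \Cup \mathbf{is}(B)$ and $\mathbf{os}(C) = \mathbf{os}(A) \Cup \mathbf{os}(B)$ from equation (\ref{input-output}). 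The main delicate point here is that the canonical maximal equalizer $\Lambda(C)$ is not simply the ``concatenation'' of $\Lambda(A)$ and $\Lambda(B)$: the constant-length constraint couples them across the full matrix. I would handle this by appealing to Proposition \ref{uniqueness} to locate $\Lambda(C)$ inside the pair $(\Lambda^{row}(C),\Lambda^{col}(C))$, and then verify that this pair does split compatibly across the BTP blocks into the maximal row/column equalizers of $A$ and $B$.

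Given these identities, row precoherence of $C$ becomes
\[
\overset{\wedge}{\mathbf{e}}(A) \times \overset{\wedge}{\mathbf{e}}(B) \sqsubseteq \Delta_{P}^{(q-1)}\bigl(\pi \overset{\wedge}{eq}(A) \Cup \pi \overset{\wedge}{eq}(B)\bigr),
\]
which I intend to derive by combining the row-precoherence inclusions for $A$ and $B$ with the compatibility of the iterated diagonal with the combinatorial join $P_{m} \ast_{c} P_{n} = P_{m+n}$ from Section \ref{topological} (namely, product cells inside the iterated diagonal of a join factor as products of product cells inside the iterated diagonals of the two parts, via the decomposition map $W$). Column precoherence is dual. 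The principal obstacle throughout is bookkeeping: reconciling the \emph{canonical} maximal (row/column) equalizers of $C$ with those of its BTP factors, and then transporting each diagonal containment across the join decomposition, all while tracking how the dimensions $q$ and $p$ distribute among the factors. Once the structural identities are in hand, the compatibility of $\Delta_{P}^{(\cdot)}$ with partitioned unions is a direct consequence of the join structure already developed in the combinatorial join subsection.
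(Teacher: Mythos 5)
Your reduction to the indecomposable case is fine (for an indecomposable matrix, coherence and precoherence coincide by definition), and your instinct to exploit the indecomposable factorization and the multiplicativity of $\Delta_{P}$ on product cells is the right one — it is essentially what the paper does, though the paper argues directly with all $r$ factors at once rather than by induction on $r$. The problem is the pair of structural identities on which your inductive step rests. The identity $\overset{\wedge}{\mathbf{e}}(C)=\overset{\wedge}{\mathbf{e}}(A)\times\overset{\wedge}{\mathbf{e}}(B)$ is false, already on the level of counting factors: $\overset{\wedge}{\mathbf{e}}(C)$ has one factor $\pi\overset{\wedge}{\alpha}_{i}(C)$ per row of $C$ (there are $q$ of them), whereas $\overset{\wedge}{\mathbf{e}}(A)\times\overset{\wedge}{\mathbf{e}}(B)$ has one factor per row of $A$ plus one per row of $B$, and in a nontrivial BTP these numbers do not add up to $q$. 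More importantly, the input partitions of $C$ are computed with the maximal row equalizer of $C$ itself, which is strictly coarser on each block than the equalizers of the factors; so even after regrouping, the factors of $\overset{\wedge}{\mathbf{e}}(C_{k})$ are only \emph{faces} of the corresponding blocks of the factors of $\overset{\wedge}{\mathbf{e}}(C)$, not equal to them. Your first identity $\pi\overset{\wedge}{eq}(C)=\pi\overset{\wedge}{eq}(A)\Cup\pi\overset{\wedge}{eq}(B)$ also cannot be a partitioned union (the lengths differ); what is actually used is that $\pi\overset{\wedge}{eq}(C)$ is the cell $P_{\#\mathbf{is}(C_{1})}\times\cdots\times P_{\#\mathbf{is}(C_{r})}$ of $P_{\#\mathbf{is}(C)}$.

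Because of this, the step you describe as "combining the row-precoherence inclusions for $A$ and $B$ with the compatibility of the iterated diagonal with the join" does not go through as stated: the row coherence of an indecomposable factor $C_{k}$ (which has, say, $k_{q}\geq q$ rows) gives a containment of its $k_{q}$-fold input product in $\Delta_{P}^{(k_{q}-1)}(P_{\#\mathbf{is}(C_{k})})$, while what you need for $C$ is a containment of the coarser $q$-fold product of the blocks $\bar{e}_{q,k}\times\cdots\times\bar{e}_{1,k}$ in $\Delta_{P}^{(q-1)}(P_{\#\mathbf{is}(C_{k})})$. Passing from the finer statement to the coarser one — i.e., showing that grouping consecutive factors of a product cell lying in an iterated diagonal yields a product cell lying in the correspondingly less-iterated diagonal — is the genuine content of the proof (it rests on the combinatorial structure of the S--U diagonal, in the spirit of Proposition \ref{ab}), and it is exactly the step your proposal replaces with a false equality. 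Once that coarsening lemma is in hand, multiplying the containments over $k$ and using multiplicativity of $\Delta_{P}$ finishes the row case, with the column case dual; your worries about reconciling $\Lambda(C)$ with the factors' equalizers via Proposition \ref{uniqueness} are then largely beside the point.
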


\begin{proof}
	When i/o sets are empty, the proof is trivial.
	Let $C$ be a $q\times p$ coherent bipartition matrix with indecomposable
	factorization $C=C_{1}\cdots C_{r}$ and let $\mathbf{is}(C)\neq \varnothing.$ We claim that  $C$ is row
	precoherent.
	If $r=1$ or $q=1$ there is nothing to prove. So
	assume $r,q>1.$ By hypothesis $C_{k}$ is coherent for each $k.$  The input
	product $\overset{\wedge}{\mathbf{e}}(C_{k}),$ which has the form
	\[
	\overset{\wedge}{\mathbf{e}}(C_{k})=(e_{k_{q}}\times\cdots\times e_{k_{q-1}
		+1})\times\cdots\times(e_{k_{1}}\times\cdots\times e_{1}),
	\]
	is a subcomplex of $\Delta_{P}^{\left(  k_{q}-1\right)  }\left(
	P_{\#\mathbf{is}(C_{k})}\right)  $ by the row coherence of $C_{k}.$ On the
	other hand, $\overset{\wedge}{\mathbf{e}}(C)$ has the form
	\[
	\overset{\wedge}{\mathbf{e}}(C)=E_{q}\times\cdots\times E_{1},
	\]
	where $E_{i}=\bar{e}_{i,1}\times\cdots\times\bar{e}_{i,r}$ and $\bar{e}
	_{i,k}\sqsubseteq P_{\#\mathbf{is}(C_{k})}.$ Now for all $i$ and $k,$ each
	factor $e_{\nu}$ of $e_{k_{i}}\times\cdots\times e_{k_{i-1}+1}$ is a cell of
	$\bar{e}_{i,k},$ and an analysis of $\Delta_P$ reveals that
	$e_{k_{i}}\times\cdots\times e_{k_{i-1}+1}\sqsubseteq\Delta_{P}^{\left(
		k_{i}-k_{i-1}-1\right)  }(\bar{e}_{i,k})$ and
	\begin{equation}
		\bar{e}_{q,k}\times\cdots\times\bar{e}_{1,k}\sqsubseteq\Delta_{P}^{\left(
			q-1\right)  }(P_{\#\mathbf{is}(C_{k})}). \label{bars}%
	\end{equation}
	Since $\Delta_{p}$ acts multiplicatively on product cells, the inclusion in
	(\ref{bars}) yields
	\begin{align*}
		\overset{\wedge}{\mathbf{e}}(C)  &  =E_{q}\times\cdots\times E_{1}=\left(
		\bar{e}_{q,1}\times\cdots\times\bar{e}_{q,r}\right)  \times\cdots\times\left(
		\bar{e}_{1,1}\times\cdots\times\bar{e}_{1,r}\right) \\
		&  \sqsubseteq\Delta_{P}^{\left(  q-1\right)  }\left(  P_{\#\mathbf{is}\left(
			C_{1}\right)  }\times\cdots\times P_{\#\mathbf{is}\left(  C_{r}\right)
		}\right)  =\Delta_{P}^{\left(  q-1\right)  }\left(  \pi\overset{\wedge
		}{eq}\left(  C\right)  \right)  ,
	\end{align*}
	and it follows that $C$ is row precoherent. A dual argument for column
	precoherence completes the proof.
\end{proof}

The class of coherent elementary matrices is highly constrained.
\pagebreak

\begin{proposition}
	\label{el-coherence}Let $\mathbf{C}$ be a $q\times p$ elementary matrix with
	$pq>1$.
	
	\begin{enumerate}

		\item If $\mathbf{is}\left(  \mathbf{C}\right)  \neq\varnothing$ and
		$q\geq2,$  then $\mathbf{C}$ is row coherent if and only if $\mathbf{C}$ is
		maximally row  coherent if and only if $\#\mathbf{is}\left(  \mathbf{C}%
		\right)  =1;$
		
		\item If $\mathbf{os}\left(  \mathbf{C}\right)  \neq\varnothing$ and
		$p\geq2,$  then $\mathbf{C}$ is column coherent if and only if $\mathbf{C}$ is
		maximally  column coherent if and only if $\#\mathbf{os}\left(  \mathbf{C}%
		\right)  =1;$
		
		\item If $\mathbf{is}\left(  \mathbf{C}\right)  ,\mathbf{os}\left(
		\mathbf{C}\right)  \neq\varnothing,$ and $p,q\geq2,$ then $\mathbf{C}$ is
		coherent if and only if $\mathbf{C}$ is maximally coherent if and only if
		$\#\mathbf{is}\left(  \mathbf{C}\right)  =\#\mathbf{os}\left(  \mathbf{C}
		\right)  =1$.
	\end{enumerate}
\end{proposition}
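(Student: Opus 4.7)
The central observation is that every entry of an elementary matrix $\mathbf{C}=(\mathbf{b}_{i}/\mathbf{a}_{j})$ has length one, and this collapses all of the coherence data to a single dimension equation on a permutahedral top cell.

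First I would record the structural consequences of $r_{ij}=1$. Since $\Lambda_{ij}\subseteq\mathfrak{r}_{ij}\smallsetminus\{r_{ij}\}=\varnothing$, and likewise for the row and column equalizers, the matrices $\Lambda(\mathbf{C})$, $\Lambda^{row}(\mathbf{C})$, and $\Lambda^{col}(\mathbf{C})$ are all null. In particular $\mathbf{C}$ is indecomposable, so by the discussion immediately preceding Example~\ref{not-totally-coherent}, (row/column/total) coherence of $\mathbf{C}$ is equivalent to (row/column) precoherence of $\mathbf{C}$ itself. Next I would unpack the inputs and outputs. Because $\mu_{\varnothing}(\mathbf{a}_{j})=\mathbf{a}_{j}$ and $\mu_{\varnothing}(\mathbf{b}_{i})=\mathbf{b}_{i}$ as length-one partitions, the definitions of $\overset{\wedge}{\alpha}_{i}$, $\overset{\vee}{\beta}_{j}$, $\overset{\wedge}{eq}$, and $\overset{\vee}{eq}$ give
\[
\pi\overset{\wedge}{eq}(\mathbf{C})=\mathbf{a}_{1}\Cup\cdots\Cup\mathbf{a}_{p}=\mathbf{is}(\mathbf{C}),\qquad \pi\overset{\vee}{eq}(\mathbf{C})=\mathbf{b}_{1}\Cup\cdots\Cup\mathbf{b}_{q}=\mathbf{os}(\mathbf{C}),
\]
each viewed as a single-block partition, i.e.\ as the \emph{top} cell of $P_{\#\mathbf{is}(\mathbf{C})}$ and $P_{\#\mathbf{os}(\mathbf{C})}$ respectively. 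The input and output products reduce to $\overset{\wedge}{\mathbf{e}}(\mathbf{C})=\mathbf{is}(\mathbf{C})^{\times q}$ and $\overset{\vee}{\mathbf{e}}(\mathbf{C})=\mathbf{os}(\mathbf{C})^{\times p}$.

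The decisive step is a dimension count for part~(1). Row precoherence requires $\mathbf{is}(\mathbf{C})^{\times q}\sqsubseteq\Delta_{P}^{(q-1)}(\mathbf{is}(\mathbf{C}))$, so some diagonal component $e^{k_{1}}\times\cdots\times e^{k_{q}}$ of the top cell of $P_{\#\mathbf{is}(\mathbf{C})}$ must contain $\mathbf{is}(\mathbf{C})$ as a face in every slot. Since $\mathbf{is}(\mathbf{C})$ is itself the top cell, each $e^{k_{i}}$ is forced to equal $\mathbf{is}(\mathbf{C})$, and the dimensions of factors of a diagonal component on $P_{\#\mathbf{is}(\mathbf{C})}$ sum to $\#\mathbf{is}(\mathbf{C})-1$. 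This yields $q(\#\mathbf{is}(\mathbf{C})-1)=\#\mathbf{is}(\mathbf{C})-1$, and the hypothesis $q\geq 2$ forces $\#\mathbf{is}(\mathbf{C})=1$. Conversely, if $\#\mathbf{is}(\mathbf{C})=1$ then $P_{\#\mathbf{is}(\mathbf{C})}$ is a single point and both~(\ref{deco-iterwedge}) and~(\ref{mdeco-iterwedge}) hold trivially with matching dimensions, so $\mathbf{C}$ is maximally row coherent. The same argument forces row coherence and maximal row coherence to coincide, proving~(1). Part~(2) is proved by the dual dimension count using $\Delta_{P}^{(p-1)}$ on $P_{\#\mathbf{os}(\mathbf{C})}$, and part~(3) follows immediately by combining (1) and~(2), since $\mathbf{C}$ is coherent iff it is both row and column coherent.

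I do not anticipate a serious obstacle. The only subtle point is recognizing that the weaker containment $\sqsubseteq$ already forces each factor of the enclosing diagonal component to be the top cell $\mathbf{is}(\mathbf{C})$ itself, so the dimension equation is unavoidable and the notions $\sqsubseteq$ and $\sqsubseteq_{diag}$ cannot be separated in this setting. Once this is noted, the argument is essentially bookkeeping about permutahedral diagonals on top cells.
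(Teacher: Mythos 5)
Your proposal is correct and follows essentially the same route as the paper: both reduce to the observation that for an elementary matrix $\pi\overset{\wedge}{eq}(\mathbf{C})=\mathbf{is}(\mathbf{C})$ is the top cell, so the $q$-fold product $\mathbf{is}(\mathbf{C})^{\times q}$ can sit inside $\Delta_{P}^{(q-1)}(\mathbf{is}(\mathbf{C}))$ only when $\#\mathbf{is}(\mathbf{C})=1$, with part (2) dual and part (3) the conjunction. You merely make explicit the dimension count and the indecomposability remark that the paper leaves implicit.
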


\begin{proof}
	(1) Since $\mathbf{C}$ is elementary, $\pi\overset{\wedge}{eq}\left(
	\mathbf{C}\right)  =\pi\overset{\wedge}{\alpha}_{i}\left(  \mathbf{C}\right)
	=\mathbf{is}(\mathbf{C})$ for all $i.$ But $q\geq2$ implies $\#\mathbf{is}
	\left(  \mathbf{C}\right)  =1$ if and only if
	\[
	\overset{\wedge}{\mathbf{e}}\left(  \mathbf{C}\right)  =\underset{q\text{
			factors}}{\underbrace{\mathbf{is}\left(  \mathbf{C}\right)  \times\cdots
			\times\mathbf{is}\left(  \mathbf{C}\right)  }}=\Delta_{P}^{\left(  q-1\right)
	}(\mathbf{is}\left(  \mathbf{C}\right)  )
	\]
	if and only if $\mathbf{C}$ is maximally row coherent. The proof of (2) is
	completely dual and the proof of (3) follows immediately from (1) and (2).
\end{proof}

\noindent In fact, every coherent elementary matrix is maximally totally coherent.

Now if $\mathbf{C}$ is a $q\times p$ (maximally) coherent elementary matrix
such that $q\geq2$ and $\mathbf{is}\left(  \mathbf{C}\right)  \neq
\varnothing,$ then $\#\mathbf{is}\left(  \mathbf{C}\right)  =1$ by Proposition
\ref{el-coherence}. Since $\mathbf{C}$ has constant denominators in each
column, row coherence and $\#\mathbf{is}\left(  \mathbf{C}\right)  =1$ imply
that denominators in exactly one column of $\mathbf{C}$ are non-empty constant
singletons and all other denominators are null. Dually, if $p\geq2$ and
$\mathbf{os}\left(  \mathbf{C}\right)  \neq\varnothing,$ numerators in exactly
one row of $\mathbf{C}$ are non-empty constant singletons and all other
numerators are null. Thus if $p,q\geq2$ and $\mathbf{is}\left(  \mathbf{C}%
\right)  =\mathbf{os}\left(  \mathbf{C}\right)  =\left\{  1\right\}  ,$
exactly one row of $\mathbf{C}$ has the form $\left(  \frac{1}{0}\cdots
\frac{1}{0}\text{ }\frac{1}{1}\text{ }\frac{1}{0}\cdots\frac{1}{0}\right)  $,
exactly one column of $\mathbf{C}$ has the form $\left(  \frac{0}{1}%
\cdots\frac{0}{1}\text{ }\frac{1}{1}\text{ }\frac{0}{1}\cdots\frac{0}%
{1}\right)  ^{T}$, and all other entries are null. For example, when $\left(  p,q\right)
=\left(  3,4\right)  $ we have
\[
\mathbf{C}=\left(
\begin{array}
	[c]{cccc}%
	\frac{0}{0}\smallskip & \frac{0}{0} & \frac{0}{1} & \frac{0}{0}\vspace{1mm}\\
	\frac{1}{0}\smallskip & \frac{1}{0} & \frac{1}{1} & \frac{1}{0}\vspace{1mm}\\
	\frac{0}{0}\smallskip & \frac{0}{0} & \frac{0}{1} & \frac{0}{0}
\end{array}
\right)  .
\]
By similar calculations, if $p,q\geq2,$ $\mathbf{is}\left(  \mathbf{C}\right)
=\varnothing$ and $\mathbf{os}\left(  \mathbf{C}\right)  =\left\{  1\right\}
,$ exactly one row of $\mathbf{C}$ has the form $\left(  \frac{1}{0}%
\cdots\frac{1}{0}\right)  $ and all other entries are null.

Bipartitions viewed as $1\times 1 $ matrices are (trivially) precoherent but not necessarily coherent. Indeed,
the set of coherent bipartitions is highly constrained.

\begin{proposition}
	\label{coh-bipartition}A bipartition $c=\frac{B_{1}|\cdots|B_{r}}{A_{1}
		|\cdots|A_{r}}$ is coherent whenever one of the following conditions is satisfied:
	
	\begin{enumerate}

		\item $r=1.$
		
		\item $A_{1}=\cdots=A_{i-1}=B_{i+1}=\cdots=B_{r}=\varnothing$ for $r\geq2$
		and  some $i\leq r.$
		
		\item $\#A_{1},\ldots,\#A_{r-1},\#B_{2},\ldots,\#B_{r}\in\{0,1\}$ for
		$r\geq2.$
	\end{enumerate}
\end{proposition}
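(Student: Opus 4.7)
The plan is to apply the algorithm (\ref{bipart-factor}) to obtain the unique indecomposable factorization $c = C_{1}\cdots C_{r}$, in which each $C_{k}$ is a $q_{k}\times p_{k}$ elementary matrix with $p_{k} = l(EP_{A_{1}\cup\cdots\cup A_{k}}A_{k})$ and $q_{k} = l(EP_{B_{k}\cup\cdots\cup B_{r}}B_{k})$, and then verify precoherence of each $C_{k}$ via Proposition \ref{el-coherence}. Two observations I will reuse are that $p_{1} = q_{r} = 1$ always, since a non-empty set embeds into itself as a single block and $\varnothing$ embeds into $\varnothing$ as a single $0$-block by Definition \ref{embed}; and that if $\mathbf{is}(C_{k}) = \varnothing$ (respectively $\mathbf{os}(C_{k}) = \varnothing$), then $C_{k}$ is automatically row (respectively column) coherent by the first clause of Definition \ref{defn-coherence}.

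Case (1) is immediate: when $r = 1$, $c$ is its own indecomposable $1\times 1$ factor, and both inclusions (\ref{deco-iterwedge}) and (\ref{deco-itervee}) collapse to identities because $\Delta_{P}^{(0)} = \mathbf{Id}$, so $c$ is precoherent.

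For case (2), I would split the factorization at $k = i$. For $k < i$, the hypothesis $A_{1} = \cdots = A_{i-1} = \varnothing$ forces $A_{1}\cup\cdots\cup A_{k} = \varnothing$, whence $p_{k} = 1$ and $\mathbf{is}(C_{k}) = \varnothing$; column precoherence is then trivial and row coherence is automatic. The dual argument for $k > i$ uses $B_{i+1} = \cdots = B_{r} = \varnothing$ to give $q_{k} = 1$ and $\mathbf{os}(C_{k}) = \varnothing$. The middle factor $C_{i} = (B_{i}/A_{i})$ is $1\times 1$ and reduces to case (1).

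Case (3) is the main case. Here I exploit $p_{1} = q_{r} = 1$ to conclude that $C_{1}$ needs only row coherence and $C_{r}$ only column coherence. By Proposition \ref{el-coherence}, the remaining obstructions are $\#A_{k} \leq 1$ whenever $q_{k} \geq 2$ and $\mathbf{is}(C_{k}) \neq \varnothing$, and $\#B_{k} \leq 1$ whenever $p_{k} \geq 2$ and $\mathbf{os}(C_{k}) \neq \varnothing$. These constraints are relevant only for $1 \leq k \leq r-1$ and $2 \leq k \leq r$, respectively, which is exactly the hypothesis of (3). The main obstacle is the combinatorial bookkeeping that extracts $p_{k}$, $q_{k}$, $\mathbf{is}(C_{k})$, and $\mathbf{os}(C_{k})$ from Definition \ref{embed} and tracks which clause of Definition \ref{defn-coherence} each factor invokes; once that is done, the proof reduces to a routine application of Proposition \ref{el-coherence}.
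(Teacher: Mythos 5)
Your proposal is correct and follows essentially the same route as the paper: factor $c$ into elementary matrices via the algorithm in (\ref{bipart-factor}) and verify each factor against the elementary coherence criteria of Proposition \ref{el-coherence}, using $p_{1}=q_{r}=1$ and the automatic row/column coherence of factors with empty i/o sets. The paper only works out $r\leq 2$ explicitly and dismisses $r>2$ with ``similar arguments,'' so your uniform treatment of general $r$ is, if anything, more complete; the paper's additional ``Conversely'' remarks address necessity, which is not part of the stated claim.
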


\begin{proof}
	When $r=1$ there is nothing to prove. When $r=2,$ consider the indecomposable
	factorization $c=C_{1}C_{2}$. Since $C_{1}$ and $C_2$ are elementary column and row matrices, respectively, their coherence follows easily from (2) and (3) independently. Hence $c$ is 	coherent.  Conversely, if $\#A_{1}\geq2$ and $\#B_{2}\geq1$, the
	partition 	$EP_{B_{1}\cup B_{2}}B_{1}$ has $k\geq2$ blocks so that $C_{1}$ has $k$ rows.
	Since the  $k$-fold product cell $\overset{\wedge}{\mathbf{e}}\left(
	C_{1}\right)   =A_{1}\times\cdots\times A_{1}\sqsubseteq\hspace*{-0.15in}%
	\diagup\Delta_{P}^{\left(  k-1\right)  }(\pi\overset{\wedge}{eq}\left(
	C_{1}\right)  )$,  it follows that $C_{1}$ is incoherent and so is $c.$ On the
	other hand,  $\#A_{1}\geq1$ and $\#B_{2}\geq2$ imply that $C_{2}$ is incoherent by
	a symmetric  argument. Cases with $r>2$ follow by similar arguments.
\end{proof}

\noindent Note that Condition (2) reduces to $A_{1}=\cdots=A_{r-1}%
=\varnothing$ when $i=r,$ and in particular, to $A_{k}=\varnothing$ for all
$k$ when $\mathbf{is}\left(  c\right)  =\varnothing;$ dually, Condition (2)
reduces to $B_{2}=\cdots=B_{r}=\varnothing$ when $i=1,$ and in particular, to
$B_{k}=\varnothing$ for all $k$ when $\mathbf{os}\left(  c\right)
=\varnothing.$ Thus every bipartition with a null input set or a null output
set is coherent. A coherent bipartition satisfying Condition (2) has the form%
\[
\frac{\text{\ }B_{1}|\cdots|B_{i}|0|\cdots|0}{0|\cdots|0|A_{i}|\cdots|A_{r}}.
\]
An example of a coherent bipartition satisfying Condition (3) is%
\[
\frac{123|0|0|4|5\text{ \ \ }}{\text{ \ \ }1|0|2|0|345}.
\]

\subsection{Coheretization}

\label{coheretizing} In this subsection we discuss conditions under which a sequence of
partitioning actions transforms a bipartition matrix into a precoherent matrix.

\begin{definition}
	\label{coheretizable}A bipartition matrix $C$ is (\textbf{pre)coheretizable}
	if there exists a sequence of partitioning operators $\partial_{\mathbf{M_{1}
			,N_{1}}},\dots,\partial_{\mathbf{M}_{k}\mathbf{,N}_{k}}$ such that
	\[
	C^{\prime}=\partial_{\mathbf{M}_{k}\mathbf{,N}_{k}}\cdots\partial
	_{\mathbf{M_{1},N_{1} }}\left(  C\right)
	\]
	is (pre)coherent; the matrix $C^{\prime}$ is a  (\textbf{pre)coheretization of
	}$C$.
\end{definition}

\noindent Coherent matrices $C$ are trivially coheretizable via the identity
partitioning action $\mathbf{Id}\left(  C\right)  $. An indecomposable
precoheretization is a coheretization.

Consider a $q\times p$ semi-null elementary matrix $C$ with $\mathbf{is}%
\left(  C\right)  \neq\varnothing,$ an augmented partition $\alpha\in
P^{\prime}\left(  \mathbf{\#is}\left(  C\right)  \right)  ,$ and a diagonal
component $X_{q}:=x_{q}\times\cdots\times x_{1}\sqsubseteq_{diag}\Delta
_{P}^{\left(  q-1\right)  }\left(  P_{\#\mathbf{is}(C)}\right)  .$ Since the
rows of $C$ are equal and elementary, there is a row $i$ partitioning action
$\partial_{\mathbf{M}_{i}\mathbf{,N}_{i}}\left(  C\right)  $ for each $i$ such
that $\pi\overset{\wedge}{\alpha}_{i}(\partial_{\mathbf{M}_{i}\mathbf{,N}_{i}%
}\left(  C\right)  )=x_{i}.$ Hence $C^{\prime}:=\partial_{\mathbf{M}
	_{q}\mathbf{,N}_{q}}\cdots\partial_{\mathbf{M_{1},N_{1}}}\left(  C\right)  $
is a precoheretization such that $\overset{\wedge}{\mathbf{e}}(C^{\prime
})=X_{q}.$
In fact, $C^{\prime}$ is a coheretization: When $q=2,$ the partition factors
of $X_{2}=x_{2}\times x_{1}$ are distinct and $\Lambda\left(  C^{\prime
}\right)  $ is null. When $q=3,$ consider $X_{3}=x_{3}\times x_{2}\times
x_{1}\sqsubseteq_{diag}\left(  \Delta_{P}\times\mathbf{1}\right)  \Delta
_{P}\left(  P_{\#\mathbf{is}(C)}\right)  .$ Let $D$ be the matrix obtained
from $C$ by deleting the first row, and let $D^{\prime}=\partial
_{\mathbf{M}_{3}\mathbf{,N}_{3}}\partial_{\mathbf{M_{2},N_{2}}}\left(
D\right)  .$ Then $\overset{\wedge}{\mathbf{e}}(D^{\prime})=x_{3}\times x_{2}$
and $\Lambda\left(  D^{\prime}\right)  $ is null. Hence $\Lambda\left(
C^{\prime}\right)  $ is null, and so on inductively. It follows that
$C^{\prime}$ is indecomposable. The dual statement with $\mathbf{os}%
(C)\neq\varnothing$ follows by a similar argument.

\begin{example}
	Consider the semi-null incoherent matrix
	\[
	C=\left(
	\begin{array}
		[c]{cc}%
		\frac{0}{12} & \frac{0}{45}\vspace{1mm}\\
		\frac{0}{12} & \frac{0}{45}%
	\end{array}
	\right)  .
	\]
	To construct the coheretization $C^{\prime}$ such that $\overset{\wedge
	}{\mathbf{e}}\left(  C^{\prime}\right)  =1|24|5\times14|25\sqsubseteq
	_{diag}\Delta_{P}\left(  P_{4}\right)  ,$ let $(\mathbf{M_{1},N_{1}})=\left\{
	(\{1\}_{i1},\varnothing_{i1}),(\{4\}_{i2},\varnothing_{i2})\right\}
	_{i=1,2}$  and $(\mathbf{M_{2},N_{2}})=\{(\{2\}_{21},\varnothing_{21}^{2}),$
	$(\varnothing_{22}^{1},\varnothing_{22}^{1})\}.$ Then
	\[
	C^{\prime}=\partial_{\mathbf{M_{2},N_{2}}}\partial_{\mathbf{M_{1},N_{1}}
	}\left(  C\right)  =\left(
	\begin{array}
		[c]{cc}%
		\frac{0|0}{1|2} & \frac{0|0}{4|5}\vspace{1mm}\\
		\frac{0|0|0}{1|2|0} & \frac{0|0|0}{0|4|5}%
	\end{array}
	\right)  .
	\]
	
\end{example}
\noindent When $p\geq2,$ a coherent elementary row matrix has the form $C=\left(
\frac{\mathbf{b}}{\mathbf{a}_{1}}\cdots\frac{\mathbf{b}}{\mathbf{a}_{p}%
}\right)  ,$ where $\#\mathbf{b\in}\left\{  0,1\right\}  .$ When
$\#\mathbf{b}=1,$ a family of pairs $\left(  \mathbf{M,N}\right)  =\left\{
\left(  M^{j},\varnothing\right)  \subseteq\left(  \mathbf{a}_{j}%
,\mathbf{b}\right)  \right\}  _{j\in\mathfrak{p}}$ defines a row action
\begin{align}
	\partial_{\mathbf{M,N}}\left(  C\right)   &  =\left(  \frac{0|\mathbf{b}
	}{M^{1}|\mathbf{a}_{1}\smallsetminus M^{1}}\text{ }\cdots\text{ }
	\frac{0|\mathbf{b}}{M^{p}|\mathbf{a}_{p}\smallsetminus M^{p}}\right)
	\bigskip\nonumber\\
	&  =\left(
	\begin{array}
		[c]{ccc}%
		\frac{0}{M^{1}} & \cdots & \frac{0}{M^{p}}\vspace{1mm}\\
		\frac{0}{M^{1}} & \cdots & \frac{0}{M^{p}}%
	\end{array}
	\right)  \left(  \frac{\mathbf{b}}{A_{11}}\frac{\mathbf{b}}{A_{12}}\text{
	}\cdots\text{ }\frac{\mathbf{b}}{A_{p1}}\frac{\mathbf{b}}{A_{p2}}\right)
	=C_{1}C_{2}, \label{rowdecomp}%
\end{align}
where $A_{j1}|A_{j2}:=EP_{\mathbf{a}_{j}}\left(  \mathbf{a}_{j}\smallsetminus
M^{j}\right)  .$ For example, when $\#M^{j}=1$ for all $j,$ the factor $C_{1}$
is incoherent whenever $\#\mathbf{is}\left(  C_{1}\right)  \geq2.$ However,
given a \emph{primitive }component $X\sqsubseteq_{diag}\Delta_{P}\left(
P_{\cup M^{j}}\right)  $, a row action $C_{1}^{\prime}=\partial_{\mathbf{M,N}%
}\left(  C_{1}\right)  $ such that $\overset{\wedge}{\mathbf{e}}\left(
C_{1}^{\prime}\right)  =X$ can be expressed as compositions of $\eta_{\ast}$
actions on the entries of $C_{1}.$

\begin{example}
	\label{eta-action}Let $C=\left(  \tfrac{1}{12}\text{ }\tfrac{1}{45}\text{
	}\tfrac{1}{78}\right)  $ and $\left(  \mathbf{M,N}\right)  =\left\{  \left(
	\left\{  j\right\}  ,\varnothing\right)  :j=1,4,7\right\}  ;$ then
	\[
	\partial_{\mathbf{M,N}}\left(  C\right)  =\left(  \tfrac{0|1}{1|2}\text{
	}\tfrac{0|1}{4|5}\text{ }\tfrac{0|1}{7|8}\right)  =\left(
	\begin{array}
		[c]{ccc}%
		\frac{0}{1} & \frac{0}{4} & \frac{0}{7}\vspace{1mm}\\
		\frac{0}{1} & \frac{0}{4} & \frac{0}{7}%
	\end{array}
	\right)  \left(  \tfrac{1}{0}\text{ }\tfrac{1}{2}\text{ }\tfrac{1}{0}\text{
	}\tfrac{1}{5}\text{ }\tfrac{1}{0}\text{ }\tfrac{1}{8}\right)  =C_{1}C_{2}.
	\]
	The coheretization $C_{1}^{\prime}$ of $C_{1}$ such that $\overset{\wedge
	}{\mathbf{e}}\left(  C_{1}^{\prime}\right)  =1|4|7\times147\sqsubseteq
	_{diag}\Delta_{P}^{\left(  1\right)  }\left(  P_{3}\right)  $ is given by
	\[
	C_{1}^{\prime}=\left(
	\begin{array}
		[c]{ccc}%
		\frac{0}{1} & \frac{0}{4} & \frac{0}{7}\vspace{1mm}\\
		\eta_{2}\eta_{2}\left(  \frac{0}{1}\right)  & \eta_{1}\eta_{2}\left(  \frac
		{0}{4}\right)  & \eta_{1}\eta_{1}\left(  \frac{0}{7}\right)
	\end{array}
	\right)  =\left(
	\begin{array}
		[c]{ccc}%
		\frac{0}{1} & \frac{0}{4} & \frac{0}{7}\vspace{1mm}\\
		\frac{0|0|0}{1|0|0} & \frac{0|0|0}{0|4|0} & \frac{0|0|0}{0|0|7}%
	\end{array}
	\right)  .
	\]
	
\end{example}

More generally, let $\mathbf{C}=\left(  \mathbf{b}_{i}/\mathbf{a}_{j}\right)  $ be a
$q\times p$ elementary matrix over $(\mathbf{a}_{\ast},\mathbf{b}_{\ast}).$ If
$\mathbf{a}_{j}\neq\varnothing,$ write $\mathbf{a}_{j}=\{a_{j_{1}}^{j}%
<\cdots<a_{j_{s}}^{j}\};$ if $\mathbf{b}_{i}\neq\varnothing,$ write
$\mathbf{b}_{i}=\{b_{i_{1}}^{i}<\cdots<b_{i_{t}}^{i}\}.$ There exists a unique
$q\times p$ totally coherent bipartition matrix $prim(\mathbf{C}),$ called the
\emph{primitive coheretization of} $\mathbf{C},$ with the following properties:

\begin{itemize}
	\item $\overset{\wedge}{\alpha}_{1}\left(  prim(\mathbf{C})\right)  =\mathbf{a}_{1}%
	\cup\cdots\cup\mathbf{a}_{p}$ and $\overset{\wedge}{\alpha}_{i}\left(
	prim(\mathbf{C})\right)  =a_{1}^{1}|\cdots|a_{u_{i}}^{1}\Cup\cdots\Cup a_{1}^{p}%
	|\cdots|a_{u_{i}}^{p}$ for $2\leq i\leq q,$ where $\pi(a_{1}^{j}%
	|\cdots|a_{u_{i}}^{j})=a_{j_{1}}^{j}|\cdots|a_{j_{s}}^{j}$ when $\mathbf{a}%
	_{j}\neq\varnothing,$
	
	\item $\overset{\vee}{\beta}_{1}\left(  prim(\mathbf{C})\right)  =\mathbf{b}_{1}\cup
	\cdots\cup\mathbf{b}_{q}$ and $\overset{\vee}{\beta}_{j}\left(  prim(\mathbf{C})\right)
	=b_{v_{j}}^{1}|\cdots|b_{1}^{1}\Cup\cdots\Cup b_{v_{j}}^{q}|\cdots|b_{1}^{q}$
	for $2\leq j\leq p,$ where $\pi(b_{v_{j}}^{i}|\cdots|b_{1}^{i})=b_{i_{t}}%
	^{i}|\cdots|b_{i_{1}}^{i}$ when $\mathbf{b}_{i}\neq\varnothing,$
	
	\item non-empty partition blocks in the entries of $prim\left(  \mathbf{C}\right)  $ are
	in left-most positions,
	
	\item and $|prim(\mathbf{C})|=\#\mathbf{is}(\mathbf{C})+\#\mathbf{os}(\mathbf{C})-1.$
\end{itemize}
\begin{example}
	The primitive coheretization of the elementary matrix
	\[
	\mathbf{C}=\left(
	\begin{array}
		[c]{cccc}
		\frac{1}{1}\smallskip & \frac{1}{2} & \frac{1}{0} & \frac{1}{3}\vspace{1mm}\\
		\frac{23}{1} & \frac{23}{2} & \frac{23}{0} & \frac{23}{3}%
	\end{array}
	\right)  \ \text{is}\ \
	prim\left(  \mathbf{C}\right)  =
	\left(
	\begin{array}
		[c]{cccc}
		\frac{1}{1}\smallskip & \frac{0|0|1}{0|2|0} & \frac{0|0|1}{0|0|0} &
		\frac{0|0|1}{0|0|3}\\
		\frac{3|2|0}{1|0|0} & \frac{3|2|0}{0|2|0} & \frac{3|2|0}{0|0|0} &
		\frac{3|2|0}{0|0|3}
	\end{array}
	\right)
	\]
	so that
	$
	\overset{\wedge}{\mathbf{e}}\left(  prim\left(  \mathbf{C}\right)  \right)  =1|2|3\times
	123\text{ \ and \ }\overset{\vee}{\mathbf{e}}\left( prim\left(  \mathbf{C}\right)
	\right)  =123\times3|2|1\times3|2|1\times3|2|1.
	$
\end{example}

\subsection{Generalized Bipartition Matrices}

Given an elementary bipartition $\mathbf{c}=$ $\mathbf{b/a},$ choose a bipartition
$c^{1}\in P_{r}^{\prime}\left(  \mathbf{a}\right)  \times P_{r}^{\prime
}\left(  \mathbf{b}\right)  $ for some $r\geq1$ (the \emph{trivial choice} is
$c^{1}=\mathbf{c}$) and let $c^{1}=\mathbf{C}_{1}^{1}\cdots \mathbf{C}_{r}^{1}$ be the
elementary factorization. The $1$\emph{-formal product} $T^{1}\left(
\mathbf{c}\right)  :=\mathbf{C}_{1}^{1}\cdots \mathbf{C}_{r}^{1}$ is a $1$\emph{-formal
	bipartition on} $\mathbf{c}$.

For each $k,$ write $\mathbf{C}_{k}^{1}=(\mathbf{c}_{k}^{ij}),$ choose a $1$-formal
bipartition $T^{1}(\mathbf{c}_{k}^{ij})$ for each $\left(  i,j\right)  ,$ and
form the $1$-\emph{formal (bipartition) matrix} $C_{k}^{2}=T^{1}\left(
\mathbf{C}_{k}^{1}\right)  :=(T^{1}(\mathbf{c}_{k}^{ij}))$. The $2$\emph{-formal
	product} $c^{2}=T^{2}\left(  \mathbf{c}\right)  :=C_{1}^{2}\cdots C_{r}^{2}$
of $1$-formal matrices is a $2$\emph{-formal bipartition on }$\mathbf{c}$.

For each $\left(  i,j,k\right)  ,$ choose a $2$-formal bipartition
$T^{2}(\mathbf{c}_{k}^{ij})$ and form the $2$\emph{-formal matrix} $C_{k}%
^{3}=T^{2}\left(  \mathbf{C}_{k}^{1}\right)  :=(T^{2}(\mathbf{c}_{k}^{ij}))$. Given an
elementary matrix $\mathbf{C}$ and a $2$-formal matrix $C=T^{2}\left(
\mathbf{C}\right)  ,$ define $\mathbf{is}\left(  C\right)  :=\mathbf{is}%
\left(  \mathbf{C}\right)  $ and $\mathbf{os}\left(  C\right)  :=\mathbf{os}%
\left(  \mathbf{C}\right)  .$ Recall that if $\left(  A^{q\times1},B^{1\times
	p}\right)  $ is a TP of bipartition matrices, then $\left(  \mathbf{\#is}%
(A),\#\mathbf{os}(B)\right)  =(p-1,q-1)$. A pair of $2$-formal matrices
$\left(  A^{q\times1},B^{1\times p}\right)  $ is a \emph{(formal) Transverse
	Pair (TP)} if $\left(  \mathbf{\#is}(A),\#\mathbf{os}(B)\right)  =(p-1,q-1);$
then the notion of a formal product of bipartition matrices extends to
$2$-formal matrices in the obvious way. The $3$-\emph{formal product}
$c^{3}=T^{3}\left(  \mathbf{c}\right)  :=C_{1}^{3}\cdots C_{r}^{3}\ $of
$2$-formal matrices is a $3$\emph{-formal} \emph{bipartition on }$\mathbf{c}$.

Continue inductively. After $h$ steps we obtain a sequence $\left(
c^{1},\ldots,c^{h}\right)  $ of $i$-formal bipartitions $c^{i}=T^{i}\left(
\mathbf{c}\right)  =C_{1}^{i}\cdots C_{r}^{i}\mathbf{,}$ where $C_{k}%
^{i}=T^{i-1}\left(  \mathbf{C}_{k}^{1}\right)  $ is an $\left(  i-1\right)  $-formal
matrix for each $k.$

\begin{definition}
	\label{GBPM}Let $\mathbf{a}$ and $\mathbf{b}$ be ordered sets. A
	\textbf{generalized bipartition}\emph{ }\textbf{on}\emph{ }$\mathbf{b/a}$ is
	an $i$-formal bipartition on\emph{ }$\mathbf{b/a}$ for some $i\geq1.$ Let
	$\mathbf{C}=\left(  \mathbf{b}_{s}\mathbf{/a}_{t}\right)  $ be a $q\times p$
	elementary matrix over $\left(  \mathbf{a}_{\ast},\mathbf{b}_{\ast}\right)  $
	with $pq\geq1,$ let $c_{st}$ be a generalized bipartition on $\mathbf{b}%
	_{s}\mathbf{/a}_{t}$ for each $\left(s,t\right),$ and let $C=(c_{st})$.
	If $B=\left(
	b_{ij}\right)  $ is a $v\times u$ matrix within $c_{st}$, and $b_{ij}%
	=B_{1}\cdots B_{k-1}\cdot\mathbf{0}\cdot B_{k+1}\cdots B_{r}$ for some $k,$
	identify $b_{ij}$ with $B_{1}\cdots B_{k-1}B_{k+1}\cdots B_{r}$ if either
	$uv=1$ and $B=C$ or $uv>1$ and $B_{i\ast}$ and $B_{\ast j}$ are indecomposable. Then
	$C$ with this identification is a \textbf{Generalized BiPartition Matrix }(GBPM)
	\textbf{over} $\left(  \mathbf{a}_{\ast},\mathbf{b}_{\ast}\right)  .$
	
	Given a bipartition matrix $C^{1}=T^1(\mathbf{C})$ and its
	indecomposable factorization $C^{1}=C_{1}^{1}\cdots$ $ C_{r}^{1}$, let $C=\left(
	C^{1},\ldots,C^{h}\right)  $ be a sequence of GBPMs over $\left(
	\mathbf{a}_{\ast},\mathbf{b}_{\ast}\right)  $ such that $C^{i}=(c_{st}%
	^{i}=T_{st}^{i-1}(C_{1}^{1})\cdots$\linebreak $T_{st}^{i-1}(C_{r}^{1}))$ for $2\leq i\leq
	h;$ then $C$ is an $h$-\textbf{level path (of GBPMs) from}\emph{ }$C^{1}$
	\textbf{to} $C^{h}$ with \textbf{initial bipartition matrix }$C^{1}$ and
	$i^{th}$ \textbf{level} $C^{i}$. The sequence $\tilde{C}=\left(
	\mathbf{C},C^{1},\ldots,C^{h}\right)  $ is an \textbf{augmented }$h$-level
	path from $C^{1}$ to $C^{h}$ with \textbf{augmentation} $\mathbf{C}$\textbf{.}
\end{definition}

\begin{example}
	\label{framed-ex}Let $\mathbf{C=}\left(
	\begin{array}
		[c]{c}%
		\frac{123}{123}\medskip\\
		\frac{456}{123}%
	\end{array}
	\right)  $ and define
	\begin{align*}
		C^{1}  &
		=T^{1}\left(  \mathbf{C}\right)  :=\left(
		\begin{array}
			[c]{c}%
			\frac{\!\!\!123|0}{\,\,\,\,\,\,1|23}=\left(  \frac{123}{1}\right)\!  \left(  \frac{0}{0}\text{
			}\frac{0}{23}\right)  \smallskip\\
			\frac{45|6}{12|3}=\left(
			\begin{array}
				[c]{c}%
				\frac{45}{12}\smallskip\\
				\frac{0}{12}%
			\end{array}
			\right)\!  \left(  \frac{6}{0}\text{ }\frac{6}{0}\text{ }\frac{6}{3}\right)
		\end{array}
		\right)  ,\\
		&
	\end{align*}
	\vspace{-.3in}
	\begin{align*}
		C^{2}  &  =T^{2}\left(  \mathbf{C}\right)  :=\left(
		\begin{array}
			[c]{c}%
			\left(  \frac{12|3}{\,\,0|1}\right)  \left(  \frac{0}{0}\text{ }\frac{0}
			{23}\right)  =\left(
			\begin{array}
				[c]{c}%
				\frac{12}{0}\smallskip\\
				\frac{0}{0}%
			\end{array}
			\right)  \left(  \frac{3}{1}\right)  \left(  \frac{0}{0}\text{ }\frac{0}
			{23}\right)  \smallskip\\
			\left(
			\begin{array}
				[c]{c}%
				\frac{4|5}{1|2}=\left(
				\begin{array}
					[c]{c}%
					\frac{4}{1}\smallskip\\
					\frac{0}{1}%
				\end{array}
				\right)  \left(  \frac{5}{0}\text{ }\frac{5}{2}\right)  \medskip\\
				\frac{0}{12}%
			\end{array}
			\right)  \left(  \frac{6}{0}\text{ }\frac{6}{0}\text{ }\frac{6}{3}\right)
		\end{array}
		\right)  ,\\
		&
	\end{align*}
	\vspace{-.3in}
	\begin{align*}
		\text{and\ \  }C^{3}  &  =T^{3}\left(  \mathbf{C}\right)  :=\left(
		\begin{array}
			[c]{c}%
			\left(
			\begin{array}
				[c]{c}%
				\frac{12}{0}\smallskip\\
				\frac{0}{0}%
			\end{array}
			\right)  \left(  \frac{3}{1}\right)  \left(  \frac{0}{0}\text{ }\frac{0}
			{23}\right)  \smallskip\\
			\left(
			\begin{array}
				[c]{c}%
				\left(
				\begin{array}
					[c]{c}%
					\frac{4}{1}\smallskip\\
					\frac{0}{1}%
				\end{array}
				\right)  \left(  \frac{5}{0}\text{ }\frac{5|0}{0|2}\right)  \smallskip\\
				\frac{0}{12}%
			\end{array}
			\right)  \left(  \frac{6}{0}\text{ }\frac{6}{0}\text{ }\frac{6}{3}\right)
		\end{array}
		\right)  .\\
		&
	\end{align*}
	Then $\left(  \mathbf{C,}\text{ }C^{1},C^{2},C^{3}\right)  $ is an augmented
	$3$-level path of GBPMs from $C^{1}$ to $C^{3}$.
\end{example}

\subsection{The Dimension of a Generalized Bipartition Matrix}

Given a bipartition matrix $C$, let $\pi\left(  C\right)$ denote the matrix obtained from $C$ by discarding empty biblocks $0/0$ in non-null entries and by reducing null entries to $0/0$.

\begin{definition}
	\label{dim-GBPM}Let $C$ be a $q\times p$ GBPM with $pq\geq1$. Denote the
	dimension of $C$ by $\left\vert C\right\vert .$ If $C$ is null, define
	$\left\vert C\right\vert :=0.$ Otherwise, define
	\begin{equation}
		\left\vert C\right\vert :=\left\vert C\right\vert ^{row}+\left\vert
		C\right\vert ^{col}+|C|^{ent}, \label{dim-framed}%
	\end{equation}
	where the \textbf{row dimension }$\left\vert C\right\vert ^{row}$\textbf{,
		column dimension }$\left\vert C\right\vert ^{col}$, and \textbf{entry
		dimension} $\left\vert C\right\vert ^{ent}$ are independent and given by the
	following recursive algorithms\textbf{:\medskip}
	
	\noindent\textbf{Row Dimension Algorithm. }Let $C$ be a $q\times p$ GBPM over
	$\left(  \mathbf{a}_{\ast},\mathbf{b}_{\ast}\right)  $ with $pq\geq1$ and
	indecomposable factorization $C=C_{1}\cdots C_{r}.$
	\smallskip
	
	\noindent\smallskip If $r>1,$ define
	$\left\vert C\right\vert ^{row}:=\sum_{k\in\mathfrak{r}}\left\vert
	C_{k}\right\vert ^{row},$
	where $\left\vert C_{k}\right\vert ^{row}$ is given by setting $C=C_{k}$\linebreak 
	\hspace*{.2in} and continuing recursively.\smallskip
	
	\noindent If $q>1$, define $\left\vert C\right\vert ^{row}:=\sum_{i\in\mathfrak{q}}\left\vert C_{i\ast
	}\right\vert ^{row},$
	where $\left\vert C_{i\ast}\right\vert ^{row}$ is given by setting
	$C=C_{i\ast}$\linebreak 
	\hspace*{.2in} and continuing recursively.\smallskip
	
	\noindent Otherwise, write $C=\left(  c_{1}\cdots c_{p}\right)  .$\smallskip
	
	If $C$ is a bipartition matrix, set $C=\pi(C)$.\smallskip
	
	If $C=\left(  \frac{\mathbf{b}}{\mathbf{a}}\right)  $ with $\mathbf{b}
	\neq\varnothing,$ define $\left\vert C\right\vert ^{row}:=0.$\smallskip
	
	If $C=\left(  \frac{0}{\mathbf{a}_{1}}\cdots\frac{0}{\mathbf{a}_{p}}\right)
	,$ define $
	\left\vert C\right\vert ^{row}:=\left\{
	\begin{array}
		[c]{cc}%
		0, & C\text{ is null}\\
		\#\mathbf{is}(C)-1, & \text{otherwise.}%
	\end{array}
	\right.$\smallskip
	
	Otherwise, define $	\left\vert C\right\vert ^{row}:=\sum_{j\in\mathfrak{p}}\left\vert
	c_{j}\right\vert ^{row},$ where $\left\vert c_{j}\right\vert ^{row}$ is given by setting
	\linebreak 
	\hspace*{.4in}$C=c_{j}$ and continuing recursively.\medskip
	
	\noindent\textbf{Column Dimension Algorithm. }Let $C$ be a $q\times p$ GBPM
	over $\left(  \mathbf{a}_{\ast},\mathbf{b}_{\ast}\right)  $ with $pq\geq1$
	and  indecomposable factorization $C=C_{1}\cdots C_{r}.$\smallskip
	
	\noindent If $r>1,$ define $\left\vert C\right\vert ^{col}:=\sum_{k\in\mathfrak{r}}\left\vert
	C_{k}\right\vert ^{col},$
	where $\left\vert C_{k}\right\vert ^{col}$ is given by setting $C=C_{k}$\linebreak 
	\hspace*{.2in} and continuing recursively.\smallskip
	
	\noindent If $p>1$, define $\left\vert C\right\vert ^{col}:=\sum_{j\in\mathfrak{p}}\left\vert C_{\ast
		j}\right\vert ^{col},$ where $\left\vert C_{\ast j}\right\vert ^{col}$ is given by setting $C=C_{\ast j}$\linebreak 
	\hspace*{.2in} and continuing recursively.\smallskip
	
	\noindent Otherwise, write $C=\left(  c_{1}\cdots c_{q}\right)  ^{T}.$\smallskip
	
	If $C$ is a bipartition matrix, set $C=\pi(C)$.\smallskip
	
	If $C=\left(  \frac{\mathbf{b}}{\mathbf{a}}\right)  $ with $\mathbf{a}
	\neq\varnothing,$ define $\left\vert C\right\vert ^{col}:=0.$\smallskip
	
	If $C=\left(  \frac{\mathbf{b}_{1}}{0}\cdots\frac{\mathbf{b}_{q}}{0}\right)
	^{T},$ define $
	\left\vert C\right\vert ^{col}:=\left\{
	\begin{array}
		[c]{cc}%
		0, & C\text{ is null}\\
		\#\mathbf{os}(C)-1, & \text{otherwise.}%
	\end{array}
	\right.$\smallskip	
	
	Otherwise, define $|C|^{col}:=\sum_{i\in\mathfrak{q}}\left\vert c_{i}\right\vert ^{col},$ where $\left\vert c_{i}\right\vert ^{col}$ is given by setting
	$C=c_{i}$\linebreak 
	\hspace*{.4in}and continuing recursively.\medskip
	
	\noindent\textbf{Entry Dimension Algorithm. }Let $C=\left(  c_{ij}\right)  $
	be a $q\times p$ GBPM over $\left(  \mathbf{a}_{\ast},\mathbf{b}_{\ast
	}\right)   $ with $pq\geq1$ and indecomposable factorization $C=C_{1}\cdots
	C_{r}.$\smallskip
	
	\noindent If $r>1,$ define $\left\vert C\right\vert ^{ent}:=\sum_{k\in\mathfrak{r}}\left\vert
	C_{k}\right\vert ^{ent},$ where $\left\vert C_{k}\right\vert ^{ent}$ is given by setting $C=C_{k}$ and\linebreak 
	\hspace*{.2in}continuing recursively.\smallskip
	
	\noindent Otherwise, define
	$|C|^{ent}:=\sum_{\left(  i,j\right)  \in\mathfrak{q}\times\mathfrak{p}
	}\left\vert c_{ij}\right\vert ^{ent},$ where $\left\vert c_{ij}\right\vert ^{ent}$ is given by setting\linebreak 
	\hspace*{.2in}$C=c_{ij}$ and continuing recursively unless $c_{ij}=\frac{\mathbf{b}_{i}}{\mathbf{a}_{j}},$ in which case define
	\[
	\left\vert c_{ij}\right\vert ^{ent}:=\left\{
	\begin{array}
		[c]{cl}%
		\#\mathbf{a}_{j}+\#\mathbf{b}_{i}-1, & \mathbf{a}_{j},\mathbf{b}_{i}
		\neq\varnothing\\
		0, & \text{otherwise.}%
	\end{array}
	\right.
	\]
\end{definition}
\smallskip

\noindent We highlight several important facts:\ \medskip

\begin{enumerate}

	\item If $C=C_{1}\cdots C_{r},$ then
	\begin{equation}
		\left\vert C\right\vert =\sum_{k\in\mathfrak{r}}\left(  \left\vert
		C_{k}\right\vert ^{row}+\left\vert C_{k}\right\vert ^{col}+\left\vert
		C_{k}\right\vert ^{ent}\right)  =\sum_{k\in\mathfrak{r}}\left\vert
		C_{k}\right\vert . \label{Cdecomp}%
	\end{equation}

	\item $\left\vert C\right\vert $ \emph{is the sum of the dimensions of all
		terminating matrices in the recursion}, i.e., elementary bipartitions
	$\frac{\mathbf{b}}{\mathbf{a}}$ with $\mathbf{a},\mathbf{b}\neq\varnothing$,
	elementary row matrices $\left(  \frac{0}{\mathbf{a}_{1}}\cdots\frac
	{0}{\mathbf{a}_{p}}\right)  ,$ and elementary column matrices $\left(
	\frac{\mathbf{b}_{1}}{0}\cdots\frac{\mathbf{b}_{q}}{0}\right)  ^{T}$. Indeed, if $C$ is an elementary matrix,
	$\left\vert C_{i\ast}\right\vert ^{row}=0$ when the denominators in $C_{i\ast
	}$  are empty or the (equal) numerators in $C_{i\ast}$ are non-empty; dually,
	$\left\vert C_{\ast j}\right\vert ^{col}=0$ when the numerators in $C_{\ast
		j}$ are empty or the (equal) denominators in $C_{\ast j}$ are non-empty. Thus
	if $C$ is an elementary matrix over $\left(  \mathbf{a}_{\ast},\mathbf{b}
	_{\ast}\right)  $ with $\mathbf{a}_{j},\mathbf{b}_{i}\neq\varnothing$ for all
	$\left(  i,j\right)  ,$ then $\left\vert C\right\vert ^{row}=\left\vert
	C\right\vert ^{col}=0$ and $\left\vert C\right\vert =\left\vert C\right\vert
	^{ent}.$
	
	\item $\left\vert C\right\vert $\emph{\ is not the sum of the dimensions of
		its entries}. For an underlying geometric example, consider a facet
	$A|B\sqsubset P_{n}$. Let $\mathbf{a}_{1}
	|\cdots|\mathbf{a}_{p}:=EP_{\mathfrak{n}}B;$ then $A|B$ corresponds to the
	monomial $C_{1}C_{2}=\left(  \frac{0}{A}\right)  \left(  \frac{0}
	{\mathbf{a}_{1}}\cdots\frac{0}{\mathbf{a}_{p}}\right)  \in\mathfrak{n}
	\circledast \mathfrak{0}.$ By the Row Dimension Algorithm, $|C_{1}C_{2}|=|C_{1}
	|+|C_{2}|=\#\mathbf{is}(C_{1})-1+\#\mathbf{is}(C_{2})-1=n-2$ as required, but if $\mathbf{a}_{i},\mathbf{a}_{j}\neq\varnothing$ for some $i\neq
	j,$ then $\left\vert \frac{0}{\mathbf{a}_{1}}\right\vert +\cdots+\left\vert
	\frac{0}{\mathbf{a}_{p}}\right\vert <|C_{2}|$ .
	
	\item When $C$ is an indecomposable row or column bipartition matrix,
	the row and column dimension algorithms reduce the
	calculation to that of $\left\vert \pi\left(  C\right)  \right\vert $. This
	reduction is critical. For example, if $C=\left(  \frac{0}{1}\text{ \ }
	\frac{0|0}{0|2}\right)  $, then $\left\vert C\right\vert =\left\vert
	\pi\left(  C\right)  \right\vert =\left\vert \left(  \frac{0}{1}\text{
		\ }\frac{0}{2}\right)  \right\vert =1$, but without reduction to
	$|\pi\left(  C\right) | $, the computed dimension of $C$ would be $0$.
\end{enumerate}

\begin{example}
	Let us compute the dimension of the indecomposable bipartition matrix
	\[
	C=\left(
	\begin{array}
		[c]{cc}%
		\frac{0}{12} & \frac{0}{0}\smallskip\\
		\frac{3|4}{1|2} & \frac{34}{0}%
	\end{array}
	\right)  .
	\]
	\bigskip
	
	\noindent Then $\left\vert C\right\vert =|C|^{row}+|C|^{col}+\left\vert
	C\right\vert ^{ent},$ where
	
	\noindent$|C|^{row}=\left\vert \left(
	\begin{array}
		[c]{cc}%
		\frac{0}{12} & \frac{0}{0}%
	\end{array}
	\right)  \right\vert ^{row}+\left\vert \left(
	\begin{array}
		[c]{cc}%
		\left(  \frac{34}{12},\left(
		\begin{array}
			[c]{c}%
			\frac{3}{1}\smallskip\\
			\frac{0}{1}%
		\end{array}
		\right)  \left(
		\begin{array}
			[c]{cc}%
			\frac{4}{0} & \frac{4}{2}%
		\end{array}
		\right)  \right)  & \left(  \frac{34}{0},\frac{34}{0}\right)
	\end{array}
	\right)  \right\vert ^{row}$
	\[
	=1+\left\vert \left(
	\begin{array}
		[c]{c}%
		\frac{3}{1}\smallskip\\
		\frac{0}{1}%
	\end{array}
	\right)  \right\vert ^{row}+\left\vert \left(
	\begin{array}
		[c]{cc}%
		\frac{4}{0} & \frac{4}{2}%
	\end{array}
	\right)  \right\vert ^{row}+0=1;
	\]

	\noindent$|C|^{col}=\left\vert \left(
	\begin{array}
		[c]{c}%
		\frac{0}{12}\smallskip\\
		\frac{3|4}{1|2}%
	\end{array}
	\right)  \right\vert ^{col}+\left\vert \left(
	\begin{array}
		[c]{c}%
		\frac{0}{0}\smallskip\\
		\frac{34}{0}%
	\end{array}
	\right)  \right\vert ^{col}=0+\left\vert \left(
	\begin{array}
		[c]{c}%
		\frac{3}{1}\smallskip\\
		\frac{0}{1}%
	\end{array}
	\right)  \right\vert ^{col}+\left\vert \left(
	\begin{array}
		[c]{cc}%
		\frac{4}{0} & \frac{4}{2}%
	\end{array}
	\right)  \right\vert ^{col}+1=1;$\bigskip
	
	\noindent$\left\vert C\right\vert ^{ent}=\left\vert \tfrac{0}{12}\right\vert
	^{ent}+\left\vert \frac{0}{0}\right\vert ^{ent}+\left\vert \left(
	\begin{array}
		[c]{c}%
		\frac{3}{1}\smallskip\\
		\frac{0}{1}%
	\end{array}
	\right)  \right\vert ^{ent}+\left\vert \left(
	\begin{array}
		[c]{cc}%
		\frac{4}{0} & \frac{4}{2}%
	\end{array}
	\right)  \right\vert ^{ent}+\left\vert \tfrac{34}{0}\right\vert ^{ent}
	=0+0+1+1+0=2.$\bigskip
	
	\noindent Therefore $\left\vert C\right\vert =4.$
\end{example}

\begin{example}
	\label{22framed}  Let us compute the dimension of the indecomposable
	bipartition  matrix
	\[
	C=\left(\!\!
	\begin{array}
		[c]{cc}%
		\frac{0|1}{1|0}\smallskip & \frac{0|1}{2|0}\\
		\frac{0|2}{1|0} & \frac{0|2|0}{0|2|0}
	\end{array}\!\!
	\right)  =
	\left(\!\!
	\begin{array}
		[c]{ccc}
		\left(
		\begin{array}
			[c]{c}%
			\frac{0}{1}\smallskip\\
			{\frac{0}{1}}%
		\end{array}
		\right)  \left(  \frac{1}{0}\text{ \ }\frac{1}{0}\right)  &  & \left(
		\begin{array}
			[c]{c}%
			\frac{0}{2}\smallskip\\
			{\frac{0}{2}}%
		\end{array}
		\right)  \left(  \frac{1}{0}\text{ \ }\frac{1}{0}\right) \\
		&  & \\
		\left(
		\begin{array}
			[c]{c}%
			\frac{0}{1}\smallskip\\
			{\frac{0}{1}}%
		\end{array}
		\right)  \left(  \frac{2}{0}\text{ \ }\frac{2}{0}\right)  &  & \frac
		{0|2|0}{0|2|0}=\left(
		\begin{array}
			[c]{c}%
			\frac{0}{0}\smallskip\\
			{\frac{0}{0}}%
		\end{array}
		\right)  \left(  \frac{2|0}{0|0}\right)  =\left(  \tfrac{0|2}{0|2}\right)
		\left(  \frac{0}{0}\text{ \ }\frac{0}{0}\right)
	\end{array}
	\!\!
	\right)\!  .
	\]
	\noindent Then
	\[
	\begin{array}
		[c]{ccc}%
		C_{1\ast}=\left(
		\begin{array}
			[c]{cc}%
			\frac{0}{1} & \frac{0}{2}\smallskip\\
			\frac{0}{1} & \frac{0}{2}%
		\end{array}
		\right)  \left(
		\begin{array}
			[c]{cccc}%
			\frac{1}{0} & \frac{1}{0} & \tfrac{1}{0} & \tfrac{1}{0}%
		\end{array}
		\right)  , &  & C_{2\ast}=\left(
		\begin{array}
			[c]{cc}%
			\frac{0}{1} & \frac{0}{0}\smallskip\\
			\frac{0}{1} & \frac{0}{0}%
		\end{array}
		\right)  \left(
		\begin{array}
			[c]{ccc}%
			\frac{2}{0} & \frac{2}{0} & \tfrac{2|0}{2|0}%
		\end{array}
		\right)  ,\\
		&  & \\
		C_{\ast1}=\left(
		\begin{array}
			[c]{c}%
			\frac{0}{1}\smallskip\\
			\frac{0}{1}\smallskip\\
			\frac{0}{1}\smallskip\\
			\frac{0}{1}%
		\end{array}
		\right)  \left(
		\begin{array}
			[c]{cc}%
			\frac{1}{0} & \frac{1}{0}\smallskip\\
			\frac{2}{0} & \frac{2}{0}%
		\end{array}
		\right)  , & \text{and} & C_{\ast2}=\left(
		\begin{array}
			[c]{c}%
			\frac{0}{2}\smallskip\\
			\frac{0}{2}\smallskip\\
			\tfrac{0|2}{0|2}%
		\end{array}
		\right)  \left(
		\begin{array}
			[c]{cc}%
			\frac{1}{0} & \frac{1}{0}\smallskip\\
			\frac{0}{0} & \frac{0}{0}%
		\end{array}
		\right)
	\end{array}
	\]
	\bigskip
	
	\noindent so that $\left\vert C\right\vert ^{row}=\left\vert \left(
	\begin{array}
		[c]{cc}%
		\frac{0}{1} & \frac{0}{2}\smallskip\\
		\frac{0}{1} & \frac{0}{2}%
	\end{array}
	\right)  \right\vert ^{row}=1+1=2,$ $\left\vert C\right\vert ^{col}=\left\vert
	\left(
	\begin{array}
		[c]{cc}%
		\frac{1}{0} & \frac{1}{0}\smallskip\\
		\frac{2}{0} & \frac{2}{0}%
	\end{array}
	\right)  \right\vert ^{col}=1+1=2,$\bigskip\linebreak and $\left\vert
	C\right\vert ^{ent}=\left\vert \frac{0|2|0}{0|2|0}\right\vert =1.$ Therefore
	$\left\vert C\right\vert =|C|^{row}+|C|^{col}+|C|^{ent}=5.$
\end{example}

\begin{example}
	Consider the indecomposable GBPM $C=C^{3}$ in Example \ref{framed-ex}
	:$\bigskip$
	\[
	C=\left(
	\begin{array}
		[c]{c}%
		\left(
		\begin{array}
			[c]{c}%
			\frac{12}{0}\smallskip\\
			\frac{0}{0}%
		\end{array}
		\right)  \left(  \frac{3}{1}\right)  \left(  \frac{0}{0}\text{ }\frac{0}
		{23}\right)  \smallskip\\
		\left(
		\begin{array}
			[c]{c}%
			\left(
			\begin{array}
				[c]{c}%
				\frac{4}{1}\smallskip\\
				\frac{0}{1}%
			\end{array}
			\right)  \left(  \frac{5}{0}\text{ \ }\left(  \frac{5}{0}\right)  \left(
			\frac{0}{2}\right)  \right)  \smallskip\\
			\frac{0}{12}%
		\end{array}
		\right)  \left(  \frac{6}{0}\text{ }\frac{6}{0}\text{ }\frac{6}{3}\right)
	\end{array}
	\right)  .
	\]
	\smallskip
	
	\noindent Then $|C|^{row}=\left\vert \left(  \frac{0}{0}\text{ }\frac{0}
	{23}\right)  \right\vert ^{row}+\left\vert \left(  \frac{0}{12}\right)
	\right\vert ^{row}=2,$ $\ |C|^{col}=\left\vert \left(
	\begin{array}
		[c]{c}%
		\frac{12}{0}\smallskip\\
		\frac{0}{0}%
	\end{array}
	\right)  \right\vert ^{col}=1,$ and$\bigskip$
	
	\noindent$\left\vert C\right\vert ^{ent}=\left\vert \frac{3}{1}\right\vert
	^{ent}+\left\vert \frac{4}{1}\right\vert ^{ent}+\left\vert \frac{6}
	{3}\right\vert ^{ent}=3$ imply $\left\vert C\right\vert =6.\bigskip$
\end{example}
Our next proposition follows immediately from Proposition \ref{el-coherence}.

\begin{proposition}
	\label{coh-elem}If $\mathbf{C}$ is a $q\times p$ coherent elementary matrix
	with $p,q\geq2,\ $then
	\[
	\left\vert \mathbf{C}\right\vert =\left\{
	\begin{array}
		[c]{cc}%
		1, & \mathbf{is}\left(  \mathbf{C}\right)  ,\mathbf{os}\left(  \mathbf{C}
		\right)  \neq\varnothing,\\
		0, & \text{otherwise.}%
	\end{array}
	\right.
	\]
	
\end{proposition}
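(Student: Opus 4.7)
The plan is to combine Proposition \ref{el-coherence} with the explicit normal forms for coherent elementary matrices recorded in the paragraph following that proposition, and then apply the Row, Column, and Entry Dimension Algorithms directly to each normal form. Since $p,q\geq 2$, Proposition \ref{el-coherence} (parts (1)--(3)) forces $\#\mathbf{is}(\mathbf{C})\leq 1$ and $\#\mathbf{os}(\mathbf{C})\leq 1$, with equality whenever the corresponding set is nonempty. This reduces the computation to three mutually exclusive cases, and note throughout that every elementary matrix is indecomposable (all entries have length $1$, so the maximal equalizer is null), so the $r=1$ branches of all three algorithms apply.

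First, if $\mathbf{is}(\mathbf{C})=\mathbf{os}(\mathbf{C})=\varnothing$, then $\mathbf{C}$ is null and $|\mathbf{C}|=0$ by definition. Second, when exactly one of the i/o sets is empty — say $\mathbf{is}(\mathbf{C})=\varnothing$ and $\mathbf{os}(\mathbf{C})=\{1\}$ — the normal form in the text says exactly one row is $\left(\frac{1}{0}\cdots\frac{1}{0}\right)$ and all other entries are null. Since every entry has an empty input or output block, $|\mathbf{C}|^{ent}=0$. The Column Dimension Algorithm applied to each column (an elementary column matrix with $\#\mathbf{os}=1$) yields $|C_{\ast j}|^{col}=1-1=0$, and the Row Dimension Algorithm applied to the distinguished row (with nonempty numerators, so we fall through to the ``Otherwise'' branch summing over entries $\frac{1}{0}$ each with $\mathbf{b}\neq\varnothing$) yields $0$, while the null rows contribute $0$. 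The dual case $\mathbf{os}(\mathbf{C})=\varnothing$, $\#\mathbf{is}(\mathbf{C})=1$ is symmetric. Hence $|\mathbf{C}|=0$.

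Third, when $\mathbf{is}(\mathbf{C})=\mathbf{os}(\mathbf{C})=\{1\}$, the normal form places a unique entry $\frac{1}{1}$ at some position $(i_0,j_0)$, with the $i_0$-th row being $\left(\frac{1}{0}\cdots\frac{1}{0}\,\frac{1}{1}\,\frac{1}{0}\cdots\frac{1}{0}\right)$, the $j_0$-th column being $\left(\frac{0}{1}\cdots\frac{0}{1}\,\frac{1}{1}\,\frac{0}{1}\cdots\frac{0}{1}\right)^T$, and all other entries null. The Entry Dimension Algorithm gives $\left|\frac{1}{1}\right|^{ent}=1+1-1=1$ and $0$ for every other entry, so $|\mathbf{C}|^{ent}=1$. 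For the row dimension, row $i_0$ has nonempty numerators and falls through to a sum of entry-wise row dimensions, each $0$; every other row matches the form $\left(\frac{0}{\mathbf{a}_1}\cdots\frac{0}{\mathbf{a}_p}\right)$ with $\#\mathbf{is}=1$, contributing $1-1=0$. Dually, $|\mathbf{C}|^{col}=0$. Therefore $|\mathbf{C}|=0+0+1=1$.

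The only delicate point is bookkeeping: one must be careful to recognize that in the Row Dimension Algorithm the row containing the distinguished entry does \emph{not} match the elementary row template $\left(\frac{0}{\mathbf{a}_1}\cdots\frac{0}{\mathbf{a}_p}\right)$ (because its numerators are nonempty), and must instead be handled via the ``Otherwise'' branch, and dually for the Column Dimension Algorithm. Once that distinction is observed, the three cases are wholly mechanical and the proposition follows.
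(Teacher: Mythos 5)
Your proof is correct and follows exactly the route the paper intends: the paper gives no details, stating only that the proposition "follows immediately from Proposition \ref{el-coherence}," and your argument simply makes explicit the normal forms established in the paragraph after that proposition together with the mechanical application of the three dimension algorithms. The case analysis and all the bookkeeping (in particular, routing the distinguished row/column through the ``Otherwise'' branch) check out.
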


In the special case of bipartition matrices, computer implementations of the algorithms in Definition \ref{dim-GBPM} were given by D. Freeman and the second author in \cite{Freeman}.

\section{Framed Matrices}\label{FM}

\subsection{The Framed Join $\mathfrak{m}{\circledast}\mathfrak{n}$}

Given an elementary bipartition $\mathbf{c,}$ choose a $1$-formal bipartition
$c^{1}=T^{1}\left(  \mathbf{c}\right)  $ with elementary factorization
$c^{1}=\mathbf{C}_{1}^{1}\cdots \mathbf{C}_{r}^{1}.$ Then $\left(  c^{1}\right)  $ is a
$1$-level path of generalized bipartitions on $\mathbf{c.}$ A\emph{ structure
	element within }$c^{1}$ is an elementary bipartition entry of some
$\mathbf{C}_{k}^{1}.$ Define $\mathcal{T}_{1}\left(  \mathbf{c}\right)  :=\{1$-level
paths of generalized bipartitions on $\mathbf{c\}}$.

Let $\left(  c^{1},c^{2}=C_{1}^{2}\cdots C_{r}^{2}\right)  $ be a $2$-level
path of generalized bipartitions\ on $\mathbf{c.}$ A \emph{structure element
	within }$c^{2}$ is a bipartition entry of some $C_{k}^{2}.$ Define
$\mathcal{T}_{2}\left(  \mathbf{c}\right)  :=\{2$-level paths of generalized
bipartitions on $\mathbf{c}\}$.

Let $\left(  c^{1},c^{2},c^{3}=C_{1}^{3}\cdots C_{r}^{3}\right)  $ be a
$3$-level path of generalized bipartitions\ on $\mathbf{c.}$ A \emph{structure
	element within }$c^{3}$ is a structure element within an entry of some
$C_{k}^{3}.$ Define $\mathcal{T}_{3}\left(  \mathbf{c}\right)  :=\{3$-level
paths of generalized bipartitions on $\mathbf{c\}}$.

Continuing inductively, let $\left(  c^{1},\ldots,c^{h}=C_{1}^{h}\cdots
C_{r}^{h}\right)  ,$ $h>1,$ be an $h$-level path of generalized
bipartitions\ on $\mathbf{c.}$ Then a structure element within $c^{h}$ is a
structure element within an entry of some $C_{k}^{h}$ and $\mathcal{T}%
_{h}\left(  \mathbf{c}\right)  =\{h$-level paths of generalized bipartitions
on $\mathbf{c\}}$. Note that if $i<j\leq h$ and all choices from level $i$
through level $j-1$ are trivial, then $c^{i}=c^{i+1}=\cdots=c^{j}$.

\begin{definition}
	\label{framed-prejoin}Let $\mathbf{a}$ and $\mathbf{b}$ be ordered sets and
	let $\mathbf{c=b/a.}$ The \textbf{framed join of }$\mathbf{a}$ \textbf{with
	}$\mathbf{b}$ is defined and denoted by
	\begin{equation}
		\mathbf{a}\circledast\mathbf{b}:=\left.  \bigcup_{h\geq1}{\mathcal{T}}
		_{h}\left(  \mathbf{c}\right)  \right/  \sim,\text{ where} \label{hierarchy}%
	\end{equation}

	\begin{enumerate}

		\item $\left(  c^{1},\ldots,c^{h}\right)  \sim\left(  d^{1},\ldots
		,d^{h}\right)  $ if $c^{h}=d^{h},$
		
		\item $\left(  c^{1},\ldots,c^{h}\right)  \sim\left(  c^{1},\ldots,c^{i}
		,c^{i},\ldots,c^{h}\right)  $ for all $i$.
	\end{enumerate}
	
	A class $[c]\in\mathbf{a}\circledast\mathbf{b}$ is a \textbf{framed element on
	}$\mathbf{c.}$ Let $c=\left(  c^{1},\ldots,c^{h}\right)  \in\left[  c\right]
	.$ A \textbf{level }$i$ \textbf{structure element within} $c$ is a structure
	element within $c^{i}.$ The \textbf{canonical path} in\textbf{ }$\left[  c\right]  ,$
	denoted by $\bar{c}=(\bar{c}^{1},\ldots,\bar{c}^{\bar{h}}),$ has a minimum
	number of levels $\bar{h}$ and components $\bar{c}^{i}$ of maximal
	(indecomposable) factorization length. The \textbf{height }$h\left[  c\right]
	:=\bar{h}$\textbf{. }
	
	Let $\mathbf{C}=(\mathbf{b}_{s}/\mathbf{a}_{t})$ be a $q\times p$ elementary
	matrix with $pq\geq1.$ For each $\left(  s,t\right)  ,$ let $\left[
	c_{st}\right]  \in\mathbf{a}_{t}\circledast\mathbf{b}_{s};$ Let $h=max\left\{
	h\left[  c_{st}\right]  \right\}  ,$ choose $(c_{st}^{1},\ldots,c_{st}
	^{h_{st}})\in\left[  c_{st}\right]  $ such that $h_{st}\leq h,$ and define
	\begin{equation}
		c_{st}:=(c_{st}^{1},\ldots,c_{st}^{h_{st}},\underset{h-h_{st}
		}{\underbrace{c_{st}^{h_{st}},\ldots,c_{st}^{h_{st}}}}). \label{adjusted}%
	\end{equation}
	Then $\left[  C\right]  =\left(  \left[  c_{st}\right]  \right)  $ is a
	$q\times p$ \textbf{framed matrix} \textbf{on }$\mathbf{C}.$ Let
	$C^{i}=\left(  c_{st}^{i}\right)  ;$ then $C=\left(  C^{1},\ldots
	,C^{h}\right)  \in\left[  C\right]  $, the \textbf{initial bipartition
		matrix}  of $C$ is $C^{1},$ the $i^{th}$ \textbf{level of }$C$ is $C^{i},$ and
	a  \textbf{structure matrix} \textbf{within} $C$ is an indecomposable factor
	of  either $C^{1}$ or a structure element within some entry of some $C^{i},$
	$i>1.$ Set $c_{st}=\bar{c}_{st}$ in (\ref{adjusted})\textbf{; }the
	\textbf{canonical path }in $\left[  C\right]  $ is $\bar{C}:=(\bar{c}_{st})$
	and the \textbf{height }$h\left[  C\right]  :=\bar{h}.$ The \textbf{input set
	}$\mathbf{is}[C]:=\mathbf{is}(\mathbf{C}),$ the \textbf{output set}
	$\mathbf{os}[C]:=\mathbf{os}(\mathbf{C}),$ and the \textbf{dimension
	}$\left\vert \left[  C\right]  \right\vert :=\left\vert C\right\vert
	:=\left\vert C^{h}\right\vert .$
\end{definition}

In view of Definition \ref{GBPM}, a framed matrix has finitely many
representatives.
A
bipartition  can be thought of as a framed element of height $1,$ and a
bipartition matrix can be thought of as a framed matrix of height $1$. In other words, the set of all bipartitions on $\left(  \mathbf{a,b}\right)$
is a subset of $  \mathbf{a} \circledast\mathbf{b},$ and the inclusion is a bijection when
$\left(  \mathbf{a,b}\right)  =\left(  \mathfrak{n},
\mathfrak{0}\right)  $ (or $\left(  \mathfrak{0},\mathfrak{n}\right)  $). On the other hand, such bipartitions
can be identified with ordinary
partitions in $P\left( \mathfrak{n}\right)  $ by discarding null numerators
(or null denominators). Thus $\mathfrak{0}\circledast\mathfrak{n}
=\mathfrak{n}\circledast\mathfrak{0}=P\left(  \mathfrak{n}\right)  .$

Let $\left[  c\right]  $ be a framed element and let $c=\left(  c^{1}%
,\ldots,c^{h}\right)  \in\left[  c\right]  .$ Definition \ref{framed-prejoin},
part (1), says that all paths in $\left[  c\right]  $ terminate at $c^{h},$
and part (2) says that repeated components
in $c$ can be suppressed. Thus the paths in $\left[  c\right]  $ form a
connected digraph with maximal element $c^{h}$. The canonical path $\bar{c}%
\in\left[  c\right]  $ has minimal height $\bar{h}$ and components of maximal
factorization length. Thus, if $c$ is a non-canonical path of minimal height
$\bar{h},$ some component $c^{i}$ has non-maximal factorization length.

\begin{example}
	\label{canonical}Let $\mathbf{a}=\mathbf{b}=\{1,2,3,4\}.$ The initial
	bipartition of the canonical path
	\[
	\bar{c}=\left(  \frac{1|2|34}{1|2|34},\text{ }\left(
	\begin{array}
		[c]{c}%
		\frac{1}{1}\smallskip\\
		\frac{0}{1}\smallskip\\
		\frac{0}{1}\smallskip\\
		\frac{0}{1}%
	\end{array}
	\right)  \left(
	\begin{array}
		[c]{cc}%
		\frac{2}{0}\smallskip & \frac{2}{2}\smallskip\\
		\frac{0}{0}\smallskip & \frac{0}{2}\smallskip\\
		\frac{0}{0} & \frac{0}{2}%
	\end{array}
	\right)  \left(  \tfrac{4|3}{0|0}\text{ }\tfrac{3|4}{0|0}\text{ }\tfrac
	{3|4}{4|3}\right)  \right)
	\]
	has factorization length $3.$ Since
	\[
	\left(
	\begin{array}
		[c]{c}%
		\frac{1}{1}\smallskip\\
		\frac{0}{1}\smallskip\\
		\frac{0}{1}\smallskip\\
		\frac{0}{1}%
	\end{array}
	\right) \!\! \left(  \tfrac{2|4|3}{0|0|0}\text{ }\tfrac{2|3|4}{2|4|3}\right)
	=\left(
	\begin{array}
		[c]{c}%
		\frac{1}{1}\smallskip\\
		\frac{0}{1}\smallskip\\
		\frac{0}{1}\smallskip\\
		\frac{0}{1}%
	\end{array}
	\right)\!\!  \left(
	\begin{array}
		[c]{cc}%
		\frac{2}{0}\smallskip & \frac{2}{2}\smallskip\\
		\frac{0}{0}\smallskip & \frac{0}{2}\smallskip\\
		\frac{0}{0} & \frac{0}{2}%
	\end{array}
	\right) \!\! \left(  \tfrac{4|3}{0|0}\text{ }\tfrac{3|4}{0|0}\text{ }\tfrac
	{3|4}{4|3}\right)  =\left(
	\begin{array}
		[c]{c}%
		\frac{1|2}{1|2}\smallskip\\
		\frac{0|0}{1|2}\smallskip\\
		\frac{0|0}{1|2}%
	\end{array}
	\right)\!\!  \left(  \tfrac{4|3}{0|0}\text{ }\tfrac{3|4}{0|0}\text{ }\tfrac
	{3|4}{4|3}\right)  ,
	\]
	the framed element $\left[  \bar{c}\right]  $ is also represented by
	\[
	\left(  \frac{1|234}{1|234},\text{ }\left(
	\begin{array}
		[c]{c}%
		\frac{1}{1}\smallskip\\
		\frac{0}{1}\smallskip\\
		\frac{0}{1}\smallskip\\
		\frac{0}{1}%
	\end{array}
	\right)  \left(
	\begin{array}
		[c]{cc}%
		\frac{2|4|3}{0|0|0}\text{ } & \frac{2|3|4}{2|4|3}%
	\end{array}
	\right)  \right)
	\]
	\ and
	\[
	\left(  \frac{12|34}{12|34},\text{ }\left(
	\begin{array}
		[c]{c}%
		\frac{1|2}{1|2}\smallskip\\
		\frac{0|0}{1|2}\smallskip\\
		\frac{0|0}{1|2}%
	\end{array}
	\right)  \left(
	\begin{array}
		[c]{ccc}%
		\frac{4|3}{0|0} & \frac{4|3}{0|0} & \frac{3|4}{4|3}%
	\end{array}
	\right)  \right)
	\]
	whose initial bipartitions have non-maximal factorization length $2.$
\end{example}

\subsection{The Structure Tree of a Framed Element}

Let $\left[  c\right]  $ be a framed element, let $c=\left(  c^{1}%
,\ldots,c^{h}\right)  \in\left[  c\right]  $ and consider the augmented path
$\tilde{c}=\left(  \mathbf{c}_{1}^{0},c^{1},\ldots,c^{h}\right)  .$ Then
$\left(  \mathbf{c}_{1}^{0}\right)  $ is an $m_{0}$-tuple with $m_{0}=1.$
Inductively, if $i\geq0$ and the $m_{i}$-tuple $c^{i}=\left(  \mathbf{c}%
_{1}^{i},\ldots,\mathbf{c}_{m_{i}}^{i}\right)  $ has been constructed, write
$T^{1}(\mathbf{c}_{j}^{i})=\mathbf{C}_{1}^{i}\cdots\mathbf{C}_{r_{ij}}^{i}$.
Identify $\mathbf{C}_{k}^{i}$ with the tuple of its entries listed in row
order, identify $T^{1}(\mathbf{c}_{j}^{i})$ with the concatenation of tuples
$(\mathbf{C}_{1}^{i},\ldots,\mathbf{C}_{r_{ij}}^{i})$, and identify $c^{i+1}$
with the $m_{i+1}$-tuple $\left(  \mathbf{c}_{1}^{i+1},\ldots,\mathbf{c}%
_{m_{i+1}}^{i+1}\right)  :=\left(  T^{1}\left(  \mathbf{c}_{1}^{i}\right)
,\ldots,T^{1}\left(  \mathbf{c}_{m_{i}}^{i}\right)  \right)  $.

The \emph{structure tree} \emph{of} $c,$ denoted by $\mathbb{T}\left(
c\right)  ,$ is the PLT with $h+1$ levels constructed as
follows: If $h=1,$ then%
\[
\mathbb{T}\left(  c\right)  :=%
\begin{array}
	[c]{c}%
	\mathbf{c}_{1}^{1}\text{ }\cdots\text{ }\mathbf{c}_{m_{1}}^{1}\\
	\diagdown|\diagup\\
	\mathbf{c}_{1}^{0}%
\end{array}
\]
is the $2$-level tree with root in level $0$ and vertices $\mathbf{c}_{j}^{1}$
in level $1.$ Inductively, if $1\leq i<h$ and level $i$ has been constructed,
construct level $i+1$ by connecting each level $i$ vertex $\mathbf{c}_{j}^{i}$
to the level $i+1$ vertices in $T^{1}(\mathbf{c}_{j}^{i})$ (see Figure 3).

\[
\begin{tabular}
	[c]{c}
	$
	\begin{array}
		[c]{c}%
		\mathbf{c}_{1}^{h}\cdots\cdots\cdots\mathbf{c}_{m_{h}}^{h}\\
		\vdots\text{ \ }\ \ \ \ \ \text{ \ \ }\vdots\\
		\diagdown|\diagup\cdots\diagdown|\diagup\\
		\text{ }\mathbf{c}_{1}^{1}\text{ }\cdots\text{ }\mathbf{c}_{m_{1}}^{1}\\
		\diagdown|\diagup\\
		\mathbf{c}_{1}^{0}%
	\end{array}
	$
\end{tabular}
\]
\medskip
\begin{center}
	Figure 3. The structure tree $\mathbb{T}\left(  c\right)$.
	\medskip
\end{center}
When the marked units on a measuring stick are levels, the measured height of
$\mathbb{T}\left(  c\right)  $ is $h$. The \emph{structure tree of} $\left[  c\right]  $ is defined and denoted by
$\mathbb{T}\left[  c\right]  :=\mathbb{T}\left(  \bar{c}\right)  .$

Let $1\leq i<h$. Given a bipartition $b$ within $c^{i+1}$, let $b=\mathbf{B}_{1}%
\cdots\mathbf{B}_{r}$ be the elementary factorization and let $b=T^{1}\left(
\mathbf{b}\right);$ note that $\mathbf{b}$ lies
within $c^{i}$ when $b$ is a structure element. Let $\left\{  \mathbf{b}_{1},\ldots,\mathbf{b}%
_{t}\right\}  $ be the entries of $\mathbf{B}_{1},\ldots,\mathbf{B}_{r}$
listed in row order as above. For each $s,$ let $b_{s}=T^{1}\left(  \mathbf{b}%
_{s}\right)  $ and denote the maximal subtree of
$\mathbb{T}\left(  c\right)  $ with root $\mathbf{b}_{s}$ by $\mathbb{T}(c|_{b_{s}})$. The \emph{restriction of }%
$c$\emph{ to }$b$ is the representative $c|_{b}:=\left(  T^{1}%
(\mathbf{b}),\dots,T^{h-i}(\mathbf{b})\right)  $ whose structure tree
$\mathbb{T}\left(  c|_{b}\right)$ is obtained by
attaching the root $\mathbf{b}_{s}$ of each subtree $\mathbb{T}(c|_{b_{s}})$ to $\mathbf{b}$. The class $\left[  c|_{b}\right]  $ is a
\emph{subframed element in }$\left[  c\right]  .$ Let $C=\left(  C^{1}%
,\ldots,C^{h}\right)  $ be a framed matrix representative and consider a
bipartition matrix $B=\left(  b_{uv}\right)  $ within $C^{i+1}.$ The
\emph{restriction of }$C$\emph{ to }$B,$ defined and denoted by $C|_{B}%
:=\left(  C|_{b_{uv}}\right)  ,$ represents the \emph{subframed matrix
}$\left[  C|_{B}\right]  $ \emph{in }$\left[  C\right]  $.

Let $\Gamma_{m}^{n}$ denote the upward directed double corolla with $m$
incoming leaves (inputs) and $n$ outgoing leaves (outputs):
\vspace{0.2in}

\[
\hspace{-.7in}\Gamma_{m}^{n}=
\]

\vspace{-0.55in} \unitlength=1.00mm\linethickness{0.4pt}
\hspace{.2in}
\ifx\plotpoint\newsavebox{\plotpoint}\fi
\begin{picture}(61.69,19.771)(0,0)
	\multiput(49.649,15.757)(.0341096942,-.0336885868){353}{\line(1,0){.0341096942}}
	\multiput(61.541,15.906)(-.0336885868,-.0341096942){353}{\line(0,-1){.0341096942}}
	\multiput(52.43,15.906)(.033642043,-.062589848){190}{\line(0,-1){.062589848}}
	\put(56.785,15.757){\makebox(0,0)[cc]{$\cdots$}}
	\put(54.109,4.162){\makebox(0,0)[cc]{$\cdots$}}
	\put(56.19,19.00){\makebox(0,0)[cc]{$n$}}
	\put(55.893,1.635){\makebox(0,0)[cc]{$m$}}
\end{picture}

\noindent Given ordered sets $\mathbf{a}$ and $\mathbf{b,}$ there is the
projection
\begin{equation}
	\frac{\mathbf{b}}{\mathbf{a}}\rightarrow\Gamma_{\#\mathbf{a}+1}^{\#\mathbf{b}
		+1}. \label{elementary-correspondence}%
\end{equation}
When the context is clear, we suppress the scripts and simply write $\Gamma$.
The \emph{graph structure tree of} $\left[  c\right]  ,$ denoted by
$\Gamma\left[  c\right]  ,$ is the PLT given by
replacing each vertex $\mathbf{c}_{j}^{i}$ in $\mathbb{T}_{h}\left[  c\right]
$ with the corresponding double corolla $\Gamma_{ij}.$

\begin{equation}
	\begin{tabular}
		[c]{c}
		$
		\begin{array}
			[c]{c}
			\Gamma_{h1}\cdots\cdots\cdots\Gamma_{hm_{h}}\\
			\vdots\text{ \ }\ \ \ \ \ \text{ \ \ }\vdots\\
			\diagdown|\diagup\cdots\diagdown|\diagup\\
			\text{ }\Gamma_{11}\text{\thinspace}\cdots\,\Gamma_{1m_{1}}\\
			\diagdown|\diagup\\
			\Gamma_{01}
		\end{array}
		$
	\end{tabular}
	\ \ \ \ \ \label{g-hierarchy}%
\end{equation}
\medskip
\begin{center}
	Figure 4. The graph structure tree $\Gamma\left[  c\right]$.
\end{center}
\medskip

The \emph{graph matrix} of an elementary matrix $\mathbf{C=}\left(
\mathbf{c}_{ij}\right)  $ is the matrix $\mathbb{C=}\left(  \Gamma
_{ij}\right)  $ given by the replacement $\mathbf{c}_{ij}\leftarrow\Gamma
_{ij}$ for all $\left(  i,j\right)  $. The \emph{graph} of a bipartition
$\frac{\beta}{\alpha}=\mathbf{C}_{1}\cdots\mathbf{C}_{r}$ is the evaluated
formal product
\begin{equation}
	\mathbb{G}:=\mathbb{C}_{1}\cdots\mathbb{C}_{r}, \label{graph-matrices}%
\end{equation}
i.e., the corresponding iterated elementary fraction constructed by M. Markl in \cite{Markl2}. The
\emph{graph matrix }of\emph{\ }a bipartition matrix $C=\left(  c_{ij}\right)
$ is the matrix $\mathbb{C}=\left(  \mathbb{G}_{ij}\right)  $ given by the
replacement $c_{ij}\leftarrow\mathbb{G}_{ij}$ for all $\left(  i,j\right)  .$

\subsection{Formal Decomposability}
\begin{definition}
	\label{formal}For $1\leq i\leq r,$ let $C_{1},\ldots,C_{r}$ be framed matrix
	representatives on $\mathbf{C}_{1},\ldots,\mathbf{C}_{r},$ respectively. The
	string $\rho:=C_{1}\cdots C_{r}$ is a \textbf{formal product }if $\rho
	^{1}:=C_{1}^{1}\cdots C_{r}^{1}$ is a formal product of bipartition matrices,
	in which case
	\[
	(\mathbf{is}(\rho),(\mathbf{os}(\rho)):=\left(  \mathbf{is}(\rho
	^{1}),\mathbf{os}(\rho^{1})\right)  =\left(  \mathbf{is}(C_{r}),\mathbf{os}
	(C_{1})\right)  .
	\]
	A framed matrix representative $F$ is \textbf{formally decomposable }if there
	exist framed matrix representatives $F_{1},\ldots,F_{r}$ such that
	$F=F_{1}\cdots F_{r}$ is a formal product with $r>1$; when $r$ is maximal,
	the  \textbf{length }$l(F):=r.$
\end{definition}

For each $r\geq1,$ let $\mathbf{a}\circledast^{r}\mathbf{b:=}\left\{
[c]\in\mathbf{a}{\circledast}\mathbf{b}:l[c]=r\right\}  .$ There is the
canonical decomposition
\[
\mathbf{a}\circledast\mathbf{b}:=\bigcup_{r\geq1}\mathbf{a}\circledast
^{r}\mathbf{b.}%
\]
\begin{example}
	Let $c_{i}^{o}$ and $d_{i}^{o}$ denote representatives of framed elements
	with  $i$ inputs and $o$ outputs. A matrix of the form
	\[
	F=\left(
	\begin{array}
		[c]{cc}%
		\left(
		\begin{array}
			[c]{c}%
			c_{1}^{2}\smallskip\\
			c_{1}^{1}%
		\end{array}
		\right)  \smallskip\left(  d_{1}^{2}\right)  & \left(
		\begin{array}
			[c]{c}%
			c_{2}^{2}\smallskip\\
			c_{2}^{1}%
		\end{array}
		\right)  \left(  d_{1}^{2}\text{ \ }d_{2}^{2}\right) \\
		\left(  c_{1}^{1}\right)  \left(  d_{1}^{1}\right)  & \left(  c_{2}
		^{1}\right)  (d_{1}^{1}\text{ \ }d_{2}^{1})
	\end{array}
	\right)
	\]
	is formally decomposable and factors as
	\[
	F=\left(
	\begin{array}
		[c]{cc}%
		c_{1}^{2}\smallskip & c_{2}^{2}\smallskip\\
		c_{1}^{1}\smallskip & c_{2}^{1}\smallskip\\
		c_{1}^{1} & c_{2}^{1}%
	\end{array}
	\right)  \left(
	\begin{array}
		[c]{ccc}%
		d_{1}^{2}\smallskip & d_{1}^{2} & d_{2}^{2}\\
		d_{1}^{1} & d_{1}^{1} & d_{2}^{1}%
	\end{array}
	\right)  .
	\]
	
\end{example}

When $r>1,$ a path $C=\left(  C^{1},\ldots,C^{h}\right)  =\left(  C_{1}%
^{1}\cdots C_{r}^{1},\ldots,C_{1}^{h}\cdots C_{r}^{h}\right)  $ is formally
decomposable as a matrix of formal products and factors as
\[
C=C_{1}\cdots C_{r}=\left(  C_{1}^{1},\ldots,C_{1}^{h}\right)  \cdots\left(
C_{r}^{1},\cdots,C_{r}^{h}\right)  =C|_{C_{1}^{1}}\cdots C|_{C_{r}^{1}}.
\]
The $k^{th}$ factor $C_{k}=C|_{C_{k}^{1}}$ represents the subframed matrix
$[C|_{C_{k}^{1}}]$ in $\left[  C\right]  $. Moreover, if $1\leq k<\ell\leq r,$
the formal product $\left[  C_{k}\right]  \cdots\left[  C_{\ell}\right]  $ is
a subframed matrix in $\left[  C\right]  $ as well. To see this, consider an
entry $AB$ of $C_{k}C_{k+1};$ then $(A,B)$ is a formal TP. Let $A=(A^{1}%
,\dots,A^{h})$ and $B=(B^{1},\dots,B^{h});$ then $\left(  A^{i},B^{i}\right)
$ is also a formal TP since $A^{i}$ and $B^{i}$ have the same respective
matrix dimensions and i/o sets as $A$ and $B$. Furthermore, since $C^{1}%
=C_{1}^{1}\cdots C_{r}^{1}$ is a bipartition matrix, so is $C_{k}^{1}%
C_{k+1}^{1}$, and it follows that $A^{1}B^{1}$ is a bipartition. Let
$T^{1}\left(  \mathbf{AB}\right)  =A^{1}B^{1};$ then $T^{i}\left(
\mathbf{AB}\right)  =A^{i}B^{i}$ for all $i$ and $AB=(A^{1}B^{1},\dots
,A^{h}B^{h})\ $represents the subframed element $\left[  AB\right]  =\left[
A\right]  \left[  B\right]  $ in $\left[  C\right]  .$ Thus $C_{k}C_{k+1}$
represents the subframed matrix $\left[  C_{k}C_{k+1}\right]  =\left[
C_{k}\right]  \left[  C_{k+1}\right]  $ in $\left[  C\right]  .$ Continue inductively.

\subsection{Coherent Framed Matrices}

Recall that if $\mathbf{C=}\left(  \mathbf{c}_{ij}\right)  $ is an elementary
matrix, then $T^{k}\left(  \mathbf{c}_{ij}\right)  $ is a $k$-formal
bipartition and $T^{k}\left(  \mathbf{C}\right)  =\left(  T^{k}\left(
\mathbf{c}_{ij}\right)  \right)  $ is a $k$-formal matrix.

\begin{definition}
	A $k$-formal matrix is
	
	\begin{itemize}

		\item \textbf{simple} if its indecomposable factors are bipartition matrices.
		
		\item \textbf{semi-simple} if its rows and columns are simple.
	\end{itemize}
\end{definition}

\noindent A $1$-formal matrix is a bipartition matrix, hence simple.

\begin{definition}
	A \textbf{(maximally) (totally) coherent simple matrix} has (maximally)
	(totally) coherent indecomposable factors.\textbf{ }A \textbf{(maximally)
		coherent semi-simple matrix} has (maximally) coherent rows, columns, and
	entries. A \textbf{(maximally) totally coherent semi-simple matrix} is
	(maximally) coherent with (maximally) totally coherent entries.
	
	A path $(C^{1},C^{2})$ is \textbf{(maximally) (totally) coherent} if $C^{1}$
	is (maximally) precoherent and $C^{2}$ is both semi-simple and (maximally)
	(totally) coherent.
\end{definition}

\begin{example}
	\label{22cframed}  Continuing the discussion in Example \ref{22framed},
	consider  the indecomposable maximally precoherent bipartition matrix
	\[
	C^{1}=\left(
	\begin{array}
		[c]{cc}%
		\frac{0|1}{1|0}\smallskip & \frac{0|1}{2|0}\\
		\frac{0|2}{1|0} & \frac{0|2|0}{0|2|0}%
	\end{array}
	\right)  =\left(
	\begin{array}
		[c]{ccc}%
		\left(
		\begin{array}
			[c]{c}%
			\frac{0}{1}\smallskip\\
			{\frac{0}{1}}%
		\end{array}
		\right)  \left(  \frac{1}{0}\text{ \ }\frac{1}{0}\right)  &  & \left(
		\begin{array}
			[c]{c}%
			\frac{0}{2}\smallskip\\
			{\frac{0}{2}}%
		\end{array}
		\right)  \left(  \frac{1}{0}\text{ \ }\frac{1}{0}\right) \\
		&  & \\
		\left(
		\begin{array}
			[c]{c}%
			\frac{0}{1}\smallskip\\
			{\frac{0}{1}}%
		\end{array}
		\right)  \left(  \frac{2}{0}\text{ \ }\frac{2}{0}\right)  &  & \frac
		{0|2|0}{0|2|0}%
	\end{array}
	\right)  .
	\]
	The first row and first column of $C^{1}$ have the respective incoherent
	factors
	\[
	\left(
	\begin{array}
		[c]{cc}%
		\frac{0}{1}\smallskip & \frac{0}{2}\smallskip\\
		{\frac{0}{1}} & {\frac{0}{2}}%
	\end{array}
	\right)  \text{ and }\left(
	\begin{array}
		[c]{cc}%
		\frac{1}{0}\smallskip & \frac{1}{0}\smallskip\\
		{\frac{2}{0}} & {\frac{2}{0}}%
	\end{array}
	\right)  ,
	\]
	each of which can be coheretized two ways:
	\[
	\left\{  \left(
	\begin{array}
		[c]{cc}%
		\frac{0}{1}\smallskip & \frac{0}{2}\smallskip\\
		{\frac{0|0}{1|0}} & {\frac{0|0}{0|2}}%
	\end{array}
	\right)  ,\text{ }\left(
	\begin{array}
		[c]{cc}%
		\frac{0|0}{0|1}\smallskip & \frac{0|0}{2|0}\smallskip\\
		{\frac{0}{1}} & {\frac{0}{2}}%
	\end{array}
	\right)  \right\}  \text{ and }\left\{  \left(
	\begin{array}
		[c]{cc}%
		\frac{1|0}{0|0}\smallskip & \frac{1}{0}\smallskip\\
		{\frac{0|2}{0|0}} & {\frac{2}{0}}%
	\end{array}
	\right)  ,\text{ }\left(
	\begin{array}
		[c]{cc}%
		\frac{1}{0}\smallskip & \frac{0|1}{0|0}\smallskip\\
		{\frac{2}{0}} & {\frac{2|0}{0|0}}
	\end{array}
	\right)  \right\}  .
	\]
	Thus, there are four coherent choices for $C^{2}$. Consider the
	indecomposable  choice
	\[
	C^{2}=\left(
	\begin{array}
		[c]{ccc}%
		\left(
		\begin{array}
			[c]{c}%
			\frac{0}{1}\smallskip\\
			\frac{0|0}{1|0}%
		\end{array}
		\right)  \left(
		\begin{array}
			[c]{cc}%
			\frac{1|0}{0|0}\smallskip & \frac{1}{0}\smallskip
		\end{array}
		\right)  &  & \left(
		\begin{array}
			[c]{c}%
			\frac{0}{2}\smallskip\\
			\frac{0|0}{0|2}%
		\end{array}
		\right)  \left(
		\begin{array}
			[c]{cc}%
			\frac{1}{0} & \frac{1}{0}%
		\end{array}
		\right) \\
		&  & \\
		\left(
		\begin{array}
			[c]{c}%
			\frac{0}{1}\smallskip\\
			\frac{0}{1}%
		\end{array}
		\right)  \left(
		\begin{array}
			[c]{cc}%
			{\frac{0|2}{0|0}} & {\frac{2}{0}}%
		\end{array}
		\right)  &  & \left(  \frac{0|2}{0|2}\right)  \left(  \frac{0}{0}\text{
			\ }\frac{0}{0}\right)  =\left(
		\begin{array}
			[c]{c}%
			\frac{0}{0}\smallskip\\
			\frac{0}{0}%
		\end{array}
		\right)  \left(  \tfrac{2|0}{2|0}\right)
	\end{array}
	\right)  .
	\]
	Its rows factor as
	\[
	\left(
	\begin{array}
		[c]{cc}%
		\frac{0}{1} & \frac{0}{2}\smallskip\\
		\frac{0|0}{1|0} & \frac{0|0}{0|2}%
	\end{array}
	\right)  \left(
	\begin{array}
		[c]{cccc}%
		\frac{1|0}{0|0}\smallskip & \frac{1}{0}\smallskip & \tfrac{1}{0} & \tfrac
		{1}{0}%
	\end{array}
	\right)  \text{ and }\left(
	\begin{array}
		[c]{cc}%
		\frac{0}{1} & \frac{0}{0}\smallskip\\
		\frac{0}{1} & \frac{0}{0}%
	\end{array}
	\right)  \left(
	\begin{array}
		[c]{ccc}%
		{\frac{0|2}{0|0}} & {\frac{2}{0}} & \tfrac{2|0}{2|0}%
	\end{array}
	\right)  ,
	\]
	and its columns factor as
	\[
	\left(
	\begin{array}
		[c]{c}%
		\frac{0}{1}\smallskip\\
		\frac{0|0}{1|0}\smallskip\\
		\frac{0}{1}\smallskip\\
		\frac{0}{1}%
	\end{array}
	\right)  \left(
	\begin{array}
		[c]{cc}%
		\frac{1|0}{0|0}\smallskip & \frac{1}{0}\smallskip\\
		{\frac{0|2}{0|0}} & {\frac{2}{0}}%
	\end{array}
	\right)  \text{ and }\left(
	\begin{array}
		[c]{c}%
		\frac{0}{2}\smallskip\\
		\frac{0|0}{0|2}\smallskip\\
		\tfrac{0|2}{0|2}%
	\end{array}
	\right)  \left(
	\begin{array}
		[c]{cc}%
		\frac{1}{0} & \frac{1}{0}\smallskip\\
		\frac{0}{0} & \frac{0}{0}%
	\end{array}
	\right)  .
	\]
	Thus $C^{2}$ is semi-simple with maximally coherent rows, columns, and
	entries. Since the entries of $C^{2}$ are totally coherent, the path
	$C=\left(  C^{1},C^{2}\right)  $ is maximally totally coherent. Note that
	\[
	|C|^{row}+|C|^{col}+|C|^{ent}=1+1+1=3=\#\mathbf{is}\left(  C\right)
	+\#\mathbf{os}\left(  C\right)  -1,
	\]
	which is a property of all \textquotedblleft top dimensionally coherent
	framed  matrices\textquotedblright\ (see Proposition \ref{tdimC}).
\end{example}

\begin{remark}
	Unless explicitly indicated otherwise, we henceforth identify a framed matrix
	$\left[  C\right]  $ with its canonical representative $\bar{C}$ and denote
	both by $C$ (without the bar).
\end{remark}

\begin{definition}
	\label{coherent-framed}Let $pq\geq1.$ A $q\times p$ framed matrix $C=\left(
	C^{1},\ldots,C^{h}\right)  $ is \textbf{(maximally) coherent} if
	
	\begin{enumerate}

		\item $h=1$ and $C^{1}$ is (maximally) totally coherent.
		
		\item $h\geq2,$ (maximal) coherence has been defined for all framed matrices
		of height less than $h,$ 			 the path $\left(  C^{1},C^{2}\right)  $
		is (maximally) coherent, and the entries of $C$ are (maximally) coherent.
		
	\end{enumerate}
\end{definition}

\noindent For example, the path $\left(  C^{1},C^{2}\right)  $ in Example
\ref{22cframed} represents a $2\times2$ maximally coherent framed matrix of
height $2$.

\begin{remark}
	In general, an empty biblock in a framed matrix $C$ is \emph{non unital} and deleting
	it may change the dimension and/or coherence properties of $C.$ Henceforth, we
	will only display an empty biblock in a (coherent) $C$ when deleting it
	changes the dimension (or coherence properties) of $C$ (cf. Examples 17 and 21).
\end{remark}

\begin{proposition}
	\label{totally}If $C$ is a (maximally) coherent $q\times p$ framed matrix and
	$pq\geq1,$ top level structure matrices within $C$ are (maximally) totally coherent.
\end{proposition}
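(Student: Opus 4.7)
The plan is to proceed by induction on the height $h := h[C]$ of the canonical representative of $C$. The base case $h=1$ is essentially tautological: Definition \ref{coherent-framed}(1) asserts that $C^{1}$ is (maximally) totally coherent, so by the last bullet of Definition \ref{defn-coherence} its indecomposable factors $C^{1}_{k}$ are coherent and have (maximally) coherent rows, columns, and entries. Since each $C^{1}_{k}$ is itself indecomposable, these conditions are exactly what is required for $C^{1}_{k}$ to be totally coherent as an individual matrix, and these factors are precisely the top level structure matrices within $C$ when $h=1$.

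For the induction step, assume $h \geq 2$ and that the proposition holds for all framed matrices of height less than $h$. By Definition \ref{coherent-framed}(2), the path $(C^{1},C^{2})$ is (maximally) coherent and every entry $[c_{st}]$ of $C$ is a (maximally) coherent framed element. Each entry $[c_{st}]$ has height strictly less than $h$ (the refinement producing level $2$ from level $1$ strips off one level of the structure tree when we restrict to an entry), so the induction hypothesis applies: the top level structure matrices within $[c_{st}]$ are (maximally) totally coherent. By the definition of the structure tree $\mathbb{T}[C]$ and the restriction operation $C|_{B}$ described after Definition \ref{framed-prejoin}, the top level structure matrices within $C$ are exactly the union, ranging over $(s,t)$, of the top level structure matrices within the subframed matrices $[C|_{c_{st}^{1}}]$, together with those indecomposable factors of $C^{1}$ that remain \emph{elementary} in the canonical representative (i.e., those where no further refinement occurs above level $1$). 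For this second class, the coherence of the path $(C^{1},C^{2})$, combined with the semi-simplicity of $C^{2}$, forces the relevant factors to be coherent \emph{elementary} matrices, and Proposition \ref{coh-elem} then implies that their rows, columns, and entries are trivially coherent, so they are totally coherent.

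The main obstacle, and the place where care must be exercised, is in the induction step where we pass from the path-level datum ``$C^{1}$ is precoherent and $C^{2}$ is semi-simple coherent'' to a statement about total coherence of structure matrices at the top of the hierarchy. The subtlety is that precoherence of $C^{1}$ is only a dimensional statement about $\overset{\wedge}{\mathbf{e}}(C^{1})$ and $\overset{\vee}{\mathbf{e}}(C^{1})$ lying inside the appropriate iterated diagonals, and does not by itself guarantee that the rows, columns, and entries of the indecomposable factors $C^{1}_{k}$ are coherent (cf.\ Example \ref{not-totally-coherent}). The upgrade to total coherence is supplied precisely by semi-simplicity of $C^{2}$ together with coherence of the entries, and verifying this compatibility will require a careful tracking of the maximal equalizations $\Lambda^{row}$, $\Lambda^{col}$, $\Lambda$ under restriction to entries, using Proposition \ref{uniqueness} to ensure that the row/column equalization data descends consistently through the levels.

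The remaining steps are bookkeeping: one checks that the sub-framed matrices $[C|_{c_{st}^{1}}]$ inherit the coherence hypothesis from $C$ (which follows directly from coherence of the entries in Definition \ref{coherent-framed}(2)), so that the induction hypothesis applies and the conclusion follows by combining the two cases above.
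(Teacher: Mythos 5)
Your base case and the overall strategy (induction on height, unwinding Definition \ref{coherent-framed}) match the paper's proof, but your induction step contains a genuine gap in the well-foundedness of the induction. You assert that ``each entry $[c_{st}]$ has height strictly less than $h$,'' justified by the claim that restricting to an entry strips off a level of the structure tree. This is false: by Definition \ref{framed-prejoin} the height of a framed matrix is defined via $h=\max\{h[c_{st}]\}$ over its entries, so at least one entry has height exactly $h$, and the paper's own proof explicitly treats the entries of a height-$2$ matrix as framed \emph{elements of height $2$}. What strips a level is restriction to a structure element $b$ sitting inside $c^{i+1}$ for $i\geq 1$ (giving $c|_{b}=(T^{1}(\mathbf{b}),\dots,T^{h-i}(\mathbf{b}))$), not passage from a matrix to its entries. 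As written, your induction does not terminate.

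The paper repairs exactly this point by alternating two reductions: a framed \emph{matrix} of height $h$ passes to its entries, which are framed \emph{elements} of the \emph{same} height $h$; a framed \emph{element} of height $h\geq 2$ then passes to its formally indecomposable matrix factors $C_{k}$, which are coherent framed \emph{matrices} of height $h-1$, and only here does the height drop. Top level structure matrices within $C$ are located within some entry, and top level structure matrices within an entry are located within some factor $C_k$, so the two-step descent carries them down to the height-$1$ case, where Definition \ref{coherent-framed}(1) gives total coherence outright. Your proof needs this intermediate element-level reduction inserted; once it is, the paragraph about tracking $\Lambda^{row}$, $\Lambda^{col}$, $\Lambda$ under restriction via Proposition \ref{uniqueness} becomes unnecessary --- the argument is pure definitional unwinding and no equalizer bookkeeping is required. (Also, the fact that coherent elementary matrices are totally coherent is the remark following Proposition \ref{el-coherence}, not Proposition \ref{coh-elem}, which only computes dimensions.)
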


\begin{proof}
	We proceed by induction on height.
	
	Let $C=\left(  C^{1}\right)  $ be a (maximally) coherent framed matrix of
	height $1.$ Since $C^{1}$ is (maximally) totally coherent by Definition
	\ref{coherent-framed}, part (1), the top level structure matrices within $C,$
	i.e., the indecomposable factors of $C^{1},$ are (maximally) totally coherent.
	
	Let $c$ be a (maximally) coherent framed element of height $2.$ Then $c$ has
	(maximally) coherent formally indecomposable factors $C_{k}$ by Definition
	\ref{coherent-framed}, part (2a). But $C_{k}$ is a (maximally) coherent
	framed  matrix of height $1.$ Hence $C_{k}^{1}=C_{k}$ is (maximally) totally
	coherent  by Definition \ref{coherent-framed}, part (1), and the top level
	structure  matrices within $c$ are the (maximally) totally coherent factors
	$C_{k}$.
	
	Let $C=\left(  c_{ij}\right)  $ be a $q\times p$ (maximally) coherent framed
	matrix of height $2$ with $p+q>2.$ Since $c_{ij}$ is a (maximally) coherent
	framed element of height $2,$ the top level structure matrices within $C$,
	which are top level structure matrices within some $c_{ij},$ are (maximally)
	totally coherent by the previous argument.
	
	Let $c$ be a (maximally) coherent framed element of height $3.$ Then $c$ has
	(maximally) coherent indecomposable factors $C_{k}$ by Definition
	\ref{coherent-framed}, part (2a). Since $C_{k}$ is a (maximally) coherent
	framed matrix of height $2,$ a top level structure matrix within $c$ is a top
	level structure matrix within some $C_{k},$ which is (maximally) totally
	coherent by the previous argument.
	
	Continue inductively.
\end{proof}

\subsection{The Coherent Framed Join $\mathfrak{m}{\circledast}_{cc}\mathfrak{n}$}

\begin{definition}
	Let $\mathbf{a}$ and $\mathbf{b}$ be ordered sets. The \textbf{coherent framed
		join of }$\mathbf{a}$ \textbf{with} $\mathbf{b}$ is defined and denoted by
	\[
	\mathbf{a}\circledast_{cc}\mathbf{b:}=\{\text{coherent framed elements in
	}\mathbf{a\circledast b\}}.
	\]
	The subset of length $r$ coherent framed partitions in $\mathbf{a}
	\circledast_{cc}\mathbf{b}$ is denoted by $\mathbf{a}\circledast_{cc}
	^{r}\mathbf{b.}$
\end{definition}

\begin{remark}
	\label{P(n)}Since $\mathfrak{0}\circledast_{cc}\mathfrak{n}=\mathfrak{0}
	\circledast\mathfrak{n}$ and $\mathfrak{n}\circledast_{cc}\mathfrak{0}
	=\mathfrak{n}\circledast\mathfrak{0},$ we identify $\mathfrak{0}
	\circledast_{cc}\mathfrak{n}$ and $\mathfrak{n}\circledast_{cc}\mathfrak{0}$
	with $P\left(  \mathfrak{n}\right)  .$
\end{remark}
\begin{example}
	\label{Ex23}Let us determine the elements\ of $\mathfrak{2\circledast}
	_{cc}\mathfrak{1}$ with initial bipartition
	\[
	c^{1}=\frac{\,\,0|1}{12|0}=\left(
	\begin{array}
		[c]{c}%
		\frac{0}{12}\smallskip\\
		\frac{0}{12}%
	\end{array}
	\right)  \left(  \frac{1}{0}\text{ }\frac{1}{0}\text{ }\frac{1}{0}\right)  .
	\]
	The replacements $\left\{  \frac{0}{12},\frac{0|0}{1|2} ,\frac{0|0}
	{2|1}\right\}  $ of $\frac{0}{12}$ give rise to the following seven indecomposables, the last three of which are coherent:
	\[
	\left\{  \left(
	\begin{array}
		[c]{c}%
		\frac{0}{12}\smallskip\\
		\frac{0}{12}%
	\end{array}
	\right)  ,\text{ }\left(
	\begin{array}
		[c]{c}%
		\frac{0}{12}\smallskip\\
		\frac{0|0}{2|1}%
	\end{array}
	\right)  ,\text{ }\left(
	\begin{array}
		[c]{c}%
		\frac{0|0}{1|2}\smallskip\\
		\frac{0}{12}%
	\end{array}
	\right)  ,\text{ }\left(
	\begin{array}
		[c]{c}%
		{\frac{0|0}{1|2}}\smallskip\\
		{\frac{0|0}{2|1}}%
	\end{array}
	\right)  ,\text{ }\left(
	\begin{array}
		[c]{c}%
		\frac{0}{12}\smallskip\\
		{\frac{0|0}{1|2}}%
	\end{array}
	\right)  ,\text{ }\left(
	\begin{array}
		[c]{c}%
		{\frac{0|0}{2|1}}\smallskip\\
		\frac{0}{12}%
	\end{array}
	\right)  ,\text{ }\left(
	\begin{array}
		[c]{c}%
		{\frac{0|0}{2|1}}\smallskip\\
		{\frac{0|0}{1|2}}%
	\end{array}
	\right)  \right\}  ,
	\]
	Thus
	\[
	c^{2}\in\left\{  \left(
	\begin{array}
		[c]{c}%
		\frac{0}{12}\smallskip\\
		{\frac{0|0}{1|2}}%
	\end{array}
	\right)  \left(  \tfrac{1}{0}\text{ }\tfrac{1}{0}\text{ }\tfrac{1}{0}\right)
	,\text{ }\left(
	\begin{array}
		[c]{c}%
		{\frac{0|0}{2|1}}\smallskip\\
		\frac{0}{12}%
	\end{array}
	\right)  \left(  \tfrac{1}{0}\text{ }\tfrac{1}{0}\text{ }\tfrac{1}{0}\right)
	,\text{ }\left(
	\begin{array}
		[c]{c}%
		{\frac{0|0}{2|1}}\smallskip\\
		{\frac{0|0}{1|2}}%
	\end{array}
	\right)  \left(  \tfrac{1}{0}\text{ }\tfrac{1}{0}\text{ }\tfrac{1}{0}\right)
	\right\}  .
	\]
	
\end{example}

\begin{example}
	Let us count the codimension $1$ elements in $\mathfrak{3}\circledast
	_{cc}\mathfrak{1}$. Let $c^{1}=\frac{B_{1}|B_{2}}{A_{1}|A_{2}}\in
	P_{2}^{\prime}(\mathfrak{3})\times P_{2}^{\prime}(\mathfrak{1}).$ Twelve
	bipartitions are produced when the six two-block partitionings of $123$ are
	paired with $0|1$ and $1|0.$ In addition, pre and post appending an empty
	block to $123$ produces an additional four, of which $\frac{\!\!\!\!\!0|1}{0|123}$ and
	$\frac{123|0}{\,\,\,\,\,1|0}$ are discarded for dimensional reasons. Thus the
	codimension $1$ elements in $\mathfrak{3}\circledast_{cc}\mathfrak{1}$ arise
	from $14$ initial bipartitions $c^{1}$. Routine calculations show that
	$\left\{  c^{2}\right\}  $ is a singleton set unless $c^{1}\in\left\{
	\frac{0|1}{12|3},\frac{0|1}{13|2},\frac{0|1}{23|1},\frac{0|1}{123|0}\right\}
	.$ Of these, the first three give rise to two codimension $1$ elements and
	the  last gives rise to eight (corresponding to the eight terms of $\Delta_P(P_{3})$).
	For example, if $c^{1}=\frac{0|1}{12|3},$ then
	\[
	c^{2}\in\left\{  \left(
	\begin{array}
		[c]{c}%
		\frac{0}{12}\smallskip\\
		\frac{0|0}{1|2}%
	\end{array}
	\right)  \left(  \tfrac{1}{0}\text{ }\tfrac{1}{0}\text{ }\tfrac{1}{3}\right)
	,\text{ }\left(
	\begin{array}
		[c]{c}%
		\frac{0|0}{2|1}\smallskip\\
		\frac{0}{12}%
	\end{array}
	\right)  \left(  \tfrac{1}{0}\text{ }\tfrac{1}{0}\text{ }\tfrac{1}{3}\right)
	\right\}  \text{.}
	\]
	Thus $\mathfrak{3}\circledast_{cc}\mathfrak{1}$ contains exactly $24$
	codimension $1$ elements.
\end{example}

\subsection{Top Dimensional Coherence}

Define the\emph{\ input and output vacuosities} of a $q\times p$ bipartition
matrix $C$ by $\overset{\wedge}{v}\left(  C\right)  :={\sum\nolimits_{i\in
		\mathfrak{q}}}v(\overset{\wedge}{\alpha}_{i}\left(  C\right)  )$ and
$\overset{\vee}{v}\left(  C\right)  :=\sum_{j\in\mathfrak{p}}v(\overset{\vee
}{\beta}_{j}\left(  C\right)  )$, respectively (see (\ref{dim-vac})). In
particular, $\overset{\wedge}{v}\left(  C\right)  =0$ if and only if all
blocks in the input partitions of $C$ are non-empty. This occurs, for example,
when the columns of $C$ are indecomposable, and dually for $\overset{\vee
}{v}\left(  C\right)  =0$. Of course, an elementary matrix has total vacuosity zero.

A framed matrix $B$ \emph{embeds} in a framed matrix $C$ if there exists
an injection $\iota:B\rightarrow C$ such that $\iota\left(  B\right)
=C|_{B^{1}}$.

\begin{definition}
	\label{dim-rigid}A $q\times p$ framed matrix $C$ with $pq>1$ is
	\textbf{Dimensionally Complete} (\textbf{DC}) if one of the following
	conditions holds:
	
	\begin{enumerate}

		\item $\mathbf{is}\left(  C\right)  =\varnothing$ or $\mathbf{os}\left(
		C\right)  =\varnothing.$
		
		\item If a $t\times s$ framed matrix $B$ embeds in $C,$ then
		\[
		\left\vert B\right\vert ^{row}=\overset{\vee}{v}\left(  B^{1}\right)
		+\sum_{j\in\mathfrak{s}}|B_{\ast j}|^{row}\ \ \text{and \ }\left\vert
		B\right\vert ^{col}=\overset{\wedge}{v}\left(  B^{1}\right)  +\sum
		_{i\in\mathfrak{t}}|B_{i\ast}|^{col}.
		\]
	\end{enumerate}
	\noindent A framed element $c$ is \textbf{Dimensionally Complete}
	(\textbf{DC}) if all formally indecomposable factors of $c$ are DC.
\end{definition}

By definition of dimension, a framed row matrix $C$  with
$\mathbf{is}\left(  C\right)  \neq\varnothing$ and $\mathbf{os}\left(
C\right)  \neq\varnothing$ is DC if and only if $\overset{\vee}{v}\left(
B^{1}\right)  =0$ for all subframed row matrices $B$ in $C$, and dually for
framed column matrices. Furthermore, elementary matrices,
and hence bipartitions, are DC.

\begin{definition}
	\label{TD-coherent}A maximally coherent DC matrix is \textbf{Top Dimensionally
		(TD) coherent}. A framed element $c$ with TD coherent formally indecomposable
	factors is \textbf{TD coherent}. Denote the subset of TD coherent elements in
	$\mathbf{a\circledast}_{cc}\mathbf{b}$ by $\mathbf{a\circledast}
	_{td}\mathbf{b}$ and the subset of TD coherent formal products in
	$\mathbf{a}\circledast_{cc}^{r}\mathbf{b}$ by $\mathbf{a}\circledast_{td}
	^{r}\mathbf{b}$.
\end{definition}

The definition of maximal coherence implies

\begin{proposition}
	\label{PropTD}Coherent elementary matrices are TD coherent, and the rows and
	columns of a TD coherent matrix are TD coherent.
\end{proposition}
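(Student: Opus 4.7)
The first claim reduces to checking dimensional completeness, since Proposition \ref{el-coherence} already identifies coherence with maximal coherence for elementary matrices. If $\mathbf{is}(\mathbf{C})=\varnothing$ or $\mathbf{os}(\mathbf{C})=\varnothing$, condition (1) of Definition \ref{dim-rigid} applies. Otherwise Proposition \ref{el-coherence} forces $\#\mathbf{is}(\mathbf{C})=\#\mathbf{os}(\mathbf{C})=1$, and the explicit description following that proposition shows that $\mathbf{C}$ has exactly one non-null row and one non-null column intersecting at the unique entry $1/1$. The only framed matrices embedding in $\mathbf{C}$ are $\mathbf{C}$ itself and its elementary bipartition entries; the entries are trivially DC, and for $B=\mathbf{C}$ the Row and Column Dimension Algorithms give $|\mathbf{C}|^{row}=|\mathbf{C}|^{col}=0$ together with $|\mathbf{C}_{\ast j}|^{row}=|\mathbf{C}_{i\ast}|^{col}=0$ for every row and column, while $\overset{\wedge}{v}(\mathbf{C})=\overset{\vee}{v}(\mathbf{C})=0$ because the sole input and output partitions of an elementary matrix form the single block $\{1\}$. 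Thus condition (2) collapses to $0=0+0$.

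For the second claim, let $C=(C^1,\ldots,C^h)$ be TD coherent and let $R:=C_{i\ast}$; the column case is dual. Dimensional completeness transfers from $C$ to $R$ because any framed matrix embedded in $R$ is also embedded in $C$ via composition of embeddings, so condition (2) of Definition \ref{dim-rigid} for $C$ yields the same condition for $R$. I establish maximal coherence of $R$ by induction on the height $h$. For $h=1$, $C^1$ is maximally totally coherent, meaning by Definition \ref{defn-coherence} that the indecomposable factors $C^1_k$ of $C^1$ have maximally coherent rows, columns, and entries; the BTP structure of the factorization then presents $R^1$ as the corresponding formal product of maximally coherent rows of the $C^1_k$'s. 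For $h\geq 2$, the path $(C^1,C^2)$ is maximally coherent with $C^2$ semi-simple and maximally coherent, so the rows of $C^2$ are maximally coherent by the semi-simple clause, and combined with maximal precoherence of $R^1$ (inherited from $C^1$ as described below) and the maximally coherent entries of $R$ (which are entries of $C$), the recursive clause of Definition \ref{coherent-framed} yields maximal coherence of $R$.

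The principal technical obstacle is the inheritance of maximal precoherence from $C^1$ to $R^1$. Row precoherence of $R^1$ is automatic because $q=1$ makes $\Delta_P^{(0)}=\mathbf{Id}$ and $\overset{\wedge}{\mathbf{e}}(R^1)=\pi\overset{\wedge}{eq}(R^1)$. Column precoherence, namely $\overset{\vee}{\mathbf{e}}(R^1)\sqsubseteq_{diag}\Delta_P^{(p-1)}(\pi\overset{\vee}{eq}(R^1))$, requires comparing the maximal column equalizer of $R^1$ (which can only enlarge upon single-row restriction) with that of $C^1$, so that the output product $\overset{\vee}{\mathbf{e}}(R^1)$ arises as an appropriate face of the row-$i$ slice of $\overset{\vee}{\mathbf{e}}(C^1)$, and the multiplicative behavior of $\Delta_P$ on product cells, as exploited in the argument leading to (\ref{bars}), transfers the $\sqsubseteq_{diag}$ inclusion of $C^1$ to that of $R^1$.
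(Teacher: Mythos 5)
The paper itself offers no argument here beyond the remark that the proposition is implied by the definition of maximal coherence, so you are filling in details the authors leave implicit; your filling, however, has two genuine problems. First, in the elementary case your step ``otherwise Proposition \ref{el-coherence} forces $\#\mathbf{is}(\mathbf{C})=\#\mathbf{os}(\mathbf{C})=1$'' is false: that proposition gives both cardinalities equal to $1$ only when $p,q\geq 2$. A coherent elementary row matrix with non-empty output set is constrained only to have $\#\mathbf{os}(\mathbf{C})=1$, and its input set can be large --- the paper's own matrix $\left(\frac{1}{12}\ \ \frac{1}{45}\right)$ is explicitly identified as TD coherent and has $\#\mathbf{is}=4$ --- and a $1\times 1$ elementary matrix $\mathbf{b}/\mathbf{a}$ has unconstrained i/o cardinalities. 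Your picture of ``exactly one non-null row and one non-null column meeting at $1/1$'' therefore covers only the $p,q\geq 2$ case, and the verification of condition (2) of Definition \ref{dim-rigid} must be redone for row, column, and $1\times 1$ elementary matrices (an easy check, since there the output, resp.\ input, partitions are single non-empty blocks of zero vacuosity; note also that the paper asserts outright, just before Definition \ref{TD-coherent}, that all elementary matrices are DC).

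Second, for rows of a TD coherent matrix the DC-inheritance argument is reasonable, but the maximal-coherence half leans on the assertion that the BTP structure ``presents $R^{1}$ as the corresponding formal product of maximally coherent rows of the $C^{1}_{k}$'s.'' That is not what the block product gives: the $i$-th row of $C^{1}=C^{1}_{1}\cdots C^{1}_{r}$ is a product whose left-hand factors are multi-row \emph{blocks} of the $C^{1}_{k}$, not single rows, and the indecomposable factorization of $(C^{1})_{i\ast}$ is in general strictly finer than that of $C^{1}$ (in Example \ref{decomposable-ex1} the first row of $C$ factors into three indecomposables while $C$ itself factors into two). You correctly isolate column precoherence of $R^{1}$ as the principal obstacle but then only sketch it. The route the paper intends is definitional rather than geometric: for $h\geq 2$, maximal coherence of $C$ forces $C^{2}$ to be semi-simple and maximally coherent, which by definition means its rows and columns are maximally coherent, and for $h=1$ maximal total coherence of $C^{1}$ is defined so as to include maximal coherence of the rows and columns of its indecomposable factors; what still has to be nailed down --- and what your sketch does not supply --- is exactly the passage from rows of the factors and levels to the row of $C$ itself.
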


\begin{example}
	\label{TD-coherent-ex}The indecomposable maximally coherent matrix
	\[
	C=\left(
	\begin{array}
		[c]{cc}%
		\frac{1|0|2}{0|1|0} & \frac{12}{0}%
	\end{array}
	\right)
	\]
	is not TD\ coherent because $\left\vert C\right\vert ^{row}=0\neq
	\overset{\vee}{v}\left(  C\right)  =1$, but the indecomposable maximally
	totally coherent matrix
	\[
	D=\left(
	\begin{array}
		[c]{cc}%
		\frac{1|2}{0|1} & \frac{12}{0}%
	\end{array}
	\right)
	\]
	is TD coherent because $\left\vert D\right\vert ^{row}=\overset{\vee
	}{v}\left(  D\right)  =\left\vert D\right\vert ^{col}=\overset{\wedge
	}{v}\left(  D\right)  =0$.
\end{example}

When verifying the TD coherence of a framed matrix $[C],$ it suffices to check
that its canonical representative $\bar{C}$ is TD coherent.

\subsection{The Dimension of a TD Coherent Matrix}

Unlike the dimension of a general framed matrix, the dimension of a
\emph{TD}\ \emph{coherent} framed matrix $c$ is completely determined by the
cardinalities of its i/o sets and its maximal factorization length $l(C)$.

\begin{proposition}
	\label{tdimC}If $C$ is a non-null TD coherent framed matrix, then
	\begin{equation}
		\left\vert C\right\vert =\#\mathbf{is}(C)+\#\mathbf{os}(C)-l\left(  C\right)
		. \label{fdimC}%
	\end{equation}
	
\end{proposition}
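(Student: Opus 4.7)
The plan is to reduce to the formally indecomposable case and then carry out a nested induction. First, write $C = C_1 \cdots C_r$ with $r = l(C)$ as the maximal formal factorization; at the top level $C^h = C_1^h \cdots C_r^h$ is a formal product of bipartition matrices, so (\ref{Cdecomp}) together with $|C| := |C^h|$ yields $|C| = \sum_k |C_k|$. The transverse-pair relation (\ref{input-output}), together with the disjoint packing of i/o sets exhibited in Example \ref{decomposable-ex1}, gives $\#\mathbf{is}(C) = \sum_k \#\mathbf{is}(C_k)$ and $\#\mathbf{os}(C) = \sum_k \#\mathbf{os}(C_k)$. Hence the proposition reduces to proving, for each formally indecomposable TD coherent factor, that $|C_k| = \#\mathbf{is}(C_k) + \#\mathbf{os}(C_k) - 1$.

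For a formally indecomposable TD coherent $C$, I would induct on the height $h$ and, for fixed $h$, on the matrix size $p + q$. The degenerate cases $\mathbf{is}(C) = \varnothing$ or $\mathbf{os}(C) = \varnothing$ are handled directly by the terminal clauses of the row and column dimension algorithms in Definition \ref{dim-GBPM}, which return $\#\mathbf{os}(C) - 1$ and $\#\mathbf{is}(C) - 1$ respectively. When $h = 1$ and $pq = 1$, any bipartition of length greater than one admits the nontrivial elementary factorization (\ref{bipart-factor}), contradicting formal indecomposability, so $C = \frac{\mathbf{b}}{\mathbf{a}}$ is elementary and the entry dimension algorithm gives $|C| = \#\mathbf{a} + \#\mathbf{b} - 1$ immediately. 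For the inductive step at $h = 1$ with $p + q > 2$, the DC clause of Definition \ref{dim-rigid} gives
\begin{align*}
|C|^{row} &= \overset{\vee}{v}(C^1) + \sum_{j=1}^{p} |C_{\ast j}|^{row},\\
|C|^{col} &= \overset{\wedge}{v}(C^1) + \sum_{i=1}^{q} |C_{i\ast}|^{col},
\end{align*}
and Proposition \ref{PropTD} ensures each slice $C_{\ast j}$, $C_{i\ast}$ is TD coherent of strictly smaller $p + q$. Combining these with $|C|^{ent} = \sum_{i,j} |c_{ij}|^{ent}$ and applying the inductive hypothesis to the slices collapses the three-way sum to $\#\mathbf{is}(C) + \#\mathbf{os}(C) - 1$. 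For $h > 1$, the entries of $C$ are TD coherent framed elements of strictly smaller height (by Proposition \ref{totally} applied to top level structure matrices), so the entry dimension algorithm combined with the entry-level inductive hypothesis, plus the $h = 1$ analysis at the initial bipartition matrix, delivers the formula.

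The technical heart of the argument will be the bookkeeping in the inductive step at $h = 1$: the vacuosity corrections $\overset{\vee}{v}(C^1)$ and $\overset{\wedge}{v}(C^1)$ must exactly offset the apparent over-counting of i/o elements that occurs when one sums $\#\mathbf{is}$ and $\#\mathbf{os}$ over the slices, because each empty biblock inserted by coheretization simultaneously contributes one unit to a vacuosity and one unit to a slice's i/o count. The DC identity is calibrated precisely to make this cancellation uniform. Verifying that calibration across multi-level framed structure, and correctly handling the $\pi$-reduction built into the row and column dimension algorithms when applied to indecomposable bipartition arguments with empty extremal blocks, is where the most care is required.
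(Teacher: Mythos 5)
Your overall skeleton (additivity over the formal factorization, then an induction driven by the DC identity of Definition \ref{dim-rigid} with vacuosity corrections) points in the right direction, but the inductive step at the heart of the argument is asserted rather than carried out, and as sketched it would not close. Two problems. First, you invoke \emph{both} DC identities at once, writing $|C|^{row}$ as a sum over column slices and $|C|^{col}$ as a sum over row slices. The inductive hypothesis applies to the full dimension $|C_{\ast j}|=|C_{\ast j}|^{row}+|C_{\ast j}|^{col}+|C_{\ast j}|^{ent}$ of a slice, not to the single summand $|C_{\ast j}|^{row}$, so your ``three-way sum'' never reassembles into quantities the induction can consume. The paper's proof slices in one direction only: it uses the DC identity for $|C|^{row}$ and the plain recursive clauses of the column and entry algorithms to get $|C|^{col}=\sum_j|C_{\ast j}|^{col}$ and $|C|^{ent}=\sum_j|C_{\ast j}|^{ent}$, so that $|C|=\overset{\vee}{v}(C^{1})+\sum_j|C_{\ast j}|$ with a single vacuosity term. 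Second, and more seriously, the cancellation you describe cannot be effected by vacuosity alone. Each column slice satisfies $\#\mathbf{os}(C_{\ast j})=\#\mathbf{os}(C)$, so summing the inductive formula over the $p$ columns over-counts the output set $p$-fold; what absorbs this is the identity $\#\mathbf{os}(C_{\ast j})=|\overset{\vee}{\beta}_{j}(C^{1})|+l(\pi\overset{\vee}{\beta}_{j}(C^{1}))$ combined with maximal column precoherence, i.e.\ formula (\ref{maxi2}), which says that the output partitions form a diagonal component and hence $\sum_j|\pi\overset{\vee}{\beta}_{j}(C^{1})|=|\overset{\vee}{eq}(C^{1})|=\#\mathbf{os}(C)-l(C)$. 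Your proposal never invokes (\ref{maxi1})--(\ref{maxi2}), and without them the count does not come out.

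Two smaller points. Your induction variable $p+q$ does not decrease when $C$ is itself a row or column matrix (its own slice in the relevant direction is $C$), and your separate induction on height is unnecessary; the paper inducts on $\#\mathbf{is}(C)+\#\mathbf{os}(C)$, which handles all heights and all lengths $l(C)\geq 1$ uniformly, so the preliminary reduction to formally indecomposable factors buys you nothing (the column slices of an indecomposable matrix need not be indecomposable, so you need the general statement for them anyway). The elementary base case and the degenerate cases with a null i/o set are handled correctly in your sketch.
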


\begin{proof}
	Assume $C$ is an elementary matrix. Then $C$ is a $q\times p$ maximally
	coherent elementary matrix and $l(C) =1$. If $p=q=1,$ the conclusion follows
	by definition of dimension. If $p=1,$ $q\geq2,$ and $\mathbf{is}\left(
	C\right)  \neq\varnothing,$ maximal row coherence implies $\#\mathbf{is}
	\left(  C\right)  =1$ by Proposition \ref{el-coherence}. Since $C$ has equal
	singleton denominators, $\left\vert C\right\vert ^{col}=\left\vert
	C\right\vert ^{row}=0$ so that $\left\vert C\right\vert =\left\vert
	C\right\vert ^{ent}=\#\mathbf{os}\left(  C\right)  ,$ and dually for
	$p\geq2,$  $q=1,$ and $\mathbf{os}\left(  C\right)  \neq\varnothing.$ If
	$p,q\geq2,$ then  $\left\vert C\right\vert =1$ when $\mathbf{is}\left(
	C\right)  ,\mathbf{os} \left(  C\right)  \neq\varnothing;$ otherwise
	$\left\vert C\right\vert =0$  (see Example \ref{coh-elem}). The conclusion
	follows in each case.
	
	Let $r=l\left(  C \right)  .$ Since the initial bipartition matrix $C^{1}$ is
	maximally precoherent,
	\begin{equation}
		\left\vert \overset{\wedge}{\mathbf{e}}(C^{1})\right\vert =\left\vert
		\overset{\wedge}{eq}(C^{1})\right\vert =\#\mathbf{is}(C)-r\ \ \text{
			when}\ \ \mathbf{is}(C)\neq\varnothing\text{ \ and} \label{maxi1}%
	\end{equation}
	\begin{equation}
		\left\vert \overset{\vee}{\mathbf{e}}(C^{1})\right\vert =\left\vert
		\overset{\vee}{eq}(C^{1})\right\vert =\#\mathbf{os}(C)-r\ \ \text{ when
		}\ \ \mathbf{os}(C)\neq\varnothing. \label{maxi2}%
	\end{equation}

	\begin{case}
		$\mathbf{is}(C)=\varnothing$ or $\mathbf{os}(C)=\varnothing.$ Then $h\left(
		C\right)  =1$ and either $\left\vert C\right\vert =\left\vert C\right\vert
		^{col}=\mathbf{\#os}\left(  C\right)  -r$ or $\left\vert C\right\vert
		=\left\vert C\right\vert ^{row}=\mathbf{\#is}\left(  C\right)  -r$ so that
		Formula \ref{fdimC} holds.
	\end{case}
	
	\begin{case}
		$\left(  \#\mathbf{in}(C),\#\mathbf{os}(C)\right)  =\left(  m,n\right)
		\neq(0,0).$ Assume Formula \ref{fdimC} holds for all non-null TD coherent
		matrices $B$ such that $\#\mathbf{in}(B)+\#\mathbf{os}(B)<m+n$
		and $ (\#\mathbf{in}(B),$ $\#\mathbf{os}(B))\leq\left(  m,n\right)  .$
		Since  $C$ is DC,
		\begin{align}
			\hspace{-1.7in}|C| =|C|^{row}+|C|^{col}+|C|^{ent}\label{mdimC}
		\end{align}
			\[  =\overset{\vee}{v}\left(  C^{1}\right)  +\sum_{j\in\mathfrak{p}}|C_{\ast
				j}|^{row}+\sum_{j\in\mathfrak{p}}|C_{\ast j}|^{col}+\sum_{j\in\mathfrak{p}
			}|C_{\ast j}|^{ent}=\overset{\vee}{v}\left(  C^{1}\right)  +\sum
			_{j\in\mathfrak{p}}\left\vert C_{\ast j}\right\vert .
			\]
		For each $j,$ the framed column matrix $C_{\ast j}$ is TD coherent by
		Proposition \ref{PropTD}. Let $r_{j}:=l(\overset{\vee}{\beta}_{j}(C^{1}))$
		and  let $C_{\ast j}=C_{\ast j}^{1}\cdots C_{\ast j}^{r_{j}};$ then $|C_{\ast
			j}|=\#\mathbf{is}(C_{\ast j})+\#\mathbf{os}(C_{\ast j})-l(\overset{\vee
		}{\beta}_{j}(C^{1}))$ by the induction hypotheses. But $\#\mathbf{os}(C_{\ast
			j})=|\overset{\vee}{\beta}_{j}(C^{1})|+l(\pi(\overset{\vee}{\beta}_{j}
		(C^{1})))$ implies
		\begin{align}
			|C_{\ast j}|  &  =\#\mathbf{is}(C_{\ast j})+|\overset{\vee}{\beta}_{j}
			(C^{1})|-l(\overset{\vee}{\beta}_{j}(C^{1}))+l(\pi(\overset{\vee}{\beta}
			_{j}(C^{1})))\label{dim-col-2}\\
			&  =\#\mathbf{is}(C_{\ast j})+|\overset{\vee}{\beta}_{j}(C^{1})|-\overset{\vee
			}{v}_{j}(C^{1}),\nonumber
		\end{align}
		and by combining Formulas (\ref{mdimC}), (\ref{dim-col-2}), and (\ref{maxi2})
		we obtain
		\begin{align*}
			|C|  &  =\sum_{j\in\mathfrak{p}}\left\vert C_{\ast j}\right\vert
			+\overset{\vee}{v}\left(  C^{1}\right)  =\sum_{j\in\mathfrak{p}}\left[
			\#\mathbf{is}(C_{\ast j})+|\overset{\vee}{\beta}_{j}(C^{1})|-\overset{\vee
			}{v}_{j}(C^{1})\right]  +\overset{\vee}{v}\left(  C^{1}\right) \\
			&  =\sum_{j\in\mathfrak{p}}\left[  \#\mathbf{is}(C_{\ast j})+|\overset{\vee
			}{\beta}_{j}(C^{1})|\right]  =\#\mathbf{is}(C)+\left\vert \overset{\vee
			}{\mathbf{e}}(C^{1})\right\vert =\#\mathbf{is}(C)+\#\mathbf{os}(C)-r.
		\end{align*}
		
	\end{case}
\end{proof}

\begin{corollary}
	\label{tdimc}If $c$ is a non-null TD coherent framed element on $\frac
	{\mathbf{b}}{\mathbf{a}}$, then
	\begin{equation}
		\left\vert c\right\vert =\#\mathbf{a}+\#\mathbf{b}-l\left(  c \right)  .
		\label{fdimc}%
	\end{equation}
	
\end{corollary}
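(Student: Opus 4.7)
The plan is to derive Corollary \ref{tdimc} as an immediate specialization of Proposition \ref{tdimC}. A framed element $c \in \mathbf{a} \circledast \mathbf{b}$ on $\frac{\mathbf{b}}{\mathbf{a}}$ can be canonically identified with a $1 \times 1$ framed matrix $C$ whose base elementary matrix is the $1 \times 1$ matrix $\bigl( \frac{\mathbf{b}}{\mathbf{a}} \bigr)$; this is explicit in Definition \ref{framed-prejoin}, where a framed matrix on an elementary matrix $\mathbf{C} = (\mathbf{b}_s/\mathbf{a}_t)$ with $pq=1$ has a single entry from $\mathbf{a}_1 \circledast \mathbf{b}_1$. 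Under this identification, $\mathbf{is}(C) = \mathbf{a}$ and $\mathbf{os}(C) = \mathbf{b}$, and the dimension $|C| = |c|$ by construction.

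Next, I would verify that the data entering Proposition \ref{tdimC} agree under this identification. The initial bipartition matrix $C^1$ coincides with the initial bipartition of $c$, so the formal factorization length $l(C)$, which by Definition \ref{formal} is determined by the formal decomposability of $C^1$ into bipartition matrices, equals $l(c)$. The formally indecomposable factors of $C$ are precisely those of $c$, since both arise from the indecomposable factorization of the common initial bipartition via Algorithm \ref{bipart-factor}. By Definition \ref{TD-coherent}, a framed element is TD coherent iff its formally indecomposable factors are TD coherent as matrices; this is the same condition that makes $C$ TD coherent. Consequently, $c$ is non-null and TD coherent iff $C$ is.

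Applying Proposition \ref{tdimC} to $C$ yields
\[
|c| \;=\; |C| \;=\; \#\mathbf{is}(C) + \#\mathbf{os}(C) - l(C) \;=\; \#\mathbf{a} + \#\mathbf{b} - l(c),
\]
establishing \eqref{fdimc}. There is no genuine obstacle here: once the identification of a framed element on $\frac{\mathbf{b}}{\mathbf{a}}$ with the corresponding $1 \times 1$ framed matrix is made explicit, the corollary is a direct substitution. The only point that requires brief checking is that the notions of TD coherence, length, and dimension transfer transparently across this identification, which they do by Definitions \ref{framed-prejoin}, \ref{formal}, and \ref{TD-coherent}.
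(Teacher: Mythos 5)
Your proof is correct and matches the paper's: the paper's entire proof of Corollary \ref{tdimc} is "Apply Proposition \ref{tdimC} to $c$," which is precisely the specialization you carry out. The extra care you take in checking that dimension, length, and TD coherence transfer across the identification of a framed element with a $1\times1$ framed matrix is sound but left implicit in the paper.
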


\begin{proof}
	Apply Proposition \ref{tdimC} to $c $.
\end{proof}

Since a coherent bipartition is a TD coherent framed element we immediately obtain

\begin{corollary}
	\label{coherent-bipartition}If a bipartition $\frac{\beta}{\alpha}\in
	P_{r}^{\prime}\left(  \mathbf{a}\right)  \times P_{r}^{\prime}\left(
	\mathbf{b}\right)  $ is coherent, then $\frac{\beta}{\alpha}$ is maximally
	totally coherent and
	\begin{equation}
		\left\vert \frac{\beta}{\alpha}\right\vert =\#\mathbf{a}+\#\mathbf{b}
		-r=|\alpha\Cup\beta|. \label{bi-dim}%
	\end{equation}
	
\end{corollary}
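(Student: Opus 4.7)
The plan is to invoke Corollary \ref{tdimc} after verifying that a coherent bipartition, viewed as a framed element of height $1$, is in fact TD coherent with $l(c)=r$. The main work is threading together several already-established facts from the excerpt so that the hypothesis ``coherent'' upgrades all the way to ``TD coherent.''

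First I would use the algorithm following Proposition \ref{decomposition-formula} to write the indecomposable factorization $\beta/\alpha = C_{1}\cdots C_{r}$ with each $C_{k}$ an elementary matrix. Since $\beta/\alpha$ is coherent by hypothesis, each factor $C_{k}$ is a coherent elementary matrix, and by the parenthetical remark immediately following Proposition \ref{el-coherence} (``every coherent elementary matrix is maximally totally coherent''), each $C_{k}$ is maximally totally coherent. Since maximal total coherence of every indecomposable factor of a framed element is precisely maximal total coherence of the element, this gives the first assertion of the corollary.

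Second, by the remark that elementary matrices (and hence bipartitions) are DC, recorded right after Definition \ref{dim-rigid}, each $C_{k}$ is simultaneously maximally coherent and DC, hence TD coherent by Definition \ref{TD-coherent}. Therefore the formally indecomposable factors of $\beta/\alpha$ are TD coherent, so $\beta/\alpha$ itself is TD coherent, and Corollary \ref{tdimc} then delivers
\[
\left|\frac{\beta}{\alpha}\right|=\#\mathbf{a}+\#\mathbf{b}-l(\beta/\alpha)=\#\mathbf{a}+\#\mathbf{b}-r.
\]

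For the final equality $\#\mathbf{a}+\#\mathbf{b}-r=|\alpha\Cup\beta|$, I would use $\alpha\Cup\beta\in P_{r}^{\prime}(\mathbf{a}\Cup\mathbf{b})$ together with $\#(\mathbf{a}\Cup\mathbf{b})=\#\mathbf{a}+\#\mathbf{b}$, and then apply the dimension formula (\ref{dim-vac}) once $l(\pi(\alpha\Cup\beta))=r$ is in hand. The main (and essentially only) obstacle is ruling out a doubly empty biblock $(A_{k},B_{k})=(\varnothing,\varnothing)$ in the interior, which would shrink the length under $\pi$. Such a block would, however, force the corresponding factor $C_{k}$ in the indecomposable factorization to be null, and a null factor inside a non-null product contradicts the indecomposable factorization being an elementary product together with the coherence (indeed non-null status) of $\beta/\alpha$. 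With doubly empty biblocks excluded, every block $A_{k}\cup B_{k}$ of $\alpha\Cup\beta$ is non-empty, $\pi$ preserves length, and $|\alpha\Cup\beta|=\#(\mathbf{a}\Cup\mathbf{b})-r=\#\mathbf{a}+\#\mathbf{b}-r$, completing the identification in (\ref{bi-dim}).
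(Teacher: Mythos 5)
Your main chain of reasoning is exactly what the paper intends: the paper offers no argument beyond the sentence ``Since a coherent bipartition is a TD coherent framed element we immediately obtain\dots'', and your steps --- elementary indecomposable factorization, coherent elementary factors are maximally totally coherent, elementary matrices are DC, hence each factor is TD coherent, hence $\beta/\alpha$ is TD coherent and Corollary \ref{tdimc} applies --- fill that sentence in correctly. The problem is your final paragraph. You claim a doubly empty biblock $(A_k,B_k)=(\varnothing,\varnothing)$ is impossible because a null factor inside a non-null product would be ``a contradiction.'' It is not: the TP/BTP conditions are perfectly compatible with a null elementary factor (the operators $\eta_1,\eta_2$ manufacture exactly such biblocks), and Proposition \ref{coh-bipartition} certifies bipartitions with doubly empty biblocks as coherent. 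Concretely, $\frac{0|1}{0|1}=\eta_1\left(\frac{1}{1}\right)\in P_2'(\mathfrak{1})\times P_2'(\mathfrak{1})$ satisfies conditions (2) and (3) of Proposition \ref{coh-bipartition}, yet its indecomposable factorization $\bigl(\tfrac{0}{0}\ \tfrac{0}{0}\bigr)^{T}\bigl(\tfrac{1}{1}\bigr)$ contains a null factor, its dimension computed by the algorithms is $1$, and $\#\mathbf{a}+\#\mathbf{b}-r=0$.

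What actually rescues the statement is not a contradiction but the identification in Definition \ref{GBPM}: a null factor in a formal product is suppressed, so $\frac{0|1}{0|1}$ is identified with $\frac{1}{1}$ and the length entering Corollary \ref{tdimc} is the reduced one, $l(c)=1$, not the nominal $r=2$. Equivalently, $r$ in (\ref{bi-dim}) must be read as the number of biblocks that are not doubly empty (consistent with the Remark following Definition \ref{coherent-framed} on when empty biblocks are displayed). With that reading both your step $l(\beta/\alpha)=r$ and the equality $\#\mathbf{a}+\#\mathbf{b}-r=|\alpha\Cup\beta|$ go through, since $\pi$ then preserves the reduced length of $\alpha\Cup\beta$. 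So you isolated the right obstacle, but it is resolved by the reduction convention, not by the contradiction you assert.
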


\begin{corollary}
	If $c\in\mathbf{a}\circledast_{cc}\mathbf{b}$ with initial bipartition $c^{1}
	$, then $\left\vert c\right\vert \leq\left\vert c^{1}\right\vert $.
\end{corollary}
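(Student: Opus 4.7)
The plan is to prove the inequality by induction on the height $h$ of $c$, reducing to the single level-transition estimate $|c^{i+1}| \leq |c^i|$.

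For the base case $h=1$ there is nothing to do, since $c = c^1$ by convention. For the inductive step, by Definition \ref{coherent-framed} the coherence of $c$ forces each consecutive path $(c^i, c^{i+1})$ to itself be coherent: $c^i$ is maximally precoherent and $c^{i+1}$ is semi-simple coherent, and the entries of $c^{i+1}$ are coherent framed elements of strictly lower height. It therefore suffices to establish $|c^{i+1}| \leq |c^i|$ for each $1 \leq i \leq h-1$, since chaining these inequalities delivers $|c| = |c^h| \leq \cdots \leq |c^1|$.

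To prove $|c^{i+1}| \leq |c^i|$, I would first invoke the additivity $|C| = \sum_k |C_k|$ across formally indecomposable factors (equation (\ref{Cdecomp})) to reduce to the case where the level-$i$ bipartition matrix is indecomposable. Write $c^i = (\mathbf{c}_{uv})$ in its elementary form and $c^{i+1} = (c^{i+1}_{uv})$, where each $c^{i+1}_{uv}$ is a coherent bipartition on the elementary entry $\mathbf{c}_{uv} = \mathbf{b}_u/\mathbf{a}_v$. Using the recursive algorithm of Definition \ref{dim-GBPM} I would separately bound the three contributions:

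\begin{itemize}
\item \emph{Entry dimensions.} Each $c^{i+1}_{uv}$ is coherent on $\mathbf{b}_u/\mathbf{a}_v$, hence by Corollary \ref{coherent-bipartition} has dimension $\#\mathbf{a}_v + \#\mathbf{b}_u - l(c^{i+1}_{uv})$, which is at most the elementary value $\#\mathbf{a}_v + \#\mathbf{b}_u - 1$ entering $|c^i|^{ent}$ in the terminating case of the entry algorithm.
\item \emph{Row and column dimensions.} The semi-simple coherence of $c^{i+1}$ means that the input product $\overset{\wedge}{\mathbf{e}}(c^{i+1})$ embeds in $\Delta_P^{(q-1)}(\pi\overset{\wedge}{eq}(c^{i+1}))$, and dually for columns. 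Because any cell in an iterated permutahedral diagonal has dimension at most that of the cell being subdivided, Proposition \ref{ab} (and the analysis surrounding the diagonal components $\sqsubseteq_{diag}$) implies that the row/column subdivision contributions cannot exceed those recorded for $c^i$.
\end{itemize}

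Summing the three contributions and using the identity (\ref{dim-framed}) yields $|c^{i+1}| \leq |c^i|$, completing the induction. The main obstacle is the bookkeeping at the level-transition: one must carefully track how vacuosities introduced by coheretizing empty blocks interact with the row/column decompositions, in essentially the same way as in the proof of Proposition \ref{tdimC}, but now only asserting inequality because the dimensionally complete hypothesis of Corollary \ref{tdimc} is dropped. Equality holds precisely when $c$ happens to be TD coherent, recovering the TD formula as the sharp case.
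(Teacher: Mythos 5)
Your overall strategy (induction on the height via the level-transition inequality $|c^{i+1}|\leq|c^{i}|$) is a genuinely different route from the paper's, which simply compares the closed-form dimension of a TD coherent element (Corollary \ref{tdimc}) with formula (\ref{bi-dim}) for a coherent bipartition and observes that TD coherence is precisely the maximizing case. Your route could in principle work, but as written it has a genuine gap: the decisive step, the bound on the row and column contributions across a level transition, is asserted rather than proved. You defer it to ``the same bookkeeping as in the proof of Proposition \ref{tdimC},'' but that proof rests essentially on the dimensional completeness identity $|B|^{row}=\overset{\vee}{v}(B^{1})+\sum_{j}|B_{\ast j}|^{row}$ (and its column dual), which is exactly the hypothesis you are dropping; Example \ref{TD-coherent-ex} shows it can fail even for maximally coherent matrices ($|C|^{row}=0$ while $\overset{\vee}{v}(C)=1$). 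Remarking that one now ``only asserts inequality'' is not enough: you must show that whenever the DC identity fails it fails in the favorable direction, and that the vacuosity terms created by coheretization never inflate $|\cdot|^{row}+|\cdot|^{col}$ beyond what the previous level records. Since the paper stresses that $|C|$ is \emph{not} the sum of the dimensions of its entries, this row/column comparison is the entire content of the claim, and it is the part your proposal leaves open.

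A secondary inaccuracy: you describe each $c^{i+1}_{uv}$ as ``a coherent bipartition on the elementary entry $\mathbf{c}_{uv}$'' and then apply Corollary \ref{coherent-bipartition}. That description fits only the transition from level $1$ to level $2$ (and even there only entrywise on the elementary factors of $c^{1}$); for $i\geq 2$ the entries of $c^{i+1}$ are $i$-formal bipartitions, i.e.\ nested formal products of matrices, to which Corollary \ref{coherent-bipartition} does not directly apply. The paper's argument sidesteps both difficulties by working with the global formulas $|c|=\#\mathbf{a}+\#\mathbf{b}-l(c)$ in the TD coherent case and (\ref{bi-dim}) for the initial bipartition, rather than tracking the three dimension components level by level.
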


\begin{proof}
	By Corollaries \ref{tdimc} and (\ref{bi-dim}), $\left\vert c\right\vert $
	attains its maximum value $\left\vert c^{1}\right\vert $ when $c$ is TD coherent.
\end{proof}

\section{Face Operators and Chain Complexes}

\subsection{The Face Operator $\tilde{\delta}$ on $\mathfrak{m}	{\circledast}_{cc}\mathfrak{n}$}

\hspace{.1in}

Let $m,n\geq0.$ In this subsection we define a face operator $\tilde{\delta}$
on $\mathfrak{m}\circledast_{cc}\mathfrak{n}$ in terms of partitioning actions on bipartition matrices. Incoherent matrices so obtained are either coheretized or discarded (cf. Example \ref{33obstruction}).

Recall Proposition \ref{totally}, which asserts that a top level structure matrix within a
coherent framed matrix is totally coherent.

\begin{definition}
	\label{delta}
	Given $c\in\mathfrak{m}\circledast_{cc}\mathfrak{n},$ let
	$\tilde{\delta}(c)$ denote the set of \textbf{coherent codimension} $1$
	\textbf{faces} \textbf{of} $c$. If $c=\frac{\mathfrak{n}}{\mathfrak{m}}$
	define
	\begin{equation}
		\tilde{\delta}\left(  \frac{\mathfrak{n}}{\mathfrak{m}}\right)  :=\left\{
		\begin{array}
			[c]{cl}%
			\varnothing, & m+n<2\\
			\mathfrak{m}\circledast_{td}^{2}\mathfrak{n}, & m+n\geq2.
		\end{array}
		\right.
	\end{equation}
	Otherwise, let $c=C_{1}\cdots C_{r}$ and define
	\[
	\tilde{\delta}(c):=\bigcup_{\substack{1\leq k\leq r\smallskip\\F\in
			\tilde{\delta}\left(  C_{k}\right)  }}\left\{  C_{1}\cdots C_{k-1}\cdot F\cdot
	C_{k+1}\cdots C_{r}\right\}  ,
	\]
	where $\tilde{\delta}\left(  C_{k}\right)  $ denotes the set of all
	$\left\vert C_{k}\right\vert -1$ dimensional coherent framed matrices that
	arise from $C_{k}$ in the following way: Given a positive dimensional top
	level structure matrix $E$ within $C_{k},$ let $B$ be either $E$ or a
	positive  dimensional row, column, or elementary bipartition entry of $E.$ If
	$B$ is a/an
	
	\begin{enumerate}

		\item elementary bipartition $\frac{\mathbf{b}}{\mathbf{a}},$ let $D\in
		\{B_{1}B_{2}\in\tilde{\delta}\left(  \frac{\mathbf{b}}{\mathbf{a}}\right)
		\}.$\smallskip
		
		\item $1\times p$ matrix, $p\geq2,$ let $D\in\{$coheretizations $B_{1}
		^{\prime}B_{2}$ of $B_{1}B_{2}=\partial_{\mathbf{M},\mathbf{N}}\left(
		B\right)  ,$ where $\partial_{\mathbf{M},\mathbf{N}}\left(  B\right)  $
		ranges  over all row actions$\}.$\smallskip
		
		\item $q\times1$ matrix, $q\geq2$, let $D\in\{$coheretizations $B_{1}
		B_{2}^{\prime}$ of $B_{1}B_{2}=\partial_{\mathbf{M},\mathbf{N}}\left(
		B\right)  ,$ where $\partial_{\mathbf{M},\mathbf{N}}\left(  B\right)  $
		ranges  over all column actions $\}.$\smallskip
		
		\item $q\times p$ matrix, $p,q\geq2,$ let $D\in\{ $coheretizations
		$B_{1}^{\prime}B_{2}^{\prime}$ of $B_{1}B_{2}=\partial_{\mathbf{M},\mathbf{N}
		}(B) ,$ where $\partial_{\mathbf{M},\mathbf{N}}(B) $ ranges over all row
		$i$/column $j$ actions $\}$.
	\end{enumerate}
	
	\noindent Given $D,$ let $t\times s$ be the matrix dimensions of $C_{k}$ and
	let $C_{k}^{D}$ be the $t\times s$ framed matrix representative obtained from
	$C_{k}$ via the replacement $B\leftarrow D$. If $C_{k}^{D}$ is coherent and
	formally decomposable, then $C_{k}^{D}\in\tilde{\delta}\left(  C_{k}\right)
	.$ If $C_{k}^{D}$ is formally indecomposable and $F$ is the canonical
	representative of $[C_{k}^{D}]$, then $F\in\tilde{\delta}\left(  C_{k}\right)
	$ if either $s,t\geq2$ or
	
	\begin{enumerate}

		\item[(a)] $s\geq2,$ $C_{k}=\left(  c_{1}\cdots c_{s}\right)  ,$ $F=(f_{1}\cdots
		f_{s}),$  and for some non-strongly extreme pair $\left(
		\mathbf{M}^{i},\mathbf{N}^{i}\right)  $ and some $j\in\mathfrak{s}$, the entry action $\partial_{\mathbf{M}^{i},\mathbf{N}^{i}}(c_{j}^{1})=f_{j}^{1}$.
		
		\item[(b)] $t\geq2,$ $C_{k}=\left(  c_{1}\cdots c_{t}\right)  ^{T}$, $F=(f_{1}\cdots
		f_{t})^{T},$ and for some non-strongly extreme pair $\left(
		\mathbf{M}^{j},\mathbf{N}^{j}\right)$ and some $i\in\mathfrak{t}$, the entry action $\partial_{\mathbf{M}^{j},\mathbf{N}^{j}}(c_{i}^{1})=f_{i}^{1}$.
	\end{enumerate}
\end{definition}

When $m,n\geq2$, the set $\tilde{\delta}\left(  \frac{\mathfrak{n}%
}{\mathfrak{m}}\right)  =\mathfrak{m}\circledast_{td}^{2}\mathfrak{n}$
consists of all framed partitions of $\frac{\mathfrak{n}}{\mathfrak{m}}$ of
length $2$.

\begin{example}
	Consider the $5$-dimensional coherent framed element
	\[
	c=C_{1}C_{2}=\left(
	\begin{array}
		[c]{c}%
		\frac{1}{1}\smallskip\\
		\frac{0}{1}\smallskip\\
		\frac{0}{1}\smallskip\\
		\frac{0}{1}%
	\end{array}
	\right)  \left(  \frac{2|34}{0|0}\text{ }\frac{24|3}{2|34}\right)  .
	\]
	Write $C_{2}=\left(  c_{1}\text{ }c_{2}\right)  $ and consider the
	$4$-dimensional $1\times1$ (totally) coherent top level element
	\[
	c_{2}^{1}=B_{1}B_{2}=\frac{24|3}{2|34}=\left(
	\begin{array}
		[c]{c}%
		\frac{2}{2}\smallskip\\
		\frac{4}{2}%
	\end{array}
	\right)  \left(  \frac{3}{0}\text{ }\frac{3}{34}\right)  .
	\]
	Six admissible formally indecomposable matrices $F$ arise from two column
	actions and four row actions on the indecomposable factors $B_{i}$. For
	example, the replacement $B_{1}\leftarrow D,$ where $D$ is the right column
	action
	\[
	D=\partial_{\mathbf{M,N}}\left(  B_{1}\right)  =\left(
	\begin{array}
		[c]{c}%
		\frac{0|2}{2|0}\smallskip\\
		\frac{4|0}{2|0}%
	\end{array}
	\right)  =\left(
	\begin{array}
		[c]{c}%
		\frac{0}{2}\smallskip\\
		\frac{0}{2}\smallskip\\
		\frac{4}{2}%
	\end{array}
	\right)  \left(
	\begin{array}
		[c]{cc}%
		\frac{2}{0}\smallskip & \frac{2}{0}\\
		\frac{0}{0} & \frac{0}{0}%
	\end{array}
	\right)  ,
	\]
	produces the $3$-dimensional coherent bipartition
	\[
	\frac{4|2|3}{2|0|34}=\left(
	\begin{array}
		[c]{c}%
		\frac{0}{2}\smallskip\\
		\frac{0}{2}\smallskip\\
		\frac{4}{2}%
	\end{array}
	\right)  \left(
	\begin{array}
		[c]{cc}%
		\frac{2}{0}\smallskip & \frac{2}{0}\\
		\frac{0}{0} & \frac{0}{0}%
	\end{array}
	\right)  \left(  \frac{3}{0}\text{ }\frac{3}{34}\right)  .
	\]
	Alternatively, $\frac{4|2|3}{2|0|34}$ is obtained by appropriately
	partitioning the biblock $\frac{24}{2}$. The canonical representative
	\[
	F=\left(  f_{1}\text{ }f_{2}\right)  =\left(  \left(  \frac{2|34}{0|0}\right)
	\text{ }\left(  \frac{4|2|3}{2|0|34}\right)  \right)
	\]
	given by the replacement $c_{2}^{1}\leftarrow\frac{4|2|3}{2|0|34}$ is
	coherent, indecomposable, and satisfies $f_{2}^{1}=\partial_{\left(  \left\{
		2\right\}  ,\left\{  4\right\}  \right)  }(c_{2}^{1}),$ Since $\left(
	\left\{  2\right\}  ,\left\{  4\right\}  \right)  $ is not strongly extreme,
	$F$ is admissible by Definition \ref{delta}, condition (a), and $C_{1}\cdot
	F\in\tilde{\delta}\left(  c\right)  .$
\end{example}

Our next example demonstrates that partitioning actions may fail to preserve coherence.
\begin{example}\label{33obstruction}
	Consider the dimension $4$ coherent framed element
	\[
	c=\left(
	\begin{array}
		[c]{c}
		\frac{0}{1}\smallskip\\
		\frac{0}{1}\smallskip\\
		\frac{0}{1}\smallskip\\
		\frac{0}{1}
	\end{array}
	\right)  \left(
	\begin{array}
		[c]{cc}
		\frac{1|2|3}{3|2|0} & \frac{123}{0}
	\end{array}
	\right)  .
	\]
	\noindent The partitioning action given by the replacement $\frac{123}{0}\leftarrow \frac {12|3}{0|0}$
	factors as
	\[
	\left(
	\begin{array}
		[c]{cc}
		\frac{1|2|3}{3|2|0} & \frac{12|3}{0|0}
	\end{array}
	\right)
	=
	\left(
	\begin{array}
		[c]{cc}
		\frac{1|2}{3|2}\smallskip & \frac{12}{0}\\
		\frac{0|0}{3|2} & \frac{0}{0}
	\end{array}
	\right)  \left(
	\begin{array}
		[c]{cccc}
		\frac{3}{0} & \frac{3}{0} & \frac{3}{0} & \frac{3}{0}
	\end{array}
	\right) .
	\]
	Since the $2\times 2$ (row) incoherent factor fails to admit a $3$-dimensional coheretization, the given partitioning action fails to preserve coherence, and consequently, fails to produce an element of $\tilde{\delta}(c).$
\end{example}

\begin{proposition}
	If $C$ is a formally indecomposable TD\ coherent matrix, then $F\in
	\tilde{\delta}\left(  C\right)  $ is TD coherent if and only if $F$ is
	formally decomposable.
\end{proposition}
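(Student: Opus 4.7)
The plan is to derive both directions from the dimension formula of Proposition \ref{tdimC}, using the key observation that any partitioning action preserves the input and output sets. Since Definition \ref{delta} constructs $F \in \tilde{\delta}(C)$ by applying a partitioning action to a structure matrix within $C$, one has $\#\mathbf{is}(F) = \#\mathbf{is}(C)$ and $\#\mathbf{os}(F) = \#\mathbf{os}(C)$, while $|F| = |C| - 1$ by construction. Since $C$ is formally indecomposable and TD coherent, $l(C) = 1$ and Proposition \ref{tdimC} gives $|C| = \#\mathbf{is}(C) + \#\mathbf{os}(C) - 1$. Combining these identities yields
\[
|F| = \#\mathbf{is}(F) + \#\mathbf{os}(F) - 2,
\]
which will drive both implications.

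For the forward direction, I would assume $F$ is TD coherent (hence non-null) and apply Proposition \ref{tdimC} to $F$ to obtain $|F| = \#\mathbf{is}(F) + \#\mathbf{os}(F) - l(F)$. Comparing with the identity above forces $l(F) = 2$, so $F$ is formally decomposable.

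For the reverse direction, I would write $F = F_1 \cdots F_r$ in canonical factorization form. Inspection of Definition \ref{delta} shows that each codimension-one construction produces exactly two formally indecomposable factors, so $r = 2$ and $F = F_1 F_2$. Since $F \in \tilde{\delta}(C)$ is coherent, Proposition \ref{totally} implies that $F_1$ and $F_2$ are maximally coherent. It then remains to verify the DC condition of Definition \ref{dim-rigid} on each $F_k$: for every subframed matrix $B$ embedded in $F_k$,
\[
|B|^{row} = \overset{\vee}{v}(B^1) + \sum_{j} |B_{\ast j}|^{row}, \qquad |B|^{col} = \overset{\wedge}{v}(B^1) + \sum_{i} |B_{i\ast}|^{col}.
\]
These equations are designed into the coheretizations appearing in Definition \ref{delta}: in each action type the introduced empty blocks are positioned so that the row/column dimension algorithms precisely account for the added vacuosities.

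The principal obstacle is this last case analysis across the four coheretization types in Definition \ref{delta}. Cases (1)--(3) (elementary, row, and column actions) are routine because their coheretizations admit straightforward factorizations into elementary building blocks whose row/column dimensions are zero by the second highlighted fact following Definition \ref{dim-GBPM}. Case (4), the row $i$/column $j$ action, is the most delicate: the non-strongly-extreme pivoting pair $(M_{ij}^{k_{ij}}, N_{ij}^{k_{ij}})$ and the strongly extreme pairs indexed by $\mathcal{V}_{ij}$ interact with the formal decomposition, so one must track how the coheretization redistributes vacuosities among the factors. Once DC is verified on each $F_k$, maximal coherence combined with DC yields TD coherence of each $F_k$, and TD coherence of $F$ follows directly from Definition \ref{TD-coherent}.
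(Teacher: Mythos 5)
Your proposal matches the paper's own argument: the forward direction is exactly the "dimensional reasons" the paper invokes via Proposition \ref{tdimC} (you spell out the count $|F|=\#\mathbf{is}(F)+\#\mathbf{os}(F)-2$ versus $\#\mathbf{is}(F)+\#\mathbf{os}(F)-l(F)$, forcing $l(F)=2$), and the reverse direction is the same "straightforward check" of maximal coherence and the DC condition on the two factors that the paper leaves to the reader. Your write-up is in fact more explicit than the paper's, and correctly locates the only nontrivial work in verifying DC for the row $i$/column $j$ coheretizations.
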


\begin{proof}
	A formally indecomposable $F\in\tilde{\delta}\left(  C\right)  $ is not TD
	coherent for dimensional reasons by Proposition \ref{tdimC}, and a formally
	decomposable $F\in\tilde{\delta}\left(  C\right)  $ is TD coherent by a
	straightforward check.
\end{proof}

\begin{example}
	\label{action}The $3$-dimensional maximally coherent framed element
	\[
	c=\left(\mathbf{C}_{1}\mathbf{C}_{2}\mathbf{C}_{3}=\frac{0|13|2}{2|13|0}\,,\text{
	}\left(
	\begin{array}
		[c]{c}%
		\frac{0}{2}\smallskip\\
		\frac{0}{2}\smallskip\\
		\frac{0}{2}\smallskip\\
		\frac{0}{2}%
	\end{array}
	\right)
	\!\!
	\left(
	\begin{array}
		[c]{cc}%
		\frac{0|1}{0|1}\smallskip & \frac{0|1}{3|0}\\
		\frac{3}{1} & \frac{3|0}{3|0}%
	\end{array}
	\right)
	\!\!
	\left(\tfrac{2}{0}\text{ }\tfrac{2}{0}\text{ }\tfrac{2}{0}\text{
	}\tfrac{2}{0}\right)\right)
	\]
	has exactly fourteen (14) codimension $1$ (decomposable) faces. To see this,
	note that $c=C_{1}C_{2}C_{3},$ where $C_{1}=\mathbf{C}_{1},$ $C_{2}=\left(
	\mathbf{C}_{2},C_{2}^{1}\right)  ,$ and $C_{3}=\mathbf{C}_{3}.$ The only
	positive dimensional top level structure matrix within $c$ is
	\[
	C_{2}^{1}=\left(
	\begin{array}
		[c]{cc}%
		\frac{0|1}{0|1}\smallskip & \frac{0|1}{3|0}\\
		\frac{3}{1} & \frac{3|0}{3|0}%
	\end{array}
	\right)  .
	\]
	Thus a codimension $1$ face in $\tilde{\delta}\left(  c\right)  $ has the
	form  $C_{1}\cdot F\cdot C_{3},$ where $F=\left(  \mathbf{C}_{2},F^{1}\right)
	\in\tilde{\delta}\left(  C_{2}\right)  $ and $F^{1}$ is a totally coherent
	coheretization of some row $i/$column $j$ action $\partial_{\mathbf{M,N}
	}\left(  C_{2}^{1}\right)  $ given by Definition \ref{delta}, item (4).
	\medskip
	
	\noindent One codimension $1$ face arises from the row $1$/column $1$ incoherent action with $\mathcal{V}_{11}=\varnothing$:
	\smallskip
	
	\[
	E=\left(
	\begin{array}
		[c]{cc}%
		\frac{0|0|1}{0|1|0}\smallskip & \frac{0|1|0}{3|0|0}\\
		\frac{3|0}{1|0} & \frac{3|0}{3|0}%
	\end{array}
	\right)  =\left(
	\begin{array}
		[c]{cc}%
		\frac{0|0}{0|1}\smallskip & \frac{0}{3}\\
		\frac{0|0}{0|1}\smallskip & \frac{0}{3}\\
		\frac{3}{1} & \frac{3}{3}%
	\end{array}
	\right)  \left(
	\begin{array}
		[c]{cccc}%
		\frac{1}{0}\smallskip & \frac{1}{0} & \frac{1|0}{0|0} & \frac{1|0}{0|0}\\
		\frac{0}{0} & \frac{0}{0} & \frac{0}{0} & \frac{0}{0}%
	\end{array}
	\right)
	\]
	\medskip
	
	\noindent the first factor of which coheretizes one way and gives
	\smallskip
	
	\[
	F^{1}=\left(
	\begin{array}
		[c]{cc}%
		\frac{0|0}{0|1}\smallskip & \frac{0|0}{3|0}\\
		\frac{0|0}{0|1}\smallskip & \frac{0|0}{3|0}\\
		\frac{3}{1} & \frac{3}{3}%
	\end{array}
	\right)  \left(
	\begin{array}
		[c]{cccc}%
		\frac{1}{0}\smallskip & \frac{1}{0} & \frac{1|0}{0|0} & \frac{1|0}{0|0}\\
		\frac{0}{0} & \frac{0}{0} & \frac{0}{0} & \frac{0}{0}%
	\end{array}
	\right)  .
	\]
	\medskip
	
	\noindent Twelve codimension $1$ faces arise from row $2/$column $1$ actions.
	Four are given by coherent actions with $\mathcal{V}_{21}=\varnothing$:
	\smallskip
	
	\[
	F^{1}\in\left\{  \left(
	\begin{array}
		[c]{cc}%
		\frac{0|1|0}{0|1|0}\smallskip & \frac{0|1}{3|0}\\
		\frac{3|0}{0|1} & \frac{3|0|0}{3|0|0}%
	\end{array}
	\right)  ,\left(
	\begin{array}
		[c]{cc}%
		\frac{0|1|0}{0|1|0}\smallskip & \frac{0|1}{3|0}\\
		\frac{3|0}{0|1} & \frac{3|0|0}{0|3|0}%
	\end{array}
	\right)  ,\left(
	\begin{array}
		[c]{cc}%
		\frac{0|1|0}{0|0|1}\smallskip & \frac{0|1}{3|0}\\
		\frac{3|0}{0|1} & \frac{3|0|0}{3|0|0}%
	\end{array}
	\right)  ,\left(
	\begin{array}
		[c]{cc}%
		\frac{0|1|0}{0|0|1}\smallskip & \frac{0|1}{3|0}\\
		\frac{3|0}{0|1} & \frac{3|0|0}{0|3|0}%
	\end{array}
	\right)  \right\}  .
	\]
	\medskip
	
	\noindent
	Four arise from the incoherent action with $\mathcal{V}_{21}=\{(1,2)\}$:
	\smallskip
	
	\[
	E\in\left\{  \left(
	\begin{array}
		[c]{cc}%
		\frac{0|1|0}{0|1|0}\smallskip & \frac{0|1|0}{3|0|0}\\
		\frac{0|3}{1|0} & \frac{0|3|0}{3|0|0}%
	\end{array}
	\right)  ,\left(
	\begin{array}
		[c]{cc}%
		\frac{0|0|1}{0|1|0}\smallskip & \frac{0|0|1}{0|3|0}\\
		\frac{0|3}{1|0} & \frac{0|3|0}{0|3|0}%
	\end{array}
	\right)  \right\}  .
	\]
	\medskip
	
	\noindent The first factor of the first matrix coheretizes two ways and gives
	
	\smallskip
	
	\[
	F^{1}\in\left\{\!\!  \left(
	\begin{array}
		[c]{cc}%
		\frac{0|1}{0|1}\smallskip & \frac{0|1}{3|0}\\
		\frac{0|0}{0|1}\smallskip & \frac{0|0}{3|0}\\
		\frac{0}{1} & \frac{0}{3}%
	\end{array}
	\right) \!\! \left(
	\begin{array}
		[c]{cccc}%
		\frac{0}{0}\smallskip & \frac{0}{0} & \frac{0}{0}\smallskip & \frac{0}{0}\\
		\frac{3}{0} & \frac{3}{0} & \frac{3|0}{0|0} & \frac{3|0}{0|0}%
	\end{array}
	\right)  , \left(
	\begin{array}
		[c]{cc}%
		\frac{0|1}{0|1}\smallskip & \frac{0|1}{3|0}\\
		\frac{0}{1}\smallskip & \frac{0}{3}\\
		\frac{0|0}{1|0} & \frac{0|0}{0|3}%
	\end{array}
	\right) \!\! \left(
	\begin{array}
		[c]{cccc}%
		\frac{0}{0}\smallskip & \frac{0}{0} & \frac{0}{0}\smallskip & \frac{0}{0}\\
		\frac{3}{0} & \frac{3}{0} & \frac{3|0}{0|0} & \frac{3|0}{0|0}%
	\end{array}
	\right) \!\! \right\} ;
	\]
	\medskip
	
	\noindent the second factor of the second matrix also coheretizes two ways and
	gives
	\smallskip
	
	\[
	F^{1}\in\left\{  \left(
	\begin{array}
		[c]{cc}%
		\frac{0|0}{0|1}\smallskip & \frac{0}{0}\\
		\frac{0|0}{0|1}\smallskip & \frac{0}{0}\\
		\frac{0}{1}\smallskip & \frac{0}{0}\\
		\frac{0}{1} & \frac{0}{0}%
	\end{array}
	\right)  \left(
	\begin{array}
		[c]{ccc}%
		\frac{1}{0}\smallskip & \frac{0|1}{0|0} & \frac{0|1}{3|0}\\
		\frac{3}{0} & \frac{3|0}{0|0} & \frac{3|0}{0|0}%
	\end{array}
	\right)  ,\text{ }\left(
	\begin{array}
		[c]{cc}%
		\frac{0|0}{0|1}\smallskip & \frac{0}{0}\\
		\frac{0|0}{0|1}\smallskip & \frac{0}{0}\\
		\frac{0}{1}\smallskip & \frac{0}{0}\\
		\frac{0}{1} & \frac{0}{0}%
	\end{array}
	\right)  \left(
	\begin{array}
		[c]{ccc}%
		\frac{1|0}{0|0}\smallskip & \frac{1}{0} & \frac{0|1}{3|0}\\
		\frac{0|3}{0|0} & \frac{3}{0} & \frac{3|0}{0|0}%
	\end{array}
	\right)  \right\}  .
	\]
	\medskip
	
	\noindent And four arise from the incoherent action with $\mathcal{V}_{21}=\varnothing$:
	\smallskip
	
	\[
	E=\left(
	\begin{array}
		[c]{cc}%
		\frac{0|0|1}{0|1|0}\smallskip & \frac{0|1}{3|0}\\
		\frac{0|3}{1|0} & \frac{0|3|0}{3|0|0}%
	\end{array}
	\right)  =\left(
	\begin{array}
		[c]{cc}%
		\frac{0|0}{0|1}\smallskip & \frac{0}{3}\\
		\frac{0|0}{0|1}\smallskip & \frac{0}{3}\\
		\frac{0}{1}\smallskip & \frac{0}{3}\\
		\frac{0}{1} & \frac{0}{3}%
	\end{array}
	\right)  \left(
	\begin{array}
		[c]{cccc}%
		\frac{1}{0}\smallskip & \frac{1}{0} & \frac{1}{0} & \frac{1}{0}\\
		\frac{3}{0} & \frac{3}{0} & \frac{3|0}{0|0} & \frac{3|0}{0|0}%
	\end{array}
	\right)
	\]
	\medskip
	
	\noindent each factor of which coheretizes two ways and gives
	\smallskip
	
	\[
	F^{1}\in\left\{  \left(
	\begin{array}
		[c]{cc}%
		\frac{0|0}{0|1}\smallskip & \frac{0|0}{3|0}\\
		\frac{0|0}{0|1}\smallskip & \frac{0|0}{3|0}\\
		\frac{0}{1}\smallskip & \frac{0}{3}\\
		\frac{0|0}{1|0} & \frac{0|0}{0|3}%
	\end{array}
	\right) \!\! \left(\!\!
	\begin{array}
		[c]{cccc}%
		\frac{1|0}{0|0}\smallskip & \frac{1}{0} & \frac{0|1}{0|0} & \frac{0|1}{0|0}\\
		\frac{0|3}{0|0} & \frac{3}{0} & \frac{3|0}{0|0} & \frac{3|0}{0|0}%
	\end{array}\!\!
	\right)  ,\left(\!\!
	\begin{array}
		[c]{cc}%
		\frac{0|0}{0|1}\smallskip & \frac{0|0}{3|0}\\
		\frac{0|0}{0|1}\smallskip & \frac{0|0}{3|0}\\
		\frac{0|0}{0|1}\smallskip & \frac{0|0}{3|0}\\
		\frac{0}{1} & \frac{0}{3}%
	\end{array}\!\!
	\right) \!\! \left(\!\!
	\begin{array}
		[c]{cccc}%
		\frac{1|0}{0|0}\smallskip & \frac{1}{0} & \frac{0|1}{0|0} & \frac{0|1}{0|0}\\
		\frac{0|3}{0|0} & \frac{3}{0} & \frac{3|0}{0|0} & \frac{3|0}{0|0}%
	\end{array}
	\!\!
	\right)  ,\right.
	\]
	\[
	\left.  \hspace*{0.25in}\left(
	\begin{array}
		[c]{cc}%
		\frac{0|0}{0|1}\smallskip & \frac{0|0}{3|0}\\
		\frac{0|0}{0|1}\smallskip & \frac{0|0}{3|0}\\
		\frac{0}{1}\smallskip & \frac{0}{3}\\
		\frac{0|0}{1|0} & \frac{0|0}{0|3}%
	\end{array}
	\right) \!\! \left(\!\!
	\begin{array}
		[c]{cccc}%
		\frac{1}{0}\smallskip & \frac{0|1}{0|0} & \frac{0|1}{0|0} & \frac{0|1}{0|0}\\
		\frac{3}{0} & \frac{3|0}{0|0} & \frac{3|0}{0|0} & \frac{3|0}{0|0}%
	\end{array}\!\!
	\right)  ,\left(
	\begin{array}
		[c]{cc}%
		\frac{0|0}{0|1}\smallskip & \frac{0|0}{3|0}\\
		\frac{0|0}{0|1}\smallskip & \frac{0|0}{3|0}\\
		\frac{0|0}{0|1}\smallskip & \frac{0|0}{3|0}\\
		\frac{0}{1} & \frac{0}{3}%
	\end{array}
	\right) \!\! \left(\!\!
	\begin{array}
		[c]{cccc}%
		\frac{1}{0}\smallskip & \frac{0|1}{0|0} & \frac{0|1}{0|0} & \frac{0|1}{0|0}\\
		\frac{3}{0} & \frac{3|0}{0|0} & \frac{3|0}{0|0} & \frac{3|0}{0|0}%
	\end{array}\!\!
	\right)  \right\}  .
	\]
	\medskip
	
	\noindent Finally, one codimension $1$ face arises from the row $2$/column $2$ incoherent action with $\mathcal{V}_{22}=\varnothing$:
	\[
	E=\left(
	\begin{array}
		[c]{cc}%
		\frac{0|1}{0|1}\smallskip & \frac{0|0|1}{0|3|0}\\
		\frac{0|3}{0|1} & \frac{0|3|0}{3|0|0}%
	\end{array}
	\right)  =\left(
	\begin{array}
		[c]{cc}%
		\frac{0}{0}\smallskip & \frac{0|0}{0|3}\\
		\frac{0}{0}\smallskip & \frac{0|0}{0|3}\\
		\frac{0}{0}\smallskip & \frac{0}{3}\\
		\frac{0}{0} & \frac{0}{3}%
	\end{array}
	\right)  \left(
	\begin{array}
		[c]{ccc}%
		\frac{1}{1}\smallskip & \frac{1}{0} & \frac{1}{0}\\
		\frac{3}{1} & \frac{3|0}{0|0} & \frac{3|0}{0|0}%
	\end{array}
	\right)
	\]
	the second factor of which coheretizes one way and gives
	\[
	F^{1}=\left(
	\begin{array}
		[c]{cc}%
		\frac{0}{0}\smallskip & \frac{0|0}{0|3}\\
		\frac{0}{0}\smallskip & \frac{0|0}{0|3}\\
		\frac{0}{0}\smallskip & \frac{0}{3}\\
		\frac{0}{0} & \frac{0}{3}%
	\end{array}
	\right)  \left(
	\begin{array}
		[c]{ccc}%
		\frac{1}{1}\smallskip & \frac{0|1}{0|0} & \frac{0|1}{0|0}\\
		\frac{3}{1} & \frac{3|0}{0|0} & \frac{3|0}{0|0}%
	\end{array}
	\right)  .
	\]
	
\end{example}

\subsection{The Face Operator $\tilde{\partial}$ on $\mathfrak{m}
	\protect\widetilde{\circledast}_{pp}\mathfrak{n}$}

In this subsection we introduce three important concepts:\ the integrability
of certain framed matrices, the \textquotedblleft prebalanced framed
join\textquotedblright\ $\mathfrak{m}\widetilde{\circledast}_{pp}%
\mathfrak{n}\subset\mathfrak{m}{\circledast}_{cc}\mathfrak{n},$ and a face
operator $\tilde{\partial}$ on $\mathfrak{m}\widetilde{\circledast}%
_{pp}\mathfrak{n}$.
\begin{definition}
	\label{integrable}Let $\mathcal{M}$ denote the set of coherent framed
	matrices. A \textbf{TD} \textbf{quadratic} \textbf{matrix} has exactly two
	non-null TD coherent formally indecomposable factors\textbf{.} A TD quadratic
	matrix $F$ is \textbf{integrable }if there exists a unique TD coherent
	$C\in\mathcal{M}$ such that $F\in\mathcal{\tilde{\delta}}\left(  C\right)  .$
	When $F$ is integrable, we refer to $C$ as the\textbf{\ integral of }$F$ and
	write $\int F=C$. An indecomposable or integrable framed matrix whose TD
	quadratic subframed matrices are integrable is \textbf{totally integrable}.
\end{definition}

\noindent When $C=\int F$ we have $\left\vert C\right\vert =\left\vert
F\right\vert +1$ by the definition of $\mathcal{\tilde{\delta}}\left(
C\right)  .$ Furthermore, if $F=F_{1}F_{2},$ and its factors $F_{1}$ and
$F_{2}$ are indecomposable TD coherent column and row matrices, respectively,
then%
\[
\int F=\frac{\mathbf{os}(F_{1})}{\mathbf{is}(F_{2})}.
\]

\begin{example}
	
	\begin{enumerate}

		\item[(a)] Consider the TD quadratic matrix
		\[
		F=\left(
		\begin{array}
			[c]{cc}%
			\tfrac{1|2}{2|1} & \tfrac{1|2}{0|0}\vspace{1mm}\\
			\tfrac{0|0}{2|1} & \tfrac{0|0}{0|0}%
		\end{array}
		\right)  =\left(
		\begin{array}
			[c]{cc}%
			\frac{1}{2}\smallskip & \frac{1}{0}\\
			\frac{0}{2}\smallskip & \frac{0}{0}\\
			\frac{0}{2} & \frac{0}{0}%
		\end{array}
		\right)  \left(
		\begin{array}
			[c]{ccc}%
			\frac{2}{1}\smallskip & \frac{2}{0} & \frac{2}{0}\\
			\frac{0}{1} & \frac{0}{0} & \frac{0}{0}%
		\end{array}
		\right)
		\]
		and the matrix
		\[
		C_{1}=\left(
		\begin{array}
			[c]{cc}%
			\tfrac{1|2}{2|1} & \tfrac{1|2}{0|0}\vspace{1mm}\\
			\tfrac{0}{12} & \tfrac{0|0}{0|0}%
		\end{array}
		\right)  \in\mathcal{M}.
		\]
		Set $e=$ $\overset{\wedge}{eq}\left(  C_{1}\right)  =12$ and $\mathbf{x=}
		\overset{\wedge}{\mathbf{e}}\left(  F\right)  \sqsubseteq_{diag}\Delta
		_{P}^{\left(  1\right)  }\left(  \partial e\right)  .$ By Proposition
		\ref{ab}, part (2), $\mathbf{x}_{11}\mathbf{=}12\times2|1$ is the unique
		diagonal component of $e$ that can be obtained from $\mathbf{x}$ by a single
		factor replacement; consequently, $C_{1}$ is the unique maximally row
		coherent  framed bipartition matrix such that $\partial_{\mathbf{M,N}}\left(
		C_{1}\right)  =F$ for some $\left(  \mathbf{M,N}\right)  .$ By a dual
		argument,
		\[
		C_{2}=\left(
		\begin{array}
			[c]{cc}%
			\tfrac{1|2}{2|1} & \tfrac{12}{0}\vspace{1mm}\\
			\tfrac{0|0}{2|1} & \tfrac{0|0}{0|0}%
		\end{array}
		\right)  \in\mathcal{M}
		\]
		is the unique maximally column coherent framed matrix such that
		$\partial_{\mathbf{M,N}}\left(  C_{2}\right)
		=F $ for some $\left(  \mathbf{M,N}\right)  $. Since ${C}_{1}\neq{C}_{2},$ no $C\in\mathcal{M}$ with the
		required  integrability property exists and $F$ is not integrable.
		
		\item[(b)] On the other hand, consider the TD\ quadratic matrix
		\[
		F=\left(
		\begin{array}
			[c]{cc}%
			\tfrac{2|1}{1|2} & \tfrac{2|1}{0|0}\vspace{1mm}\\
			\tfrac{0|0}{1|2} & \tfrac{0|0}{0|0}%
		\end{array}
		\right)  =\left(
		\begin{array}
			[c]{cc}%
			\frac{0}{1}\smallskip & \frac{0}{0}\\
			\frac{2}{1}\smallskip & \frac{2}{0}\\
			\frac{0}{1} & \frac{0}{0}%
		\end{array}
		\right)  \left(
		\begin{array}
			[c]{ccc}%
			\frac{1}{0}\smallskip & \frac{1}{2} & \frac{1}{0}\\
			\frac{0}{0} & \frac{0}{2} & \frac{0}{0}%
		\end{array}
		\right)
		\]
		and the matrix
		\[
		C=\left(
		\begin{array}
			[c]{cc}%
			\tfrac{12}{12} & \tfrac{2|1}{0|0}\vspace{1mm}\\
			\tfrac{0|0}{1|2} & \tfrac{0|0}{0|0}%
		\end{array}
		\right)  \in\mathcal{M}.
		\]
		Then $\partial_{\{1\}_{11},\{2\}_{11}}(C)=F$, and by Proposition \ref{ab},
		part (2), $C$ is the unique element of $\mathcal{M}$ such that $\partial
		_{\mathbf{M,N}}\left(  C\right)  =F$ for some $\left(  \mathbf{M,N}\right)
		$.  Hence $
		{\textstyle\int}
		F=C$.
		
		\item[(c)] In Example \ref{not-totally-coherent} we considered the incoherent
		matrix
		\begin{align*}
			F  &  =\left(
			\begin{array}
				[c]{cc}%
				\frac{0|1}{12|0} & \frac{0|1}{45|0}%
			\end{array}
			\right)  =\left(
			\begin{array}
				[c]{cc}%
				\frac{0}{12}\smallskip & \frac{0}{45}\\
				\frac{0}{12} & \frac{0}{45}%
			\end{array}
			\right)  \left(
			\begin{array}
				[c]{cccccc}%
				\frac{1}{0} & \frac{1}{0} & \frac{1}{0} & \frac{1}{0} & \frac{1}{0} & \frac
				{1}{0}%
			\end{array}
			\right) \\
			&  =\partial_{(\{12\}_{11},\varnothing_{11}),(\{45\}_{12},\varnothing_{12}
				)}\left(
			\begin{array}
				[c]{cc}%
				\frac{1}{12} & \frac{1}{45}%
			\end{array}
			\right)  .
		\end{align*}
		Since $\overset{\wedge}{\mathbf{e}}\left(  F\right)  =1245\sqsubseteq
		\hspace*{-0.15in}\diagup_{diag}{\ \Delta}_{P}^{(0)}(\partial e)$ for any cell
		$e$, Proposition \ref{ab} does not apply. Nevertheless by inspection, exactly
		three bipartition matrices $C_{i}$ satisfy $\partial_{\mathbf{M},\mathbf{N}
		}(C_{i})=F$ for some $\left(  \mathbf{M},\mathbf{N}\right)  ,$ namely,
		\[
		C_{1}=\left(
		\begin{array}
			[c]{cc}%
			\frac{1}{12} & \frac{1}{45}%
		\end{array}
		\right)  ,\text{ }C_{2}=\left(
		\begin{array}
			[c]{cc}%
			\frac{0|1}{12|0} & \frac{1}{45}%
		\end{array}
		\right)  ,\text{ and }C_{3}=\left(
		\begin{array}
			[c]{cc}%
			\frac{1}{12} & \frac{0|1}{45|0}%
		\end{array}
		\right)  .
		\]

		Of these, only $C_{1}$ is TD coherent and the coheretization
		\[
		F^{\prime}=\left(
		\begin{array}
			[c]{cc}%
			\frac{0}{12}\smallskip & \frac{0}{45}\\
			\frac{0|0|0|0}{1|2|0|0} & \frac{0|0|0|0}{0|0|4|5}%
		\end{array}
		\right)  \left(
		\begin{array}
			[c]{cccccc}%
			\frac{1}{0} & \frac{1}{0} & \frac{1}{0} & \frac{1}{0} & \frac{1}{0} & \frac
			{1}{0}%
		\end{array}
		\right)  \in\mathcal{\tilde{\delta}}\left(  C_{1}\right)  .
		\]
		Thus $\int F^{\prime}=C_{1}$; in fact, $F^{\prime}$ is totally integrable via
		the coheretizations $\left(
		\begin{array}
			[c]{c}%
			\frac{0}{12}\smallskip\\
			\frac{0|0}{1|2}%
		\end{array}
		\right)  \left(
		\begin{array}
			[c]{ccc}%
			\frac{1}{0} & \frac{1}{0} & \frac{1}{0}%
		\end{array}
		\right)  $ and $\left(
		\begin{array}
			[c]{c}%
			\frac{0}{45}\\
			\frac{0|0}{4|5}%
		\end{array}
		\right)
		\left(
		\begin{array}
			[c]{ccc}%
			\frac{1}{0} & \frac{1}{0} & \frac{1}{0}%
		\end{array}
		\right)
		$ of the incoherent subframed elements $\left(  \frac{0|1}{12|0}\right)  $
		and  $\left(  \frac{0|1}{45|0}\right)  $.
	\end{enumerate}
\end{example}

Let\emph{ }$\mathcal{CM}$ denote the subset of all totally integrable matrices
in $\mathcal{M}$. Given\emph{ }$C\in\mathcal{M},$\ denote the subsets of
formally indecomposable and formally decomposable elements of $\tilde{\delta
}(C)\cap\mathcal{CM}$\emph{\ }by\emph{ }$\tilde{\delta}_{b}(C)$ and
$\tilde{\delta}_{d}(C),$\emph{\ }respectively\emph{.} Let $x\in\left\{
b,d\right\}  ;$ if $F_{1}F_{2}\in\mathcal{\tilde{\delta}}(C)$, extend
$\mathcal{\tilde{\delta}}_{x}$ as a derivation so that
\[
\mathcal{\tilde{\delta}}_{x}\left(  F_{1}F_{2}\right)  :=\mathcal{\tilde
	{\delta}}_{x}\left(  F_{1}\right)  F_{2}\cup F_{1}\mathcal{\tilde{\delta}}%
_{x}\left(  F_{2}\right)  .
\]

\begin{definition}
	\label{extend}Let $m,n\geq1$ and define $\mathcal{CM}_{1}\left(  m,n\right)
	:=\{\frac{\mathfrak{n}}{\mathfrak{m}}\}.$ For $r\geq2$ define
	\[
	\mathcal{CM}_{r}(m,n):=\bigcup\limits_{\substack{x_{i}\in\left\{  b,d\right\}
			\\1\leq i\leq s}}\left\{  F\in\mathcal{\tilde{\delta}}_{x_{s}}\cdots
	\mathcal{\tilde{\delta}}_{x_{1}}\left(  \frac{\mathfrak{n}}{\mathfrak{m}
	}\right)  :l\left(  F\right)  =r\right\}  .
	\]
	The\textbf{\ prebalanced framed join of} $\mathfrak{m}$ \textbf{and}
	$\mathfrak{n}$ is the positively graded set
	\[
	\mathfrak{m}\widetilde{\circledast}_{pp}\mathfrak{n}:=\mathcal{CM}_{\ast
	}(m,n)
	\]
	with \textbf{face operator }$\tilde{\partial}:=\mathcal{\tilde{\delta}}
	_{d}\cup\mathcal{\tilde{\delta}}_{b}$.
\end{definition}

TD coherent row and column matrices of positive dimension in $\mathcal{CM}$
have the following key property:

\begin{lemma}
	\label{basic}Given a positive dimensional TD coherent framed row (respt.
	column) matrix $C\in\mathcal{CM}$ with at least $2$ entries and $F\in{\delta
	}_{b}\left(  C\right)  $, there exists a unique row (respt. column) matrix
	$D\in\mathcal{M\smallsetminus}\left\{  C\right\}  $ such that $F\in\delta
	_{b}\left(  D\right)  $.
\end{lemma}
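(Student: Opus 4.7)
The plan is to reduce the statement to the uniqueness property of diagonal components established in Proposition \ref{ab}, part (1). I focus on the row case; the column case is dual, using Proposition \ref{ab}, part (2), together with the output product $\overset{\vee}{\mathbf{e}}$ in place of $\overset{\wedge}{\mathbf{e}}$.

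First I would unpack Definition \ref{delta} to describe how a formally indecomposable face $F \in \delta_b(C)$ of a TD coherent row matrix $C = (c_1, \ldots, c_s)$ with $s \geq 2$ arises. Clause (a) at the end of Definition \ref{delta} forces $F$ to have the form $F = (f_1, \ldots, f_s)$ with a common non-strongly extreme position $i$ at which the initial bipartition $c_j^1$ of each entry is acted on by some $(\mathbf{M}^i,\mathbf{N}^i)$; the requirement that $F$ be row coherent (equal numerators across columns of $\Lambda^{row}$) is what forces compatibility at the same index $i$ across the entries, and forces the coheretization used to obtain $F$ to be unique once the action position is fixed.

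Second I would translate this combinatorial picture into the diagonal language on permutahedra. Because $C$ is TD coherent, its input product $\overset{\wedge}{\mathbf{e}}(C)$ is a maximal diagonal component of $\Delta_P^{(q-1)}$ applied to the cell $e := \pi\overset{\wedge}{eq}(C)$, by the definitions preceding Proposition \ref{tdimC}. The entry action that produces $F$ corresponds to a partitioning $\partial^{j}_{M^k}\overset{\wedge}{\mathbf{e}}(C)$ (in the notation of Proposition \ref{ab}) applied to the factor of $\overset{\wedge}{\mathbf{e}}(C)$ coming from the entry on which the action acts; formal indecomposability of $F$ translates to the statement that the resulting product cell $\mathbf{x}$ no longer lies in ${\Delta}_P^{(q-1)}(\partial e)$ as a diagonal component. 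Then Proposition \ref{ab}, part (1), yields unique indices $(j,\ell)$ such that $\mathbf{x}_{j\ell} \neq \overset{\wedge}{\mathbf{e}}(C)$ and $\mathbf{x}_{j\ell} \sqsubseteq_{diag} \Delta_P^{(q-1)}(e)$. This alternative diagonal component lifts uniquely to a TD coherent row matrix $D$ via the same input-product correspondence, and by construction $D \neq C$, $D \in \mathcal{M}$, and $F \in \delta_b(D)$ (obtained from $D$ by the same entry action at position $i$).

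Uniqueness of $D$ is then immediate from the uniqueness clause of Proposition \ref{ab}: any second candidate $D'$ with $F \in \delta_b(D')$ would produce a second diagonal component $\mathbf{x}_{j'\ell'} \neq \overset{\wedge}{\mathbf{e}}(C)$ reducing to $\mathbf{x}$ by a single factor replacement, contradicting the uniqueness of $(j,\ell)$. The main obstacle I expect is the bookkeeping required to make the correspondence between TD coherent row matrices and maximal diagonal components of $\Delta_P^{(q-1)}(e)$ a precise bijection at the level of entry actions, so that the single-factor-replacement condition of Proposition \ref{ab} exactly matches the single-entry-action condition that defines $\delta_b$. In particular, one must check that the alternative diagonal component $\mathbf{x}_{j\ell}$ lifts to a legitimate element of $\mathcal{M}$ (not merely to a formal matrix), which reduces to verifying that the partition structure it encodes is realizable as a coherent framed matrix $D$ of the same i/o type as $C$; this follows by reading off the entries of $D$ from the cells of $\mathbf{x}_{j\ell}$ and invoking the characterization of coherence in Proposition \ref{tdimC}.
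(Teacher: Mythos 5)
Your overall strategy---reduce uniqueness of $D$ to the uniqueness clause of Proposition \ref{ab}---is the right instinct and is indeed how the paper handles part of the argument, but as written the proposal has three concrete problems. First, you are working on the wrong side of the input/output duality. For a $1\times p$ row matrix $C=(c_{1}\cdots c_{p})$ the number of rows is $q=1$, so the input product $\overset{\wedge}{\mathbf{e}}(C)$ is a \emph{single} partition and $\Delta_{P}^{(q-1)}=\Delta_{P}^{(0)}=\mathbf{Id}$; there is no multi-factor diagonal component on the input side and hence no uniqueness to exploit there. The entry-indexed product for a row matrix is the \emph{output} product $\overset{\vee}{\mathbf{e}}(C)=\pi\overset{\vee}{\beta}_{1}(C)\times\cdots\times\pi\overset{\vee}{\beta}_{p}(C)\sqsubseteq\Delta_{P}^{(p-1)}(\overset{\vee}{eq}(C^{1}))$, and the paper's proof accordingly sets $\mathbf{e}=\overset{\vee}{\mathbf{e}}(C^{1})$ and applies Proposition \ref{ab}, part (1), to $\partial_{N^{i}}^{j}(\mathbf{e})=\overset{\vee}{\mathbf{e}}(F^{1})$. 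Second, this diagonal argument only covers the case where the numerator component $N^{i}\subseteq B_{j}^{i}$ of the acting pair is \emph{not} extreme. When $N^{i}$ is extreme (the action merely inserts an empty output block, i.e.\ $(M^{i},N^{i})=(A_{j}^{i},\varnothing)$ or $(\varnothing,B_{j}^{i})$), the output product of $F^{1}$ is not a non-extreme partitioning of that of $C^{1}$ and Proposition \ref{ab} does not apply; the paper treats this as a separate case, concluding $D\neq C$ from the denominators and coherence of $D^{1}$ from $\overset{\vee}{\mathbf{e}}(D^{1})=\overset{\vee}{\mathbf{e}}(C^{1})$.

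Third, and most seriously, the step you defer as "bookkeeping"---lifting the alternative diagonal component to a legitimate matrix $D\in\mathcal{M}$---is exactly where the hypothesis $C\in\mathcal{CM}$ (total integrability) is needed, and you never invoke it. In both cases the paper constructs the replacement entry as $d_{j}=C_{1}\cdots C_{\ell-1}\cdot D^{\prime}\cdot C_{\ell+2}\cdots C_{r_{j}+1}$ with $D^{\prime}:=\int C_{\ell}C_{\ell+1}$, where existence and uniqueness of this integral is precisely the definition of total integrability of the TD quadratic subframed matrix $C_{\ell}C_{\ell+1}$ inside $f_{j}$. The paper's Example of the non-integrable matrix $F$ with two distinct candidates $C_{1}\neq C_{2}$ (Example 27(a)) shows that without this hypothesis the lifting step can fail outright, so it cannot be dispatched by Proposition \ref{tdimC}, which is a dimension count and says nothing about existence or uniqueness of such a lift.
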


\begin{proof}
	Let $C=(c_{1}\cdots c_{p})\in\mathcal{CM}$ be a positive dimensional TD
	coherent row matrix and let $F=(f_{1}\cdots f_{p})\in\tilde{\delta}_{b}\left(
	C\right)  ;$ then $\left\vert F\right\vert =\left\vert C\right\vert -1.$
	Since  $F$ is formally indecomposable, so is $F^{1}.$ If $h=1,$ the
	underlying  structure matrix $B\neq C^{1}$ (otherwise, the row action on
	$C^{1}$ implies  that $F^{1}$ is decomposable); hence $B$ is an elementary
	bipartition entry of  $C^{1}.$ If $h>1,$ then $B$ lies in some top level entry
	$c_{j}^{h}.$ In  either case, there is a $j^{th}$ entry action $f_{j}%
	^{1}=\partial_{M^{i} ,N^{i}}(c_{j}^{1})$ such that $\left(  M^{i}%
	,N^{i}\right)  $ is not strongly  extreme by Definition \ref{delta}.
	Furthermore, the indecomposability of  $F^{1}$ implies $\#\mathbf{os}%
	(C)\geq2.$ Let $c_{j}^{1}=\left(  A_{j} ^{1}|\cdots|A_{j}^{r_{j}},B_{j}%
	^{1}|\cdots|B_{j}^{r_{j}}\right)  ;$ then  $\left(  M^{i},N^{i}\right)
	\subseteq(A_{j}^{i},B_{j}^{i}).$ Let $f_{j} =C_{1}\cdots C_{r_{j}+1}.$ There
	are two cases:
	
	\begin{case} $N^{i}\subseteq$ $B_{j}^{i}$ is extreme. Since $F^{1}$ is
		indecomposable and $\left(  M^{i},N^{i}\right)  $ is not strongly extreme,
		either $\left(  M^{i},N^{i}\right)  =(A_{j}^{i},\varnothing)$ for some $i>1$
		or $\left(  M^{i},N^{i}\right)  =(\varnothing,B_{j}^{i})$ for some $i<r_{j}$.
		If $N^{i}=B_{j}^{i},$ set $\ell=i.$ Since elements of $\mathcal{CM}$ are
		totally integrable, all TD quadratic subframed matrices of $F$ are
		integrable,  and in particular, $C_{\ell}C_{\ell+1}$ is integrable. Let
		$D^{\prime}:=\int C_{\ell}C_{\ell+1}$ and set
		\[
		d_{j}=C_{1}\cdots C_{\ell-1}\cdot D^{\prime}\cdot C_{\ell+2}\cdots C_{r_{j}
			+1},
		\]
		where ${d}_{j}^{1}={f}_{j}^{1}[\ell];$ then $f_{j}\in\tilde{\delta}\left(
		d_{j}\right)  .$ Obtain $D$ from $F$ via the replacement $f_{j}\leftarrow
		d_{j}$; then $F\in\delta_{b}\left(  D\right)  $. If $N^{i}=\varnothing,$ set
		$\ell=i-1$ and obtain $D$ from $F$ by a similar argument. In either case,
		$D\neq C$ since the denominators of $d_{j}^{1}$ and ${c}_{j}^{1}$ are
		distinct. On the other hand, the numerators of ${d}_{j}^{1}$ and $c_{j}^{1}$
		(hence the numerators of all entries) are equal. But $\overset{\vee
		}{\mathbf{e}}(D^{1})=\overset{\vee}{\mathbf{e}}(C^{1})$ implies $D^{1}$ is
		coherent so that $D\in\mathcal{M\smallsetminus}\left\{  C\right\}  $ as claimed.
	\end{case}
	
	\begin{case} $N^{i}\subseteq$ $B_{j}^{i}$ is not extreme. First,
		$l\pi(\overset{\vee}{\beta}_{j}(F^{1}))=l\pi(\overset{\vee}{\beta}_{j}
		(C^{1}))+1$ by the indecomposability of $F^{1}$. Set $e=\overset{\vee
		}{eq}(C^{1}),$ $\mathbf{e}=\overset{\vee}{\mathbf{e}}(C^{1}),$ and
		$\partial_{N^{i}}^{j}(\mathbf{e})=\overset{\vee}{\mathbf{e}}(F^{1});$ then by
		Proposition \ref{ab}, part (1), there is a unique $\mathbf{x}_{j\ell}
		\neq\mathbf{e}$ such that $\mathbf{x}_{j\ell}\sqsubseteq\Delta_{P}
		^{(q-1)}(e).$ As in Case 5.10, let $D^{\prime}:=\int C_{\ell}C_{\ell+1}$ and set
		$d_{j}=C_{1}\cdots C_{\ell-1}\cdot D^{\prime}\cdot C_{\ell+2}\cdots
		C_{r_{j}+1},$ where$\ {d}_{j}^{1}={f}_{j}^{1}[\ell];$ then $f_{j}\in
		\tilde{\delta}(d_{j}).$ Let $D$ be the matrix obtained from $F$ via the
		replacement $f_{j}\leftarrow d_{j}$; then $F\in\delta_{b}\left(  D\right)  $.
		But the numerator of ${d}_{j}^{1}={f}_{j}^{1}[\ell]$ is the $j^{th}$ factor
		of  $\mathbf{x}_{j\ell}.$ Hence $\overset{\vee}{\mathbf{e}}(D^{1})=\mathbf{x}
		_{j\ell},$ the bipartition matrix $D^{1}$ is coherent, and $D\in
		\mathcal{M\smallsetminus}\left\{  C\right\}  $ as desired.
	\end{case}
	
	The proof for column matrices is entirely dual.
\end{proof}

\begin{example}
	Consider the totally coherent indecomposable $1\times2$ matrix
	\[
	C=\left(
	\begin{array}
		[c]{cc}%
		\frac{1|2}{1|2} & \frac{12}{0}%
	\end{array}
	\right)  .
	\]
	
\end{example}

(a) Set $j=1$ and $\left(  M^{1},N^{1}\right)  =\left(  \varnothing
,\{1\}\right)  .$ Then $N^{1}$ is extreme,
\[
F=\left(
\begin{array}
	[c]{cc}%
	\frac{1|0|2}{0|1|2} & \frac{12}{0}%
\end{array}
\right)  \in\partial_{\left(  M^{1},N^{1}\right)  }(C), \text{ and }D=\left(
\begin{array}
	[c]{cc}%
	\frac{1|2}{0|12} & \frac{12}{0}%
\end{array}
\right)  .
\]

(b) Set $j=2$ and $\left(  M^{2},N^{2}\right)  =\left(  \varnothing
,\{2\}\right)  $. Then $N^{2}$ is not extreme,
\[
F=\left(
\begin{array}
	[c]{cc}%
	\frac{1|2}{1|2} & \frac{2|1}{0|0}%
\end{array}
\right)  \in\partial_{\left(  M^{2},N^{2}\right)  }(C),\text{ and }D=\left(
\begin{array}
	[c]{cc}%
	\frac{12}{12} & \frac{2|1}{0|0}%
\end{array}
\right)  .
\]
\begin{example}
	For the $2$-dimensional TD quadratic matrix
	\[
	C=\left(
	\begin{array}
		[c]{c}%
		\frac{0|1}{1|2}\smallskip\\
		\frac{2|0}{1|2}%
	\end{array}
	\right)  =\left(
	\begin{array}
		[c]{c}%
		\frac{0}{1}\smallskip\\
		\frac{0}{1}\smallskip\\
		\frac{2}{1}%
	\end{array}
	\right)  \left(
	\begin{array}
		[c]{cc}%
		\frac{1}{0}\smallskip & \frac{1}{2}\\
		\frac{0}{0} & \frac{0}{2}
	\end{array}
	\right)  =C_{1}C_{2}
	\]
	we have
	\[
	\tilde{\delta}\left(  C_{1}\right)  =
	\left\{
	\left(
	\begin{array}
		[c]{c}
		\frac{0|0}{1|0}\smallskip\\
		\frac{0|0}{1|0}\smallskip\\
		\frac{0|2}{1|0}
	\end{array}
	\right),
	\left(
	\begin{array}
		[c]{c}
		\frac{0|0}{0|1}\smallskip\\
		\frac{0|0}{0|1}\smallskip\\
		\frac{2|0}{0|1}
	\end{array}
	\right)
	\right\}=\left\{
	\left(
	\begin{array}
		[c]{c}
		\frac{0}{1}\smallskip\\
		\frac{0}{1}\smallskip\\
		\frac{0}{1}\smallskip\\
		\frac{0}{1}
	\end{array}
	\right)\left(
	\begin{array}
		[c]{ccc}
		\frac{0}{0}&  \frac{0}{0} \smallskip\\
		\frac{0}{0}&   \frac{0}{0} \smallskip\\
		\frac{2}{0}& \frac{2}{0}
	\end{array}
	\right),
	\left(
	\begin{array}
		[c]{c}
		\frac{0}{0}\smallskip\\
		\frac{0}{0}\smallskip\\
		\frac{2}{0}
	\end{array}
	\right)\left(
	\begin{array}
		[c]{c}
		\frac{0}{1}\smallskip\\
		\frac{0}{1}\smallskip\\
		\frac{0}{1}
	\end{array}
	\right)
	\right\}
	\]
	(cf. Example \ref{TD-coherent-ex}).
\end{example}

\begin{example}
	\label{framed-31}(Cf. Example \ref{matrad-42}). Let
	\[
	\rho=\frac{0|1}{2|13}=\left(
	\begin{array}
		[c]{c}%
		\frac{0}{2}\vspace{1mm}\\
		\frac{0}{2}%
	\end{array}
	\right)  \left(  \frac{1}{1}\,\,\,\frac{1}{3}\right)  =C_{1}C_{2}
	\in\mathfrak{3}\widetilde{\circledast}_{pp}\mathfrak{1};
	\]
	then $\tilde{\partial}(\rho)=\tilde{\partial}(C_{1})C_{2}\cup C_{1}
	\tilde{\partial}(C_{2})=C_{1}\tilde{\partial}(C_{2}).$ In turn, $\tilde
	{\partial}(C_{2}):=\{\tilde{\partial}_{k}(C_{2})\}_{1\leq k\leq7}$ is given
	by  the following row actions $\partial_{\mathbf{M,N}}\left(
	C_{2}\right)  :$
	
	\begin{itemize}

		\item  $(\mathbf{M},\mathbf{N})=\left\{  \left(  \{1\},\varnothing\right)
		,\left(  \varnothing,\varnothing\right)  )\right\}  :$
		\[
		\tilde{\partial}_{1}(C_{2})=\left(  \frac{0|1}{1|0}\ \ \frac{0|1}{0|3}\right)
		=\left(
		\begin{array}
			[c]{cc}%
			\frac{0}{1} & \frac{0}{0}\vspace{1mm}\\
			\frac{0}{1} & \frac{0}{0}%
		\end{array}
		\right)  \left(  \frac{1}{0}\ \ \frac{1}{0}\ \ \frac{1}{3}\right).
		\]

		\item  $(\mathbf{M},\mathbf{N})=\left\{  \left(  \varnothing,\varnothing
		\right)  ,\left(  \{3\},\varnothing\right)  \right\}  :$
		\[
		\tilde{\partial}_{2}(C_{2})=\left(  \frac{0|1}{0|1}\ \ \frac{0|1}{3|0}\right)
		=\left(
		\begin{array}
			[c]{cc}%
			\frac{0}{0} & \frac{0}{3}\vspace{1mm}\\
			\frac{0}{0} & \frac{0}{3}%
		\end{array}
		\right)  \left(  \frac{1}{1}\ \ \frac{1}{0}\ \ \frac{1}{0}\right).
		\]

		\item  $(\mathbf{M},\mathbf{N})=\left\{  \left(  \left\{  1\right\}
		,\varnothing\right)  ,\left(  \{3\},\varnothing\right)  \right\}  $ (cf.
		Example \ref{bi-22})$:$
		\[
		\text{\ \ \ }\partial_{\mathbf{M},\mathbf{N}}(C_{2})=\left(  \frac{0|1}
		{1|0}\ \ \frac{0|1}{3|0}\right)  =\left(
		\begin{array}
			[c]{cc}
			\frac{0}{1} & \frac{0}{3}\vspace{1mm}\\
			\frac{0}{1} & \frac{0}{3}%
		\end{array}
		\right)  \left(  \frac{1}{0}\ \ \frac{1}{0}\ \ \frac{1}{0}\ \ \frac{1}
		{0}\right)
		\]

		$\hspace*{1.3in}\xrightarrow{(\eta_1,\eta_2)}\text{\ \ \ }\tilde{\partial}
		_{3}(C_{2})=\left(
		\begin{array}
			[c]{cc}%
			\frac{0|0}{0|1} & \frac{0|0}{3|0}\vspace{1mm}\\
			\frac{0}{1} & \frac{0}{3}%
		\end{array}
		\right)  \left(  \frac{1}{0}\ \ \frac{1}{0}\ \ \frac{1}{0}\ \ \frac{1}
		{0}\right)  .$\medskip
		
		$\hspace*{1.3in}\xrightarrow{(\eta_2,\eta_1)}\text{\ \ \ }\tilde{\partial}
		_{4}(C_{2})=\left(
		\begin{array}
			[c]{cc}%
			\frac{0}{1} & \frac{0}{3}\vspace{1mm}\\
			\frac{0|0}{1|0} & \frac{0|0}{0|3}%
		\end{array}
		\right)  \left(  \frac{1}{0}\ \ \frac{1}{0}\ \ \frac{1}{0}\ \ \frac{1}
		{0}\right)  .$\medskip
		
		\item  $(\mathbf{M},\mathbf{N})=\left\{  \left(  \varnothing,\left\{
		1\right\}  \right)  ,\left(  \{3\},\left\{  1\right\}  \right)  \right\}  :$
		\[
		\tilde{\partial}_{5}(C_{2})=\left(  \frac{1|0}{0|1}\ \ \frac{1|0}{3|0}\right)
		=\left(  \frac{1}{0}\ \ \frac{1}{3}\right)  \left(  \frac{0}{1}\ \ \frac{0}
		{0}\ \ \frac{0}{0}\right)  .
		\]

		\item  $(\mathbf{M},\mathbf{N})=\left\{  \left(  \left\{  1\right\}
		,\left\{  1\right\}  \right)  ,\left(  \varnothing,\left\{  1\right\}
		\right)  \right\}  :$
		\[
		\tilde{\partial}_{6}(C_{2})=\left(  \frac{1|0}{1|0}\ \ \frac{1|0}{0|3}\right)
		=\left(  \frac{1}{1}\ \ \frac{1}{0}\right)  \left(  \frac{0}{0}\ \ \frac{0}
		{0}\ \ \frac{0}{3}\right)  .
		\]

		\item  $(\mathbf{M},\mathbf{N})=\left\{  \left(  \varnothing,\left\{
		1\right\}  \right)  ,\left(  \varnothing,\left\{  1\right\}  \right)
		\right\}  :$
		\[
		\tilde{\partial}_{7}(C_{2})=\left(  \frac{1|0}{0|1}\ \ \frac{1|0}{0|3}\right)
		=\left(  \frac{1}{0}\ \ \frac{1}{0}\right)  \left(  \frac{0}{1}\ \ \frac{0}
		{3}\right)  .
		\]
		
	\end{itemize}
\end{example}

\subsection{The Chain Complex $(  \mathfrak{m}	\widetilde{\circledast}_{pp}\mathfrak{n},\tilde{\partial})  $}

Let $R$ be a commutative ring with unity, and let
\[
\tilde{C}_{n,m}:=\langle\mathfrak{m}\widetilde{\circledast}_{pp}%
\mathfrak{n}\rangle
\]
denote the free $R$-module generated by the set $\mathfrak{m}%
\widetilde{\circledast}_{pp}\mathfrak{n}$. Define a degree $-1$ endomorphism
$\tilde{\partial}:\tilde{C}_{n,m}\rightarrow\tilde{C}_{n,m}$ on a generator
$\rho=C_{1}\cdots C_{r}\in\tilde{C}_{n,m}$ by
\begin{equation}
	\tilde{\partial}(\rho):=\sum_{1\leq s\leq r}(-1)^{|C_{1}|+\cdots+|C_{s-1}
		|}\ C_{1}\cdots\tilde{\partial}(C_{s})\cdots C_{r}, \label{differential}%
\end{equation}
where $\tilde{\partial}(C_{s}):=\sum\nolimits_{F\in\tilde{\delta}
	(C)}(-1)^{\epsilon_{F}}F$ and $\epsilon_{F}$ is defined as follows:\ Let
$q\times p$ be the matrix dimensions of $C_{s}$ and let $C=(c_{ij}):=C_{s}$.

\begin{enumerate}
	\item[(a)] If $F=(f_{ij})\in\delta_{b}\left(  C\right)  $, let $(i,j)\in
	\mathfrak{q\times p}$ be the smallest pair of positive integers such that
	$|f_{ij}|=|c_{ij}|-1;$ then
	\[
	\epsilon_{F}:=\sum_{\substack{1\leq k<i\\j\in\mathfrak{p}}}\left[
	\#\mathbf{is}(C_{\ast j})-l(c_{kj})\right]  +\sum_{1\leq\ell<j}\left[
	\#\mathbf{os}(C_{i\ell})-l(f_{i\ell})\right]  .
	\]
	\
	
	\item[(b)] Let $F=F_{1}F_{2}\in\delta_{d}(C).$ There are two cases.
	
	\begin{case} $C$ is TD coherent. Let
		\[
		D_{1}|D_{2}=\mathbf{is}(F_{1})|\mathbf{is}(F_{2})\Cup\mathbf{os}%
		(F_{1})|\mathbf{os}(F_{2})\in P(\mathbf{is}(C)\Cup\mathbf{os}(C)),
		\]
		and let ${\overset{\diamond}{\varepsilon}_{_{F}}}:=(-1)^{\#D_{1}}\cdot
		\textit{shuff\hspace{.02in}}(D_{1};D_{2}).$ Denote the signs of $\overset{\wedge}{\mathbf{e}}(C)$ in\linebreak
		$\Delta^{(q-1)}(P_{\#\mathbf{is}(C)})$ and $\overset{\vee}{\mathbf{e}}(C)$ in
		$\Delta^{(p-1)}(P_{\#\mathbf{os}(C)})$ by $\overset{\wedge}{\epsilon}_{_{C}}$
		and $\overset{\vee}{\epsilon}_{_{C}},$ respectively; then%
		\[
		\epsilon_{_{F}}:=\left\{
		\begin{array}
			[c]{cl}%
			\overset{\diamond}{\varepsilon}_{_{F}}+\overset{\vee}{\epsilon}_{_{F_{1}}
			}+\overset{\wedge}{\epsilon}_{_{F_{2}}}, & C=\frac{\mathfrak{m}}{\mathfrak{n}
			}\vspace{1mm}\\
			\overset{\diamond}{\varepsilon}_{_{F}}, & \text{otherwise}.
		\end{array}
		\right.
		\]
	\end{case}
	
	\begin{case} $C$ is not TD coherent. Let $C\in\delta_{b}(D)$ for
		some TD coherent $D$. Then $F\in\delta_{b}\delta_{d}(D);$ furthermore, if
		$E=E_{1}E_{2}\in\delta_{d}(D)$ and $F_{i}\in\delta_{b}(E_{i}),$ then
		$\epsilon_{_{F}}=\epsilon_{_{C}}\epsilon_{_{E}}(-1)^{|E_{i-1}|}\epsilon
		_{_{F_{i}}}+1.$
	\end{case}
	
\end{enumerate}

\begin{proposition}
	\label{d-tildepartial} The map $\tilde{\partial}$ is a differential on
	$\tilde{ C}_{n,m},$ i.e., $\tilde{\partial}^{2}=0.$
\end{proposition}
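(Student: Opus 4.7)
The plan is to verify $\tilde{\partial}^{2}=0$ on generators. Since $\tilde{\partial}$ is defined on a product $\rho=C_{1}\cdots C_{r}$ as a graded derivation via (\ref{differential}), the usual Koszul sign bookkeeping reduces the claim to showing $\tilde{\partial}^{2}(C)=0$ for a single coherent framed matrix $C$; the cross terms $C_{1}\cdots\tilde{\partial}(C_{s})\cdots\tilde{\partial}(C_{t})\cdots C_{r}$ with $s\neq t$ pair off with opposite signs in the standard way. Writing $\tilde{\partial}=\tilde{\delta}_{d}+\tilde{\delta}_{b}$ (with the signs $\epsilon_{F}$ folded in), I would then split $\tilde{\partial}^{2}(C)$ into three pieces
\[
\tilde{\delta}_{d}^{2}(C),\qquad \tilde{\delta}_{d}\tilde{\delta}_{b}(C)+\tilde{\delta}_{b}\tilde{\delta}_{d}(C),\qquad \tilde{\delta}_{b}^{2}(C),
\]
and show that each vanishes separately.

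For $\tilde{\delta}_{d}^{2}=0$, the codimension $2$ faces produced by two successive formal decompositions correspond to length-$3$ factorizations $C\rightsquigarrow F_{1}F_{2}F_{3}$ of $C$. Each such triple is reached in exactly two ways: either one first splits off $F_{1}$ and then splits $F_{2}F_{3}$, or one first splits off $F_{1}F_{2}$ and then splits it further. The signs $\overset{\diamond}{\varepsilon}_{F}=(-1)^{\#D_{1}}\cdot\mathit{shuff}(D_{1};D_{2})$ are precisely the shuffle signs that govern $\partial^{2}=0$ on the joined permutahedra; the two orderings contribute opposite signs by the standard shuffle identity. In the base case $C=\mathfrak{n}/\mathfrak{m}$, the additional signs $\overset{\wedge}{\epsilon}_{_{F_{2}}}+\overset{\vee}{\epsilon}_{_{F_{1}}}$ reduce this cancellation to the identity $\partial^{2}=0$ for the DG coalgebra $(C_{\ast}(P_{n}),\partial,\Delta_{P})$ of (\ref{DeltaP}).

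For $\tilde{\delta}_{b}^{2}=0$, the key tool is Lemma \ref{basic}. A typical term in $\tilde{\delta}_{b}^{2}(C)$ is produced by a coheretization $F\in\tilde{\delta}_{b}(C)$ followed by a coheretization $G\in\tilde{\delta}_{b}(F)$, where the two partitioning actions affect different structure matrices within $C$ (one inside a row matrix, one inside a column matrix, or two independent row/column actions in possibly nested entries). By Lemma \ref{basic} applied to each of the two row/column factors in turn, there is a unique \emph{other} coherent matrix $C'\neq C$ through which $G$ is reached by reversing the order of the two actions, so $G\in\tilde{\delta}_{b}\tilde{\delta}_{b}(C')$. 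The sign convention on $\epsilon_{F}$ (Case (a)) is calibrated so that the two routes $C\to F\to G$ and $C\to F'\to G$ contribute with opposite signs, giving pairwise cancellation.

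The main obstacle, and the piece I would spend the most effort on, is the mixed term $\tilde{\delta}_{d}\tilde{\delta}_{b}+\tilde{\delta}_{b}\tilde{\delta}_{d}$. Here a face $G$ arises from both: (i) a formal decomposition $C\to F_{1}F_{2}$ followed by a coheretization inside one factor, and (ii) a coheretization $C\to F$ followed by a formal decomposition of $F$. To pair these I would invoke Proposition \ref{ab} on the iterated permutahedron diagonal: the uniqueness statement produces, for each $G$ in the first family, a unique coheretization $F\in\tilde{\delta}_{b}(C)$ that decomposes as $G$, and conversely. The sign reconciliation uses Case 2 of the definition of $\epsilon_{F}$, where $\epsilon_{_{F}}$ is defined via $\epsilon_{_{C}}\epsilon_{_{E}}(-1)^{|E_{i-1}|}\epsilon_{_{F_{i}}}+1$; the ``$+1$'' is exactly what flips the sign between the two orderings. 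Carrying out this matching carefully, case by case, according to whether the partitioning action lies in the left or right factor of the decomposition and whether it is a row, column, or row-$i$/column-$j$ action, completes the verification.
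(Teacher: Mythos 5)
Your overall strategy --- reduce to a single factor by the derivation property and then show that every codimension-$2$ face is reached from exactly two codimension-$1$ faces with opposite signs --- is the right one, and your treatment of the purely decomposable part (reassociation of length-$3$ factorizations, with the shuffle signs) matches the paper's Subcases 1a and 2a. Note that the paper itself only verifies $\tilde{\partial}^{2}(\frac{\mathfrak{n}}{\mathfrak{m}})=0$ and asserts that this "illustrates the basic arguments."

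However, your proposed pairing scheme for the remaining terms is not how the cancellation works, and it fails already on the one case the paper checks. Since $\tilde{\delta}(\frac{\mathfrak{n}}{\mathfrak{m}})=\mathfrak{m}\circledast_{td}^{2}\mathfrak{n}$ consists entirely of length-$2$, hence formally decomposable, faces, we have $\tilde{\delta}_{b}(\frac{\mathfrak{n}}{\mathfrak{m}})=\varnothing$; consequently the pieces $\tilde{\delta}_{d}\tilde{\delta}_{b}$ and $\tilde{\delta}_{b}^{2}$ contribute nothing, while $\tilde{\delta}_{b}\tilde{\delta}_{d}$ does. So the mixed term cannot vanish by pairing the two orders of application against each other, as you propose, and there is no $\tilde{\delta}_{b}^{2}$ piece for Lemma \ref{basic} to act on. The actual mechanism is this: a face $F=\rho\cdot E_{2}$ with $\rho\in\tilde{\delta}_{b}(E_{1})$ formally indecomposable is cancelled against the same $F$ viewed as a face of $D\cdot E_{2}$, where $D$ is the \emph{unique other coherent parent} of $\rho$ supplied by Lemma \ref{basic}; both routes are of the form decompose-then-coheretize, so the $\tilde{\delta}_{b}\tilde{\delta}_{d}$ terms cancel among themselves. (Proposition \ref{ab} enters only through the proof of Lemma \ref{basic}, not as a separate device for a mixed-term pairing, and the "Case 2" sign you cite is not what reconciles the two routes.) Your $\tilde{\delta}_{b}^{2}$ paragraph also misapplies Lemma \ref{basic}: the lemma produces a second \emph{parent} $D\neq C$ of a given indecomposable face, not a second intermediate face $F'$ of the same $C$ through which a common codimension-$2$ face $G$ is reached; as written, your two routes land in $\tilde{\partial}^{2}(C)$ and $\tilde{\partial}^{2}(C')$ for different cells and hence do not cancel within $\tilde{\partial}^{2}(C)$.
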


\begin{proof}
	We only prove $\tilde{\partial}^{2}(\frac{\mathfrak{n}}{\mathfrak{m}})=0$,
	which illustrates the basic arguments. Non-trivial cases assume $m+n\geq3$ and
	apply Lemma \ref{basic}. Consider a component $\tilde{\partial}_{E_{1}E_{2}
	}(\frac{\mathfrak{n}}{\mathfrak{m}}):=E_{1}E_{2}\in\tilde{\partial}
	(\frac{\mathfrak{n}}{\mathfrak{m}})$ and a face component $F\in\tilde
	{\partial}(E_{1}E_{2}).$	
	It suffices to show that there exists a unique component $E_{1}^{\prime}
	E_{2}^{\prime}$ of $\tilde{\partial}(\frac{\mathfrak{n}}{\mathfrak{m}})$
	distinct from $E_{1}E_{2}$ such that $F$ is a codimension $1$ face of
	$E_{1}^{\prime}E_{2}^{\prime}$. There are two cases.
	
	\begin{case}$F=\rho\!\cdot \! E_{2}\in\tilde{\partial}(E_{1})E_{2}$.
		Then $\rho$ is coherent by definition and may or may not be decomposable.
		There are two subcases.\medskip
		
		\noindent\textbf{Subcase 5.18.a}. $\rho=E_{1}^{1}E_{1}^{2}$ is decomposable. Then
		by definition there is a unique coherent matrix $C$ such that $E_{1}^{2}
		E_{2}\in\tilde{\partial}(C).$ Thus $E_{1}^{1}C\in\tilde{\partial}\left(
		\frac{\mathfrak{n}}{\mathfrak{m}}\right)  $ and $E_{1}^{\prime}E_{2}^{\prime
		}:=E_{1}^{1}C$ has the required properties.\medskip
		
		\noindent\textbf{Subcase 5.18.b}. $\rho$ is indecomposable. Let $(B,C):=(\rho
		,E_{2});$ by Lemma \ref{basic}, there is a unique coherent matrix $D$ such
		that $\rho\in\tilde{\partial}(D)$. Then $DE_{2}\in\tilde{\partial}\left(
		\frac{\mathfrak{n}}{\mathfrak{m}}\right)  $ and $E_{1}^{\prime}E_{2}^{\prime
		}:=DE_{2}$ has the required properties. \medskip
	\end{case}
	
	\begin{case} $F=E_{1}\!\cdot\! \rho\in E_{1}\tilde{\partial}(E_{2})$.
		Then $\rho$ is coherent and may or may not be decomposable. There are two
		subcases. \medskip
		
		\noindent\textbf{Subcase 5.19.a.} $\rho=E_{1}^{2}E_{2}^{2}$ is decomposable. Then
		by definition there is a unique coherent matrix $C$ such that $E_{1}E_{2}
		^{1}\in\tilde{\partial}(C).$ Thus $CE_{2}^{2}\in\tilde{\partial}\left(
		\frac{\mathfrak{n}}{\mathfrak{m}}\right)  $ and $E_{1}^{\prime}E_{2}^{\prime
		}:=CE_{2}^{2}$ has the required properties.\medskip
		
		\noindent\textbf{Subcase 5.19.b.} $\rho$ is indecomposable. Let $(C,B):=(E_{1}
		,\rho);$ by Lemma \ref{basic}, there is a unique coherent matrix $D$ such
		that  $\rho\in\tilde{\partial}(D)$. Then $E_{1}D\in\tilde{\partial}\left(
		\frac{\mathfrak{n}}{\mathfrak{m}}\right)  $ and $E_{1}^{\prime}E_{2}^{\prime
		}:=E_{1}$ has the required properties.
	\end{case}
\end{proof}

\subsection{The Balanced Framed Join $\mathfrak{m}{\circledast}_{pp}\mathfrak{n}$}

\begin{definition}
	\label{balancedextend} A coherent framed element $c$ is \textbf{balanced} if
	each top level (totally coherent) structure matrix in $c$ is primitively
	coherent (cf. Subsection \ref{coheretizing}). The \textbf{balanced framed join
		of} $\mathfrak{m}$ \textbf{and} $\mathfrak{n}$ is the subset
	\[
	(\mathfrak{m}\circledast_{pp}\mathfrak{n},\tilde{\partial})\subseteq
	(\mathfrak{m}\widetilde{\circledast}_{pp}\mathfrak{n},\tilde{\partial})
	\]
	of balanced framed elements of height $h\leq3.$
\end{definition}

In fact, if $\mathbf{c}=\mathfrak{n}/\mathfrak{m,}$ a balanced farmed element
$c\in\mathfrak{m}\circledast_{pp}\mathfrak{n}$ is characterized by a $2$-level
path of generalized bipartitions $c=(T^{1}(\mathbf{c}),T^{2}(\mathbf{c}))$ and
indexes some face of the polytope $PP_{n,m}$ defined in Section \ref{PP-KK} below.

If $0\leq m\leq2$ or $0\leq n\leq2,$ an element $c\in\mathfrak{m}%
\circledast_{cc}\mathfrak{n}$ is balanced and uniquely determined by a
$2$-level path for dimensional reasons. Thus we immediately obtain (also see
Section \ref{PP-KK})

\begin{proposition}
	\label{equality-ranges2}If $0\leq m\leq2$ or $0\leq n\leq2,$ then
	\begin{equation}
		\mathfrak{m}\circledast_{pp}\mathfrak{n}=\mathfrak{m}\circledast
		_{cc}\mathfrak{n}. \label{equality2}%
	\end{equation}
	
\end{proposition}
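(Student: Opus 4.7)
The plan is to establish the equality by proving the non-trivial inclusion $\mathfrak{m}\circledast_{cc}\mathfrak{n}\subseteq\mathfrak{m}\circledast_{pp}\mathfrak{n}$; the reverse inclusion follows from Definition \ref{balancedextend}. Fix $c\in\mathfrak{m}\circledast_{cc}\mathfrak{n}$ with $m\leq 2$ (the case $n\leq 2$ is dual). Two things must be verified: (i) $c$ admits a representative of height $h\leq 3$, and (ii) every top level structure matrix within that representative is primitively coherent in the sense of Subsection~\ref{coheretizing}.

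For the trivial sub-cases $m\in\{0,1\}$ I would appeal directly to Remark~\ref{P(n)} and to the fact that $\mathfrak{1}\circledast_{cc}\mathfrak{n}$ is indexed by the associahedron $K_{n+1}$, where every coherent framed element has height $1$ and its indecomposable factors are elementary, hence trivially primitively coherent.

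The substantive case is $m=2$. I would first reduce to showing the claim for any indecomposable TD coherent factor of $c$, using Proposition~\ref{totally} which says that top level structure matrices within a coherent framed element are totally coherent. For such an indecomposable $q\times p$ TD coherent factor $E$ with $\#\mathbf{is}(E)\leq 2$, Proposition~\ref{el-coherence} forces any $q\times p$ elementary structure block inside $E$ with $q\geq 2$ to satisfy $\#\mathbf{is}=1$; combined with the dimension identity $|E|=\#\mathbf{is}(E)+\#\mathbf{os}(E)-l(E)$ of Proposition~\ref{tdimC}, this severely restricts the admissible indecomposable factorizations of $E$. A case analysis on whether $\#\mathbf{is}(E)\in\{0,1,2\}$ shows that each indecomposable factor is of one of the elementary types described after Proposition~\ref{el-coherence}, and these are exactly the primitive coheretizations of their augmentations in the sense defined in Subsection~\ref{coheretizing}. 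Since primitivity of all top level structure matrices is precisely the balance condition, this yields (ii).

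For (i) I would argue that, once each top level structure matrix is primitively coherent, the recursive tower $(c^1,\dots,c^h)$ collapses: the primitive coheretization $C^{2}=prim(\mathbf{C})$ of an elementary $\mathbf{C}$ with $\#\mathbf{is}(\mathbf{C})\leq 2$ has entries whose elementary factorizations consist of $1\times 1,$ $1\times p$, or $q\times 1$ matrices whose own non-null entries are either elementary bipartitions $\mathbf{b}/\mathbf{a}$ with $\#\mathbf{a}\leq 1$ or are null, so no further non-trivial coheretization is needed at level $3$, i.e., $c^2=c^3=\cdots=c^h$. Thus the canonical representative of $c$ has height at most $2$, giving $h[c]\leq 3$ after augmentation, as required by Definition~\ref{balancedextend}.

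The main obstacle I expect is the bookkeeping in the $m=2$ case: one must verify that under the TD coherence dimension formula there is literally no room for a non-primitive coheretization at any structure level, and in particular no room for a genuinely $3$-level path that is not equivalent to a $2$-level one. This amounts to ruling out hypothetical towers of the kind illustrated in Example~\ref{22cframed} when the input or output cardinality drops to $2$, and is essentially a finite check using Propositions~\ref{el-coherence} and \ref{tdimC} together with the explicit description of primitive coheretizations.
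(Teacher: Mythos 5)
The paper disposes of this proposition in a single sentence (``an element $c\in\mathfrak{m}\circledast_{cc}\mathfrak{n}$ is balanced and uniquely determined by a $2$-level path for dimensional reasons''), so you are trying to supply an argument the authors omit; unfortunately your argument contains concrete errors. First, your base case $m=1$ is false: coherent framed elements of $\mathfrak{1}\circledast_{cc}\mathfrak{n}$ do not all have height $1$, and they are not indexed by the associahedron (the relevant quotient polytope is $KK_{n+1,2}$, a subdivision of the multiplihedron). For instance, $\frac{0|12}{1|0}\in\mathfrak{1}\circledast\mathfrak{2}$ factors as a $3\times1$ column of $\frac{0}{1}$'s times the elementary row $\left(\frac{12}{0}\ \ \frac{12}{0}\right)$, whose output set has cardinality $2$; by Proposition \ref{el-coherence}(2) this factor is incoherent, so every coherent framed element over this initial bipartition needs a second level refining $\frac{12}{0}$ and has height $2$. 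Second, in the case $m=2$ you conflate ``totally coherent'' (the conclusion of Proposition \ref{totally}) with ``TD coherent'' (Definition \ref{TD-coherent}, which additionally demands maximal coherence and dimensional completeness). A generic element of $\mathfrak{m}\circledast_{cc}\mathfrak{n}$ --- a face of arbitrary codimension --- has formally indecomposable factors that are coherent but not TD coherent (cf.\ Example \ref{TD-coherent-ex}), so the dimension identity $|E|=\#\mathbf{is}(E)+\#\mathbf{os}(E)-l(E)$ of Proposition \ref{tdimC}, on which your whole case analysis rests, is simply not available for the elements you must handle.

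There is also a structural point you pass over: by Definition \ref{balancedextend}, $\mathfrak{m}\circledast_{pp}\mathfrak{n}$ is a subset of the prebalanced join $\mathfrak{m}\widetilde{\circledast}_{pp}\mathfrak{n}=\mathcal{CM}_{\ast}(m,n)$, whose elements must be totally integrable and must arise from $\frac{\mathfrak{n}}{\mathfrak{m}}$ by iterated face operators $\tilde{\delta}_{b},\tilde{\delta}_{d}$. Establishing that a coherent framed element is balanced and of height at most $3$ does not by itself place it in $\mathcal{CM}_{\ast}(m,n)$, and this membership is part of what ``for dimensional reasons'' is silently covering. The workable repair is to argue directly that when $\#\mathbf{is}\leq2$ (dually $\#\mathbf{os}\leq2$) every coheretization step involves diagonal components of $P_{1}$ or $P_{2}$ only, so each incoherent factor admits an essentially unique, hence primitive, coheretization and no nontrivial third level can arise; that observation, not the TD-coherence bookkeeping, is what forces balance and bounds the height.
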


\begin{remark}
	Our earlier result in \cite{SU4} agrees with Equality (\ref{equality2}) in the
	indicated ranges, and Definition \ref{balancedextend} extends that result
	to all $m,n\geq 0$.
\end{remark}

\subsection{The Reduced Balanced Framed Join $\mathfrak{m}{\circledast}_{kk}\mathfrak{n}$}

\begin{definition}
	A bipartition matrix $C$ is \textbf{reduced } if it represents a class of bipartition
	matrices in $\mathfrak{m}\circledast\mathfrak{n}$ such that $C\sim C^{\prime}$ if and only if $C$ and $C^{\prime}$
	differ only in the number of empty biblocks in their entries. The
	\textbf{reduced} framed join and the \textbf{reduced balanced} framed join
	are  denoted  by
	\[
	\mathfrak{m}\circledast_{red}\mathfrak{n:=m}\circledast\mathfrak{n/}\sim\ \text{
		and }\ \ \mathfrak{m}\circledast_{kk}\mathfrak{n:=m}\circledast_{pp}
	\mathfrak{n/}\sim,
	\]
	respectively. The canonical projections (denoted by the same symbol) are
	\begin{equation} \label{bivartheta}		\vartheta\vartheta:\mathfrak{m}\circledast\mathfrak{n}\longrightarrow
		\mathfrak{m}\circledast_{red}\mathfrak{n}
	\end{equation}
	and
	\begin{equation}\label{co-bivartheta}		\vartheta\vartheta:\mathfrak{m}\circledast_{pp}\mathfrak{n}\longrightarrow
		\mathfrak{m}\circledast_{kk}\mathfrak{n}.
	\end{equation}

\end{definition}

If $C=(c_{ij})$ is a $q\times p$ reduced framed matrix, the dimension formula in
Definition \ref{dim-GBPM} reduces to
\[
|C|=\sum_{\left(  i,j\right)  \in\mathfrak{q}\times\mathfrak{p}}|c_{ij}|.
\]
Consequently, if $\rho\in\mathfrak{m}\circledast_{kk}\mathfrak{n,}$ then
\[
|\rho|=\sum\left\vert \frac{\mathbf{b}}{\mathbf{a}}\right\vert =\sum
(\#\mathbf{a}+\#\mathbf{b}-1),
\]
where $\frac{\mathbf{b}}{\mathbf{a}}$ ranges over all top level elementary
non-null bipartitions in $\rho.$

The face operator $\tilde{\partial}$ induces a face operator
\[
\partial:\mathfrak{m}\circledast_{kk}\mathfrak{n}\rightarrow\mathfrak{m}%
\circledast_{kk}\mathfrak{n}%
\]
of degree $-1$, which acts on a
reduced balanced framed matrix $C=\left(  c_{ij}\right)  $ as a derivation
with respect to the entries $c_{ij}$.
Identify $(\mathfrak{0}\circledast_{pp}\mathfrak{n}
,\tilde{\partial})$ and $(\mathfrak{m}\circledast_{pp}\mathfrak{0}
,\tilde{\partial})$ with the permutahedra $P_{n}$ and $P_{m},$ identify
$(\mathfrak{0}\circledast_{kk}\mathfrak{n},{\partial})$ and $(\mathfrak{m}
\circledast_{kk}\mathfrak{0},{\partial})$ with the associahedra
$K_{n+1}$ and $K_{m+1}$, and denote the induced projections by
\[
\vartheta:P_{n}\rightarrow K_{n+1}\ \ \text{and}\ \ \vartheta:P_{m}\rightarrow
K_{m+1}.
\]
\begin{example}
	\label{P-K}
	In $P_3$ we have
	\[    2|1|3=\left(\frac{0}{2} \right)\!\! \left(\frac{0}{1}\, \, \frac{0}{0} \right)\!\!\left(\frac{0}{0}\,\,\frac{0}{0}\,\,\frac{0}{3}\right)=
	\left(\frac{0}{2} \right) \!\! \left(\left(\frac{0}{1}\right)\!\!\left(\frac{0}{0} \,\, \frac{0}{0}\right)\,\,
	\left(\frac{0}{0}\right)\!\!\left(\frac{0}{3}\right)\right)  \]
	and
	
	\[    2|3|1=\left(\frac{0}{2} \right)\!\! \left(\frac{0}{0}\, \, \frac{0}{3} \right)\!\!\left(\frac{0}{1}\,\,\frac{0}{0}\,\,\frac{0}{0}\right)=
	\left(\frac{0}{2} \right) \!\! \left(\left(\frac{0}{0}\right)\!\!\left(\frac{0}{1}\right)\,\,
	\left(\frac{0}{3}\right)\!\!\left(\frac{0}{0} \,\, \frac{0}{0}\right)
	\right) . \]
	Thus, in $K_4 $ we obtain
	\[
	\vartheta(2|13)=\vartheta(2|1|3)=\vartheta(2|3|1).
	\]
	In Example \ref{framed-31}, $|\vartheta\vartheta(\tilde{\partial}_{3}
	(\rho))|=|\vartheta\vartheta(\tilde{\partial}_{4}(\rho))|=|\vartheta
	\vartheta(\tilde{\partial}_{7}(\rho))|=0.$ Consequently, the $2$-dimensional
	element $\vartheta\vartheta(\rho)\in\mathfrak{3}\circledast_{kk}
	\,\mathfrak{1}\,$has four $1$-dimensional faces induced by $\tilde{\partial
	}_{1}(C_{2}),$ $\tilde{\partial}_{2}(C_{2}),$ $\tilde{\partial}_{5}(C_{2}),$
	and $\tilde{\partial}_{6}(C_{2}).$
\end{example}

\subsection{The Chain Complex $(\mathfrak{m}{\circledast}_{kk}\mathfrak{n},\partial)$}

Let $R$ be a commutative ring with unity, and let $C_{\ast}(\mathfrak{m}%
\circledast_{kk}\mathfrak{n})$ denote the free $R$-module generated by the set
$\mathfrak{m}\circledast_{kk}\mathfrak{n}.$ The map $\partial$ on
$\mathfrak{m}\circledast_{kk}\mathfrak{n}$ induces a degree $-1$ operator
\[
\partial:C_{\ast}(\mathfrak{m}\circledast_{kk}\mathfrak{n})\rightarrow
C_{\ast}(\mathfrak{m}\circledast_{kk}\mathfrak{n})
\]
$\ $defined for $\rho=C_{1}\cdots C_{r}\in C_{\ast}(\mathfrak{m}%
\circledast_{kk}\mathfrak{n})$ by
\begin{equation}
	\partial(\rho):=\sum_{1\leq s\leq r}(-1)^{\epsilon_{_{C_{s}}}}\ C_{1}
	\cdots{\partial}(C_{s})\cdots C_{r}. \label{reduced-differential}%
\end{equation}

By Proposition \ref{d-tildepartial} we immediately obtain

\begin{theorem}
	\label{df-partial}The map ${\partial}$ is a differential on $C_{\ast
	}(\mathfrak{m}\circledast_{kk}\mathfrak{n}),$ i.e., ${\partial}^{2}=0.$
\end{theorem}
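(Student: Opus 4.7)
The plan is to deduce $\partial^{2}=0$ from Proposition \ref{d-tildepartial} by showing that the face operator $\tilde{\partial}$ descends through the composition
\[
\langle\mathfrak{m}\widetilde{\circledast}_{pp}\mathfrak{n}\rangle
\;\supset\;
\langle\mathfrak{m}{\circledast}_{pp}\mathfrak{n}\rangle
\;\xrightarrow{\vartheta\vartheta_{\ast}}\;
C_{\ast}(\mathfrak{m}{\circledast}_{kk}\mathfrak{n})
\]
to the operator $\partial$ defined by formula \eqref{reduced-differential}. Since $\tilde{\partial}^{2}=0$ on the top module, naturality then forces $\partial^{2}=0$ on the quotient.

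First I would verify that $\tilde{\partial}$ restricts to a well-defined differential on $\langle\mathfrak{m}{\circledast}_{pp}\mathfrak{n}\rangle$. By Definition \ref{balancedextend}, a generator $C=C_{1}\cdots C_{r}\in\mathfrak{m}\circledast_{pp}\mathfrak{n}$ has height at most $3$ with primitively coherent top level structure matrices. Running through the four cases of Definition \ref{delta}, each elementary, row, column, or row$/$column action produces a face $F$ whose new top level structure matrices are built from the primitive coheretization recipe of Subsection \ref{coheretizing}, so $F$ remains balanced. Proposition \ref{d-tildepartial} then restricts to give $\tilde{\partial}^{2}=0$ on $\langle\mathfrak{m}{\circledast}_{pp}\mathfrak{n}\rangle$.

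Next I would verify that the map $\vartheta\vartheta_{\ast}$, defined by $\vartheta\vartheta_{\ast}(F):=\vartheta\vartheta(F)$ when $|\vartheta\vartheta(F)|=|F|$ and $\vartheta\vartheta_{\ast}(F):=0$ otherwise, intertwines $\tilde{\partial}$ and $\partial$. For a fixed balanced $C$, the faces $F\in\tilde{\delta}(C)$ fall into two classes under reduction: either $|\vartheta\vartheta(F)|=|F|$, in which case the sign prescription of Case (a) and Case 1 of Case (b) in the definition of $\tilde{\partial}$ matches $\epsilon_{_{C_{s}}}$ in \eqref{reduced-differential} directly; or $|\vartheta\vartheta(F)|<|F|$, in which case the image is degenerate. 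I would show that the degenerate faces cancel in pairs before reduction, so that $\vartheta\vartheta_{\ast}\tilde{\partial}=\partial\vartheta\vartheta_{\ast}$. Once this intertwining relation is established, $\partial^{2}\vartheta\vartheta_{\ast}=\vartheta\vartheta_{\ast}\tilde{\partial}^{2}=0,$ and since $\vartheta\vartheta_{\ast}$ is surjective onto generators, $\partial^{2}=0$.

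The main obstacle will be the pairing of degenerate faces. Example \ref{P-K} illustrates the phenomenon: the face components $\tilde{\partial}_{3}(C_{2})$, $\tilde{\partial}_{4}(C_{2})$, and $\tilde{\partial}_{7}(C_{2})$ all reduce to zero-dimensional cells under $\vartheta\vartheta$. The verification that such collapsing faces either vanish individually after reduction or group into signed pairs $F+F'$ with $\vartheta\vartheta_{\ast}(F)=-\vartheta\vartheta_{\ast}(F')$ amounts to a combinatorial matching, driven by the uniqueness statements in Proposition \ref{ab} and Lemma \ref{basic}. Specifically, a degenerate face $F$ arising from an $\eta_{1}$-type insertion pairs with one arising from an $\eta_{2}$-type insertion adjacent to it via the complementary-pair structure \eqref{Delta_P} of the S-U diagonal $\Delta_{P}$, and the sign prefactor $\mathit{shuff}(D_{1};D_{2})\cdot(-1)^{\#D_{1}}$ in $\overset{\diamond}{\varepsilon}_{_{F}}$ is precisely what makes the two contributions cancel after $\vartheta\vartheta$ is applied. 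This cancellation is the same mechanism that forces $\vartheta$-images of adjacent cells of $P_{n}$ to coincide in $K_{n+1}$, so the sign discipline of Subsection 2.4 is the essential input.
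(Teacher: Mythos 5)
Your proposal takes essentially the same route as the paper, which simply defines $\partial$ as the operator induced from $\tilde{\partial}$ through the projection $\vartheta\vartheta$ and then deduces $\partial^{2}=0$ directly from Proposition \ref{d-tildepartial}. One small caution on your mechanism for the degenerate faces: they need not cancel in signed pairs (in Example \ref{P-K} exactly three faces degenerate, an odd number); rather, each is annihilated individually because its dimension drops under $\vartheta\vartheta$, and this alone gives the intertwining $\vartheta\vartheta_{\ast}\tilde{\partial}=\partial\,\vartheta\vartheta_{\ast}$ that your argument needs.
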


\section{Prematrads}

\subsection{Prematrads Defined}

In this subsection we review the notion of a \emph{prematrad} introduced in \cite{SU4}. Let
$R$ be a (graded or ungraded) commutative ring with unity $1_{R}$ and let
$M=\left\{  M_{n,m}\right\}  _{mn\geq 1}$ be a bigraded module over
$R.$ Fix a set of bihomogeneous module generators $G=\left\{  \alpha_{x}%
^{y}\in M_{y,x}:x,y\in\mathbb{N}\right\}  ;$ then $M_{y,x}=\left\langle
\alpha_{x}^{y}\right\rangle \ $is the $R$-module generated by $\alpha_{x}%
^{y}.$ Identify a monomial $\alpha_{x_{1}}^{y_{1}}\cdots\alpha_{x_{p}}^{y_{p}%
}\in G^{\otimes p}$ with the $1\times p$ matrix $\left[  \alpha_{x_{1}}%
^{y_{1}}\text{ }\cdots\text{ }\alpha_{x_{p}}^{y_{p}}\right]  ,$ and identify
the monomial \[(\alpha_{x_{1,1}}^{y_{1,1}}\cdots\alpha_{x_{1,p}}^{y_{1,p}%
})\otimes\cdots\otimes(\alpha_{x_{q,1}}^{y_{q,1}}\cdots\alpha_{x_{q,p}
}^{y_{q,p}})\in\left(  G^{\otimes p}\right)  ^{\otimes q}\]
with the $q\times
p$ matrix
\[
\left[
\begin{array}
	[c]{ccc}%
	\alpha_{x_{1,1}}^{y_{1,1}} & \cdots & \alpha_{x_{1,p}}^{y_{1,p}}\\
	\vdots &  & \vdots\\
	\alpha_{x_{q,1}}^{y_{q,1}} & \cdots & \alpha_{x_{q,p}}^{y_{q,p}}%
\end{array}
\right]  .
\]

Denote the set of $q\times p$ matrices over $\mathbb{N}$ by $\mathbb{N}%
^{q\times p}\mathbb{,}$ and denote the double tensor module of $M$ by $TTM$.
Given $X=\left[  x_{i,j}\right]  ,\ Y=\left[  y_{i,j}\right]  \in\mathbb{N}%
^{q\times p},$ let ${M}_{Y,X}=\left(  M_{y_{1,1},x_{1,1}}\otimes\cdots\otimes
M_{y_{1,p},x_{1,p}}\right)  \otimes\cdots\otimes\left(  M_{y_{q,1},x_{q,1}%
}\otimes\cdots\otimes M_{y_{q,p},x_{q,p}}\right)  $ and consider the
\emph{matrix submodule}%
\[
\overline{\mathbf{M}}:=\bigoplus_{pq\geq1}\left(  M^{\otimes p}\right)
^{\otimes q}=\bigoplus_{\substack{X,Y\in\mathbb{N}^{q\times p}\\pq\geq
		1}}M_{Y,X}\subset TTM.
\]
A $q\times p$ \emph{bisequence matrix }$B\in\overline{\mathbf{M}}$ has the
form
\[
B=\left[
\begin{array}
	[c]{ccc}%
	\beta_{x_{1}}^{y_{1}} & \cdots & \beta_{x_{p}}^{y_{1}}\\
	\vdots &  & \vdots\\
	\beta_{x_{1}}^{y_{q}} & \cdots & \beta_{x_{p}}^{y_{q}}%
\end{array}
\right]  ;
\]
$B$ is \emph{elementary }if $\beta_{x_{j}}^{y_{i}}\in G$ for all $\left(
i,j\right)  $. Define the \emph{input and output leaf sequences} \emph{of} $B$ by
${ils}\left(  B\right)  :=  \mathbf{x}= \left(  x_{1},\ldots,x_{p}\right)  \ $and
${ols}\left(  B\right)  :=\mathbf{y}=\left(  y_{1},\ldots,y_{q}\right)
^{T}$; define the {$indeg$}$\left(  B\right)  :=\Sigma x_{j},$ the {$outdeg$%
}$\left(  B\right)  :=\Sigma y_{i},$ and the ${bideg}(B):=\left(
{indeg}\left(  B\right)  ,\right.  $ $\left.  {outdeg}\left(  B\right)
\right)  .$ Let
\[
\mathbf{M}_{\mathbf{x}}^{\mathbf{y}}:=\left\{  \text{bisequence matrices }%
B\in\overline{\mathbf{M}}:\mathbf{x}\times\mathbf{y}={ils}\left(
B\right)  \times{ols}\left(  B\right)  \in\mathbb{N}^{1\times p}%
\times\mathbb{N}^{q\times1}\right\}  ;
\]
the \emph{bisequence submodule}
\[
\mathbf{M:}=\bigoplus_{\substack{\mathbf{x\times y\in}\mathbb{N}^{1\times
			p}\times\mathbb{N}^{q\times1}\\pq\geq1}}\mathbf{M}_{\mathbf{x}}^{\mathbf{y}%
}\subset TTM.
\]

A \emph{Transverse Pair} (TP) of bisequence matrices is a pair $A\times
B\in\mathbf{M}_{p}^{\mathbf{y}}\times\mathbf{M}_{\mathbf{x}}^{q}$ such that
$\mathbf{x}\times\mathbf{y}\in\mathbb{N}^{1\times p}\times\mathbb{N}%
^{q\times1}.$ A pair of matrices $A\times B\in\overline{\mathbf{M}}%
\times\overline{\mathbf{M}}$ is a\ \emph{Block Transverse Pair} (BTP) if there
exist block decompositions $A=\left[  A_{ij}\right]  $ and $B=\left[
B_{ij}\right]  $ such that $A_{ij}$ is a $q_{i}\times1$ block for each $j$,
$B_{ij}$ is a $1\times p_{j}$ block for each $i$, and $A_{ij}\times B_{ij}$ is
a TP for all $(i,j)$. When a BTP decomposition of $A\times B$ exists, it is
unique. Note that a pair of bisequence matrices $A^{q\times s}\times
B^{t\times p}$ is a BTP if and only if
\begin{equation}
	indeg(A)=p\ \ \text{and}\ \ outdeg(B)=q. \label{btp}%
\end{equation}

\begin{example}
	\label{example1}A $\left(  4\times2,2\times3\right)  $ monomial pair $A\times
	B\in\mathbf{M}_{_{21}}^{\overset{\overset{\overset{\overset{1}{5}}{4}}{3}}{}
	}\times\mathbf{M}_{_{123}}^{\overset{\overset{3}{1}}{}}$ is a $2\times2$ BTP
	per the block decompositions
	
	\vspace{.2in}
	\hspace{.6in}
	\setlength{\unitlength}{.06in}\linethickness{0.4pt}
	\begin{picture}(118.66,29.34)
		\put(-5,16.77){\makebox(0,0)[cc]{$A=$}} \
		\put(5.23,25.00){\makebox(0,0)[cc]{$\alpha^{1}_{2}$}}
		\put(5.23,18.00){\makebox(0,0)[cc]{$\alpha^{5}_{2}$}}
		\put(5.23,11.67){\makebox(0,0)[cc]{$\alpha^{4}_{2}$}}
		\put(5.23,4.67){\makebox(0,0)[cc]{$\alpha^{3}_{2}$}}
		\put(1.66,8.67){\dashbox{0.67}(6.33,20.00)[cc]{}}
		\put(1.66,2.34){\dashbox{0.67}(6.33,4.67)[cc]{}}
		\put(14.00,25.00){\makebox(0,0)[cc]{$\alpha^{1}_{1}$}}
		\put(14.00,18.00){\makebox(0,0)[cc]{$\alpha^{5}_{1}$}}
		\put(14.00,11.67){\makebox(0,0)[cc]{$\alpha^{4}_{1}$}}
		\put(14.00,4.67){\makebox(0,0)[cc]{$\alpha^{3}_{1}$}}
		\put(10.33,8.67){\dashbox{0.67}(6.33,20.00)[cc]{}}
		\put(10.33,2.34){\dashbox{0.67}(6.33,4.67)[cc]{}}
		\put(-1.0,1.0){\line(0,1){29}}
		\put(-1.0,1.0){\line(1,0){2}}
		\put(-1.0,30.0){\line(1,0){2}}
		\put(19.0,1.0){\line(0,1){29}}
		\put(19.0,1.0){\line(-1,0){2}}
		\put(19.0,30.0){\line(-1,0){2}}
		\put(24.83,16.77){\makebox(0,0)[cc]{\hspace*{0.2in}\text{and\ \ }$B=$}} \
		\put(39.00,20.01){\makebox(0,0)[cc]{$\beta^{3}_{1}$}}
		\put(45.58,20.01){\makebox(0,0)[cc]{$\beta^{3}_{2}$}}
		\put(53.53,20.01){\makebox(0,0)[cc]{$\beta^{3}_{3}$}}
		\put(35.67,17.67){\dashbox{0.67}(12.67,5.67)[cc]{}}
		\put(50.33,17.67){\dashbox{0.67}(6.67,5.67)[cc]{}}
		\put(39.00,12.34){\makebox(0,0)[cc]{$\beta^{1}_{1}$}}
		\put(45.58,12.34){\makebox(0,0)[cc]{$\beta^{1}_{2}$}}
		\put(53.53,12.34){\makebox(0,0)[cc]{$\beta^{1}_{3}$}}
		\put(35.67,10.01){\dashbox{0.67}(12.67,5.67)[cc]{}}
		\put(50.53,10.01){\dashbox{0.67}(6.67,5.67)[cc]{}}
		\put(33.0,7.0){\line(0,1){19}}
		\put(33.0,7.0){\line(1,0){2}}
		\put(33.0,26.0){\line(1,0){2}}
		\put(59.5,7.0){\line(0,1){19}}
		\put(59.5,7.0){\line(-1,0){2}}
		\put(59.5,26.0){\line(-1,0){2}}
		\put(62.00,16.00){\makebox(0,0)[cc]{$.$}}
	\end{picture}
	
\end{example}
Given $\mathbf{x}=\left(  x_{1},\ldots,x_{p}\right)  ,$ define $\left\Vert
\mathbf{x}\right\Vert :=\Sigma x_{j}.$ A family of maps $\gamma=\{\gamma
_{\mathbf{x}}^{\mathbf{y}}:\mathbf{M}_{p}^{\mathbf{y}}\otimes\mathbf{M}%
_{\mathbf{x}}^{q}\rightarrow\mathbf{M}_{\left\Vert \mathbf{x}\right\Vert
}^{\left\Vert \mathbf{y}\right\Vert },$ $\mathbf{x}\times\mathbf{y}%
\in\mathbb{N}^{1\times p}\times\mathbb{N}^{q\times1},$ $pq\geq1\}$
extends to
a global product $\Upsilon:\overline{\mathbf{M}}\otimes\overline{\mathbf{M}%
}\rightarrow\overline{\mathbf{M}}$ via
\begin{equation}
	\Upsilon\left(  A\otimes B\right)  =\left\{
	\begin{array}
		[c]{cc}%
		\left[  \gamma\left(  A_{ij}\otimes B_{ij}\right)  \right]  , & A\times B=\left[
		A_{ij}\right]  \times\left[  B_{ij}\right]  \ \text{is a\ BTP}\\
		0, & \text{otherwise.}%
	\end{array}
	\right.  \label{upsilon}%
\end{equation}
Juxtaposition $AB$ denotes the product $\Upsilon\left(  A\otimes B\right)  ;$
it is easy to check that $\Upsilon$ is closed in $\mathbf{M}$. Let
$\eta:R\rightarrow M_{1,1}$ be a map and let $\mathbf{1}:=\eta(1_{R}).$ The
map $\eta$ is a $\gamma$-\emph{unit}\textbf{\ }if $\mathbf{1}\alpha
=\alpha\mathbf{1}=\alpha$ for all $\alpha\in M.$ A constant matrix with unital
entries is a \emph{unital matrix}. Note that if $\beta_{p}^{q}\in M_{q,p},$
the products
\[
\underset{q}{[\underbrace{\mathbf{1}\text{ }\mathbf{1}\text{ }\cdots\text{
		}\mathbf{1}}]^{T}}\times\beta_{p}^{q}\ \
\text{and}\ \
\beta_{p}^{q}\times
\underset{p}{[\underbrace{\mathbf{1}\text{ }\mathbf{1}\text{ }\cdots\text{
		}\mathbf{1}}]}
\]
are TPs. More generally, if $A$ is a matrix in $\overline{\mathbf{M}},$ there
exist unital matrices whose sizes are determined by $A$ such that
$\mathbf{1}\times A$ and $A\times\mathbf{1}$ are BTPs. Thus if $\eta$ is a
$\gamma$-unit and $\mathbf{1}=\eta(1_{R})$, then $\mathbf{1}A=A\mathbf{1}=A$
for all $A\in\overline{\mathbf{M}}$.

\begin{definition}
	\label{prematrad}Let $M$ be a bigraded $R$-module together with a family of
	structure maps $\tilde{\gamma}=\{\tilde{\gamma}_{\mathbf{x}}^{\mathbf{y}%
	}:\mathbf{M}_{p}^{\mathbf{y}}\otimes\mathbf{M}_{\mathbf{x}}^{q}\rightarrow
	\mathbf{M}_{\left\Vert \mathbf{x}\right\Vert }^{\left\Vert \mathbf{y}%
		\right\Vert },$ $\mathbf{x}\times\mathbf{y}\in\mathbb{N}^{1\times p}%
	\times\mathbb{N}^{q\times1},$ $pq\geq1\}$ such that $\Upsilon$ is associative
	in $\mathbf{M.}$ Then $(M,\tilde{\gamma})$ is a \textbf{non-unital prematrad.}
	Let $\eta$ be a $\tilde{\gamma}$-unit and let $\gamma$ be the family of
	structure maps induced by the unital action $\tilde{\gamma}\left(
	\mathbf{1}\otimes m\right)  =\tilde{\gamma}\left(  m\otimes\mathbf{1}\right)
	=m.$ Then $\left(  M,\gamma,\eta\right)  $ is a \textbf{prematrad}. Given
	prematrads $(M,\gamma,\eta)$ and $(M^{\prime},\gamma^{\prime},\eta^{\prime})$,
	a map $f:{M}\rightarrow{M}^{\prime}$ is a \textbf{map of prematrads} if
	$f\gamma_{\mathbf{x}}^{\mathbf{y}}=\gamma{^{\prime}}_{\mathbf{x}}^{\mathbf{y}%
	}(f^{\otimes q}\otimes f^{\otimes p})$ for all $\mathbf{x\times y}$ and
	$\eta^{\prime}=f\eta.$
\end{definition}

\begin{example}\label{ENDA}
	Let $H$ be a free $R$-module of finite type. View $M=\mathcal{E}nd_{TH}\ $as the
	bigraded $R$-module $M=\left\{  M_{n,m}={Hom}\left(  H^{\otimes m},H^{\otimes n}\right)  \right\}  _{mn\geq 1}$
	and define $\eta\left(1_{R}\right)  :={\mathbf{Id}}_{H}.$
	In \cite{Markl2}, M. Markl
	defined the submodule\emph{\ }of \emph{special elements} in $M$ whose additive
	generators are monomials expressed as \emph{elementary fractions}\ of the
	form
	\[
	\frac{\alpha_{p}^{y_{1}}\cdots\alpha_{p}^{y_{q}}}{\alpha_{x_{1}}^{q}%
		\cdots\alpha_{x_{p}}^{q}}\in\mathbf{M}_{\left\Vert \mathbf{x}\right\Vert
	}^{\left\Vert \mathbf{y}\right\Vert },
	\]
	where juxtaposition in the numerator and denominator denotes tensor product
	and the $j^{th}$ output of $\alpha_{x_{i}}^{q}$ is linked to the $i^{th}$
	input of $\alpha_{p}^{y_{j}}.$ The \emph{fraction product} $\gamma
	:\mathbf{M}_{p}^{\mathbf{y}}\otimes\mathbf{M}_{\mathbf{x}}^{q}\rightarrow
	\mathbf{M}_{\left\Vert \mathbf{x}\right\Vert }^{\left\Vert \mathbf{y}%
		\right\Vert }$ given by $\gamma\left(  \alpha_{p}^{y_{1}}\cdots\alpha
	_{p}^{y_{q}}\otimes\alpha_{x_{1}}^{q}\cdots\alpha_{x_{p}}^{q}\right)
	=\frac{\alpha_{p}^{y_{1}}\cdots\alpha_{p}^{y_{q}}}{\alpha_{x_{1}}^{q}
		\cdots\alpha_{x_{p}}^{q}}$ extends to an associative product $\Upsilon$ on
	$\mathbf{M,}$ $\eta$ is a $\gamma$-unit, and $\left(  \mathcal{E}nd_{TH},\gamma
	,\eta\right)  $ is a prematrad.
\end{example}

If $A$ is a matrix in $\overline{\mathbf{M}},$ the transpose $A\mapsto A^{T}$
induces an automorphism $\sigma=\sigma_{\ast,\ast}$ of $\overline{\mathbf{M}}$
whose component $\sigma_{p,q}:\left(  M^{\otimes p}\right)  ^{\otimes
	q}\overset{\approx}{\rightarrow}\left(  M^{\otimes q}\right)  ^{\otimes p}$ is
the linear extension of the canonical permutation of tensor factors in
$\left(  G^{\otimes p}\right)  ^{\otimes q},$\ i.e.,
\[%
\begin{array}
	[c]{c}%
	\sigma_{p,q}:\left(  \alpha_{x_{1,1}}^{y_{1,1}}\otimes\cdots\otimes
	\alpha_{x_{1,p}}^{y_{1,p}}\right)  \otimes\cdots\otimes\left(  \alpha
	_{x_{q,1}}^{y_{q,1}}\otimes\cdots\otimes\alpha_{x_{q,p}}^{y_{q,p}}\right)
	\mapsto\hspace{1in}\medskip\\
	\hspace{1in}\left(  \alpha_{x_{1,1}}^{y_{1,1}}\otimes\cdots\otimes
	\alpha_{x_{q,1}}^{y_{q,1}}\right)  \otimes\cdots\otimes\left(  \alpha
	_{x_{1,p}}^{y_{1,p}}\otimes\cdots\otimes\alpha_{x_{q,p}}^{y_{q,p}}\right)
	.\smallskip
\end{array}
\]
The identification $\left(  G^{\otimes p}\right)  ^{\otimes q}\leftrightarrow
\left(  p,q\right)  $ expresses a $q\times p$ bisequence matrix $A\in
\mathbf{M}_{\mathbf{x}}^{\mathbf{y}}$ as an operator $A:\left(  G^{\otimes
	\left\Vert \mathbf{x}\right\Vert }\right)  ^{\otimes q}\rightarrow\left(
G^{\otimes p}\right)  ^{\otimes\left\Vert \mathbf{y}\right\Vert }$ on
$\mathbb{N}^{2}$ whose action is given by the composition
\[
\hspace{-.7in}
\left(  G^{\otimes\left\Vert \mathbf{x}\right\Vert }\right)  ^{\otimes
	q}\approx\left(  G^{\otimes x_{1}}\otimes\cdots\otimes G^{\otimes x_{p}%
}\right)  ^{\otimes q}\rightarrow\left(  G^{\otimes y_{1}}\right)  ^{\otimes
	p}\otimes\cdots\otimes\left(  G^{\otimes y_{q}}\right)  ^{\otimes p}
\]
\[
\hspace{1.8in}\overset{\sigma_{y_{1},p}\otimes\cdots\otimes\sigma_{y_{q},p}}{\longrightarrow
}\left(  G^{\otimes p}\right)  ^{\otimes y_{1}}\otimes\cdots\otimes\left(
G^{\otimes p}\right)  ^{\otimes y_{q}}\approx\left(  G^{\otimes p}\right)
^{\otimes\left\Vert \mathbf{y}\right\Vert }.
\]
Thus $A$ can be thought of as an arrow $\left(  \left\Vert \mathbf{x}\right\Vert
,q\right)  \mapsto\left(  p,\left\Vert \mathbf{y}\right\Vert \right)  $ in
$\mathbb{N}^{2}$ (see Figure 5).
\vspace*{.6in}

$\hspace*{.5in}
A=\left[
\begin{array}
	[c]{ccc}%
	\alpha_{2}^{2} &  & \alpha_{3}^{2}\\
	&  & \\
	\alpha_{2}^{4} &  & \alpha_{3}^{4}%
\end{array}
\right]  \in\mathbf{M}_{_{23}}^{\overset{\overset{2}{4}}{}}\ \mapsto$
\setlength{\unitlength}{0.001in}
\begin{picture}
	(152,-140)(202,-100) \thicklines
	\put(600,-650){\line(1,0){1100}}
	\put(600,-650){\line(0,1){1300}}
	\put(820,-683){\line(0,1){75}}
	\put(1360,-683){\line(0,1){75}}
	\put(563,-430){\line(1,0){75}}
	\put(563,350){\line(1,0){75}}
	\put(820,350){\makebox(0,0){$\bullet$}}
	\put(1360,-430){\makebox(0,0){$\bullet$}}
	\put(1290,-330){\vector(-2,3){400}}
	\put(1300,0){\makebox(0,0){$A$}}
	\put(450,-430){\makebox(0,0){$2$}}
	\put(450,350){\makebox(0,0){$6$}}
	\put(820,-800){\makebox(0,0){$2$}}
	\put(1360,-800){\makebox(0,0){$5$}}
\end{picture}
\vspace*{0.6in}

\begin{center}
	Figure 5. A $2\times2$ bisequence matrix $A$ thought of as an arrow in $\mathbb{N}$.\bigskip
\end{center}

A TP $\alpha\times\beta\in\mathbf{M}_{p}^{\mathbf{y}}\times\mathbf{M}%
_{\mathbf{x}}^{q}$ can be represented by a 2-step path $\left(  \left\Vert
\mathbf{x}\right\Vert ,1\right)  \overset{\beta}{\mapsto}\left(  p,q\right)
\overset{\alpha}{\mapsto}\left(  1,\left\Vert \mathbf{y}\right\Vert \right)
$. If $p=1,$ $\alpha$ is represented by a vertical arrow $\left(
1,q\right)  \mapsto\left(  1,\left\Vert \mathbf{x}\right\Vert \right)  .$ If
$q=1,$ $\beta$ is represented by a horizontal arrow $\left(  \left\Vert
\mathbf{x}\right\Vert ,1\right)  \mapsto\left(  p,1\right)  .$ If
$\mathbf{x=1,}$ $\beta$ is represented by a vertical arrow $\left(
\left\Vert \mathbf{x}\right\Vert ,1\right)  \mapsto\left(  \left\Vert
\mathbf{x}\right\Vert ,q\right)  .$ If $\mathbf{y=1,}$ $\alpha$ is represented by a horizontal arrow $\left(  p,\left\Vert \mathbf{y}\right\Vert \right)
\mapsto\left(  1,\left\Vert \mathbf{y}\right\Vert \right)  .$ If
$\mathbf{x\neq1}$ and $q\neq1\mathbf{,}$ $\beta$ is represented by a
left-leaning\emph{ }arrow $\left(  \left\Vert \mathbf{x}\right\Vert ,1\right)
\mapsto\left(  p,q\right)  .$ And if $\mathbf{y}\neq\mathbf{1}$ and
$p\neq1\mathbf{,}$ $\alpha$ is represented by a left-leaning\emph{
}arrow\emph{ }$\left(  p,q\right)  \mapsto\left(  1,\left\Vert \mathbf{y}%
\right\Vert \right)  $.
A non-zero monomial $A=A_{1}\cdots A_{s}$ with $A_{i}\in\mathbf{M}$ is represented by
a directed $k$-step piece-wise linear path from the $x$-axis to the
$y$-axis in $\mathbb{N}^{2},$ where $s-k$ is the number of evaluated
subproducts in an association. Of course, each such path can represent multiple monomials.

\begin{example}
	The unevaluated product
	\begin{equation}
		ABC=\left[
		\begin{array}
			[c]{r}%
			\alpha_{2}^{2}\smallskip\\
			\alpha_{2}^{2}\smallskip\\
			\alpha_{2}^{2}%
		\end{array}
		\right]  \left[
		\begin{array}
			[c]{cc}%
			\beta_{2}^{2}\smallskip & \beta_{1}^{2}\smallskip\\
			\beta_{2}^{1} & \beta_{1}^{1}%
		\end{array}
		\right]  \left[
		\begin{array}
			[c]{rrr}%
			\gamma_{2}^{2} & \gamma_{2}^{2} & \gamma_{2}^{2}%
		\end{array}
		\right]  \in\mathbf{M}_{_{2}}^{\overset{\overset{\overset{2}{2}}{2}}{}%
		}\mathbf{M}_{_{21}}^{\overset{\overset{2}{1}}{}}\mathbf{M}_{_{222}%
		}^{\overset{2}{}} \label{assoc}%
	\end{equation}
	expressed as the composition
	\[
	G^{\otimes6}\overset{C}{\overrightarrow{\longrightarrow\overset{}{\left(
				G^{\otimes2}\right)  ^{\otimes3}}\overset{\text{ \ }\sigma_{2,3}
			}{\longrightarrow}}}\left(  G^{\otimes3}\right)  ^{\otimes2}
	\overset{B}{\overrightarrow{\longrightarrow\overset{}{\left(  G^{\otimes
					2}\right)  ^{\otimes3}}\overset{\text{ \ }\sigma_{2,2}\otimes\mathbf{1}%
			}{\longrightarrow}}}\left(  G^{\otimes2}\right)  ^{\otimes3}
	\overset{A}{\longrightarrow}G^{\otimes6}
	\]
	is represented by the 3-step path in Figure 6. If we evaluate $AB$ or $BC$,
	the associations
	\[
	\left(  AB\right)  C=\left[
	\begin{array}
		[c]{r}%
		\left[
		\begin{array}
			[c]{c}%
			\alpha_{2}^{2}\smallskip\\
			\alpha_{2}^{2}%
		\end{array}
		\right]  \left[
		\begin{array}
			[c]{rr}%
			\beta_{2}^{2} & \beta_{1}^{2}%
		\end{array}
		\right] \\
		\\
		\left[
		\begin{array}
			[c]{c}%
			\alpha_{2}^{2}%
		\end{array}
		\right]  \left[
		\begin{array}
			[c]{cc}%
			\beta_{2}^{1} & \beta_{1}^{1}%
		\end{array}
		\right]
	\end{array}
	\right]  \left[
	\begin{array}
		[c]{rrr}%
		\gamma_{2}^{2} & \gamma_{2}^{2} & \gamma_{2}^{2}%
	\end{array}
	\right]  \in\mathbf{M}_{_{3}}^{\overset{\overset{4}{2}}{}}\mathbf{M}_{_{222}%
	}^{\overset{2}{}}%
	\]
	and
	\[
	A\left(  BC\right)  =\left[
	\begin{array}
		[c]{r}%
		\alpha_{2}^{2}\smallskip\\
		\alpha_{2}^{2}\smallskip\\
		\alpha_{2}^{2}%
	\end{array}
	\right]  \left[
	\begin{array}
		[c]{ccc}%
		\left[
		\begin{array}
			[c]{c}%
			\beta_{2}^{2}\smallskip\\
			\beta_{2}^{1}%
		\end{array}
		\right]  \left[
		\begin{array}
			[c]{rr}%
			\gamma_{2}^{2} & \gamma_{2}^{2}%
		\end{array}
		\right]  &  & \left[
		\begin{array}
			[c]{c}%
			\beta_{1}^{2}\smallskip\\
			\beta_{1}^{1}%
		\end{array}
		\right]  \left[
		\begin{array}
			[c]{c}%
			\gamma_{2}^{2}%
		\end{array}
		\right]
	\end{array}
	\right]  \in\mathbf{M}_{_{2}}^{\overset{\overset{\overset{2}{2}}{2}}{}%
	}\mathbf{M}_{_{42}}^{\overset{3}{}}%
	\]
	expressed as the compositions
	\[
	G^{\otimes6}\overset{C}{\overrightarrow{\longrightarrow\overset{}{\left(
				G^{\otimes2}\right)  ^{\otimes3}}\overset{\text{ \ }\sigma_{2,3}%
			}{\longrightarrow}}}\left(  G^{\otimes3}\right)  ^{\otimes2}%
	\overset{AB}{\longrightarrow}G^{\otimes6},
	\]
	and
	\[
	G^{\otimes6}\overset{BC}{\overrightarrow{\overset{}{\rightarrow\left(
				G^{\otimes3}\right)  ^{\otimes2}\overset{\text{ \ }\sigma_{3,2}%
				}{\longrightarrow}}}}\left(  G^{\otimes2}\right)  ^{\otimes3}%
	\overset{A}{\longrightarrow}G^{\otimes6}%
	\]
	are represented by the 2-step paths in Figure 6. Finally, the evaluated
	associations $\left(  AB\right)  C=A\left(  BC\right)  \in\mathbf{M}_{_{6}%
	}^{\overset{6}{}}$ are represented by the 1-step path $G^{\otimes6}\rightarrow
	G^{\otimes6}$. \vspace{0.1in}
\end{example}

\unitlength3mm \linethickness{1.0 pt}
\ifx\plotpoint\undefined\newsavebox{\plotpoint}\fi
\begin{picture}(45,19)(0,0)
	\put(9,2){\line(0,1){17}}
	\put(9,2){\line(1,0){21}}
	
	\put(24.241,2.418){\line(0,-1){.42}}
	\put(27.709,2.523){\line(0,-1){.42}}
	\put(9.0,8.199){\line(1,0){.42}}
	\put(9.0,11.247){\line(1,0){.42}}
	\put(9.0,14.4){\line(1,0){.42}}
	\put(16.988,2.0){\line(0,1){.42}}
	\put(13.325,2.155){\line(0,1){.42}}
	\put(20.683, 2.0){\line(0,1){.42}}
	\put(9.0,5.277){\line(1,0){.42}}
	\put(9.0,17.021){\line(1,0){.42}}
	
	\put(13.06,7.9){\vector(-4,3){.03}}
	\multiput(16.893,5.203)(-.04713321,.03353709){82}{\line(-1,0){.04713321}}
	\multiput(20.635,8.93)(-.0333183,-.0358812){29}{\line(0,-1){.0358812}}
	\multiput(19.668,8.94)(.0359639,-.0335663){31}{\line(1,0){.0359639}}
	\multiput(15.85,16.426)(-.0333183,-.0358812){29}{\line(0,-1){.0358812}}
	\multiput(17.561,16.426)(-.0333183,-.0358812){29}{\line(0,-1){.0358812}}
	\multiput(14.9,16.44)(.0359639,-.0335663){31}{\line(1,0){.0359639}}
	\multiput(16.595,16.426)(.0359639,-.0335663){31}{\line(1,0){.0359639}}
	
	\put(19.4,7.581){\line(1,0){3.3}}
	\put(24.028,7.581){\line(1,0){3.047}}
	\multiput(21.278,7.953)(.0371627,.0325174){16}{\line(1,0){.0371627}}
	\multiput(21.865,8.473)(.0330335,-.0330335){16}{\line(1,0){.0330335}}
	\put(21.872,8.473){\line(0,1){.53}}
	
	\multiput(24.325,8.87)(.0371627,-.0330335){18}{\line(1,0){.0371627}}
	\multiput(24.994,8.276)(.0371627,.0334465){20}{\line(1,0){.0371627}}
	\put(25.03,8.25){\line(0,-1){.4}}
	\put(26.4,8.95){\line(0,-1){1.1}}
	
	\multiput(19.717,7.25)(.0323154,-.0323154){23}{\line(0,-1){.0323154}}
	\multiput(16.595,14.716)(.0323154,-.0323154){23}{\line(0,-1){.0323154}}
	\multiput(20.46,6.5)(.0330335,.0454211){17}{\line(0,1){.038}}
	\multiput(17.339,13.973)(.0330335,.0454211){17}{\line(0,1){.03}}
	\put(20.46,6.615){\line(0,-1){.595}}
	\put(17.339,13.973){\line(0,-1){.5}}
	\multiput(21.352,7.3)(.03325086,-.03325086){38}{\line(0,-1){.03325086}}
	\multiput(14.95,14.691)(.03325086,-.03325086){36}{\line(0,-1){.03325086}}
	\multiput(22.541,7.25)(-.03303353,-.03303353){36}{\line(0,-1){.03303353}}
	\multiput(16.075,14.716)(-.03303353,-.03303353){36}{\line(0,-1){.03303353}}
	\multiput(24.9,7.21)(.03539307,-.03362342){42}{\line(1,0){.03539307}}
	\multiput(26.15,7.284)(-.03279064,-.04372085){33}{\line(0,-1){.04372085}}
	
	\multiput(15.0,5.9)(-.0330335,-.0371627){15}{\line(0,-1){.05}}
	\multiput(14.5,5.3)(-.0330335,.0454211){14}{\line(0,1){.04}}
	\put(14.5,5.45){\line(0,-1){.6}}
	

	\multiput(13.55,5.946)(-.0330335,-.0412919){25}{\line(0,-1){.0412919}}
	\multiput(12.75,5.95)(.0335663,-.0359639){28}{\line(0,-1){.0359639}}

	\put(13.15,4.25){\line(0,-1){.5}}
	
	\multiput(13.15,3.73)(-.0330335,-.0330335){14}{\line(0,-1){.0330335}}
	\multiput(13.15,3.716)(.0318538,-.0318538){14}{\line(1,0){.0318538}}
	
	\put(14.5,4.385){\line(0,-1){1.115}}
	
	\multiput(21.0,3.38)(.0416222,-.0327032){23}{\line(1,0){.0416222}}
	\multiput(21.8,3.4)(-.0327032,-.0356762){23}{\line(0,-1){.0356762}}
	\multiput(19.9,3.38)(.0416222,-.0327032){23}{\line(1,0){.0416222}}	
	\multiput(20.7,3.4)(-.0327032,-.0356762){23}{\line(0,-1){.0356762}}
	\multiput(18.8,3.38)(.0416222,-.0327032){23}{\line(1,0){.0416222}}
	\multiput(19.6,3.4)(-.0327032,-.0356762){23}{\line(0,-1){.0356762}}
	
	\multiput(9.857,11.92)(.0416222,-.0327032){23}{\line(1,0){.0416222}}
	\multiput(10.7,11.92)(-.0327032,-.0356762){23}{\line(0,-1){.0356762}}
	\multiput(9.857,10.743)(.0416222,-.0327032){23}{\line(1,0){.0416222}}
	\multiput(10.7,10.743)(-.0327032,-.0356762){23}{\line(0,-1){.0356762}}
	\multiput(9.91,9.566)(.0416222,-.0327032){23}{\line(1,0){.0416222}}
	\multiput(10.753,9.566)(-.0356762,-.0327032){23}{\line(-1,0){.0356762}}
	
	\multiput(17.2,12.561)(-.0431567,-.0335663){31}{\line(-1,0){.0431567}}
	\multiput(15.936,12.71)(.03303353,-.03303353){36}{\line(0,-1){.03303353}}
	\put(15.234,11.247){\line(1,0){2.453}}
	\multiput(15.301,9.514)(.0334465,.040879){20}{\line(0,1){.040879}}
	\multiput(15.97,10.331)(.0330335,-.0454211){18}{\line(0,-1){.0454211}}
	\put(15.949,10.333){\line(0,1){.5}}	
	\put(17.4,10.8){\line(0,-1){1.3}}
	
	\put(14.811,15.088){\line(1,0){3.1}}
	\put(9.5,0.5){\makebox(0,0)[cc]{1}}
	\put(13.325,0.5){\makebox(0,0)[cc]{2}}
	\put(16.967,0.5){\makebox(0,0)[cc]{3}}
	\put(20.609,0.5){\makebox(0,0)[cc]{4}}
	\put(24.251,.5){\makebox(0,0)[cc]{5}}
	\put(27.967,0.5){\makebox(0,0)[cc]{6}}
	\put(17.041,5.128){\vector(-4,1){.035}}
	\multiput(27.744,2.155)(-.1202569,.03340469){89}{\line(-1,0){.1202569}}
	\put(9.237,16.946){\vector(-1,2){.035}}
	\multiput(13.102,7.953)(-.033608027,.078203294){115}{\line(0,1){.078203294}}
	\multiput(14.15,7.408)(.082349,-.032095){4}{\line(1,0){.082349}}
	\multiput(14.992,7.099)(.082349,-.032095){11}{\line(1,0){.082349}}
	\multiput(16.804,6.396)(.082349,-.032095){11}{\line(1,0){.082349}}
	\multiput(18.616,5.69)(.082349,-.032095){11}{\line(1,0){.082349}}
	\multiput(20.428,4.987)(.082349,-.032095){11}{\line(1,0){.082349}}
	\multiput(22.24,4.284)(.082349,-.032095){11}{\line(1,0){.082349}}
	\multiput(24.052,3.581)(.082349,-.032095){11}{\line(1,0){.082349}}
	\multiput(25.864,2.878)(.082349,-.032095){11}{\line(1,0){.082349}}
	
	\multiput(17.12,5.132)(-.0331368,.0492406){16}{\line(0,1){.0492406}}
	\multiput(16.059,6.708)(-.0331368,.0492406){16}{\line(0,1){.0492406}}
	\multiput(14.998,8.284)(-.0331368,.0492406){16}{\line(0,1){.0492406}}
	\multiput(13.937,9.86)(-.0331368,.0492406){16}{\line(0,1){.0492406}}
	\multiput(12.876,11.436)(-.0331368,.0492406){16}{\line(0,1){.0492406}}
	\multiput(11.815,13.012)(-.0331368,.0492406){16}{\line(0,1){.0492406}}
	\multiput(10.754,14.588)(-.0331368,.0492406){16}{\line(0,1){.0492406}}
	
	\put(7.3,2){\makebox(0,0)[cc]{1}}
	\put(7.3,5){\makebox(0,0)[cc]{2}}
	\put(7.3,8){\makebox(0,0)[cc]{3}}
	\put(7.3,11){\makebox(0,0)[cc]{4}}
	\put(7.3,14){\makebox(0,0)[cc]{5}}
	\put(7.3,17){\makebox(0,0)[cc]{6}}
\end{picture}

\begin{center}
	Figure 6. The polygonal paths of $ABC.$
\end{center}

\subsection{Free Prematrads}

In this subsection we review the fundamental notion of a \emph{free prematrad}, but the
development here differs somewhat from that in \cite{SU4}. To begin, we
introduce an auxiliary object, the free \emph{non-unital prematrad}.

The notion of a bisequence matrix and its related constructions extend to
matrices whose entries are general bigraded objects; we refer to such matrices
as \emph{Generalized BiSequence Matrices (GBSMs).} When $A$ is a GBSM and
\[(indeg(A)),outdeg(A))
=(m,n),\]
we refer to $A$ as an $(m,n)$\emph{-matrix}.

Let $\xi=A_{1}\cdots A_{r}$ be a monomial of GBSMs. The \emph{bidegree of
}$\xi$ is defined and denoted by $bideg(\xi):=(outdeg(A_{1}),indeg(A_{r})).$
When $bideg(\xi)=(m,n)$, we refer to $\xi$ as an $(m,n)$-\emph{monomial}.

\begin{definition}
	\label{free-monomials} Given a bigraded set $Q=\left\{  Q_{n,m}\right\}
	_{mn\geq 1}$ with based element $\mathbf{1}\in Q_{1,1}$, construct the
	set $\tilde{G}\left(  Q\right)  :=\{\tilde{G}_{n,m}\left(  Q\right)  \}$
	of\textbf{ free bigraded monomials generated by }$Q$ inductively as follows:
	Define $\tilde{G}_{1,1}\left(  Q\right)  :=Q_{1,1}.$ If $m+n\geq3,$ and
	$\tilde{G} _{j,i}\left(  Q\right)  $ has been constructed for all $\left(
	i,j\right)  \leq\left(  m,n\right)  $ such that $i+j<m+n,$ define
	\begin{equation}
		\tilde{G}_{n,m}\left(  Q\right)  :=Q_{n,m}\cup\left\{  A_{1}\cdots A_{s}
		:s\geq2\right\}  , \label{entries}%
	\end{equation}
	where for all $k$,
	
	\begin{enumerate}
		\item[\textit{(i)}] $A_{k}$ is a GBSM over $\{\tilde{G}_{j,i}\left(  Q\right)
		:\left(  i,j\right)  \leq\left(  m,n\right)  \text{and } i+j<m+n\}$,
		
		\item[\textit{(ii)}] $A_{1}$ is a column matrix with {$outdeg$}$(A_{1})=n,$
		
		\item[\textit{(iii)}] $A_{s}$ is a row matrix with {$indeg$}$(A_{s})=m,$
		
		\item[\textit{(iv)}] if $B=\left(  \xi_{uv}\right)  $ is a $q\times p$ GBSM
		within $A_{k}$ and $\xi_{uv}=B_{1}\cdots B_{\ell-1}\cdot\mathbf{1}\cdot
		B_{\ell+1}\cdots B_{r}\mathbf{\ }$for some $\ell$, define $\xi_{uv}=B_{1}\cdots
		B_{\ell-1}B_{\ell+1}\cdots B_{r}$ when either $pq=1$ and $B=A_{k}$ or $pq>1$ and
		$B_{u\ast}$ and $B_{\ast v}$ are indecomposable,
		
		\item[\textit{(v)}] $A_{k}\times A_{k+1}$ is a BTP,
		
		\item[\textit{(vi)}] $A_{k}A_{k+1}$ is the formal product, and
		
		\item[\textit{(vii)}] $(A_{k}A_{k+1})A_{k+2}=A_{k}(A_{k+1}A_{k+2}).$\medskip
	\end{enumerate}
	A $Q$-\textbf{monomial} is an elementary monomial in $\tilde{G}\left(  Q\right) $.
\end{definition}

\begin{remark}
	Monomials in $\tilde{G}\left(  Q\right)  $ are generated by the component sets
	$Q_{n,m}$, and not by particular elements thereof. The double corolla
	$\Gamma_{m}^{n}$ represents the set $Q_{n,m}$.
\end{remark}

In view of item (iv), we henceforth assume that an $(m,n)$-monomial $\xi
=A_{1}\cdots A_{r}$ is a formal product of non-unital indecomposables unless
explicitly indicated otherwise.

Let $\Theta=\left\{  \Theta_{n,m}\right\}  _{mn\geq 1}$, where
$\Theta_{n,m}=\{\theta_{m}^{n}\}$ is a singleton set when $\Theta_{n,m}%
\neq\varnothing$. When this occurs, we identify the set $\Theta_{n,m}$ and its
element $\theta_{m}^{n}$.

\begin{example}
	The monomial
	\[
	\left[
	\begin{array}
		[c]{c}%
		\theta_{2}^{1}\smallskip\\
		\theta_{2}^{1}%
	\end{array}
	\right]  \left[
	\begin{array}
		[c]{cc}%
		\theta_{2}^{1} & \mathbf{1}\smallskip\\
		\theta_{2}^{1} & \mathbf{1}%
	\end{array}
	\right]  \left[
	\begin{array}
		[c]{cc}%
		\theta_{1}^{2} & \theta_{1}^{2}%
	\end{array}
	\right]  \in\tilde{G}(\Theta)
	\]
	is a $\Theta$-monomial, but
	\[
	\text{ }\left[\!
	\begin{array}
		[c]{c}%
		\left[  \theta_{2}^{1}\right] \!\! \left[\!
		\begin{array}
			[c]{cc}
			\theta_{2}^{1} & \mathbf{1}
		\end{array}
		\!\right]\medskip\\
		
		\left[  \theta_{2}^{1}\right]\!\!  \left[\!
		\begin{array}
			[c]{cc}
			\mathbf{1} & \theta_{2}^{1}
		\end{array}
		\!\right]
	\end{array}
	\!\right]  \left[
	\begin{array}
		[c]{cc}%
		\theta_{1}^{2} & \theta_{1}^{2}%
	\end{array}
	\right]  \in\tilde{G}(\Theta)
	\]
	is not.
\end{example}

\begin{proposition}
	\label{associativity}Let $M=\langle\overset{\sim}{{G}}\left(  \Theta\right)
	\rangle$ and let $\tilde{\gamma}=\{\tilde{\gamma}_{\mathbf{x}}^{\mathbf{y}
	}:\mathbf{M}_{p}^{\mathbf{y}}\otimes\mathbf{M}_{\mathbf{x}}^{q}\rightarrow
	\mathbf{M}_{\left\Vert \mathbf{x}\right\Vert }^{\left\Vert \mathbf{y}
		\right\Vert }\}$ be the linear extension of juxtaposition. Then the induced
	global product $\widetilde{\Upsilon}$ acts associatively on $\mathbf{M.}$
\end{proposition}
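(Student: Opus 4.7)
The plan is to prove $(A\cdot B)\cdot C = A\cdot(B\cdot C)$ for arbitrary $A,B,C\in\mathbf{M}$, where ``$\cdot$'' denotes $\widetilde{\Upsilon}$. I would split the argument into two parts: (i) both triple products vanish simultaneously, and (ii) when non-zero, they agree term-by-term.

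For (i), recall from (\ref{btp}) that a pair $A^{q\times s}\times B^{t\times p}$ is a BTP precisely when $\mathit{indeg}(A)=p$ and $\mathit{outdeg}(B)=q$, and that the associated block decomposition $A=[A_{ij}]$, $B=[B_{ij}]$ is unique when it exists. First I would show that $(A\cdot B)\cdot C\neq 0$ forces $(A,B)$ and $(AB,C)$ to be BTPs, and then verify that the induced matrix dimensions and leaf sequences of $AB$ force both $(B,C)$ and $(A,BC)$ to be BTPs. Symmetrically for the other association. This reduces (i) to a numerical check on input/output leaf sequences.

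For (ii), the key step is a \emph{refinement lemma}: the BTP block decomposition of $(A,B)$ combined with that of $(AB,C)$ refines to a common three-fold block decomposition in which each block consists of a triple $(A_{ijk}, B_{ijk}, C_{ijk})$ such that both $(A_{ijk},B_{ijk})$ and $(B_{ijk}, C_{ijk})$ are TPs. I would prove this by tracking how the column-block structure of $A$, row-block structure of $B$, and the induced column-block structure of $AB$ propagate, exploiting the uniqueness of the BTP decomposition at each stage. The same refinement is then derived on the other side from the BTPs $(B,C)$ and $(A,BC)$. Because both refinements satisfy the same numerical matching conditions and are unique, they coincide.

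Once the refinement is in place, the equality $(A\cdot B)\cdot C = A\cdot(B\cdot C)$ reduces at each block to an entry-level equality $(\alpha\beta)\gamma = \alpha(\beta\gamma)$ among $\Theta$-monomials in $\tilde{G}(\Theta)$, where $\tilde\gamma$ is simply juxtaposition. But this is exactly property \textit{(vii)} of Definition~\ref{free-monomials}, which is built into the construction of $\tilde{G}(\Theta)$. The main obstacle is thus the refinement lemma: carefully bookkeeping how the row/column block structures and the degree conditions in (\ref{btp}) interact to give a unique common three-fold decomposition, so that the entry-level associativity in $\tilde{G}(\Theta)$ can be applied uniformly. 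All other steps are straightforward once this combinatorial compatibility is secured.
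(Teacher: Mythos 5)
Your proposal follows essentially the same route as the paper: the paper's proof likewise forms the common refinement of the two block decompositions of $B$ (one induced by the BTP $(A,B)$, the other by $(B,C)$), assembles the compatible triple blocks $A_{i^{\prime},j}B_{i^{\prime},j^{\prime}}C_{i,j^{\prime}}$, checks these are BTPs, and reduces to axiom \textit{(vii)} of Definition~\ref{free-monomials}. Your part (i) on simultaneous vanishing is a small completeness point the paper leaves implicit, but the substance of the argument is identical.
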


\begin{proof}
	Let $A\times B$ and $B\times C$ be BTPs of bisequence matrices over $\tilde
	{G}(\Theta)$ and consider the associated block decompositions $\left[
	A_{k^{\prime},l}\right]  \times\left[  B_{i,j^{\prime}}\right]  $ and $\left[
	B_{i^{\prime},j}\right]  \times\left[  C_{u,v^{\prime} }\right]  .$ Let
	$B=[\xi_{i,j}]^{q\times p},$ let $B_{i,j^{\prime} }=[\xi_{i,m_{j^{\prime
			}-1}+1}\cdots\xi_{i,m_{j^{\prime}}}], $ and let $B_{i^{\prime},j}%
	=[\xi_{n_{i^{\prime}-1}+1,j}\cdots\xi_{n_{i^{\prime} },j}]^{T},$ where $1\leq
	j^{\prime}\leq s$ indexes the blocks in the partition
	\[
	m_{0}+1<\cdots<m_{1}|\cdots|m_{j^{\prime}-1}+1<\cdots<m_{j^{\prime}}
	|\cdots|m_{s-1}+1<\cdots<m_{s}\in P(\mathfrak{p})
	\]
	and $1\leq i^{\prime}\leq t$ indexes the
	blocks in the partition
	\[
	n_{0}+1<\cdots<n_{1}|\cdots|n_{i^{\prime}-1}+1<\cdots<n_{i^{\prime}}
	|\cdots|n_{t-1}+1<\cdots<n_{t}\in P(\mathfrak{q}).
	\]
	Let $u_{j^{\prime}}=m_{j^{\prime}
	}-m_{j^{\prime}-1}$ and $v_{i^{\prime}}=n_{i^{\prime}}-n_{i^{\prime}-1}, $ and
	consider the block decomposition $\left[  B_{i^{\prime},j^{\prime} }\right]  $
	of $B\ $given by
	\[
	B_{i^{\prime},j^{\prime}}=[\xi_{i,j}]_{n_{i^{\prime}-1}\leq i\leq
		n_{i^{\prime}};\text{ }m_{j^{\prime}-1}\leq j\leq m_{j^{\prime}}
	}^{v_{i^{\prime}}\times u_{j^{\prime}}}.
	\]
	Now consider the juxtaposition of bisequence matrices
	\[	A_{i^{\prime},j}B_{i^{\prime},j^{\prime}}C_{i,j^{\prime}}:=\left[
	\begin{array}
		[c]{c}%
		A_{n_{i^{\prime}-1}+1,j}\\
		\vdots\\
		A_{n_{i^{\prime}},j}%
	\end{array}
	\right]  ^{v_{i}^{\prime}\times1}\cdot B_{i^{\prime}j^{\prime}}^{v_{i^{\prime
		}}\times u_{j^{\prime}}}\cdot\lbrack C_{i,m_{j^{\prime}-1}+1}\text{ }
	\cdots\text{ }C_{i,m_{j^{\prime}}}]^{1\times u_{j^{\prime}}}.
	\]
	It is easy to check that $A_{i^{\prime},j}\times B_{i^{\prime},j^{\prime}} $
	and $B_{i^{\prime},j^{\prime}}\times C_{i,j^{\prime}}$ are BTPs so that
	$A_{i^{\prime},j}B_{i^{\prime},j^{\prime}}C_{i,j^{\prime}}\in\tilde{G}
	(\Theta)$ and axiom (vii) applies. Therefore $(A_{i^{\prime},j}B_{i^{\prime
		},j^{\prime}})C_{i,j^{\prime}}=A_{i^{\prime},j}(B_{i^{\prime},j^{\prime}
	}C_{i,j^{\prime}}),\ $and it follows that $(AB)C=A(BC)$ as claimed.
\end{proof}

\begin{definition}
	\label{tildefreeprematrad}Let $\tilde{F}^{{pre}}(\Theta)=\langle
	\overset{\sim}{{G}}\left(  \Theta\right)  \rangle$ and let $\tilde{\gamma}$ be
	the product in Proposition \ref{associativity}. Then $(\tilde{F}^{{pre}
	}(\Theta),\tilde{\gamma})$ is the \textbf{free non-unital prematrad generated
		by} $\Theta.$
	
	\label{freeprematrad}Define ${G}_{n,m}(\Theta):=\overset{\sim}{G}_{n,m}
	(\Theta)/\left(  A\sim\mathbf{1}A\sim A\mathbf{1}\right)  $ and let $F^{{pre}
	}\left(  \Theta\right)  =\left\langle G\left(  \Theta\right)  \right\rangle .$
	Let $\gamma$ be the product induced by $\tilde{\gamma},$ and let
	$\eta:R\rightarrow F_{1,1}^{{pre}}\left(  \Theta\right)  $ be the unit given
	by $\eta\left(  1_{R}\right)  =\mathbf{1}$. Then $(F^{{pre}}(\Theta
	),\gamma,\eta)$\ is the \textbf{free prematrad generated by} $\Theta$.
\end{definition}

\noindent There is the obvious projection
\begin{equation}
	\varsigma\varsigma_{n,m}^{{pre}}:\tilde{F}_{n,m}^{{pre}}(\Theta)\rightarrow
	{F}_{n,m}^{{pre}}(\Theta). \label{2varsigma}%
\end{equation}
In the operadic cases, we denote $\varsigma\varsigma_{1,n}^{{pre}}$ and
$\varsigma\varsigma_{n,1}^{{pre}}\ $by $\varsigma_{n};$ which we mean will be
clear from context.

\begin{example}\label{bialg}
	\underline{\textbf{The Bialgebra Prematrad}.} Let $\Theta=\left\{  \theta
	_{1}^{1}=\mathbf{1},\text{ }\theta_{2}^{1},\text{ }\theta_{1}^{2}\right\}  $
	and consider $\mathcal{H}^{{pre}}=F^{{pre}}\left(  \Theta\right)  /\sim,$
	where $A\sim B$ if ${bideg}\left(  A\right)  ={bideg}\left(  B\right)  .$ Let
	$\bar{\gamma}$ be the product on $\mathcal{H}^{pre}$ induced by the projection $F^{{pre}}\left(
	\Theta\right)  \rightarrow\mathcal{H}^{{pre}}\mathbf{.}$ The prematrad
	$\left(  \mathcal{H}^{{pre}},\bar{\gamma},\eta\right)  $ is the
	\textbf{bialgebra prematrad} \textbf{generated by} $\Theta$ if\smallskip
	
	\begin{enumerate}
		\item[\textit{(i)}] $\theta_{2}^{1}$ is associative: $\left[  \theta_{2}
		^{1}\right]  \left[
		\begin{array}
			[c]{cc}%
			\theta_{2}^{1} & \mathbf{1}%
		\end{array}
		\right]  =\left[  \theta_{2}^{1}\right]  \left[
		\begin{array}
			[c]{cc}%
			\mathbf{1} & \theta_{2}^{1}%
		\end{array}
		\right]  $,\smallskip
		
		\item[\textit{(ii)}] $\theta_{1}^{2}$ is coassociative: $\left[
		\begin{array}
			[c]{c}%
			\theta_{1}^{2}\\
			\mathbf{1}%
		\end{array}
		\right]  \left[  \theta_{1}^{2}\right]  =\left[
		\begin{array}
			[c]{c}%
			\mathbf{1}\\
			\theta_{1}^{2}%
		\end{array}
		\right]  \left[  \theta_{1}^{2}\right]  $, and\smallskip
		
		\item[\textit{(iii)}] $\theta_{1}^{2}$ and $\theta_{2}^{1}$ are Hopf
		compatible: $\left[  \theta_{1}^{2}\right]  \left[  \theta_{2}^{1}\right]
		=\left[
		\begin{array}
			[c]{c}%
			\theta_{2}^{1}\\
			\theta_{2}^{1}%
		\end{array}
		\right]  \left[
		\begin{array}
			[c]{cc}%
			\theta_{1}^{2} & \theta_{1}^{2}%
		\end{array}
		\right]  $.\smallskip
	\end{enumerate}
	
	\noindent The component ${\mathcal{H}}_{n,m}^{{pre}}$ of $\mathcal{H}^{{pre}}
	$ is generated by a class $c_{n,m},$ where
	\begin{multline*}
		c_{1,1}=\left\{  \mathbf{1}
		\right\},\,
		c_{1,2}=\left\{  \theta_{2}^{1}\right\}  ,\,
		c_{2,1}=\left\{  \theta_{1}^{2}\right\}  ,\,
		c_{1,3}=\left\{ \gamma
		(\theta_{2}^{1};\theta_{2}^{1},\mathbf{1}),\gamma(\theta_{2}^{1}
		;\mathbf{1,}\theta_{2}^{1})\right\}  ,\\
		c_{3,1}=\{ \gamma(\theta_{1}
		^{2},\mathbf{1};\theta_{1}^{2}),\gamma(\mathbf{1},\theta_{1}^{2}
		;\theta_{1}^{2})\} ,\,
		c_{2,2}=
		\left\{ \gamma(\theta_{1}^{2} ;\theta_{2}
		^{1}),\gamma(\theta_{2}^{1},
		\theta_{2}^{1};\theta_{1}^{2} ,\theta_{1}^{2})\right\}  ,
	\end{multline*}
	and so on. Note that ${\mathcal{H}}_{1,\ast}^{{\ pre}}$ is a
	non-sigma operad generated by $c_{1,2}$ subject to axiom (\textit{i}) and
	${\mathcal{H}}_{\ast,1}^{{pre}}$ is a non-sigma operad generated by $c_{2,1}$
	subject to axiom (\textit{ii}); both are isomorphic to the associativity
	operad $\underline{{\mathcal{A}}ss}$ \cite{MSS}. Given a graded $R$-module
	$H,$ a map of prematrads ${\mathcal{H}}^{{\ {pre}} }\rightarrow \mathcal{E}nd_{TH}$
	defines a bialgebra structure on $H$. In this case, each representative of
	$c_{n,m}$ determines a directed piece-wise linear path from $(m,1)$ to $(1,n)$ in
	$\mathbb{N}^{2}$, and each such path represents the class $c_{n,m} $ (see
	Figure 6).
\end{example}

\subsection{The Dimension of $\tilde{G}_{n,m}(\Theta)$}

Given a bigraded set $Q=\{Q_{m,n}\}_{mn\ge 1}$, let $\tilde{G}(Q)=\{\tilde{G}_{m,n}(Q)\}$ be the set of free bigraded monomials generated by $Q$. There is a canonical map
\begin{equation}\label{digamma}
	\digamma:\tilde{G}(Q_{m+1,n+1})\rightarrow \mathfrak{m}\circledast\mathfrak{n},
\end{equation}
which is a bijection if $Q_{m,n}$ is a singleton for all $m$ and $n$.
The definition of $\digamma$ requires some preliminaries.

\begin{definition}
	Let $B=\left(  b_{ij}\right)  $ be a $q\times p$ GBSM. The \emph{input and
		output leaf sequences of }$B$ are defined and denoted by $ils\left(  B\right)
	:=\left(  indeg\left(  b_{11}\right)  ,\ldots,indeg\left(  b_{1p}\right)
	\right)$ and $ols\left(  B\right)  :=\left(  outdeg\left(  b_{11}\right)
	,\ldots,outdeg\left(  b_{q1}\right)  \right) . $	
	Let $\xi=B_{1}\cdots B_{k}$ be a monomial in $\tilde{G}_{m,n}(Q).$
	
	\begin{enumerate}
		\item[(\textit{i)}] The \textbf{Input Leaf} \textbf{Decomposition of} $\xi$
		is the sequence
		\[
		ild(\xi):=\left( ils(B_{1}),\ldots,ils(B_{k})\right);
		\]
		when $B_{1}\cdots
		B_{k}$ is indecomposable we write $ILD(\xi)$. Given $ILD(\xi),$ the \textbf{Reduced Input Leaf Decomposition of} $\xi$, denoted by $RILD(\xi),$ is
		obtained from $ILD\left(\xi\right)$ by suppressing all unital components.
		\medskip
		
		\item[\textit{(ii)}] The \textbf{Output Leaf Decomposition of} $\xi$
		is the sequence
		\[
		old(\xi)  :=\left( ols(B_{1}),\ldots,ols(B_{k})\right)  ;
		\]
		when
		$B_{1}\cdots B_{k} $ is indecomposable we write $OLD\left(  \xi\right)  $.
		Given $OLD(\xi),$ the \textbf{Reduced Output Leaf Decomposition of}
		$\xi$, denoted by $ROLD(\xi),$ is obtained from $OLD(\xi)$ by suppressing all unital components.
	\end{enumerate}
\end{definition}

Given an indecomposable monomial $\xi=B_{1}\cdots B_{k}\in \tilde{G}_{m,n}(Q)$ and its i/o leaf decompositions $(\alpha, \beta):=(( \mathbf{m}_{1},\ldots,\mathbf{m}_{k}) ,( \mathbf{n}_{1},\ldots,\mathbf{n}_{k}))$, consider the  corresponding reduced i/o leaf decompositions $(R\alpha, R\beta)$  and
form the bipartition $\bar{R}\beta/\bar{R}\alpha,$
where $\bar{R}\alpha$ and $\bar{R}\beta$ are obtained  from
$\overset{\wedge}{\epsilon}\hspace{.01in}^{-1}(T_{R\alpha})$ and $\overset{\vee}{\epsilon}\hspace{.01in}^{-1}(T^{R\beta})$
by inserting empty blocks
in positions corresponding to the unital components of $\alpha$ and $\beta$.

Now define the map $\digamma$ in (\ref{digamma}) on a generator $Q_{m+1,n+1}\in Q$ by $\digamma(Q_{m+1,n+1}):=
\frac{\mathfrak{n}}{\mathfrak{m}}$ and on a monomial $\xi\in\tilde
{G}_{n+1,m+1}(Q)$ as follows: For each monomial $\zeta$ within $\xi$  (including $\zeta=\xi$), define $\digamma(\zeta):=\bar{R}OLD(\zeta)/\bar{R}ILD(\zeta);$
then $\digamma(\xi)$   is the top  level component of an element in $\mathfrak{m}\circledast\mathfrak{n}.$ Since the top level component of a framed element uniquely determines its canonical representative, we obtain the desired map $\digamma$. Note that  $\xi$ is  a  $\Theta$-monomial if and only if $h(\digamma(\xi))=1.$

\begin{example}
	\label{colrow}Consider the $\Theta$-monomial
	\[
	\xi=B_{1}B_{2}B_{3}B_{4}=\left[
	\begin{array}
		[c]{c}%
		\theta_{2}^{1}\smallskip\\
		\theta_{2}^{2}\smallskip\\
		\theta_{2}^{2}%
	\end{array}
	\right]  \left[
	\begin{array}
		[c]{cc}%
		\theta_{1}^{2} & \theta_{2}^{2}\smallskip\\
		\mathbf{1} & \theta_{2}^{1}%
	\end{array}
	\right]  \left[  \theta_{1}^{2}\text{ }\theta_{2}^{2}\text{ }\theta_{1}
	^{2}\right]  \left[  \theta_{2}^{1}\text{ }\mathbf{1}\text{ }\mathbf{1}\text{
	}\mathbf{1}\right]  \in\tilde{G}_{5,5}\left(  \Theta\right).
	\]
	The reduced leaf decompositions $RILD(\xi)=\left( (2),(12),(121),(2111)\right)$  and $ROLD(\xi)=\left((122),(21)\right.$, $\left. (2)\right)$ are
	the leaf decompositions of $T_{{RILD}\left(  \xi\right)  }$ and
	$T^{{ROLD}\left(  \xi\right)  }$ pictured in Figure 7 (reading from top-down).   But ${ILD}(\xi)=RILD(\xi)$ and $OLD(\xi)$ $=\left(  (122),(21),(2),(1)\right) $
	imply	$\overset{\wedge}{\epsilon}\hspace{.01in}^{-1}(T_{RILD(\xi)})=1|3|4|2$ and $\overset{\vee}{\epsilon} \hspace{.01in}^{-1}(T^{ROLD(\xi)})=24|1|3$. Hence
	\[
	\digamma(\xi)=\frac{\bar{R}OLD(\xi)}{\overset{\ }{\bar{R}ILD(\xi)}}=\frac{24|1|3|0}{1|3|4|2}\in \mathfrak{4}\circledast\mathfrak{4}.
	\]	
	
	\bigskip
	\unitlength 1mm 
	\linethickness{0.8pt}
	\ifx\plotpoint\undefined\newsavebox{\plotpoint}\fi 
	\begin{picture}(77.572,16.187)(0,0)
		\multiput(35.7,0.6)(.086102036,.067092495){128}{\line(1,0){.086102036}}
		\multiput(47.09,9.25)(.071565328,-.067092495){128}{\line(1,0){.071565328}}
		\put(46.985,9.04){\line(0,1){7.042}}
		\put(44.5,0.6){\line(0,1){2.042}}
		\put(50.3,0.6){\line(0,1){2.042}}
		\multiput(47.721,4.94)(.07207651,-.06607013){35}{\line(1,0){.07207651}}
		\multiput(38.721,2.94)(.07207651,-.06607013){35}{\line(1,0){.07207651}}
		\multiput(49.613,6.727)(-.08058554,-.06657066){64}{\line(-1,0){.08058554}}
		
		\multiput(60.019,15.977)(.070074384,-.067379215){117}{\line(1,0){.070074384}}
		\multiput(68.217,8.094)(.078612859,.067130082){119}{\line(1,0){.078612859}}
		\put(68.112,8.199){\line(0,-1){5.361}}
		\multiput(64.539,11.772)(.08122258,.06688918){66}{\line(1,0){.08122258}}
		\multiput(64.433,16.082)(.06607013,-.06907332){35}{\line(0,-1){.06907332}}
		\multiput(72.317,15.977)(.06638626,-.06638626){38}{\line(0,-1){.06638626}}
	\end{picture}
	\begin{center}
		Figure 7. The PLTs $T_{{RILD}\left(  \xi\right)  }$ and
		$T^{{ROLD}\left(  \xi\right)  }$.
	\end{center}
\end{example}
\smallskip

\begin{definition}
	The \textbf{dimension} of a monomial  $\xi\in\tilde{G}(\Theta)$ is defined and denoted by
	\[
	\left\vert \xi\right\vert :=\left\vert \digamma\left(  \xi\right)  \right\vert
	.
	\]
\end{definition}

\vspace{0.1in}
\noindent Then $|\theta^{t}_{s}|=s+t-3,$ $|(\theta^{1}_{x_{1}}\cdots\theta
^{1}_{x_{p}})|=x_{1}+\cdots+x_{p}-p-1$ and $|(\theta_{1}^{y_{1}}\cdots
\theta_{1}^{y_{q}})^{T}|=y_{1}+\cdots+y_{q}-q-1.$

\begin{example}
	Consider the $2\times2$ matrix
	\[
	B=\left[
	\begin{array}
		[c]{ccc}%
		\left[
		\begin{array}
			[c]{c}%
			\theta_{2}^{1}\smallskip\\
			\left[  \theta_{2}^{1}\right]  \left[  \mathbf{1}\text{ \ }\mathbf{1}\right]
		\end{array}
		\right]  \left[  \theta_{1}^{2}\,\,\,\,\left[
		\begin{array}
			[c]{c}%
			\mathbf{1}\\
			\mathbf{1}%
		\end{array}
		\right]  \left[  \theta_{1}^{2}\right]  \right]  &  & \left[
		\begin{array}
			[c]{c}%
			\theta_{2}^{1}\smallskip\\
			\left[  \mathbf{1}\right]  \left[  \theta_{2}^{1}\right]
		\end{array}
		\right]  \left[  \theta_{1}^{2}\,\,\theta_{1}^{2}\right]\bigskip \\
		\left[
		\begin{array}
			[c]{c}%
			\theta_{2}^{1}\smallskip\\
			\theta_{2}^{1}%
		\end{array}
		\right]  \left[  \theta_{1}^{2}\,\,\,\,\left[  \theta_{1}^{2}\right]  \left[
		\mathbf{1}\right]  \right]  &  & \left[
		\begin{array}
			[c]{c}%
			\mathbf{1}\\
			\mathbf{1}%
		\end{array}
		\right]  \left[  \theta_{2}^{2}\right]  \left[  \mathbf{1}\text{ \ }
		\mathbf{1}\right]
	\end{array}
	\right]  ,
	\]
	whose first row and first column are decomposable (cf. Example \ref{22cframed}%
	). To evaluate  $\left\vert \xi_{1,\ast}\right\vert $ for example, we
	decompose and obtain
	\[
	\left\vert \xi_{1,\ast}\right\vert =\left\vert \left[
	\begin{array}
		[c]{ccc}%
		\theta_{2}^{1} &  & \theta_{2}^{1}\medskip\\
		\left[  \theta_{2}^{1}\right]  \left[  \mathbf{1}\text{ \ }\mathbf{1}\right]
		&  & \left[  \mathbf{1}\right]  \left[  \theta_{2}^{1}\right]
	\end{array}
	\right]  \right\vert +\left\vert \left[  \theta_{1}^{2}\ \ \left[
	\begin{array}
		[c]{c}%
		\mathbf{1}\\
		\mathbf{1}%
	\end{array}
	\right]  \!\!\left[  \theta_{1}^{2}\right]  \ \ \theta_{1}^{2}\ \ \theta
	_{1}^{2}\right]  \right\vert =1+0=1.
	\]
	Completing the dimension calculation we have
	\[
	|B|=|B|^{row}+|B|^{col}+|B|^{ent}=1+1+1=3.
	\]
	
\end{example}
\pagebreak

\section{Free Matrads}

In this section we construct the free non-unital matrad $\tilde{F}(\Theta),$ which is an
auxiliary object, then obtain the free matrad ${F}(\Theta)$ by defining the
unit relation.

\subsection{Free Non-unital Matrads}

\label{bb}
\begin{definition}
	Given the map $\digamma$ defined in (\ref{digamma}), let
	\[
	\tilde{\mathcal{B}}_{n,m}(\Theta):=\digamma^{-1}(\mathfrak{m} \circledast
	_{pp}\mathfrak{n})\subset\tilde{G}_{n,m}(\Theta).
	\]
	The \textbf{free non-unital matrad generated by}
	$\Theta$ is the bigraded module
	\[
	\tilde{F}(\Theta):=\langle\tilde{\mathcal{B}}(\Theta)\rangle.
	\]
	
\end{definition}

\noindent Then by definition, $\tilde{\mathcal{B}}(\Theta)$ inherits the face operator
$\tilde{\partial}$ from $(\mathfrak{m}\circledast_{pp}\mathfrak{n}%
,\tilde{\partial})$. For example,
\begin{equation}
	\label{theta-22}\tilde{\partial}(\theta_{2}^{2})=\left[  \theta_{1}
	^{2}\right]  \left[  \theta_{2}^{1}\right]  -\left[
	\begin{array}
		[c]{c}%
		\theta_{2}^{1}\vspace{1mm}\\
		\theta_{2}^{1}%
	\end{array}
	\right]  \left[
	\begin{array}
		[c]{cc}%
		\theta_{1}^{2} & \theta_{1}^{2}%
	\end{array}
	\right]  .
\end{equation}

\noindent Let $\tilde{F}(\Theta)_{n,m}:=\langle \tilde{\mathcal{B}}_{n,m}(\Theta)\rangle$. By Proposition \ref{d-tildepartial} we immediately obtain

\begin{proposition}
	\label{dif-non}The face operator $\tilde{\partial}$ on $\mathfrak{m}
	\circledast_{pp}\mathfrak{n}$ induces a degree $-1$ differential $\tilde{\partial
	}:\tilde{F}(\Theta)_{n,m}\rightarrow\tilde{F}(\Theta)_{n,m}$ and a
	degree $-1$ differential on $\tilde{F}(\Theta)$.
\end{proposition}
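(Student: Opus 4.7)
The plan is to transport the face operator $\tilde{\partial}$ from the balanced framed join to $\tilde{F}(\Theta)_{n,m}$ via the canonical map $\digamma$ of (\ref{digamma}). Since each $\Theta_{n,m}$ is a singleton, $\digamma$ is a bijection, and the very definition $\tilde{\mathcal{B}}_{n,m}(\Theta):=\digamma^{-1}(\mathfrak{m}\circledast_{pp}\mathfrak{n})$ yields by restriction a bijection
\[
\digamma:\tilde{\mathcal{B}}_{n,m}(\Theta)\overset{\cong}{\longrightarrow}\mathfrak{m}\circledast_{pp}\mathfrak{n}
\]
(with the appropriate index shift). Extending linearly and setting $\tilde{\partial}:=\digamma^{-1}\circ\tilde{\partial}\circ\digamma$ on $\tilde{F}(\Theta)_{n,m}$ produces a well-defined $R$-linear endomorphism. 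Since the dimension of a monomial is defined by $|\xi|:=|\digamma(\xi)|$, the transported operator is manifestly of degree $-1$.

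For the identity $\tilde{\partial}^{2}=0$, I would appeal to Definition \ref{balancedextend}, which presents $(\mathfrak{m}\circledast_{pp}\mathfrak{n},\tilde{\partial})$ as a subcomplex of the prebalanced complex $(\mathfrak{m}\widetilde{\circledast}_{pp}\mathfrak{n},\tilde{\partial})$; in particular, the face operator preserves both the primitive-coherence condition on top level structure matrices and the height bound $h\leq 3$. Consequently the restriction of $\tilde{\partial}$ to $\langle\mathfrak{m}\circledast_{pp}\mathfrak{n}\rangle\subset\tilde{C}_{n,m}$ is a well-defined endomorphism, and Proposition \ref{d-tildepartial} (which gives $\tilde{\partial}^{2}=0$ on the ambient $\tilde{C}_{n,m}$) specializes to $\tilde{\partial}^{2}=0$ on this subcomplex. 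Transporting this identity back through $\digamma^{-1}$ yields $\tilde{\partial}^{2}=0$ on $\tilde{F}(\Theta)_{n,m}$, and the global differential on $\tilde{F}(\Theta)=\bigoplus_{n,m}\tilde{F}(\Theta)_{n,m}$ is obtained by direct sum.

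The hard part of this plan is the claim, implicit in Definition \ref{balancedextend}, that $\tilde{\partial}$ genuinely restricts to the balanced subcomplex: one must verify that every coherent codimension $1$ face $F\in\tilde{\partial}(C)$ of a balanced framed element $C$ is itself balanced. Unpacking Definitions \ref{balancedextend} and \ref{delta}, this reduces to checking that the face-producing operations -- partitioning an elementary bipartition, row, column, or entry actions, and the attendant coheretizations -- preserve both the height bound $h\leq 3$ and the primitive coherence of top level structure matrices. These are local properties of the normal forms produced by Definition \ref{delta}, and once confirmed the remainder of the argument is pure formal transport along the bijection $\digamma$.
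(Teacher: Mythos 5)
Your proposal is correct and follows essentially the same route as the paper: the paper defines $\tilde{\mathcal{B}}_{n,m}(\Theta)=\digamma^{-1}(\mathfrak{m}\circledast_{pp}\mathfrak{n})$, declares that this set inherits the face operator, and then deduces the proposition "immediately" from Proposition \ref{d-tildepartial}, which is exactly your transport-along-$\digamma$ argument. The one point you flag as the "hard part" --- that $\tilde{\partial}$ genuinely preserves balancedness and the height bound --- is not separately verified in the paper either; it is absorbed into the notation of Definition \ref{balancedextend}, so your explicit identification of that step is, if anything, a more careful reading than the source.
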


Given $\xi\in\tilde{\mathcal{B}}(\Theta),$ the analogs of the operators
$\eta_{1}$ and $\eta_{2}$ are denoted by the same symbols and given by
$\eta_{1}(\xi)=\mathbf{1}\cdot\xi$ and $\eta_{2}(\xi)=\xi\cdot\mathbf{1}.$

\begin{example}
	Let $B=\left[
	\begin{array}
		[c]{cc}%
		\theta_{2}^{2} & \mathbf{1}\cdot\theta_{1}^{2}\vspace{1mm}\\
		\theta_{2}^{2} & \theta_{1}^{2}\cdot\mathbf{1}%
	\end{array}
	\right]  \in\tilde{\mathbf{B}};$ then $\tilde{\partial}(B)$ has two
	components  $\tilde{\partial}_{1}(B)$ and $\tilde{\partial}_{2}(B).$ The first
	is given by applying (\ref{theta-22}) to the $\left(  1,1\right)  $ entry:
	\[
	\tilde{\partial}_{1}(B)=\left[
	\begin{array}
		[c]{cc}%
		\left[
		\begin{array}
			[c]{c}%
			\theta_{2}^{1}\vspace{1mm}\\
			\theta_{2}^{1}%
		\end{array}
		\right]  \!\!\left[  \theta_{1}^{2}\,\,\,\theta_{1}^{2}\right]  &
		\mathbf{1}\cdot\theta_{1}^{2}\vspace{1mm}\\
		\theta_{2}^{2} & \theta_{1}^{2}\cdot\mathbf{1}%
	\end{array}
	\right]
	=\left[
	\begin{array}
		[c]{cc}%
		\left[
		\begin{array}
			[c]{c}%
			\theta_{2}^{1}\vspace{1mm}\\
			\theta_{2}^{1}%
		\end{array}
		\right]  \!\!\left[  \theta_{1}^{2}\,\,\,\theta_{1}^{2}\right]  & \left[
		\begin{array}
			[c]{c}%
			\mathbf{1}\\
			\mathbf{1}%
		\end{array}
		\right]\! [\theta_{1}^{2}]\vspace{1mm}\\
		\left[  \theta_{2}^{2}\right]\!  [\mathbf{1}\ \ \mathbf{1}] & \theta_{1}
		^{2}\cdot\mathbf{1}%
	\end{array}
	\right]
	\]
	\[
	=\left[
	\begin{array}
		[c]{cc}%
		\theta_{2}^{1}\vspace{1mm} & \mathbf{1}\\
		\theta_{2}^{1}\vspace{1mm} & \mathbf{1}\\
		\theta_{2}^{2} & \theta_{1}^{2}%
	\end{array}
	\right]  \left[
	\begin{array}
		[c]{ccc}%
		\theta_{1}^{2}\vspace{1mm} & \theta_{1}^{2} & \theta_{1}^{2}\\
		\mathbf{1} & \mathbf{1} & \mathbf{1}%
	\end{array}
	\right]  .
	\]
	The second arises by applying (\ref{theta-22}) to the entries in the first
	column, then coheretizing:
	
	\vspace{-.2in}
	
	\begin{multline*}
		\hspace{-.13in}
		B\rightarrow\left[
		\begin{array}
			[c]{cc}%
			\left[
			\begin{array}
				[c]{c}%
				\theta_{2}^{1}\vspace{1mm}\\
				\theta_{2}^{1}%
			\end{array}
			\right]  \!\!\left[  \theta_{1}^{2}\,\,\,\theta_{1}^{2}\right]  &
			\mathbf{1}\cdot\theta_{1}^{2}\vspace{1mm}\\
			\left[
			\begin{array}
				[c]{c}%
				\theta_{2}^{1}\vspace{1mm}\\
				\theta_{2}^{1}%
			\end{array}
			\right]  \!\!\left[  \theta_{1}^{2}\,\,\,\theta_{1}^{2}\right]  & \theta
			_{1}^{2}\cdot\mathbf{1}%
		\end{array}
		\right]
		\xrightarrow{coheretize}
		\\
		\tilde{\partial}_{2}(B)=\left[\!\!
		\begin{array}
			[c]{cc}%
			\left[
			\begin{array}
				[c]{c}%
				\theta_{2}^{1}\vspace{1mm}\\
				\theta_{2}^{1}%
			\end{array}
			\right]  \!\!\left[  \theta_{1}^{2}\,\,\,\,\mathbf{1}\cdot\theta_{1}
			^{2}\right]  & \!\!\left[
			\begin{array}
				[c]{c}%
				\mathbf{1}\\
				\mathbf{1}%
			\end{array}
			\right]  \!\!\left[  \mathbf{1}\cdot\theta_{1}^{2}\right]  \vspace{1mm}\\
			\left[
			\begin{array}
				[c]{c}%
				\theta_{2}^{1}\vspace{1mm}\\
				\theta_{2}^{1}%
			\end{array}
			\right]  \!\!\left[  \theta_{1}^{2}\,\,\,\,\theta_{1}^{2}\cdot\mathbf{1}
			\right]  & \left[
			\begin{array}
				[c]{c}%
				\mathbf{1}\\
				\mathbf{1}%
			\end{array}
			\right]  \!\!\left[  \theta_{1}^{2}\cdot\mathbf{1}\right]
		\end{array}\!\!
		\right] \!\! =\!\!
		\left[
		\begin{array}
			[c]{cc}%
			\theta_{2}^{1}\vspace{1mm} & \mathbf{1}\\
			\theta_{2}^{1}\vspace{1mm} & \mathbf{1}\\
			\theta_{2}^{1}\vspace{1mm} & \mathbf{1}\\
			\theta_{2}^{1} & \mathbf{1}%
		\end{array}
		\right] \!\! \left[
		\begin{array}
			[c]{ccc}%
			\theta_{1}^{2} & \mathbf{1}\cdot\theta_{1}^{2} & \mathbf{1}\cdot\theta_{1}
			^{2}\vspace{1mm}\\
			\theta_{1}^{2} & \theta_{1}^{2}\cdot\mathbf{1} & \theta_{1}^{2}\cdot\mathbf{1}
		\end{array}
		\right]\!  .
	\end{multline*}
	
\end{example}

\begin{example}
	\label{matrad-42} Consider the $2$-dimensional monomial $\xi=AB=[\theta
	_{2}^{1}\theta_{2}^{1}]^{T}[\theta_{2}^{2}\theta_{2}^{2}]\in\tilde
	{\mathcal{B}}_{2,4}$, which corresponds to the element $\rho$ in Example
	\ref{framed-31}; then $\tilde{\partial}(\xi)=\tilde{\partial}(A)B\cup
	A\tilde{\partial}(B)=A\tilde{\partial}(B).$ The $1$-dimensional components in
	$\tilde{\partial}(B):=\{\tilde{\partial}_{k}(B)\}_{1\leq k\leq7}$ are
	\[
	\hspace{-.55in}
	\begin{array}
		[c]{l}%
		\tilde{\partial}_{1}(B)=\left[  \,\left[
		\begin{array}
			[c]{c}%
			\theta_{2}^{1}\smallskip\\
			\theta_{2}^{1}%
		\end{array}
		\right]\!\!  \left[  \theta_{1}^{2}\text{ \ }\theta_{1}^{2}\right]  \,\,\,\left[
		\begin{array}
			[c]{c}%
			\mathbf{1}\smallskip\\
			\mathbf{1}%
		\end{array}
		\right] \!\! \left[  \theta_{2}^{2}\right]  \,\right]  =\left[
		\begin{array}
			[c]{cc}%
			\theta_{2}^{1}\smallskip & \mathbf{1}\\
			\theta_{2}^{1} & \mathbf{1}%
		\end{array}
		\right] \! \left[  \theta_{1}^{2}\text{ \ }\theta_{1}^{2}\text{ \ }\theta
		_{2}^{2}\right]  ;
		\medskip\\
		\tilde{\partial}_{2}(B)=\left[  \,\left[
		\begin{array}
			[c]{c}%
			\mathbf{1}\smallskip\\
			\mathbf{1}%
		\end{array}
		\right] \!\! \left[  \theta_{2}^{2}\right]  \,\,\,\left[
		\begin{array}
			[c]{c}%
			\theta_{2}^{1}\smallskip\\
			\theta_{2}^{1}%
		\end{array}
		\right] \!\! \left[  \theta_{1}^{2}\text{ \ }\theta_{1}^{2}\right]  \,\right]
		=\left[
		\begin{array}
			[c]{cc}%
			\mathbf{1} & \theta_{2}^{1}\smallskip\\
			\mathbf{1} & \theta_{2}^{1}%
		\end{array}
		\right]\!  [\theta_{2}^{2}\text{ \ }\theta_{1}^{2}\text{ \ }\theta_{1}^{2}];
	\end{array}
	\]
	\[
	\begin{array}
		[c]{l}%
		\tilde{\partial}_{3}(B)=\left[  \,\left[
		\begin{array}
			[c]{c}%
			\lbrack\mathbf{1}][\theta_{2}^{1}]\\
			\theta_{2}^{1}%
		\end{array}
		\right] \! [\theta_{1}^{2}\text{ \ }\theta_{1}^{2}]\,\,\ \,\left[
		\begin{array}
			[c]{c}%
			\lbrack\theta_{2}^{1}][\mathbf{1}\text{ \ }\mathbf{1}]\smallskip\\
			\theta_{2}^{1}%
		\end{array}
		\right] \! [\theta_{1}^{2}\text{ \ }\theta_{1}^{2}]\,\,\right]  =\medskip\\
		\hspace{2in}\left[\!\!\!
		\begin{array}
			[c]{cc}%
			\begin{array}
				[c]{c}%
				\lbrack\mathbf{1}][\theta_{2}^{1}]\smallskip\\
				\theta_{2}^{1}
			\end{array}
			&
			\begin{array}
				[c]{c}%
				\lbrack\theta_{2}^{1}][\mathbf{1}\text{ \ }\mathbf{1}]\smallskip\\
				\theta_{2}^{1}
			\end{array}
		\end{array}
		\!\!\!\right] \! \left[  \theta_{1}^{2}\text{ \ }\theta_{1}^{2}\text{ \ }
		\theta_{1}^{2}\text{ \ }\theta_{1}^{2}\right]  ;\medskip\\
		\tilde{\partial}_{4}(B)=\left[  \,\left[
		\begin{array}
			[c]{c}%
			\theta_{2}^{1}\smallskip\\
			\left[  \theta_{2}^{1}\right] \! \left[  \mathbf{1}\text{ \ }\mathbf{1}\right]
		\end{array}
		\right] \! [\theta_{1}^{2}\text{ \ }\theta_{1}^{2}]\,\,\,\left[
		\begin{array}
			[c]{c}%
			\theta_{2}^{1}\smallskip\\
			\left[  \mathbf{1}\right]\!  \left[  \theta_{2}^{1}\right]
		\end{array}
		\right] \! [\theta_{1}^{2}\text{ \ }\theta_{1}^{2}]\,\,\right]  =\medskip\\
		\hspace{2in}\left[  \!\!\!
		\begin{array}
			[c]{cc}%
			\begin{array}
				[c]{c}%
				\theta_{2}^{1}\smallskip\\
				\left[  \theta_{2}^{1}\right]\!  \left[  \mathbf{1}\text{\ }\mathbf{1}\right]
			\end{array}
			&
			\begin{array}
				[c]{c}%
				\theta_{2}^{1}\smallskip\\
				\left[  \mathbf{1}\right] \! \left[  \theta_{2}^{1}\right]
			\end{array}
		\end{array}
		\!\!\!\right] \! [\theta_{1}^{2}\text{ \ }\theta_{1}^{2}\text{ \ }\theta_{1}%
		^{2}\text{ \ }\theta_{1}^{2}];\medskip\\
		\tilde{\partial}_{5}(B)=[\,[\theta_{1}^{2}][\theta_{2}^{1}]\,\,\,[\theta
		_{2}^{2}][\mathbf{1}\mathbf{1}]\,]=[\theta_{1}^{2}\text{ \ }\theta_{2}%
		^{2}][\theta_{2}^{1}\text{ \ }\mathbf{1}\text{ \ }\mathbf{1}];\medskip\\
		\tilde{\partial}_{6}(B)=[\,[\theta_{2}^{2}][\mathbf{1}\text{ \ }%
		\mathbf{1}]\,\,\,[\theta_{1}^{2}][\theta_{2}^{1}]\,]=[\theta_{2}^{2}\text{
			\ }\theta_{1}^{2}][\mathbf{1}\text{ \ }\mathbf{1}\text{ \ }\theta_{2}%
		^{1}];\medskip\\
		\tilde{\partial}_{7}(B)=[\,[\theta_{1}^{2}][\theta_{2}^{1}]\,\,\,[\theta
		_{1}^{2}][\theta_{2}^{1}]\,]=[\theta_{1}^{2}\,\ \theta_{1}^{2}][\theta_{2}%
		^{1}\,\ \theta_{2}^{1}].
	\end{array}
	\]
	
\end{example}

\subsection{Free Matrads Defined}

\begin{definition}
	\label{freematrad}For $m, n\geq 0$, define $\mathcal{B}_{n+1,m+1}%
	(\Theta):=\tilde{\mathcal{B}}_{n,m}(\Theta)/  \left(  B\sim\mathbf{1}B\sim
	B\mathbf{1}\right)  $. Let $F( \Theta): =\left\langle \mathcal{B}( \Theta)
	\right\rangle ,$ let $\gamma$ be the product induced by $\tilde{\gamma}$ on
	$\tilde{F}( \Theta),$ and let $\eta:R\rightarrow F_{1,1}(\Theta) $ be the
	unit  given by $\eta( 1_{R}) =\mathbf{1}.$ Then $(F(\Theta),\gamma,\eta)$ is
	the  \textbf{free matrad generated by} $\Theta$.
\end{definition}

\noindent The map
\begin{equation}
	\varsigma\varsigma:=\left.  \varsigma\varsigma^{{pre}}\right\vert _{\tilde
		{F}(\Theta)}:\tilde{F}_{m,n}(\Theta)\rightarrow{F}_{m+1,n+1}(\Theta)
	\label{tildevarsigmas}%
\end{equation}
is the canonical projection and the dimension of a balanced monomial $\xi
\in\mathcal{B}(\Theta)$ is
\[
|\xi|:=\sum_{\theta_{s}^{t}\text{ in }\xi}|\theta_{s}^{t}|.
\]
From (\ref{digamma}) we immediately establish the following commutative diagram:

\[%
\begin{array}
	[c]{ccc}%
	\ \ \ \ \tilde{\mathcal{B}}_{n,m}(\Theta) & \overset{\approx}{\longrightarrow}
	& \mathfrak{m}\circledast_{pp}\mathfrak{n}\vspace*{0.1in}\\
	\varsigma\varsigma\downarrow\text{ \ \ \ } &  & \ \ \ \ \ \downarrow
	\vartheta\vartheta\vspace*{0.1in}\\
	{\mathcal{B}}_{n+1,m+1}(\Theta) & \overset{\approx}{\longrightarrow} &
	\mathfrak{m}\circledast_{kk}\mathfrak{n}.
\end{array}
\]
We have (cf. Proposition \ref{dif-non})

\begin{theorem}
	\label{dif-unital} The differential $\tilde{\partial}$ induces a differential
	${\partial}$ on $F(\Theta)$ such that
	\[
	(F(\Theta)),\partial)\overset{\approx}{\longrightarrow}(C_{\ast}
	(\mathfrak{m}\circledast_{kk}\mathfrak{n}),\partial)
	\]
	is a canonical isomorphism.
\end{theorem}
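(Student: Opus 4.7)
The plan is to exploit the bijection $\digamma:\tilde{\mathcal{B}}(\Theta)\to\mathfrak{m}\circledast_{pp}\mathfrak{n}$ established in Section~\ref{bb}. By the very definition of $\tilde{\mathcal{B}}(\Theta)=\digamma^{-1}(\mathfrak{m}\circledast_{pp}\mathfrak{n})$ and the statement immediately following (that $\tilde{\mathcal{B}}(\Theta)$ inherits $\tilde\partial$ from $(\mathfrak{m}\circledast_{pp}\mathfrak{n},\tilde\partial)$ via $\digamma$), the map $\digamma$ is already a chain isomorphism
\[
(\tilde F(\Theta),\tilde\partial)\overset{\approx}{\longrightarrow}(C_{\ast}(\mathfrak{m}\circledast_{pp}\mathfrak{n}),\tilde\partial).
\]
Proposition~\ref{dif-non} and Theorem~\ref{df-partial} guarantee that both sides are honest chain complexes. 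Thus the theorem reduces to showing that the vertical quotients in the commutative diagram displayed immediately before the statement are compatible with these differentials, and that the induced map on quotients is still a bijection (the latter being already contained in the diagram).

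First I would verify that $\tilde\partial$ descends to a well-defined endomorphism $\partial$ on $F(\Theta)=\tilde F(\Theta)/(B\sim\mathbf{1}B\sim B\mathbf{1})$. It suffices to check that $\tilde\partial(\mathbf{1}\cdot B)\equiv\tilde\partial(B)$ and $\tilde\partial(B\cdot\mathbf{1})\equiv\tilde\partial(B)$ modulo the unital relation. Translating via $\digamma$, inserting $\mathbf{1}$ on the left or right of $B$ corresponds to inserting an empty biblock on the extreme left or right of the underlying framed matrix. Since $\vartheta\vartheta:\mathfrak{m}\circledast_{pp}\mathfrak{n}\to\mathfrak{m}\circledast_{kk}\mathfrak{n}$ identifies framed matrices that differ only by empty biblocks in entries, and the extreme biblocks can be detected by the $\eta_1,\eta_2$ operators, the components of $\tilde\partial$ acting on $\mathbf{1}\cdot B$ are in bijection with those acting on $B$, either propagating through the unital factor or acting on the underlying structure; in each case the face produced for $\mathbf{1}\cdot B$ is congruent mod $\sim$ to the corresponding face for $B$, with matching signs.

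Next, since the two quotients $\varsigma\varsigma$ and $\vartheta\vartheta$ are intertwined by $\digamma$ (the commutative square preceding the theorem), the induced map
\[
\bar\digamma:\mathcal{B}_{n+1,m+1}(\Theta)\overset{\approx}{\longrightarrow}\mathfrak{m}\circledast_{kk}\mathfrak{n}
\]
is a bijection of generators. The induced differentials $\partial$ on each side are obtained from $\tilde\partial$ by the same descent procedure (with signs given by (\ref{reduced-differential}) on the right and inherited on the left), so the $R$-linear extension of $\bar\digamma$ is a chain isomorphism $(F(\Theta),\partial)\overset{\approx}{\to}(C_{\ast}(\mathfrak{m}\circledast_{kk}\mathfrak{n}),\partial)$. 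That this differential squares to zero on the left follows from $\tilde\partial^{2}=0$ on $\tilde F(\Theta)$ together with the descent, and independently from Theorem~\ref{df-partial} on the right.

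The main obstacle is the verification in the second paragraph: one must check, case by case over the definition of $\tilde\delta$ (entry actions, row and column actions, row/column actions, and coheretizations), that adjoining a unital matrix produces faces equivalent modulo $\sim$ to the faces of the original. The delicate cases are those involving coheretizations, where empty biblocks may appear or disappear; here one uses the uniqueness of maximal equalizers (Proposition~\ref{uniqueness}) and Proposition~\ref{ab} to pair faces of $\mathbf{1}\cdot B$ with faces of $B$ in a sign-preserving bijection compatible with the quotient. Once this bookkeeping is in place, the remainder is formal.
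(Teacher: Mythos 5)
Your proposal is correct and follows essentially the same route as the paper: the paper deduces Theorem \ref{dif-unital} directly from the commutative square intertwining $\varsigma\varsigma$ with $\vartheta\vartheta$ via $\digamma$, together with Proposition \ref{dif-non} and the already-established descent of $\tilde{\partial}$ to the face operator $\partial$ on $\mathfrak{m}\circledast_{kk}\mathfrak{n}$ (Theorem \ref{df-partial}). Your second and fourth paragraphs merely make explicit the compatibility check (unital insertion $\leftrightarrow$ empty biblocks under $\sim$) that the paper leaves implicit.
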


\begin{example}
	Regarding Example \ref{matrad-42}, let $\zeta=\varsigma\varsigma(\xi)$ and
	$C=\varsigma\varsigma(B).$ Once again, $\zeta\in\mathcal{B}_{4,2}(\Theta)$ is
	$2$-dimensional and has four $1$-dimensional faces $\partial_{1}(C),$
	$\partial_{2}(C),$ $\partial_{5}(C),$ and $\partial_{6}(C)$ respectively induced by
	$\tilde{\partial}_{1}(B),$ $\tilde{\partial}_{2}(B),$ $\tilde{\partial}
	_{5}(B),$ and $\tilde{\partial}_{6}(B),$ because $|\varsigma\varsigma
	(\tilde{\partial}_{3}(\xi))|=|\varsigma\varsigma(\tilde{\partial}_{4}
	(\xi))|=|\varsigma\varsigma(\tilde{\partial}_{7}(\xi))|=0$ in $\mathcal{B}
	_{4,2}(\Theta).$
\end{example}

We denote the chain complex $(F(\Theta)),\partial)$ by $(\mathcal{H}_{\infty},\partial)$ and refer to it as the $A_{\infty}$\emph{-bialgebra matrad} (cf. Theorem \ref{kkh} below). Thus an $A_\infty$-bialgebra structure on a DG module $A$ is defined by a prematrad map $\mathcal{H}_{\infty}\rightarrow \mathcal{E}nd_{TA}$ (cf. Example \ref{ENDA}).

\section{Constructions of PP and KK}

\label{PP-KK}In this section we construct the polytopes $PP$ and $KK$ as
geometric realizations of the balanced and reduced balanced framed joins. We
subsequently identify the free matrad $\mathcal{H}_{\infty}$ with the cellular
chains of $KK.$

\subsection{The Bipermutahedron $PP_{n,m}$}

Let $w\in P_{m+n}$. By equality $P_{m}\ast_{c}P_{n}=P_{m+n},$ which follows from (\ref{join}), the decomposition
$w=\alpha\Cup\beta\in P_{m}\ast_{c}P_{n}$ uniquely
determines a bipartition $\beta/\alpha\in P_{r}^{\prime}\left(  m\right)
\times P_{r}^{\prime}\left(  n\right).$
Define the \emph{combinatorial join}
\[
\mathfrak{m}\ast_{c}\mathfrak{n:}=\left\{  \beta/\alpha\in
P_{r}^{\prime}\left(  m\right)  \times P_{r}^{\prime}\left(  n\right)
:\alpha\Cup\beta\in P_{m}\ast_{c}P_{n},\  1\leq r\leq m+n\right\};
\]
then the bijection
$
g_{1}:\mathfrak{m}\ast_{c}\,\mathfrak{n}{\rightarrow
}P_{m+n}
$
given by $\beta/\alpha\mapsto\alpha\Cup\beta$ identifies $\beta/\alpha\in \mathfrak{m}\ast_{c}\,\mathfrak{n}$ with
the face $C_{1}|\cdots|C_{r}:=\alpha\Cup\beta$ of $P_{m+n}$, and $\partial_{M^{k},N^{k}}({\beta}/
{\alpha})$ with the boundary component $\partial_{X^{k}}(C_{1}|\cdots|C_{r}),$
where $X^{k}:=M^{k}\cup(N^{k}+m)\subseteq C_{k}$ (cf. (2.6)). Furthermore, $g_1 $ extends to a
bijection
\begin{equation}
	g:\mathfrak{m}\circledast_{pp}\,\mathfrak{n}\overset{\approx}{\longrightarrow
	}P_{m}\ast_{pp}P_{n}=PP_{m,n}\label{PP}%
\end{equation}
via $(c_{1},c_{2},c_{3})\overset{g}{\mapsto}(g_{1}(c_{1}),c_{2}),$ where
$(g_{1}(c_{1}),c_{2})$ ranges over the faces of $PP_{n,m}$ to be constructed.

Given $m,n\geq 0,$ construct the polytope $PP_{n,m}$ by subdividing $P_{m+n}$
in the following way: A face $e\sqsubseteq P_{m+n}$ is indexed by a partition
$C_{1}|\cdots|C_{r}\in P(\mathfrak{m\Cup n})=P_{m+n},$ and in particular,
$\mathfrak{m\Cup n}$ indexes the top dimensional cell. While the top
dimensional cell of $PP_{n,m}$ is also indexed by $\mathfrak{m\Cup n}%
=g_{1}(\mathfrak{n}/\mathfrak{m})$ thought of as the elementary bipartition
$\mathfrak{n}/\mathfrak{m}$, a proper face $e\sqsubset PP_{n,m}$ is indexed
by a pair $\left(  w,T(w)\right)  ,$ where $w=\alpha\Cup\beta$ is expressed as
the bipartition $\beta/\alpha=\mathbf{C}_{1}\cdots\mathbf{C}_{r}\in
P_{r}^{\prime}(\mathfrak{m})\times P_{r}^{\prime}(\mathfrak{n)}$ and
$T(w)=C_{1}\cdots C_{r}:=T^{1}(\mathbf{C}_{1})\cdots T^{1}(\mathbf{C}_{r})$ is
a coherent formal product of bipartition matrices. When $\beta/\alpha$ is
coherent (when $|w|=0$ for example), we identify $\left(  w,T(w)\right)  $
with $w.$

Express $w$ as a product $w=E_{1}\times\cdots\times E_{r}$ of permutahedra. A
proper cell $e=(w,T(w))\sqsubset PP_{n,m}$ is a subdivision cell of the form
$e=e_{1}\times\cdots\times e_{r}\sqsubseteq w$ such that $e_{k}\subseteq
E_{k}$ and no proper face of $w$ contains $e$. The dimension $|e|:=|e_{1}%
|+\cdots+|e_{r}|,$ where
\[
|e_{k}|:=|\overset{\wedge}{\mathbf{e}}(C_{k})|+|\overset{\vee}{\mathbf{e}%
}(C_{k})|-\overset{\wedge}{v}(C_{k})-\overset{\vee}{v}(C_{k})+1.
\]
Thus, $|e|=|w|$ if and only if $C_{k}$ is TD coherent for all $k.$

The action of $\tilde{\partial}$ on the top dimensional cell $\mathfrak{m\Cup
	n}\sqsubset PP_{n,m}$ produces the set of all codimension $1$ faces
\[
\tilde{\partial}(\mathfrak{m\Cup n})=\left\{  (C_{1}|C_{2},T(C_{1}%
|C_{2})):C_{1}|C_{2}\in P_{2}(\mathfrak{m\Cup n})\right\}  ,
\]
and its action on a proper cell $e=(w,T(w))$ is defined by the set
\[
\tilde{\partial}(e):=\bigcup\limits_{k\in\mathfrak{r}} \left\{  \left(  w,\partial_{\mathbf{M}_{1}		,\mathbf{N}_{1}}^{k}T(w)\right), \left(  \partial_{M^{k}}
(w),\text{coheretizations\ of}\ \partial_{\mathbf{M}_{2},\mathbf{N}_{2}}
^{k}T(w)\right)\right\}
\]
where $\partial_{\mathbf{M}_{1},\mathbf{N}_{1}}^{k}T(w)$ is indecomposable and
$\partial_{\mathbf{M}_{2},\mathbf{N}_{2}}^{k}T(w)$ is decomposable with
coheretizable matrix factors.

A cell of codimension $2$ or more is given by the iterated action of
$\tilde{\partial}$ on some cell of codimension $1$. Furthermore, a careful
analysis of Lemma \ref{basic} reveals that the face structure of $e_{k}$
agrees with that of $\overset{\wedge}{\mathbf{e}}(C_{k})$ or $\overset{\vee
}{\mathbf{e}}(C_{k})$ thought of as cells in the $(q_{k}-1)^{st}$ or
$(p_{k}-1)^{st}$ subdivision complexes of $P_{\#\mathbf{is}(C_{k})}$ or
$P_{\#\mathbf{os}(C_{k})},$ respectively (see Example \ref{subdivision} and
Figure 8 below). It is straightforward to check that $g$ is compatible with
face operators.
\begin{example}
	\label{subdivision}To obtain the subdivision cells of $P_{2,2}$ contained in
	$w=1|234\subset P_{4},$ construct the decomposition $1|234=1|2$ $\Cup$ $0|34$
	and obtain the bipartition
	\[
	\frac{\,\,0|34}{1|2}=\left(
	\begin{array}
		[c]{c}%
		\frac{0}{1}\vspace{1mm}\\
		\frac{0}{1}\vspace{1mm}\\
		\frac{0}{1}%
	\end{array}
	\right) \! \left(
	\begin{array}
		[c]{cc}
		\frac{34}{0} & \frac{34}{2}
	\end{array}
	\right)  .
	\]
	Then $w$ subdivides as $a\cup b\cup c,$ where $a$ and $b$ are squares and $c$
	is a heptagon. The squares are labeled by
	\[
	a=\left(  \frac{0|34}{1|2},\left(
	\begin{array}
		[c]{c}%
		\frac{0}{1}\vspace{1mm}\\
		\frac{0}{1}\vspace{1mm}\\
		\frac{0}{1}%
	\end{array}
	\right) \!\! \left(
	\begin{array}
		[c]{cc}%
		\frac{34}{0} & \frac{4|3}{2|0}%
	\end{array}
	\right)  \right)  \text{ \ and \ \ }
	b=\left(  \frac{0|34}{1|2},\left(
	\begin{array}
		[c]{c}%
		\frac{0}{1}\vspace{1mm}\\
		\frac{0}{1}\vspace{1mm}\\
		\frac{0}{1}%
	\end{array}
	\right)\!\!  \left(
	\begin{array}
		[c]{cc}%
		\frac{34}{0} & \frac{4|3}{0|2}%
	\end{array}
	\right)  \right)  ;
	\]
	the heptagon $c$ is labeled by
	\[
	c=\left(  \frac{0|34}{1|2},\left(
	\begin{array}
		[c]{c}%
		\frac{0}{1}\vspace{1mm}\\
		\frac{0}{1}\vspace{1mm}\\
		\frac{0}{1}\vspace{1mm}\\
		\frac{0}{1}%
	\end{array}
	\right) \!\! \left(
	\begin{array}
		[c]{cc}%
		\frac{3|4}{0|0} & \frac{34}{2}%
	\end{array}
	\right)  \right)  .
	\]
	One subdivision vertex  with initial bipartition $\frac{0|34}{1|2}$
	lies in the interior of $w$:
	\[
	z_{1}=\left(  \frac{0|34}{1|2}\,,\left(
	\begin{array}
		[c]{c}%
		\frac{0}{1}\vspace{1mm}\\
		\frac{0}{1}\vspace{1mm}\\
		\frac{0}{1}%
	\end{array}
	\right)\!\!  \left(
	\begin{array}
		[c]{cc}%
		\frac{3|4}{0|0} & \frac{4|0|3}{0|2|0}%
	\end{array}
	\right)  \right)  .
	\]
	Three subdivision vertices lie in the boundary of $w:$
	\[
	z_{2}=\left(  \frac{0|34|0}{1|0|2},\left(
	\begin{array}
		[c]{c}%
		\frac{0}{1}\vspace{1mm}\\
		\frac{0}{1}\vspace{1mm}\\
		\frac{0}{1}%
	\end{array}
	\right)\!\!  \left(
	\begin{array}
		[c]{cc}%
		\frac{3|4}{0|0} & \frac{4|3}{0|0}%
	\end{array}
	\right) \! \left(
	\begin{array}
		[c]{cc}%
		\frac{0}{0} & \frac{0}{2}%
	\end{array}
	\right)  \right)  ,
	\]%
	\[
	u_{1}=\left(  \frac{0|0|34}{1|2|0}\,,\left(
	\begin{array}
		[c]{c}%
		\frac{0}{1}\vspace{1mm}\\
		\frac{0}{1}\vspace{1mm}\\
		\frac{0}{1}%
	\end{array}
	\right) \!\! \left(
	\begin{array}
		[c]{cc}%
		\frac{0}{0} & \frac{0}{2}\vspace{1mm}\\
		\frac{0}{0} & \frac{0}{2}\vspace{1mm}\\
		\frac{0}{0} & \frac{0}{2}%
	\end{array}
	\right) \!\! \left(
	\begin{array}
		[c]{ccc}%
		\frac{3|4}{0|0} & \frac{3|4}{0|0} & \frac{4|3}{0|0}%
	\end{array}
	\right)  \right)  ,
	\]
	and
	\[
	u_{2}=\left(  \frac{0|0|34}{1|2|0},\left(
	\begin{array}
		[c]{c}%
		\frac{0}{1}\vspace{1mm}\\
		\frac{0}{1}\vspace{1mm}\\
		\frac{0}{1}%
	\end{array}
	\right) \!\! \left(
	\begin{array}
		[c]{cc}%
		\frac{0}{0} & \frac{0}{2}\vspace{1mm}\\
		\frac{0}{0} & \frac{0}{2}\vspace{1mm}\\
		\frac{0}{0} & \frac{0}{2}%
	\end{array}
	\right)\!\!  \left(
	\begin{array}
		[c]{ccc}%
		\frac{3|4}{0|0} & \frac{4|3}{0|0} & \frac{4|3}{0|0}%
	\end{array}
	\right)  \right)  .
	\]
	\vspace{0.2in}
\end{example}

\unitlength 1mm
\linethickness{0.4pt} \ifx\plotpoint\undefined\newsavebox{\plotpoint}\fi
\begin{picture}(82.829,76.248)(0,0)
	\put(66.67,5.33){\line(0,1){49.67}}
	\put(66.67,5.33){\line(-6,5){28.67}}
	\put(66.67,55){\line(-4,3){28.33}}
	\put(66.67,45){\line(-6,5){20.67}}
	\put(46,22.66){\line(0,1){48}}
	\put(46,70.66){\circle*{1.6}}
	\put(46,62.33){\circle*{1.6}}
	\put(46,70.66){\circle*{1.6}}
	\put(66.67,55){\circle*{1.6}}
	\put(66.67,45.33){\circle*{1.6}}
	\put(66.67,35.33){\circle*{1.6}}
	\put(66.67,5.33){\circle*{1.6}}
	\put(46,45.66){\circle*{1.6}}
	\put(46,22.66){\circle*{1.6}}
	\put(66.67,25.33){\circle*{1.0}}
	\put(66.67,15.33){\circle*{1.0}}
	\put(77,35.33){\circle*{0}}
	\put(77,15.33){\circle*{0}}
	\put(46,53){\circle*{1.0}}
	\multiput(45.75,53.25)(.01686746988,-.02269076305){1245}{\line(0,-1){.02269076305}}
	\multiput(56,39.5)(.0301966292,.0168539326){356}{\line(1,0){.0301966292}}
	\put(62.75,37.25){\makebox(0,0)[cc]{$a$}}
	\put(55.25,47.75){\makebox(0,0)[cc]{$b$}}
	\put(55.75,39.75){\circle*{1}}
	\put(66.893,25.419){\line(1,0){10.554}}
	\put(67.042,54.852){\line(1,0){13.973}}
	\put(66.893,15.46){\line(1,0){10.257}}
	\put(66.596,5.649){\line(1,0){14.865}}
	\put(66.744,35.23){\line(1,0){13.676}}
	\multiput(46.082,45.784)(-.020298966,.0168637564){476}{\line(-1,0){.020298966}}
	\put(55.71,58.443){\makebox(0,0)[cc]{${14|23}$}}
	\put(66.641,45.619){\line(1,0){9.67}}
	\put(82.829,44.988){\makebox(0,0)[cc]{${124|3}$}}
	\put(80.516,21.023){\makebox(0,0)[cc]{${12|34}$}}
	\put(31.954,67.272){\makebox(0,0)[cc]{${134|2}$}}
	\put(32.954,37.272){\makebox(0,0)[cc]{${13|24}$}}
	\put(54.448,28.591){\makebox(0,0)[cc]{${c}$}}
	\put(53.7,38.471){\makebox(0,0)[cc]{$z_1$}}
	\put(43.5,53.5){\makebox(0,0)[cc]{$z_2$}}
	\put(64.0,15.346){\makebox(0,0)[cc]{$u_1$}}
	\put(64.0,24.386){\makebox(0,0)[cc]{$u_2$}}
	\put(66.431,2.943){\makebox(0,0)[cc]{$_{1|2|3|4}$}}
	\put(43.517,20){\makebox(0,0)[cc]{$_{1|3|2|4}$}}
	\put(71.2,37){\makebox(0,0)[cc]{$_{1|2|4|3}$}}
	\put(71.2,47.352){\makebox(0,0)[cc]{$_{1|4|2|3}$}}
	\put(41,44.568){\makebox(0,0)[cc]{$_{1|3|4|2}$}}
	\put(40.573,62.437){\makebox(0,0)[cc]{$_{1|4|3|2}$}}
\end{picture}

\begin{center}
	Figure 8. The subdivision cells of $1|234$ in $PP_{2,2}.$
\end{center}

\vspace{0.2in}

\subsection{The Biassociahedron $KK_{n+1,m+1}$}

The\emph{ reduced combinatorial framed join of $\mathfrak{m}$ and $\mathfrak{n}$}, denoted by $\mathfrak{m}\ast_{ck}\,\mathfrak{n}$, is  the image of the subset $\mathfrak{m}\ast_{c}\,\mathfrak{n}\subseteq \mathfrak{m}\circledast\mathfrak{n} $ under the projection $\vartheta\vartheta$, i.e., $\mathfrak{m}\ast_{ck}\,\mathfrak{n}:=\vartheta\vartheta(\mathfrak{m}\ast_{c}\,\mathfrak{n}).$
In particular, the equivalence relation on $\mathfrak{m}\circledast\mathfrak{n}$    restricted to  $0$-dimensional bipartitions $\frac{\beta_{1}}{\alpha_{1}
}=\mathbf{C}_{1,1}\cdots\mathbf{C}_{m+n,1}$ and $\frac{\beta_{2}}{\alpha_{2}
}=\mathbf{C}_{1,2} \cdots
\mathbf{C}_{m+n,2}$ in $\mathfrak{m}\ast_{c}\,\mathfrak{n}$ implies
$\frac{\beta_{1}}{\alpha_{1}}\sim\frac{\beta_{2}
}{\alpha_{2}}$ if and only if there exist associations $A_{1}\cdots
A_{s}=\mathbf{C}_{1,1}\cdots\mathbf{C}_{m+n,1},$ $s\leq m+n,$ and $B_{1}\cdots
B_{t}=\mathbf{C}_{1,2}\cdots\mathbf{C}_{m+n,2},$ $t\leq m+n,$ such that

\begin{itemize}
	\item the number of semi-null matrix factors $A_{k}$ with $\#\mathbf{is}%
	(A_{k})\neq0$ equals the number of semi-null matrix factors $B_{k}$ with
	$\#\mathbf{is}(B_{k})\neq0$,
	
	\item corresponding semi-null matrix factors $A_{k}$ and $B_{k}$ differ only
	in the empty biblocks in their corresponding entries,
	
	\item the number of semi-null matrix factors $A_{\ell}$ with $\#\mathbf{os}%
	(A_{\ell})\neq0$ equals the number of semi-null matrix factors $B_{\ell}$ with
	$\#\mathbf{os}(B_{\ell})\neq0$, and
	
	\item corresponding semi-null matrix factors $A_{\ell}$ and $B_{\ell}$ differ
	only in the empty biblocks in their corresponding entries.
\end{itemize}
(See Example \ref{P-K}   and Remark \ref{ck}.)

Thus we obtain the quotient polytopes $K_{n+1,m+1}:=P_{m+n}/\sim$ of dimension
$m+n-1$ determined by the set $\mathfrak{m}\ast_{ck}\,\mathfrak{n}$ together
with the cellular projection
\[
\vartheta_{n,m}:P_{m+n}\rightarrow K_{n+1,m+1}.
\]
While $K_{n+1,1}=K_{1,n+1}$ is the associahedron $K_{n+1}$ for all $n\geq1$,
$K_{n+1,m+1}$ is the permutahedron $P_{m+n}$ for $1\leq m,n\leq2$ and
$K_{n,2}=K_{2,n}$ is the multiplihedron $J_{n}$ for all $n$ (cf. Remark
\ref{ck}).

The canonical projection $\vartheta\vartheta$ defined in (\ref{co-bivartheta})
and the bijection $g$ defined in (\ref{PP}) induce the subdivision of
$K_{n+1,m+1}$ that produces the biassociahedron $KK_{n+1,m+1}$ and commutes
the following diagram:
\[%
\begin{array}
	[c]{ccc}%
	\ \ \ \ PP_{n,m} & \overset{\approx}{\longrightarrow} & P_{m+n}\vspace
	*{0.1in}\\
	\vartheta\vartheta\downarrow\text{ \ \ \ } &  & \ \ \ \ \ \downarrow
	\vartheta_{n,m}\vspace*{0.1in}\\
	KK_{n+1,m+1} & \underset{\approx}{\longrightarrow} & K_{n+1,m+1},
\end{array}
\]
where the horizontal maps are non-cellular homeomorphisms induced by the
subdivision process. We also refer to $KK_{m+1,n+1}$ as the \textit{reduced
	balanced framed join of the associahedra} $K_{m+1}$ and $K_{n+1},$ and denote
$K_{m+1}\ast_{kk}K_{n+1}:=KK_{m+1,n+1}.$

\begin{remark}
	\label{ck}
	Regarding Example \ref{P-K}, the associahedron $K_{n+1}$ is
	identified with $\mathfrak{n}\circledast_{kk}\mathfrak{0}$ but not with
	$K_{n,2}=\mathfrak{n}\smallsetminus\{n\}\circledast_{kk}\mathfrak{1}$. In the
	simplest case $n=4,$ the vertices $\frac{0|0|0|0}{2|3|4|1}$ and $\frac
	{0|0|0|0}{2|1|4|3}$ of $P_{4}$ are identified in $K_{5}$ but not when viewed
	as bipartitions $\frac{0|0|4|0}{2|3|0|1}$ and $\frac{0|0|4|0}{2|1|0|3}$ in
	$P_{4}=P_{3}\ast_{c}P_{1}$ in $K_{4,2}.$ Thus, $K_{4,2}=J_4\neq K_{5}$ while $|K_{4,2}|=|K_{5}|=3$. Verification is an exercise in bipartition matrix factorizations and left to the reader.
\end{remark}

Let $(C_{\ast}(PP_{n,m}),\tilde{\partial})$ and $(C_{\ast}(KK_{n+1,m+1}%
),\partial)$ be the cellular chain complexes of $PP_{n,m}$ and $KK_{n+1,m+1},$
respectively. There exist isomorphisms
\[
(C_{\ast}(\mathfrak{m}\circledast_{pp}\mathfrak{n}),\tilde{\partial
})\overset{\approx}{\longrightarrow}(C_{\ast}(PP_{n,m}),\tilde{\partial})
\]
and
\[
(C_{\ast}(\mathfrak{m}\circledast_{kk}\mathfrak{n}),\partial)\overset{\approx
}{\longrightarrow}(C_{\ast}(KK_{n+1,m+1}),\partial).
\]
These facts, together with Proposition \ref{dif-non} and Theorem
\ref{dif-unital}, immediately imply

\begin{theorem}
	\label{pph}There is a canonical isomorphism of chain complexes%
	\begin{equation}
		\tilde{\iota}_{\ast}:(\tilde{F}(\Theta)_{n,m},\tilde{\partial}%
		)\overset{\approx}{\longrightarrow}(C_{\ast}(PP_{n,m}),\tilde{\partial})
	\end{equation}

	extending the isomorphisms%
	\[
	\tilde{F}(\Theta)_{n+1,0}\overset{\approx}{\longrightarrow}C_{\ast}%
	(PP_{n,0})=C_{\ast}(P_{n})
	\]
	and
	\[
	\tilde{F}(\Theta)_{0,m+1}\overset{\approx}{\longrightarrow}C_{\ast}%
	(PP_{0,m})=C_{\ast}(P_{m}).
	\]
	
\end{theorem}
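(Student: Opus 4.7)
The plan is to define $\tilde{\iota}_{*}$ as the linear extension of a composition of two bijections already constructed in the paper, and then verify that it intertwines the two differentials. First, by definition $\tilde{\mathcal{B}}_{n,m}(\Theta) = \digamma^{-1}(\mathfrak{m}\circledast_{pp}\mathfrak{n})$, so the map $\digamma$ of (\ref{digamma}) restricts to a bijection $\tilde{\mathcal{B}}_{n,m}(\Theta) \overset{\approx}{\to} \mathfrak{m}\circledast_{pp}\mathfrak{n}$; composing with $g: \mathfrak{m}\circledast_{pp}\mathfrak{n} \overset{\approx}{\to} PP_{n,m}$ from (\ref{PP}) and extending $R$-linearly yields the candidate isomorphism $\tilde{\iota}_{*} := \langle g \circ \digamma\rangle$. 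Bijectivity is then automatic, so everything reduces to checking the two maps match up correctly on the chain level.

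For the chain-map property, recall that by Proposition \ref{dif-non} the differential on $\tilde{F}(\Theta)_{n,m}$ is transported along $\digamma$ from the face operator $\tilde{\partial}$ on $\mathfrak{m}\circledast_{pp}\mathfrak{n}$, so the real content is that $g$ intertwines $\tilde{\partial}$ with the cellular boundary on $C_{*}(PP_{n,m})$. I would establish this by inspecting the explicit construction of the subdivision cells in $PP_{n,m}$: a proper cell $e = (w, T(w))$ with $w = \alpha \Cup \beta$ and $T(w) = C_{1}\cdots C_{r}$ carries the face set
\[
\tilde{\partial}(e) = \bigcup_{k} \left\{\bigl(w, \partial^{k}_{\mathbf{M}_{1},\mathbf{N}_{1}} T(w)\bigr),\ \bigl(\partial_{M^{k}}(w),\text{coheretizations of }\partial^{k}_{\mathbf{M}_{2},\mathbf{N}_{2}} T(w)\bigr)\right\},
\]
which is precisely the combination of formally indecomposable ($\tilde{\delta}_{b}$) and formally decomposable ($\tilde{\delta}_{d}$) contributions making up $\tilde{\partial}$ in Definition \ref{extend}. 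Dimensions match because the TD-coherence hypothesis built into $\mathcal{CM}$ guarantees $|e_{k}| = |C_{k}|$ via Proposition \ref{tdimC}, and incidence of cells is controlled by Lemma \ref{basic} together with Proposition \ref{ab}, both of which were designed to enforce exactly the subdivision-cube incidence relations in $\partial P_{n}^{(q)}$. The signs in (\ref{differential}) align with the permutahedral cellular boundary (\ref{Psign}) and the diagonal signs (\ref{DeltaP}) by direct translation of the shuffle/subdivision sign conventions.

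The main obstacle I expect is the bookkeeping around coheretization: a single $\tilde{\delta}_{d}$ face of $T(w)$ may admit several incoherent decompositions that each require distinct coheretizations, and I must verify that each such coheretization corresponds bijectively to a subdivision cell of the boundary face $\partial_{M^{k}}(w) \sqsubset P_{m+n}$. This is precisely the content of Proposition \ref{ab}(2) and the uniqueness clause in Lemma \ref{basic}, so the verification reduces to tracking how the pair $(\overset{\wedge}{\mathbf{e}}, \overset{\vee}{\mathbf{e}})$ attached to each $C_{k}$ restricts under a single-factor replacement — an argument already carried out in Example \ref{subdivision} in the case $PP_{2,2}$, which I would generalize.

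Finally, the boundary cases $n=0$ and $m=0$ are essentially immediate. By Remark \ref{P(n)} we have $\mathfrak{0}\circledast_{pp}\mathfrak{n} = \mathfrak{0}\circledast_{cc}\mathfrak{n} = P(\mathfrak{n})$ (and dually), and the bijection $g$ collapses to the tautological identification of $P(\mathfrak{n})$ with the face poset of $P_{n}$; the face operator $\tilde{\partial}$ restricts to the standard boundary (\ref{Psign}) since there is no nontrivial coheretization to perform in the one-sided case. Thus $\tilde{\iota}_{*}|_{\tilde{F}(\Theta)_{n+1,0}}$ and $\tilde{\iota}_{*}|_{\tilde{F}(\Theta)_{0,m+1}}$ are the required isomorphisms onto $C_{*}(P_{n})$ and $C_{*}(P_{m})$, completing the sketch.
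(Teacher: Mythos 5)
Your proposal is correct and follows essentially the same route as the paper: the paper derives Theorem \ref{pph} "immediately" from the bijection $g$ of (\ref{PP}), the identification $\tilde{\mathcal{B}}_{n,m}(\Theta)=\digamma^{-1}(\mathfrak{m}\circledast_{pp}\mathfrak{n})$, the compatibility of $g$ with face operators established in the construction of the subdivision cells of $PP_{n,m}$, and Proposition \ref{dif-non}. Your elaboration of why $g$ intertwines $\tilde{\partial}$ with the cellular boundary (via Lemma \ref{basic}, Proposition \ref{ab}, and Proposition \ref{tdimC}) matches the paper's own justification point for point, so nothing further is needed.
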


\begin{theorem}
	\label{kkh}There is a canonical isomorphism of chain complexes
	\begin{equation}
		\iota_{\ast}:({\mathcal{H}}_{\infty})_{n,m}\overset{\approx}{\longrightarrow
		}C_{\ast}(KK_{n,m})\label{inftymatrad}%
	\end{equation}
	extending the standard isomorphisms
	\[
	\mathcal{A}_{\infty}(n)=({\mathcal{H}}_{\infty})_{n,1}\overset{\approx
	}{\longrightarrow}C_{\ast}(KK_{n,1})=C_{\ast}(K_{n})
	\]
	and
	\[
	\mathcal{A}_{\infty}(m)=({\mathcal{H}}_{\infty})_{1,m}\overset{\approx
	}{\longrightarrow}C_{\ast}(KK_{1,m})=C_{\ast}(K_{m}).
	\]
	
\end{theorem}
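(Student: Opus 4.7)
The strategy is to assemble three ingredients already established in the paper: the bijection $\digamma$ between free monomials and framed partitions, the descent of $\digamma$ to the unital quotients, and the identification of the cellular chains of $KK$ with the chain complex on $\mathfrak{m}\circledast_{kk}\mathfrak{n}$. Specifically, since $\Theta_{n,m}$ is a singleton whenever it is non-empty, the map $\digamma$ of (\ref{digamma}) is a bijection, which by definition of $\tilde{\mathcal{B}}_{n,m}(\Theta) = \digamma^{-1}(\mathfrak{m}\circledast_{pp}\mathfrak{n})$ restricts to a bijection
\[
\tilde{\mathcal{B}}_{n-1,m-1}(\Theta) \overset{\approx}{\longrightarrow} \mathfrak{m-1}\circledast_{pp}\mathfrak{n-1}.
\]
Both sides are then quotiented by compatible relations: on the left by $B \sim \mathbf{1}B \sim B\mathbf{1}$ to obtain $\mathcal{B}_{n,m}(\Theta)$, and on the right by the reduced balanced framed join relation of (\ref{co-bivartheta}) to obtain $\mathfrak{m-1}\circledast_{kk}\mathfrak{n-1}$. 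Each relation deletes exactly the unital biblocks in entries, so the bijection descends, and extending $R$-linearly gives an isomorphism of bigraded modules $\iota_\ast : (\mathcal{H}_{\infty})_{n,m} \overset{\approx}{\to} C_\ast(KK_{n,m})$.

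Next I would verify that $\iota_\ast$ is degree preserving and intertwines the differentials. Degree preservation is immediate because the dimension of a monomial $\xi \in \tilde{\mathcal{B}}(\Theta)$ is defined as $|\xi| := |\digamma(\xi)|$, and this respects the unital quotient on both sides by the dimension formula of Proposition \ref{tdimC} (combined with the reduction in the row/column algorithms that ignores empty biblocks). Chain-map compatibility then reduces to Theorem \ref{dif-unital}, which already provides a canonical isomorphism $(F(\Theta), \partial) \overset{\approx}{\to} (C_\ast(\mathfrak{m}\circledast_{kk}\mathfrak{n}), \partial)$, composed with the tautological identification $C_\ast(\mathfrak{m-1}\circledast_{kk}\mathfrak{n-1}) = C_\ast(KK_{n,m})$ afforded by the construction of $KK_{n,m}$ as the geometric realization of the reduced balanced framed join in Section \ref{PP-KK}.

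The operadic edge cases follow by specialization. When $m = 1$ the only generators of bidegree $(k,1)$ in $\Theta$ that survive are $\mathbf{1}$ and $\theta_k^1$, the set $\mathcal{B}_{n,1}(\Theta)$ is Stasheff's set of parenthesizations of a word of length $n$, and by Remark \ref{ck} and the identification $\mathfrak{0}\circledast_{kk}\mathfrak{n-1} = P(\mathfrak{n-1})/\sim \cong K_n$, the isomorphism $\iota_\ast$ restricts to the classical identification $\mathcal{A}_\infty(n) \cong C_\ast(K_n)$; the case $n = 1$ is dual.

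The main obstacle I anticipate is a careful bookkeeping check that the bijection $\digamma$ on the nose carries the face operator $\tilde{\partial}$ on $\mathfrak{m}\circledast_{pp}\mathfrak{n}$ (with all its entry, row, column, and row/column actions, including coheretizations) to the differential of Definition \ref{freematrad} on the level of monomials, together with matching of signs as specified in (\ref{differential}). However, once the PP-level isomorphism $\tilde{\iota}_\ast$ of Theorem \ref{pph} is in hand, the desired $\iota_\ast$ is obtained by passage to the unital quotient via $\varsigma\varsigma$ on the left and $\vartheta\vartheta$ on the right, a compatibility that is already built into the constructions of these two projections. No new computation beyond what appears in Propositions \ref{d-tildepartial} and \ref{dif-non} and Theorem \ref{df-partial} is required.
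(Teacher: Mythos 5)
Your proposal is correct and follows essentially the same route as the paper, which derives Theorem \ref{kkh} immediately from the commutative square relating $\varsigma\varsigma$ and $\vartheta\vartheta$, Proposition \ref{dif-non}, Theorem \ref{dif-unital}, and the identification of $C_{\ast}(\mathfrak{m}\circledast_{kk}\mathfrak{n})$ with $C_{\ast}(KK_{n+1,m+1})$ from Section \ref{PP-KK}. Your additional remarks on degree preservation and the operadic edge cases are consistent with what the paper leaves implicit.
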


The contractibility of $KK_{n,m}$ implies that the canonical projection
$\varrho:\left(  \mathcal{H}_{\infty},\partial\right)  \rightarrow\left(
{\mathcal{H}}^{pre},0\right)  $ is the minimal resolution mentioned in
the introduction, and an $A_{\infty}$-bialgebra structure on a DGM $H$ is
given by a morphism of matrads $\mathcal{H}_{\infty}\rightarrow U_{H};$ thus, an $A_\infty$-bialgebra is an algebra over $\mathcal{H}_\infty$.

Pictures of $KK_{n,m}$ with codimension 1 faces labeled by bipartitions in
$\mathfrak{m}\circledast_{kk}\mathfrak{n}$ for $2\leq m+n\leq6$ appear below.
These pictures first appeared in \cite{Markl2} and subsequently in \cite{SU4}.
\pagebreak

\noindent\underline{For $KK_{1,3}$:}

$
\begin{array}
	[c]{lllll}%
	1|2 & \leftrightarrow & \frac{0|0}{1|2} & \leftrightarrow & \gamma(\theta
	_{2}^{1}\,; \theta_{1}^{1}\theta_{2}^{1})\vspace{1mm}\newline\\
	2|1 & \leftrightarrow & \frac{0|0}{2|1} & \leftrightarrow & \gamma(\theta
	_{2}^{1}\,; \theta_{2}^{1}\theta^{1}_{1})\\
	&  &  &  &
\end{array}
$

\noindent\underline{For $KK_{3,1}$:}

$
\begin{array}
	[c]{lllll}%
	1|2 & \leftrightarrow & \frac{1|2}{0|0} & \leftrightarrow & \gamma(\theta
	_{1}^{2}\theta_{1}^{1} \,;\theta_{1}^{2})\vspace{1mm}\newline\\
	2|1 & \leftrightarrow & \frac{2|1}{0|0} & \leftrightarrow & \gamma(\theta
	_{1}^{1}\theta_{1}^{2} \,; \theta_{1}^{2})\\
	&  &  &  &
\end{array}
$

\vspace{0.1in}

\hspace{0.8in}\unitlength=1.00mm
\linethickness{0.4pt} \ifx\plotpoint\undefined\newsavebox{\plotpoint}\fi
\begin{picture}(66.625,9.75)(0,0)
	\put(9.25,6.625){\line(1,0){18.875}}
	\put(67.25,6.5){\line(1,0){18.875}}
	\put(9.25,6.5){\circle*{.75}}
	\put(9.25,6.625){\circle*{.75}}
	\put(28.125,6.625){\circle*{1}}
	\put(86.125,6.5){\circle*{1}}
	\put(9.25,6.625){\circle*{.75}}
	\put(9.25,6.625){\circle*{1}}
	\put(67.25,6.5){\circle*{1}}
	\put(9.125,9.75){\makebox(0,0)[cc]{${1|2}$}}
	\put(67.125,9.625){\makebox(0,0)[cc]{${1|2}$}}
	\put(28.125,9.5){\makebox(0,0)[cc]{$2|1$}}
	\put(86.125,9.375){\makebox(0,0)[cc]{$2|1$}}
	\put(9.25,    2.1){\makebox(0,0)[cc]{$\frac{0|0}{1|2}$}}
	\put(28.125,  2.1){\makebox(0,0)[cc]{$\frac{0|0}{2|1}$}}
	\put(67.25, 2.1){\makebox(0,0)[cc]{${\frac{1|2}{0|0}}$}}
	\put(86.125,2.1){\makebox(0,0)[cc]{$\frac{2|1}{0|0}$}}
\end{picture}
\begin{center}
	Figure 9. The biassociahedra $KK_{1,3}=K_{3}$ and $KK_{3,1}=K_{3}$
	(intervals). \vspace{0.15in}
\end{center}
\vspace{-0.1in}

\noindent\underline{For $KK_{2,2}$:}

$
\begin{array}
	[c]{lllll}%
	1|2 & \leftrightarrow & \frac{0|1}{1|0} & \leftrightarrow & \gamma(\theta
	_{2}^{1}\theta_{2}^{1}\,;\theta_{1}^{2}\theta_{1}^{2})\vspace{1mm}\newline\\
	2|1 & \leftrightarrow & \frac{1|0}{0|1} & \leftrightarrow & \gamma(\theta
	_{1}^{2}\,;\theta_{2}^{1})\\
	&  &  &  &
\end{array}
$

\hspace{0.1in}
\unitlength=1.00mm\special{em:linewidth 0.4pt}
\linethickness{0.4pt}
\begin{picture}(86.33,13.33)
	\put(43.00,8.67){\line(1,0){43.00}} \put(43.33,8.67){\circle*{1.33}}
	\put(85.67,8.67){\circle*{1.33}} \put(43.33,13.33){\makebox(0,0)[cc]{$1|2$}}
	\put(43.33,03.33){\makebox(0,0)[cc]{$\frac{0|1}{1|0}$}}
	\put(85.67,13.00){\makebox(0,0)[cc]{$2|1$}}
	\put(85.67,03.00){\makebox(0,0)[cc]{$\frac{1|0}{0|1}$}}
\end{picture}

\begin{center}
	Figure 10. The biassociahedron $KK_{2,2}$ (an interval).\vspace{0.1in}
\end{center}
\bigskip

\noindent\underline{For $KK_{3,2}$:}

$
\begin{array}
	[c]{lllll}
	1|23 & \leftrightarrow & \frac{0|12}{1|0} &  \leftrightarrow & \gamma(\theta_{2}^{1}\theta_{2}
	^{1}\theta_{2}^{1}\,;\gamma(\theta_{1}^{2}{\theta_{1}^{1}};\theta_{1}
	^{2})\theta_{1}^{3}+\theta_{1}^{3}\gamma({\theta_{1}^{1}}\theta_{1}^{2}
	;\theta_{1}^{2}))\vspace{1mm}\newline\\
	13|2 & \leftrightarrow & \frac{2|1}{1|0} &  \leftrightarrow& \gamma(\theta_{2}^{1}\theta_{2}
	^{2}\,;\theta_{1}^{2}\theta_{1}^{2})\vspace{1mm}\newline\\
	3|12 & \leftrightarrow & \frac{2|1}{0|1}&  \leftrightarrow& \gamma(\theta_{1}^{1}\theta_{1}
	^{2}\,;\theta_{2}^{2})\vspace{1mm}\newline\\
	12|3 & \leftrightarrow & \frac{1|2}{1|0} &  \leftrightarrow& \gamma(\theta_{2}^{2}\theta_{2}
	^{1}\,;\theta_{1}^{2}\theta_{1}^{2})\vspace{1mm}\newline\\
	2|13 & \leftrightarrow & \frac{1|2}{0|1} &  \leftrightarrow & \gamma(\theta_{1}^{2}\theta_{1}
	^{1}\,;\theta_{2}^{2})\vspace{1mm}\newline\\
	23|1 & \leftrightarrow & \frac{12|0}{0|1}&  \leftrightarrow & \gamma(\theta_{1}^{3}\,;\theta_{2}
	^{1})\\
	&  &  &  &
\end{array}
$
\newline \noindent\underline{For $KK_{2,3}$:}

$
\begin{array}
	[c]{lllll}
	1|23 & \leftrightarrow & \frac{0|1}{1|2} &  \leftrightarrow & \gamma(\theta_{2}^{1}\theta_{2}
	^{1}\,;\theta_{1}^{2}\theta_{2}^{2})\vspace{1mm}\newline\\
	13|2 & \leftrightarrow & \frac{1|0}{1|2} &  \leftrightarrow & \gamma(\theta_{2}^{2}\,;\theta
	_{1}^{1}\theta_{2}^{1})\vspace{1mm}\newline\\
	3|12 & \leftrightarrow & \frac{1|0}{0|12} &  \leftrightarrow& \gamma(\theta_{1}^{2}\,;\theta_{3}
	^{1})\vspace{1mm}\newline\\
	12|3 & \leftrightarrow & \frac{0|1}{12|0} &  \leftrightarrow & \gamma(\gamma(\theta_{2}
	^{1}\,;\theta_{1}^{1}\theta_{2}^{1})\theta_{3}^{1}+ \theta_{3}^{1}
	\gamma(\theta_{2}^{1}\,;\theta_{2}^{1}\theta_{1}^{1})\,;\theta_{1}^{2}
	\theta_{1}^{2}\theta_{1}^{2})\vspace{1mm}\newline\\
	2|13 & \leftrightarrow & \frac{0|1}{2|1}&  \leftrightarrow & \gamma(\theta_{2}^{1}\theta_{2}
	^{1}\,;\theta_{2}^{2}\theta_{1}^{2})\vspace{1mm}\newline\\
	23|1 & \leftrightarrow & \frac{1|0}{2|1} &  \leftrightarrow & \gamma(\theta_{2}^{2}\,;\theta
	_{2}^{1}\theta_{1}^{1})\\
	&  &  &  &
\end{array}
$
\pagebreak
\
\vspace*{0.2in}

\hspace*{-0.5in}
\unitlength=1.00mm\special{em:linewidth 0.4pt}
\linethickness{0.4pt}
\begin{picture}(88.33,43.67)
	\put(6.67,40.67){\line(1,0){48.33}}
	\put(55.00,40.67){\line(0,-1){31.67}}
	\put(55.00,9.00){\line(-1,0){48.33}}
	\put(6.67,9.00){\line(0,1){31.67}}
	\put(6.67,40.67){\circle*{1.33}}
	\put(6.67,25.00){\circle*{1.33}}
	\put(6.67,9.00){\circle*{1.33}}
	\put(55.00,40.67){\circle*{1.33}}
	\put(55.00,9.00){\circle*{1.33}}
	\put(55.00,25.00){\circle*{1.33}}
	\put(6.67,17.00){\circle*{0.67}}
	\put(1.33,33.33){\makebox(0,0)[cc]{$13|2$}}
	\put(10.33,33.33){\makebox(0,0)[cc]{$\frac{2|1}{1|0}$}}
	\put(1.33,17.00){\makebox(0,0)[cc]{$1|23$}}
	\put(10.80,17.00){\makebox(0,0)[cc]{$\frac{0|12}{1|0}$}}
	\put(29.00,5.00){\makebox(0,0)[cc]{$12|3$}}
	\put(29.00,13.00){\makebox(0,0)[cc]{$\frac{1|2}{1|0}$}}
	\put(30.00,43.67){\makebox(0,0)[cc]{$3|12$}}
	\put(30.00,35.67){\makebox(0,0)[cc]{$\frac{2|1}{0|1}$}}
	\put(60.0,17.00){\makebox(0,0)[cc]{$2|13$}}
	\put(51.0,17.00){\makebox(0,0)[cc]{$\frac{1|2}{0|1}$}}
	\put(60.0,32.67){\makebox(0,0)[cc]{$23|1$}}
	\put(51.0,32.67){\makebox(0,0)[cc]{$\frac{12|0}{0|1}$}}
	\put(80.67,40.67){\line(1,0){48.33}}
	\put(129.00,40.67){\line(0,-1){31.67}}
	\put(129.00,9.00){\line(-1,0){48.33}}
	\put(80.67,9.00){\line(0,1){31.67}}
	\put(80.67,40.67){\circle*{1.33}}
	\put(80.67,25.00){\circle*{1.33}}
	\put(80.67,9.00){\circle*{1.33}}
	\put(129.00,40.67){\circle*{1.33}}
	\put(129.00,9.00){\circle*{1.33}}
	\put(129.00,25.00){\circle*{1.33}}
	\put(75.5,33.33){\makebox(0,0)[cc]{$13|2$}}
	\put(84.33,33.33){\makebox(0,0)[cc]{$\frac{1|0}{1|2}$}}
	\put(75.5,17.00){\makebox(0,0)[cc]{$1|23$}}
	\put(84.80,17.00){\makebox(0,0)[cc]{$\frac{0|1}{1|2}$}}
	\put(104.50,5.00){\makebox(0,0)[cc]{$12|3$}}
	\put(104.50,13.00){\makebox(0,0)[cc]{$\frac{0|1}{12|0}$}}
	\put(104.00,43.67){\makebox(0,0)[cc]{$3|12$}}
	\put(104.00,35.67){\makebox(0,0)[cc]{$\frac{1|0}{0|12}$}}
	\put(134.0,17.00){\makebox(0,0)[cc]{$2|13$}}
	\put(125.33,17.00){\makebox(0,0)[cc]{$\frac{0|1}{2|1}$}}
	\put(134.0,32.67){\makebox(0,0)[cc]{$23|1$}}
	\put(126.33,32.67){\makebox(0,0)[cc]{$\frac{1|0}{2|1}$}}
	\put(104.33,9.00){\circle*{0.67}}
\end{picture}
\vspace{-.2in}
\begin{center}
	Figure 11. The biassociahedra $KK_{3,2}$ and $KK_{2,3}$ (heptagons).
\end{center}

\noindent\underline{For $KK_{3,3}$:}

$
\begin{array}
	[c]{lllll}
	1|234 & \leftrightarrow & \frac{0|12}{1|2} & \leftrightarrow & \gamma(\theta_{2}^{1}\theta
	_{2}^{1}\theta_{2}^{1}\,; \gamma(\theta_{1}^{2}\theta_{1}^{1}\,;\theta_{1}
	^{2}) \theta_{2}^{3} +a+b), \text{ where}\\
	&  &  &  & \ \ \ \ a+b=\theta_{1}^{3} \gamma(\theta_{2}^{1}\theta_{2}
	^{2}\,;\theta_{1}^{2}\theta_{1}^{2})+ \theta_{1}^{3} \gamma(\theta_{1}
	^{1}\theta_{1}^{2} \,;\theta_{2}^{2}) \vspace{1mm}\newline\\
	123|4 & \leftrightarrow & \frac{1|2}{12|0}  & \leftrightarrow & \gamma(c+d+\theta_{3}^{2}
	\gamma(\theta_{2}^{1}\,;\theta_{2}^{1}\theta_{1}^{1})\,;\theta_{1}^{2}
	\theta_{1}^{2}\theta_{1}^{2}),\text{ where}\\
	&  &  &  & \ \ \ \ c+d=\gamma(\theta_{2}^{1}\theta_{2}^{1}\,; \theta_{1}
	^{2}\theta_{2}^{2})\theta_{3}^{1}+\gamma(\theta_{2}^{2}\,;\theta_{1}^{1}
	\theta_{2}^{1})\theta_{3}^{1}\vspace{1mm}\newline\\
	2|134 & \leftrightarrow & \frac{0|12}{2|1}  & \leftrightarrow& \gamma(\theta_{2}^{1}\theta_{2}^{1}\theta_{2}^{1}\,; \theta_{2}^{3}\gamma(\theta_{1}^{1}\theta_{1}
	^{2}\,;\theta_{1}^{2}) +e+f),\text{ where}\\
	&  &  &  & \ \ \ \ e+f= \gamma(\theta_{2}^{2}\theta_{2}^{1}\,;\theta_{1}
	^{2}\theta_{1}^{2})\theta_{1}^{3} + \gamma(\theta_{1}^{2}\theta_{1}
	^{1}\,;\theta_{2}^{2}) \theta_{1}^{3}
\end{array}
$	

$
\begin{array}
	[c]{lllll}
	124|3 & \leftrightarrow & \frac{2|1}{12|0}  & \leftrightarrow& \gamma(g+h+\gamma(\theta_{2}
	^{1}\,;\theta_{1}^{1}\theta_{2}^{1})\theta_{3}^{2}\,;\theta_{1}^{2}\theta
	_{1}^{2}\theta_{1}^{2}),\text{ where}\\
	&  &  &  & \ \ \ \ g+h=\theta_{3}^{1}\gamma(\theta_{2}^{1}\theta_{2}
	^{1}\,;\theta_{2}^{2}\theta_{1}^{2})+\theta_{3}^{1}\gamma(\theta_{2}
	^{2}\,;\theta_{2}^{1}\theta_{1}^{1})
	\vspace{1mm}\newline\\
	134|2 & \leftrightarrow & \frac{12|0}{1|2}  & \leftrightarrow& \gamma(\theta_{2}^{3}\,;\theta
	_{1}^{1}\theta_{2}^{1})\vspace{1mm}\newline\\
	234|1 & \leftrightarrow & \frac{12|0}{2|1}  & \leftrightarrow & \gamma(\theta_{2}^{3}\,;\theta
	_{2}^{1}\theta_{1}^{1})\vspace{1mm}\newline\\
	3|124 & \leftrightarrow & \frac{1|2}{0|12} & \leftrightarrow& \gamma(\theta_{1}^{2}\theta_{1}
	^{1}\,;\theta_{3}^{2})\vspace{1mm}\newline\\
	4|123 & \leftrightarrow & \frac{2|1}{0|12}  & \leftrightarrow & \gamma(\theta_{1}^{1}\theta_{1}
	^{2}\,;\theta_{3}^{2})\vspace{1mm}\newline\\
	23|14 & \leftrightarrow & \frac{1|2}{2|1}  & \leftrightarrow & \gamma(\theta_{2}^{2}\theta_{2}
	^{1}\,;\theta_{2}^{2}\theta_{1}^{2})\vspace{1mm}\newline\\
	14|23 & \leftrightarrow & \frac{2|1}{1|2}  & \leftrightarrow& \gamma(\theta_{2}^{1}\theta_{2}
	^{2}\,;\theta_{1}^{2}\theta_{2}^{2})\vspace{1mm}\newline\\
	24|13 & \leftrightarrow & \frac{2|1}{2|1}  &\leftrightarrow & \gamma(\theta_{2}^{1}\theta_{2}
	^{2}\,;\theta_{2}^{2}\theta_{1}^{2})\vspace{1mm}\newline\\
	13|24 & \leftrightarrow & \frac{1|2}{1|2}  & \leftrightarrow & \gamma(\theta_{2}^{2}\theta_{2}
	^{1}\,;\theta_{1}^{2}\theta_{2}^{2})\vspace{1mm}\newline\\
\end{array}
$	

$
\begin{array}
	[c]{lllll}
	34|12 & \leftrightarrow & \frac{12|0}{0|12}  & \leftrightarrow & \gamma(\theta_{1}^{3}\,;\theta
	_{3}^{1})\vspace{1mm}\newline\\
	12|34 & \leftrightarrow & \frac{0|12}{12|0}  & \leftrightarrow & \gamma\lbrack\theta_{3}^{1}
	\gamma(\theta_{2}^{1}\,;\theta_{2}^{1}\theta_{1}^{1})\gamma(\theta_{2}
	^{1}\,;\theta_{2}^{1}\theta_{1}^{1})+\gamma(\theta_{2}^{1}\,; \theta_{1}
	^{1}\theta_{2}^{1})\theta_{3}^{1}\gamma(\theta_{2}^{1}\,;\theta_{2}^{1}
	\theta_{1}^{1})\\
	&  &  &  & \ \ \ \ \ \ +\gamma(\theta_{2}^{1}\,;\theta_{2}^{1}\theta_{1}
	^{1})\gamma(\theta_{2}^{1}\,;\theta_{2}^{1}\theta_{1}^{1})\theta_{3}
	^{1}\,;\vspace{1mm}\newline\\
	&  &  &  & \ \ \ \theta_{1}^{3}\gamma(\theta_{1}^{1}\theta_{1}^{2}
	\,;\theta_{1}^{2})\gamma(\theta_{1}^{1}\theta_{1}^{2}\,;\theta_{1}^{2}
	)+\gamma(\theta_{1}^{2}\theta_{1}^{1}\,;\theta_{1}^{2})\theta_{1}^{3}
	\gamma(\theta_{1}^{1}\theta_{1}^{2}\,;\theta_{1}^{2})+\\
	&  &  &  & \ \ \ \ \ \ +\gamma(\theta_{1}^{2}\theta_{1}^{1}\,;\theta_{1}
	^{2})\gamma(\theta_{1}^{2} \theta_{1}^{1}\,;\theta_{1}^{2})\theta_{1}^{3}]\\
	&  &  &  &
\end{array}
$

\
\vspace{0.2in}
\hspace*{-.5in}
\unitlength 1mm 
\linethickness{0.4pt}
\ifx\plotpoint\undefined\newsavebox{\plotpoint}\fi 
\begin{picture}(98.5,76.995)(0,0)
	\put(67,5.33){\line(1,0){30.33}}
	\put(97.33,15.33){\line(-1,0){30.33}}
	\put(97.33,25.33){\line(-1,0){30.33}}
	\put(97.33,35.33){\line(-1,0){30.33}}
	\put(77,5.33){\line(0,1){30}}
	\put(67.33,29.33){\line(1,0){4.67}}
	\put(72,35){\line(0,-1){5.67}}
	\put(97.67,55.33){\line(0,-1){50}}
	\put(66.67,5.33){\line(0,1){49.67}}
	\put(66.67,55){\line(1,0){31}}
	\put(92.33,40){\line(0,1){0}}
	\put(97.67,55){\line(-6,5){25.67}}
	\put(66.67,5.33){\line(-6,5){28.67}}
	\put(38.33,29.33){\line(0,1){47}}
	\put(66.67,55){\line(-4,3){28.33}}
	\put(38.33,76.33){\line(1,0){33.67}}
	\put(72,55.33){\line(0,1){21}}
	\put(79.33,70.66){\line(0,-1){15.33}}
	\put(79.33,54.66){\line(0,-1){19}}
	\put(79.33,35){\line(0,-1){9.33}}
	\put(79.33,25){\line(0,-1){2.33}}
	\put(66.67,45){\line(-6,5){20.67}}
	\put(46,45.33){\line(-1,1){7.67}}
	\put(38.33,29.33){\line(1,0){7.33}}
	\put(38.33,70.66){\line(1,0){6.33}}
	\put(47.67,70.66){\line(1,0){24.33}}
	\put(46,22.66){\line(0,1){48}}
	\put(46,70.66){\circle*{.67}}
	\put(46,62.33){\circle*{1.33}}
	\put(46,70.66){\circle*{1.33}}
	\put(38.33,76.33){\circle*{1.33}}
	\put(38.33,70.66){\circle*{1.33}}
	\put(72,76.33){\circle*{1.33}}
	\put(72,70.66){\circle*{1.33}}
	\put(79.33,70.33){\circle*{1.33}}
	\put(79.33,61){\circle*{1.33}}
	\put(97.67,55){\circle*{1.33}}
	\put(97.67,35.33){\circle*{1.33}}
	\put(66.67,55){\circle*{1.33}}
	\put(66.67,45.33){\circle*{1.33}}
	\put(72,51.33){\circle*{1.33}}
	\put(79.33,44.66){\circle*{1.33}}
	\put(72,29.33){\circle*{.67}}
	\put(72,29.33){\circle*{1.33}}
	\put(79.33,23){\circle*{1.33}}
	\put(66.67,35.33){\circle*{1.33}}
	\put(66.67,5.33){\circle*{1.33}}
	\put(97.67,5.33){\circle*{1.33}}
	\put(46,45.66){\circle*{1.33}}
	\put(38.33,52.66){\circle*{1.33}}
	\put(38.33,29.33){\circle*{1.33}}
	\put(46,22.66){\circle*{1.33}}
	\put(66.67,25.33){\circle*{.67}}
	\put(66.67,15.33){\circle*{.67}}
	\put(77,35.33){\circle*{0}}
	\put(77,25.33){\circle*{.67}}
	\put(77,15.33){\circle*{0}}
	\put(77,5.33){\circle*{.67}}
	\put(87,25.33){\circle*{.67}}
	\put(87,15.33){\circle*{.67}}
	\put(87,5.33){\circle*{.67}}
	\put(97.67,25.33){\circle*{.67}}
	\put(97.67,15.33){\circle*{.67}}
	\put(79.33,52){\circle*{.67}}
	\put(77,35.33){\circle*{.67}}
	\put(77,15.33){\circle*{.67}}
	\put(46,53){\circle*{.67}}
	\put(88,14.33){\line(1,-1){9.67}}
	\put(86.33,16){\line(-1,1){7}}
	\put(79.33,23){\line(-6,5){7.33}}
	\put(87,5.75){\line(0,1){29.75}}
	\multiput(79.5,61)(.044207317,-.033536585){164}{\line(1,0){.044207317}}
	\put(87.5,24.5){\line(0,-1){.25}}
	\multiput(87.75,24.75)(.0354545455,-.0336363636){275}{\line(1,0){.0354545455}}
	\multiput(86.25,26)(-.03353659,.03963415){82}{\line(0,1){.03963415}}
	\multiput(79.5,23)(.03289474,.04605263){38}{\line(0,1){.04605263}}
	\multiput(81.5,25.75)(.03358209,.04850746){67}{\line(0,1){.04850746}}
	\multiput(72,21.5)(.1956522,.0326087){23}{\line(1,0){.1956522}}
	\multiput(77.5,22.5)(.125,.03125){8}{\line(1,0){.125}}
	\multiput(68,29.25)(.03365385,-.0625){52}{\line(0,-1){.0625}}
	\multiput(70.5,24.75)(.03365385,-.05769231){52}{\line(0,-1){.05769231}}
	\multiput(72.25,21.25)(.038461538,-.033653846){104}{\line(1,0){.038461538}}
	\multiput(77.75,17)(.0416667,-.0333333){30}{\line(1,0){.0416667}}
	\multiput(80.75,14.5)(.040780142,-.033687943){141}{\line(1,0){.040780142}}
	\multiput(45.75,53.25)(.0337078652,-.0453451043){623}{\line(0,-1){.0453451043}}
	\multiput(56,39.5)(.060393258,.033707865){178}{\line(1,0){.060393258}}
	\put(46.5,29.25){\line(1,0){16.25}}
	\put(64.25,29.25){\line(1,0){2}}
	\multiput(87.5,9.25)(.046218487,-.033613445){119}{\line(1,0){.046218487}}
	\put(76.75,35.5){\line(0,1){19.75}}
	\put(66.75,45.5){\line(1,0){10}}
	\put(72,54.5){\line(0,-1){8.25}}
	\put(72,45){\line(0,-1){9.25}}
	\multiput(71.75,51.25)(.035447761,-.03358209){134}{\line(1,0){.035447761}}
	\multiput(77.25,46.25)(.03365385,-.03365385){52}{\line(0,-1){.03365385}}
	\multiput(79.25,52.25)(.033482143,-.147321429){112}{\line(0,-1){.147321429}}
	\multiput(83.25,34.75)(.0333333,-.3666667){15}{\line(0,-1){.3666667}}
	\put(62.75,37.25){\makebox(0,0)[cc]{$a$}}
	\put(55.25,47.75){\makebox(0,0)[cc]{$b$}}
	\put(69.5,40){\makebox(0,0)[cc]{$c$}}
	\put(69.5,48.25){\makebox(0,0)[cc]{$d$}}
	\put(80.5,18.5){\makebox(0,0)[cc]{$g$}}
	\put(72.0,27.0){\makebox(0,0)[cc]{$h$}}
	\put(94.25,11.5){\makebox(0,0)[cc]{$e$}}
	\put(81.5,30.5){\makebox(0,0)[cc]{$f$}}
	\put(87,35.25){\circle*{1}}
	\put(76.75,45.5){\circle*{1}}
	\put(76.75,55){\circle*{1}}
	\put(55.75,39.75){\circle*{1}}
	\put(72.25,21.5){\circle*{1}}
	\put(83.75,29.25){\circle*{1}}
	\put(68.25,29.25){\circle*{1}}
	\put(92.75,5.5){\circle*{1}}
	\put(88,54.5){\line(5,-4){10}}
	\put(97.75,46.75){\circle*{1.5}}
\end{picture}
\begin{center}
	Figure 12. The biassociahedron $KK_{3,3}$ as a subdivision of $P_{4}$.
\end{center}

\noindent\underline{For $KK_{4,2}$:}

$
\begin{array}
	[c]{lllll}
	1|234 & \leftrightarrow & \frac{0|123}{1|0}  & \leftrightarrow& \gamma(\theta_{2}^{1}\theta
	_{2}^{1}\theta_{2}^{1}\theta_{2}^{1}\,;a+b-c+d+e+f),\text{ where}\\
	&  &  &  & \ \ \ a=\theta_{1}^{4}\gamma(\theta_{1}^{1}\gamma(\theta_{1}
	^{1}\theta_{1}^{2}\,;\theta_{1}^{2})\,;\theta_{1}^{2})\\
	&  &  &  & \ \ \ b=\gamma(\gamma(\theta_{1}^{2}\theta_{1}^{1}\,;\theta_{1}
	^{2})\theta_{1}^{1}\,;\theta_{1}^{2})\theta_{1}^{4}\\
	&  &  &  & \ \ \ c=\gamma(\theta_{1}^{2}\theta_{1}^{1}\theta_{1}^{1}
	\,;\theta_{1}^{3})\,\gamma(\theta_{1}^{1}\theta_{1}^{1}\theta_{1}^{2}
	\,;\theta_{1}^{3})\\
	&  &  &  & \ \ \ d=\gamma(\theta_{1}^{3}\theta_{1}^{1}\,;\theta_{1}
	^{2})\,\gamma(\theta_{1}^{1}\theta_{1}^{2}\theta_{1}^{1}\,;\theta_{1}^{3})\\
	&  &  &  & \ \ \ e=\gamma(\theta_{1}^{3}\theta_{1}^{1}\,;\theta_{1}
	^{2})\,\gamma(\theta_{1}^{1}\theta_{1}^{3}\,;\theta_{1}^{2})\\
	&  &  &  & \ \ \ f=\gamma(\theta_{1}^{1}\theta_{1}^{2}\theta_{1}^{1}
	\,;\theta_{1}^{3})\,\gamma(\theta_{1}^{1}\theta_{1}^{3}\,;\theta_{1}^{2})\\
	\\
	123|4 & \leftrightarrow & \frac{12|3}{1|0} & \leftrightarrow & \gamma(\theta_{2}^{3}\theta_{2}
	^{1}\,;\theta_{1}^{2}\theta_{1}^{2})\vspace{1mm}\newline\\
	2|134 & \leftrightarrow & \frac{1|23}{0|1}  & \leftrightarrow & \gamma(\theta_{1}^{2}\theta_{1}
	^{1}\theta_{1}^{1}\,;\theta_{2}^{3})\vspace{1mm}\newline\\
	124|3 & \leftrightarrow & \frac{13|2}{1|0}  & \leftrightarrow & \gamma(\theta_{2}^{2}\theta_{2}
	^{2}\,;\theta_{1}^{2}\theta_{1}^{2})\vspace{1mm}\newline\\
	134|2 & \leftrightarrow & \frac{23|1}{1|0}  & \leftrightarrow & \gamma(\theta_{2}^{1}\theta_{2}
	^{3}\,;\theta_{1}^{2}\theta_{1}^{2})\vspace{1mm}\newline\\
	234|1 & \leftrightarrow & \frac{123|0}{0|1}  & \leftrightarrow & \gamma(\theta_{1}^{4}\,;\theta
	_{2}^{1})\vspace{1mm}\newline\\
	3|124 & \leftrightarrow & \frac{2|13}{0|1}  & \leftrightarrow & \gamma(\theta_{1}^{1}\theta_{1}
	^{2}\theta_{1}^{1}\,;\theta_{2}^{3})\vspace{1mm}\newline\\
	4|123 & \leftrightarrow & \frac{3|12}{0|1} & \leftrightarrow & \gamma(\theta_{1}^{1}\theta_{1}
	^{1}\theta_{1}^{2}\,;\theta_{2}^{3})\vspace{1mm}\newline\\
\end{array}
$	

$
\begin{array}
	[c]{lllll}
	13|24 & \leftrightarrow & \frac{2|13}{1|0}  & \leftrightarrow& \gamma\lbrack\theta_{2}^{1}
	\theta_{2}^{2}\theta_{2}^{1}\,;\theta_{1}^{3}\gamma(\theta_{1}^{1}\theta
	_{1}^{2}\,;\theta_{1}^{2})+\gamma(\theta_{1}^{2}\theta_{1}^{1}\,;\theta
	_{1}^{2})\theta_{1}^{3}]\vspace{1mm}\newline\\
	14|23 & \leftrightarrow & \frac{3|12}{1|0} &\leftrightarrow & \gamma\lbrack\theta_{2}^{1}
	\theta_{2}^{1}\theta_{2}^{2}\,;\theta_{1}^{3}\gamma(\theta_{1}^{1}\theta
	_{1}^{2}\,;\theta_{1}^{2})+\gamma(\theta_{1}^{2}\theta_{1}^{1}\,;\theta
	_{1}^{2})\theta_{1}^{3}]\vspace{1mm}\newline\\
	23|14 & \leftrightarrow & \frac{12|3}{0|1}  &\leftrightarrow & \gamma(\theta_{1}^{3}\theta_{1}
	^{1}\,;\theta_{2}^{2})\vspace{1mm}\newline\\
	34|12 & \leftrightarrow & \frac{23|1}{0|1}
	& = & \gamma(\theta_{1}^{1}\theta_{1}
	^{3}\,;\theta_{2}^{2})\vspace{1mm}\newline\\
	12|34 & \leftrightarrow & \frac{1|23}{1|0}  & \leftrightarrow& \gamma\lbrack\theta_{2}^{2}
	\theta_{2}^{1}\theta_{2}^{1}\,;\theta_{1}^{3}\gamma(\theta_{1}^{1}\theta
	_{1}^{2}\,;\theta_{1}^{2})+\gamma(\theta_{1}^{2}\theta_{1}^{1}\,;\theta
	_{1}^{2})\theta_{1}^{3}]\vspace{1mm}\newline\\
\end{array}
$
\bigskip

\unitlength=0.75mm\special{em:linewidth 0.4pt} \linethickness{0.4pt}
\begin{picture}(98.33,76.99)
	\put(67.00,5.33){\line(1,0){30.33}} \put(67.33,29.33){\line(1,0){4.67}}
	\put(97.67,55.33){\line(0,-1){50.00}} \put(66.67,5.33){\line(0,1){49.67}}
	\put(66.67,55.00){\line(1,0){31.00}} \put(97.67,55.00){\line(-6,5){25.67}}
	\put(66.67,5.33){\line(-6,5){28.67}} \put(38.33,29.33){\line(0,1){47.00}}
	\put(66.67,55.00){\line(-4,3){28.33}} \put(38.33,76.33){\line(1,0){33.67}}
	\put(72.00,55.33){\line(0,1){21.00}} \put(79.33,70.66){\line(0,-1){15.33}}
	\put(79.33,44.33){\line(-1,1){7.33}} \put(46.00,45.33){\line(-1,1){7.67}}
	\put(38.33,29.33){\line(1,0){7.33}} \put(46.33,29.33){\line(1,0){20.00}}
	\put(38.33,70.66){\line(1,0){6.33}} \put(47.67,70.66){\line(1,0){24.33}}
	\put(46.00,22.66){\line(0,1){48.00}} \put(46.00,70.66){\circle*{0.67}}
	\put(46.00,70.66){\circle*{1.33}} \put(38.33,76.33){\circle*{1.33}}
	\put(38.33,70.66){\circle*{1.33}} \put(72.00,76.33){\circle*{1.33}}
	\put(72.00,70.66){\circle*{1.33}} \put(79.33,70.33){\circle*{1.33}}
	\put(97.67,55.00){\circle*{1.33}} \put(66.67,55.00){\circle*{1.33}}
	\put(72.00,51.33){\circle*{1.33}} \put(79.33,44.66){\circle*{1.33}}
	\put(72.00,29.33){\circle*{0.67}} \put(72.00,29.33){\circle*{1.33}}
	\put(79.33,23.00){\circle*{1.33}} \put(66.67,5.33){\circle*{1.33}}
	\put(97.67,5.33){\circle*{1.33}} \put(46.00,45.66){\circle*{1.33}}
	\put(38.33,52.66){\circle*{1.33}} \put(38.33,29.33){\circle*{1.33}}
	\put(46.00,22.66){\circle*{1.33}} \put(66.67,20.33){\line(1,0){31.00}}
	\put(56.00,14.33){\line(0,1){48.67}} \put(66.67,20.33){\line(-4,3){20.67}}
	\put(82.33,20.00){\line(1,-1){15.33}} \put(81.67,20.67){\line(-6,5){9.67}}
	\put(56.00,14.00){\circle*{0.67}} \put(56.00,20.67){\circle*{0.67}}
	\put(56.00,28.00){\circle*{0.67}} \put(56.00,62.67){\circle*{0.67}}
	\put(46.00,35.67){\line(-6,5){7.67}} \put(46.00,54.33){\line(1,-4){5.67}}
	\put(97.67,20.33){\circle*{0.67}} \put(66.67,20.33){\circle*{0.67}}
	\put(38.33,41.67){\circle*{0.67}} \put(51.33,32.00){\circle*{0.67}}
	\put(46.00,36.00){\line(2,-3){10.00}} \put(46.00,36.00){\circle*{0.67}}
	\put(46.00,53.67){\circle*{0.67}} \put(66.67,42.67){\line(1,0){31.00}}
	\put(66.67,42.67){\line(-6,5){20.67}} \put(46.00,59.67){\circle*{1.33}}
	\put(66.67,42.67){\circle*{1.33}} \put(97.67,42.67){\circle*{1.33}}
	\put(56.00,51.67){\circle*{0.67}} \put(79.33,43.00){\line(0,1){11.67}}
	\put(79.33,22.67){\line(0,1){19.67}} \put(72.00,29.33){\line(0,1){13.00}}
	\put(72.00,43.00){\line(0,1){11.67}}
\end{picture}

\begin{center}
	Figure 13. The biassociahedron $KK_{4,2}$ as a subdivision of $J_{4}
	=K_{4,2}=\vartheta_{3,1}(P_{4})$.
\end{center}

\vspace{0.3in}
\unitlength=0.80mm \special{em:linewidth 0.4pt} \linethickness{0.4pt}
\begin{picture}(98.33,76.99)
	\put(67.00,5.33){\line(1,0){30.33}} \put(97.67,55.33){\line(0,-1){50.00}}
	\put(66.67,5.33){\line(0,1){49.67}} \put(66.67,55.00){\line(1,0){31.00}}
	\put(66.67,5.33){\line(-6,5){28.67}} \put(38.33,29.33){\line(0,1){47.00}}
	\put(66.67,55.00){\line(-4,3){28.33}} \put(38.33,76.33){\line(1,0){33.67}}
	\put(72.00,55.33){\line(0,1){21.00}} \put(38.33,76.33){\circle*{1.33}}
	\put(72.00,76.33){\circle*{1.33}} \put(97.67,55.00){\circle*{1.33}}
	\put(66.67,55.00){\circle*{1.33}} \put(66.67,5.33){\circle*{1.33}}
	\put(97.67,5.33){\circle*{1.33}} \put(38.33,29.33){\circle*{1.33}}
	\put(57.33,62.33){\line(0,-1){49.33}} \put(38.33,29.00){\line(1,0){18.33}}
	\put(58.00,29.00){\line(1,0){7.67}} \put(67.33,29.00){\line(1,0){4.67}}
	\put(72.00,29.00){\line(0,1){25.33}} \put(66.67,19.00){\line(1,0){31.00}}
	\put(72.33,29.00){\line(5,-4){12.00}} \put(85.67,18.33){\line(1,-1){12.33}}
	\put(57.33,52.67){\line(4,-3){9.33}} \put(59.67,61.00){\line(1,0){12.33}}
	\put(56.67,61.00){\line(-1,0){18.33}} \put(57.33,62.00){\circle*{1.33}}
	\put(57.33,52.33){\circle*{1.33}} \put(66.67,46.00){\circle*{1.33}}
	\put(97.67,19.00){\circle*{1.33}} \put(66.67,19.00){\circle*{1.33}}
	\put(57.33,13.33){\circle*{1.33}} \put(72.00,29.00){\circle*{1.33}}
	\put(38.33,61.00){\circle*{0.67}} \put(72.00,61.00){\circle*{0.67}}
	\put(72.00,61.00){\circle*{0.67}} \put(82.67,20.33){\line(-1,0){15.33}} \
	\put(66.00,20.33){\line(-1,0){8.00}} \put(56.67,20.33){\line(-1,0){7.67}}
	\put(59.33,29.00){\line(5,-4){6.67}} \put(67.33,22.33){\line(3,-2){4.67}}
	\put(57.33,33.00){\line(-3,2){19.00}} \ \put(49.00,20.33){\line(0,1){18.33}}
	\put(86.33,5.33){\line(0,1){13.67}} \ \put(62.00,9.00){\line(1,5){2.33}}
	\put(49.00,20.33){\line(2,1){8.00}} \put(58.00,24.33){\line(2,1){4.67}}
	\put(88.67,34.33){\line(-3,2){16.67}} \ \put(88.33,34.33){\line(1,-3){5.00}} \ \
	\put(93.67,18.67){\line(1,-3){4.33}} \put(83.33,20.00){\line(0,1){18.00}} \ \
	\put(57.33,33.67){\circle*{1.33}} \put(38.33,45.33){\circle*{1.33}}
	\put(38.33,61.00){\circle*{1.33}} \put(72.00,61.00){\circle*{1.33}}
	\put(72.00,45.00){\circle*{1.33}} \put(88.33,34.33){\circle*{1.33}}
	\put(86.33,5.33){\circle*{0.67}} \put(49.00,20.33){\circle*{0.67}}
	\put(49.00,38.67){\circle*{0.67}} \put(83.33,37.67){\circle*{0.67}} \
	\put(83.33,20.33){\circle*{0.67}} \ \put(70.30,20.33){\circle*{0.67}}
	\put(64.33,20.33){\circle*{0.67}} \put(62.00,9.33){\circle*{0.67}}
	\put(62.33,26.67){\circle*{0.67}} \put(59.67,29.00){\circle*{0.67}}
	\put(72.33,76.00){\line(6,-5){25.33}} \put(86.33,5.33){\line(-1,1){13.33}}
	\put(86.33,19.00){\circle*{0.67}} \put(97.67,46.00){\circle*{1.33}}
	\put(88.33,34.33){\line(4,5){9.33}}
\end{picture}\vspace*{-0.1in}

\bigskip
\begin{center}
	Figure 14. The biassociahedron $KK_{2,4}$ as a subdivision of $J_{4}
	=K_{2,4}=\vartheta_{1,3}(P_{4})$.
\end{center}

\section{The $A_{\infty}$-Bialgebra Morphism Matrad}

\subsection{The Combinatorial Cylinder $\mathcal{ZJ}_{n+1,m+1}$}

Let $m, n\geq0.$ In this subsection we define the combinatorial cylinder $\mathcal{ZJ}_{n+1,m+1}$ on the reduced balanced framed join $\mathfrak{m}
\circledast_{kk}\mathfrak{n}$ as a quotient of the combinatorial
cylinder $\mathcal{ZP}_{n,m}$ on the balanced framed join $\mathfrak{m}%
\circledast_{pp}\mathfrak{n}.$

Given an integer $z>\max\{m,n\},$ define $\mathfrak{m}_{z}:=\mathfrak{m}%
\cup\{z\}$ and $\mathfrak{n}_{z}:=\mathfrak{n}\cup\{z\}$, and consider the disjoint union   $\left(  \mathfrak{m}_{z}\circledast\mathfrak{n}\right)
\sqcup\left(  \,\mathfrak{m}\circledast\mathfrak{n}_{z}\right)$. A framed element $c\in \left(  \mathfrak{m}_{z}\circledast\mathfrak{n}\right)
\sqcup\left(  \,\mathfrak{m}\circledast\mathfrak{n}_{z}\right)$ has the form $c=C_{1}\cdots C_{k-1}\cdot F\cdot C_{k+1}\cdots C_{r}$, where $F$ is the matrix factor containing $z$ and $z\in \mathbf{is}(F)$ if and only if $c\in  \mathfrak{m}_{z}\circledast\mathfrak{n}$.    A \emph{structure bipartition within} $c$ is a bipartition within $c$ of the form $\frac{B_{1}|\cdots|B_{r}}{A_{1}|\cdots|A_{r}}=
\mathbf{C}_{1}
\cdots\mathbf{C}_{k-1}\cdot \mathbf{F}\cdot \mathbf{C}_{k+1}\cdots\mathbf{C}_{r}$.

Let $ZF_{n,m}$ denote the subset of $\left(  \mathfrak{m}_{z}\circledast\mathfrak{n}\right)
\sqcup\left(  \,\mathfrak{m}\circledast\mathfrak{n}_{z}\right)  $ with the following property: If $c\in ZF_{n,m}$
and $\frac{B_{1}|\cdots|B_{r}}{A_{1}|\cdots|A_{r}}$ is a structure bipartition within $c$, then
$z$ lies in ${A_i}$ or ${B_i}$ but not in  $A_{r}$ or $B_{1}$ when $r>1.$
Given $c\in ZF_{n,m}$, let $c_{m,n}$ denote the framed element
obtained from $c$ by discarding $z\in A_k\cup B_k$ when $r=1$ or $1<k<r.$

\begin{definition}
	\label{cylinder}Given $m, n\geq 0$, choose an integer $z>\max\{m,n\}.$ The
	\textbf{combinatorial cylinder} \textbf{on} \textbf{the framed join}
	$\mathfrak{m}\,\circledast\,\mathfrak{n}$  is the set $\widetilde{\mathcal{ZF}
	}_{n,m}:=$
	\begin{itemize}
		\item $\{\frac{0}{z}\sim \frac{z}{0}\}$ when $m=n=0,$
		
		\item $\mathfrak{m}\ast_c\{z\}$ when $m>0$ and $n=0,$
		
		\item $\{z\}\ast_c\mathfrak{n}$ when $m=0$ and $n>0,$ and
		\item $ZF_{n,m} /\sim$ when $mn>0$, where $c^1\sim c^2$ if and only if $c^1_{m,n}=c^2_{m,n}$.
	\end{itemize}
	The \textbf{combinatorial cylinder on the prebalanced framed join}
	$\mathfrak{m}\,\widetilde{\circledast}_{pp}\,\mathfrak{n}$ is the set $\widetilde{\mathcal{ZP}}_{n,m}:=$
	
	\begin{itemize}
		
		\item $\widetilde{\mathcal{ZF}}_{n,0}$ for all $n\geq0,$
		
		\item $\widetilde{\mathcal{ZF}}_{0,m}$ for all $m\geq0,$ and
		
		\item
		$\{c\in \widetilde{\mathcal{ZF}}_{n,m}:
		c\in \left(m_{z}\widetilde{\circledast}_{pp}\mathfrak{n}\right)\sqcup
		\left(\mathfrak{m}\widetilde{\circledast}_{pp} n_{z}\right)\}$
		for all $mn>0.$

	\end{itemize}
	The \textbf{combinatorial cylinder on the reduced prebalanced framed join}
	$\mathfrak{m}\,\widetilde{\circledast}_{pp}\,\mathfrak{n}$ is the set
	\[
	\widetilde{\mathcal{ZJ}}_{n+1,m+1}:=\widetilde{\mathcal{ZP}}_{n,m}/\sim,
	\]
	where $c^{\prime}\sim c\in\widetilde{\mathcal{ZP}}_{n,m}$ if and only if $c$
	and $c^{\prime}$ have the same number of indecomposable matrix factors and
	corresponding indecomposable matrix factors differ only in the number of empty
	blocks in their entries. As in the definitions of $ \mathfrak{m}\circledast_{pp}\mathfrak{n}\subseteq \mathfrak{m} \widetilde\circledast_{pp}\mathfrak{n}$ and $\mathfrak{m}\circledast_{kk}\mathfrak{n}=
	\mathfrak{m}\circledast_{pp}\mathfrak{n}/\sim$, the \textbf{combinatorial cylinder on the reduced balanced framed join}
	$\mathfrak{m}\,{\circledast}_{pp}\,\mathfrak{n}$ is the set
	\[
	\mathcal{ZJ}_{n+1,m+1}:=\mathcal{ZP}_{n,m}/\sim.
	\]
	
\end{definition}

\noindent Then in particular, $\mathcal{ZJ}_{n+1,m+1}=\mathcal{ZP}_{n,m}$ when $0\leq
m+n\leq1.$

As in the absolute case, there is the face operator $\tilde{\partial}_{n,m}$
on $\widetilde{\mathcal{ZP}}_{n,m},$ the projection of chain complexes
$
C_{\ast}(\widetilde{\mathcal{ZP}}_{n,m})\rightarrow C_{\ast}
(\widetilde{\mathcal{ZJ}}_{n+1,m+1}),
$
the induced face operator $\partial_{n+1,m+1}$ on $\widetilde{\mathcal{ZJ}%
}_{n+1,m+1},$ and the restricted projection
\[
C_{\ast}({\mathcal{ZP}}_{n,m})\rightarrow C_{\ast}(\mathcal{ZJ}_{n+1,m+1}).
\]
Thus, denoting
\[
\mathfrak{g}_{m+1}^{n+1}:=\left\{
\begin{array}
	[c]{cl}%
	\lbrack\frac{z}{0}]=[\frac{0}{z}],\smallskip & m=n=0\\
	\lbrack\frac{\mathfrak{n}}{z}],\smallskip & m=0,\text{ }n>0\\
	\lbrack\frac{z}{\mathfrak{m}}],\smallskip & m>0,\text{ }n=0\\
	\lbrack\frac{n_{z}}{\mathfrak{m}}]=[\frac{\mathfrak{n}}{m_{z}}], & mn>0
\end{array}
\right.
\]
we have
\begin{equation}
	\tilde{\partial}_{n,m}\left(  \mathfrak{g}_{m+1}^{n+1}\right)  :=\left\{
	\begin{array}
		[c]{cc}%
		\varnothing, & m=n=0\\
		\left\{  F_{1}C_{2},C_{1}F_{2}\right\}  , & m+n>0.
	\end{array}
	\right.
\end{equation}

\subsection{Relative Prematrads}
Let $\left(  M,\gamma_{_{M}},\eta_{_{M}}\right)  $ and $\left(  N,\gamma
_{_{N}},\eta_{_{N}}\right)  $ be $R$-prematrads, where $\gamma_{M}=\{
\gamma_{\mathbf{x}}^{\mathbf{y}}:\mathbf{M}_{p}^{\mathbf{y}}\otimes
\mathbf{M}_{\mathbf{x}}^{q}$ $\rightarrow\mathbf{M}_{\left\Vert \mathbf{x}\right\Vert }^{\left\Vert \mathbf{y}\right\Vert}\}  $ and $\gamma_{N}=\{  \gamma_{\mathbf{x}}^{\mathbf{y}%
}:\mathbf{N}_{p}^{\mathbf{y}}\otimes\mathbf{N}_{\mathbf{x}}^{q}\rightarrow
\mathbf{N}_{\left\Vert \mathbf{x}\right\Vert }^{\left\Vert \mathbf{y}%
	\right\Vert }\}  ,$ let $E=\{E_{n,m}\}_{mn\geq1}$ be a bigraded
$R$-module, and let $\lambda=\{  \lambda_{\mathbf{x}}^{\mathbf{y}%
}:\mathbf{M}_{p}^{\mathbf{y}}\otimes\mathbf{E}_{\mathbf{x}}^{q}\rightarrow
\mathbf{E}_{\left\Vert \mathbf{x}\right\Vert }^{\left\Vert \mathbf{y}%
	\right\Vert }\}  $ and $\rho=\{  \rho_{\mathbf{x}}^{\mathbf{y}%
}:\mathbf{E}_{p}^{\mathbf{y}}\otimes\mathbf{N}_{\mathbf{x}}^{q}\rightarrow
\mathbf{E}_{\left\Vert \mathbf{x}\right\Vert }^{\left\Vert \mathbf{y}%
	\right\Vert }\}  $ be left and right $R$-module actions ($\gamma_M,\gamma_N, \lambda, \text{ and } \rho$ are subject to $\mathbf{x\times y}\in\mathbb{N}^{1\times p}\times
\mathbb{N}^{q\times1}$ and $pq\geq1$). Then $\lambda$ and $\rho$ induce left
and right global products $\Upsilon_{\lambda}:{\mathbf{M}}\otimes{\mathbf{E}%
}\rightarrow{\mathbf{E}}$ and $\Upsilon_{\rho}:{\mathbf{E}}\otimes{\mathbf{N}%
}\rightarrow{\mathbf{E}}$ in the same way that $\gamma_{M}$ induces the
associative global product $\Upsilon_{M}:{\mathbf{M}}\otimes{\mathbf{M}%
}\rightarrow{\mathbf{M}}$.

\begin{definition}
	\label{reldefn}A tuple $\left(  M,E,N,\lambda,\rho\right)  $ is a
	\textbf{relative prematrad} if
	
	\begin{enumerate}
		\item[\textit{(i)}] associativity holds, i.e.,
		
		\begin{enumerate}
			\item[\textit{(a)}] $\Upsilon_{\rho}(\Upsilon_{\lambda}\otimes${$\mathbf{1}$
			}$)=\Upsilon_{\lambda}(${$\mathbf{1}$}$\otimes\Upsilon_{\rho}),$
			
			\item[\textit{(b)}] $\Upsilon_{\lambda}(\Upsilon_{M}\otimes${$\mathbf{1}$
			}$)=\Upsilon_{\lambda}(${$\mathbf{1}$}$\otimes\Upsilon_{\lambda}),$
			
			\item[\textit{(c)}] $\Upsilon_{\rho}(\Upsilon_{\rho}\otimes${$\mathbf{1}$%
			}$)=\Upsilon_{\rho}(${$\mathbf{1}$}$\otimes\Upsilon_{N}),$ and
		\end{enumerate}
		
		\item[\textit{(ii)}] for all $a,b\in\mathbb{N,}$ the units $\eta_{M}$ and
		$\eta_{N}$ induce the canonical isomorphisms
		\[
		R^{\otimes b}\otimes\mathbf{E}_{a}^{b}\overset{\eta_{_{M}}^{\otimes b}%
			\otimes{\mathbf{1}}}{\longrightarrow}\mathbf{M}_{1}^{\mathbf{1}^{b\times1}%
		}\otimes\mathbf{E}_{a}^{b}\overset{\lambda_{a}^{\mathbf{1}^{b\times1}%
		}}{\longrightarrow}\mathbf{E}_{a}^{b}\text{ and }\mathbf{E}_{a}^{b}\otimes
		R^{\otimes a}\overset{{\mathbf{1}}\otimes\eta_{N}^{\otimes a}}{\longrightarrow
		}\mathbf{E}_{a}^{b}\otimes\mathbf{N}_{\mathbf{1}^{1\times a}}^{1}%
		\overset{\rho_{\mathbf{1}^{1\times a}}^{b}}{\longrightarrow}\mathbf{E}_{a}%
		^{b}.
		\]
		
	\end{enumerate}
	
	\noindent Under conditions (i) and (ii), $E$ is as an $M$-$N$-\textbf{bimodule
	}and an $M$\textbf{-bimodule }when $M=N$.
\end{definition}

\begin{definition}\label{remorphism}
	A \textbf{morphism} $f:(M,E,N,\lambda,\rho)\rightarrow(M^{\prime},E^{\prime
	},N^{\prime},\lambda^{\prime},\rho^{\prime})$ of relative prematrads is a
	triple $\left(  f_{M}:{M}\rightarrow{M}^{\prime},\text{ }f_{E}:E\rightarrow
	E^{\prime},\text{ }f_{N}:N\rightarrow N^{\prime}\right)  $ such that
	
	\begin{enumerate}
		\item[\textit{(i)}] $f_{M}$ and $f_{N}$ are maps of prematrads, and
		
		\item[\textit{(ii)}] $f_{E}$ commutes with left and right actions, i.e., for
		all $\mathbf{x\times y\in}\mathbb{N}^{p\times1}\times\mathbb{N}^{1\times q}$
		and $pq\geq1$ we have $f_{E}\hspace{.02in} \lambda_{\mathbf{x}}^{\mathbf{y}}%
		=\lambda{^{\prime}}_{\mathbf{x}}^{\mathbf{y}}\hspace{.02in} (f_{M}^{\otimes q}\otimes
		f_{E}^{\otimes p})$ and $f_{E}\hspace{.02in} \rho_{\mathbf{x}}^{\mathbf{y}}%
		=\rho{^{\prime}}_{\mathbf{x}}^{\mathbf{y}}\hspace{.02in} (f_{E}^{\otimes q}\otimes
		f_{N}^{\otimes p}).$
	\end{enumerate}
\end{definition}

Tree representations of $\lambda_{1}^{\mathbf{x}}$ and $\rho_{1}^{\mathbf{x}}
$ are related to those of $\lambda_{\mathbf{x}}^{1}$ and $\rho_{\mathbf{x}%
}^{1}$ by a reflection in some horizontal axis. Although $\rho_{\mathbf{x}%
}^{1}$ agrees with Markl, Shnider, and Stasheff's right module action over an
operad \cite{MSS}, $\lambda_{\mathbf{x}}^{1}$ differs fundamentally from their
left module action, and our definition of an \textquotedblleft
operadic\ bimodule\textquotedblright\ is consistent with their definition of
an operadic ideal.

Given graded $R$-modules $A$ and $B,$ let
\[%
\begin{array}
	[c]{lllll}%
	U_{A} & := & \mathcal{E}nd_{TA} & = & \left\{  N_{s,p}=Hom\left(  A^{\otimes
		p},A^{\otimes s}\right)  \right\}  _{p,s\geq1},\\
	U_{A,B} & := & Hom\left(  TA,TB\right)  & = & \left\{  E_{t,q}=Hom\left(
	A^{\otimes q},B^{\otimes t}\right)  \right\}  _{q,t\geq1},\\
	U_{B} & := & \mathcal{E}nd_{TB} & = & \left\{  M_{u,r}=Hom\left(  B^{\otimes
		r},B^{\otimes u}\right)  \right\}  _{r,u\geq1},
\end{array}
\]
and define left and right actions $\lambda$ and $\rho$ in terms of the
horizontal and vertical operations $\times$ and $\circ$ analogous to those in
the prematrad structures on $U_{A}$ and $U_{B}$ (see Section 2 above and \cite{SU4}). Then the\emph{\ }relative PROP\texttt{ }$\left(  U_{A}%
,U_{A,B},U_{B},\lambda,\rho\right)  $ is the universal example of a relative prematrad.

\begin{definition}
	Given bigraded sets $F=\left\{  F_{n,m}\right\}  _{mn\geq1}$ and $Q=\left\{
	Q_{n,m}\right\}  _{mn\geq1}$ with based element $\mathbf{1}\in Q_{1,1},$  construct the
	bigraded set $\tilde{E}\left(  F;Q\right)  =\{\tilde{E}_{n,m}\left(
	F;Q\right)  \}_{mn\geq1}$ of \textbf{relative free monomials generated by }$F$
	\textbf{and }$Q$ inductively as follows: Define $\tilde{E}_{1,1}\left(
	F;Q\right)  :=F_{1,1}.$ If $m+n\geq3$ and $\tilde{E}_{j,i}\left(  F;Q\right)
	$ has been constructed for all $\left(  i,j\right)  \leq\left(  m,n\right)  $
	such that $i+j<m+n,$ define
	\[
	\tilde{E}_{n,m}(F;Q):={F}_{n,m}\cup\left\{  C_{1}\cdots C_{r+s+1}=A_{1}\cdots
	A_{r}\cdot F\cdot B_{1}\cdots B_{s}:r+s\geq 1  \right\}  ,
	\]
	where
	
	\begin{enumerate}
		\item[\textit{(i)}] $F$ is a GBSM over $\{\tilde{E}_{j,i}\left(  F;Q\right)
		:\left(  i,j\right)  \leq\left(  m,n\right)  ,$ $i+j<m+n\}$,\vspace*{0.05in}
		
		\item[\textit{(ii)}] $F$ is a column matrix with $outdeg(F)=n$ when $r=0$,
		\vspace*{0.05in}
		
		\item[\textit{(iii)}] $F$ is a row matrix with $indeg(F)=m$ when $s=0$,
		\vspace*{0.05in}
		
		\item[\textit{(iv)}] $A_{k}$ and $B_{\ell}$ are GBSMs\ over $\{\tilde{G}_{j,i}\left(
		Q\right)  :\left(  i,j\right)  \leq\left(  m,n\right)  ,$ $i+j<m+n\}$,\vspace*{0.05in}
		
		\item[\textit{(v)}] $A_{1}$ is a column matrix with $outdeg(A_{1})=n$ when $r>0$,
		\vspace*{0.05in}
		
		\item[\textit{(vi)}] $B_{s}$ is a row matrix with $indeg(B_{s})=m$ when $s>0$,
		\vspace*{0.05in}
		
		\item[\textit{(vii)}] if $B=\left(  \xi_{tu}\right)  $ is a $q\times p$ GBSM over $\tilde{G}_{\ast,\ast}\left(
		Q\right)$
		within $C_{i}$ and $\xi_{tu}=B_{1}\cdots B_{k-1}\cdot\mathbf{1}\cdot
		B_{k+1}\cdots B_{r}$ for some $k,$ define $\xi_{tu}:=B_{1}\cdots B_{k-1}B_{k+1}\cdots B_{r}$ when either $pq=1$ and $B=C_{i}$ or $pq>1$ and
		$B_{t\ast}$ and $B_{\ast u}$ are indecomposable,

		\vspace*{0.05in}
		
		\item[\textit{(viii)}]$\mathfrak{f}_{s}^{t}=[\mathbf{1}\cdots\mathbf{1}]^T[ \mathfrak{f}_{s}^{t}]=[ \mathfrak{f}_{s}^{t}][\mathbf{1}\cdots\mathbf{1}],$
		
		\vspace*{0.05in}
		
		\item[\textit{(ix)}] $C_{i}\times C_{i+1}$ is a BTP for all $i$,\vspace
		*{0.05in}
		
		\item[\textit{(x)}] $C_{i}C_{i+1}$ is the formal product, and\vspace
		*{0.05in}
		
		\item[\textit{(xi)}] $(C_{i}C_{i+1})C_{i+2}=C_{i}(C_{i+1}C_{i+2}).$
		\vspace*{0.05in}
		
	\end{enumerate}
	
	\noindent An $(F,Q)$-\textbf{monomial }is an elementary monomial in $\tilde{E}(F;Q)$.	
	
\end{definition}
\noindent Note that in view of item (vii), the sets $\tilde{E}_{n,1}(F;Q)$ and $\tilde{E}_{1,m}(F;Q)$ consist of $(F,Q)$-monomials exclusively.

Let $\mathfrak{F}=\left\{  \mathfrak{f}_{m}^{n}\right\}  _{mn\geq1}$ and
$\Theta=\left\{  \theta_{m}^{n}:\theta_{1}^{1}=\mathbf{1}\right\}  _{mn\geq1}$
be bigraded sets with at most one element of bidegree $\left(  m,n\right) .$

\begin{definition}
	Let $\tilde{F}^{pre}\left(  \Theta;\mathfrak{F};\Theta\right)  =\left\langle
	\tilde{E}(\mathfrak{F};\Theta)\right\rangle $ and define left and right
	actions $\tilde{\lambda}^{pre}:$ $\tilde{F}^{pre}(\Theta)\otimes\tilde
	{F}^{pre}\left(  \Theta,\mathfrak{F},\Theta\right)  \rightarrow\tilde{F}%
	^{pre}\left(  \Theta,\mathfrak{F},\Theta\right)  $ and $\tilde{\rho}%
	^{pre}:\tilde{F}^{pre}\left(  \Theta,\mathfrak{F},\Theta\right)  \otimes
	\tilde{F}^{pre}(\Theta)\rightarrow\tilde{F}^{pre}\left(  \Theta,\mathfrak{F}%
	,\Theta\right)  $ by juxtaposition. Then
	\[
	(\tilde{F}^{pre}(\Theta),\tilde{F}^{pre}\left(  \Theta,\mathfrak{F}%
	,\Theta\right)  ,\tilde{F}^{pre}(\Theta),\tilde{\lambda}^{pre},\tilde{\rho
	}^{pre})
	\]
	is \textbf{relative free non-unital prematrad generated by }$\mathfrak{F}
	$\textbf{ \textbf{and} }$\Theta$.
	
	Define ${E}_{n,m}(\mathfrak{F};\Theta):=\overset{\sim}{E}_{n,m}
	(\mathfrak{F};\Theta)/\left(  A\sim\mathbf{1}A\sim A\mathbf{1}\right)  $ and
	let \[{F}^{pre}\left(  \Theta,\mathfrak{F},\Theta\right)  =\left\langle
	{E}(\mathfrak{F};\Theta)\right\rangle .\]
	The \textbf{relative free prematrad
		generated by }$\mathfrak{F}$ \textbf{and} $\Theta$ is the relative prematrad
	\[
	({F}^{pre}(\Theta),{F}^{pre}\left(  \Theta,\mathfrak{F},\Theta\right)
	,{F}^{pre}(\Theta),{\lambda}^{pre},{\rho}^{pre}).
	\]
	
\end{definition}

\begin{example}
	\label{22components}The bimodule $F_{2,2}^{pre}(\Theta,\mathfrak{F},\Theta)$
	contains 25 generators, namely, the indecomposable $\mathfrak{f}_{2}^{2}$ and
	the following twenty-four $\left(  \lambda^{pre},\rho^{pre}\right)
	$-decomposables:
	
	\medskip
	\noindent two of the form $AFB$:
	\begin{equation}
		\left[  \theta_{1}^{2}\right]  \left[  \mathfrak{f}_{1}^{1}\right]  \left[
		\theta_{2}^{1}\right]  \text{ and }\left[
		\begin{array}
			[c]{r}%
			\theta_{2}^{1}\medskip\\
			\theta_{2}^{1}%
		\end{array}
		\right]  \left[
		\begin{array}
			[c]{cc}%
			\mathfrak{f}_{1}^{1}\medskip & \mathfrak{f}_{1}^{1}\\
			\mathfrak{f}_{1}^{1} & \mathfrak{f}_{1}^{1}%
		\end{array}
		\right]  \left[
		\begin{array}
			[c]{ll}%
			\theta_{1}^{2} & \theta_{1}^{2}%
		\end{array}
		\right]  ; \label{relinner}%
	\end{equation}
	eleven of the form $FB_{1}\cdots B_{s}$:
	\[
	\left[  \mathfrak{f}_{1}^{2}\right]  \left[  \theta_{2}^{1}\right]  ,\text{
	}\left[
	\begin{array}
		[c]{c}%
		\mathfrak{f}_{1}^{1}\medskip\\
		\mathfrak{f}_{1}^{1}%
	\end{array}
	\right]  \left[  \theta_{2}^{2}\right]  ,\text{ }\left[
	\begin{array}
		[c]{c}%
		\mathfrak{f}_{1}^{1}\medskip\\
		\mathfrak{f}_{1}^{1}%
	\end{array}
	\right]  \left[  \theta_{1}^{2}\right]  \left[  \theta_{2}^{1}\right]  ,\text{
	}\left[
	\begin{array}
		[c]{c}%
		\mathfrak{f}_{1}^{1}\medskip\\
		\mathfrak{f}_{1}^{1}%
	\end{array}
	\right]  \left[
	\begin{array}
		[c]{r}%
		\theta_{2}^{1}\medskip\\
		\theta_{2}^{1}%
	\end{array}
	\right]  \left[
	\begin{array}
		[c]{ll}%
		\theta_{1}^{2} & \theta_{1}^{2}%
	\end{array}
	\right]  ,\bigskip
	\]%
	\[
	\left[
	\begin{array}
		[c]{c}%
		\mathfrak{f}_{2}^{1}\medskip\\
		\mathfrak{f}_{2}^{1}%
	\end{array}
	\right]  \left[
	\begin{array}
		[c]{ll}%
		\theta_{1}^{2} & \theta_{1}^{2}%
	\end{array}
	\right]  ,\text{ }\left[
	\begin{array}
		[c]{c}%
		\mathfrak{f}_{2}^{1}\medskip\\
		\left[  \mathfrak{f}_{1}^{1}\right]  \left[  \theta_{2}^{1}\right]
	\end{array}
	\right]  \left[
	\begin{array}
		[c]{ll}%
		\theta_{1}^{2} & \theta_{1}^{2}%
	\end{array}
	\right]  ,\text{ }\left[
	\begin{array}
		[c]{c}%
		\left[  \mathfrak{f}_{1}^{1}\right]  \left[  \theta_{2}^{1}\right]  \medskip\\
		\mathfrak{f}_{2}^{1}%
	\end{array}
	\right]  \left[
	\begin{array}
		[c]{ll}%
		\theta_{1}^{2} & \theta_{1}^{2}%
	\end{array}
	\right]  ,\bigskip\text{ }%
	\]%
	\[
	\left[
	\begin{array}
		[c]{c}%
		\left[  \theta_{2}^{1}\right]  \left[
		\begin{array}
			[c]{cc}%
			\mathfrak{f}_{1}^{1} & \mathfrak{f}_{1}^{1}%
		\end{array}
		\right]  \medskip\\
		\left[  \mathfrak{f}_{1}^{1}\right]  \left[  \theta_{2}^{1}\right]
	\end{array}
	\right]  \left[
	\begin{array}
		[c]{ll}%
		\theta_{1}^{2} & \theta_{1}^{2}%
	\end{array}
	\right]  ,\text{ }\left[
	\begin{array}
		[c]{c}%
		\left[  \mathfrak{f}_{1}^{1}\right]  \left[  \theta_{2}^{1}\right]  \medskip\\
		\left[  \theta_{2}^{1}\right]  \left[
		\begin{array}
			[c]{cc}%
			\mathfrak{f}_{1}^{1} & \mathfrak{f}_{1}^{1}%
		\end{array}
		\right]
	\end{array}
	\right]  \left[
	\begin{array}
		[c]{ll}%
		\theta_{1}^{2} & \theta_{1}^{2}%
	\end{array}
	\right]  ,\text{ }\bigskip
	\]%
	\[
	\left[
	\begin{array}
		[c]{c}%
		\left[  \theta_{2}^{1}\right]  \left[
		\begin{array}
			[c]{cc}%
			\mathfrak{f}_{1}^{1} & \mathfrak{f}_{1}^{1}%
		\end{array}
		\right]  \medskip\\
		\mathfrak{f}_{2}^{1}%
	\end{array}
	\right]  \left[
	\begin{array}
		[c]{ll}%
		\theta_{1}^{2} & \theta_{1}^{2}%
	\end{array}
	\right]  ,\text{ }\left[
	\begin{array}
		[c]{c}%
		\mathfrak{f}_{2}^{1}\medskip\\
		\left[  \theta_{2}^{1}\right]  \left[
		\begin{array}
			[c]{cc}%
			\mathfrak{f}_{1}^{1} & \mathfrak{f}_{1}^{1}%
		\end{array}
		\right]
	\end{array}
	\right]  \left[
	\begin{array}
		[c]{ll}%
		\theta_{1}^{2} & \theta_{1}^{2}%
	\end{array}
	\right]  ;\bigskip
	\]
	and their eleven respective duals of the form $A_{1}\cdots A_{r}F.$
\end{example}

\begin{example}
	\label{JJprime}Recall that the bialgebra prematrad $\mathcal{H}^{pre}$ has two
	prematrad generators $c_{1,2}$ and $c_{2,1},$ and a single module generator
	$c_{n,m}$ of bidegree $\left(  m,n\right)  $ for $m+n\geq 4$. Consequently,
	the $\mathcal{H}^{pre}$-bimodule $\mathcal{JJ}^{pre}$ has a single bimodule
	generator $\mathfrak{f}$ of bidegree $(1,1)$, and a single module generator $c_{n,m}$ of
	bidegree $(m,n)$ satisfying the structure relation
	\[
	\lambda(c_{n,m};\underset{m}{\underbrace{\mathfrak{f},\ldots,\mathfrak{f}}%
	})=\rho(\underset{n}{\underbrace{\mathfrak{f},\ldots,\mathfrak{f}}}%
	\,;c_{n,m}).
	\]
	More precisely, if $\Theta=\left\langle \theta_{1}^{1}=\mathbf{1,}\,\theta
	_{2}^{1},\,\theta_{1}^{2}\right\rangle $ and $\mathfrak{F}=\left\langle
	\mathfrak{f=f}_{1}^{1}\right\rangle ,$ then
	\[
	\mathcal{JJ}^{pre}=F^{pre}\left(  \Theta,\mathfrak{F},\Theta\right)  /\sim,
	\]
	where $u\sim u^{\prime}$ if and only if $bideg(u)=bideg(u^{\prime}).$ A
	bialgebra morphism $f:A\rightarrow B$ is the image of $\mathfrak{f}$ under a
	map $\mathcal{JJ}^{pre}\rightarrow U_{A,B}$ of relative prematrads. In this
	case, we omit the superscript and simply write $\mathcal{JJ}.$
\end{example}

\begin{example}
	Whereas $F_{1,\ast}^{pre}(\Theta)$ with bimodule generators
	$\{{\theta}_{m}^{1}\}_{m\geq1}$ and $F_{\ast,1}^{pre}(\Theta)$ with bimodule generators
	$\{{\theta}_{1}^{n}\}_{n\geq1}$ can be
	identified with the $A_{\infty}$-operad $\mathcal{A}_{\infty}$,\linebreak $F_{1,\ast}(\Theta,{\mathfrak{F}},\Theta)$ and
	$F_{\ast,1}(\Theta,{\mathfrak{F}},\Theta)$   can be identified with the $\mathcal{A}_{\infty}%
	$-bimodule $\mathcal{J}_{\infty}$. Thus an $A_{\infty}$-(co)algebra morphism
	$f:A\rightarrow B$ is the image of the $A_{\infty}$-bimodule generators under
	a map $\mathcal{J}_{\infty}\rightarrow Hom\left(  TA,B\right)  $ $($or
	$\mathcal{J}_{\infty}\rightarrow Hom\left(  A,TB\right)  )$ of relative prematrads.
\end{example}

When $\Theta=\left\langle \theta_{m}^{n}:\theta_{1}^{1}=\mathbf{1}%
\right\rangle _{mn\geq1}$ and $\mathfrak{F}=\left\langle \mathfrak{f}_{m}%
^{n}\right\rangle _{mn\geq1},$ the canonical projections $\varrho_{\Theta
}^{pre}:F^{pre}(\Theta)\rightarrow\mathcal{H}^{pre}$ and $\varrho
^{pre}:F^{pre}(\Theta,\mathfrak{F},\Theta)\rightarrow\mathcal{JJ}^{pre}$
define a map $\left(\varrho_{\Theta}^{pre},\varrho^{pre},\right.$
$\left.\varrho_{\Theta}^{pre}\right)$ of relative prematrads. If $\partial^{pre}$ is a
differential on $F^{pre}(\Theta,\mathfrak{F},\Theta)$ such that $\varrho
^{pre}$ is a free resolution in the category of relative prematrads, the
induced isomorphism $\varrho^{pre}:H_{\ast}\left(  F^{pre}(\Theta
,\mathfrak{F},\Theta),\partial^{pre}\right)  \approx\mathcal{JJ}^{pre}$
implies
\[%
\begin{array}
	[c]{c}%
	\partial^{pre}(\mathfrak{f}_{1}^{1})=0\\
	\\
	\partial^{pre}(\mathfrak{f}_{2}^{1})=\rho(\mathfrak{f}_{1}^{1};\theta_{2}%
	^{1})-\lambda(\theta_{2}^{1};\mathfrak{f}_{1}^{1},\mathfrak{f}_{1}^{1})\\
	\\
	\partial^{pre}(\mathfrak{f}_{1}^{2})=\rho(\mathfrak{f}_{1}^{1},\mathfrak{f}%
	_{1}^{1};\theta_{1}^{2})-\lambda(\theta_{1}^{2};\mathfrak{f}_{1}^{1}).
\end{array}
\]
This gives rise to the standard isomorphisms
\[%
\begin{array}
	[c]{ccccccc}%
	F_{1,2}^{pre}(\Theta,\mathfrak{F},\Theta)\smallskip & = & \left\langle \left[
	\theta_{2}^{1}\right]  \right. \!\!
	\left[
	\begin{array}
		[c]{cc}%
		\mathfrak{f}_{1}^{1} & \mathfrak{f}_{1}^{1}%
	\end{array}
	\right]  & , & \left[  \mathfrak{f}_{2}^{1}\right]  & , & \left.  \left[
	\mathfrak{f}_{1}^{1}\right]\!  \left[  \theta_{2}^{1}\right]  \right\rangle \\ \\
	\approx\text{ }\updownarrow\text{ \ }\smallskip &  & \updownarrow &  &
	\updownarrow &  & \updownarrow\\ \\
	C_{\ast}(J_{2})\smallskip & = & \left\langle 1|2\right.  & , & 12 & , &
	\left.  2|1\right\rangle \\ \\
	\approx\text{ }\updownarrow\text{ \ }\medskip &  & \updownarrow &  &
	\updownarrow &  & \updownarrow\\
	F_{2,1}^{pre}(\Theta,\mathfrak{F},\Theta) & = & \left\langle \left[
	\theta_{1}^{2}\right] \! \left[  \mathfrak{f}_{1}^{1}\right]  \right.  & , &
	\left[  \mathfrak{f}_{1}^{2}\right]  & , & \left[
	\begin{array}
		[c]{c}%
		\mathfrak{f}_{1}^{1}\medskip\\
		\mathfrak{f}_{1}^{1}%
	\end{array}
	\right]  \left. \!\! \left[  \theta_{1}^{2}\right]  \right\rangle
\end{array}
\]
(see Figure 17). A similar application of $\partial^{pre}$ to $\mathfrak{f}
_{1}^{n}$ and $\mathfrak{f}_{m}^{1}$ gives the isomorphisms
\begin{equation}
	F_{n,1}^{pre}(\Theta,\mathfrak{F},\Theta)\overset{\approx}{\longrightarrow
	}\mathcal{J}_{\infty}(n)=C_{\ast}(J_{n}) \label{iso1}%
\end{equation}
and
\begin{equation}
	F_{1,m}^{pre}(\Theta,\mathfrak{F},\Theta)\overset{\approx}{\longrightarrow
	}\mathcal{J}_{\infty}(m)=C_{\ast}(J_{m}) \label{iso2}%
\end{equation}
(see \cite{Stasheff}, \cite{Stasheff2}, \cite{SU2}).
Combinatorially, $J_{n+1}$ is the cylinder $K_{n+1}\times I$ and the
projection
\begin{equation}\label{multip}
	\pi_{n+1}:P_{n+1}\rightarrow J_{n+1}
\end{equation}
in (\ref{JJdiagram}) below is given by $\pi_{n+1}:=\pi_{0,n}$ (cf. \cite{SU2}).

\subsection{Relative Matrads.}

Let $f:=A_{1}\cdots A_{k-1}\cdot F\cdot A_{k+1}\cdots A_{r}\in\tilde
{E}_{n+1,m+1}(\mathfrak{F;}\Theta).$ As in the absolute case, the
\emph{dimension} of $f$ is the sum of the dimensions of its matrix factors,
i.e.,
\[
\left\vert f\right\vert =\left\vert F\right\vert +\sum_{1\leq k\leq
	r}\left\vert A_{k}\right\vert ,
\]
and in particular, $|\mathfrak{f}_{m}^{n}|=m+n-2.$

Given a positive integer $n$ and integer sequences $\mathbf{x}=(x_{1}%
,\ldots,x_{k})$ and $\mathbf{y}=(y_{1},\ldots,y_{k}),$ define $\mathbf{x}\smile
n:=(x_{1},\ldots,x_{k},n)$ and $\mathbf{x}+\mathbf{y:}=(x_{1}+y_{1}%
,\ldots,x_{k}+y_{k}).$

\begin{definition}
	\label{rleaf} Let $f=A_{1}\cdots A_{k-1}\cdot F \cdot A_{k+1}\cdots A_{r}\in \tilde{E}_{n+1,m+1}(\mathfrak{F;}\Theta).$
	
	\begin{enumerate}
		\item[(\textit{i)}] The \textbf{Input Leaf Decomposition of} $f$ is the
		sequence
		\[
		ILD(f):=\! \left\{  ils(A_{1}),\ldots,ils(A_{k-1}),ils(F),ils(A_{k+1}),\ldots,ils(A_{r}%
		)\right\}  ;
		\]
		the \textbf{Augmented Input Leaf Decomposition of} $f$ is the sequence
		\[
		AILD(f):=\left\{  ils(A_{1}),\ldots,ils(A_{k-1}),ils(F)+(0,\ldots,0,1),\right.
		\]
		\[		
		\hspace{2.5in}\left.ils(A_{k+1}\smile1),\ldots,ils(A_{r})\smile1\right\}.
		\]

		\item[\textit{(ii)}] The \textbf{Output Leaf Decomposition of} $f$ is the
		sequence
		\[
		\ \ OLD\left(  f\right)  :=\left\{  ols(A_{1}),\ldots,ols(A_{k-1}),ols(F),ols(A_{k+1}%
		),...,ols(A_{r})\right\};
		\]
		the \textbf{Augmented Output Leaf Decomposition of} $f$ is the sequence
		\[
		AOLD\left(  f\right)  := \! \left\{  ols(A_{1})\smile \!1,\ldots,ols(A_{k-1}
		)\smile \!1,\right.
		\]
		\[		
		\hspace{1.5in}\left. ols(F)+(0,\ldots,0,1),ols(A_{k+1}),\ldots,ols(A_{r})\right\}.
		\]
		
	\end{enumerate}
\end{definition}

Now define the map
\begin{equation}
	\digamma_{\!\!z}:\tilde{E}_{n+1,m+1}(\mathfrak{F;}\Theta)\rightarrow
	\widetilde{\mathcal{ZF}}_{n,m}\label{redigamma}%
\end{equation}
in a manner similar to the definition of $\digamma$ in (\ref{digamma}), but this time define
$\digamma_{\!\!z}$ on \emph{three} types of monomials: Let $f=A_{1}\cdots A_{k-1}\cdot F\cdot A_{k+1}\cdots A_{r}%
\in\tilde{E}_{n+1,m+1}(\mathfrak{F};\Theta),$ let $\left[  c\right]
:=\digamma_{\!\!z}(f)$, and let $c=(c^{1},c^{2},\ldots,c^{h})$ be the
canonical representative. If $r=1,$ then $f=\mathfrak{f}_{m+1}^{n+1}$ and $\digamma
_{\!\!z}\left(  f\right) : =\mathfrak{g}_{m+1}^{n+1}.$ Otherwise,

\begin{enumerate}
	\item if $m=0,$ then $\digamma_{\!\!z}(f)=\frac{\bar{R}OLD(f)}{\overset{}{\bar{R}AILD(f)}
	}=c^1=c.\smallskip$
	
	\item if $n=0,$ then $\digamma_{\!\!z}(f)=\frac{\bar{R}AOLD(f)}{\overset{}{\bar{R}ILD(f)}
	}=c^1=c.\smallskip$
	
	\item if $mn>0$ and $g$ is a monomial within $f$ (the case $g=f$ included)
	\begin{enumerate}

		\item $1<k=r,$ then $\digamma_{\!\!z}(g)=\frac{\bar{R}AOLD(f)}{\overset{}{\bar{R}ILD(f)}}.\smallskip$
		
		\item $1=k<r,$ then $\digamma_{\!\!z}(g)=\frac{\bar{R}OLD(f)}{\overset{}{\bar{R}AILD(f)}}.\smallskip$
		
		\item $1<k<r,$ then $\digamma_{\!\!z}(g)$ is given by either $3(a)$ or $3(b)$\smallskip.

	\end{enumerate}
\end{enumerate}

\noindent Then, for example,
\[
\digamma_{\!\!z}\left(  \left[  \theta_{1}^{n+1}\right] \!\! \left[
\mathfrak{f}_{1}^{1}\right]  \right)  = \frac{\mathfrak{n}|0}{0|z},\ \ \
\digamma_{\!\!z}\left(  \left[
\begin{array}
	[c]{c}%
	\mathfrak{f}_{1}^{1}\\
	\vdots\\
	\mathfrak{f}_{1}^{1}%
\end{array}
\right]\!\!  \left[  \theta_{1}^{n+1}\right]  \right)  =\frac{0|\mathfrak{n}}%
{z|0},\
\]
\[
\digamma_{\!\!z}\left(  \left[  \theta_{m+1}^{1}\right]\!\!  \left[
\mathfrak{f}_{1}^{1}\cdots\mathfrak{f}_{1}^{1}\right]  \right)  =\frac
{0|z}{\mathfrak{m}|0},\ \text{ and }\ \
\digamma_{\!\!z}\left(  \left[  \mathfrak{f}_{1}^{1}\right] \!\! \left[
\theta_{m+1}^{1}\right]  \right)  = \frac{z|0}{0|\mathfrak{m}}\ .
\]

\noindent
Note that $f$ is  an  $(\mathfrak{F},\Theta)$-monomial if and only if $h(\digamma_{\!\!z}(f))=1$.

\begin{remark}
	If $g$ is a monomial of bidegree $(t,1)$ or $(1,t)$, $t> 1$, its value $\digamma_{\!\!z}(g)$ given by (1) or (2) may differ from its value as a monomial within but distinct from $f$ in (3) (see Example \ref{exrule}).
\end{remark}

\begin{example}
	\label{exrule} Consider the monomial
	\[
	f=\left[
	\begin{array}
		[c]{c}%
		\mathfrak{f}_{2}^{1}\medskip\\
		\left[  \mathfrak{f}_{1}^{1}\right]\!\!  \left[  \theta_{2}^{1}\right]
	\end{array}
	\right] \! [\theta_{1}^{2}\,\,\theta_{1}^{2}]\in\tilde{E}_{2,2}(\mathfrak{F;}%
	\Theta).
	\]
	Apply item 3(b) in the definition of $	\digamma_{\!\!z}$ and obtain
	\[
	\digamma_{\!\!z}(f)=(c^{1},c^{2})=\left(  \frac{\,\,\,0|1}{1z|0}=\left(
	\begin{array}
		[c]{c}%
		\frac{0}{1z}\vspace{1mm}\\
		\frac{0}{1z}%
	\end{array}
	\right)  \!\!\left(  \frac{1}{0}\,\,\frac{1}{0}\,\,\frac{1}{0}\right)
	\,,\left(
	\begin{array}
		[c]{c}
		\frac{0}{1z}\vspace{1mm}\\
		\frac{0|0}{z|1}
	\end{array}
	\right)  \!\!\left(  \frac{1}{0}\,\,\frac{1}{0}\,\,\frac{1}{0}\right)
	\right)  \in\widetilde{\mathcal{ZF}}_{1,1}.
	\]
	Note that the incoherent left-hand factor of $c^2$ implies $\digamma_{\!\!z}(f)\notin \widetilde{\mathcal{ZP}}_{1,1}$. Also note that $\digamma_{\!\!z}(\mathfrak{f}_{2}^{1})=\frac{z}{1}\neq\frac{0}{1z}$, which is the value of $\mathfrak{f}_{2}^{1}$ as an $(\mathfrak{F},\Theta)$-monomial within $f$.
\end{example}

Apply (\ref{redigamma}) to define the subset $\tilde{\mathcal{B}}_{n,m}
(\Theta,\mathfrak{F},\Theta):=\digamma_{\!\!z}^{-1}({\mathcal{ZP}}_{n,m})\subset
\tilde{E}_{n,m}(\mathfrak{F;\Theta});$ then define
\[
{\mathcal{B}}_{n,m}(\Theta,\mathfrak{F},\Theta):=\tilde{\mathcal{B}}
_{n,m}(\Theta,\mathfrak{F},\Theta)/\left(  A\sim\mathbf{1}A\sim A\mathbf{1}
\right)  .
\]
\begin{definition}
	The \textbf{relative free non-unital balanced matrad}
	\textbf{generated by }$\mathfrak{F}$ and $\Theta$ is the bigraded module
	\[
	\tilde{F}(\Theta,\mathfrak{F},\Theta):=\langle\tilde{\mathcal{B}}%
	(\Theta,\mathfrak{F},\Theta)\rangle,
	\]
	and the \textbf{relative free balanced matrad}\emph{\ }\textbf{generated by
	}$\mathfrak{F}$ and $\Theta$ is the bigraded module
	\[
	{F}(\Theta,\mathfrak{F},\Theta):=\langle{\mathcal{B}}(\Theta,\mathfrak{F}%
	,\Theta)\rangle.
	\]
	
\end{definition}

\begin{example}
	\label{mult4}
	Consider the map $\pi_{n+1}:P_{n+1}\rightarrow J_{n+1}$
	for $n=3$ (cf. (\ref{multip}) and Remark \ref{ck}).
	Then  bijection (\ref{iso2}) is represented on the vertices of
	$J_{4}$ in $\pi_4(2|134\cup24|13)$ as follows:
	\[
	\begin{array}
		[c]{ccc}
		\left[
		\theta_{2}^{1}\right]\!\!  \left[  \theta_{2}^{1}\text{ }\mathbf{1}\right]\!\!
		\left[  \mathbf{1}\text{ }\mathbf{1}\text{ }\theta_{2}^{1}\right] \!\! \left[
		\mathfrak{f}_{1}^{1}\text{ }\mathfrak{f}_{1}^{1}\text{ }\mathfrak{f}_{1}
		^{1}\text{ }\mathfrak{f}_{1}^{1}\right]
		&\sim &
		\left[  \theta_{2}^{1}\right]\!\!  \left[  \mathbf{1}\text{ }\theta_{2}
		^{1}\right] \!\! \left[  \theta_{2}^{1}\text{ }\mathbf{1}\text{ }\mathbf{1}
		\right] \!\! \left[  \mathfrak{f}_{1}^{1}\text{ }\mathfrak{f}_{1}^{1}\text{
		}\mathfrak{f}_{1}^{1}\text{ }\mathfrak{f}_{1}^{1}\right]
		
		\\
		\updownarrow &  & \updownarrow\\
		\frac{0|0|0|z}{2|1|3|0}& \sim &
		\frac{0|0|0|z}{2|3|1|0} \\
		\updownarrow &  & \updownarrow\\
		2|1|3|4 & \overset{\pi_4}{\sim} & 2|3|1|4\\
		&  & \\
		\\

		\left[  \theta_{2}^{1}\right] \!\! \left[  \theta_{2}
		^{1}\text{ }\mathbf{1}\right] \!\! \left[  \mathfrak{f}_{1}^{1}\text{
		}\mathfrak{f}_{1}^{1}\text{ }\mathfrak{f}_{1}^{1}\right]
		\!\! \left[
		\mathbf{1} \text{ } \mathbf{1} \text{ }\theta_{2}^{1}\right]
		& &
		\left[  \theta_{2}^{1}\right] \!\! \left[  \mathbf{1}\text{ }\theta_{2}
		^{1}\right] \!\! \left[  \mathfrak{f}_{1}^{1}\text{ }\mathfrak{f}_{1}^{1}\text{
		}\mathfrak{f}_{1}^{1}\right]\!\!
		\left[  \theta_{2}^{1}\text{ }\mathbf{1}\text{ }\mathbf{1}\right]

		\\
		\updownarrow &  & \updownarrow\\
		
		\frac{0|0|z|0}{2|1|0|3}
		&  &
		
		\frac{0|0|z|0}{2|3|0|1}
		\\
		\updownarrow &  & \updownarrow\\
		2|1|4|3 &  & 2|3|4|1\\
		&  & \\
	\\

		\left[  \theta_{2}^{1}\right] \!\! \left[  \mathfrak{f}_{1}^{1}\text{
		}\mathfrak{f}_{1}^{1}\right] \!\! \left[  \theta_{2}^{1}\text{ }\mathbf{1}\right]
		\!\!\left[  \mathbf{1}\text{ }\mathbf{1}\text{ }\theta_{2}^{1}\right]

		& \sim &
		\left[  \theta_{2}^{1}\right]\!\!  \left[  \mathfrak{f}_{1}^{1}\text{
		}\mathfrak{f}_{1}^{1}\right] \!\! \left[  \mathbf{1}\text{ }\theta_{2}^{1}\right]\!\!
		\left[  \theta_{2}^{1}\text{ }\mathbf{1}\text{ }\mathbf{1}\right]

		\\
		\updownarrow &  & \updownarrow\\
		\frac{0|z|0|0}{2|0|1|3}
		&   \sim  &
		\frac{0|z|0|0}{2|0|3|1}
		
		\\
		\updownarrow &  & \updownarrow\\
		2|4|1|3 & \overset{\pi_4}{\sim} & 2|4|3|1\\
		&  & \\
		\\

		\left[  \mathfrak{f}_{1}
		^{1}\right] \!\! \left[  \theta_{2}^{1}\right]\!\!  \left[  \theta_{2}^{1}\text{
		}\mathbf{1}\right] \!\! \left[  \mathbf{1}\text{ }\mathbf{1}\text{ }\theta_{2}
		^{1}\right]
		&\sim &
		\left[  \mathfrak{f}_{1}^{1}\right] \!\! \left[  \theta_{2}^{1}\right] \!\! \left[
		\mathbf{1}\text{ }\theta_{2}^{1}\right]\!\!  \left[  \theta_{2}^{1}\text{
		}\mathbf{1}\text{ }\mathbf{1}\right]

		\\
		\updownarrow &  & \updownarrow\\
		
		\frac{z|0|0|0}{0|2|1|3}
		&   \sim  &
		\frac{z|0|0|0}{0|2|3|1}
		\\
		\updownarrow &  & \updownarrow\\
		4|2|1|3 & \overset{\pi_4}{\sim} & 4|2|3|1.
	\end{array}
	\]
	Note that the $2$-cell $24|13$ and the edge $2|13|4$ of $P_{4}$ degenerate
	under $\pi_4$ above, while  the $2$-cell $24|13$ and the edge $1|24|3$ of $P_{4}$ degenerate under the map $\pi_4$ defined in
	\cite{SU2}.
\end{example}

\begin{proposition}
	\label{r-dif-non}The map $\tilde{\partial}_{n,m}$ on the cellular chains
	$C_{\ast}({\mathcal{ZP}}_{n,m})$ induces a map
	\[
	\tilde{\partial}_{n,m}:\tilde{F}_{n,m}(\Theta,\mathfrak{F},\Theta
	)\rightarrow\tilde{F}_{n,m}(\Theta,\mathfrak{F},\Theta)
	\]
	of degree $-1$ and a differential $\tilde{\partial}=\sum\nolimits_{mn\geq1}\tilde{\partial}_{n,m}$ on $\tilde{F}(\Theta,\mathfrak{F},\Theta)$. Consequently, there
	is the identification of chain complexes
	\[
	{F}(\Theta,\mathfrak{F},\Theta)=C_{\ast}({\mathcal{ZJ}}_{n,m}).
	\]
	
\end{proposition}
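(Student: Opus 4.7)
The plan is to reduce the statement to the absolute case already handled by Proposition \ref{d-tildepartial} and Theorem \ref{dif-unital}. First I would verify that the map $\digamma_{\!\!z}$ defined in (\ref{redigamma}) restricts to a bijection
\[
\digamma_{\!\!z}:\tilde{\mathcal{B}}_{n,m}(\Theta,\mathfrak{F},\Theta)\overset{\approx}{\longrightarrow}\widetilde{\mathcal{ZP}}_{n,m},
\]
since the left-hand side is by definition $\digamma_{\!\!z}^{-1}(\mathcal{ZP}_{n,m})$ and $\digamma_{\!\!z}$ takes $(\mathfrak{F},\Theta)$-monomials of height $1$ onto their canonical images by the same leaf-decomposition argument that makes $\digamma$ a bijection in the absolute setting. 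This bijection induces a canonical isomorphism of graded modules $\tilde{F}_{n,m}(\Theta,\mathfrak{F},\Theta)\approx C_{\ast}(\widetilde{\mathcal{ZP}}_{n,m})$, under which the operator $\tilde{\partial}_{n,m}$ on the cellular chains of $\widetilde{\mathcal{ZP}}_{n,m}$ can be transferred to the left-hand side; this produces the required degree $-1$ endomorphism, and unwinding the sign conventions in (\ref{differential}) together with the analog of Definition \ref{delta} for monomials containing a single $\mathfrak{f}$-factor gives the explicit formula.

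Next I would establish $\tilde{\partial}^{2}=0$. The key point is that the chain complex $(C_{\ast}(\widetilde{\mathcal{ZP}}_{n,m}),\tilde{\partial})$ is itself a direct sum of two absolute complexes $(C_{\ast}(\mathfrak{m}_{z}\widetilde{\circledast}_{pp}\mathfrak{n}),\tilde{\partial})$ and $(C_{\ast}(\mathfrak{m}\widetilde{\circledast}_{pp}\mathfrak{n}_{z}),\tilde{\partial})$ glued along their common codimension-$1$ face $\mathfrak{g}_{m+1}^{n+1}=[\frac{n_{z}}{\mathfrak{m}}]=[\frac{\mathfrak{n}}{m_{z}}]$. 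On each summand, Proposition \ref{d-tildepartial} already supplies $\tilde{\partial}^{2}=0$, so the only thing to check is cancellation of the mixed terms arising from faces of $\mathfrak{g}_{m+1}^{n+1}$ in which the $z$-block crosses from the input side to the output side. For this I would mimic the case analysis in the proof of Proposition \ref{d-tildepartial}: given a codimension-$2$ face $F$ of $\mathfrak{g}_{m+1}^{n+1}$, the two expressions $\tilde{\partial}\tilde{\partial}_{E_{1}E_{2}}$ and $\tilde{\partial}\tilde{\partial}_{E_{1}^{\prime}E_{2}^{\prime}}$ cancel by the uniqueness clause in Lemma \ref{basic} applied within whichever of $\mathfrak{m}_{z}\widetilde{\circledast}_{pp}\mathfrak{n}$ or $\mathfrak{m}\widetilde{\circledast}_{pp}\mathfrak{n}_{z}$ contains the unique partner of $F$.

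The main obstacle will be verifying that Lemma \ref{basic} remains valid across the gluing, i.e., that when a formally indecomposable face $F\in\delta_{b}(C)$ of a TD coherent row (or column) matrix $C$ in one summand of the cylinder has a unique partner $D\neq C$, that partner still lies in $\mathcal{ZP}_{n,m}$ and not outside it. The delicate point is ensuring that the construction of $D$ in Cases 1 and 2 of the proof of Lemma \ref{basic}, which replaces the $\ell$-th factor by $\int C_{\ell}C_{\ell+1}$, preserves the admissibility condition that $z$ lies in some interior block of the underlying structure bipartition. This follows from the observation that the integration $\int$ only merges or refines blocks at positions distinct from the $z$-block (since $z$ never lies in the extreme blocks $A_{r}$ or $B_{1}$ by Definition \ref{cylinder}), so $D$ automatically inherits the cylinder admissibility of $C$.

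Finally, passing to the reduced quotient is routine: since the equivalence relation defining $\mathcal{ZJ}_{n+1,m+1}=\mathcal{ZP}_{n,m}/\!\sim$ identifies matrices differing only in empty biblocks, and since the relation $A\sim\mathbf{1}A\sim A\mathbf{1}$ defining ${F}(\Theta,\mathfrak{F},\Theta)=\tilde{F}(\Theta,\mathfrak{F},\Theta)/\sim$ is the $(\mathfrak{F},\Theta)$-analog of the same relation used to pass from $\tilde{F}(\Theta)$ to ${F}(\Theta)$ in Theorem \ref{dif-unital}, the bijection $\digamma_{\!\!z}$ descends to a chain isomorphism $F_{n,m}(\Theta,\mathfrak{F},\Theta)\approx C_{\ast}(\mathcal{ZJ}_{n,m})$, with the induced differential on the left-hand side identified with $\partial_{n+1,m+1}$ on the right.
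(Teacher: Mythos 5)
Your overall strategy is the one the paper itself (implicitly) relies on: the module $\tilde{F}_{n,m}(\Theta,\mathfrak{F},\Theta)$ is \emph{defined} as the span of $\digamma_{\!\!z}^{-1}(\mathcal{ZP}_{n,m})$, so the face operator on $C_{\ast}(\mathcal{ZP}_{n,m})$ transfers tautologically, $\tilde{\partial}^{2}=0$ comes from the relative analogue of Proposition \ref{d-tildepartial} via Lemma \ref{basic}, and the reduced statement follows by passing to the two quotients exactly as in Theorem \ref{dif-unital}. The paper offers no written proof beyond this, so your reconstruction is a legitimate filling-in of the details, and your observation that the admissibility of $z$ is preserved under the integration $\int C_{\ell}C_{\ell+1}$ (because $z$ never occupies $A_{r}$ or $B_{1}$) is precisely the point that needs checking when Lemma \ref{basic} is invoked in the relative setting.

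One correction, though it does not sink the argument: $\mathfrak{g}_{m+1}^{n+1}$ is not a common codimension-$1$ face along which the two summands are glued --- it is the \emph{top-dimensional} cell of the cylinder, of dimension $m+n=\dim(KK_{n+1,m+1}\times I)$, and Definition \ref{cylinder} identifies it as $[\frac{n_{z}}{\mathfrak{m}}]=[\frac{\mathfrak{n}}{m_{z}}]$. More generally the cylinder is the quotient of $(\mathfrak{m}_{z}\circledast\mathfrak{n})\sqcup(\mathfrak{m}\circledast\mathfrak{n}_{z})$ identifying \emph{every} pair of cells whose structure bipartitions place $z$ in an interior block (equivalently, whose $z$-deleted elements $c_{m,n}$ agree), not just the top cells; the cells that survive unidentified are exactly those with $z$ in $A_{1}$ or in $B_{r}$, which account for the two ends $KK\times 0$ and $KK\times 1$. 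Consequently $C_{\ast}(\mathcal{ZP}_{n,m})$ is not a direct sum of the two absolute complexes, and $\tilde{\partial}$ on an identified cell is not the quotient of $\tilde{\partial}\oplus\tilde{\partial}$ (that would double-count the identified faces); rather, one must argue directly that each codimension-$2$ cell of the cylinder lies in exactly two codimension-$1$ cells, applying the uniqueness clause of Lemma \ref{basic} inside whichever summand contains the relevant representatives. Your "mixed terms" discussion is the right instinct here, but it should be rephrased in these terms rather than as a gluing along a single face. With that repair the proof is sound. Also note the minor notational slip: the target of $\digamma_{\!\!z}$ restricted to $\tilde{\mathcal{B}}_{n,m}(\Theta,\mathfrak{F},\Theta)$ is the balanced cylinder $\mathcal{ZP}_{n,m}$, not the prebalanced $\widetilde{\mathcal{ZP}}_{n,m}$.
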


\begin{example}
	The set $F_{2,2}(\Theta,\mathfrak{F},\Theta)$	consists of the following coherent dimension $1$ generators and their duals in Example \ref{22components} (cf. Figure 18):
	\bigskip
	\[
	\left[  \mathfrak{f}_{1}^{2}\right]\!\!  \left[  \theta_{2}^{1}\right]  ,\text{
	}\left[
	\begin{array}
		[c]{c}%
		\mathfrak{f}_{1}^{1}\medskip\\
		\mathfrak{f}_{1}^{1}%
	\end{array}
	\right] \!\! \left[  \theta_{2}^{2}\right]  ,\text{ }\left[
	\begin{array}
		[c]{c}%
		\left[  \mathfrak{f}_{1}^{1}\right] \!\! \left[  \theta_{2}^{1}\right]  \medskip\\
		\mathfrak{f}_{2}^{1}
	\end{array}
	\right]\!\!  \left[
	\begin{array}
		[c]{ll}
		\theta_{1}^{2} & \theta_{1}^{2}
	\end{array}
	\right]  ,\
	\left[
	\begin{array}
		[c]{c}
		\mathfrak{f}_{2}^{1}
		\medskip\\
		\left[  \theta_{2}^{1}\right]\!\!
		\left[
		\begin{array}
			[c]{cc}
			\mathfrak{f}_{1}^{1} & \mathfrak{f}_{1}^{1}
		\end{array}
		\right]
	\end{array}
	\right]\!\!
	\left[
	\begin{array}
		[c]{ll}
		\theta_{1}^{2} & \theta_{1}^{2}
	\end{array}
	\right].
	\]
	
\end{example}

\begin{definition}
	The $\mathcal{A}_{\infty}$- \textbf{bialgebra morphism matrad is the} DG
	$\mathcal{H}_{\infty}$-bimodule
	\[
	r\mathcal{H}_{\infty}:=\left(  F\left(  \Theta,\mathfrak{F},\Theta\right)
	,\partial\right)  .
	\]
\end{definition}
\noindent Thus a map $A\Rightarrow B$ of $A_{\infty}$-bialgebras is defined by a relative prematrad map
$r\mathcal{H}_{\infty}\rightarrow U_{A,B}$ (cf. Definitions \ref{remorphism} and \ref{gmorphism}).

\section{The Bimultiplihedron $JJ_{n,m}$}

Let $m, n\geq1.$ First, we construct the polytopes $JP_{n,m}$ and a face-preserving bijection
\begin{equation}
	\mathcal{ZP}_{n,m}\longrightarrow\{\text{cells of }\ JP_{n,m}\}, \label{JP}%
\end{equation}
which induces the structural
bijection
\[
\mathcal{ZJ}_{n+1,m+1}\longrightarrow\{\text{cells of }\ JJ_{n+1,m+1}\}
\]
and the commutative diagram
\begin{equation}%
	\begin{array}
		[c]{ccc}%
		\mathcal{ZP}_{n,m} & \overset{\approx}{\longrightarrow} & \{\text{cells of
		}\ JP_{n,m}\}\vspace*{0.1in}\\
		\hspace{.1in}\downarrow\text{ \ \ \ } &  & \ \ \ \ \ \downarrow\pi_{n,m}\vspace*{0.1in}\\
		\mathcal{ZJ}_{n+1,m+1} & \overset{\approx}{\longrightarrow} & \{\text{cells of
		}\ JJ_{n+1,m+1}\}.
	\end{array}
	\label{JJdiagram}%
\end{equation}
We refer to the polytopes $JJ=\left\{  JJ_{n+1,m+1}\right\}$ as
\emph{bimultiplihedra}. As in the absolute case,
apply the bijections $m_{z}\ast_{c}\,\mathfrak{n}\rightarrow P_{m+n+1}$ and
$\mathfrak{m}\ast_{c}\,n_{z}\rightarrow P_{m+n+1}$ with $z=m+n+1$ and fix the bijection (\ref{JP}). Then
$JP_{n,m}$ forms a subdivision of the cylinder $PP_{n,m}\times I$ in which
codimension 1 cells of the form $C_{1}F_{2}:=\left(  \frac{B_{1}|(B_{2}\cup
	z)}{\!\!\!\!\!\!\!\!\!\!\!\!A_{1}|A_{2}}\,,\dots\right)  $ and $F_{1}%
C_{2}:=\left(  \frac{\ \ \ \ \,B_{1}|B_{2}}{(A_{1}\cup z)|A_{2}}\,,\dots\right)
$ correspond to the subdivision cells of the codimension 1 cells $A|(B\cup
m+n+1)$ and $(A\cup m+n+1)|B$ for $A|B=A_{1}|A_{2}\Cup B_{1}|B_{2}\subset
P_{m+n+1},$ respectively. Then the bimultiplihedron $JJ_{n+1,m+1}$ is a
subdivision of the cylinder $KK_{n+1,m+1}\times I$; in particular, the octagon $JJ_{2,2}$ is a subdivision of
a hexagon (see Figure 18) and $JJ_{n+1,1}=JJ_{1,n+1}$ can be identified with
multiplihedra $J_{n+1}$ (cf. Example \ref{mult4}).

Thus, there is an isomorphism of chain complexes
\begin{equation}
	C_{\ast}(\mathcal{ZJ}_{n+1,m+1})\overset{\approx}{\longrightarrow}C_{\ast
	}(JJ_{n+1,m+1}), \label{rel-iso-jj}%
\end{equation}
and $C_{\ast}(JJ)$ realizes the $\mathcal{A}_{\infty}$-bialgebra morphism
matrad $r\mathcal{H}_{\infty}.$ Furthermore, the standard isomorphisms
(\ref{iso1}) and (\ref{iso2}) extend to isomorphisms
\begin{equation}
	(r\mathcal{H}_{\infty})_{n+1,m+1}\overset{\approx}{\longrightarrow}C_{\ast
	}(JJ_{n+1,m+1}), \label{JJiso}%
\end{equation}
and one recovers $\mathcal{J}_{\infty}$ by restricting the differential
$\partial$ to $(r\mathcal{H}_{\infty})_{1,\ast}$ or $(r\mathcal{H}_{\infty
})_{\ast,1}.$

Since $\rho\in\mathcal{ZJ}_{n,m}$ can be expressed as a matrix product
$
\rho=A_{1}\cdots A_{r}\cdot F\cdot B_{1}\cdots B_{s}
$
in which only $F$ contains $z,$ the element $\rho$ can be represented by a
piecewise linear path in $\mathbb{N}^{3}$ with $r+s+1$ directed components
from $(m+1,1,0)\in\mathbb{N}^{2}\times0$ to $(1,n+1,1)\in\mathbb{N}^{2}%
\times1$, where $B_{1}\cdots B_{s}$ is represented by a path in $\mathbb{N}%
^{2}\times0$ with directed components ${B}_{s},{B}_{s-1}%
,\ldots,{B}_{1}$, $F$ is represented by a directed component $\vec{F}$ from
$\mathbb{N}^{2}\times0$ to $\mathbb{N}^{2}\times1,$ and $A_{1}\cdots A_{r}$ is
represented by a path in $\mathbb{N}^{2}\times1$ with directed components
${A}_{r},{A}_{r-1},\ldots,{A}_{1}$ (see Figure 15).

\hspace{-.4in}
\unitlength=1.6mm \linethickness{0.4pt}
\begin{picture}(92.00,47.00)
	\put(32.00,9.00){\line(1,0){40.00}}
	\put(32.00,9.00){\circle*{1.00}}
	\put(52.00,9.00){\circle*{1.00}}
	\put(42.00,9.00){\circle*{1.00}}
	\put(32.00,5.00){\makebox(0,0)[cc]{$1$}}
	\put(42.00,5.00){\makebox(0,0)[cc]{$2$}}
	\put(52.00,5.00){\makebox(0,0)[cc]{$3$}} \ \
	\put(42.00,9.00){\vector(0,1){10.00}}
	\put(62.00,9.00){\vector(-1,0){10.00}}
	\put(52.00,9.00){\vector(-1,0){10.00}}
	\put(42.00,9.00){\vector(-1,0){10.00}}
	\put(32.00,9.00){\vector(0,1){10.00}}
	\put(14.33,22.00){\line(0,1){25.00}}
	\put(14.33,22.00){\circle*{1.00}}
	\put(24.33,22.00){\circle*{1.00}}
	\put(14.33,32.00){\circle*{1.00}}
	\put(14.33,42.00){\circle*{1.00}}
	\put(10.66,22.00){\makebox(0,0)[cc]{$1$}}
	\put(10.66,32.00){\makebox(0,0)[cc]{$2$}}
	\put(24.33,22.00){\vector(0,1){10.00}}
	\put(24.33,32.00){\vector(-1,0){10.00}}
	\put(24.33,22.00){\vector(-1,0){10.00}}
	\put(14.33,22.00){\vector(0,1){10.00}}
	\put(14.33,32.00){\vector(0,1){10.00}}
	\put(42.00,9.00){\line(-4,3){9.33}} \
	\put(31.33,16.67){\vector(-4,3){7.00}}
	\put(31.60,9.00){\vector(-4,3){17.27}}
	\put(31.60,19.00){\vector(-4,3){17.27}}
	\put(32.00,19.00){\circle*{1.33}}
	\put(42.00,19.00){\circle*{1.33}}
	\put(24.33,32.00){\circle*{1.33}}
	\put(32.00,19.00){\line(0,1){16.67}}
	\put(42.00,19.00){\line(-3,2){9.33}}
	\put(42.00,19.00){\vector(-1,0){10.00}}
	\put(52.00,19.00){\vector(-1,0){10.00}}
	\put(34.33,22.00){\vector(-1,0){10.00}}
	\put(24.33,22.00){\line(1,0){11.67}}
	\put(39.00,22.00){\line(1,0){16.33}}
	\put(52.00,19.00){\circle*{1.33}}
	\put(10.33,42.00){\makebox(0,0)[cc]{$3$}} \
	\put(39.00,24.20){\makebox(0,0)[cc]{$_{F^{2\times 2}}$}}
	\put(38.50,15.00){\makebox(0,0)[cc]{$_{F^{1\times 2}}$}}
	\put(20.33,24.20){\makebox(0,0)[cc]{$_{F^{2\times1}}$}}
	\put(20.67,13.67){\makebox(0,0)[cc]{$_{F^{1\times1}}$}} \
	\put(36.67,17.67){\makebox(0,0)[cc]{$_{A}$}}
	\put(44.00,13.67){\makebox(0,0)[cc]{$_{B}$}}
	\put(37.20,7.00){\makebox(0,0)[cc]{$_{A^{'}}$}}
	\put(30.33,14.33){\makebox(0,0)[cc]{$_{B^{'}}$}}
	\put(20.00,33.33){\makebox(0,0)[cc]{$_{C}$}} \
	\put(26.50,26.67){\makebox(0,0)[cc]{$_{D}$}}
	\put(12.50,27.00){\makebox(0,0)[cc]{$_{D^{'}}$}} \
	\put(21.90,20.60){\makebox(0,0)[cc]{$_{C^{'}}$}} \
	\put(34.33,22.00){\circle*{1.33}}
	\put(52.00,9.00){\vector(0,1){9.67}}
	\put(31.33,26.43){\vector(-4,3){7.00}}
\end{picture}
\vspace{-.6in}

\begin{center}
	Figure 15. The paths in $\mathbb{N}^{3}$ with directed 3 components
	corresponding to common vertices of $JJ_{2,2}$ and $P_{3}$.
\end{center}

Here are some pictures of $JJ_{n,m}$ for small values of $m$ and $n:$
\vspace{0.2in}

\unitlength=1.00mm \linethickness{0.4pt}
\ifx\plotpoint\undefined\newsavebox{\plotpoint}\fi
\begin{picture}(52.5,20)(0,0)
	\put(58.3, 14.5){\makebox(0,0)[cc]{$\bullet$}}
	\put(58.3, 18){\makebox(0,0)[cc]{1}}
	\put(58.3, 10.8){\makebox(0,0)[cc]{$\frac{0}{z}=\frac{z}{0}$}}
\end{picture}\vspace*{-0.6in}

\begin{center}
	Figure 16. The point $JJ_{1,1}.$\vspace{0.3in}
\end{center}
\unitlength=1.00mm \linethickness{0.4pt}
\begin{picture}(-40.33,13.33)
	\put(11.00,8.67){\line(1,0){43.00}}
	\put(11.33,8.67){\circle*{1.33}}
	\put(53.67,8.67){\circle*{1.33}}
	\put(11.33,13.33){\makebox(0,0)[cc]{$1|2$}}
	\put(11.33,03.33){\makebox(0,0)[cc]{$\frac{1|0}{0|z}$}}
	\put(53.67,13.00){\makebox(0,0)[cc]{$2|1$}}
	\put(53.67,03.00){\makebox(0,0)[cc]{$\frac{0|1}{z|0}$}}
	\put(71.00,8.67){\line(1,0){43.00}}
	\put(71.33,8.67){\circle*{1.33}}
	\put(113.67,8.67){\circle*{1.33}}
	\put(71.33,13.33){\makebox(0,0)[cc]{$1|2$}}
	\put(71.33,03.33){\makebox(0,0)[cc]{$\frac{0|z}{1|0}$}}
	\put(113.67,13.00){\makebox(0,0)[cc]{$2|1$}}
	\put(113.67,03.00){\makebox(0,0)[cc]{$\frac{z|0}{0|1}$}}
\end{picture}
\vspace{-0.1in}
\begin{center}
	Figure 17. The intervals $JJ_{2,1}$ and $JJ_{1,2}$.
\end{center}

\noindent\underline{For $JJ_{2,2}$}:

$
\begin{array}
	[c]{lllllllll}%
	1|23 & \leftrightarrow & \frac{0|2z}{\!\!\!1|0} & \leftrightarrow & \left[
	\begin{array}
		[c]{c}%
		\theta_{2}^{1}\smallskip\\
		\theta_{2}^{1}
	\end{array}
	\right]
	\left(  \left[  \!\!
	\begin{array}
		[c]{cc}
		\left[  \theta_{1}^{2}\right] \! \!\left[  \mathfrak{f}
		_{1}^{1}\right]	 & \mathfrak{f}_{1}^{2}
	\end{array}
	\!\!\right]
	+
	\left[  \begin{array}
		[c]{cc}
		\mathfrak{f}_{1}^{2}&
		\left[
		\begin{array}
			[c]{cc}
			\mathfrak{f}_{1}^{1}\smallskip  \\
			\mathfrak{f}_{1}^{1}
		\end{array}
		\right]  \!\!\left[  \theta_{1}^{2}\right]
	\end{array}
	\!\!\right]  \right)  \smallskip &  &  &  & \\
	13|2 & \leftrightarrow & \frac{\,\,\,0|2}{1z|0} & \leftrightarrow & \left(
	\left[
	\begin{array}
		[c]{c}
		\mathfrak{f}_{2}^{1}  \smallskip\\
		\left[  \theta_{2}^{1}\right]\!\!  \left[  \mathfrak{f}_{1}^{1}\,\,\mathfrak{f}
		_{1}^{1}\right]
	\end{array}
	\right]  +
	\left[
	\begin{array}
		[c]{c}
		\left[  \mathfrak{f}_{1}^{1}\right] \!\! \left[  \theta_{2}^{1}\right]
		\smallskip\\
		\mathfrak{f}_{2}^{1}
	\end{array}
	\right]  \right)
	\left[
	\begin{array}
		[c]{cc}%
		\theta_{1}^{2} & \theta_{1}^{2}%
	\end{array}
	\right]  \smallskip &  &  &  & \\
\end{array}
$

$
\begin{array}
	[c]{lllllllll}
	3|12 & \leftrightarrow & \frac{0|2}{z|1} & \leftrightarrow & \left[\!\!
	\begin{array}
		[c]{c}%
		\mathfrak{f}_{1}^{1}\smallskip\\
		\mathfrak{f}_{1}^{1}%
	\end{array}
	\!\!\right] \!\! [\theta_{2}^{2}]\smallskip &  &  &  & \\
	12|3 & \leftrightarrow & \frac{2|z}{1|0} & \leftrightarrow & \left[
	\theta_{2}^{2}\right] \!\! \left[  \mathfrak{f}_{1}^{1}\,\,\mathfrak{f}_{1}%
	^{1}\right]  \smallskip &  &  &  & \\
	2|13 & \leftrightarrow & \frac{2|z}{0|1} & \leftrightarrow & \left[
	\theta_{1}^{2}\right]\!\!  \left[  \mathfrak{f}_{2}^{1}\right]  \smallskip &  &  &
	& \\
	23|1 & \leftrightarrow & \frac{2|0}{z|1} & \leftrightarrow & \left[
	\mathfrak{f}_{1}^{2}\right] \!\! \left[  \theta_{2}^{1}\right]  \smallskip &  &  &
	& \\
	1|2|3 & \leftrightarrow & \frac{0|2|z}{1|0|0} & \leftrightarrow & \left[\!\!
	\begin{array}
		[c]{c}%
		\theta_{2}^{1}\smallskip\\
		\theta_{2}^{1}%
	\end{array}
	\!\!\right] \!\! \left[\!\!
	\begin{array}
		[c]{cc}%
		\theta_{1}^{2} & \theta_{1}^{2}%
	\end{array}
	\!\!	\right] \!\! \left[\!\!
	\begin{array}
		[c]{cc}%
		\mathfrak{f}_{1}^{1} & \mathfrak{f}_{1}^{1}%
	\end{array}
	\!\!\right]  \smallskip &  &  &  & \\
	1|3|2 & \leftrightarrow & \frac{0|z|2}{1|0|0} & \leftrightarrow & \left[\!\!
	\begin{array}
		[c]{c}%
		\theta_{2}^{1}\smallskip\\
		\theta_{2}^{1}%
	\end{array}
	\!\!\right]  \!\!\left[\!\!
	\begin{array}
		[c]{cc}%
		\mathfrak{f}_{1}^{1}\smallskip & \mathfrak{f}_{1}^{1}\\
		\mathfrak{f}_{1}^{1} & \mathfrak{f}_{1}^{1}%
	\end{array}
	\!\!\right] \!\! \left[\!\!
	\begin{array}
		[c]{cc}%
		\theta_{1}^{2} & \theta_{1}^{2}%
	\end{array}
	\!\!\right]  \smallskip &  &  &  & \\
	3|1|2 & \leftrightarrow & \frac{0|0|2}{z|1|0} & \leftrightarrow & \left[\!\!
	\begin{array}
		[c]{c}%
		\mathfrak{f}_{1}^{1}\smallskip\\
		\mathfrak{f}_{1}^{1}%
	\end{array}
	\!\!\right] \!\! \left[\!\!
	\begin{array}
		[c]{c}%
		\theta_{2}^{1}\smallskip\\
		\theta_{2}^{1}%
	\end{array}
	\!\!\right]\!\!  \left[\!\!
	\begin{array}
		[c]{cc}%
		\theta_{1}^{2} & \theta_{1}^{2}%
	\end{array}
	\!\!\right]  \smallskip &  &  &  & \\
	2|1|3 & \leftrightarrow & \frac{2|0|z}{0|1|0} & \leftrightarrow & \left[
	\theta_{1}^{2}\right]\!\!  \left[  \theta_{2}^{1}\right] \!\! \left[\!\!
	\begin{array}
		[c]{cc}%
		\mathfrak{f}_{1}^{1} & \mathfrak{f}_{1}^{1}%
	\end{array}
	\!\!\right]  \smallskip &  &  &  & \\
	2|3|1 & \leftrightarrow & \frac{2|z|0}{0|0|1} & \leftrightarrow & \left[
	\theta_{1}^{2}\right]\!\!  \left[  \mathfrak{f}_{1}^{1}\right]
	\!\!  \left[  \theta
	_{2}^{1}\right]  \smallskip &  &  &  & \\
	3|2|1 & \leftrightarrow & \frac{z|2|0}{0|0|1} & \leftrightarrow & \left[\!\!
	\begin{array}
		[c]{c}%
		\mathfrak{f}_{1}^{1}\smallskip\\
		\mathfrak{f}_{1}^{1}%
	\end{array}
	\!\!\right] \!\! \left[  \theta_{1}^{2}\right] \!\! \left[  \theta_{2}^{1}\right]   &  &
	&  &
\end{array}
$

$
\begin{array}
	[c]{rllll}%
	v_{1} & \leftrightarrow & \left(  \frac{0|2z}{\!\!\!1|0},\left(
	\begin{array}
		[c]{c}%
		\frac{0}{1}\vspace{1mm}\\
		\frac{0}{1}\vspace{1mm}\\
		\frac{0}{1}%
	\end{array}
	\right) \!\! \left(  \frac{2|z}{0|0}\,\frac{z|2}{0|0}\right)  \right)   &
	\leftrightarrow & \left[\!\!
	\begin{array}
		[c]{c}%
		\theta_{2}^{1}\smallskip\\
		\theta_{2}^{1}%
	\end{array}
	\!\!\right] \!\! \left[\!\!
	\begin{array}
		[c]{cc}%
		\left[  \theta_{1}^{2}\right]\!\!  \left[  \mathfrak{f}_{1}^{1}\right]   & \left[\!\!
		\begin{array}
			[c]{c}%
			\mathfrak{f}_{1}^{1}\smallskip\\
			\mathfrak{f}_{1}^{1}%
		\end{array}
		\!\!\right] \!\! \left[  \theta_{1}^{2} \right]
	\end{array}
	\!\!\right]  \bigskip\\
	v_{2} & \leftrightarrow & \left(  \frac{\,\,\,0|2}{1z|0},\left(
	\begin{array}
		[c]{c}%
		\frac{0|0}{z|1}\vspace{1mm}\\
		\frac{0|0}{1|z}%
	\end{array}
	\right) \!\! \left(  \frac{2}{0}\,\frac{2}{0}\,\frac{2}{0}\right)  \right)   &
	\leftrightarrow & \left[\!\!
	\begin{array}
		[c]{c}
		\left[  \mathfrak{f}_{1}^{1}\right] \!\! \left[  \theta_{2}^{1}\right]\vspace{1mm}\\
		\left[  \theta_{2}^{1}\right] \!\! \left[
		\begin{array}
			[c]{cc}
			\mathfrak{f}_{1}^{1} & \mathfrak{f}_{1}^{1}
		\end{array}
		\right]
	\end{array}
	\!\!\right]\!\!
	\left[\!\!
	\begin{array}
		[c]{cc}%
		\theta_{1}^{2} & \theta_{1}^{2}%
	\end{array}
	\!\!\right]  ,
\end{array}
\hspace*{0.45in}%
$
\vspace{.05in}

where $v_{1}$ and $v_{2}$ are the respective midpoints of the edges $1|23$
and $13|2$ in $P_{3}$.
\vspace{.4in}

\hspace{-.3in}
\unitlength=1.00mm \linethickness{0.4pt}
\begin{picture}(88.33,43.67)
	\put(38.67,40.67){\line(1,0){48.33}} \put(87.00,40.67){\line(0,-1){31.67}}
	\put(87.00,9.00){\line(-1,0){48.33}} \put(38.67,9.00){\line(0,1){31.67}}
	\put(38.67,40.67){\circle*{1.33}} \put(38.67,25.00){\circle*{1.33}}
	\put(38.67,9.00){\circle*{1.33}} \put(87.00,40.67){\circle*{1.33}}
	\put(87.00,9.00){\circle*{1.33}} \put(87.00,25.00){\circle*{1.33}}
	\put(32.00,33.33){\makebox(0,0)[cc]{$13|2$}}
	\put(43.3,33.33){\makebox(0,0)[cc]{$\frac{\,\,\,0|2}{1z|0}$}}
	\put(32.00,17.00){\makebox(0,0)[cc]{$1|23$}}
	\put(43.3,17.00){\makebox(0,0)[cc]{$\frac{0|2z}{\!\!\!1|0}$}}
	\put(61.00,5.00){\makebox(0,0)[cc]{$12|3 $}}
	\put(61.00,13.00){\makebox(0,0)[cc]{$\frac{2|z}{1|0}$}}
	\put(62.00,43.67){\makebox(0,0)[cc]{$ 3|12 $}}
	\put(62.00,35.67){\makebox(0,0)[cc]{$\frac{0|2}{z|1}$}}
	\put(93.33,17.00){\makebox(0,0)[cc]{$2|13  $}}
	\put(83.0,17.00){\makebox(0,0)[cc]{$\frac{2|z}{0|1}$}}
	\put(93.33,32.67){\makebox(0,0)[cc]{$23|1  $}}
	\put(83.0,32.67){\makebox(0,0)[cc]{$\frac{2|0}{ z|1}$}}
	\put(38.67,32.33){\circle*{0.67}} \put(38.67,17.33){\circle*{0.67}}
\end{picture}

\vspace*{-0.5in}

\begin{center}
	Figure 18. The octagon $JJ_{2,2}$ as a subdivision of $P_{3}$.
\end{center}
\pagebreak

\noindent\underline{For $JJ_{2,3}$:}

$
\begin{array}
	[c]{lllll}%
	123|4 & \leftrightarrow & \frac{\,\,\,3|z}{12|0} & \leftrightarrow &
	\theta_{3}^{2}/\mathfrak{f}_{1}^{1}\mathfrak{f}_{1}^{1}\mathfrak{f}_{1}%
	^{1}\vspace{1mm}\\
	4|123 & \leftrightarrow & \frac{\!\!\!0|3}{z|12} & \leftrightarrow &
	\mathfrak{f}_{1}^{1}\mathfrak{f}_{1}^{1}/\theta_{3}^{2}\vspace{1mm}\\
	13|24 & \leftrightarrow & \frac{\,\,\,3|z}{1|2} & \leftrightarrow & \theta
	_{2}^{2}/\mathfrak{f}_{1}^{1}\mathfrak{f}_{2}^{1}\vspace{1mm}\\
	134|2 & \leftrightarrow & \frac{\,\,\,3|0}{1z|2} & \leftrightarrow &
	\mathfrak{f}_{2}^{2}/\theta_{1}^{1}\theta_{2}^{1}\vspace{1mm}\\
	3|124 & \leftrightarrow & \frac{\!\!\!3|z}{0|12} & \leftrightarrow &
	\theta_{1}^{2}/\mathfrak{f}_{3}^{1}\vspace{1mm}\\
	34|12 & \leftrightarrow & \frac{3|0}{\,\,\,z|12} & \leftrightarrow &
	\mathfrak{f}_{1}^{2}/\theta_{3}^{1}\vspace{1mm}\\
	1|234 & \leftrightarrow & \frac{\,\,\,0|3z}{1|2} & \leftrightarrow &
	\theta_{2}^{1}\theta_{2}^{1}/\left(  \theta_{1}^{2}\mathfrak{f}_{1}%
	^{1}\right)  \mathfrak{f}_{2}^{2}\\
	14|23 & \leftrightarrow & \frac{\,\,\,0|3}{1z|2} & \leftrightarrow &
	[(\mathfrak{f}_{1}^{1}/\theta_{2}^{1})\mathfrak{f}_{2}^{1}+\mathfrak{f}%
	_{2}^{1}\left(  \theta_{2}^{1}/\mathfrak{f}_{1}^{1}\mathfrak{f}_{1}%
	^{1}\right)  ]/\theta_{1}^{2}\theta_{2}^{2}\\
	2|134 & \leftrightarrow & \frac{\,\,\,0|3z}{2|1} & \leftrightarrow &
	\theta_{2}^{1}\theta_{2}^{1}/\mathfrak{f}_{2}^{2}(\mathfrak{f}_{1}%
	^{1}\mathfrak{f}_{1}^{1}/\theta_{1}^{2})\\
	24|13 & \leftrightarrow & \frac{\,\,\,0|3}{2z|1} & \leftrightarrow &
	[(\mathfrak{f}_{1}^{1}/\theta_{2}^{1})\mathfrak{f}_{2}^{1}+\mathfrak{f}%
	_{2}^{1}\left(  \theta_{2}^{1}/\mathfrak{f}_{1}^{1}\mathfrak{f}_{1}%
	^{1}\right)  ]/\theta_{2}^{2}\theta_{1}^{2}\\
	23|14 & \leftrightarrow & \frac{3|z}{2|1} & \leftrightarrow & \theta_{2}%
	^{2}/\mathfrak{f}_{2}^{1}\mathfrak{f}_{1}^{1}\\
	234|1 & \leftrightarrow & \frac{\,\,\,3|0}{2z|1} & \leftrightarrow &
	\mathfrak{f}_{2}^{2}/\theta_{2}^{1}\theta_{1}^{1}%
\end{array}
$

\hspace{0.01in}$%
\begin{array}
	[c]{lllll}%
	12|34 & \leftrightarrow & \frac{\,\,\,0|3z}{\!\!\!12|0} & \leftrightarrow &
	[\left(  \theta_{2}^{1}/\theta_{2}^{1}\theta_{1}^{1}\right)  \theta_{3}%
	^{1}+\theta_{3}^{1}\left(  \theta_{2}^{1}/\theta_{1}^{1}\theta_{2}^{1}\right)
	]/\\
	&  &  &  & [(\theta_{1}^{2}/\mathfrak{f}_{1}^{1})(\theta_{1}^{2}%
	/\mathfrak{f}_{1}^{1})\mathfrak{f}_{1}^{2}+\,(\theta_{1}^{2}/\mathfrak{f}%
	_{1}^{1})\mathfrak{f}_{1}^{2}(\mathfrak{f}_{1}^{1}\mathfrak{f}_{1}^{1}%
	/\theta_{1}^{2})\vspace{1mm}\\
	&  &  &  & +\mathfrak{f}_{1}^{2}(\mathfrak{f}_{1}^{1}\mathfrak{f}_{1}%
	^{1}/\theta_{1}^{2})(\mathfrak{f}_{1}^{1}\mathfrak{f}_{1}^{1}/\theta_{1}%
	^{2})]\vspace{1mm}\\
	124|3 & \leftrightarrow & \frac{\,\,\,0|3}{\!\!\!12z|0} & \leftrightarrow &
	[(\theta_{2}^{1}/\theta_{1}^{1}\theta_{2}^{1}/\mathfrak{f}_{1}^{1}%
	\mathfrak{f}_{1}^{1}\mathfrak{f}_{1}^{1})\mathfrak{f}_{3}^{1}+\mathfrak{f}%
	_{3}^{1}(\mathfrak{f}_{1}^{1}/\theta_{2}^{1}/\theta_{2}^{1}\theta_{1}^{1})\\
	&  &  &  & +(\theta_{3}^{1}/\mathfrak{f}_{1}^{1}\mathfrak{f}_{1}%
	^{1}\mathfrak{f}_{1}^{1})(\theta_{2}^{1}/\mathfrak{f}_{2}^{1}\mathfrak{f}%
	_{1}^{1})+(\theta_{3}^{1}/\mathfrak{f}_{1}^{1}\mathfrak{f}_{1}^{1}%
	\mathfrak{f}_{1}^{1})(\mathfrak{f}_{2}^{1}/\theta_{2}^{1}\theta_{1}^{1})\\
	&  &  &  & +(\theta_{2}^{1}/\mathfrak{f}_{2}^{1}\mathfrak{f}_{1}%
	^{1})(\mathfrak{f}_{2}^{1}/\theta_{2}^{1}\theta_{1}^{1})-(\theta_{2}%
	^{1}/\mathfrak{f}_{1}^{1}\mathfrak{f}_{2}^{1})(\mathfrak{f}_{2}^{1}/\theta
	_{1}^{1}\theta_{2}^{1})\\
	&  &  &  & -(\theta_{2}^{1}/\mathfrak{f}_{1}^{1}\mathfrak{f}_{2}%
	^{1})(\mathfrak{f}_{1}^{1}/\theta_{3}^{1})-(\mathfrak{f}_{2}^{1}/\theta
	_{1}^{1}\theta_{2}^{1})(\mathfrak{f}_{1}^{1}/\theta_{3}^{1})]/\,\theta_{1}%
	^{2}\theta_{1}^{2}\theta_{1}^{2}%
\end{array}
$
\vspace{.5in}

\hspace{-.27in}\unitlength 1mm 
\linethickness{0.5pt}
\ifx\plotpoint\undefined\newsavebox{\plotpoint}\fi 
\begin{picture}(99.547,71.641)(0,0)
	\thicklines
	\put(19.547,70.891){\line(1,0){44.75}}
	\put(54.297,51.891){\line(1,0){44.75}}
	\put(54.297,17.141){\line(1,0){44.75}}
	\multiput(64.047,70.891)(.0765864333,-.0421225383){457}{\line(1,0){.0765864333}}
	\multiput(19.547,70.891)(.0765864333,-.0421225383){457}{\line(1,0){.0765864333}}
	\multiput(19.547,36.141)(.0765864333,-.0421225383){457}{\line(1,0){.0765864333}}
	\put(19.547,70.891){\line(0,-1){34.25}}
	\put(98.797,51.641){\line(0,-1){34.75}}
	\multiput(37.297,49.641)(.076136364,-.042045455){220}{\line(1,0){.076136364}}
	\put(54.047,34.391){\line(1,0){44.75}}
	\thinlines
	\put(78.047,51.641){\line(0,-1){34.25}}
	\put(98.797,43.141){\line(-1,0){44.75}}
	\multiput(88.047,43.141)(.076923077,.041958042){143}{\line(1,0){.076923077}}
	\thicklines
	\put(86.047,51.391){\line(2,-1){8.5}}
	\multiput(95.547,46.641)(.09027778,-.04166667){36}{\line(1,0){.09027778}}
	\multiput(84.547,52.391)(-.0694444,.0416667){18}{\line(-1,0){.0694444}}
	\thinlines
	\multiput(54.047,43.391)(.185114504,.041984733){131}{\line(1,0){.185114504}}
	\put(54.047,17.391){\circle*{1.414}}
	\put(66.047,43.141){\circle*{.5}}
	\put(78.047,43.141){\circle*{.707}}
	\put(54.047,43.391){\circle*{1}}
	\put(66.047,43.141){\circle*{1}}
	\put(78.047,43.141){\circle*{1}}
	\put(78.047,51.891){\circle*{1}}
	\put(78.047,48.641){\circle*{1}}
	\put(98.797,48.891){\circle*{1}}
	\put(83.297,56.391){\circle*{1}}
	\thicklines
	\put(37.297,61.141){\line(0,-1){35.25}}
	\multiput(19.797,48.891)(.079545455,-.042045455){220}{\line(1,0){.079545455}}
	\put(37.297,44.391){\circle*{1}}
	\put(19.547,62.641){\line(1,0){14}}
	\put(35.547,62.641){\line(1,0){28.75}}
	\put(64.047,70.891){\line(0,-1){18.25}}
	\put(64.297,41.891){\line(0,-1){5.5}}
	\put(83.297,25.391){\line(0,1){8.75}}
	\put(83.297,39.891){\line(0,1){3}}
	\put(83.297,43.641){\line(0,1){8}}
	\put(83.297,52.391){\line(0,1){8}}
	\put(54.297,36.141){\line(1,0){10.5}}
	\put(53.547,36.141){\line(-1,0){16}}
	\put(36.797,36.141){\line(-1,0){17.25}}
	\put(64.297,36.641){\circle*{1.581}}
	\put(19.547,36.141){\circle*{1.5}}
	\put(37.297,39.641){\circle*{1.5}}
	\put(19.547,49.141){\circle*{1.5}}
	\put(19.547,62.641){\circle*{1.581}}
	\put(19.547,70.641){\circle*{1.5}}
	\put(64.047,70.891){\circle*{1.5}}
	\put(98.797,34.391){\circle*{1.5}}
	\put(54.047,34.391){\circle*{1.5}}
	\put(98.547,51.641){\circle*{1.5}}
	\put(37.297,60.891){\circle*{1.5}}
	\put(37.297,49.391){\circle*{1.5}}
	\put(98.797,45.141){\circle*{1.5}}
	\put(83.297,53.141){\circle*{1.5}}
	\put(98.797,17.391){\circle*{1.5}}
	\put(54.051,51.771){\line(0,-1){34.477}}
	\thinlines
	\put(54.109,27.946){\line(1,0){44.595}}
	\put(54.109,22.298){\circle*{1.189}}
	\multiput(53.811,37.46)(.090970149,.042156716){134}{\line(1,0){.090970149}}
	\put(54.109,37.757){\circle*{1.189}}
	\multiput(37.163,55.001)(.0605381818,-.0421636364){275}{\line(1,0){.0605381818}}
	\multiput(98.406,48.757)(-.08825,.0418125){64}{\line(-1,0){.08825}}
	\put(64.217,51.433){\line(0,-1){5.5}}
	\put(64.068,62.582){\circle*{1.189}}
	\put(54.109,51.879){\circle*{1.189}}
	\put(83.244,60.203){\circle*{1.189}}
	\put(83.393,25.865){\circle*{1.189}}
	\put(54.109,40.284){\circle*{1.487}}
	\put(98.852,42.96){\circle*{.892}}
	\put(37.311,54.852){\circle*{.892}}
	\put(54.109,27.946){\circle*{.892}}
	\put(98.852,27.798){\circle*{.892}}
	\put(78.041,37.46){\circle*{.892}}
	\put(78.041,34.338){\circle*{.892}}
	\put(78.041,27.649){\circle*{.892}}
	\put(78.041,17.243){\circle*{.892}}
	\put(54.109,22.595){\line(-1,0){.149}}
	\multiput(54.109,22.446)(.1115,-.037){4}{\line(1,0){.1115}}
	\put(98.852,22.298){\line(-1,0){44.892}}
	\put(78.041,22.298){\circle*{.892}}
	\put(98.852,22.298){\circle*{.892}}
	\put(37.311,26.311){\circle*{1.487}}
	\put(83.244,46.825){\circle*{.892}}
	\multiput(83.414,56.402)(.080738248,-.042051171){101}{\line(1,0){.080738248}}
	\multiput(78.148,37.29)(.15647462,.042152347){133}{\line(1,0){.15647462}}
	\put(83.329,38.309){\line(0,-1){3.483}}
	\put(83.329,39.244){\line(0,1){.595}}
	\put(64.302,45.445){\line(0,-1){2.039}}
	\put(83.329,39.838){\circle*{1.189}}
	\put(88.171,43.151){\circle*{.849}}
	\put(88.171,43.066){\circle*{.537}}
	\put(83.329,38.989){\line(0,1){.7645}}
	\multiput(90.72,21.915)(.076853846,-.042067368){105}{\line(1,0){.076853846}}
	\multiput(68.55,33.977)(.072512936,-.042126563){123}{\line(1,0){.072512936}}
	\multiput(79.507,27.522)(.084943725,-.042102542){115}{\line(1,0){.084943725}}
	\multiput(67.53,34.657)(-.06984262,.04152804){45}{\line(-1,0){.06984262}}
	\put(64.217,49.862){\circle*{1.019}}
	\multiput(78.488,42.472)(.07550553,-.04179771){63}{\line(1,0){.07550553}}
	\multiput(64.133,49.947)(.08625055,-.04181845){65}{\line(1,0){.08625055}}
	\multiput(70.503,46.804)(.08279325,-.04193424){79}{\line(1,0){.08279325}}
\end{picture}
\vspace{-0.5in}
\begin{center}
	Figure 19. The bimultiplihedra $JJ_{2,3}$ as a subdivision of $P_{4}$.
\end{center}
\bigskip

\section{Morphisms and the Transfer of $A_\infty$-Structure}

In this section we define the morphisms of $A_{\infty}$-bialgebras. But before
we begin, we mention three settings in which $A_{\infty}$-bialgebras naturally
appear:\vspace{0.1in}

\noindent(1) Let $X$ be a space. The bar construction $\Omega S_{\ast}(X)$
on the simplicial singular chain complex of $X$ is a DG bialgebra with
coassociative coproduct \cite{Baues1}, \cite{CM}, \cite{KS1}, but whether or
not $\Omega^{2}S_{\ast}(X)$ admits a coassociative coproduct is unknown.
However, there is an $A_{\infty}$-coalgebra structure on $\Omega^{2}S_{\ast
}(X),$ which is compatible with the product, and $\Omega^{2}S_{\ast}(X)$ is an
$A_{\infty}$-bialgebra.\vspace{0.1in}

\noindent(2) Let $R$ be a commutative ring with unity, let $H$ be a graded $R$-bialgebra with nontrivial product and
coproduct, and let $\rho:RH\longrightarrow H$ be a (bigraded) free resolution as an $R$-algebra. Since $RH$ cannot be simultaneously free and cofree, it is
difficult to introduce a coassociative coproduct on $RH$ so that $\rho$ is a
bialgebra map. However, there is always an $A_{\infty}$-bialgebra structure on
$RH$ such that $\rho$ is a morphism of $A_{\infty}$-bialgebras.\vspace{0.1in}

\noindent(3) If $A$ is an $A_{\infty}$-bialgebra over a field, and
$g:H(A)\rightarrow A$ is a cycle-selecting homomorphism, there is an
$A_{\infty}$-bialgebra structure on $H(A),$ which is unique up to isomorphism,
and a morphism $G:H(A)\Rightarrow A$ of $A_{\infty}$-bialgebras extending $g$
(see Theorem \ref{AAHopf}).\vspace*{0.1in}

\begin{definition}\label{gmorphism}
	Let $(A,\omega_{A})$ and $(B,\omega_{B})$ be $A_{\infty}$-bialgebras. An
	element
	\[
	G=\{g_{m}^{n}\in Hom^{m+n-2}(A^{\otimes m},B^{\otimes n})\}_{m,n\geq1}\in
	U_{A,B}%
	\]
	is an $A_{\infty}$-\textbf{bialgebra morphism from} $A$ \textbf{to} $B$ if the
	map $\mathfrak{f}_{m}^{n}\mapsto g_{m}^{n}$ extends to a map $r\mathcal{H}%
	_{\infty}\rightarrow U_{A,B}$ of relative prematrads. When $G$ is an $A_{\infty}$-bialgebra morphism from $A$ \textbf{to} $B$	we write $G:A\Rightarrow B$.
	An $A_{\infty}$-bialgebra morphism $\Phi
	=\{\phi_{m}^{n}\}_{m,n\geq1}:A\Rightarrow B$ is an \textbf{isomorphism} if
	$\phi_{1}^{1}:A\rightarrow B{\ }$is an isomorphism of underlying modules.
\end{definition}

If $A$ is a free DGM, $B$ is an $A_{\infty}$-coalgebra, and $g:A\rightarrow B$
is a homology isomorphism (weak equivalence) with a right-homotopy inverse,
the Coalgebra Perturbation Lemma (CPL) transfers the $A_{\infty}$-coalgebra
structure from $B$ to $A$ (see \cite{Hueb-Kade}, \cite{Markl1}). When $B$ is
an $A_{\infty}$-bialgebra, Theorem \ref{transfer} generalizes the CPL in two directions:

\begin{enumerate}
	\item The $A_{\infty}$-bialgebra structure transfers from $B$ to $A.$
	
	\item Neither freeness nor a existence of a right-homotopy inverse is required.
\end{enumerate}

\noindent Note that (2) formulates the transfer of $A_{\infty} $-algebra
structure in maximal generality (see Remark \ref{twoinfty}).

\begin{proposition}
	\label{homology-iso}Let $A$ and $B$ be DGMs. If $g:A\rightarrow B$ is a chain
	map and $u\in Hom\left(  A^{\otimes m},A^{\otimes n}\right)  $, the induced
	map $\tilde{g}:U_{A}\rightarrow U_{A,B}$ defined by $\tilde{g}\left(
	u\right)  =g^{\otimes n}u$ is a cochain map. If in addition, $g$ is a homology
	isomorphism, $\tilde{g}$ is a cohomology isomorphism if either
	
	\begin{enumerate}
		\item[\textit{(i)}] $A$ is free as an $R$-module or
		
		\item[\textit{(ii)}] for each $n\geq1,$ there is a DGM\ $X\left(  n\right)  $
		and a splitting $B^{\otimes n}=A^{\otimes n}\oplus X(n)$ as a chain complex
		such that $H^{\ast}Hom\left(  A^{\otimes k},X\left(  n\right)  \right)  =0$
		for all $k\geq1.$
	\end{enumerate}
\end{proposition}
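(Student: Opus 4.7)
The plan is to attack the two assertions in sequence. First, that $\tilde g$ is a cochain map follows by direct computation. The Hom-complex differentials on $U_A$ and $U_{A,B}$ both take the form $\delta u = d_{\mathrm{target}} \circ u - (-1)^{|u|} u \circ d_{A^{\otimes m}}$, and the chain-map hypothesis $g d_A = d_B g$ tensorises to $g^{\otimes n} d_{A^{\otimes n}} = d_{B^{\otimes n}} g^{\otimes n}$. Substituting into $\tilde g(\delta u) = g^{\otimes n}(d_{A^{\otimes n}} u - (-1)^{|u|} u d_{A^{\otimes m}})$ and comparing with $\delta(\tilde g u)$ yields the claim in one line, without yet invoking either (i) or (ii).

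For the cohomology-isomorphism assertion under (i), I would proceed in two steps. Since $A$ is $R$-free, each tensor power $A^{\otimes k}$ is $R$-free, and the Künneth formula (without Tor correction) promotes the homology isomorphism $g$ to a homology isomorphism $g^{\otimes n} : A^{\otimes n} \to B^{\otimes n}$. The bigraded component of $\tilde g$ in bidegree $(m,n)$ is precisely $Hom(A^{\otimes m}, g^{\otimes n})$, and since $A^{\otimes m}$ is a chain complex of $R$-projectives, the mapping-cone argument (the cone $C(g^{\otimes n})$ is acyclic and $Hom(P, -)$ preserves acyclicity on complexes of projectives) forces $Hom(A^{\otimes m}, g^{\otimes n})$ to be a cohomology isomorphism. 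Summing over $(m,n)$ gives the result.

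For case (ii), I would interpret the chain-complex splitting $B^{\otimes n} = A^{\otimes n} \oplus X(n)$ as identifying $g^{\otimes n}$ with the inclusion of the first summand, and use it to decompose
\[
Hom(A^{\otimes m}, B^{\otimes n}) \;\cong\; Hom(A^{\otimes m}, A^{\otimes n}) \;\oplus\; Hom(A^{\otimes m}, X(n))
\]
as cochain complexes. The hypothesis kills the cohomology of the second summand, so projection onto the first is a cohomology isomorphism; composing with $\tilde g$ then recovers the identity on $Hom(A^{\otimes m}, A^{\otimes n})$. Consequently $\tilde g$ itself is a cohomology isomorphism by the two-out-of-three property for quasi-isomorphisms.

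The main obstacle I anticipate is in case (ii): ensuring that the stated splitting is actually compatible with $g^{\otimes n}$, i.e., that one is free to identify $g^{\otimes n}$ with the summand inclusion $A^{\otimes n} \hookrightarrow A^{\otimes n} \oplus X(n)$. Absent such compatibility, the projection of $\tilde g(u)$ onto $Hom(A^{\otimes m}, A^{\otimes n})$ would be post-composition by $\pi_{A^{\otimes n}} \circ g^{\otimes n}$, and one would need to argue separately that this endomorphism of $A^{\otimes n}$ is a homology isomorphism in a way visible to $Hom(A^{\otimes m}, -)$. In the intended applications (Theorem~\ref{AAHopf}, with $A$ free and $g$ a cycle-selecting section into $B$), the compatibility is built in by construction, so the apparent obstacle is surmountable in context; the cleanest write-up will pin down the compatibility up front rather than finesse it later.
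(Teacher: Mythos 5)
The paper offers no proof to compare against here (it says ``The proof is left to the reader''), so your argument can only be judged on its own terms. The cochain-map computation is correct, and your treatment of case \textit{(ii)} is essentially forced: once one reads the splitting $B^{\otimes n}=A^{\otimes n}\oplus X(n)$ as being realized by $g^{\otimes n}$ --- the only sensible reading, and the compatibility point you rightly insist on pinning down up front --- the complex $Hom(A^{\otimes m},B^{\otimes n})$ splits as a direct sum of cochain complexes, the second summand is acyclic by the hypothesis with $k=m$, and $\tilde g$ is literally the inclusion of the first summand; no two-out-of-three argument is needed.

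Case \textit{(i)} is where the genuine gap sits. Your reduction is the right one --- $A$ free (and bounded below) makes $A^{\otimes m}$ K-projective, so $Hom(A^{\otimes m},-)$ preserves quasi-isomorphisms and everything rests on $g^{\otimes n}$ being a homology isomorphism --- but the K\"{u}nneth step you invoke to get that is exactly where freeness of $A$ alone does not suffice. K\"{u}nneth ``without Tor correction'' for $A^{\otimes n}$ already requires more than $A$ free over a general commutative ring (projective cycles and boundaries, or $R$ hereditary with $H(A)$ free), and, more seriously, nothing in the hypotheses controls $H(B^{\otimes n})$: writing $g^{\otimes n}$ as a composite of maps $1^{\otimes i}\otimes g\otimes 1^{\otimes j}$, the cones of the later factors involve $\mathrm{cone}(g)\otimes B^{\otimes j}$, and acyclicity of $\mathrm{cone}(g)$ does not survive tensoring with a non-flat $B$. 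Concretely, take $R=\mathbb{Z}$, $A=\mathbb{Z}$ concentrated in degree $0$, and $B=\mathbb{Z}\oplus C$ with $C$ the acyclic complex $\mathbb{Z}\xrightarrow{2}\mathbb{Z}\rightarrow\mathbb{Z}/2$; then $g$ is a homology isomorphism but $H(C\otimes C)\neq 0$, so $g^{\otimes 2}$, and hence $\tilde g$ with $(m,n)=(1,2)$, fails to be a (co)homology isomorphism. As written, your proof of \textit{(i)} therefore needs an additional hypothesis --- e.g.\ $B$ a complex of flat modules, or $R$ a field --- which is available in the paper's applications (singular and bar-construction complexes, or field coefficients) but should be stated explicitly rather than left hidden inside the word ``K\"{u}nneth.''
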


The proof is left to the reader.

\begin{theorem}
	\label{transfer}Let $\left(  A,d_{A}\right)  $
	be a DGM, let $(B,d_{B},\omega_{B})$ be an $A_{\infty}$-bialgebra, and let
	$g:A\rightarrow B $ be a chain map/homology isomorphism. If $\tilde{g}$ is a
	cohomology isomorphism, then
	
	\begin{enumerate}
		\item[\textit{(i)}] (Existence) $g$ induces an $A_{\infty}$-bialgebra
		structure $\omega_{A}=\{\omega_{A}^{n,m}\}$ on $A$, and extends to a map
		$G=\{g_{m}^{n}\mid g_{1}^{1}=g\}:A\Rightarrow B$\ of $A_{\infty}$-bialgebras.
		
		\item[\textit{(ii)}] (Uniqueness) $\left(  \omega_{A},G\right)  $ is unique up
		to isomorphism, i.e., if $\left(  \omega_{A},G\right)  $ and $\left(
		\bar{\omega}_{A},\bar{G}\right)  $ are induced by chain homotopic maps $g$ and
		$\bar{g}$, there is an isomorphism\ of $A_{\infty}$-bialgebras $\Phi:\left(
		A,\bar{\omega}_{A}\right)  \Rightarrow\left(  A,\omega_{A}\right)  $ and a
		chain homotopy $T:\bar{G}\simeq G\circ\Phi.$
	\end{enumerate}
\end{theorem}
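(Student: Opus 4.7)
The plan is to construct $\omega_{A}$ and $G$ simultaneously by induction on $N=m+n$, exploiting the realization $(r\mathcal{H}_{\infty})_{n,m}\cong C_{\ast}(JJ_{n,m})$ from (\ref{JJiso}) and the key geometric fact that $JJ_{n,m}$ subdivides $KK_{n,m}\times I$. The base case $N\le 2$ sets $g_{1}^{1}=g$; there is nothing to define for $\omega_{A}$ since $|\omega_{A}^{n,m}|=m+n-3$. For the inductive step, assume $\omega_{A}^{n',m'}$ and $g_{m'}^{n'}$ have been chosen for all $m'+n'<N$ so that the induced partial maps $\mathcal{H}_{\infty}^{<N}\to U_{A}$ and $r\mathcal{H}_{\infty}^{<N}\to U_{A,B}$ are chain maps.

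Fix $(m,n)$ with $m+n=N$. The differential on $r\mathcal{H}_{\infty}$ dictates
\[
\partial\,\mathfrak{f}_{m}^{n}\;=\;\bigl[\omega_{m}^{n}\cdot(\mathfrak{f}_{1}^{1})^{\otimes m}\bigr]\;-\;\bigl[(\mathfrak{f}_{1}^{1})^{\otimes n}\cdot\omega_{m}^{n}\bigr]\;+\;\Xi_{m,n},
\]
where $\Xi_{m,n}$ is a $\mathbb{Z}$-linear combination of cells of $JJ_{n,m}$ strictly interior to its subdivision of $KK_{n,m}\times I$, all of bidegrees $(m',n')$ with $m'+n'<N$. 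Applied to the putative structure, the equations to solve read
\[
d(\omega_{A}^{n,m})=\Phi_{m,n}\qquad\text{in }U_{A},\qquad
d(g_{m}^{n})=g^{\otimes n}\omega_{A}^{n,m}-\omega_{B}^{n,m}g^{\otimes m}+\Psi_{m,n}\qquad\text{in }U_{A,B},
\]
with $\Phi_{m,n}\in U_{A}$ and $\Psi_{m,n}\in U_{A,B}$ both expressible in lower-order data and hence already defined. By the inductive hypothesis applied separately on the "bottom face" $KK_{n,m}\times 0$ of $JJ_{n,m}$, $\Phi_{m,n}$ is a cocycle. I would then identify the combination $\tilde{g}(\Phi_{m,n})-\omega_{B}^{n,m}\circ g^{\otimes m}+\Psi_{m,n}$ as the coboundary of a specific chain obtained by evaluating the ``vertical'' cells of $JJ_{n,m}$ that witness the subdivision of the product $KK_{n,m}\times I$; this identification is exactly what the subdivision interpretation supplies. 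Since $\tilde{g}$ is a cohomology isomorphism by hypothesis, this forces $[\Phi_{m,n}]=0$ in $H^{\ast}(U_{A})$, so there exists $\omega_{A}^{n,m}\in U_{A}$ with $d\omega_{A}^{n,m}=\Phi_{m,n}$. With this choice, the cochain $g^{\otimes n}\omega_{A}^{n,m}-\omega_{B}^{n,m}g^{\otimes m}+\Psi_{m,n}$ is a coboundary in $U_{A,B}$ (again by the same subdivision evaluation), so one can pick $g_{m}^{n}$ completing the inductive step.

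For uniqueness, given chain homotopic $g\simeq\bar{g}$ inducing $(\omega_{A},G)$ and $(\bar{\omega}_{A},\bar{G})$, I would run an analogous induction to build the components $\phi_{m}^{n}$ of an isomorphism $\Phi$ (with $\phi_{1}^{1}$ the identity of $A$) and the components of a chain homotopy $T:\bar{G}\simeq G\circ\Phi$. The relevant cell complex here is the morphism-of-morphisms object, which is a subdivision of $JJ_{n,m}\times I$; its boundary provides the required obstruction identities, and the same cohomology-isomorphism hypothesis on $\tilde{g}$ turns each obstruction into a coboundary.

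The principal obstacle is the subdivision-theoretic identification in the inductive step: one must show that under the evaluation sending $\mathfrak{f}_{m'}^{n'}\mapsto g_{m'}^{n'}$, $\omega_{m'}^{n'}\mapsto\omega_{A}^{n',m'}$ (on the bottom face) and $\omega_{m'}^{n'}\mapsto\omega_{B}^{n',m'}$ (on the top face), the cellular boundary identity on the top-dimensional cell of $JJ_{n,m}$ converts into the precise cochain equation that makes $\Phi_{m,n}$ (pushed via $\tilde{g}$) cohomologous to $\omega_{B}^{n,m}g^{\otimes m}-\Psi_{m,n}$. This requires a careful book-keeping of the signs introduced in (\ref{differential}) and (\ref{reduced-differential}) together with the decomposition rules developed throughout Sections \ref{BM} and \ref{FM}; the remainder of the proof is a standard obstruction-theoretic assembly, but this combinatorial coherence is the heart of the transfer algorithm.
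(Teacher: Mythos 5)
Your overall strategy is the paper's: a simultaneous induction on $m+n$ constructing a matrad map $C_{\ast}(KK)\to U_{A}$ and a relative matrad map $C_{\ast}(JJ)\to U_{A,B}$, reading $JJ_{n,m}$ as a subdivision of $KK_{n,m}\times I$ so that the bottom face evaluates to $\omega_{B}^{n,m}g^{\otimes m}$, the top face to $g^{\otimes n}\omega_{A}^{n,m}$, and the cellular boundary identity becomes the obstruction equation. Up to that point the proposal tracks the paper's Transfer Algorithm.

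There is, however, a genuine gap in your inductive step, at the sentence ``With this choice, the cochain $g^{\otimes n}\omega_{A}^{n,m}-\omega_{B}^{n,m}g^{\otimes m}+\Psi_{m,n}$ is a coboundary in $U_{A,B}$ (again by the same subdivision evaluation).'' The subdivision evaluation only yields $\tilde{g}(z)=\nabla\varphi$, where $z=\Phi_{m,n}$ is the obstruction cocycle and $\varphi=\omega_{B}^{n,m}g^{\otimes m}-\Psi_{m,n}$. From this and injectivity of $\tilde{g}_{\ast}$ you correctly conclude $[z]=0$, so some primitive $b$ with $\nabla b=z$ exists. But for an \emph{arbitrary} such $b$, the cochain $\tilde{g}(b)-\varphi$ is only a $\nabla$-cocycle, not automatically a coboundary; its class $[\tilde{g}(b)-\varphi]$ is a secondary obstruction that can be nonzero, in which case no $g_{m}^{n}$ exists for that choice of $\omega_{A}^{n,m}$. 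The paper resolves this by using the hypothesis on $\tilde{g}$ a second time, now via surjectivity: choose a cocycle $u$ representing $\tilde{g}_{\ast}^{-1}\left[\tilde{g}(b)-\varphi\right]$ and set $\omega_{A}^{n,m}=b-u$, so that $\left[\tilde{g}(\omega_{A}^{n,m})-\varphi\right]=0$ and $g_{m}^{n}$ can then be chosen with $\nabla g_{m}^{n}=g^{\otimes n}\omega_{A}^{n,m}-\varphi$. This correction of $\omega_{A}^{n,m}$ within its coset of cocycles is the essential point that makes the two constructions mesh; without it the induction can halt. (One must also record $\beta((\mathfrak{f}_{1}^{1})^{\otimes n}\theta_{m}^{n})=g^{\otimes n}\omega_{A}^{n,m}$ to keep $\beta$ a map of relative matrads, a small bookkeeping step you omit.)

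On uniqueness, your appeal to a ``morphism-of-morphisms object'' subdividing $JJ_{n,m}\times I$ is plausible in spirit but is not an object constructed anywhere in the paper, and building it would be a nontrivial additional project. The paper instead runs a direct induction: starting from a chain homotopy $s:\bar{g}\simeq g$, it produces the components $\phi_{m}^{n}$ of $\Phi$ and $t_{m}^{n}$ of $T$ by the same two-stage obstruction argument (first kill the class of ${\omega}_{A}^{n,m}-\bar{\omega}_{A}^{n,m}$, then adjust $\phi_{m}^{n}$ so that the corresponding cochain in $U_{A,B}$ is a coboundary). If you keep your route, you would need to actually construct and justify the auxiliary polytope; otherwise adopt the paper's hands-on induction.
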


\begin{proof} \ \textbf{(The Transfer Algorithm)}
	We obtain the desired structures by simultaneously constructing a map of
	matrads $\alpha_{A}:C_{\ast}\left(  KK\right)  \rightarrow(U_{A},\nabla)$ and
	a map of relative matrads $\beta:C_{\ast}\left(  JJ\right)  \rightarrow
	(U_{A,B},\nabla)$. Thinking of $JJ_{n,m}$ as a subdivision of the cylinder
	$KK_{n,m}\times I,$ identify the top dimensional cells of $KK_{n,m}$ and
	$JJ_{n,m}$ with $\theta_{m}^{n}$ and $\mathfrak{f}_{m}^{n}$, and the faces
	$KK_{n,m}\times0$ and $KK_{n,m}\times1$ of $JJ_{n,m}$ with $\theta_{m}%
	^{n}\left(  \mathfrak{f} _{1}^{1}\right)  ^{\otimes m}$ and $\left(
	\mathfrak{f}_{1}^{1}\right)  ^{\otimes n}\theta_{m}^{n},$ respectively. By
	hypothesis, there is a map of matrads $\alpha_{B}:C_{\ast}(KK)\rightarrow
	(U_{B},\nabla)$ such that $\alpha_{B}(\theta_{m}^{n})=\omega_{B}^{n,m}.$
	
	To initialize the induction, define $\mathcal{\beta}:C_{\ast}\left(
	JJ_{1,1}\right)  \rightarrow Hom^{0}\left(  A,B\right)  $ by $\beta\left(
	\mathfrak{f}_{1}^{1}\right)  =g_{1}^{1}=g$, and extend $\mathcal{\beta}$ to
	$C_{\ast}\left(  JJ_{1,2}\right)  \rightarrow Hom^{1}\left(  A^{\otimes
		2},B\right)  $ and $C_{\ast}\left(  JJ_{2,1}\right)  \rightarrow
	Hom^{1}\left(  A,B^{\otimes2}\right)  $ in the following way: On the vertices
	$\theta_{2}^{1}\left(  \mathfrak{f}_{1}^{1}\otimes\mathfrak{f}_{1}^{1}\right)
	\in JJ_{1,2}$ and $\theta_{1}^{2}\mathfrak{f}_{1}^{1}\in JJ_{2,1}$, define
	$\beta\left(  \theta_{2}^{1}\left(  \mathfrak{f}_{1}^{1}\otimes\mathfrak{f}
	_{1}^{1}\right)  \right)  =\omega_{B}^{1,2}\left(  g\otimes g\right)  $ and
	$\beta\left(  \theta_{1}^{2}\mathfrak{f}_{1}^{1}\right)  =\omega_{B}^{2,1}g.$
	Since $\omega_{B}^{1,2}\left(  g\otimes g\right)  $ and $\omega_{B}^{2,1}g$
	are $\nabla$-cocycles, and $\tilde{g}_{\ast}$ is an isomorphism, there exist
	cocycles $\omega_{A}^{1,2}$ and $\omega_{A}^{2,1}$ in $U_{A}$ such that
	\[
	\tilde{g}_{\ast}[\omega_{A}^{1,2}]=[\omega_{B}^{1,2}\left(  g\otimes g\right)
	]\text{ \ and \ }\tilde{g}_{\ast}[\omega_{A}^{2,1}]=[\omega_{B}^{2,1}g].
	\]
	Thus $\left[  \omega_{B}^{1,2}\left(  g\otimes g\right)  -g\omega_{A}
	^{1,2}\right]  =\left[  \omega_{B}^{2,1}g-\left(  g\otimes g\right)
	\omega_{A}^{2,1}\right]  =0,$ and there exist cochains $g_{2}^{1}$ and
	$g_{1}^{2}$ in $U_{A,B} $ such that
	\[
	\nabla g_{2}^{1}=\omega_{B}^{1,2}\left(  g\otimes g\right)  -g\omega_{A}
	^{1,2}\text{ \ and \ }\nabla g_{1}^{2}=\omega_{B}^{2,1}g-\left(  g\otimes
	g\right)  \omega_{A}^{2,1}.
	\]
	For $m=1,2$ and $n=3-m,$ define $\alpha_{A}:C_{\ast}\left(  KK_{n,m}\right)
	\rightarrow Hom\left(  A^{\otimes m},A^{\otimes n}\right)  $ by $\alpha
	_{A}(\theta_{m}^{n})=\omega_{A}^{n,m}$, and define $\beta:C_{\ast}\left(
	JJ_{n,m}\right)  \rightarrow Hom\left(  A^{\otimes m},B^{\otimes n}\right)  $
	by
	\[%
	\begin{array}
		[c]{rllll}%
		\beta( \mathfrak{f}_{m}^{n}) & = & g_{m}^{n} \vspace{1mm} &  & \\
		\beta( \mathfrak{f}_{1}^{1}\, \theta_{2}^{1}) & = & g\, \omega_{A}^{1,2} & (
		m=2 ) \vspace{1mm} & \\
		\beta( ( \mathfrak{f}_{1}^{1}\otimes\mathfrak{f}_{1}^{1})\, \theta_{1}^{2}) &
		= & ( g\otimes g )\, \omega_{A}^{2,1} & (m=1). &
	\end{array}
	\
	\]

	Inductively, given $\left(  m,n\right)  ,$ $m+n\geq4,$ assume that for
	$i+j<m+n$ there exists a map of matrads $\alpha_{A}:C_{\ast}\left(
	KK_{j,i}\right)  \rightarrow Hom\left(  A^{\otimes i},A^{\otimes j}\right)  $
	and a map of relative matrads $\beta:C_{\ast}\left(  JJ_{j,i}\right)
	\rightarrow Hom\left(  A^{\otimes i},B^{\otimes j}\right)  $ such that
	$\alpha_{A}(\theta_{i}^{j})=\omega_{A}^{j,i}$ and $\beta(\mathfrak{f}_{i}%
	^{j})=g_{i}^{j}$. Thus we are given chain maps $\alpha_{A}:C_{\ast}\left(
	\partial KK_{n,m}\right)  \rightarrow Hom\left(  A^{\otimes m},A^{\otimes
		n}\right)  $ and $\beta:C_{\ast}\left(  \partial JJ_{n,m}\smallsetminus
	\operatorname*{int}KK_{n,m}\times1\right)  \rightarrow Hom\left(  A^{\otimes
		m},B^{\otimes n}\right)  ;$ we wish to extend $\alpha_{A}$ to the top cell
	$\theta_{m}^{n}$ of $KK_{n,m}$, and $\beta$ to the codimension 1 cell $\left(
	\mathfrak{f}_{1}^{1}\right)  ^{\otimes n}\theta_{m}^{n}$ and the top cell
	$\mathfrak{f}_{m}^{n}$ of $JJ_{n,m}$. Since $\alpha_{A}$ is a map of matrads,
	the components of the cocycle
	\[
	z=\alpha_{A}\left(  C_{\ast}(\partial KK_{n,m})\right)  \in Hom^{m+n-4}\left(
	A^{\otimes m},A^{\otimes n}\right)
	\]
	are expressed in terms of $\omega_{A}^{j,i}$ with $i+j<m+n;$ similarly, since
	$\beta$ is a map of relative matrads, the components of the cochain
	\[
	\varphi=\beta\left(  C_{\ast}(\partial JJ_{n,m}\smallsetminus
	\operatorname*{int}KK_{n,m}\times1)\right)  \in Hom^{m+n-3}\left(  A^{\otimes
		m},B^{\otimes n}\right)
	\]
	are expressed in terms of $\omega_{B},$ $\omega_{A}^{j,i}$ and $g_{i}^{j}$
	with $i+j<m+n.$ Clearly $\tilde{g}\left(  z\right)  =\nabla\varphi;$ and
	$\left[  z\right]  =\left[  0\right]  $ since $\tilde{g}$ is a homology
	isomorphism. Now choose a cochain $b\in Hom^{m+n-3}\left(  A^{\otimes
		m},A^{\otimes n}\right)  $ such that $\nabla b=z.$ Then
	\[
	\nabla\left(  \tilde{g}\left(  b\right)  -\varphi\right)  =\nabla\tilde
	{g}\left(  b\right)  -\tilde{g}\left(  z\right)  =0.
	\]
	Choose a class representative $u\in\tilde{g}_{\ast}^{-1}\left[  \tilde
	{g}\left(  b\right)  -\varphi\right]  ,\ $set $\omega_{A}^{n,m}=b-u,$ and
	define $\alpha_{A}\left(  \theta_{m}^{n}\right)  =\omega_{A}^{n,m}.$ Then
	$\left[  \tilde{g}\left(  \omega_{A}^{n,m}\right)  -\varphi\right]  =\left[
	\tilde{g}\left(  b-u\right)  -\varphi\right]  =\left[  \tilde{g}\left(
	b\right)  -\varphi\right]  -\left[  \tilde{g}\left(  u\right)  \right]
	=\left[  0\right]  .$ Choose a cochain $g_{m}^{n}\in Hom^{m+n-2}$ $\left(
	A^{\otimes m},B^{\otimes n}\right)  $ such that
	\[
	\nabla g_{m}^{n}=g^{\otimes n}\omega_{A}^{n,m}-\varphi,
	\]
	and define $\beta\left(  \mathfrak{f}_{m}^{n}\right)=g_{m}^{n}.$ To extend
	$\beta$ as a map of relative matrads, define\linebreak $\beta(\left(
	\mathfrak{f}_{1}^{1}\right)  ^{\otimes n}\theta_{m}^{n})=g^{\otimes n}%
	\omega_{A}^{n,m}.$ Passing to the limit we obtain the desired maps $\alpha
	_{A}$ and $\beta.$
	
	Furthermore, if chain maps $\bar{\alpha}_{A}$ and $\bar{\beta}$ are defined in
	terms of different choices, beginning with a chain map $\bar{g}$ chain
	homotopic to $g,$ let $\bar{\omega}_{A}=\operatorname{Im}\bar{\alpha}_{A}$ and
	$\bar{G}=\operatorname{Im}\bar{\beta}.$ There is an inductively defined
	isomorphism $\Phi=\sum\phi_{m}^{n}:\left(  A,\bar{\omega}_{A}\right)
	\Rightarrow\left(  A,\omega_{A}\right)  $ with $\phi_{1}^{1}=\mathbf{1},$ and
	a chain homotopy $T:\tilde{G}\simeq G\circ\Phi.$ To initialize the induction,
	set $\phi_{1}^{1}=\mathbf{1}${, and note that}
	\[
	\nabla{g}_{2}^{1}=g{\omega}_{A}^{1,2}-{\omega}_{B}^{1,2}(g\otimes g)\text{ and
	}\nabla\bar{g}_{2}^{1}={\bar{g}\bar{\omega}}_{A}^{1,2}-{\omega}_{B}^{1,2}
	(\bar{g}\otimes\bar{g}).
	\]
	Let $s:\bar{g}\simeq g;$ then $c_{2}^{1}={\omega}_{B}^{1,2}\left(  s\otimes
	g+\bar{g}\otimes s\right)  $ satisfies
	\[
	\nabla c_{2}^{1}={\omega}_{B}^{1,2}(g\otimes g)-{\omega}_{B}^{1,2}(\bar
	{g}\otimes\bar{g}).
	\]
	Hence
	\[
	\nabla(g_{2}^{1}-\bar{g}_{2}^{1}+c_{2}^{1})=g{\omega}_{A}^{1,2}-\bar{g}
	{\bar{\omega}}_{A}^{1,2}%
	\]
	and
	\[
	\bar{g}({\omega}_{A}^{1,2}-{\bar{\omega}}_{A}^{1,2})=\nabla(g_{2}^{1}-\bar
	{g}_{2}^{1}+c_{2}^{1}-s\,\omega_{A}^{1,2}).
	\]
	Consequently, there is $\phi_{2}^{1}:A^{\otimes2}\rightarrow A$ such that
	$\nabla\phi_{2}^{1}={\omega}_{A}^{1,2}-${$\bar{\omega}$}$_{A}^{1,2};$ and, as
	above, $\phi_{2}^{1}$ may be chosen so that $\bar{g}\phi_{2}^{1}-(g_{2}
	^{1}-\bar{g}_{2}^{1}+c_{2}^{1}-s\,\omega_{A}^{1,2})$ is cohomologous to zero.
	Thus there is a component $t_{2}^{1}$ of $T$ such that
	\[
	\nabla(t_{2}^{1})=\bar{g}\phi_{2}^{1}-(g_{2}^{1}-g_{2}^{1}+c_{2}^{1}
	+s\,\omega_{A}^{1,2}).
	\]
	
\end{proof}

Since $\tilde{g}$ is a homology isomorphism
whenever $A$ is free (cf. Proposition \ref{homology-iso}) we have:

\begin{corollary}
	\label{m-bialgebra}Let $\left(  A,d_{A}\right)  $ be a free DGM, let
	$(B,d_{B},\omega_{B})$ be an $A_{\infty}$-bialgebra, and let $g:A\rightarrow B
	$ be a chain map/homology isomorphism. Then
	
	\begin{enumerate}
		\item[\textit{(i)}] (Existence) $g$ induces an $A_{\infty}$-bialgebra
		structure $\omega_{A}$ on $A$, and extends to a map $G:A\Rightarrow B$\ of
		$A_{\infty}$-bialgebras.
		
		\item[\textit{(ii)}] (Uniqueness) $\left(  \omega_{A},G\right)  $ is unique up
		to isomorphism.
	\end{enumerate}
\end{corollary}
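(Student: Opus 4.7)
The plan is to observe that this corollary is an immediate specialization of Theorem \ref{transfer}: the hypothesis $\tilde{g}$ is a cohomology isomorphism, which is the extra assumption in Theorem \ref{transfer}, is automatically satisfied when $A$ is free over $R$.

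First I would invoke Proposition \ref{homology-iso}(i), which asserts precisely that if $g:A\to B$ is a chain map and homology isomorphism, and $A$ is free as an $R$-module, then the induced map $\tilde{g}:U_A\to U_{A,B}$ given by $\tilde{g}(u)=g^{\otimes n}u$ is a cohomology isomorphism. The freeness hypothesis on $A$ supplies the flat-tensor property that makes $A^{\otimes m}\otimes(-)$ exact, which is what drives the argument behind Proposition \ref{homology-iso}(i); no further work is needed here because this has already been established.

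Second I would apply Theorem \ref{transfer} with this $g$. Existence then gives the inductive construction, via the Transfer Algorithm, of the $A_\infty$-bialgebra structure $\omega_A=\{\omega_A^{n,m}\}$ on $A$ together with the morphism $G=\{g_m^n\mid g_1^1=g\}:A\Rightarrow B,$ defined simultaneously by extending matrad maps $\alpha_A:C_\ast(KK)\to U_A$ and $\beta:C_\ast(JJ)\to U_{A,B}$ cell-by-cell, using the cohomology-isomorphism property of $\tilde{g}$ at each inductive step to lift the boundary cocycle on $\partial KK_{n,m}$ to a filler on the top cell $\theta_m^n$ and then to produce the corresponding filler $g_m^n$ on $\mathfrak{f}_m^n\in JJ_{n,m}$. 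Uniqueness up to isomorphism follows from Theorem \ref{transfer}(ii) applied to any two chain-homotopic choices of $g$, producing an isomorphism $\Phi:(A,\bar{\omega}_A)\Rightarrow(A,\omega_A)$ and a chain homotopy $T:\bar{G}\simeq G\circ\Phi$.

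There is no real obstacle to overcome: the entire content of the corollary has been packaged into Theorem \ref{transfer} and Proposition \ref{homology-iso}. The only thing to verify is that the hypotheses of Theorem \ref{transfer} are met, which is exactly what Proposition \ref{homology-iso}(i) supplies under the freeness assumption. Accordingly the proof reduces to a one-line citation of these two results.
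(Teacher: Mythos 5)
Your proposal is correct and matches the paper exactly: the paper derives the corollary in one line by citing Proposition \ref{homology-iso} to conclude that $\tilde{g}$ is a cohomology isomorphism whenever $A$ is free, and then applying Theorem \ref{transfer}. Nothing further is needed.
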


Given a chain complex $B$ of (not necessarily free) $R$-modules, there is
always a chain complex of free $R$-modules $\left(  A,d_{A}\right)  $, and a
homology isomorphism $g:A\rightarrow B.$ To see this, let $(RH:\cdots
\rightarrow R_{1}H\rightarrow R_{0}H\overset{\rho}{\rightarrow}H,d)$ be a free
$R$-module resolution of $H=H_{\ast}\left(  B\right)  .$ Since $R_{0}H$ is
projective, there is a cycle-selecting homomorphism $g_{0}^{\prime}%
:R_{0}H\rightarrow Z\left(  B\right)  $ lifting $\rho$ through the projection
$Z\left(  B\right)  \rightarrow H$, and extending to a chain map
$g_{0}:\left(  RH,0\right)  \rightarrow\left(  B,d_{B}\right)  .$ If
$RH:0\rightarrow R_{1}H\rightarrow R_{0}H\rightarrow H$ is a short $R$-module
resolution of $H,$ then $g_{0}$ extends to a homology isomorphism $g:\left(
RH,d+h\right)  \rightarrow\left(  B,d_{B}\right)  $ with $\left(
A,d_{A}\right)  =\left(  RH,d\right)  .$ Otherwise, there is a perturbation
$h$ of $d$ such that $g:\left(  RH,d+h\right)  \rightarrow\left(
B,d_{B}\right)  $ is a homology isomorphism with $\left(  A,d_{A}\right)
=\left(  RH,d+h\right)  $ (see \cite{Berikashvili}, \cite{Saneblidze1}). Thus
an $A_{\infty}$-structure on $B$ always transfers to an $A_{\infty}$-structure
on $\left(  RH,d+h\right)  $ via Corollary \ref{m-bialgebra}, and we obtain
our main result concerning the transfer of $A_{\infty}$-structure to homology:

\begin{theorem}
	\label{AAHopf} \textit{Let }$B$\textit{\ be an }$A_{\infty}$\textit{-bialgebra
		with homology }$H=H_{\ast}\left(  B\right)  $\textit{, let }$\left(
	RH,d\right)  $\textit{\ be a free }$R$\textit{-module resolution of }$H,$
	\textit{and let }$h$\textit{\ be} \textit{a perturbation of }$d$\textit{\ such
		that }$g:\left(  RH,d+h\right)  \rightarrow\left(  B,d_{B}\right)
	$\textit{\ is a homology isomorphism. Then}
	
	\begin{enumerate}
		\item[\textit{(i)}] (Existence) $g$ induces an $A_{\infty}$-bialgebra
		structure $\omega_{RH}$ on $RH$, and extends to a map $G:RH\Rightarrow B$ of
		$A_{\infty}$-bialgebras.
		
		\item[\textit{(ii)}] (Uniqueness) $\left(  \omega_{RH},G\right)  $ is unique
		up to isomorphism.
	\end{enumerate}
\end{theorem}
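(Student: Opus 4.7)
The plan is to deduce Theorem \ref{AAHopf} as an essentially immediate corollary of Corollary \ref{m-bialgebra}. The paragraph preceding the theorem statement already produces the required input data: a free DGM $(A, d_A) := (RH, d+h)$ together with a chain map/homology isomorphism $g : A \to B$. So what remains is to match the hypotheses and invoke the previous result.

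First I would verify that $(RH, d+h)$ is a free DGM in the sense required by Corollary \ref{m-bialgebra}. By the definition of a free $R$-module resolution, each term $R_iH$ is a free $R$-module, hence the underlying graded $R$-module $RH = \bigoplus_i R_iH$ is free. The perturbation $h$ alters only the differential and leaves the underlying graded module intact, so $(RH, d+h)$ is indeed free.

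Second, I would apply Corollary \ref{m-bialgebra} with $A = (RH, d+h)$ and the given $g$. Freeness of $A$ activates Proposition \ref{homology-iso}(i), which guarantees that the induced map $\tilde{g} : U_A \to U_{A,B}$ is a cohomology isomorphism --- precisely the hypothesis needed for Theorem \ref{transfer}. Conclusion (i) of Theorem \ref{AAHopf} (the existence of $\omega_{RH}$ on $RH$ and the extension $G : RH \Rightarrow B$) then follows from conclusion (i) of Corollary \ref{m-bialgebra}, and similarly for uniqueness (ii). The actual construction of $\omega_{RH}$ and $G$ is carried out explicitly by the Transfer Algorithm inside the proof of Theorem \ref{transfer}, which inductively builds $\omega_{A}^{n,m}$ and $g_m^n$ using the cohomology isomorphism $\tilde{g}_\ast$ to find cochains bounding the obstruction cocycles on $\partial KK_{n,m}$ and $\partial JJ_{n,m} \smallsetminus \operatorname{int} KK_{n,m} \times 1$.

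I do not anticipate a genuine obstacle: the existence of the perturbation $h$ making $g$ a homology isomorphism is a hypothesis of the theorem (with existence justified in general via the references \cite{Berikashvili}, \cite{Saneblidze1} already cited in the discussion), not something requiring proof here. The only conceptual check --- that the freeness of $RH$ passes through the perturbation to give a free DGM --- is immediate. Thus the full proof will consist of roughly one sentence observing that $(RH, d+h)$ is a free DGM, followed by one sentence invoking Corollary \ref{m-bialgebra} to conclude both (i) and (ii).
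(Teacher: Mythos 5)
Your proposal is correct and follows exactly the paper's route: the paper proves Theorem \ref{AAHopf} by the paragraph immediately preceding it, which sets up $(A,d_A)=(RH,d+h)$ as a free DGM with a homology isomorphism $g:A\to B$ and then invokes Corollary \ref{m-bialgebra} (whose hypotheses are met because freeness of $RH$ activates Proposition \ref{homology-iso}(i), feeding Theorem \ref{transfer}). Nothing is missing; your observation that the perturbation $h$ leaves the underlying free graded module unchanged is the only check required, and it is the same implicit step the paper takes.
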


\begin{remark}\label{Aclass}
	Note that $A_{\infty}$-bialgebra structures induced by the Transfer Algorithm
	are isomorphic for all choices of the map $g:\left(  RH,d+h\right)
	\rightarrow\left(  B,d_{B}\right).$ Thus we obtain an isomorphism class of
	$A_{\infty}$-bialgebra structures on $RH.$
\end{remark}

\begin{remark}
	\label{twoinfty} When $H=H_{\ast}(B)$ is a free module, we recover the
	classical results of Kadeishvili \cite{Kadeishvili1}, Markl \cite{Markl1}, and
	others, which transfer a DG (co)algebra structure to an $A_{\infty}%
	$-(co)algebra structure on homology, by setting $RH=H$. Furthermore, any pair
	of $A_{\infty}$-(co)algebra structures $\{{\omega}_{H}^{n,1}\}_{n\geq1}$ and
	$\{{\omega}_{H}^{1,m}\}_{m\geq1}$ on $H$ induced by the same cycle-selecting
	map $g:H\rightarrow B$ extend to an $A_{\infty}$-bialgebra structure $\left(
	H,{\omega}_{H}^{n,m}\right)  ,$ by the proof of Theorem \ref{AAHopf}. For an
	example of a DGA $B$ whose cohomology $H(B)$ is not free, and whose DGA
	structure transfers to an $A_{\infty}$-algebra structure on $H\left(
	B\right)  $ via Theorem \ref{AAHopf} along a map $g:H(B)\rightarrow B$ having
	no right-homotopy inverse, see \cite{Umble2}.
\end{remark}

\section{Applications and Examples}

The applications and examples in this section apply the Transfer Algorithm
given by the proof of Theorem \ref{transfer}. Three kinds of specialized
$A_{\infty}$-bialgebras $\left(  A,\left\{  \omega_{m}^{n}\right\}  \right)  $
are relevant here:

\begin{enumerate}
	\item $\omega_{m}^{1}=0$ for $m\geq3$ (the $A_{\infty}$-algebra substructure
	is trivial).
	
	\item $\omega_{m}^{n}=0$ for $m,n\geq2$ (all higher order structure is
	concentrated in the $A_{\infty}$-algebra and $A_{\infty}$-coalgebra substructures).
	
	\item Conditions (1) and (2) hold simultaneously.
\end{enumerate}

\noindent Of these, $A_{\infty}$-bialgebras of the first and third kind appear
in the applications.

Structure relations defining $A_{\infty}$-bialgebras of the second and third
kind are expressed in terms of the S-U diagonal $\Delta_{K}$ on associahedra
(see Subsection 2.4) and have especially nice form. Structure relations of the second
kind were derived in \cite{Umble}. Structure relations in an $A_{\infty}%
$-bialgebra $\left(  A,\omega\right)  $ of the third kind with $\omega_{1}%
^{1}=0$, $\mu=\omega_{2}^{1}$ and $\psi^{n}=\omega_{1}^{n}$ are a special case
of those derived in \cite{Umble}, and are given by the formula
\begin{equation}
	\left\{  \psi^{n}\mu=\mu^{\otimes n}\Psi^{n}\right\}  _{n\geq2},
	\label{multformula}%
\end{equation}
where the $n$-ary $A_{\infty}$-coalgebra operation
\[
\Psi^{n}=\left(  \sigma_{n,2}\right)  _{\ast}\iota\left(  \xi\otimes
\xi\right)  \Delta_{K}\left(  e^{n-2}\right)  :A\otimes A\rightarrow\left(
A\otimes A\right)  ^{\otimes n}%
\]
is defined in terms of

\begin{itemize}
	\item a map $\xi:C_{\ast}(K)\rightarrow Hom(A,TA)$ of operads that sends  $e^{n-2}\subseteq K_{n}$ to $\psi^{n},$
	
	\item the canonical isomorphism
	$	\iota:Hom\left(  A,A^{\otimes n}\right)  ^{\otimes2}\rightarrow Hom\left(
	A^{\otimes2},\left(  A^{\otimes n}\right)  ^{\otimes2}\right)  ,$

	\item and the induced isomorphism
	\[	\left(  \sigma_{n,2}\right)  _{\ast}:Hom\left(  A^{\otimes2},\left(
	A^{\otimes n}\right)  ^{\otimes2}\right)  \rightarrow Hom\left(  A^{\otimes
		2},\left(  A^{\otimes2}\right)  ^{\otimes n}\right)  .\]
	
\end{itemize}

Structure relations defining a morphism $G=\left\{  g^{n}\right\}
:(A,\omega_{A})\Rightarrow(B,\omega_{B})$ between $A_{\infty}$-bialgebras of
the third kind are expressed in terms of the S-U diagonal $\Delta_{J}$ on
multiplihedra \cite{SU2} by the formula
\begin{equation}
	\left\{  g^{n}\mu_{A}=\mu_{B}^{\otimes n}\mathbf{g}^{n}\right\}  _{n\geq1},
	\label{fmultformula}%
\end{equation}
where
\[
\mathbf{g}^{n}=(\sigma_{n,2})_{\ast}\iota(\upsilon\otimes\upsilon)\Delta
_{J}(e^{n-1}):A\otimes A\rightarrow\left(  B\otimes B\right)  ^{\otimes n},
\]
and $\upsilon:C_{\ast}(J)\rightarrow Hom(A,TB)$ is a map of relative
prematrads sending the top dimensional cell $e^{n-1}\subseteq J_{n}$ to
$g^{n}$ (the maps $\left\{  \mathbf{g}^{n}\right\}  $ define the tensor
product morphism $G\otimes G:\left(  A\otimes A,\Psi_{A\otimes A}\right)
\Rightarrow\left(  B\otimes B,\Psi_{B\otimes B}\right)  $).

Given a simply connected topological space $X,$ consider the Moore loop space
$\Omega X$ and the simplicial singular cochain complex $S^{\ast}(\Omega X;R)$.
Under the hypotheses of the Transfer Algorithm, the DG bialgebra structure of
$S^{\ast}(\Omega X;R)$ transfers to an $A_{\infty}$-bialgebra structure on
$H^{\ast}(\Omega X;R)$. Our next two theorems apply this principle, and
identify some important $A_{\infty}$-bialgebras of the third kind on loop
space (co)homology.

\begin{theorem}
	\label{rational3}If $X$ is simply connected, $H^{\ast}(\Omega X;\mathbb{Q})$
	admits an induced $A_{\infty}$-bialgebra structure of the third kind.
\end{theorem}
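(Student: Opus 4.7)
The plan is to apply the Transfer Algorithm (proof of Theorem \ref{transfer}) to the DG bialgebra of simplicial singular cochains $S^{\ast}(\Omega X;\mathbb{Q})$, whose cohomology is $H^{\ast}(\Omega X;\mathbb{Q})$, using a cycle-selecting chain map $g:H^{\ast}(\Omega X;\mathbb{Q})\to S^{\ast}(\Omega X;\mathbb{Q})$ chosen so that the transferred structure is automatically of the third kind. The input I would exploit is that, because $X$ is simply connected and $\Omega X$ is a topological monoid, $H^{\ast}(\Omega X;\mathbb{Q})$ is a connected graded-commutative Hopf algebra. By the Milnor--Moore/Leray structure theorem over $\mathbb{Q}$, it is therefore free as a graded-commutative algebra on the indecomposables. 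Choosing cocycle representatives in $S^{\ast}(\Omega X;\mathbb{Q})$ for these generators and extending multiplicatively produces a homology isomorphism $g$ which is a strict DG algebra map, i.e., $g\mu_{A}=\mu_{B}(g\otimes g)$.

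Next I would run the Transfer Algorithm with this multiplicative $g$. Since $g$ is a free-module homology isomorphism, Corollary~\ref{m-bialgebra} applies and produces an $A_{\infty}$-bialgebra structure $\{\omega_{A}^{n,m}\}$ on $A=H^{\ast}(\Omega X;\mathbb{Q})$ together with a morphism $G:A\Rightarrow B$ extending $g$. I claim that the freedom available in the algorithm allows the choices $g_{m}^{n}=0$ for all $m\geq 2$ and $\omega_{A}^{n,m}=0$ whenever $m,n\geq 2$ or $m\geq 3, n=1$. The initialization is immediate: in bidegree $(2,1)$, the obstruction $\omega_{B}^{1,2}(g\otimes g)-g\omega_{A}^{1,2}$ vanishes because $g$ is multiplicative and $\omega_{A}^{1,2}$ is chosen to be the strict cup product, so one may take $g_{2}^{1}=0$. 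Inductively, at each stage the obstruction cocycle $z\in\alpha_{A}(C_{\ast}(\partial KK_{n,m}))$ and the corresponding coboundary $\varphi\in\beta(C_{\ast}(\partial JJ_{n,m}\setminus\operatorname{int}KK_{n,m}\times 1))$ are polynomials in previously chosen $\omega_{B}$, $\omega_{A}^{j,i}$ and $g_{i}^{j}$ with $i+j<m+n$. Those components where some tensor factor in the formal product involves a $g_{i}^{j}$ with $i\geq 2$ (or an $\omega_{A}^{j,i}$ with $i\geq 3, j=1$ or with $i,j\geq 2$) vanish by the inductive hypothesis; the remaining components assemble under multiplicativity of $g$ into a cocycle in $\tilde{g}_{\ast}^{-1}[0]$, so $\omega_{A}^{n,m}$ may be chosen to be zero for these bidegrees, and then $g_{m}^{n}$ may be chosen to be zero for $m\geq 2$. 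By the uniqueness part of Theorem~\ref{AAHopf}, any other transferred structure is isomorphic to this one, so the induced $A_{\infty}$-bialgebra structure on $H^{\ast}(\Omega X;\mathbb{Q})$ is of the third kind.

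The nontrivial structure that survives is $\omega_{A}^{1,2}=\mu$ (the strict Pontryagin/cup product on $H^{\ast}(\Omega X;\mathbb{Q})$) together with the higher coproducts $\{\omega_{A}^{n,1}=\psi^{n}\}_{n\geq 2}$ coming from the Kadeishvili--Gugenheim transfer of the coassociative coproduct on $S^{\ast}(\Omega X;\mathbb{Q})$. The compatibility relations $\psi^{n}\mu=\mu^{\otimes n}\Psi^{n}$ of (\ref{multformula}) are then exactly the relations encoded by the surviving cells of $C_{\ast}(KK_{n,2})$ of the form $[\theta_{1}^{n}][\theta_{2}^{1}]$ balanced against $[\theta_{2}^{1}\cdots\theta_{2}^{1}]^{T}[\theta_{1}^{n}\cdots]$, and are automatic from the matrad map $\alpha_{A}$ produced by the algorithm.

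The main obstacle is the inductive verification in the third paragraph, namely confirming that when we attempt $g_{m}^{n}=0$ for $m\geq 2$ and $\omega_{A}^{n,m}=0$ for $(m,n)$ outside the pattern of the third kind, the full obstruction cochain $\varphi$ (supported on $\partial JJ_{n,m}\setminus\operatorname{int}KK_{n,m}\times 1$) indeed collapses to $\tilde{g}$ applied to an exact term. This reduces to combinatorial bookkeeping on the faces of $KK_{n,m}$ and $JJ_{n,m}$ as described by the face operator $\partial$ of Definition \ref{delta}: every face of codimension one that could contribute a nonzero term is either labeled by a formal product in which some factor has $m\geq 2$ inputs and $n\geq 2$ outputs (hence already zero by induction), or is labeled by a balanced product against the strictly associative $\mu$ (hence a $\nabla$-coboundary by multiplicativity of $g$). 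Making this cancellation explicit cell-by-cell is where the delicate coherence properties of framed matrices, as developed in Sections \ref{BM}--\ref{FM}, enter essentially.
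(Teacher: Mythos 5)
Your overall strategy — choose a multiplicative cocycle-selecting map $g$, so that the $A_{\infty}$-algebra substructure can be taken strict, then run the Transfer Algorithm and argue inductively that all $\omega_{m}^{n}$ with $m,n\geq2$ may be chosen zero — is exactly the paper's. The gap is in your choice of target. You transfer from $B=S^{\ast}(\Omega X;\mathbb{Q})$ and claim that, because $H^{\ast}(\Omega X;\mathbb{Q})$ is free graded-commutative (Hopf's theorem), cocycle representatives of the generators extend multiplicatively to a strict DGA map $g:H\rightarrow B$. This extension is not well-defined: a map out of a \emph{free graded-commutative} algebra requires the images of the generators to commute strictly in the target, and the cup product on singular cochains is commutative only up to the $\smile_{1}$-homotopy. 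For the same reason your inductive step is compromised: the vanishing of $\beta(\partial\mathfrak{f}_{2}^{2})$ on $\partial JJ_{2,2}\smallsetminus\operatorname{int}(KK_{2,2}\times1)$, which is what lets you set $\omega_{2}^{2}=g_{2}^{2}=0$, uses the \emph{strict} Hopf compatibility $\psi^{2}\mu=(\mu\otimes\mu)\sigma_{2,2}(\psi^{2}\otimes\psi^{2})$ in $B$, and $S^{\ast}(\Omega X;\mathbb{Q})$ is not a strict DG bialgebra (the Eilenberg--Zilber comparison is only a quasi-isomorphism).

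The paper repairs both defects at once by replacing $S^{\ast}(\Omega X;\mathbb{Q})$ with $B=B\mathcal{A}_{X}$, the bar construction on a free commutative cochain model $\mathcal{A}_{X}$ of $X$ (Sullivan or Halperin--Stasheff), equipped with the shuffle product. This $B$ is a genuinely commutative, cofree DG Hopf algebra model for $\Omega X$, so Hopf's theorem applies to $H=H^{\ast}(B)$, a strictly multiplicative cocycle-selecting map exists, and the strict Hopf relation needed at the $(2,2)$ stage holds on the nose. One further point worth internalizing: the higher coproducts $\psi^{n}$ are not simply "whatever the Kadeishvili--Gugenheim transfer produces" — at each stage the paper restricts the candidate $\omega_{1}^{n+1}$ to multiplicative generators and then \emph{freely re-extends} it via Formula (\ref{multformula}) (and likewise $g_{1}^{n}$ via (\ref{fmultformula})); without this re-extension the compatibility $\psi^{n}\mu=\mu^{\otimes n}\Psi^{n}$ defining the third kind is not automatic. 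With the commutative model in place, the rest of your induction goes through essentially as you outline it.
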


\begin{proof}
	Let $\mathcal{A}_{X}$ be a free DG commutative algebra cochain model for $X$
	over $\mathbb{Q}$ (e.g., Sullivan's minimal or Halperin-Stasheff's filtered
	model); then $H^{\ast}\left(  \mathcal{A}_{X}\right)  \approx H^{\ast
	}(X;\mathbb{Q}).$ The bar construction $\left(  B=B\mathcal{A}_{X}
	,d_{B},\Delta_{B}\right)  $ with shuffle product is a cofree DG commutative
	Hopf algebra cochain model for $\Omega X,$ and $H=H^{\ast}(B,d_{B})$ is a Hopf
	algebra with induced coproduct $\psi^{2}=\omega_{1}^{2}$ and free graded
	commutative product $\mu=\omega_{2}^{1}$ (by a theorem of Hopf). Since $H$ is
	a free commutative algebra, there is a multiplicative cocycle-selecting map
	$g_{1}^{1}:H\rightarrow B.$ Consequently, we may set $\omega_{n}^{1}=0$ for
	all $n\geq3$ and $g_{n}^{1}=0$ for all $n\geq2$, and obtain a trivial
	$A_{\infty}$-algebra structure $\left(  H,\mu\right)  $ induced by $g_{1}
	^{1}.$ There is an induced $A_{\infty}$-coalgebra structure $\left(
	H,\psi^{n}\right)  _{n\geq2}$, and an $A_{\infty}$-coalgebra map $G=\left\{
	g^{n}\mid g^{1}=g_{1}^{1}\right\}  _{n\geq1}:H\Rightarrow B$ constructed as
	follows: For $n\geq2,$ assume $\psi^{n}$ and $g^{n-1}$ have been constructed,
	and apply the Transfer Algorithm to obtain candidates $\omega_{1}^{n+1}$ and
	$g_{1}^{n}.$ Restrict $\omega_{1}^{n+1}$ to generators, and let $\psi^{n+1}$
	be the free extension of $\omega_{1}^{n+1}$ to all of $H $ using Formula
	\ref{multformula}. Similarly, restrict $g_{1}^{n}$ to generators, and let
	$g^{n}$ be the free extension of $g_{1}^{n}$ to all of $H$ using Formula
	\ref{fmultformula}.
	
	To complete the proof, we show that all other $A_{\infty}$-bialgebra
	operations may be trivially chosen. Refer to the Transfer Algorithm, and note
	that the Hopf relation $\psi^{2}\mu=\left(  \mu\otimes\mu\right)  \sigma
	_{2,2}\left(  \psi^{2}\otimes\psi^{2}\right)  $ implies $\beta(\partial
	\mathfrak{f}_{2}^{2})|_{C_{1}(JJ_{2,2}\setminus\operatorname*{int}
		(KK_{2,2}\times1))}=0.$ Thus we may choose $\omega_{2}^{2}=g_{2}^{2}=0\ $so
	that $\beta(\partial\mathfrak{f}_{2}^{2})=\beta(\mathfrak{f}_{2}^{2})=0.$
	Inductively, assume that $\omega_{2}^{n-1}=g_{2}^{n-1}=0$ for $n\geq3.$ Then
	$\beta(\partial\mathfrak{f}_{2}^{n})|_{C_{n-1}(JJ_{n,2}\setminus
		\operatorname*{int}(KK_{n,2}\times1))}=0$, and we may choose $\omega_{2}%
	^{n}=0$ and $g_{2}^{n}=0 $ so that $\beta(\partial\mathfrak{f}_{2}^{n}%
	)=\beta(\mathfrak{f}_{2}^{n})=0.$ Finally, for $m\geq3$ set $\omega_{m}^{n}=0$
	and $g_{m}^{n}=0. $ Then $\left(  H,\mu,\psi^{n}\right)  _{n\geq2}$ as an
	$A_{\infty}$-bialgebra of the third kind with structure relations given by
	Formula \ref{multformula}, and $G$ is a map of $A_{\infty}$-bialgebras
	satisfying Formula \ref{fmultformula}.
\end{proof}

Note that the components of the $A_{\infty}$-bialgebra map $G$ in the proof of
Theorem \ref{rational3} are exactly the components of a map of underlying
$A_{\infty}$-coalgebras given by the Transfer Algorithm.

Let $R$ be a PID and let $X$ be a connected space such that $H_{\ast}(X;R)$ is
torsion free. Then the Bott-Samelson Theorem \cite{Bott} asserts that
$H_{\ast}\left(  \Omega\Sigma X;R\right)  $ is isomorphic as an algebra to the
tensor algebra $T^{a}\tilde{H}_{\ast}(X;R)$ generated by the reduced homology
of $X$, and the adjoint $i:X\rightarrow\Omega\Sigma X$ of the identity
$\mathbf{1}:\Sigma X\rightarrow\Sigma X$ induces the canonical inclusion
$i_{\ast}:\tilde{H}_{\ast}\left(  X;R\right)  \hookrightarrow T^{a}\tilde
{H}_{\ast}(X;R)\approx H_{\ast}\left(  \Omega\Sigma X;R\right)  $. Thus if
$\{\psi^{n}\}_{n\geq2}$ is an $A_{\infty}$-coalgebra structure on ${H}_{\ast
}(X;R),$ the tensor algebra $T^{a}\tilde{H}_{\ast}(X;R)$ admits a canonical
$A_{\infty}$-bialgebra structure of the third kind with respect to the free
extension of each $\psi^{n}$ via Formula \ref{multformula}.

Furthermore, the canonical inclusion $t:X\hookrightarrow\Omega\Sigma X$
induces a DG coalgebra map of simplicial singular chains $t_{\#}:S_{\ast
}(X;R)\rightarrow S_{\ast}(\Omega\Sigma X;R)$, which extends to a homology
isomorphism $t_{\#}:T^{a}\tilde{S}_{\ast}(X;R)\approx S_{\ast}(\Omega\Sigma
X;R)$ of DG Hopf algebras. Thus the induced \emph{Bott-Samelson Isomorphism}
$t_{\ast}:T^{a}\tilde{H}_{\ast}\left(  X;R\right)  \approx H_{\ast}
(\Omega\Sigma X;R)$ is an isomorphism of Hopf algebras (\cite{husemoller},
\cite{KS1}), and $T^{a}\tilde{S}_{\ast}(X;R)$ is a free DG Hopf algebra chain
model for $\Omega\Sigma X.$

\begin{theorem}
	\label{B-S}Let $R$ be a PID, and let $X$ be a connected space such that
	$H_{\ast}(X;R)$ is torsion free.
	
	\begin{enumerate}
		\item[\textit{(i)}] Then $T^{a}\tilde{H}_{\ast}(X;R)$ admits an $A_{\infty}$
		-bialgebra structure of the third kind, which is trivial if and only if the
		$A_{\infty}$-coalgebra structure of $H_{\ast}(X;R)$ is trivial.
		
		\item[\textit{(ii)}] The Bott-Samelson Isomorphism $t_{\ast}:T^{a}\tilde
		{H}_{\ast}(X;R)\approx H_{\ast}(\Omega\Sigma X;R)$ extends to an isomorphism
		of $A_{\infty}$-bialgebras.
	\end{enumerate}
\end{theorem}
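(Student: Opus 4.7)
\medskip
\noindent\textbf{Proof plan.} For (i), I build on the construction template of Theorem \ref{rational3}. Since $H_*(X;R)$ is torsion-free, it is a free $R$-module, so the DG coalgebra $S_*(X;R)$ transfers, via Theorem \ref{AAHopf} specialized to the coalgebra substructure, to an $A_\infty$-coalgebra $\{\psi^n\}_{n\ge 2}$ on $H_*(X;R)$ along a cocycle-selecting chain map $h:H_*(X;R)\to S_*(X;R)$. Let $\mu$ denote the tensor-algebra product on $T^a\tilde H_*(X;R)$. I define an $A_\infty$-bialgebra structure of the third kind on $T^a\tilde H_*(X;R)$ by $\omega_2^1=\mu$, by letting $\omega_1^n$ be the unique multiplicative extension of $\psi^n$ forced by Formula (\ref{multformula}) with respect to the free product $\mu$, and by setting $\omega_m^n=0$ in all remaining bidegrees. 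Verification of the matrad axioms repeats the final paragraph of the proof of Theorem \ref{rational3} verbatim: the Hopf compatibility at $(2,2)$ holds because $\psi^2$ is extended along $\mu$ as a tensor-algebra coproduct, and the inductive obstruction to trivializing $\omega_m^n$ for $m\ge 2$, $(m,n)\ne(2,1)$ is killed because $T^a\tilde H_*(X;R)$ is free as an associative algebra, so each obstruction cocycle $\beta(\partial \mathfrak f_m^n)$ can be made to vanish by the same argument that forces $\omega_2^n=g_2^n=0$ in Theorem \ref{rational3}.

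The triviality equivalence in (i) is immediate from the construction. If $\psi^n=0$ for all $n\ge 3$, then the multiplicative extension of each such $\psi^n$ via Formula (\ref{multformula}) vanishes identically, reducing the $A_\infty$-bialgebra structure to the ordinary bialgebra $(\mu,\psi^2)$. Conversely, restriction of $\omega_1^n$ to the algebra generators $\tilde H_*(X;R)\subset T^a\tilde H_*(X;R)$ returns $\psi^n$, so triviality of the extended structure forces triviality of $\{\psi^n\}_{n\ge 3}$ on $H_*(X;R)$.

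For (ii), let $h:H_*(X;R)\to S_*(X;R)$ be the same cocycle-selecting map used in (i), and let $\bar h:T^a\tilde H_*(X;R)\to T^a\tilde S_*(X;R)$ be its multiplicative extension, which is a DG algebra homology isomorphism since both sides are free associative algebras on torsion-free generators. Composing with the DG Hopf algebra quasi-isomorphism $t_\#$ produces a homology isomorphism of chain complexes
\[
f=t_\#\circ \bar h:T^a\tilde H_*(X;R)\longrightarrow S_*(\Omega\Sigma X;R)
\]
whose induced map on homology equals $t_*$ under the Bott--Samelson identification. Applying Theorem \ref{AAHopf}(i) to $f$ yields an $A_\infty$-bialgebra structure $\omega'$ on $T^a\tilde H_*(X;R)$ together with a morphism $G=\{g_m^n\mid g_1^1=f\}:(T^a\tilde H_*(X;R),\omega')\Rightarrow S_*(\Omega\Sigma X;R)$. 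A parallel application to a cocycle-selecting map $k:H_*(\Omega\Sigma X;R)\to S_*(\Omega\Sigma X;R)$ produces the transferred structure on $H_*(\Omega\Sigma X;R)$ together with a morphism $K:H_*(\Omega\Sigma X;R)\Rightarrow S_*(\Omega\Sigma X;R)$. Since $f$ and $k\circ t_*$ are chain homotopic (both represent the Bott--Samelson homology class), the uniqueness clause of Theorem \ref{AAHopf}(ii) produces an $A_\infty$-bialgebra isomorphism $\Phi:(T^a\tilde H_*(X;R),\omega')\Rightarrow H_*(\Omega\Sigma X;R)$ with $\phi_1^1=t_*$.

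The main obstacle is identifying $\omega'$ with the explicit structure $\omega$ of (i), so that $\Phi$ descends to an isomorphism of the $A_\infty$-bialgebras named in the theorem. This reconciliation uses freeness of $T^a$ decisively: when carrying out the Transfer Algorithm for $f$, the inductive choices may be made so that, on the algebra generators $\tilde H_*(X;R)$, the component $\omega_1^{\prime\,n}$ coincides with $\psi^n$ and every $\omega_m^{\prime\,n}$ with $m\ge 2$, $(m,n)\ne(2,1)$ vanishes, precisely because the relevant obstructions are built from $\omega_B=\omega_{S_*(\Omega\Sigma X;R)}$ evaluated on images of $\bar h$, which are products of $h$-generators; the same cancellations as in Theorem \ref{rational3} apply. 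By Remark \ref{Aclass}, any other coherent set of choices yields an isomorphic $A_\infty$-bialgebra structure, so $\omega'\cong\omega$ via some $A_\infty$-bialgebra isomorphism $\Psi$ with $\psi_1^1=\mathbf{1}$. The composition $\Phi\circ\Psi$ is then the desired extension of $t_*$.
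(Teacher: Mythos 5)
Your argument for part (i) is correct and is essentially the paper's: transfer the $A_{\infty}$-coalgebra structure $\{\psi^{n}\}$ to $H_{\ast}(X;R)$ from $S_{\ast}(X;R)$ (possible since $H_{\ast}(X;R)$ is free over the PID $R$), extend each $\psi^{n}$ multiplicatively to $T^{a}\tilde{H}_{\ast}(X;R)$ via Formula (\ref{multformula}), set all remaining operations to zero, and read off the triviality equivalence by restricting to generators.

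For part (ii) you take a genuinely different, somewhat more roundabout route. The paper works entirely inside the free DG Hopf algebra chain model $B=T^{a}\tilde{S}_{\ast}(X;R)$ of $\Omega\Sigma X$: the $A_{\infty}$-coalgebra morphism $\bar{G}:H_{\ast}(X;R)\Rightarrow S_{\ast}(X;R)$ produced by the Transfer Algorithm is extended freely via Formula (\ref{fmultformula}) to a morphism $G:(T^{a}\tilde{H}_{\ast}(X;R),\omega,\mu)\Rightarrow B$, and since $B$ is a chain model for $\Omega\Sigma X$ the Bott--Samelson isomorphism immediately identifies the transferred structure on $H_{\ast}(\Omega\Sigma X;R)$ with $(H,\omega,\mu)$. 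You instead map into the singular chains $S_{\ast}(\Omega\Sigma X;R)$, apply existence of transfer twice (once along $f=t_{\#}\circ\bar{h}$ and once along a cycle-selecting map $k$), and then invoke the uniqueness clause of Theorem \ref{AAHopf} for the chain-homotopic maps $f$ and $k\circ t_{\ast}$ to produce the isomorphism $\Phi$ with $\phi_{1}^{1}=t_{\ast}$. This works — the chain homotopy $f\simeq k\circ t_{\ast}$ exists because the source has zero differential and is free — but it forces you into the reconciliation step identifying the transferred structure $\omega'$ with the explicit $\omega$ of part (i), which you resolve by arguing the transfer choices can be made to agree with the free extension and appealing to Remark \ref{Aclass}. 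The paper's route avoids this reconciliation entirely because the explicit morphism $G$ into the chain model is constructed simultaneously with $\omega$; your route buys nothing extra here, but it is logically sound. One small caution: the uniqueness clause of Theorem \ref{AAHopf} compares two structures on the \emph{same} underlying module, so you should make explicit that you first transport $k$ along the module isomorphism $t_{\ast}$ before invoking it.
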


\begin{proof}
	Since $H_{\ast}(X;R)$ is free as an $R$-module, we may choose a
	cycle-selecting map $\bar{g}={\bar{g}}_{1}^{1}:H_{\ast}(X,R)\rightarrow
	S_{\ast}(X;R)$ and apply the Transfer Algorithm to obtain an induced
	$A_{\infty}$-coalgebra structure $\bar{\omega}=\{{\bar{\omega}}_{1}
	^{n}\}_{n\geq2}$ on $H_{\ast}(X,R)$ and a corresponding map of $A_{\infty}
	$-coalgebras $\bar{G}=\{{\bar{g}}_{1}^{n}\}_{n\geq1}:H_{\ast}(X;R)\Rightarrow
	S_{\ast}(X;R).$ Let $H=T^{a}\tilde{H}_{\ast}(X;R),$ let $B=T^{a}\tilde
	{S}_{\ast}(X;R),$ and consider the free (multiplicative) extension $g=T\left(
	\bar{g}\right)  :H\rightarrow B.$ As in the proof of Theorem \ref{rational3},
	use formulas \ref{multformula} and \ref{fmultformula} to freely extend
	$\bar{\omega}$ and $\bar{G}$ to families $\omega=\left\{  \omega_{1}
	^{n}\right\}  $ and $G=\{{g}_{1}^{n}\mid{g}_{1}^{1}=g\}_{n\geq1}$ defined on
	$H$, and choose all other $A_{\infty}$-bialgebra operations to be zero. Then
	$\bar{\omega}$ lifts to an $A_{\infty}$-bialgebra structure $\left(
	H,\omega,\mu\right)  $ of the third kind with free product $\mu$, and $\bar
	{G}$ lifts to a map $G:H\Rightarrow B$ of $A_{\infty}$-bialgebras.
	Furthermore, restricting $\omega$ to the multiplicative generators $H_{\ast
	}(X;R)$ recovers the $A_{\infty}$-coalgebra operations on $H_{\ast}(X;R)$.
	Thus $A_{\infty}$ -bialgebra structure of $H$ is trivial if and only if the
	$A_{\infty}$-coalgebra structure of $H_{\ast}(X;R)$ is trivial. Finally, since
	$B$ is a free DG Hopf algebra chain model for $\Omega\Sigma X,$ the
	Bott-Samelson Isomorphism $t_{\ast}$ extends to an isomorphism of $A_{\infty}
	$-bialgebras, and identifies the $A_{\infty}$-bialgebra structure of $H_{\ast
	}(\Omega\Sigma X;R)$ with $\left(  H,\omega,\mu\right)  $.
\end{proof}

It is important to note that prior to this work, all known \emph{rational}
homology invariants of $\Omega\Sigma X$ are trivial for any space $X$.
However, we now have the following:

\begin{corollary}
	\label{invariant}A nontrivial $A_{\infty}$-coalgebra structure on $H_{\ast
	}(X;\mathbb{Q})$ induces a nontrivial $A_{\infty}$-bialgebra structure on
	$H_{\ast}(\Omega\Sigma X;\mathbb{Q}).$ Thus the $A_{\infty}$-bialgebra
	structure of $H_{\ast}(\Omega\Sigma X;\mathbb{Q})$ is a nontrivial rational
	homology invariant.
\end{corollary}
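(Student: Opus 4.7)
The proof is essentially a two-line consequence of Theorem \ref{B-S}, combined with a brief observation to establish nontriviality as an invariant. My plan is as follows.

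First, I would apply Theorem \ref{B-S}(i) with $R=\mathbb{Q}$: because $\mathbb{Q}$ is a field, every graded $\mathbb{Q}$-module is free, so the hypotheses hold for any connected $X$. The theorem then yields an $A_{\infty}$-bialgebra structure of the third kind on $T^{a}\tilde{H}_{\ast}(X;\mathbb{Q})$ whose coalgebra operations $\{\omega_{1}^{n}\}_{n\geq 2}$ restrict on generators to the $A_{\infty}$-coalgebra operations $\{\bar\omega_{1}^{n}\}_{n\geq 2}$ on $H_{\ast}(X;\mathbb{Q})$, and which is trivial if and only if the latter is trivial. Under the hypothesis of the corollary, the structure on $H_{\ast}(X;\mathbb{Q})$ is nontrivial, hence so is the induced structure on $T^{a}\tilde{H}_{\ast}(X;\mathbb{Q})$.

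Next, Theorem \ref{B-S}(ii) asserts that the Bott-Samelson isomorphism $t_{\ast}:T^{a}\tilde{H}_{\ast}(X;\mathbb{Q})\overset{\approx}{\to}H_{\ast}(\Omega\Sigma X;\mathbb{Q})$ extends to an isomorphism of $A_{\infty}$-bialgebras. Since nontriviality of an $A_{\infty}$-bialgebra structure is preserved under isomorphism (the top operations cannot simultaneously be zero in both structures), the $A_{\infty}$-bialgebra structure induced on $H_{\ast}(\Omega\Sigma X;\mathbb{Q})$ by its DG bialgebra chain model $S_{\ast}(\Omega\Sigma X;\mathbb{Q})$ is likewise nontrivial. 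By Remark \ref{Aclass} (or equivalently, the uniqueness clause of Theorem \ref{AAHopf}) this $A_{\infty}$-bialgebra structure is unique up to isomorphism, so it is a well-defined invariant of $\Omega\Sigma X$.

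For the final assertion that this is the first \emph{nontrivial} rational homology invariant, I would contrast it with the classical Hopf algebra structure. By Bott--Samelson, $H_{\ast}(\Omega\Sigma X;\mathbb{Q})$ is always isomorphic as a Hopf algebra to the free tensor Hopf algebra $T^{a}\tilde{H}_{\ast}(X;\mathbb{Q})$, so the Hopf algebra structure depends only on the graded rational Betti numbers of $X$ and cannot detect any higher rational homotopy information. To exhibit a case where the $A_{\infty}$-bialgebra structure does detect strictly more, I would take $X = S^{2}\cup_{\alpha}e^{4}$ with $\alpha$ the Hopf map: here the nontrivial rational Massey triple product in $H^{\ast}(X;\mathbb{Q})$ yields a nontrivial $\omega_{1}^{3}$ on $H_{\ast}(X;\mathbb{Q})$, which by the above produces a nontrivial $A_{\infty}$-bialgebra operation on $H_{\ast}(\Omega\Sigma X;\mathbb{Q})$ — whereas for any $Y$ with the same rational Betti numbers but formal rational homotopy type, the corresponding induced structure vanishes. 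This distinguishes $\Omega\Sigma X$ from $\Omega\Sigma Y$ at the level of induced $A_{\infty}$-bialgebra structure, even though their rational Hopf algebras agree.

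There is no real obstacle: the content of the corollary is entirely packaged into Theorem \ref{B-S}, and the only genuine ingredient beyond unraveling definitions is pointing to a standard non-formal example such as the one above to witness nontriviality of the invariant.
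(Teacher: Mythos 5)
Your derivation of the main conditional is correct and is exactly the paper's first step: apply Theorem \ref{B-S}(i) over $R=\mathbb{Q}$ to get that the induced $A_{\infty}$-bialgebra structure on $T^{a}\tilde{H}_{\ast}(X;\mathbb{Q})$ is trivial if and only if the $A_{\infty}$-coalgebra structure on $H_{\ast}(X;\mathbb{Q})$ is, then transport it through the Bott--Samelson isomorphism via Theorem \ref{B-S}(ii), with Remark \ref{Aclass} giving well-definedness up to isomorphism. The paper adds a second step you omit: it also invokes the dual version of Theorem \ref{rational3} to exhibit the same isomorphism class in a second form (trivial $A_{\infty}$-coalgebra substructure, with the nontriviality pushed into the $A_{\infty}$-algebra substructure); this is optional for the literal statement but is what the remark following the corollary elaborates on.

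The genuine problem is your witness example, which you yourself flag as ``the only genuine ingredient beyond unraveling definitions.'' The space $S^{2}\cup_{\alpha}e^{4}$ with $\alpha$ the Hopf map is $\mathbb{C}P^{2}$, which is a compact K\"ahler manifold and hence rationally \emph{formal}: there is no nontrivial rational Massey triple product in $H^{\ast}(\mathbb{C}P^{2};\mathbb{Q})$ (indeed $x\smile x\neq0$, so $\langle x,x,x\rangle$ is not even defined), and the higher operations $\omega_{1}^{n}$, $n\geq3$, on $H_{\ast}(\mathbb{C}P^{2};\mathbb{Q})$ may all be taken to be zero. The Hopf invariant is detected by the \emph{binary} cup product, which is already part of the ordinary coalgebra $H_{\ast}(X;\mathbb{Q})$ and hence of the Hopf algebra $H_{\ast}(\Omega\Sigma X;\mathbb{Q})$ --- recall that under Theorem \ref{B-S} the coproduct on $T^{a}\tilde{H}_{\ast}(X;\mathbb{Q})$ is the free extension of the coproduct of $H_{\ast}(X;\mathbb{Q})$, so your assertion that the Hopf algebra structure of $H_{\ast}(\Omega\Sigma X;\mathbb{Q})$ depends only on the Betti numbers of $X$ is also false (generators are not primitive in general). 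So your example satisfies the \emph{negation} of the corollary's hypothesis and witnesses nothing. To exhibit nontriviality of the invariant you need a rationally non-formal $X$ with a genuinely higher operation, e.g.\ a space carrying a nonvanishing rational Massey triple product such as a Borromean-rings complement or a suitable two-cone on a wedge of spheres; with such an $X$ the rest of your argument goes through.
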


\begin{proof}
	First, $H=H_{\ast}(\Omega\Sigma X;\mathbb{Q})$ admits an induced $A_{\infty}
	$-bialgebra structure of the third kind by Theorem \ref{B-S}, which is trivial
	if and only if the $A_{\infty}$-coalgebra structure of $H_{\ast}
	(X;\mathbb{Q})$ is trivial. Second, the dual version of Theorem
	\ref{rational3} imposes an induced $A_{\infty}$-bialgebra structure on $H$
	whose $A_{\infty}$-coalgebra substructure is trivial, and whose $A_{\infty}
	$-algebra substructure is trivial if and only if the $A_{\infty}$-coalgebra
	structure of $H_{\ast}(X;\mathbb{Q})$ is trivial.
\end{proof}

The two $A_{\infty}$-bialgebras identified in the proof of Corollary
\ref{invariant}--one with trivial $A_{\infty}$-coalgebra substructure and the
other with trivial $A_{\infty}$-algebra substructure--are in fact isomorphic,
and represent the same isomorphism class of $A_{\infty}$-bialgebra structures
on $H_{\ast}(\Omega\Sigma X;\mathbb{Q})$ (cf. Remark \ref{Aclass}). Indeed, choose a pair
of isomorphisms for the two underlying $A_{\infty}$-(co)algebra substructures---the first as an isomorphism of $A_{\infty}$-algebras and the second as an isomorphism of $A_{\infty}$-coalgebras (their
component in bidegree $(1,1)$ is the identity $\mathbf{1}:H\rightarrow H$). Since
$\omega_{i}^{j}=0$ for $i,j\geq2,$ these isomorphisms clearly determine an
isomorphism of $A_{\infty}$-bialgebras.

Our next example exhibits an $A_{\infty}$-bialgebra of the first but not the
second kind. Given a $1\,$-connected DGA $\left(  A,d_{A}\right)  ,$ the bar
construction of $A$, denoted by $BA,$ is the cofree DGC $T^{c}\left(
\downarrow\overline{A}\right)  $ whose differential $d\ $and coproduct
$\Delta$ are defined as follows:\ Let $\left\lfloor x_{1}|\cdots
|x_{n}\right\rfloor $ denote the element $\left.  \downarrow x_{1}\mid
\cdots\mid\downarrow x_{n}\right.  \in BA$, and let $e$ denote the unit
$\left\lfloor \ \right\rfloor .$ Then
\[
d\left\lfloor x_{1}|\cdots|x_{n}\right\rfloor =\sum_{i=1}^{n}\pm\left\lfloor
x_{1}|\cdots|d_{A}x_{i}|\cdots|x_{n}\right\rfloor +\sum_{i=1}^{n-1}
\pm\left\lfloor x_{1}|\cdots|x_{i}x_{i+1}|\cdots|x_{n}\right\rfloor \text{;}%
\]
\[
\Delta\left\lfloor x_{1}|\cdots|x_{n}\right\rfloor =e\otimes\left\lfloor
x_{1}|\cdots|x_{n}\right\rfloor +\left\lfloor x_{1}|\cdots|x_{n}\right\rfloor
\otimes e+\sum_{i=1}^{n-1}\left\lfloor x_{1}|\cdots|x_{i}\right\rfloor
\otimes\left\lfloor x_{i}|\cdots|x_{n}\right\rfloor .
\]
Given an $A_{\infty}$-coalgebra $\left(  C,\Delta^{n}\right)  _{n\geq1},$ the
tilde-cobar construction of $C$, denoted by $\tilde{\Omega}C,$ is the free DGA
$T^{a}\left(  \uparrow\overline{C}\right)  $ with differential $d_{\tilde
	{\Omega}A}$ given by extending $\sum_{i\geq1}\Delta^{n}$ as a derivation. Let
$\left\lceil x_{1}|\cdots|x_{n}\right\rceil $ denote $\left.  \uparrow
x_{1}\mid\cdots\mid\uparrow x_{n}\right.  \in\tilde{\Omega}H.$

\begin{example}
	\label{hga}Consider the DGA $A=\mathbb{Z}_{2}\left[  a,b\right]  /\left(
	a^{4},ab\right)  $ with $\left\vert a\right\vert =3,$ $\left\vert b\right\vert
	\!=\!5$ and trivial differential. Define a homotopy Gerstenhaber algebra
	$($HGA$)$ structure $\{E_{p,q}:A^{\otimes p}\otimes A^{\otimes q}\rightarrow
	A\}_{p,q\geq0;\text{ }p+q>0}$ with $E_{p,q}$ acting trivially except
	$E_{1,0}=E_{0,1}=\mathbf{1}$ and $E_{1,1}(b;b)=a^{3}$ $($cf. \cite{Voronov},
	\cite{KS1}$)$. Form the tensor coalgebra $BA\otimes BA$ with coproduct
	$\psi=\sigma_{2,2}\left(  \Delta\otimes\Delta\right)  ,$ and consider the
	induced map
	\[ \phi=E_{1,0}^{\prime}+E_{0,1}^{\prime}+E_{1,1}^{\prime}:BA\otimes
	BA\rightarrow A
	\]
	of degree $+1,$ which acts trivially except for $E_{1,0}^{\prime}(\lfloor
	x\rfloor\otimes e)=E_{0,1}^{\prime}(e\otimes\lfloor x\rfloor)=x$ for all $x\in
	A,$ and $E_{1,1}^{\prime}(\lfloor b\rfloor\otimes\lfloor b\rfloor)=a^{3}.$
	Since $E_{p,q}$ is an HGA structure, $\phi$ is a twisting cochain, which lifts
	to a chain map of DG coalgebras $\mu:BA\otimes BA\rightarrow BA$ defined by
	\[
	\mu=\sum_{k=0}^{\infty}\downarrow^{\otimes k+1}\phi^{\otimes k+1}\bar{\psi
	}^{\left(  k\right)  },
	\]
	where $\bar{\psi}^{(0)}=\mathbf{1,}$ $\bar{\psi}^{\left(  k\right)  }=\left(
	\bar{\psi}\otimes\mathbf{1}^{\otimes k-1}\right)  \cdots\left(  \bar{\psi
	}\otimes\mathbf{1}\right)  \bar{\psi}$ for $k>0,$ and $\bar{\psi}$ is the
	reduced coproduct on $BA\otimes BA.$ Then, for example, $\mu\left(
	\left\lfloor b\right\rfloor \!\otimes\! \left\lfloor b\right\rfloor \right)
	=\left\lfloor a^{3}\right\rfloor $ and $\mu\left(  \left\lfloor b\right\rfloor
	\! \otimes\! \left\lfloor a|b\right\rfloor \right)  =\left\lfloor a|a^{3}
	\right\rfloor +\left\lfloor b|a|b\right\rfloor .$ It follows that
	$(BA,d,\Delta,\mu)$ is a DG\ Hopf algebra. Let $\mu_{H}$ and $\Delta_{H}$ be
	the product and coproduct on $H=H^{\ast}\left(  BA\right)  $ induced by $\mu$
	and $\Delta;$ then $\left(  H,\Delta_{H},\mu_{H}\right)  $ is a graded
	bialgebra. Let $\alpha={\operatorname*{cls}}\left\lfloor a\right\rfloor $ and
	$z={\operatorname*{cls}}\left\lfloor a|a^{3}\right\rfloor $ in $H,$ and note
	that $\left\lfloor a^{3}\right\rfloor =d\left\lfloor a|a^{2}\right\rfloor .$
	Let $g:H\rightarrow BA$ be a cycle-selecting map such that $g\left(
	\operatorname*{cls}\left\lfloor x_{1}|\cdots|x_{n}\right\rfloor \right)
	=\left\lfloor x_{1}|\cdots|x_{n}\right\rfloor .$ Then
	\[
	\bar{\Delta}_{H}\left(  z\right)  ={\operatorname*{cls}\bar{\Delta}
	}\left\lfloor a|a^{3}\right\rfloor ={\operatorname*{cls}}\left\{  \left\lfloor
	a\right\rfloor \otimes\left\lfloor a^{3}\right\rfloor \right\}  =0
	\]
	so that
	\[
	\left\{  \Delta g+\left(  g\otimes g\right)  \Delta_{H}\right\}  \left(
	z\right)  =\left\lfloor a\right\rfloor \otimes\left\lfloor a^{3}\right\rfloor
	.
	\]
	By the Transfer Algorithm, we may choose a map $g^{2}:H\rightarrow BA\otimes BA
	$ such that $g^{2}\left(  z\right)  =\left\lfloor a\right\rfloor
	\otimes\left\lfloor a|a^{2}\right\rfloor ;$ then
	\[
	\nabla g^{2}\left(  z\right)  =\left\{  \Delta g+\left(  g\otimes g\right)
	\Delta_{H}\right\}  \left(  z\right)  .
	\]
	Furthermore, note that
	\[
	\left\{  \left(  g^{2}\otimes g+g\otimes g^{2}\right)  \Delta_{H}+\left(
	\Delta\otimes\mathbf{1}+\mathbf{1}\otimes\Delta\right)  g^{2}\right\}  \left(
	z\right)  =\left\lfloor a\right\rfloor \otimes\left\lfloor a\right\rfloor
	\otimes\left\lfloor a^{2}\right\rfloor .
	\]
	Since $\left\lfloor a^{2}\right\rfloor =d\left\lfloor a|a\right\rfloor ,$
	there is an $A_{\infty}$-coalgebra operation $\Delta_{H}^{3}:H\rightarrow
	H^{\otimes3}$, and a map $g^{3}:H\rightarrow\left(  BA\right)  ^{\otimes3}$
	satisfying the general relation on $J_{3}$ such that $\Delta_{H}^{3}(z)=0$ and
	$g^{3}\left(  z\right)  =\left\lfloor a\right\rfloor \otimes\left\lfloor
	a\right\rfloor \otimes\left\lfloor a|a\right\rfloor .$ In fact, we may choose
	$\Delta_{H}^{3}$ to be identically zero on $H$ so that
	\[
	\nabla g^{3}=\left(  \Delta\otimes\mathbf{1}+\mathbf{1}\otimes\Delta\right)
	g^{2}+\left(  g^{2}\otimes g+g\otimes g^{2}\right)  \Delta_{H}.
	\]

	Now the potentially non-vanishing terms in the image of $J_{4}$ are
	\[
	\left(  g^{3}\otimes g+g^{2}\otimes g^{2}+g\otimes g^{3}\right)  \Delta
	_{H}+\left(  \Delta\otimes\mathbf{1}\otimes\mathbf{1}+\mathbf{1}\otimes
	\Delta\otimes\mathbf{1}+\mathbf{1}\otimes\mathbf{1}\otimes\Delta\right)
	g^{3},
	\]
	and evaluating at $z$ gives $\left\lfloor a\right\rfloor \otimes\left\lfloor
	a\right\rfloor \otimes\left\lfloor a\right\rfloor \otimes\left\lfloor
	a\right\rfloor .$ Thus there is an $A_{\infty}$-coalgebra operation
	$\Delta_{H}^{4}$ and a map $g^{4}:H\rightarrow\left(  BA\right)  ^{\otimes4}$
	satisfying the general relation on $J_{4}$ such that $\Delta_{H}^{4}\left(
	z\right)  =\alpha\otimes\alpha\otimes\alpha\otimes\alpha$ and $g^{4}\left(
	z\right)  =0.$ Now recall that the induced $A_{\infty}$-coalgebra structure on
	$H\otimes H$ is given by
	\begin{align*}
		\Delta_{H\otimes H}  &  =\sigma_{2,2}\left(  \Delta_{H}\otimes\Delta
		_{H}\right) \\
		\Delta_{H\otimes H}^{4}  &  =\sigma_{4,2}[\Delta_{H}^{4}\otimes(\mathbf{1}
		^{\otimes2}\otimes\Delta_{H})(\mathbf{1}\otimes\Delta_{H})\Delta_{H}
		+(\Delta_{H}\otimes\mathbf{1}^{\otimes2})(\Delta_{H}\otimes\mathbf{1}
		)\Delta_{H}\otimes\Delta_{H}^{4}]\\
		&  \vdots
	\end{align*}
	Let $\beta={\operatorname*{cls}}\left\lfloor b\right\rfloor ,$
	$u={\operatorname*{cls}}\left\lfloor a|b\right\rfloor ,$
	$v={\operatorname*{cls}}\left\lfloor b|a\right\rfloor ,$ and
	$w={\operatorname*{cls}}\left\lfloor b|a|b\right\rfloor $ in $H,$ and consider
	the induced map of tilde-cobar constructions
	\[
	\widetilde{\mu_{H}}=\sum_{n\geq1}\left(  \uparrow\mu_{H}\downarrow\right)
	^{\otimes n}:\tilde{\Omega}(H\otimes H)\rightarrow\tilde{\Omega}H.
	\]
	Then
	\[
	\widetilde{\mu_{H}}\left\lceil \beta\otimes u\right\rceil =\left\lceil \mu
	_{H}\left(  \beta\otimes u\right)  \right\rceil =\left\lceil
	{\operatorname*{cls}}\text{ }\mu\left(  \left\lfloor b\right\rfloor
	\otimes\left\lfloor a|b\right\rfloor \right)  \right\rceil =\left\lceil
	z+w\right\rceil
	\]
	so that
	\[
	d_{\tilde{\Omega}H}\widetilde{\mu_{H}}\left\lceil \beta\otimes u\right\rceil
	=d_{\tilde{\Omega}H}\left\lceil z+w\right\rceil =\left\lceil \alpha
	|\alpha|\alpha|\alpha\right\rceil +\left\lceil \beta|u\right\rceil
	+\left\lceil v|\beta\right\rceil .
	\]
	But on the other hand,
	\begin{align*}
		d_{\tilde{\Omega}(H\otimes H)}\left\lceil \beta\otimes u\right\rceil  &
		=\left\lceil \Delta_{H\otimes H}\left(  \beta\otimes u\right)  \right\rceil \\
		&  =\left\lceil e\otimes u\mid\beta\otimes e\right\rceil +\left\lceil
		\beta\otimes e\mid e\otimes u\right\rceil \\
		&  \hspace*{0.5in}+\left\lceil \beta\otimes\alpha\mid e\otimes\beta
		\right\rceil +\left\lceil e\otimes\alpha\mid\beta\otimes\beta\right\rceil
	\end{align*}
	so that
	\[
	\widetilde{\mu_{H}}d_{\tilde{\Omega}(H\otimes H)}\left\lceil \beta\otimes
	u\right\rceil =\left\lceil \beta|u\right\rceil +\left\lceil v|\beta
	\right\rceil .
	\]
	Although $\widetilde{\mu_{H}}$ fails to be a chain map, the Transfer Algorithm
	implies there is a chain map $\widetilde{\mu_{H}}^{2}:\tilde{\Omega}(H\otimes
	H)\rightarrow\tilde{\Omega}H$ such that $\widetilde{\mu_{H}}^{2}\left\lceil
	e\otimes\alpha\mid\beta\otimes\beta\right\rceil =\left\lceil \alpha
	|\alpha|\alpha|\alpha\right\rceil ,$ which can be realized by defining
	\[
	\widetilde{\mu_{H}}^{2}=\sum_{n\geq1}\left(  \uparrow\mu_{H}\downarrow
	+\uparrow^{\otimes3}\omega_{2}^{3}\downarrow\right)  ^{\otimes n},
	\]
	where $\omega_{2}^{3}\left(  \beta\otimes\beta\right)  =\alpha\otimes
	\alpha\otimes\alpha.$ Indeed, to see that the required equality holds, note
	that $\mu_{H}\left(  \beta\otimes\beta\right)  =0$ since $\left\lfloor
	a^{3}\right\rfloor =d\left\lfloor a|a^{2}\right\rfloor .$ Thus there is a map
	$g_{2}:H\otimes H\rightarrow BA$ such that $g_{2}\left(  \beta\otimes
	\beta\right)  =\left\lfloor a|a^{2}\right\rfloor $, and $\nabla g_{2}=g\mu
	_{H}+\mu\left(  g\otimes g\right)  .$ Furthermore, there is the following
	general relation on $JJ_{2,2}:$
	\begin{align}
		\nabla g_{2}^{2}  &  =\omega_{BA}^{2,2}\left(  g\otimes g\right)  +\left(
		\mu\otimes\mu\right)  \sigma_{2,2}\left(  \Delta g\otimes g^{2}+g^{2}
		\otimes\left(  g\otimes g\right)  \Delta_{H}\right)  +g^{2}\mu_{H}%
		\label{three}\\
		&  +\left(  \mu\left(  g\otimes g\right)  \otimes g_{2}+g_{2}\otimes g\mu
		_{H}\right)  \sigma_{2,2}\left(  \Delta_{H}\otimes\Delta_{H}\right)  +\Delta
		g_{2}+\left(  g\otimes g\right)  \omega_{2}^{2}.\nonumber
	\end{align}
	The first expression on the right-hand side vanishes since $BA$ has trivial
	higher order structure, and the next two expressions vanish since $\mu
	_{H}\left(  \beta\otimes\beta\right)  =0$ and $g^{2}(\beta)=0$ ($\beta$ is
	primitive). However,
	\[
	\left\{  \left(  \mu\left(  g\otimes g\right)  \otimes g_{2}+g_{2}\otimes
	g\mu_{H}\right)  \sigma_{2,2}\left(  \Delta_{H}\otimes\Delta_{H}\right)
	+\Delta g_{2}\right\}  \left(  \beta\otimes\beta\right)\]\[
	=\bar{\Delta}\,g_{2}(\beta\otimes\beta)=\left\lfloor a\right\rfloor
	\otimes\left\lfloor a^{2}\right\rfloor .
	\]
	Since $d\left\lfloor a|a\right\rfloor =\left\lfloor a^{2}\right\rfloor ,$
	there an operation $\omega_{2}^{2}:H^{\otimes2}\rightarrow H^{\otimes2}$, and
	a map $g_{2}^{2}:H^{\otimes2}\rightarrow\left(  BA\right)  ^{\otimes2}$
	satisfying relation (\ref{three}) such that $\omega_{2}^{2}\left(
	\beta\otimes\beta\right)  =0$ and $g_{2}^{2}\left(  \beta\otimes\beta\right)
	=\left\lfloor a\right\rfloor \otimes\left\lfloor a|a\right\rfloor $.
	Similarly, there is an operation $\omega_{2}^{3}:H^{\otimes2}\rightarrow
	H^{\otimes3}$, and a map $g_{2}^{3}:H^{\otimes2}\rightarrow\left(  BA\right)
	^{\otimes3}$ satisfying the general relation on $JJ_{3,2}$ such that
	$\omega_{2}^{3}\left(  \beta\otimes\beta\right)  =\alpha\!\otimes
	\!\alpha\!\otimes\!\alpha$ and $g_{2}^{3}\left(  \beta\otimes\beta\right)
	=0.$ Thus $\left(  H,\mu_{H},\Delta_{H},\,\Delta_{H}^{4},\omega_{2}
	^{3},...\right)  $ is an $A_{\infty}$-bialgebra of the first kind.
\end{example}

One can think of the algebra $A$ in Example \ref{hga} as the singular
$\mathbb{Z}_{2}$-cohomology algebra of a space $X$ with the Steenrod algebra
$\mathcal{A}_{2}$ acting nontrivially via $Sq_{1}b=a^{3}$ (recall that
$Sq_{1}:H^{n}\left(  X;\mathbb{Z}_{2}\right)  \rightarrow H^{2n-1}\left(
X;\mathbb{Z}_{2}\right)  $ is a homomorphism defined by $Sq_{1}\left[
x\right]  =\left[  x\smile_{1}x\right]  $). Recall that a space $X$ is
$\mathbb{Z}_{2}$-\emph{formal} if there exists a DGA $B$ and cohomology
isomorphisms $C^{\ast}(X;\mathbb{Z}_{2})\leftarrow B\rightarrow H^{\ast
}\left(  X;\mathbb{Z}_{2}\right)  .$ Thus, when $X$ is $\mathbb{Z}_{2}$-formal,
$H^{\ast}\left(  BA\right)  \approx H^{\ast}(\Omega X;\mathbb{Z}_{2})$ as
graded coalgebras. Now consider a $\mathbb{Z}_{2}$-formal space $X$ whose
cohomology $H^{\ast}\left(  X;\mathbb{Z}_{2}\right)  $ is generated
multiplicatively by $\left\{  a_{1},\ldots,a_{n+1},b\right\}  ,$ $n\geq2.$
Then Example \ref{hga} suggests the following conditions on $X,$ which if
satisfied, give rise to a nontrivial operation $\omega_{2}^{n}$ with $n\geq2,$
on the loop cohomology $H^{\ast}(\Omega X;\mathbb{Z}_{2})$:

\begin{enumerate}
	\item $a_{1}b=0;$
	
	\item $a_{1}\cdots a_{n+1}=0;$
	
	\item $a_{i_{1}}\cdots a_{i_{k}}\neq0$ whenever $k\leq n$ and $i_{p}\neq
	i_{q}$ for all $p\neq q;$
	
	\item $Sq_{1}(b)=a_{2}\cdots a_{n+1}$.
\end{enumerate}

\noindent To see this, consider the non-zero classes $\alpha_{i}
=\operatorname*{cls}\left\lfloor a_{i}\right\rfloor ,$ $\beta
=\operatorname*{cls}\left\lfloor b\right\rfloor ,$ $u=\operatorname*{cls}
\left\lfloor a_{1}|b\right\rfloor ,$ $w=\operatorname*{cls}\left\lfloor
b|a_{1}|b\right\rfloor ,$ and $z=\operatorname*{cls}\left\lfloor a_{1}
|a_{2}\cdots a_{n+1}\right\rfloor $ in $H=H^{\ast}\left(  BA\right)  .$
Conditions (2) and (3) give rise to an induced $A_{\infty}$-coalgebra
structure $\left\{  \Delta_{H}^{k}:H\rightarrow H^{\otimes k}\right\}  $ such
that $\Delta_{H} ^{k}(z)=0$ for $3\leq k\leq n$, and $\Delta_{H}^{n+1}
(z)=\alpha_{1} \otimes\cdots\otimes\alpha_{n+1}$ with $g^{k}(z)=\left\lfloor
a_{1}\right\rfloor \otimes\cdots\otimes\left\lfloor a_{k-1}\right\rfloor
\otimes\left\lfloor a_{k}|a_{k+1}\cdots a_{n+1}\right\rfloor $ for $2\leq
k\leq n$ and $g^{n+1}(z)=0.$ Next, condition (4) implies $\beta\smile u=w+z,$
and we can define $\omega_{2}^{k}(\beta\otimes\beta)=0$ for $2\leq k<n$ and
$\omega_{2}^{n}(\beta\otimes\beta)=\alpha_{2}\otimes\cdots\otimes\alpha_{n+1}
$ with $g_{2}^{1}(\beta\otimes\beta)=\left\lfloor a_{2}|a_{3}\cdots
a_{n+1}\right\rfloor ,$ $g_{2}^{k}(\beta\otimes\beta)=\left\lfloor
a_{2}\right\rfloor \otimes\cdots\otimes\left\lfloor a_{k}\right\rfloor
\otimes\left\lfloor a_{k+1}\right.  \left.  | a_{k+2}\cdots a_{n+1}
\right\rfloor $ for $2\leq k\leq n-1$ and $g_{2}^{n}(\beta\otimes\beta)=0.$
Indeed, the Transfer Algorithm implies the existence of an $A_{\infty}
$-bialgebra structure in which $\omega_{2}^{n}$ satisfies the required
structure relation on $JJ_{n,2}$. Note that the $\mathbb{Z}_{2} $-formality
assumption is in fact superfluous here, as it is sufficient for $\alpha_{i},$
$\beta,$ and $u$ to be non-zero.

Spaces $X$ with $\mathbb{Z}_{2}$-cohomology satisfying conditions (1)-(4) abound.

\begin{example}
	\label{ex7.5}Given an integer $n\geq2,$ choose positive integers $r_{1}
	,\ldots,r_{n+1}$ and $m\geq2$ such that $r_{2}+\cdots+r_{n+1}=4m-3.$ Consider
	the \textquotedblleft thick bouquet\textquotedblright\ of spheres $S^{r_{1}}\veebar
	\cdots\veebar S^{r_{n+1}},$ i.e., $S^{r_{1}}\times\cdots\times S^{r_{n+1}}$
	with top dimensional cell removed, and generators $\bar{a}_{i}\in H^{r_{i}
	}(S^{r_{i}};\mathbb{Z}_{2}).$ Also consider the suspension of complex
	projective space $\Sigma\mathbb{C}P^{2m-2}$ with generators $\bar{b}\in
	H^{2m-1}(\Sigma\mathbb{C}P^{2m-2};\mathbb{Z}_{2})$ and $Sq_{1}\bar{b}\in
	H^{4m-3}(\Sigma\mathbb{C}P^{2m-2};\mathbb{Z}_{2})$. Let $Y_{n}=S^{r_{1}
	}\veebar\cdots\veebar S^{r_{n+1}}\vee\Sigma\mathbb{C}P^{2m-2},$ and choose a
	map $f:Y_{n}\rightarrow K(\mathbb{Z}_{2},4m-3)$ such that $f^{\ast}
	(\iota_{4m-3})=\bar{a}_{2}\cdots\bar{a}_{n+1}+Sq_{1}\bar{b}. $ Finally,
	consider the pullback $p:X_{n}\rightarrow Y_{n}$ of the following path
	fibration:
	\[%
	\begin{array}
		[c]{ccccc}%
		K\left(  \mathbb{Z}_{2},4m-4\right)  & \longrightarrow & X_{n}\medskip &
		\longrightarrow & \mathcal{L}K\left(  \mathbb{Z}_{2},4m-3\right) \\
		&  & p\downarrow\medskip\text{ } &  & \downarrow\\
		&  & Y_{n} & \overset{f}{\longrightarrow} & K\left(  \mathbb{Z}_{2}
		,4m-3\right) \\
		&  & \bar{a}_{2}\cdots\bar{a}_{n+1}+Sq_{1}\bar{b} & \underset{f^{\ast
		}}{\longleftarrow} & \iota_{4m-3}%
	\end{array}
	\]
	Let $a_{i}=p^{\ast}\left(  \bar{a}_{i}\right)  $ and $b=p^{\ast}\left(
	\bar{b}\right)  ;$ then $a_{1},\ldots,a_{n+1},b$ are multiplicative generators
	of $H^{\ast}(X_{n};\mathbb{Z}_{2})$ satisfying conditions (1) - (4) above. We
	remark that one can also obtain a space $X_{2}^{\prime}$ with a non-trivial
	$\omega_{2}^{2}$ on its loop cohomology by setting $Y_{2}^{\prime}=\left(
	S^{2}\times S^{3}\right)  \vee\Sigma\mathbb{C}P^{2}$ in the construction above
	(see \cite{Umble2} for details).
\end{example}

Finally, we note that the cohomology of Eilenberg-MacLane spaces and Lie
groups fail to satisfy all of (1)-(4), and it would not be surprising to find
that the operations $\omega_{2}^{n}$ vanish in their loop cohomologies for all
$n\geq2.$ In the case of Eilenberg-MacLane spaces, results due to Berciano and
the second author \cite{BU} seems to support this conjecture. Indeed, each tensor factor
$A=E\left(  v,2n+1\right)  \otimes\Gamma(w,2np+2)\subset H_{\ast}\left(
K\left(  \mathbb{Z},n\right)  ;\mathbb{Z}_{p}\right)  ,$ where $n\geq3$ and
$p$ an odd prime, is an $A_{\infty}$-bialgebra of the third kind of the form
$\left(  A,\Delta^{2},\Delta^{p},\mu\right)  $.

HGAs with nontrivial actions of the Steenrod algebra $\mathcal{A}_{2}$ were
first considered by the first author in \cite{Saneblidze3}. In general, the Steenrod $\smile_{1}$-cochain operation together with other higher cochain operations induce a nontrivial HGA structure on $S^{\ast}\left(  X\right)  $, but the failure of the differential to be a $\smile_{1}$-derivation prevents an immediate lifting of this HGA structure to cohomology (for some remarks on the history of lifting of $\smile_{1}$-operation  on the homology level see \cite {KS1} and \cite{Saneblidze3}).

\begin{example}
	\label{hopfinv}Let $g:S^{2n-2}\rightarrow S^{n}$ be a map of spheres, and let
	$Y_{m,n}=S^{m}\times\left(  e^{2n-1}\cup_{g}S^{n}\right)  .$ Let $\ast$ be the
	wedge point of $S^{m}\vee S^{n}\subset Y_{m,n},$ let $f:S^{2m-1}\rightarrow
	S^{m}\times\ast,$ and let $X_{m,n}=e^{2m}\cup_{f}Y_{m,n}.$ Then $X_{m,n}$ is
	$\mathbb{Z}_{2}$-formal for each $m$ and $n$ (by a dimensional argument), and
	we may consider $A=H^{\ast}(X_{m,n};\mathbb{Z}_{2})$ and $H=H^{\ast
	}(BA)\approx H^{\ast}(\Omega X_{m,n};\mathbb{Z}_{2}).$ Below we prove that:
	
	\begin{enumerate}
		\item[\textit{(i)}] The $A_{\infty}$-coalgebra structure of $H$ is nontrivial
		if and only if the Hopf invariant $h(f)=1,$ in which case $m=2,4,8 $.
		
		\item[\textit{(ii)}] If $h\left(  f\right)  =1,$ the $A_{\infty}$-coalgebra
		structure on $H$ extends to a nontrivial $A_{\infty}$-bialgebra structure on
		$H.$ Furthermore, let $a\in A^{n}$ and $c\in A^{2n-1}$ be multiplicative
		generators; then there is a perturbed multiplication $\varphi$ on $H$ if and
		only if $Sq_{1}a=c,$ in which case $n=3,5,9$; otherwise $\varphi$ is induced
		by the shuffle product on $BA$.
	\end{enumerate}
\end{example}

\begin{proof}
	Suppose $h\left(  f\right)  =1.$ Then $A$ is generated multiplicatively by
	$a\in A^{n},$ $b\in A^{m},$ and $c\in A^{2n-1}$ subject to the relations
	$a^{2}=c^{2}=ac=ab^{2}=b^{2}c=b^{3}=0.$\
	Let $\alpha=\operatorname*{cls}\left\lfloor a\right\rfloor ,$ $\beta
	=\operatorname*{cls}\left\lfloor b\right\rfloor ,$ $\gamma=\operatorname*{cls}
	\left\lfloor c\right\rfloor ,$ and $z=\operatorname*{cls}\left\lfloor
	b^{2}\right\rfloor \in H=H^{\ast}\left(  BA\right)  .$ Given $x_{i}
	=\operatorname*{cls}\left\lfloor u_{i}\right\rfloor \in H$ with $u_{i}
	u_{i+1}=0,$ let $x_{1}|\cdots|x_{n}=\operatorname*{cls}\left\lfloor
	u_{1}|\cdots|u_{n}\right\rfloor .$ Note that $x=\alpha|z=z|\alpha$ and
	$y=\gamma|z=z|\gamma.$ Let $\Delta_{H}\ $denote the coproduct in $H$ induced
	by the cofree coproduct $\Delta$ in $BA.$ Then $x$ and $y$ are primitive, and
	$\Delta_{H}\left(  \alpha|z|\alpha\right)  =e\otimes\alpha|z|\alpha
	+x\otimes\alpha+\alpha\otimes x+\alpha|z|\alpha\otimes e.$ Define $g\left(
	x\right)  =\left\lfloor a|b^{2}\right\rfloor $ and $g^{2}\left(  x\right)
	=\left\lfloor a\right\rfloor \otimes\left\lfloor b|b\right\rfloor ;$ define
	$g\left(  \alpha|z|\alpha\right)  =\left\lfloor a|b^{2}|a\right\rfloor $ and
	$g^{2}\left(  \alpha|z|\alpha\right)  =\left\lfloor a\right\rfloor
	\otimes\left(  \left\lfloor a|b|b\right\rfloor +\left\lfloor
	b|a|b\right\rfloor +\left\lfloor b|b|a\right\rfloor \right)  .$ There is an
	induced $A_{\infty}$-coalgebra operation $\Delta_{H}^{3}:H\rightarrow
	H^{\otimes3},$ which vanishes except on elements of the form $\cdots
	|z|\cdots,$ and may be defined on the elements $x,$ $y,$ and $\alpha|z|\alpha$
	by
	$\Delta_{H}^{3}(x)=\alpha\otimes\beta\otimes\beta,\ \Delta_{H}^{3}(y)=\gamma\otimes\beta\otimes\beta,$ and
	$\Delta_{H}^{3}(\alpha|z|\alpha)=\alpha\otimes(\alpha|\beta+\beta
	|\alpha)\otimes\beta+\alpha\otimes\beta\otimes(\alpha|\beta+\beta|\alpha).$\
	Then $\left\{  \Delta_{H},\Delta_{H}^{3}\right\}  $ defines an $A_{\infty}
	$-coalgebra structure on $H$. Furthermore, if $Sq_{1}a=c,$ which can only
	occur when $n=3,5,9,$ the induced HGA structure on $A$ is determined by
	$Sq_{1}$, and induces a perturbation of the shuffle product $\mu:BA\otimes
	BA\rightarrow BA$ with $\mu\left(  \left\lfloor a\right\rfloor \otimes
	\left\lfloor a\right\rfloor \right)  =\left\lfloor c\right\rfloor .$ The
	product $\mu$ lifts to a perturbed product $\varphi$ on $H$ such that
	$\varphi(\alpha\otimes\alpha|z)=\alpha|z|\alpha+\gamma|z,$
	and the $A_{\infty}$-coalgebra structure $\left(  H,\Delta_{H},\Delta_{H}
	^{3}\right)  $ extends to an $A_{\infty}$-bialgebra structure $\left(
	H,\Delta_{H},\Delta_{H}^{3},\varphi\right)  $ as in Example \ref{hga}. On the
	other hand, if $Sq_{1}a=0,$ then is induced by the shuffle product on $BA$ and
	$\varphi(\alpha\otimes\alpha|z)=\alpha|z|\alpha.$ Conversely, if $h(f)=0, $
	then $b^{2}=0$ so that $\Delta_{H}^{k}=0,\ $for all $k\geq3.$
\end{proof}

We conclude with an investigation of the $A_{\infty}$-bialgebra structure on
the double cobar construction. To this end, we first prove a more general
fact, which follows our next definition:

\begin{definition}
	Let $\left(  A,d,\psi,\varphi\right)  $ be a free DG\ bialgebra, i.e., free as
	a DGA. An \textbf{acyclic cover of }$A$ is a collection of acyclic DG
	submodules
	$\mathcal{C}\left(  A\right) :=\{  C^{a}\subseteq A: a\text{ is a
		monomial of }A\}$
	such that $\psi\left(  C^{a}\right)  \subseteq C^{a}\otimes C^{a}$ \textit{and
	}$\varphi\left(  C^{a}\otimes C^{b}\right)  \subseteq C^{ab}.$
\end{definition}

\begin{proposition}
	\label{prop}Let $\left(  A,d,\psi,\varphi\right)  $ be a free DG\ bialgebra
	with acyclic cover $\mathcal{C}\left(  A\right)  .$
	
	\begin{enumerate}
		\item[\textit{(i)}] Then $\varphi\ $and $\psi$ extend to an $A_{\infty}
		$-bialgebra structure of the third kind.
		
		\item[\textit{(ii)}] Let $\left(  A^{\prime},d^{\prime},\psi^{\prime}
		,\varphi^{\prime}\right)  $ be a free DG\ bialgebra with acyclic cover
		$\mathcal{C}\left(  A^{\prime}\right)  ,$ and let $f:A\rightarrow A^{\prime}$
		be a DGA\ map such that $f(C^{a})\subseteq C^{f\left(  a\right)  }$ for all
		$C^{a}\in\mathcal{C}\left(  A\right)  .$ Then $f $ extends to a morphism of
		$A_{\infty}$-bialgebras.
	\end{enumerate}
\end{proposition}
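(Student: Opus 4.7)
The plan is to construct the higher coalgebra operations $\psi^n = \omega_1^n$ for $n \geq 3$, together with the morphism components $g^n$, by induction, defining them first on the multiplicative generators using the acyclic cover to kill obstructions, then extending to all of $A$ by freeness via the third-kind formulas (\ref{multformula}) and (\ref{fmultformula}). With the choices $\omega_2^1 := \varphi$, $\omega_1^2 := \psi$, $\omega_1^n := \psi^n$ for $n \geq 3$, and all other $\omega_m^n := 0$ (for $m,n \geq 2$), the resulting structure is automatically of the third kind, so only the $\psi^n$ (and signs) have to be built.

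For part (i), suppose inductively that $\psi^2, \ldots, \psi^{n-1}$ have been constructed on $A$ so that (a) the $A_\infty$-coalgebra relations hold through arity $n-1$, (b) Formula (\ref{multformula}) holds through arity $n-1$, and (c) $\psi^k(C^a) \subseteq (C^a)^{\otimes k}$ for each $C^a \in \mathcal{C}(A)$ and each $k < n$; the base case $k=2$ is exactly the acyclic cover axiom $\psi(C^a) \subseteq C^a \otimes C^a$. For a multiplicative generator $a$, the arity-$n$ coalgebra relation forces $d\,\psi^n(a) = \Phi_n(a)$ for an explicit expression $\Phi_n(a)$ in $\psi^2, \ldots, \psi^{n-1}$. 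By (a), $\Phi_n(a)$ is a $d$-cycle; by (c), it lies in $(C^a)^{\otimes n}$; and since $C^a$ is acyclic (and $A$ is free, so tensor products of acyclic submodules remain acyclic), we may choose $\psi^n(a) \in (C^a)^{\otimes n}$ bounding it. Extend $\psi^n$ to all of $A$ by (\ref{multformula}); this is well-defined because $A$ is free as a DGA, and the acyclic-cover condition $\varphi(C^a \otimes C^b) \subseteq C^{ab}$ ensures property (c) propagates to products. Both the arity-$n$ $A_\infty$-coalgebra relation and Hopf compatibility then hold globally, since they hold on generators by construction and are preserved by the multiplicative extension.

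Part (ii) proceeds by an entirely analogous induction: one constructs $g^n: A \to (A')^{\otimes n}$ on generators inside $(C^{f(a)})^{\otimes n}$ as a chain bounding the obstruction dictated by the arity-$n$ morphism relation on $JJ_{n,1}$. The hypothesis $f(C^a) \subseteq C^{f(a)}$, together with (c) and its analog for $A'$, places the obstruction in $(C^{f(a)})^{\otimes n}$, which is acyclic; freeness of $A$ as a DGA then extends $g^n$ multiplicatively via (\ref{fmultformula}). The main obstacle throughout will be bookkeeping rather than novelty: at each inductive step one must verify that the obstruction lies in the correct tensor power of the appropriate acyclic cover element, that the multiplicative extension is compatible with the Hopf and morphism relations encoded by $\partial$ on $C_\ast(KK)$ and $C_\ast(JJ)$, and that the signs (dictated by $\Delta_K$ and $\Delta_J$) are tracked correctly. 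The conceptual content is exactly parallel to the proofs of Theorems \ref{transfer} and \ref{rational3}, with the acyclic cover playing the role that a cycle-selecting map plays there.
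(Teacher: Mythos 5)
Your proof is correct and follows essentially the same route as the paper: induct on arity, define $\psi^n$ (and $g^n$) on multiplicative generators, and extend to decomposables via Formulas (\ref{multformula}) and (\ref{fmultformula}) using freeness. In fact your write-up is more explicit than the paper's own terse argument, which merely says to "arbitrarily define $\psi^n$ on multiplicative generators"; your observation that the arity-$n$ obstruction cocycle lands in the acyclic complex $(C^a)^{\otimes n}$ and can therefore be bounded there is precisely the point the acyclic cover is designed to deliver.
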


\begin{proof}
	Define an $A_{\infty}$-coalgebra structure as follows: Let $\psi^{2}=\psi;$
	arbitrarily define $\psi^{3}$ on multiplicative generators, and extend
	$\psi^{3}$ to decomposables via
	$\psi^{3}\mu_{A}=\mu_{A}^{\otimes3}\sigma_{3,2}\left(  \psi^{3}\otimes\psi
	^{3}\right)  .\ $
	Inductively, if $\left\{  \psi^{i}\right\}  _{i<n}$ have been constructed,
	arbitrarily define $\psi^{n}$ on multiplicative generators, and use
	(\ref{multformula}) to extend $\psi^{n}$ to decomposables. Since each
	$\psi^{n}$ preserves $\mathcal{C}\left(  A\right)  $ by hypothesis, $\left\{
	\varphi,\psi^{2},\psi^{3},\ldots\right\}  $ is an $A_{\infty}$-bialgebra
	structure of the third kind, as desired. The proof of part (ii) is similar.
\end{proof}

Given a space $X,$ choose a base point $y\in X.$ Let $\operatorname{Sing}
^{2}X$ denote the Eilenberg 2-subcomplex of $\operatorname{Sing}X$, and let
$C_{\ast}(X)=C_{\ast}(\operatorname{Sing}^{2}X)/C_{>0}(\operatorname{Sing}
\,y).$ In \cite{SU2} we constructed an explicit (non-coassociative) coproduct
on the double cobar construction $\Omega^{2}C_{\ast}(X),$ which imposes
a DG bialgebra structure. Let $\mathbf{\Omega}{^{2}}$ denote the functor from the
category of (2-reduced) simplicial sets to the category of permutahedral sets
(\cite{SU2}, \cite{KS2}) such that $\Omega^{2}C_{\ast}(X)=
C_{\ast}^{\diamondsuit}(\mathbf{\Omega}{^{2}}\operatorname{Sing}^{2}X),$ where
$C_{\ast}^{\diamondsuit}\left(  Y\right)  \!=\!C_{\ast}\!\left(
\operatorname{Sing}^{M}Y\right)  \!/\!\left\langle \text{degeneracies}
\right\rangle \!$ and $\operatorname{Sing}^{M}Y$ is the multipermutahedral
singular complex of $Y$ (see Definition 15 in \cite{SU2}; cf. \cite{Baues1},\cite{CM}). Now consider the monoidal permutahedral set ${\mathbf{\Omega}^{2}}\operatorname{Sing}^{2}X,$ and let $V_{\ast} $ be its monoidal
(non-degenerate) generators. For each $a\in V_{n},$ let
$C^{a}=R\left\langle r\text{-faces of}\text{ }a: 0\leq r\leq n\right\rangle.$
Then $\left\{  C^{a}\right\}  $ is an acyclic cover, and by Proposition
\ref{prop} we have

\begin{theorem}
	\label{double}The DG bialgebra structure on the double cobar construction
	$\Omega^{2}C_{\ast}(X)$ extends to an $A_{\infty}$-bialgebra of the third kind.
\end{theorem}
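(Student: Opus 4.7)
The plan is to realize Theorem \ref{double} as a direct application of Proposition \ref{prop}(i), and so the entire task reduces to exhibiting an acyclic cover $\mathcal{C}(\Omega^2C_\ast(X))$ of the free DG bialgebra $\Omega^2C_\ast(X)$ that is compatible with both the product and the (non-coassociative) coproduct constructed in \cite{SU2}.

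First I would fix the setup. Since $\Omega^2C_\ast(X)=C_\ast^{\diamondsuit}(\mathbf{\Omega}^2\operatorname{Sing}^2X)$, its additive generators are monoidal generators of the permutahedral set $\mathbf{\Omega}^2\operatorname{Sing}^2X$; denote this set of generators $V_\ast$. A typical monomial $a\in\Omega^2C_\ast(X)$ is then a tensor word $a_1\otimes\cdots\otimes a_k$ in generators in $V_\ast$, and the associative free product $\varphi$ is concatenation. For each $a\in V_n$, I would define $C^a$ to be the $R$-span of all $r$-faces of $a$ for $0\le r\le n$, viewed as a DG submodule of $\Omega^2C_\ast(X)$ via the face operators of the permutahedral set structure. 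On a general monomial $a_1\cdots a_k$ I extend multiplicatively by $C^{a_1\cdots a_k}:=\varphi(C^{a_1}\otimes\cdots\otimes C^{a_k})$, so compatibility with $\varphi$ is automatic from the definition.

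The three verifications needed are: (a) each $C^a$ is acyclic as a DG $R$-module; (b) the coproduct $\psi$ on $\Omega^2C_\ast(X)$ satisfies $\psi(C^a)\subseteq C^a\otimes C^a$; (c) $\varphi(C^a\otimes C^b)\subseteq C^{ab}$. Item (c) holds by construction. For (a), I would use the fact that $C^a$ is the cellular chain complex of the permutahedron $P_{n+1}$ indexing the generator $a\in V_n$ (this is exactly the structure of $\mathbf{\Omega}^2\operatorname{Sing}^2X$ as a permutahedral set; cf. the identification in \cite{SU2}), and $P_{n+1}$ is a contractible polytope, whose cellular chain complex is thus acyclic in positive degrees. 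For (b), I would invoke the explicit S--U diagonal $\Delta_P$ on the permutahedra used in \cite{SU2} to define $\psi$: this diagonal sends a cell of $P_{n+1}$ to a sum of products of its own faces, so $\psi$ restricted to $C^a$ lands in $C^a\otimes C^a$. This is the main technical point, but it is essentially built into the construction of the coproduct on $\Omega^2C_\ast(X)$ in \cite{SU2}, so the verification amounts to unwinding definitions.

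Having established that $\{C^a\}_{a\in V_\ast}$ is an acyclic cover in the sense of the definition preceding Proposition \ref{prop}, I would invoke Proposition \ref{prop}(i) directly: the pair $(\varphi,\psi)$ extends to a family $\{\psi^n\}_{n\ge 2}$ with $\psi^2=\psi$, each $\psi^n$ built inductively on multiplicative generators using the acyclicity of the relevant $C^a$ (so that the transfer cocycle obstructions vanish) and then extended to decomposables by the free-extension formula \eqref{multformula}. Together with $\varphi$ and the vanishing of $\omega_m^n$ for $m,n\ge 2$, this is precisely an $A_\infty$-bialgebra structure of the third kind extending the given DG bialgebra structure, completing the proof. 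The main obstacle I anticipate is the careful bookkeeping in (b) to confirm that the non-coassociative coproduct of \cite{SU2} genuinely respects the face-filtered pieces $C^a$ on every monomial, but this is a direct consequence of the permutahedral combinatorics already developed in Sections 2--3.
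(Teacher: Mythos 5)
Your proof follows essentially the same route as the paper: the paper likewise sets $C^{a}=R\langle r\text{-faces of }a:0\leq r\leq n\rangle$ for each monoidal generator $a\in V_{n}$ of the permutahedral set $\mathbf{\Omega}^{2}\operatorname{Sing}^{2}X$, asserts that $\{C^{a}\}$ is an acyclic cover, and concludes by Proposition \ref{prop}. Your additional detail (multiplicative extension to monomials, acyclicity via contractibility of $P_{n+1}$, and compatibility of the S--U diagonal with faces) correctly fills in the verification the paper leaves implicit.
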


\begin{conjecture}
	Given a $2$-connected space $X,$ the chain complex $C_{\ast}^{\diamondsuit
	}(\Omega^{2}X)$ admits an $A_{\infty}$-bialgebra structure extending the DG
	bialgebra structure constructed in \cite{SU2}. Moreover, there exists a
	morphism
	$G=\left\{  g_{m}^{n}\right\}  :\Omega^{2}C_{\ast}(X)\Rightarrow C_{\ast
	}^{\diamondsuit}(\Omega^{2}X)$
	of $A_{\infty}$-bialgebras such that $g_{1}^{1}$ is a homology isomorphism.\vspace{0.2in}
\end{conjecture}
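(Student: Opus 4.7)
The plan is to mirror the strategy behind Theorem \ref{double}, lifting its acyclic-cover argument from $\Omega^2 C_\ast(X)$ to $C_\ast^\diamondsuit(\Omega^2 X)$. Specifically, I would let $V_\ast$ be the set of non-degenerate monoidal generators of $\operatorname{Sing}^M(\Omega^2 X)$ and, for each $a \in V_n$, define an acyclic submodule $C^a \subseteq C_\ast^\diamondsuit(\Omega^2 X)$ spanned by the faces of $a$. The two compatibility conditions of an acyclic cover then need to be checked: that the [SU2] coproduct sends $C^a$ into $C^a \otimes C^a$, and that the concatenation product sends $C^a \otimes C^b$ into $C^{a\cdot b}$. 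The second is automatic from the monoidal structure of $\operatorname{Sing}^M(\Omega^2 X)$; the first is a geometric statement about how the cellular diagonal decomposes on a single multipermutahedral cell.

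Granting these, part (i) of Proposition \ref{prop} immediately yields an $A_\infty$-bialgebra structure of the third kind on $C_\ast^\diamondsuit(\Omega^2 X)$ extending the DG bialgebra structure of [SU2]. For the morphism $G$, I would construct a natural DGA map $g_1^1 : \Omega^2 C_\ast(X) \to C_\ast^\diamondsuit(\Omega^2 X)$ from the obvious simplicial/permutahedral comparison, sending the iterated cobar symbol built from simplices of $X$ to the corresponding multipermutahedral chain in $\Omega^2 X$; this comparison is the natural map underlying the double cobar equivalence \cite{Baues1}, \cite{CM}, \cite{KS1}, so $g_1^1$ is a homology isomorphism. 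By construction it is a DGA map carrying the Theorem \ref{double} acyclic cover $\{C^a\}$ of $\Omega^2 C_\ast(X)$ to the proposed cover $\{C^{g_1^1(a)}\}$ of $C_\ast^\diamondsuit(\Omega^2 X)$. Part (ii) of Proposition \ref{prop} then extends $g_1^1$ to the desired $A_\infty$-bialgebra morphism $G = \{g_m^n\}$.

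The main obstacle is verifying that the cellular diagonal of [SU2] on a multipermutahedral cell $a$ actually lands in $C^a \otimes C^a$ rather than producing tensor components involving distant cells. In the double cobar case this is transparent because the algebra generators and the coproduct are defined combinatorially by the same permutahedral machinery, but on $C_\ast^\diamondsuit(\Omega^2 X)$ the coproduct is defined on the full singular complex modulo degeneracies, and localizing to a single generator $a$ requires that the coproduct respect the subdivision of the product cell $a \times a$ into sub-cells of $a$. A parallel technical point is to ensure that the quotient by degeneracies does not destroy acyclicity of the $C^a$'s. Once these two geometric issues are settled, the remainder of the argument follows the Theorem \ref{double} template closely and the uniqueness clause of Theorem \ref{AAHopf} would moreover identify the resulting structure up to isomorphism.
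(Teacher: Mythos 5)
The statement you are proving is stated in the paper as a \emph{conjecture}; the authors give no proof, and your proposal does not close the gap either. What you have written is essentially a plan that defers exactly the points on which the conjecture turns. You yourself flag the two decisive issues --- that the \cite{SU2} coproduct on $C_{\ast}^{\diamondsuit}(\Omega^{2}X)$ sends $C^{a}$ into $C^{a}\otimes C^{a}$, and that the $C^{a}$ remain acyclic after killing degeneracies --- and then write ``once these two geometric issues are settled, the remainder follows.'' Those issues \emph{are} the content: for a singular multipermutahedral cell $a$ of the topological space $\Omega^{2}X$, distinct faces of $a$ may coincide as singular cells, so the span of the faces is a quotient of the cellular chains of a permutahedron and its acyclicity is not automatic; and the coproduct of \cite{SU2} is defined on the whole singular complex, not cell-by-cell, so the containment $\psi(C^{a})\subseteq C^{a}\otimes C^{a}$ is a nontrivial claim requiring proof, not an observation.

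There is a further structural obstacle you do not address. Proposition \ref{prop} applies only to DG bialgebras that are \emph{free as DGAs} with an acyclic cover indexed by monomials in the free generators. In Theorem \ref{double} this hypothesis holds because $\mathbf{\Omega}^{2}\operatorname{Sing}^{2}X$ is a combinatorially constructed monoidal permutahedral set that is free by construction. By contrast, $\operatorname{Sing}^{M}(\Omega^{2}X)$ is the singular complex of a topological monoid: concatenation of singular cells admits no free basis, so ``the non-degenerate monoidal generators'' do not freely generate $C_{\ast}^{\diamondsuit}(\Omega^{2}X)$ as a DGA, and Proposition \ref{prop} does not apply as stated. Likewise, for part (ii) you would need $g_{1}^{1}$ to be a DGA map between \emph{free} DGAs carrying one acyclic cover into the other; even granting a comparison map that is a homology isomorphism, the hypotheses of Proposition \ref{prop}(ii) are not verified. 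In short, the proposal is a plausible strategy consistent with the authors' intent, but it is not a proof; the statement remains open.
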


\noindent\textbf{Acknowledgements.}
The second author wishes to thank the Institute of Mathematics of the Czech Academy of Sciences and the Institute of Mathematics of University of Seville (IMUS) for providing financial support and residential accommodations during various stages of this project.

\vspace{0.2in}

\end{document}